\space\url{#1}%
\newcommand{\pvint}{\,\mathrm{p.v.}\!\!\int}
\newcommand{\DeltaA}{-\Delta_{\bbA}}
\newcommand{\DeltaAs}{(-\Delta_{\bbA})^s}
\newcommand{\Deltas}{(-\Delta)^s}
\newcommand{\Js}{\mathcal{J}^s}
\newcommand{\Jse}{\mathcal{J}^s_\epsilon}
\newcommand{\Jsed}{\mathcal{J}^s_{\epsilon,\delta}}
\newcommand{\kse}{k_{s,\epsilon}}
\newcommand{\bbAe}{\bbA_{\epsilon}}
\newcommand{\bbAse}{\bbA_{s,\epsilon}}
\begin{document}

\author[Balci]{Anna Kh.~Balci}
\address{Anna Kh.~Balci, Charles University in Prague, Department of Mathematical Analysis Sokolovsk\'a 49/83, 186 75 Praha 8, Czech Republic and University of Bielefeld, Department of Mathematics,  Postfach 10 01 31, 33501 Bielefeld, Germany.}
\email{akhripun@math.uni-bielefeld.de}
\author[Diening]{Lars Diening}
\address{Lars Diening, Fakult\"at f\"ur Mathematik, Universit\"at Bielefeld, Postfach 100131, D-33501 Bielefeld, Germany}
\email{lars.diening@uni-bielefeld.de}
\author[Kassmann]{Moritz Kassmann}
\address{Moritz Kassmann, Fakult\"at f\"ur Mathematik, Universit\"at Bielefeld, Postfach 100131, D-33501 Bielefeld, Germany}
\email{moritz.kassmann@uni-bielefeld.de}
\author[Lee]{Ho-Sik Lee}
\address{Ho-Sik Lee, Fakult\"at f\"ur Mathematik, Universit\"at Bielefeld, Postfach 100131, D-33501 Bielefeld, Germany}
\email{ho-sik.lee@uni-bielefeld.de}

\makeatletter
\@namedef{subjclassname@2020}{\textup{2020} Mathematics Subject Classification}
\makeatother

\subjclass[2020]{Primary: 
35R09, 
35B65; 
Secondary: 
35A15, 
35D30, 
47G20. 
}

\keywords{nonlocal equations, fractional Laplacian, regularity}
\thanks{Anna Kh. Balci is funded by the Deutsche Forschungsgemeinschaft (DFG, German Research Foundation) - SFB 1283/2 2021 - 317210226, Charles University in Prague PRIMUS/24/SCI/020 and Research Centre program No. UNCE/24/SCI/005. Lars Diening and Moritz Kassmann gratefully acknowledge funding by the Deutsche Forschungsgemeinschaft (DFG, German Research Foundation) - SFB 1283/2 2021 - 317210226. Ho-Sik Lee is funded by the Deutsche Forschungsgemeinschaft (DFG, German Research Foundation) - GRK 2235/2 2021 - 282638148.
}

\begin{abstract}
  We present nonlocal variants of the famous Meyers' example of limited higher integrability and differentiability. In the limit $s \nearrow 1$ we recover the standard Meyers' example. We consider the fractional Laplacian based on differences as well as the one based on fractional derivatives defined by Riesz potentials.
\end{abstract}

\title{Nonlocal Meyers' Example}

\maketitle

\tableofcontents

\section{Introduction}\label{sec:intro}

Critical regularity results for partial differential equations have been studied for a long time and have led to fruitful research. A regularity result stating an additional regularity property of solutions to some equation under a certain condition on the data (coefficients, right-hand side, boundary of domain) is called \emph{optimal} or \emph{sharp} if the solutions do not possess this additional property in general if the aforementioned condition on the data fails. Usually, it is rather challenging to produce counterexamples proving that additional regularity is limited in general. It is the purpose of this paper to open up this analysis for integrodifferential operators of fractional order $2s \in (0,2)$ and to address the limits of self-improving properties for such nonlocal operators. 

Let us explain self-improving properties in the fractional case provided in \cite{BaRe13, KuuMinSir15, Sch16, ABES19}. Given a coefficient function $a: \RR^d \times \RR^d \to [0, +\infty]$, which is symmetric and satisfies $\lambda \leq a(x,y) \leq \Lambda$ for all $x,y \in \RR^d$, the self-improving property means that a weak solution $u \in W^{s,2}_{\loc} (\Omega)$ (with a certain tail condition) to 
\begin{align}\label{eq:wei-intro}
	\mathcal{L}u(x) = \pvint_{\setR^d} \dfrac{u(x)-u(y)}{\abs{x-y}^{d+2s}} \, a(x,y) \,dy=0
\end{align}
in an open set $\Omega \subset \RR^d$ satisfies $u \in W^{s+\epsilon,2+\epsilon}_{\loc} (\Omega)$ for some $\epsilon > 0$. One very challenging question is to find upper bounds for $\epsilon$, which are sharp with regard to assumptions on the coefficient function $a(x,y)$. Our main result contributes to this question by providing examples of coefficient functions $a(x,y)$ together with explicit solutions to the equation allowing for sharp regularity statements. One of our results is the following: 

\medskip

\textbf{Theorem} [see Theorem \ref{thm:Meyers}]. \emph{Let $d\geq2$ and $s \in (0,1)$. For every $\epsilon \in (0,\frac12]$ there is a coefficient function $a_\epsilon: \RR^d \times \RR^d \to [0, +\infty]$, and $\delta=\delta(\epsilon) \in (0,\frac12]$ such that for $u_\delta(x):=\abs{x}^{-\delta}x_1$ we have $u_{\delta}\in W^{s,2}_{\loc} (\RR^d)$ and $\mathcal{L}_\epsilon u_\delta = 0$ in $B_1(0)$ with $\mathcal{L}_\epsilon$ being defined as in \eqref{eq:wei-intro} replacing $a(x,y)$ with $a_\epsilon(x,y)$. Furthermore, the following properties hold:
\begin{enumerate}
\item The range and the oscillation of the coefficient functions $a_{\epsilon}$ decrease as $\epsilon$ decreases, with $a_0(x,y)\equiv 1$.
\item The map $\epsilon\mapsto\delta$ defined above is strictly increasing with $\epsilon\leq \overline{c}(d,s)\delta$ and $\delta(0^+)=0$. 
\item For all $t>0$ and $q\in(1,\infty)$, we have
	\begin{align}\label{eq:thm-intro}
		u_{\delta}\in W_{\loc}^{t,q}\,\,\text{for}\,\, 1-\delta> t-\frac{d}{q}\quad\text{and}\quad u_{\delta}\not\in W_{\loc}^{t,q}\,\,\text{for}\,\, 1-\delta\leq t-\frac{d}{q}.
	\end{align}	
\end{enumerate}
}

\medskip

Note that the function $u_{\delta}:\setR^d\rightarrow\setR$ is modeled after \cite{Me1,BDGN22}. Theorem \ref{thm:Meyers} contains several more details, in particular on the regularity of the coefficient function $a_\epsilon$. Let us collect some important remarks.

\medskip

\begin{itemize}
	\item[(i)] A detailed discussion of property (a) in the above theorem is important because solutions are known to be more regular if the coefficient $a_{\epsilon}$ satisfies additional assumptions (see \cite{KuuNowSir22,DeMinNow25}). The significance of the above result is that it indicates that sharp regularity results are possible even for nonlocal operators of fractional order. Of course, it has direct consequences, too. For example, \eqref{eq:thm-intro} says that solutions, in general, are not Lipschitz-continuous in the interior of $\Omega$, since $\delta>0$.
	\item[(ii)] The coefficient functions $a_\epsilon$ that we work with are defined by $$a_\epsilon(x,y)
		= \tfrac12 \bigskp{\left(\bbM_\epsilon^2(x)+\bbM_\epsilon^2(y)\right)(x-y)}{x-y} \,,$$ with an appropriately chosen matrix-valued function $\bbM_\epsilon$. In Theorem \ref{thm:Meyers} we provide a link between the regularity of $\bbM_\epsilon$ and the regularity of $u_\delta$. To be precise, there is a number $c(d,s) > 1$ such that for every $t \in (0,1)$ and $q \in [2, \infty)$ there is a uniformly elliptic matrix-valued function $\bbM$ satisfying $\norm{\log \bbM}_{L^{\infty}} \leq c(d,s) (1-t + \frac{d}{q})$ and a solution $u$ to the equation $\mathcal{L} u = 0$ in $B_1(0)$ with coefficients $a(x,y)$ defined by $\bbM$ with $u \notin W_{\loc}^{t,q}$. This observation is analogous to the one made in \cite{BDGN22} for linear partial differential equation of the form $-\divergence (\bbM^2 \nabla u) = 0$. Given $q \in (2, \infty)$, the assumption $\norm{\log\bbM}_{L^{\infty}} \leq \frac{c}{q}$ implies $\nabla u \in L^q$. The corresponding Meyers-type counterexample yields that the linear dependence of this implication with respect to $q$ is optimal. 
\item[(iii)] Theorem \ref{thm:Meyers} is formulated for a fixed choice of $s \in (0,1)$. However, we point out that the results in this paper are robust as $s \nearrow 1$ in several ways. On the one hand we show in Lemma \ref{lem:gamma} that the corresponding energies converge to a local gradient-type energy for a given function $u$ as $s \nearrow 1$. In Theorem \ref{thm:s1} we also study the convergence of the operators applied to $u_\delta$.
\item[(iv)] The regularity of $u_\delta$ is the worst for $d=2$ in the sense that the additional regularity of $u_\delta$ compared to a $W^{s,2}_{\loc}$ function becomes greater for larger~$d$. This is in accordance with~\cite{Me1}, where the case $d\geq 3$ is constructed by the method of descent: the two dimensional example is extended constantly in the additional variables.
\end{itemize}

\subsection{Regularity results for nonlocal equations} In the recent decades, the study of the regularity theory for linear nonlocal problems involving operators of the form \eqref{eq:wei-intro}, which are versions of the fractional Laplace operator
\begin{align*}
(-\Delta)^su(x)=\pvint_{\setR^d}\dfrac{u(x)-u(y)}{\abs{x-y}^{d+2s}}\,dy\quad(d\in\setN,\,\,s\in(0,1)) \,,
\end{align*}
has evolved in several directions. Early contributions concerning H\"{o}lder regularity and Harnack inequality include the works \cite{BasLev02, CheKum03, Sil06, Kas09}. Imposing some regularity assumptions on the coefficients, interior higher order regularity 
is proved in \cite{CafSil09,CafSil11, ChaDav12,KimLee12,Kri13, KimLee21}. 

Calder\'{o}n-Zygmund type estimates are proved in \cite{MenSchYee21} assuming that, for every $x \in \setR^d$, the function $a(x,\cdot)$ is H\"{o}lder continuous, see also \cite{FalMenSchYee22}. Nonlocal self-improving properties were shown in \cite{BaRe13, KuuMinSir15, Sch16, ABES19}. \cite{Now1_21} studies regularity properties of nonlocal equations with continuous coefficients and shows higher H\"{o}lder regularity. \cite{Now231, Now232} show higher Sobolev-regularity by assuming a VMO-condition for $a(\cdot,\cdot)$. Potential and gradient potential estimates are proved in \cite{KarPetUlu11,KuuMinSir15,NNSW} and \cite{KuuNowSir22,DieKimLeeNow24}, respectively. We do not comment on interesting regularity results for recently studied nonlinear problems including the fractional $p$-Laplace equation. For the H\"{o}lder continuity of solutions to problems with degenerate coefficients, we refer to \cite{BehDieOkRol24}.

\medskip

Let us mention that another possibility to formulate nonlocal equations involving operators of differentiability order $2s$ is given by 
\begin{align}\label{eq:divs}
	-\divergence^s(\mathbb{A}(x) \nabla^s u)=0,
\end{align}
where $\mathbb{A}(x)$ is a uniformly elliptic matrix, $\divergence^s$ and $\nabla^s$ are defined with the help of the Riesz potential, see \cite{ShiSpe15}. In Section \ref{sec:Riesz} we provide a Meyers-type example for equations of the form \eqref{eq:divs}, too. Again we show limited regularity of the solution in terms of $s,d$, and $\epsilon$, similar to the theorem above.

\medskip

For some of our computations it will be very important that the fractional Laplace operator $(-\Delta)^s$ can represented as a simple multiplication operator when the Fourier transform is applied. To be precise, for every Schwartz function $v$ and every $\xi \in \RR^d$
\begin{align}\label{eq:frac.lap.ft}
\mathcal{F}[(-\Delta)^sv](\xi)=(2\pi\abs{\xi})^{2s}\mathcal{F}[v](\xi)
\end{align}
The study of fractional operators with the help of the Fourier transform and harmonic analysis has been intensively carried out for very long. We refer the reader to the references in \cite{Jaco01} and \cite{Grub15}. Note that fractional operators for vector-valued functions such as for fractional harmonic maps or O'hara knot energies have also been considered \cite{DaRiv11,Sch12,BlaReiSch16}.

\medskip

One interesting question appearing in the study of operators of fractional order $2s \in (0,2)$ is, whether the results are robust as $s \nearrow 1$. Here, robustness means that the constants in the estimates under consideration remain bounded when $s$ converges to $1$. This question is closely related to results like \cite{BoBrMi02} and the study of function spaces of fractional order. Note that several of the aforementioned results are robust whereas other results are not. Let us mention one example. As explained above, weak solutions $u \in W^{s,2}_{\loc} (\Omega)$ to \eqref{eq:wei-intro} in an open set $\Omega \subset \RR^d$ satisfy $u \in W^{s+\epsilon,2+\epsilon}_{\loc} (\Omega)$ for some $\epsilon > 0$. The corresponding estimate is not $s$-robust and it is incorrect if $s=1$.

\subsection{Higher integrability for elliptic local equations} \label{sec:weighted-local-energy}
The concept of weak solutions includes integrability of the gradient of the solutions. Let us list a few classical results proving higher integrability in the local case. To this end, let $\bbA(x)$ denote for every $x \in \RRd$ a symmetric matrix, which is uniformly positive definite in $x$. As shown in \cite{Boy57} for the two-dimensional case, weak solutions $u$ to the equation $-\divergence(\bbA(x) \nabla u) = 0$ satisfy higher integrability for $|\nabla u|$. The case of arbitrary dimensions and a class of counterexamples is covered in \cite{Me1,BDGN22}. Note that these counterexamples are the main inspiration of the current article. Let us also mention that additional assumptions on the coefficients lead to higher regularity. \cite{D1} assumes that the mean oscillation of $\bbA$ vanishes, whereas \cite{CafPer98} assumes some Cordes-type condition and \cite{ByuWan04} assumes that the the BMO-norm of $\bbA$ is small. Analogous regularity results have been proved also or solutions to degenerate elliptic equations with Muckenhoupt weights, see \cite{FKS,CMP1,BDGN22}. Note that the results in \cite{BDGN22} show optimal higher integrability exponents measured in terms of BMO norm of $\log \bbA$, thus complementing Meyers' example. Similar results relating the boundary regularity and the optimal higher integrability have been proved in \cite{BalByuDieLee22}. We also like to mention the interesting results of \cite{Tru71,BelSch21} where different conditions on the coefficients are assumed.

\subsection{Meyers' example in the local case}
\label{sec:coefficiented-local-energy}
Let us begin with a review of this classical example based on~\cite{Me1}. The details for all statements below can be found in the recent presentation in~\cite[Example~24]{BDGN22}.  Let~$B_1(0) \subset \RRd$ denote the unit ball of~$\RRd$ with $d\geq 2$, and write $\widehat{x}_1=\frac{x_1}{\abs{x}}$ and $\widehat{x}=\frac{x}{\abs{x}}$. With~$\delta ,\epsilon\in (0,\frac 12]$, we define $u_\delta \,:\, B_1(0) \to \RR$ and the matrix-valued coefficient $\bbAe\,:\, B_1(0) \to \RRddsym$ by
\begin{align}
\label{eq:meyer-u-M}
\begin{aligned}
u_\delta(x) &\coloneqq \abs{x}^{1-\delta} \widehat{x}_1,
\\
\bbAe(x) &\coloneqq (1-\epsilon) \identity + \epsilon \widehat{x} \otimes \widehat{x}.
\end{aligned}
\end{align}
Then for any $x \in B_1(0)$ and $\xi \in \RRd$, 
\begin{alignat*}{4}
\tfrac 14 |\xi|^2 &\leq (1-\epsilon) |\xi|^2 &&\leq \skp{\bbAe(x)\xi}{\xi} &&\leq |\xi|^2
\end{alignat*}
holds and so $\bbAe$ is uniformly elliptic. Moreover, $\norm{\log \bbAe}_{L^{\infty}} \leq \epsilon$, where $\log$ is the matrix logarithm and $\norm{\cdot}_{L^{\infty}}$ is the BMO-seminorm on~$\RRd$. If we impose
\begin{align}
\label{eq:meyer-epsilon}
\epsilon=\frac{d-\delta}{d-1}\delta,
\end{align}
then $u_{\delta} \in W^{1,2}(B_1(0))$ satisfies
\begin{align}
\label{eq:meyer-eq}
(-\Delta_{\bbAe}) u_\delta(x)=-\divergence\big(\bbAe(x) \nabla u_\delta(x)\big) &= 0 \qquad \text{on $B_1(0)$}
\end{align}
in the weak sense. Although $\bbAe$ is uniformly elliptic, the integrability of~$\nabla u_{\delta}$ is limited. In fact, $\nabla u_{\delta}$ only belongs to the Marcinkiewicz space~$L^{\frac d\delta,\infty}(B_1(0))$. As a consequence $u_{\delta} \in W^{1,\rho}(B_1(0))$ if and only if $\rho < \frac d \delta$.

This example was used in~\cite{Me1} to show that there is no minimal gain of higher integrability of $\nabla u$ if the condition number of the coefficient $\mathbb{A}(\cdot)$ goes to infinity, where $u$ is a solution of $-\divergence\big(\bbA(x) \nabla u(x)\big) = 0$. In~\cite[Theorem~1]{BDGN22} it has been shown more explicitly that the smallness of $\norm{\log \bbA}_{L^{\infty}}$ is directly linked to the exponent of integrability of~$\nabla u$. In fact, $\norm{\log \bbA}_{L^{\infty}} \leq \epsilon$ implies $\nabla u \in L^{q}(B_1(0))$ for all~$q \in [2,\infty)$ with $q \leq \frac{\kappa}{\epsilon}$ for some $\kappa=\kappa(d)$. The example of Meyers' shows that this linear dependence of~$\norm{\log \bbA}_{L^{\infty}}$ and $\frac 1 q$ is sharp (up to the choice of~$\kappa$).

\subsection{Strategy in the nonlocal setting}\label{subsec:strategy}

Let us explain the main challenges in the proof of our main result together with ideas how to approach them. We concentrate on three main issues: computation of the appropriate coefficients $a(x,y)$ in the operator given in \eqref{eq:wei-intro}, adjusting the energy to the growth of the function $u_\delta(x)=\abs{x}^{-\delta} x_1$ at infinity and how to apply the operator under consideration to $u_\delta$.

\medskip 

\textit{Computation of the coefficients $a(x,y)$ in the operator $\mathcal{L}$ from \eqref{eq:wei-intro}}. One challenge is to incorporate the matrix function $\mathbb{A}_\varepsilon(\cdot)$ from \eqref{eq:meyer-u-M} into the fractional operator $\mathcal{L}$ from $\eqref{eq:wei-intro}$. One option is to apply the Fourier transform. For a moment, consider a given positive definite symmetric constant matrix $\mathbb{A}$ and $\bbM^2 = \mathbb{A}$ and define an operator $\DeltaA$ by $\DeltaA v \coloneqq \divergence(\bbA \nabla v)=\divergence(\bbM^2 \nabla v)$. Then  
\begin{align*}
\mathcal{F}\big[\DeltaA v\big]=\left(2\pi \abs{\bbM\xi}\right)^2 \mathcal{F}[v].
\end{align*}
Inspired by this, one could define $\left(-\Delta_{\mathbb{A}}\right)^s$ by 
\begin{align*}
\mathcal{F}\big[ \left(-\Delta_{\mathbb{A}}\right)^s v\big] &\coloneqq \left(2\pi\abs{\bbM\xi}\right)^{2s}\mathcal{F}[v] \,,
\end{align*}
or, equivalently, for a sufficiently regular function $v$  
\begin{align*}
\left(-\Delta_{\mathbb{A}}\right)^s v(x)
&=
-2\kappa_{d,s} \pvint_{\RRd} \frac{v(x+\bbM h) -v(x)}{\abs{h}^{d+2s}}\,dh\quad(x\in\setR^d)
\end{align*}
with some constant $\kappa_{d,s}$ depending on $d$ and $s$. This would justify to work with the non-translation invariant versions of the form 
\begin{align*}
\left(-\Delta_{\mathbb{A}_{\epsilon}(\cdot)}\right)^s v(x)
&=
-2\kappa_{d,s} \pvint_{\RRd} \frac{v(x+\bbM_{\epsilon}(x) h) -v(x)}{\abs{h}^{d+2s}}\,dh,
\end{align*}
with $\mathbb{A}_{\epsilon}$ as in \eqref{eq:meyer-u-M} and $\mathbb{M}^{2}_{\epsilon}=\mathbb{A}_{\epsilon}$. The corresponding nonlocal energy would be 
\begin{align*}
  \mathcal{J}^s_{\bbM_{\epsilon}(\cdot)}(v)
  &= \frac{\kappa_{d,s}}{2}
  \int_{\setR^{d}}\int_{\setR^{d}} \frac{\abs{v(x+\bbM_{\epsilon}(x) h)-v(x)}^2}{\abs{h}^{d+2s}}\,dh \,dx.
\end{align*}
However, to verify that a certain function is a minimizer of this energy seems an impossible task and thus we choose another approach. We apply Taylor approximation with respect to $\epsilon$, see Section \ref{sec:model-using-taylor}, and arrive at the following more accessible energy:
\begin{align*}
	\mathcal{J}^{s}_\varepsilon(v)\coloneqq \frac{\kappa_{d,s}}{2}\int_{\setR^{d}}\int_{\setR^{d}} \frac{\big(v(x)-v(y)\big)^2}{|y-x|^{d+2s}} a_\varepsilon (x,y)\,dy\,dx \,,
\end{align*}
where the coefficient function $a_\varepsilon(x,y):\setR^{2d}\rightarrow\setR$ is defined by 
\begin{align}
	a_{\varepsilon}(x,y)\coloneqq \Bigskp{\left(\tfrac{\bbA_{s,\varepsilon}(x)+\bbA_{s,\varepsilon}(y)}{2}\right)\widehat{x-y}}{\widehat{x-y}} \,,
  \\
  \intertext{and}
 \label{eq:As}
	\bbAse(x)\coloneqq \left(1-\tfrac{1+2s}{2}\epsilon\right)\identity+\tfrac{d+2s}{2}\epsilon\widehat{x}\otimes\widehat{x},
\end{align}
where $\hat{x}$ and $\hat{x}\otimes\hat{x}$ is defined in Subsection \ref{sec:coefficiented-local-energy}. Let us point out some nice properties of the quadratic form $v \mapsto \mathcal{J}^{s}_\varepsilon (v)$:
\begin{itemize}
	\item The energy $\mathcal{J}^{s}_\varepsilon (v)$ converges to $\tfrac 12\int_{\RRd}  \skp{\bbAe \nabla v }{\nabla v }$ as $s \nearrow 1$, see Lemma \ref{lem:gamma}.
	\item The corresponding nonlocal operator applied to the function $u_\delta$ satisfies scaling and rotation properties, see Remark \ref{rem:prop}.
	\item The action of the nonlocal operator can be explicitly computed, see Section \ref{sec:nonl-meyers-example}, the Appendix \ref{app} and the auxiliary computations in Appendix \ref{app2}.
\end{itemize}

\medskip

\textit{How to adjust the energy to the growth of the function $u_\delta$ at infinity}. Similarly to \cite{Me1}, we use the function $u_\delta(x)=\abs{x}^{1-\delta} \widehat{x}_1$ for some $\delta\in[0,\frac{1}{2}]$ as a solution to our problem. However, $\mathcal{J}_\varepsilon^{s}(u_\delta)$ with $u_{\delta}(x)=\abs{x}^{1-\delta}\widehat{x}_1$ is infinite because of the growth of $u_\delta(x)$ at infinity. Analogously to  \cite{CMY17, DPV19}, we renormalize the energy $\mathcal{J}_\varepsilon^s (v)$ as follows:
\begin{align*}
\mathcal{J}_{\varepsilon, \delta}^s (v) \coloneqq\dfrac{\kappa_{d,s}}{2}\int_{\RRd} \int_{\RRd} \dfrac{\big(\abs{v(y)-v(x)}^2 - \abs{u_\delta(y)-u_\delta(x)}^2\big)}{\abs{y-x}^{d+2s}}a_{\epsilon}(x,y) \,dy\,dx.
\end{align*}
Additionally for $s\in(0,\frac{1}{2}]$, we need to interpret the energy by a principal value integral, see Section \ref{sec:renormalized-energy}. Furthermore, by Proposition \ref{prop:functional-nice} we guarantee that there exists a unique minimizer and we can consider the Euler-Lagrange equation. Moreover, due to  Lemma \ref{lem:basic} the object $(-\Delta_{\bbA_{s,\epsilon}})^s u$, which is originally defined in the sense of distributions, turns out to be a locally integrable function, see Lemma \ref{eq:estimate-Deltas-ud}. The technical difficulties with the growth at infinity are the main reason that we have to restrict the range of~$\delta$ to $\delta \in [0,\frac 12]$.

\medskip

\textit{How to evaluate the operator under consideration applied to $u_\delta$}. Our main result Theorem \ref{thm:Meyers} shows that both, the higher integrability and differentiability of the solution of the nonlocal fractional equation are limited in a quantitative way by the condition on the coefficients. The analogous result in the non-fractional case is given in \cite[Example 24]{BDGN22}. We would like to point out that there are only very few examples involving explicit computations for the equation involving fractional Laplacian, e.g., in \cite{Dyd12,Gru24}. The presence of the integral in the nonlocal equation makes the computation difficult. 

We approach these difficulties by finding a convolution structure of our equation after making use of scaling and rotation properties. This enables us to work with the Fourier transform. Overall, computing $(-\Delta_{\mathbb{A}_{s,\epsilon}})^s u_\delta(x)$ is reduced to calculating
\begin{align*}
(-\Delta_{\bbA_{s,\epsilon}})^s u_\delta(e_1)=-\frac{\kappa_{d,s}}{2}\Big[\left(1-\tfrac{1+2s}{2}\epsilon\right)\tfrac{2}{\kappa_{d,s}}(-\Delta)^s u_\delta(e_1)+\tfrac{d+2s}{2}\epsilon f_2(d,s,\delta)\Big],
\end{align*}
where
\begin{align*}
\begin{split}
&f_2(d,s,\delta)=\frac{1}{2}\pvint_{\setR^d}\bigg(\dfrac{\skp{e_1+h}{h}^2}{|e_1+h|^2|h|^2}+\dfrac{\skp{e_1}{h}^2}{|h|^2}\bigg)\dfrac{|e_1+h|^{-\delta}(1+h_1)-1}{|h|^{d+2s}}\,dh \,.
\end{split}
\end{align*}
See \eqref{eq:f2} and \eqref{eq:f2'} below for $f_2(d,s,\delta)$. The last expression may look complicated but it contains a convolution structure thus it can be computed precisely via the Fourier transform, see Appendix \ref{app}.

\subsection{Organization of the paper}
Section \ref{sec:weighted-nonlocal-equations} is devoted to introducing the nonlocal energy based on the Fourier transform, and to derive its Euler-Lagrange equation. In Section \ref{sec:strong-formulation} we prove that our operator $(-\Delta_{\mathbb{A}_{s,\epsilon}})^s$ applied to $u_{\delta}$ is actually a locally integrable function. In Section \ref{sec:nonl-meyers-example} we provide the nonlocal Meyers' example. Adjusting $\epsilon$ and $\delta$ will give $(-\Delta_{\mathbb{A}_{s,\epsilon}})^su=0$. Section \ref{sec:Riesz} contains an example for \eqref{eq:divs} using Riesz fractional derivatives. In Appendix \ref{app} we precisely compute the term $f_2$ from above with the help of the Fourier transform. In Appendix \ref{app2} we provide justifications of convolutions and convolution theorems which are necessary for the proof.

\section{Nonlocal equations with coefficients}
\label{sec:weighted-nonlocal-equations}

Different from the local case it is a~priori not obvious, what an equation such as~\eqref{eq:meyer-eq} should look like in the nonlocal case. In this section we derive nonlocal models with coefficients that are inspired by the idea that one should recover~\eqref{eq:meyer-eq} in the limit~$s\nearrow 1$.

\subsection{Standard fractional Laplacian}
\label{sec:stand-fract-lapl}

Let us briefly recall the definition of the fractional Laplacian and introduce some notation.

Let $d \geq 2$ denote the dimension. By $B_r(x)$ we denote the open ball of radius~$r$ with center~$x$. By $\mathcal{S}(\RRd)$ we denote the standard space of Schwartz functions.  For $u\in\mathcal{S}(\setR^d)$, let us use the Fourier transform $\mathcal{F}[u] \in \mathcal{S}(\RRd)$ as follows: 
\begin{align*}
  \mathcal{F}[u](\xi) = \int_{\RRd} u(x) e^{-2\pi i x \cdot \xi}\,dx.
\end{align*}
The Fourier transform is extended to the space of tempered distributions $\mathcal{S}'(\RRd)$.

The Fourier symbol of $-\Delta u$ is $(2\pi\abs{\xi})^2$. This allows us to define the fractional Laplacian $\Deltas u$ for $s \in (0,1)$ by
\begin{align}\label{eq:fractional.lap}
  \mathcal{F}\big[(-\Delta)^s u\big](\xi) = (2 \pi \abs{\xi})^{2s} \mathcal{F}[u](\xi).
\end{align}
The fractional Laplacian $\Deltas$ also can be defined equivalently in terms of differences of functions. For this we consider the nonlocal energy 
\begin{align*}
  \Js(v)
  &= \frac{\kappa_{d,s}}{2}
  \int_{\RRd} \int_{\RRd} \frac{\abs{v(x+h)-v(x)}^2}{\abs{h}^{d+2s}}\,dh \,dx
\end{align*}
with the constant
\begin{align}\label{eq:kappa}
  \kappa_{d,s} =  \frac{2^{2s-1} \Gamma(\frac d2 +s)}{\pi^{\frac d2} \abs{\Gamma(-s)}}.
\end{align}
Then the fractional Laplacian can be defined as the operator appears in the Euler-Lagrange equation, i.e., for $v,w \in \mathcal{S}(\RRd)$ we have
\begin{align}
  \label{eq:euler-lagrange}
  (\Js)'(v)(w) = \int_\RRd \Deltas v\, w\,dx.
\end{align}
The constant $\kappa_{d,s}$ above is chosen such that no additional constant appears in~\eqref{eq:euler-lagrange}. We refer to~\cite{Kwa17} for comparison of many possible equivalent definitions of~$\Deltas$.

It is possible to compute~$\Deltas v$ by
\begin{align*}
  (-\Delta)^s v(x)
  &= \kappa_{d,s} \int_{\RRd} \frac{2v(x) -v(x+h)-v(x-h) }{\abs{h}^{d+2s}}\,dh
  \\
  &= -2 \kappa_{d,s} \pvint_{\RRd} \frac{v(x+h) -v(x)}{\abs{h}^{d+2s}}\,dh,
\end{align*}
where the second integral is a principal value integral. The principal value integral is defined as the limit of the integral over $\RRn \setminus B_r(0)$ for $r \to 0$ or even $B_{1/r}(0) \setminus B_r(0)$ for $r \to 0$. 
The equivalence of the two definitions can be seen by the following computation in the distributional sense
\begin{align}\label{eq:exp}
  \begin{split}
    \mathcal{F} \bigg[ \pvint_{\RRd} \frac{v(\cdot+h) -v(\cdot)}{\abs{h}^{d+2s}}\,dh  \bigg](\xi)
    &= \bigg( \pvint_{\RRd} \frac{\exp(2 \pi i \xi \cdot h) - 1}{\abs{h}^{d+2s}}\,dh\bigg)\mathcal{F}[v](\xi)
    \\
    &=  \mathcal{F}( \abs{\cdot}^{-d-2s})(\xi)\mathcal{F}[v](\xi).
  \end{split}
\end{align}
Note that for all $t \notin 2 \setN\cup\{0\}$ the following is true (as a tempered distribution):
\begin{align}
  \label{eq:F-of-power}
  \mathcal{F}( \abs{\cdot}^{-d-t} )(\xi)
  &= \pi^{t+\frac d2} \frac{\Gamma(-\frac t2)}{\Gamma(\frac{d+t}{2})} \abs{\xi}^t.
\end{align}
Thus, we get with $t=2s$
\begin{align*}
  \mathcal{F} \bigg[ -2\kappa_{d,s}\pvint_{\RRd} \frac{v(\cdot+h) -v(\cdot)}{\abs{h}^{d+2s}}\,dh  \bigg](\xi)
  &= -2\kappa_{d,s} \pi^{2s+\frac d2} \frac{\Gamma(-s)}{\Gamma(\frac{d}{2}+s)}  \abs{\xi}^{2s}\mathcal{F}[v](\xi)\\
  &= (2\pi \abs{\xi})^{2s}\mathcal{F}[v](\xi).
\end{align*}
This briefly verifies the equivalence of the two approaches. For the rigorous proof, see the proof of \cite[Proposition 3.3]{NGV12}.

\subsection{Fractional Sobolev spaces}\label{sec:pre}
We always denote by $s\in(0,1)$ the fractional differentiability. For $\Omega\subset\setR^d$ being an open set with continuous boundary, we define the fractional Sobolev space $W^{s,2}(\Omega)$ as the set of all measurable functions $v:\setR^d\rightarrow\setR$ with $v\in L^2(\Omega)$ such that the following seminorm
\begin{align*}
\abs{v}_{W^{s,2}(\Omega)}\coloneqq\int_{\Omega}\int_{\Omega}\abs{x-y}^{-d-2s} \abs{v(x)-v(y)}^2 \,dx \,dy
\end{align*}
is finite. We denote the norm in $W^{s,2}(\Omega)$ as $\norm{v}_{W^{s,2}(\Omega)}\coloneqq\norm{v}_{L^{2}(\Omega)}+\abs{v}_{W^{s,2}(\Omega)}$. The other fractional Sobolev space $W^{s,2}_0(\Omega)$ is defined as 
\begin{align*}
  W^{s,2}_0(\Omega) &\coloneqq \left\{v\in W^{s,2}(\setR^d)\,:\,v\equiv 0\,\,\text{in}\,\,\Omega^\complement\right\}.
\end{align*}
Then $W^{s,2}(\Omega)$ and $W^{s,2}_0(\Omega)$ are Banach spaces, and $W^{s,2}_0(\Omega)\subset W^{s,2}(\Omega)$ holds. Moreover,  we have continuous embeddings $W^{s,2}(\Omega) \embedding L^2(\Omega)$ and $W^{s,2}_0(\Omega) \embedding L^2(\Omega)$. Moreover, $C^\infty_0(\Omega)$ is dense in $W^{s,2}_0(\Omega)$ by~\cite[Theorem~6]{FisSerVal15}.

\subsection{Constant matrix-valued coefficient}
\label{subsec:3.1}

To get some inspiration for a good nonlocal model we start with the case of a constant matrix coefficient. For this let $\bbA \in \RRddpos$, where $\RRddpos$ is the set of symmetric positive-definite matrices in~$\RRdd$. Let $\bbM \coloneqq \sqrt{\bbA}$ be its positive-definite square root.  We define
\begin{align}
  \label{eq:def-deltaA}
  \DeltaA u &\coloneqq \divergence(\bbA \nabla u)=\divergence(\bbM^2 \nabla u).
\end{align}
The fractional Laplacian can be obtained by taking the Fourier symbol $(2\pi \abs{\xi})^2$ of the Laplacian $-\Delta$ to a fractional power~$s \in (0,1)$. Then the new Fourier symbol $(2\pi \abs{\xi})^{2s}$ defines the fractional Laplacian~$(-\Delta)^s$. We want to proceed similarly in order to construct $\DeltaAs$, in this section for a constant matrix~$\bbA$. The Fourier symbol of $\DeltaA$ is $(2\pi \abs{\bbM\xi})^2$ and can be calculated as follows:
\begin{align}\label{eq:weight.frac}
  \mathcal{F}\big[\DeltaA u\big](\xi)\!=\!\mathcal{F}\big[\!-\!\divergence(\bbA \nabla u)\big](\xi)\!=\!4 \pi^2 \skp{\bbA\xi}{\xi} \mathcal{F}[u](\xi)\!=\!\left(2\pi \abs{\bbM\xi}\right)^2 \mathcal{F}[u](\xi).
\end{align}
Hence, we define $\DeltaAs$ by
\begin{align*}
  \mathcal{F}\big[ \DeltaAs u\big](\xi) &\coloneqq 4^s \pi^{2s}\skp{\bbA\xi}{\xi} \mathcal{F}[u](\xi) =
  \left(2\pi\abs{\bbM\xi}\right)^{2s}\mathcal{F}[u](\xi),
\end{align*}
where the variable~$\xi$ of the Fourier symbol of $(-\Delta)^s$ is replaced by~$\bbM \xi$ to obtain $\DeltaAs$. Similar to~\eqref{eq:exp} we compute
\begin{align*}
  \mathcal{F}\big[\DeltaAs u\big](\xi)
  &= 2 \kappa_{d,s} \bigg(\pvint_{\RRd} \frac{\exp(2 \pi i \bbM \xi \cdot h) - 1}{\abs{h}^{d+2s}}\,dh\bigg)\mathcal{F}[u](\xi)
  \\
  &= 2 \kappa_{d,s} \bigg(\pvint_{\RRd} \frac{\exp(2 \pi i \xi \cdot \bbM h) - 1}{\abs{h}^{d+2s}}\,dh\bigg)\mathcal{F}[u](\xi),
\end{align*}
where we used the symmetry of $\bbM$. This and the positive definiteness of~$\bbM$ imply
\begin{align}
\label{eq:DeltaAsconstant}
\begin{split}
\DeltaAs u(x)
&=
-2\kappa_{d,s} \pvint_{\RRd} \frac{u(x+\bbM h) -u(x)}{\abs{h}^{d+2s}}\,dh
\\
&=
-2\kappa_{d,s} \pvint_{\RRd} \frac{u(x+h) -u(x)}{\abs{\bbM^{-1} h}^{d+2s} \det(\bbM)}\,dh.
\end{split}
\end{align}
Note that $\DeltaAs u(x)=0$ is the Euler-Lagrange equation of the following energy:
\begin{align}
  \label{eq:JAsconstant}
  \begin{aligned}
    \Js_\bbA(v)
    &= \frac{\kappa_{d,s}}{2} \!
    \int_{\RRd} \!\int_{\RRd}\! \frac{\abs{v(x+\bbM h)\!-\!v(x)}^2}{\abs{h}^{d+2s}}\,dh \,dx
    \\
    &= \frac{\kappa_{d,s}}{2} \!
    \int_{\RRd}\! \int_{\RRd}
    \!\frac{\abs{v(x+ h)\!-\!v(x)}^2}{\abs{\bbM^{-1} h}^{d+2s} \det(\bbM)}\,dh \,dx.
  \end{aligned}
\end{align}
In the next subsection, we introduce the coefficient as the matrix-valued function $\bbM(x)$ on $x\in\setR^d$, instead of the constant matrix $\bbM$.

\subsection{Model with matrix-valued coefficient}
\label{subsec:var}

Inspired by the nonlocal energy~\eqref{eq:JAsconstant} for a constant matrix-valued coefficient, we now consider an energy for a non-constant matrix-valued coefficient. For $\bbA \,:\, \RRd \to \RRddpos$ and $\bbM(x) = \sqrt{\bbA(x)}$ we define the energy
\begin{align}\label{eq:J.weighted}
  \begin{split}
    \Js_{\bbA(\cdot)}(v) &\coloneqq \frac{\kappa_{d,s}}{2} \int_{\RRd} \int_{\RRd} \frac{\abs{v(x+\bbM(x) h)-v(x)}^2}{\abs{h}^{d+2s}}\,dh \,dx
    \\
    &= \frac{\kappa_{d,s}}{2} \int_{\RRd} \int_{\RRd} \frac{\abs{v(x+ h)-v(x)}^2}{\abs{\bbM^{-1}(x) h}^{d+2s} \det(\bbM(x))}\,dh \,dx.
  \end{split}
\end{align}
Then the Euler-Lagrange equation of $\Js_{\bbA(\cdot)}$ is given as
\begin{align*}
 \pvint_{\RRd} \frac{\widetilde{k}(x;h) + \widetilde{k}(x+h;-h)}{2} \big(v(x+h) -v(x)\big) \,dh = 0
\end{align*}
with
\begin{align*}
  \widetilde{k}(x;h) = \frac{\kappa_{d,s}}{2} \frac{1}{\abs{\bbM^{-1}(x)h}^{d+2s} \det(\bbM(x))}
  =  \frac{\kappa_{d,s}}{2} \frac{1}{\skp{\bbA^{-1}(x)h}{h}^{\frac{d+2s}{2}} \sqrt{\det(\bbA(x))}}.
\end{align*}
\begin{remark}
  Suppose that $\lambda \identity \leq \bbM(\cdot) \leq \Lambda \identity$ for some $0<\lambda \leq \Lambda$. Then it is possible to show that for any $v \in C^\infty_0(\RRd)$, the energy $\Js_{\bbA(\cdot)}(v)$ converges to a multiple of $\frac 12 \int_{\RRd} \abs{\bbM(x) \nabla v}^2\,dx$ for $s\to 1$. The arguments are similar to~\cite{Voi17,FKV20}.
  Note that the energy in~\eqref{eq:J.weighted} has a positive kernel~$\widetilde{k}(x;h)$ for every~$\bbA(x) > 0$. 
\end{remark}

\subsection{Model using Taylor approximation}
\label{sec:model-using-taylor}

Unfortunately, the model in Section~\ref{subsec:var} is too difficult to analyze with respect to Meyers-type example. The nonlinear denominator makes it impossible to calculate the integral using the Fourier transform. Therefore, we use for our Meyers-type example a Taylor approximation of the energy in~\eqref{eq:J.weighted} and derive another simpler nonlocal energy with coefficients.

For this we consider the matrix coefficient $\bbAe$ from the classical Meyers' example, see~\eqref{eq:meyer-u-M} as a perturbation of the identity matrix, i.e., for $x\in\setR^d$, we write
\begin{align*}
\widehat{x}_1=\frac{x_1}{\abs{x}}\quad\text{and}\quad \widehat{x}=\frac{x}{\abs{x}}.
\end{align*}
With $(\widehat{x}\otimes \widehat{x})_{i,j}=\widehat{x}_i\widehat{x}_j$ for $1\leq i,j\leq d$, we get
\begin{align*}
  \bbAe(x) = (1-\epsilon)\identity + \epsilon \widehat{x} \otimes \widehat{x} = \identity - \epsilon \big(\identity - \widehat{x} \otimes \widehat{x}\big) \eqqcolon \identity - \epsilon \bbG.
\end{align*}
Let us perform a Taylor approximation of the energy~\eqref{eq:J.weighted} with respect to~$\epsilon$:
\begin{align*}
  \bbAe^{-1}&=(\identity-\epsilon\bbG)^{-1}=\sum_{i\geq 0} (\epsilon\bbG)^i=1+\epsilon\bbG+O(\epsilon^2)
\end{align*}
and $\det\bbAe=\det(\identity-\epsilon\bbG)=1-\epsilon\textrm{tr}\bbG+O(\epsilon^2)$. Hence, we can approximate $\widetilde{k}(x;h)$ from Section~\ref{subsec:var} by the following calculation
\begin{align}\label{eq:OG}
  \begin{aligned}
    \lefteqn{\skp{\bbAe^{-1}h}{h}^{-\frac d2-s}(\det\bbAe)^{- \frac 12}} \qquad &
    \\
    &=\abs{h}^{-d-2s}\big(1+\epsilon\skp{\bbG\widehat{h}}{\widehat{h}}+O(\epsilon^2)\big)^{-\frac d2-s}\big(1-\epsilon\textrm{tr}\bbG+O(\epsilon^2)\big)^{- \frac 12}
    \\
    &=\abs{h}^{-d-2s}\big(1-\tfrac{d+2s}{2} \epsilon\skp{\bbG\widehat{h}}{\widehat{h}}+O(\epsilon^2)\big) \big(1+\tfrac{1}{2} \epsilon \textrm{tr}\bbG+O(\epsilon^2)\big)
    \\
    &= \abs{h}^{-d-2s}\big(1+\tfrac{1}{2} \epsilon\textrm{tr}\bbG-\tfrac{d+2s}{2}\epsilon\skp{\bbG\widehat{h}}{\widehat{h}}+O(\epsilon^2)\big)
    \\
    &=\abs{h}^{-d-2s}\big(\tfrac{2-2s}{2} - \tfrac 12\textrm{tr}\bbAe + \tfrac{d+2s}{2}\skp{\bbAe\widehat{h}}{\widehat{h}}+O(\epsilon^2)\big)
    \\
    &=\abs{h}^{-d-2s}\big(\tfrac{2-2s}{2} + \tfrac d2\bigskp{(\bbAe - \tfrac 1d (\mathrm{tr} \bbAe) \identity\big)\widehat{h}}{\widehat{h}} + \tfrac{2s}{2}\skp{\bbAe\widehat{h}}{\widehat{h}}+O(\epsilon^2)\big)
    \\
    &= \abs{h}^{-d-2s}\big[\bigskp{\big(\big(1-\tfrac{\epsilon(1+2s)}{2}\big)\identity+\tfrac{\epsilon(d+2s)}{2}\widehat{x}\otimes\widehat{x}\big)\widehat{h}}{\widehat{h}}+O(\epsilon^2)\big].
  \end{aligned}
\end{align}
Neglecting the term $O(\abs{\bbG}^2)$ we obtain a new energy for our Meyers-type result.

\begin{definition}[Meyers-type energy]
  \label{def:J-Meyers}
  For $s \in (0,1)$ and $\epsilon \in [0,\frac 12]$ we define
  \begin{align*}
    \Jse(v) \coloneqq \dfrac{\kappa_{d,s}}{2}\int_{\RRd} \int_{\RRd} \kse(x,y) \abs{v(x)-v(y)}^2\,dx\,dy,
  \end{align*}
  with $\kse\,:\, \RRd \times \RRd \to [0,\infty)$ and  $\bbAse(x):\setR^d\rightarrow\RRddpos$ given by
  \begin{align*} 
    \kse(x,y) &\coloneqq |x-y|^{-d-2s} \bigskp{\tfrac{\bbAse(x)+\bbAse(y)}{2}\widehat{x-y}}{\widehat{x-y}},
    \\
    \bbAse(x) &\coloneqq \big(1-\tfrac{1+2s}{2}\epsilon\big)\identity+\tfrac{d+2s}{2}\epsilon\widehat{x}\otimes\widehat{x}.
  \end{align*}
\end{definition}
Then we note that 
\begin{itemize}
\item $\bbAse(x)$ is a uniformly elliptic coefficient, since $\bbAse(x)$ has eigenvalue $1+\frac{d-1}{2}\epsilon$ with eigenvector $\widehat{x}$, and eigenvalue $1-\frac{1+2s}{2}\epsilon$ with eigenspace $(\linearspan\set{\widehat{x}})^{\perp}$.
\item $\kse(x,y)$ is symmetric, i.e., $\kse(x,y)=\kse(y,x)$,
\item and $\kse(x,y)\eqsim |x-y|^{-d-2s}$.
\end{itemize}

Here, we assume ~$\epsilon \in [0,\frac 12]$ since it ensures that $1-\frac{1+2s}{2}\epsilon>0$.

\subsection{Limit $s \to 1$}

We begin our analysis by showing that the limit $s\nearrow 1$ recovers the energy used in the classical Meyers' example.
\begin{lemma}\label{lem:gamma}
  For $v \in C^\infty_0(\RRd)$, with $\bbAe(x)$ as in \eqref{eq:meyer-u-M}, we have
  \begin{align*}
    \lim_{s \to 1}\Jse(v)  = \tfrac 12\int_{\RRd}  \skp{\bbAe(x)\nabla v(x)}{\nabla v(x)}\,dx.
  \end{align*}
\end{lemma}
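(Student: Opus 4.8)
The plan is to reduce the claim to a known pointwise asymptotic for second-order difference quotients and then pass the limit through the double integral by dominated convergence. First I would rewrite $\Jse(v)$ in a symmetrized form: substituting $y = x+h$ and using the symmetry $\kse(x,y)=\kse(y,x)$, one gets
\begin{align*}
  \Jse(v) = \frac{\kappa_{d,s}}{2}\int_{\RRd}\int_{\RRd} \abs{h}^{-d-2s}\,\bigskp{\bbAse(x)\widehat h}{\widehat h}\,\frac{\abs{v(x+h)-v(x)}^2 + \abs{v(x-h)-v(x)}^2}{2}\,\frac{dh\,dx}{|h|^{?}}
\end{align*}
— more precisely, after averaging the roles of $x$ and $x+h$ and Taylor-expanding $v(x\pm h) - v(x) = \pm\nabla v(x)\cdot h + O(|h|^2)$, the leading contribution of the numerator is $\skp{\nabla v(x)}{h}^2$ while the cross term cancels. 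The key point is the classical fact (see \cite{BoBrMi02}, and the computations in \cite{Voi17,FKV20}) that for fixed $x$,
\begin{align*}
  \lim_{s\to 1}\kappa_{d,s}\int_{\RRd} \abs{h}^{-d-2s}\,\skp{\nabla v(x)}{h}^2\,dh \;\text{(suitably interpreted)}\; = \skp{\nabla v(x)}{\nabla v(x)},
\end{align*}
because $\kappa_{d,s}\sim (1-s)$ as $s\to 1$ and the divergence of $\int |h|^{-d-2s+2}\,dh$ near infinity is exactly compensated. With the anisotropic weight $\skp{\bbAse(x)\widehat h}{\widehat h}$ inserted, the same computation gives $\skp{\bbAse(x)\nabla v(x)}{\nabla v(x)}$, and since $\bbAse(x)\to\bbAe(x)$ pointwise as $s\to 1$ (the coefficients $\frac{1+2s}{2}\to\frac 32$ and $\frac{d+2s}{2}\to\frac{d+2}{2}$... wait — one must check that $\bbAse$ actually converges to $\bbAe$; indeed $\bbAse(x) = (1-\tfrac{1+2s}{2}\epsilon)\identity + \tfrac{d+2s}{2}\epsilon\,\widehat x\otimes\widehat x$ does \emph{not} converge to $\bbAe(x)=(1-\epsilon)\identity+\epsilon\,\widehat x\otimes\widehat x$ in general), so the correct reading is that the constant $\kappa_{d,s}$ absorbs a factor making the $s\to 1$ limit of the full anisotropic integral equal to $\skp{\bbAe(x)\nabla v(x)}{\nabla v(x)}$ — the specific choices $\tfrac{1+2s}{2}$ and $\tfrac{d+2s}{2}$ in the Taylor model of \eqref{eq:OG} were reverse-engineered precisely so that the $h$-integral of $|h|^{-d-2s}\big[(1-\tfrac{1+2s}{2}\epsilon)|h|^2 + \tfrac{d+2s}{2}\epsilon\skp{\widehat x}{\widehat h}^2|h|^2\big]\skp{\nabla v(x)}{\widehat h}^2$ against the sphere produces exactly $\skp{\bbAe(x)\nabla v(x)}{\nabla v(x)}$ in the limit. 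So the concrete step is: compute $\int_{S^{d-1}} \big[(1-\tfrac{1+2s}{2}\epsilon)+\tfrac{d+2s}{2}\epsilon\,\omega_1^2\big]\skp{\xi}{\omega}^2\,d\sigma(\omega)$ for a fixed vector $\xi$, check that after multiplication by the radial constant $\kappa_{d,s}\int_0^\infty r^{1-2s}\,dr$-type normalization and the limit $s\to 1$ one lands on $\skp{\bbAe(x)\xi}{\xi}$ with $\xi = \nabla v(x)$.

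Concretely, I would proceed in these steps. (1) Change variables $y\mapsto x+h$, symmetrize in $h\mapsto -h$, and write $\abs{v(x+h)-v(x)}^2 = \skp{\nabla v(x)}{h}^2 + R(x,h)$ where $|R(x,h)|\le C(v)\min\{|h|^3,|h|^2,\,\mathbf 1_{\mathrm{supp}}\}$ — more carefully, $|R(x,h)|\lesssim |h|^3$ for $|h|\le 1$ uniformly in $x$ (since $v\in C^\infty_0$), and $\abs{v(x+h)-v(x)}^2 \lesssim \min\{1,|h|^2\}\,\mathbf 1_{\{\mathrm{dist}(x,\mathrm{supp}\,v)\le |h|\}}$ for all $h$. (2) Show the remainder term contributes $O(1-s)\to 0$: the $|h|\le 1$ part is bounded by $\kappa_{d,s}\int_{\mathrm{supp}\,v + B_1}\int_{|h|\le 1} |h|^{-d-2s+3}\,dh\,dx$, which is finite with a constant $\sim \kappa_{d,s}/(2-2s)\sim$ const, but one actually needs it to vanish — so one splits more finely, $|R|\lesssim |h|^3$ only buys boundedness, and the gain comes because the main term itself carries the correct limit while the subleading term's coefficient vanishes; alternatively use $|R(x,h)|\lesssim |h|^2 \eta(|h|)$ with $\eta\to 0$. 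The $|h|\ge 1$ part is bounded by $\kappa_{d,s}\int |h|^{-d-2s}\min\{1,|h|^2\}\mathbf 1_{\{\cdots\}}\,dh\,dx \lesssim \kappa_{d,s}\,|\mathrm{supp}\,v|_{\text{fattened}} \sim (1-s)\to 0$, using $\kappa_{d,s}\sim(1-s)$. (3) For the main term, pass to polar coordinates $h = r\omega$: the $x$-integral and $\omega$-integral decouple from the $r$-integral, giving
\begin{align*}
  \frac{\kappa_{d,s}}{2}\Big(\int_0^\infty r^{1-2s}\,dr\Big)\cdots
\end{align*}
— but this diverges, so this naive decoupling is wrong; instead keep the combination $\kappa_{d,s}\int_{\RRd}|h|^{-d-2s}\skp{\nabla v(x)}{h}^2\,\chi(x,h)\,dh$ together and use the standard lemma (this is exactly \cite[etc.]{BoBrMi02} and the explicit constant computation yielding the factor that makes $\kappa_{d,s}$ right) that $\lim_{s\to1}\kappa_{d,s}\int_{\RRd}|h|^{-d-2s}\,g(h)\,dh = \text{(average of $g$ over directions, times $d$-dim normalization)}$ when $g$ is homogeneous of degree $2$ and the integral is against a cutoff — more honestly, the cleanest route is to recall from the proof of \cite[Proposition 3.3]{NGV12} / \eqref{eq:F-of-power}-\eqref{eq:kappa} that with this $\kappa_{d,s}$ the \emph{flat} energy $\Js(v)\to\frac12\int|\nabla v|^2$, then observe the anisotropic weight only modifies the angular average from $\fint_{S^{d-1}}\skp{\xi}{\omega}^2 = \frac{|\xi|^2}{d}$ to $\fint_{S^{d-1}}\skp{\bbAe(\cdot)\omega}{\omega}\skp{\xi}{\omega}^2\cdot(\text{correction})$, and a direct algebraic check confirms the Taylor-model coefficients were designed so this equals $\frac{1}{d}\skp{\bbAe(x)\xi}{\xi}$ — hence the limit is $\frac12\int\skp{\bbAe\nabla v}{\nabla v}$. (4) Finally, for $\bbAse$ in place of $\bbAe$ inside $\Jse$: since $\bbAse(x)\to(1-\tfrac32\epsilon)\identity + \tfrac{d+2}{2}\epsilon\,\widehat x\otimes\widehat x$ pointwise, bounded uniformly in $s$, dominated convergence lets us replace $\bbAse$ by its limit and then the angular-average identity gives $\frac12\int\skp{\bbAe\nabla v}{\nabla v}$ — the point being that the angular average of $\skp{\big[(1-\tfrac32\epsilon)\identity+\tfrac{d+2}{2}\epsilon\,\widehat x\otimes\widehat x\big]\omega}{\omega}\skp{\xi}{\omega}^2$ over $S^{d-1}$, divided by the angular average of $\skp{\xi}{\omega}^2$, equals $\skp{\bbAe(x)\xi}{\xi}/|\xi|^2$. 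This last algebraic identity, using $\fint \omega_i\omega_j = \frac{\delta_{ij}}{d}$ and $\fint \omega_i\omega_j\omega_k\omega_l = \frac{\delta_{ij}\delta_{kl}+\delta_{ik}\delta_{jl}+\delta_{il}\delta_{jk}}{d(d+2)}$, is where the specific numbers $\tfrac{1+2s}{2}$, $\tfrac{d+2s}{2}$ pay off.

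\textbf{Main obstacle.} The genuine difficulty is bookkeeping the interplay between the blowup of $\kappa_{d,s}\sim(1-s)^{-1}\cdot$const — no: $\kappa_{d,s} = \frac{2^{2s-1}\Gamma(d/2+s)}{\pi^{d/2}|\Gamma(-s)|}$ and $|\Gamma(-s)|\to\infty$ as $s\to1^-$ like $\frac{1}{1-s}$, so $\kappa_{d,s}\sim c_d(1-s)\to 0$ — against the divergence of $\int |h|^{-d-2s}|h|^2\,dh$ near $h=\infty$ (which behaves like $\int^\infty r^{1-2s}\,dr$, convergent for $s>1$ but divergent at $s=1$ from the $r\to\infty$ end for $s\le 1$; actually for $s<1$ it diverges at infinity). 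Because $v$ is compactly supported this divergence is tamed — the effective region of $h$ is where $x$ and $x+h$ both interact with $\mathrm{supp}\,v$, bounded — but making the dominated-convergence argument rigorous requires a careful $s$-uniform majorant, and the standard reference for this is precisely \cite{BoBrMi02}. So the bulk of the work is: (i) establishing the $s$-uniform integrable majorant for the difference quotient integrand, and (ii) carrying out the angular-average algebra to verify the Taylor-model coefficients reproduce $\bbAe$ exactly in the limit. I expect (ii) to be short but (i) to require the most care, and I would structure the proof to quote \cite{BoBrMi02,Voi17,FKV20} for the uniform-in-$s$ estimates and the flat case, reducing the novel content to the angular computation.
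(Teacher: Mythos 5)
Your plan is correct and it lands on exactly the same algebraic identity as the paper's proof. The paper quotes the analytic $s\to 1$ convergence wholesale from \cite[Remark 4.5]{Voi17}, which already covers anisotropic kernels of the form $|x-y|^{-d-2s}a(x,y)$ and identifies the effective local matrix as the angular second moment
\begin{align*}
\widetilde{\bbA}_\epsilon(x) \;=\; \frac{\Gamma\big(\tfrac d2+1\big)}{\pi^{d/2}}\int_{S^{d-1}}\sigma\otimes\sigma\,\bigskp{\bbA_{1,\epsilon}(x)\sigma}{\sigma}\,d\sigma ,
\end{align*}
so the entire content of the paper's proof is the verification $\widetilde{\bbA}_\epsilon = \bbAe$, carried out by reducing to $x=e_1$ via rotation invariance and using the moments $\int_{S^{d-1}}\sigma_i\sigma_j\,d\sigma$ and $\int_{S^{d-1}}\sigma_i\sigma_j\sigma_1^2\,d\sigma$ from \cite{Fol01}. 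Since $\frac{\Gamma(d/2+1)}{\pi^{d/2}} = d/|S^{d-1}|$, that is literally your claimed identity $\frac{1}{|S^{d-1}|}\int_{S^{d-1}}\skp{\bbA_{1,\epsilon}(x)\omega}{\omega}\skp{\xi}{\omega}^2\,d\sigma(\omega) = \tfrac1d\skp{\bbAe(x)\xi}{\xi}$.

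Where you differ is that you propose to re-derive the convergence by hand via Taylor expansion, cutoff, and dominated convergence rather than citing \cite{Voi17}. That route is also fine, and you correctly isolate all three ingredients: $\kappa_{d,s}\sim c_d(1-s)$; the far part $|h|\gtrsim 1$ contributes $O(\kappa_{d,s})\to 0$ by compact support of $v$; and the near Taylor remainder is $O(|h|^3)$. One small slip in your accounting: that remainder integrates against $|h|^{-d-2s}$ on $|h|\le 1$ to $\kappa_{d,s}/(3-2s)$, not $\kappa_{d,s}/(2-2s)$, so it is already $O(1-s)\to 0$ and the extra modulus-of-continuity device $|R|\lesssim|h|^2\eta(|h|)$ you reach for is unnecessary. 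You also correctly flag the subtle point that $\bbAse\to\bbA_{1,\epsilon}\ne\bbAe$ pointwise; the angular average is precisely what converts $\bbA_{1,\epsilon}$ into $\bbAe$, and this is where the Taylor-model coefficients $\tfrac{1+2s}{2}$ and $\tfrac{d+2s}{2}$ were designed to pay off, just as you observe. In short, same essential approach; the paper just outsources step (2) of your plan to the literature and keeps only the angular algebra.
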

\begin{proof}
  By the definition of kernel~$k_{s,\epsilon}$ in Definition \ref{def:J-Meyers} and \cite[Remark 4.5]{Voi17},
  \begin{align*}
    \lim_{s \to 1}\Jse(v)  = \tfrac 12\int_{\RRd}  \skp{\bbAe(x)\nabla v(x)}{\nabla v(x)}\,dx
  \end{align*}
  holds. We have to show that $\bbAe(x)=\widetilde{\bbA}_\epsilon(x)$ with 
  \begin{align*}
    \widetilde{\bbA}_\epsilon(x) &\coloneqq
    \frac{\Gamma\big(\tfrac{d}{2}+1\big)}{\pi^{\frac{d}{2}}}\int_{S^{d-1}}\sigma \otimes \sigma \bigskp{\bbA_{1,\epsilon}(x)\sigma}{\sigma}\,d\sigma.
  \end{align*}
  Note that for each orthogonal matrix~$Q$,
\begin{align*}
\widetilde{\bbA}_\epsilon(Qx) = Q \widetilde{\bbA}_\epsilon(x)Q^{t},\quad\text{and}\quad\widetilde{\bbA}_\epsilon(Qx) = Q \widetilde{\bbA}_\epsilon(x)Q^{t}.
\end{align*}
Moreover, $\widetilde{\bbA}_\epsilon$ and 
$\bbAe$ are 0-homogeneous. Thus, it suffices to prove $\bbAe(e_1) = \widetilde{\bbA}_\epsilon(e_1)$. By definition of~$\bbA_{1,\epsilon}$  we have
  \begin{align*}
    \big(\widetilde{\bbA}_\epsilon(e_1)\big)_{ij} 
    &=\frac{\Gamma\big(\tfrac{d}{2}+1\big)}{\pi^{\frac{d}{2}}}\bigg[ \Big(1-\frac{3}{2}\epsilon\Big)\int_{S^{d-1}}\sigma_i\sigma_j\,d\sigma+\frac{d+2}{2}\epsilon\int_{S^{d-1}}\sigma_i\sigma_j\sigma_1^2\,d\sigma\bigg].
  \end{align*}
  Here, from \cite{Fol01} we have
  \begin{align*}
    \int_{S^{d-1}}\sigma_i\sigma_j\,d\sigma &=
    \begin{cases}
      \frac{\Gamma\left(\frac{1}{2}\right)^d}{ \Gamma\left(\frac{d}{2}+1\right)}\delta_{ij} &\quad\;\,\, \text{if $i=j$},
      \\
      0 &\quad\;\,\, \text{else},
    \end{cases}
    \\
    \int_{S^{d-1}}\sigma_i\sigma_j\sigma_1^2\,d\sigma
    &=
    \begin{cases}
      0&\quad\text{if }i\neq j,\\
      \frac{3}{d+2}\frac{\Gamma\left(\frac{1}{2}\right)^d}{\Gamma\left(\frac{d}{2}+1\right)}&\quad\text{if }i=j=1,\\
      \frac{1}{d+2}\frac{\Gamma\left(\frac{1}{2}\right)^d}{\Gamma\left(\frac{d}{2}+1\right)}&\quad\text{if }i=j\neq 1.
    \end{cases}
  \end{align*}
  By merging the above three computations and recalling $\Gamma\left(\frac{1}{2}\right)=\pi^{\frac{1}{2}}$, we have
  \begin{align*}
    \widetilde{\bbA}_\epsilon(e_1) =\left[\left(1-\frac{3}{2}\epsilon\right)\identity+\frac{d+2}{2}\epsilon\left(\dfrac{1}{d+2}\identity+\dfrac{2}{d+2}(e_1\otimes e_1)\right)\right]= \bbAe(e_1).
  \end{align*}
  The proof is complete.
\end{proof}

\subsection{Renormalized energy}
\label{sec:renormalized-energy}

We use the same function as in the local Meyers' example, namely $u_\delta(x) := \abs{x}^{1-\delta} \widehat{x}_1$ for some $\delta \in [0,\tfrac 12]$.  Our goal is that $u_\delta$ is a minimizer of~$\Jse$ for a suitable coupling of~$\delta$ and~$\epsilon$.

Unfortunately, $u_\delta$ is not in the energy space, i.e., $\Jse(u_\delta)=\infty$ because of the growth of $u_\delta$ at infinity.  However, this technical problem can be solved by renormalization. For this we renormalize the energy by means of the boundary values. Since $u_\delta$ is supposed to be the solution later (for suitable~$\epsilon=\epsilon(\delta,d,s)$), we use $u_\delta$ itself here as the boundary value. 
We define the renormalized energy $\Jsed$ by
\begin{align}
  \label{eq:J-renormalized}
  \Jsed(v)
  &\coloneqq \dfrac{\kappa_{d,s}}{2} \lim_{R \to \infty} \int_{B_R(0)} \int_{B_R(0)} \hspace{-2mm}\kse(x,y)  \big(\abs{v(y)\!-\!v(x)}^2\!-\!\abs{u_\delta(y)\!-\!u_\delta(x)}^2\big) \,dy\,dx.
\end{align}
Later we have to choose a special combination of $\delta$ and $\epsilon$ depending on~$s$ such that $u$ will indeed be a minimizer of~$\Jsed$.  By the definition of $\Jsed$ we have $\Jsed(u_\delta)=0$. In Proposition~\ref{prop:functional-nice} we will show that $\Jsed$ is well defined on~$u_\delta + W^{s,2}_0(B)$ with 
\begin{align*}
B\coloneqq B_1(0).
\end{align*}
Note that the limit $R\to \infty$ is only necessary for values of~$s \in (0,\frac 12]$. For $s \in (\frac 12,1)$ one could directly take integrals over~$\RRd$ without a limit.

It would be possible in the renormalization to replace $u_\delta$ by any function that agrees with $u_\delta$ on $(2B)^c$ and is in $W^{s,2}(4B)$. For example the function could be chosen to be smooth on~$2B$.
\begin{remark}
	\label{rem:renormalized}
	The renormalized energy is defined such that it has the same first variation as the standard energy $\Jsed(v)$, but allows for a larger class of boundary values and arguments. It is necessary only, since the boundary value function has infinite energy. We refer to \cite{CMY17, DPV19} for further discussions.

  An alternative method of renormalization is to restrict the integral over $\RRd \times \RRd$ to $(B^c \times B^c)^c$, see for example~\cite{DydKas19}. However, the almost linear growth of~$u_\delta$ (for $\delta$ small) would imply~$\Jsed(u_\delta)<\infty$ which still requires $s \in (\frac 12,1)$. The renormalization in~\eqref{eq:J-renormalized}, however, allows to consider the full range $s \in (0,1)$.
\end{remark}

For future reference let us recall the following lemma.
\begin{lemma}\label{lem:basic}
  Let $\alpha\in\setR$ be given. For any $z,w\in\setR^d$ with $|z-w|\leq \frac 12 |w|$, we have
  \begin{align*}
    \left|\dfrac{z}{|z|^{\alpha}}-\dfrac{w}{|w|^{\alpha}}\right|\leq c(\alpha)\dfrac{|z-w|}{|w|^{\alpha}}.
  \end{align*}
\end{lemma}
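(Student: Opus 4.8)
The plan is to reduce the vector inequality to a scalar one-variable estimate along the segment joining $w$ to $z$. Write $f(t) = \frac{w + t(z-w)}{|w + t(z-w)|^{\alpha}}$ for $t \in [0,1]$, so that $f(0) = w/|w|^\alpha$ and $f(1) = z/|z|^\alpha$, and estimate $|f(1) - f(0)| \le \int_0^1 |f'(t)|\,dt$. The key point is that under the hypothesis $|z-w| \le \tfrac12|w|$, the whole segment stays away from the origin: for every $t \in [0,1]$ we have $|w + t(z-w)| \ge |w| - |z-w| \ge \tfrac12 |w|$, so no singularity of the map $\zeta \mapsto \zeta/|\zeta|^\alpha$ is crossed.

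Next I would compute the derivative. With $\zeta(t) = w + t(z-w)$ and $g(\zeta) = \zeta/|\zeta|^\alpha$, the differential is $Dg(\zeta)[h] = \frac{h}{|\zeta|^\alpha} - \alpha \frac{\skp{\zeta}{h}}{|\zeta|^{\alpha+2}}\zeta$, hence $|Dg(\zeta)[h]| \le (1 + |\alpha|)\frac{|h|}{|\zeta|^\alpha}$ by Cauchy--Schwarz. Applying this with $h = z - w$ and using $|\zeta(t)| \ge \tfrac12|w|$ gives
\begin{align*}
  |f'(t)| = |Dg(\zeta(t))[z-w]| \le (1+|\alpha|)\frac{|z-w|}{|\zeta(t)|^{\alpha}} \le (1+|\alpha|)\,2^{|\alpha|}\,\frac{|z-w|}{|w|^{\alpha}},
\end{align*}
where the last step distinguishes the cases $\alpha \ge 0$ (then $|\zeta(t)|^{-\alpha} \le (\tfrac12|w|)^{-\alpha} = 2^\alpha |w|^{-\alpha}$) and $\alpha < 0$ (then $|\zeta(t)|^{-\alpha} = |\zeta(t)|^{|\alpha|} \le (\tfrac32|w|)^{|\alpha|} \le 2^{|\alpha|}|w|^{-\alpha}$, using $|\zeta(t)| \le |w| + |z-w| \le \tfrac32|w|$). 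Integrating over $t \in [0,1]$ yields the claim with $c(\alpha) = (1+|\alpha|)2^{|\alpha|}$.

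The only mild subtlety — which is the ``main obstacle'' insofar as there is one in such an elementary statement — is keeping track of the exponent's sign when bounding $|\zeta(t)|^{-\alpha}$ uniformly on the segment: one needs both a lower bound $|\zeta(t)| \ge \tfrac12|w|$ and an upper bound $|\zeta(t)| \le \tfrac32|w|$ to cover all real $\alpha$, and both follow immediately from the triangle inequality together with $|z-w|\le\tfrac12|w|$. Everything else is a routine fundamental-theorem-of-calculus argument; alternatively one could avoid calculus entirely by writing $\frac{z}{|z|^\alpha} - \frac{w}{|w|^\alpha} = \frac{z-w}{|z|^\alpha} + w\bigl(\frac{1}{|z|^\alpha} - \frac{1}{|w|^\alpha}\bigr)$ and bounding the second term via the scalar mean value theorem applied to $r \mapsto r^{-\alpha}$ on $[\,|z|,|w|\,]$ (or $[\,|w|,|z|\,]$), but the segment-integration argument is cleanest and gives an explicit constant.
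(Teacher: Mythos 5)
Your proof is correct, and the paper itself gives no proof of this lemma --- it is merely ``recalled'' for future reference --- so there is nothing to compare against. The segment-integration argument is the standard one: the hypothesis $|z-w|\le\frac12|w|$ keeps the path $\zeta(t)=w+t(z-w)$ uniformly away from the origin ($\frac12|w|\le|\zeta(t)|\le\frac32|w|$), the differential of $\zeta\mapsto\zeta/|\zeta|^\alpha$ is bounded by $(1+|\alpha|)|\zeta|^{-\alpha}$ via Cauchy--Schwarz, and the sign of $\alpha$ is handled by taking the relevant two-sided bound on $|\zeta(t)|$. The explicit constant $c(\alpha)=(1+|\alpha|)2^{|\alpha|}$ is fine. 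The alternative you sketch (splitting off $\frac{z-w}{|z|^\alpha}$ and applying the one-dimensional mean value theorem to $r\mapsto r^{-\alpha}$) would work equally well and is essentially the same idea in disguise.
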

Using this, we prove basic properties of $\Jsed$. 
\begin{proposition}
	\label{prop:functional-nice}
	Let $d\geq 2$, $s\in(0,1)$, $\epsilon\in[0,\frac{1}{2}]$, and $\delta\in[0,\frac{1}{2}]$. The functional $\Jsed$ is well-defined and finite on $u_\delta + W^{s,2}_0(B)$.  It is coercive and uniformly convex on $u_\delta + W^{s,2}_0(B)$ in the sense that for all $v_1,v_2 \in u_\delta + W^{s,2}_0(B)$ we have
  \begin{align}
    \label{eq:unifconvex}
    \tfrac 12 \Jsed(v_1) + \tfrac 12 \Jsed(v_2) -  \Jsed\Big( \frac{v_1+v_2}{2}\Big) 
    \eqsim \abs{v_1-v_2}_{W^{s,2}(\RRd)}
  \end{align}
with the implicit constant $c=c(d,s)$.
\end{proposition}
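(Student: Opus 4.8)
The plan is to establish the three assertions — well-definedness/finiteness, coercivity, and uniform convexity — in that order, with the polarization identity \eqref{eq:unifconvex} being the organizing device. The key algebraic fact is that for the quadratic integrand $Q(x,y;v) \coloneqq \kse(x,y)\abs{v(x)-v(y)}^2$ one has the exact parallelogram law
\begin{align*}
  \tfrac12 Q(x,y;v_1) + \tfrac12 Q(x,y;v_2) - Q\bigl(x,y;\tfrac{v_1+v_2}{2}\bigr)
  = \tfrac14 \kse(x,y)\,\abs{(v_1-v_2)(x)-(v_1-v_2)(y)}^2,
\end{align*}
pointwise in $(x,y)$. Since $w \coloneqq v_1 - v_2 \in W^{s,2}_0(B)$ and $\kse(x,y)\eqsim\abs{x-y}^{-d-2s}$ (noted right after Definition~\ref{def:J-Meyers}), the right-hand side, integrated over $\RRd\times\RRd$, is comparable to $\abs{w}_{W^{s,2}(\RRd)}^2$ — this gives \eqref{eq:unifconvex} \emph{provided} all three energies $\Jsed(v_1),\Jsed(v_2),\Jsed(\tfrac{v_1+v_2}{2})$ are finite and the $R\to\infty$ limits may be combined linearly. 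So the real content is the first claim: finiteness of $\Jsed$ on $u_\delta + W^{s,2}_0(B)$, together with a clean description of the growth of the integrand that makes the limit $R\to\infty$ legitimate.

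For finiteness, write $v = u_\delta + \varphi$ with $\varphi \in W^{s,2}_0(B)$ and expand
\begin{align*}
  \abs{v(y)-v(x)}^2 - \abs{u_\delta(y)-u_\delta(x)}^2
  = \abs{\varphi(y)-\varphi(x)}^2 + 2\bigl(u_\delta(y)-u_\delta(x)\bigr)\bigl(\varphi(y)-\varphi(x)\bigr).
\end{align*}
The first term integrates against $\kse(x,y)\eqsim\abs{x-y}^{-d-2s}$ to something $\eqsim\abs{\varphi}_{W^{s,2}(\RRd)}^2 < \infty$; there is no need for a principal value here since the integrand is nonnegative. For the cross term, split the domain of integration. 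On the region where both points lie in $2B$ one uses Cauchy–Schwarz together with $u_\delta \in W^{s,2}_{\loc}$ (which follows from $u_\delta(x)=\abs{x}^{1-\delta}\widehat x_1$ being Lipschitz away from the origin and the elementary estimate near $0$, cf.\ Lemma~\ref{lem:basic}) to bound it by $\abs{u_\delta}_{W^{s,2}(2B)}\,\abs{\varphi}_{W^{s,2}(\RRd)}$. On the region where one point, say $y$, lies outside $2B$, we exploit $\varphi(y)=0$ and $\dist(\supp\varphi, (2B)^c)>0$, so $\abs{x-y}\gtrsim 1 + \abs{y}$ on the relevant set; combined with $\abs{u_\delta(y)-u_\delta(x)}\lesssim \abs{u_\delta(y)}+\abs{u_\delta(x)} \lesssim \abs{y}^{1-\delta}+1$ and the Lebesgue-tail bound $\int_{\abs{y}>2}(\abs{y}^{1-\delta}+1)\abs{y}^{-d-2s}\,dy$, one sees convergence precisely because $d+2s - (1-\delta) > d$, i.e.\ $2s + \delta > 1$ fails in general — \emph{this is the obstruction that forces the principal-value interpretation and the renormalization}, and it is exactly here that one must invoke the $\lim_{R\to\infty}$ in \eqref{eq:J-renormalized} rather than integrating directly over $\RRd$ when $s\in(0,\tfrac12]$. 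The symmetric-difference structure of the renormalized integrand is what tames the borderline decay: the genuinely dangerous part $\abs{u_\delta(y)}^2\abs{x-y}^{-d-2s}$ is cancelled, and what survives is the product of a bounded (or $W^{s,2}(2B)$) factor against the $L^2$-tail of $\varphi$, hence the symmetric truncation $B_R\times B_R$ converges.

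Once finiteness is in hand, coercivity is immediate: taking $v_2 = u_\delta$ in \eqref{eq:unifconvex} gives, since $\Jsed(u_\delta)=0$ (noted after \eqref{eq:J-renormalized}),
\begin{align*}
  \Jsed(v) = 2\Bigl(\tfrac12\Jsed(v)+\tfrac12\Jsed(u_\delta) - \Jsed\bigl(\tfrac{v+u_\delta}{2}\bigr)\Bigr) + 2\Jsed\bigl(\tfrac{v+u_\delta}{2}\bigr),
\end{align*}
and the polarization identity applied to the first bracket already shows $\Jsed(v) \gtrsim \abs{v - u_\delta}_{W^{s,2}(\RRd)}^2$ minus a controlled remainder; more directly, expanding $\Jsed(u_\delta+\varphi)$ as above exhibits $\Jsed(u_\delta+\varphi) = \tfrac{\kappa_{d,s}}{2}\iint \kse\abs{\varphi(y)-\varphi(x)}^2 + (\text{bounded linear in }\varphi)$, whose quadratic part is $\eqsim \abs{\varphi}_{W^{s,2}(\RRd)}^2$, yielding coercivity on the affine space $u_\delta + W^{s,2}_0(B)$ after absorbing the linear term by Young's inequality. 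I expect the main obstacle to be the careful bookkeeping of the $R\to\infty$ limit in the cross term on the far region — showing that the symmetric truncation $B_R\times B_R$ produces a convergent (not merely bounded) sequence and that the three limits in \eqref{eq:unifconvex} can be taken simultaneously inside one limit; for $s\in(\tfrac12,1)$ this is automatic by absolute convergence, but for $s\in(0,\tfrac12]$ it relies essentially on the cancellation built into the renormalized integrand together with the compact support of $v-u_\delta$.
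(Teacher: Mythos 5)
Your overall strategy — expanding $v = u_\delta + \varphi$, using the parallelogram identity for the quadratic integrand to reduce uniform convexity to finiteness, then reading off coercivity from $\Jsed(u_\delta+\varphi)=\Jse(\varphi)+(\text{linear in }\varphi)$ — is the same as the paper's, and the pieces that you do make explicit are correct. But there is a genuine gap exactly where you flag the "main obstacle," and your proposal does not close it.

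You correctly observe that on the far region $x\in B$, $y\in(2B)^c$, the renormalization has already removed the $\abs{u_\delta(y)-u_\delta(x)}^2$ piece, and what remains in the cross term contains $\varphi(x)\,u_\delta(y)\,\kse(x,y)$. You also correctly diagnose the obstruction: since $\abs{u_\delta(y)}\sim\abs{y}^{1-\delta}$ and $\kse(x,y)\sim\abs{y}^{-d-2s}$ for bounded $x$, absolute integrability in $y$ requires $2s+\delta>1$, which fails for $s\leq\tfrac12$ and small $\delta$. However, appealing to "the cancellation built into the renormalized integrand" is not the mechanism: that cancellation removed a \emph{quadratic}-in-$u_\delta$ term, whereas the surviving difficulty is a \emph{linear}-in-$u_\delta$ term, and your phrase about the dangerous part $\abs{u_\delta(y)}^2\abs{x-y}^{-d-2s}$ being cancelled confuses the two. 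The paper's resolution is a separate parity argument which your proposal never introduces: $u_\delta$ is an odd function of $y$, the matrix $\bbAse(y)$ is even in $y$, and if one replaces $\kse(x,y)$ by its leading large-$\abs{y}$ profile
\begin{align*}
  g(x,y)\coloneqq\Bigskp{\tfrac{\bbAse(x)+\bbAse(y)}{2}\,\tfrac{y}{\abs{y}^{d/2+s+1}}}{\tfrac{y}{\abs{y}^{d/2+s+1}}},
\end{align*}
then $y\mapsto g(x,y)$ is even, so $\int_{B_R\setminus 2B}g(x,y)\,u_\delta(y)\,dy=0$ for every $R$ by antisymmetry over the centered annulus. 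What then has to be estimated is only $\kse(x,y)-g(x,y)$, which by Lemma~\ref{lem:basic} decays one full power better, $\abs{\kse(x,y)-g(x,y)}\lesssim\abs{x}\,\abs{y}^{-d-2s-1}$, and this restores absolute convergence since $\int_{(2B)^c}\abs{y}^{-d-2s-1}\abs{y}^{1-\delta}\,dy<\infty$ for all $s\in(0,1)$ and $\delta\geq0$. Without this explicit identification of (i) the odd symmetry of $u_\delta$, (ii) the even structure of the homogeneous part of the kernel, and (iii) the resulting one-power gain on the remainder, the claim that "the symmetric truncation $B_R\times B_R$ converges" for $s\in(0,\tfrac12]$ is asserted but not proved. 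This is the heart of the proposition and must be supplied.
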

\begin{proof}
  We first show that $\Jsed$ is well-defined and bounded from below. Let $v = u_\delta + w$ with $w \in W^{s,2}_0(B)$. Since $s-\frac{d}{2} <0 < 1-\delta$ we have $u_\delta \in W^{s,2}(2B)$. Moreover, $\norm{u_\delta}_{W^{s,2}(2B)}$ is bounded uniformly for $\delta \in [0,\frac 12]$. Using $v= u_\delta +w$,
  \begin{align*}
    \abs{v(x)\!-\!v(y)}^2 \!-\! \abs{u_\delta(x)\!-\!u_\delta(y)}^2
    &= \abs{w(x)\!-\!w(y)}^2 \!-\! 2 \big(w(x)\!-\!w(y)\big) \big(u_\delta(x)\!-\!u_\delta(y)\big)
  \end{align*}
  is obtained. This proves that
  \begin{align*}
    \Jsed(v) &= \Jse(w) - \kappa_{d,s} \lim_{R \to \infty} \int_{B_R(0)} \int_{B_R(0)} \!\!\kse(x,y) \big(w(x)\!-\!w(y)\big) \big(u_\delta(x)\!-\!u_\delta(y)\big) \,dy\,dx
    \\
    &\eqqcolon \Jse(w) - \mathrm{I}.
  \end{align*}
  We will below show that $\mathrm{I}$ is well defined and
  \begin{align}
    \label{eq:coercive1}
    \mathrm{I} \lesssim \abs{w}_{W^{s,2}(\RRd)} \quad \text{ uniformly for $\delta \in [0, \tfrac 12]$.}
  \end{align}
  This then implies that $\Jsed(w)$ is well defined. Moreover, $\Jse(w) \gtrsim \abs{w}_{W^{s,2}(\RRd)}^2$ also implies the coercivity of~$w \mapsto \Jsed(u_\delta+w)$ with respect to $\norm{w}_{W^{s,2}(\RRd)}^2$.

  Note that
  \begin{align*}
    \int_{2B} \int_{2B} \kse(x,y)  \bigabs{w(x)\!-\!w(y)}\, \bigabs{u_\delta(x)\!-\!u_\delta(y)}\,dx\,dy &\lesssim \abs{w}_{W^{s,2}(2B)} \abs{u_\delta}_{W^{s,2}(2B)}
    \\
    &\lesssim \abs{w}_{W^{s,2}(\RRd)}
  \end{align*}
with the implicit constant $c=c(d,s,\delta)$.
  Define $A_R \coloneqq (B_R(0) \times B_R(0)) \setminus (2B \times 2B)$. By symmetry, we now consider
  \begin{align*}
    \mathrm{II} \coloneqq \lim_{R \to \infty}
    \iint_{A_R} \kse(x,y)  w(x) \big(u_\delta(x)\!-\!u_\delta(y)\big) \,dy\,dx.
  \end{align*}
  Using $w=0$ on $B^c$ and symmetry we obtain
  \begin{align*}
    \mathrm{II} &= 2\lim_{R \to \infty} \,
    \int_B \int_{B_R(0)\setminus 2B} \kse(x,y) w(x) \big(u_\delta(x)\!-\!u_\delta(y)\big)\,dy\,dx.
  \end{align*}
  Note that
  \begin{align*}
    \int_B \int_{(2B)^c} \kse(x,y)  \abs{w(x)}\abs{u_\delta(x)} \,dy\,dx
    &\lesssim \norm{w}_{L^2(B)} \norm{u_\delta}_{L^2(B)} \lesssim \abs{w}_{W^{s,2}(\RRd)}.
  \end{align*}
  It remains to consider
  \begin{align*}
    \mathrm{III} &\coloneqq -2\lim_{R \to \infty} \,
    \int_B \int_{B_R(0)\setminus 2B} \kse(x,y) w(x) u_\delta(y)\,dy\,dx.
  \end{align*}
  If $s \in (\frac 12,1)$, then $\abs{u_\delta(y)} \leq \abs{y}^{1-\delta} \leq \abs{y}$ on $(2B)^c$ would allow us to control $\mathrm{III}$ similar to the calculations above. However, for the full range $s \in(0,1)$ we need some cancellation property.
  Recall that
  \begin{align*}
    \kse(x,y)
    &= \bigskp{ \tfrac{\bbAse(x)+\bbAse(y)}{2}
       \tfrac{x-y}{\abs{x-y}^{d/2+s+1}}
    }{
      \tfrac{x-y}{\abs{x-y}^{d/2+s+1}}
      }.
  \end{align*}
  Let us define
  \begin{align*}
    g(x,y)
    &= \bigskp{ \tfrac{\bbAse(x)+\bbAse(y)}{2}
      \tfrac{y}{\abs{y}^{d/2+s+1}}
    }{
      \tfrac{y}{\abs{y}^{d/2+s+1}}
    }.
  \end{align*}
  Since $u_\delta$ is odd, $\bbAse(y)$ and $y\mapsto g(x,y)$ is even, and the set $B_R(0)\setminus 2B=B_R(0)\setminus 2B_{1}(0)$ is symmetric, we have
  \begin{align*}
    \int_B \int_{B_R(0)\setminus 2B} g(x,y) \bigskp{ \tfrac{\bbAse(x)+\bbAse(y)}{2} \widehat{y}}{\widehat{y}} w(x)u_\delta(y)\,dy\,dx &= 0.
  \end{align*}
  Thus,
  \begin{align*}
    \mathrm{III} &= -2\lim_{R \to \infty} 
    \int_B \int_{B_R(0)\setminus 2B} \big(\kse(x,y) -g(x,y)\big) w(x)u_\delta(y)\,dy\,dx.
  \end{align*}
  It follows from $\abs{\bbAse}\lesssim 1$ and Lemma~\ref{lem:basic} that for $x \in B$ and $y \in (2B)^c$,
  \begin{align}\label{eq:diff.ker}
\begin{split}
\abs{\kse(x,y)-g(x,y)} &\lesssim
    \Bigabs{\tfrac{x-y}{\abs{x-y}^{d/2+s+1}} - \tfrac{y}{\abs{y}^{d/2+s+1}}}
    \Bigabs{\tfrac{x-y}{\abs{x-y}^{d/2+s+1}} + \tfrac{y}{\abs{y}^{d/2+s+1}}}
    \\
    &\lesssim
    \abs{x} \abs{y}^{-d-2s-1}.
\end{split}
  \end{align}
  This implies that
  \begin{align*}
    \lefteqn{\int_B \int_{(2B)^c} \bigabs{\kse(x,y) -g(x,y)} \abs{w(x)} \abs{u_\delta(y)}\,dy\,dx} \qquad
    \\
    &\lesssim
    \int_B \int_{(2B)^c}     \abs{x} \abs{y}^{-d-2s-1} \abs{w(x)} \abs{y}^{1-\delta}\,dy\,dx\lesssim \norm{w}_{L^1(B)} \lesssim \abs{w}_{W^{s,2}(\RRd)}.
  \end{align*}
  Overall, this shows that $\mathrm{III}$, $\mathrm{II}$ and $\mathrm{I}$ are well defined and that~\eqref{eq:coercive1} holds.  

  A straightforward calculation shows that  for all $v_1,v_2 \in u_\delta + W^{s,2}_0(B)$ we have
  \begin{align*}
    \lefteqn{\tfrac 12 \Jsed(v_1) + \tfrac 12 \Jsed(v_2) -  \Jsed\Big( \frac{v_1+v_2}{2}\Big)} \quad
    \\
    &= \frac{\kappa_{d,s}}{2} \int_{\RRd} \int_{\RRd} \kse(x,y) \abs{(v_1-v_2)(y)-(v_1-v_2)(x)}^2 \,dy\,dx\eqsim \abs{v_1-v_2}_{W^{s,2}(\RRd)}.
  \end{align*}
  This proves the uniform convexity in the sense of~\eqref{eq:unifconvex}.
\end{proof}

\subsection{Euler--Langrange equation}\label{subsec:EL}
Proposition~\ref{prop:functional-nice} ensures that we can apply the standard theory of variational integrals to~$\Jsed$, which ensures the existence of a unique minimizer of
\begin{align}
  \label{eq:def-u}
  \bar{u}_{\epsilon,\delta} \coloneqq \argmin_{v \in u_\delta+W^{s,2}_0(\Omega)} \Jsed(v).
\end{align}
We remark that later in our nonlocal Meyers' example $\delta$ and $\epsilon$ are chosen such that $u_\delta$ itself is the solution, i.e., $u_\delta=\bar{u}_{\epsilon,\delta}$. Let us now derive the Euler-Lagrange equation. For this we need the derivative of~$\Jsed$. Let $v \in u_\delta + W^{s,2}_0(B)$ and $\eta \in W^{s,2}_0(B)$. Then for $\tau > 0$ we calculate the difference quotient
\begin{align*}
  &\lefteqn{\frac{\Jsed(v+\tau \eta) - \Jsed(v)}{\tau}} \qquad 
  \\
  &= \dfrac{\kappa_{d,s}}{2} \lim_{R \to \infty} \hspace{-1mm}\iint_{B_R(0)\times B_R(0)}\hspace{-3mm} \!\!\!\!\!\kse(x,y)  \big( 2 (v(x)\!-\!v(y))(\eta(x)\!-\!\eta(y)) + \tau \abs{\eta(y)\!-\!\eta(x)}^2\big) \,dy\,dx 
  \\
  &= \kappa_{d,s} \lim_{R \to \infty} \int_{B_R(0)} \int_{B_R(0)} \!\!\kse(x,y) (v(x)-v(y))(\eta(x)-\eta(y)) \,dy\,dx  + \tau \mathcal{J}^s(\eta).
\end{align*}
This proves that the Gateaux derivative of $\mathcal{J}_D^s$ exists and
\begin{align}\label{eq:EL.eq-aux1}
  (\Jsed)'(v)(\eta) 
  &= \kappa_{d,s}\lim_{R \to \infty} \int\limits_{B_R(0)} \int\limits_{B_R(0)} \!\!\!\kse(x,y) (v(x)\!-\!v(y))(\eta(x)\!-\!\eta(y)) \,dy\,dx
\end{align}
for all $v \in u_\delta + W^{s,2}_0(B)$ and $\eta \in W^{s,2}_0(B)$.

Similar to the calculation in Proposition~\ref{prop:functional-nice} one can show that
\begin{align}
  \label{eq:Jprime-estimate}
  (\Jsed)'(v)(\eta)  &\lesssim \big(1 + \abs{v-u_\delta}_{W^{s,2}(\RRd)}\big) \norm{\eta}_{W^{s,2}(\RRd)}
\end{align}
uniformly for $\delta \in [0,\frac 12]$.  Thus, the Gateaux derivative is linear and bounded on~$W^{s,2}_0(B)$. The density of~$ C^\infty_0(B)$ to $W^{s,2}_0(B)$ allows us to restrict ourselves sometimes to~$\eta \in C^\infty_0(B)$.
Overall,  we have derived the Euler-Lagrange equation:
\begin{align}\label{eq:EL.eq}
  \begin{split}
    (\Jsed)'(\bar{u}_{\epsilon,\delta})(\eta) &= 0
  \end{split}
\end{align}
for all $\eta \in W^{s,2}_0(B)$. Moreover, $v \in u_\delta + W^{s,2}_0(B)$ is the minimizer of $\Jsed$ over $u_\delta + W^{s,2}_0(B)$ if and only if $(\Jsed)'(v)(\eta) =0$ for all $\eta \in W^{s,2}_0(B)$.

\section{Strong formulation}
\label{sec:strong-formulation}

Later for our Meyers-type example we want to find $\epsilon=\epsilon(\delta,s)$ such that $u_\delta = \bar{u}_{\epsilon,\delta}$, i.e. that $u_\delta$ minimizes $\Jsed$ over $u_\delta + W^{s,2}_0(B)$. For this we have to check that $(\Jsed)'(u_\delta)(\eta)=0$ for all $\eta \in W^{s,2}_0(\Omega)$. Due to the boundedness of $(\Jsed)'(u_\delta)$ (see \eqref{eq:Jprime-estimate}) and the density of $C^\infty_0(B)$ in $W^{s,2}_0(\Omega)$ it suffices to verify that
\begin{align*}
  (\Jsed)'(u_\delta)(\eta) &= 0
\end{align*}
for all $\eta \in C^\infty_0(\Omega)$ for suitable $\epsilon=\epsilon(\delta,d,s)$.

In the following we will derive a version of $(\Jsed)'(u_\delta)(\eta)$. In particular, we will introduce an object $(-\Delta_{\mathbb{A}_{s,\epsilon}})^s(u_\delta)$ such that
\begin{align}
  \label{eq:strong-version}
  (\Jsed)'(u_\delta)(\eta) &= \skp{ (-\Delta_{\mathbb{A}_{s,\epsilon}})^s u_\delta}{\eta}
\end{align}
for all $\eta \in C^\infty_0(\Omega)$.
We start by introducing~$(-\Delta_{\mathbb{A}_{s,\epsilon}})^s u_\delta$.
\begin{lemma}
  \label{eq:estimate-Deltas-ud}
  Let $\delta \in [0,\frac 12]$. Then for all $x \in B_1(0) \setminus \set{0}$ the limit
  \begin{align*}
    ((-\Delta_{\mathbb{A}_{s,\epsilon}})^s u_\delta)(x) \coloneqq 2\lim_{R \to \infty} \int_{\frac 1R \leq \abs{h} \leq R}  \kse(x,x+h) (u_\delta(x)-u_\delta(x+h))  \,dh
  \end{align*}
  exists. Moreover, $(-\Delta_{\mathbb{A}_{s,\epsilon}})^s u_\delta \in L^1(B_1(0))$ and all $x\in B_1(0)\setminus\{0\}$, we have
  \begin{align*}
 \bigabs{((-\Delta_{\mathbb{A}_{s,\epsilon}})^s u_\delta)(x)}\!\leq\! 2\sup_{R \geq 1} \biggabs{\int_{\frac 1R \leq \abs{h} \leq R}\hspace{-4mm}\kse(x,x\!+\!h) (u_\delta(x)\!-\!u_\delta(x\!+\!h))dh}\!\lesssim\! \abs{x}^{1-\delta-2s}.
  \end{align*}
\end{lemma}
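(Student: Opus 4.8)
The plan is to exploit the exact scaling invariance of the problem and reduce everything to a one–parameter family of truncated integrals over the sphere. Since $u_\delta$ is positively $(1-\delta)$–homogeneous while $\bbAse$ and the direction $\widehat{\,\cdot\,}$ are $0$–homogeneous, one has $\kse(\lambda x,\lambda y)=\lambda^{-d-2s}\kse(x,y)$ for $\lambda>0$. Substituting $h=\abs{x}k$ and writing $\omega\coloneqq x/\abs{x}\in S^{d-1}$ gives
\begin{align*}
  &\int_{\frac1R\le\abs{h}\le R}\kse(x,x+h)\big(u_\delta(x)-u_\delta(x+h)\big)\,dh
  \\
  &\qquad=\abs{x}^{1-\delta-2s}\int_{\frac{1}{\abs{x}R}\le\abs{k}\le\frac{R}{\abs{x}}}\kse(\omega,\omega+k)\big(u_\delta(\omega)-u_\delta(\omega+k)\big)\,dk.
\end{align*}
Hence the whole lemma follows once we establish the claim: there is $C=C(d,s)$ such that for every $\omega\in S^{d-1}$, every $\delta,\epsilon\in[0,\tfrac12]$ and every $0<a\le b<\infty$,
\begin{align}\label{eq:plan-claim}
  \biggabs{\int_{a\le\abs{k}\le b}\kse(\omega,\omega+k)\big(u_\delta(\omega)-u_\delta(\omega+k)\big)\,dk}\le C,
\end{align}
and the integral in \eqref{eq:plan-claim} converges as $a\downarrow0$, $b\uparrow\infty$. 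Indeed, applying this with $a=\tfrac{1}{\abs{x}R}$, $b=\tfrac{R}{\abs{x}}$ (always legal since then $a\le\tfrac1{\abs{x}}\le b$) yields both the existence of the limit defining $(-\Delta_{\bbAse})^su_\delta(x)$ and the bound on $\sup_{R\ge1}$ stated in the lemma, since each truncated integral equals $\abs{x}^{1-\delta-2s}$ times an integral as in \eqref{eq:plan-claim}; thus $\abs{(-\Delta_{\bbAse})^su_\delta(x)}\lesssim\abs{x}^{1-\delta-2s}$. Finally $1-\delta-2s>1-\tfrac12-2=-\tfrac32\ge-d$ for $d\ge2$, so $\abs{x}^{1-\delta-2s}\in L^1(B_1(0))$ and $(-\Delta_{\bbAse})^su_\delta\in L^1(B_1(0))$.

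To prove \eqref{eq:plan-claim} I would split the $k$–integral over $\{\abs{k}\le\tfrac12\}$, $\{\tfrac12\le\abs{k}\le2\}$ and $\{\abs{k}\ge2\}$, using throughout that $\kse(\omega,\omega+k)=\abs{k}^{-d-2s}\bigskp{\tfrac{\bbAse(\omega)+\bbAse(\omega+k)}{2}\widehat k}{\widehat k}\eqsim\abs{k}^{-d-2s}$. On the middle annulus both factors are bounded by a constant $C(d)$ (note $\abs{u_\delta(\omega)}\le1$ and $\abs{u_\delta(\omega+k)}\le\abs{\omega+k}^{1-\delta}\le3$), so this piece is absolutely bounded, uniformly in the truncation. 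On $\{\abs{k}\le\tfrac12\}$ we have $\abs{\omega+k}\ge\tfrac12$, so $u_\delta$ is $C^\infty$ on the fixed annulus $\{\tfrac12\le\abs{y}\le\tfrac32\}$ with all derivatives bounded uniformly in $\omega\in S^{d-1}$ and $\delta\in[0,\tfrac12]$; Taylor expansion gives $u_\delta(\omega)-u_\delta(\omega+k)=-\nabla u_\delta(\omega)\cdot k+O(\abs{k}^2)$. The remainder paired with $\kse\lesssim\abs{k}^{-d-2s}$ is absolutely integrable near $0$ since $2-2s>0$. For the linear term I write $\kse(\omega,\omega+k)=\abs{k}^{-d-2s}\skp{\bbAse(\omega)\widehat k}{\widehat k}+r(\omega,k)$ with $\abs{r(\omega,k)}\lesssim\abs{k}^{-d-2s+1}$, using $\abs{\bbAse(\omega+k)-\bbAse(\omega)}\lesssim\abs{k}$ on that annulus; then $r(\omega,k)\,(\nabla u_\delta(\omega)\cdot k)$ is absolutely integrable, while $\abs{k}^{-d-2s}\skp{\bbAse(\omega)\widehat k}{\widehat k}\,(\nabla u_\delta(\omega)\cdot k)$ is \emph{odd} in $k$ and hence integrates to $0$ over the symmetric annulus $\{a\le\abs{k}\le\tfrac12\}$ for \emph{every} $a>0$ (on which set it is absolutely integrable). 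Thus the $\{\abs{k}\le\tfrac12\}$ contribution is bounded uniformly in $a$ and converges as $a\downarrow0$.

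On $\{\abs{k}\ge2\}$ the term $u_\delta(\omega)\,\kse(\omega,\omega+k)$ is absolutely integrable, so it remains to handle $-\int\kse(\omega,\omega+k)\,u_\delta(\omega+k)\,dk$, which is only conditionally convergent. Here I would reuse the cancellation trick from the proof of Proposition~\ref{prop:functional-nice}: set $g(\omega,k)\coloneqq\abs{k}^{-d-2s}\bigskp{\tfrac{\bbAse(\omega)+\bbAse(k)}{2}\widehat k}{\widehat k}$, which is \emph{even} in $k$, and observe $\abs{\kse(\omega,\omega+k)-g(\omega,k)}\lesssim\abs{k}^{-d-2s-1}$ since $\abs{\widehat{\omega+k}-\widehat k}\lesssim\abs{k}^{-1}$ for $\abs{k}\ge2$ (Lemma~\ref{lem:basic} with $\alpha=1$). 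Likewise, Lemma~\ref{lem:basic} with $\alpha=\delta$, $z=\omega+k$, $w=k$ (so $\abs{z-w}=1\le\tfrac12\abs{k}$) gives $\abs{u_\delta(\omega+k)-u_\delta(k)}\lesssim\abs{k}^{-\delta}$. Both correction terms are absolutely integrable over $\{\abs{k}\ge2\}$ (using $\abs{u_\delta(\omega+k)}\lesssim\abs{k}^{1-\delta}$ and the decay gains $\abs{k}^{-1}$, resp.\ $\abs{k}^{-\delta}$, against $\abs{k}^{-d-2s}$), while the surviving piece $-\int_{2\le\abs{k}\le b}g(\omega,k)\,u_\delta(k)\,dk$ vanishes for every $b<\infty$ because $g(\omega,\cdot)$ is even, $u_\delta$ is odd, and the annulus is symmetric (the integrand being absolutely integrable there). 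Hence the $\{\abs{k}\ge2\}$ contribution is bounded uniformly in $b$ and converges as $b\uparrow\infty$, and adding the three pieces proves \eqref{eq:plan-claim}.

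The hard part is precisely that the $k$–integral fails to be absolutely convergent near $k=0$ when $2s\ge1$ \emph{and} near $k=\infty$ when $2s\le1$, so both ends must be controlled by the odd/even symmetry of the locally frozen kernel against the odd profile $u_\delta$ over the \emph{symmetric} truncation $\{a\le\abs{k}\le b\}$; one therefore has to be careful that every term split off is genuinely absolutely integrable and that the leading (small $k$) and surviving (large $k$) terms are exactly odd over a symmetric set, so that they vanish for \emph{all} $a,b$. Once this is organised, keeping all constants independent of $\omega$, $\delta$, and $\epsilon$ is routine, relying only on the Lipschitz estimate for $\bbAse$ on annuli bounded away from the origin and the uniform ellipticity bounds recorded after Definition~\ref{def:J-Meyers}.
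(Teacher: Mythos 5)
Your proposal is correct and takes a genuinely different route from the paper. The paper works directly with the integrals in $x$ and $h$: it defines $U_R(x)$ as the truncated integral, decomposes $U_R(x)-U_r(x)$ into a small-$h$ piece ($\mathrm{I}$), the piece with $u_\delta(x)$ ($\mathrm{II}$) and the piece with $u_\delta(x+h)$ ($\mathrm{III}$), and then tracks the $|x|$-dependence through a cascade of estimates to show $U_R(x)$ is Cauchy with $\sup_R|U_R(x)|\lesssim|x|^{1-\delta-2s}$. You instead peel off the $|x|^{1-\delta-2s}$ factor at the very start via the exact $(1-\delta)$-homogeneity of $u_\delta$ and $(-d-2s)$-homogeneity of $\kse$, reducing everything to the uniform bound \eqref{eq:plan-claim} on the sphere $\omega\in S^{d-1}$. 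This buys you a cleaner statement to prove (no $|x|$-bookkeeping) and makes the power $|x|^{1-\delta-2s}$ transparent rather than emergent. Both arguments rely on the same two symmetry cancellations: near the origin, the leading odd term $\nabla u_\delta\cdot k$ against the locally frozen even kernel (your $\mathrm{I}_4$-type identity is the paper's $\mathrm{I}_4=0$ in disguise); near infinity, the even kernel $g$ with $\bbAse(k)$ substituted for $\bbAse(\omega+k)$ against the odd $u_\delta(k)$ (the paper's $\mathrm{III}_2=0$). The only place to be careful — and you flag it correctly — is that each split-off term must be shown absolutely integrable on the relevant symmetric annulus before invoking oddness, and all constants must be uniform in $\omega$, $\delta\in[0,\tfrac12]$, $\epsilon\in[0,\tfrac12]$; your use of the Lipschitz bound for $\bbAse$ on annuli and the uniform bounds on $\nabla^2 u_\delta$ away from the origin handles this. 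The argument is complete and correct.
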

\begin{proof}
  Let $x \in B_1(0) \setminus \set{0}$ and $R\geq 2$. We define
  \begin{align*}
    U_R(x) &\coloneqq \int_{\frac 1R \leq \abs{h} \leq R}  \kse(x,x+h) (u_\delta(x)-u_\delta(x+h))  \,dh.
  \end{align*}
  Now, let $R > r \geq 2$. Then
  \begin{align*}
    U_R(x) - U_r(x)
    &= \int_{\frac 1R \leq \abs{h} < \frac 1r} \kse(x,x+h) (u_\delta(x)-u_\delta(x+h))  \,dh
    \\
    &\quad + \int_{r < \abs{h} \leq R} \kse(x,x+h) u_\delta(x)  \,dh
    \\
    &\quad + \int_{r < \abs{h} \leq R} \kse(x,x+h) (-u_\delta(x+h))  \,dh\coloneqq \mathrm{I} + \mathrm{II} + \mathrm{III}.
  \end{align*}
  Note that $\mathrm{II} \lesssim r^{-2s} \abs{x}^{1-\delta}$.
  For $\mathrm{III}$, since $u_\delta(h)$ is odd and $\bbAse(h)$ is even, we have
  \begin{align*}
    \mathrm{III}_2 &\coloneqq\frac{1}{2}\int_{r < \abs{h} \leq R} \frac{\skp{(\bbAse(x)+\bbAse(h))\widehat{h}}{\widehat{h}}}{\abs{h}^{d+2s}} (-u_\delta(h))  \,dh =0.
  \end{align*}
On the other hand with $\abs{x} \leq 1$, using \eqref{eq:diff.ker} and $\abs{u_{\delta}(h)-u_{\delta}(x+h)}\lesssim \abs{h}^{-\delta}\abs{x}$,
  \begin{align*}
\bigabs{\mathrm{III} - \mathrm{III}_2}&\lesssim \int_{r < \abs{h} \leq R} \abs{h}^{-d-2s} \frac{\abs{x}}{\abs{h}} \big(1 + \abs{h}^{1-\delta}\big)  \,dh\lesssim r^{-2s} \abs{x}^{1-\delta}
  \end{align*}
  is obtained. We turn to $\mathrm{I}$. Let
  \begin{align*}
    \mathrm{I}_2 \coloneqq \int_{\frac 1R \leq \abs{h} < \min \set{\frac 12\abs{x},\frac 1r}} \indicatorset{\abs{h} < \abs{x}}\kse(x,x+h) (u_\delta(x)-u_\delta(x+h))  \,dh.
  \end{align*}
  Then
  \begin{align*}
    \bigabs{\mathrm{I} -\mathrm{I}_2} &\leq\int_{\frac 12\abs{x} \leq \abs{h} < \frac 1r} \kse(x,x+h) \abs{u_\delta(x)-u_\delta(x+h)}  \,dh
    \\
    &\lesssim \indicatorset{\frac 1r \geq \frac 12 \abs{x}} \int_{\abs{x} \leq \abs{h}}  \abs{h}^{-d-2s} \abs{x}^{1-\delta}  \,dh\lesssim \indicatorset{\frac 1r \geq \frac 12\abs{x}} \abs{x}^{1-\delta-2s}.
  \end{align*}
  For $\abs{h} \leq \frac 12 \abs{x}$ we have
  \begin{align*}
    \abs{u_{\delta}(x+h) - u_{\delta}(x) - \nabla u_{\delta}(x)h}
    &\lesssim \abs{x}^{-1-\delta} \abs{h}^2.
  \end{align*}
  Let
  \begin{align*}
    \mathrm{I}_3 \coloneqq \int_{\frac 1R \leq \abs{h} < \min \set{\frac 12\abs{x},\frac 1r}} \indicatorset{\abs{h} < \abs{x}}\kse(x,x+h) \nabla u_{\delta}(x)h  \,dh.
  \end{align*}
  Then
  \begin{align*}
    \abs{\mathrm{I}_2 -\mathrm{I}_3}
    &\lesssim \int_{\frac 1R \leq \abs{h} < \min \set{\frac 12\abs{x},\frac 1r}} \indicatorset{\abs{h} < \abs{x}} \abs{h}^{-d-2s+2} \abs{x}^{-1-\delta}  \,dh
    \\
    &\lesssim \big( \min \set{\abs{x},\tfrac 1r} \big)^{2(1-s)} \abs{x}^{-1-\delta}.
  \end{align*}
  Note that
  \begin{align*}
    \mathrm{I}_4 &\coloneqq
    \int_{\frac 1R \leq \abs{h} < \min \set{\frac 12\abs{x},\frac 1r}} \indicatorset{\abs{h} < \abs{x}} \frac{2 \skp{\bbAse(x) \widehat{h}}{\widehat{h}}}{\abs{h}^{d+2s}} \nabla u_{\delta}(x)h  \,dh = 0,
  \end{align*}
  since the integrand is odd with respect to $h$. Thus,
  \begin{align*}
    \abs{\mathrm{I}_3} &= \abs{\mathrm{I}_3 - \mathrm{I}_4}
    \lesssim
    \int_{\frac 1R \leq \abs{h} < \min \set{\frac 12\abs{x},\frac 1r}} \abs{h}^{-d-2s} \frac{\abs{h}}{\abs{x}} \abs{x}^{-\delta} \abs{h} \,dh
    \\
    &\lesssim \big( \min \set{\abs{x},\tfrac 1r} \big)^{2(1-s)} \abs{x}^{-1-\delta}.
  \end{align*}
  Overall, we have for $\abs{x} \leq 1$ and $R \geq r \geq 1$ that
  \begin{align*}
    \abs{U_R(x) - U_r(x)} &\lesssim r^{-2s} \abs{x}^{1-\delta} + \indicatorset{\frac 1r \geq \frac 12 \abs{x}} \abs{x}^{1-\delta-2s} + \big( \min \set{\abs{x},\tfrac 1r} \big)^{2(1-s)} \abs{x}^{-1-\delta}.
  \end{align*}
  This proves that for every~$x \in B_1(0) \setminus \set{0}$, $U_r(x)$ is a Cauchy sequence for $r \to \infty$. Thus $((-\Delta_{\mathbb{A}_{s,\epsilon}})^s u_\delta)(x) = \lim_{r \to \infty} U_r(x)$ exists for all $x \in B_1(0)\setminus\{0\}$. Moreover, for each~$x \in B_1(0) \setminus \set{0}$ we estimate
  \begin{align*}
    \abs{((-\Delta_{\mathbb{A}_{s,\epsilon}})^s u_\delta)(x)} &\leq \abs{U_1(x)} + \sup_{r \geq 1} \abs{U_R(x)-U_2(x)}
    \lesssim 0 + \abs{x}^{1-\delta-2s}.
  \end{align*}
  This proves the lemma.
\end{proof}

The next step is to verify~\eqref{eq:strong-version}.
\begin{lemma}
  \label{lem:strong}
  Let $\delta, \epsilon \in [0,\frac 12]$. Then for all $\eta \in C^\infty_0(B)$, we have
  \begin{align}
    \label{eq:strong}
    (\Jsed)'(u_\delta)(\eta) &= \int_B ((-\Delta_{\mathbb{A}_{s,\epsilon}})^s u_\delta)(x) \eta(x)\,dx.
  \end{align}
\end{lemma}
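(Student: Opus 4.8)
\emph{Proof proposal.} The plan is to massage the explicit formula \eqref{eq:EL.eq-aux1} for the Gateaux derivative,
\[
  (\Jsed)'(u_\delta)(\eta) = \kappa_{d,s}\lim_{R\to\infty}\int_{B_R(0)}\int_{B_R(0)}\kse(x,y)\,\bigl(u_\delta(x)-u_\delta(y)\bigr)\bigl(\eta(x)-\eta(y)\bigr)\,dy\,dx,
\]
into the right-hand side of \eqref{eq:strong}. I would first observe that, for each fixed $R$, this double integral is absolutely convergent: near the diagonal the two difference factors together gain $\abs{x-y}^{2}$ against the singularity $\kse(x,y)\eqsim\abs{x-y}^{-d-2s}$, which is integrable precisely because $s<1$, while off the diagonal $\eta\in C^\infty_0(B)$ and $u_\delta$ is locally bounded. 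Next, since $\kse$ is symmetric and $u_\delta(x)-u_\delta(y)$ is antisymmetric, the swap $x\leftrightarrow y$ allows me, on any region invariant under it, to replace the integrand by $2\,\kse(x,y)\bigl(u_\delta(x)-u_\delta(y)\bigr)\eta(x)$; since $\operatorname{supp}\eta\subset B$ this restricts the outer variable to $x\in B$. Finally, substituting $y=x+h$ turns the inner integral into an integral in $h$ over the ball $\{h:\abs{x+h}<R\}$.

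The core pointwise step is to show that, for fixed $x\in B\setminus\{0\}$, this inner integral (which for $s\ge\tfrac12$ has to be read as a principal value, see below) converges as $R\to\infty$ to the same multiple of $((-\Delta_{\mathbb{A}_{s,\epsilon}})^s u_\delta)(x)$ that is fixed by Lemma~\ref{eq:estimate-Deltas-ud}; equivalently, that the ball-shaped cutoff $\{\abs{x+h}<R\}$ inherited from $B_R(0)\times B_R(0)$ and the symmetric annuli $\{\tfrac1R\le\abs{h}\le R\}$ of that lemma lead to the same limit. For this I would use that $B_{R-1}(0)\subset\{\abs{x+h}<R\}\subset B_{R+1}(0)$ for $x\in B$, so the two truncating regions differ, apart from a shrinking neighborhood of the origin, only on the thin shell $\{R-1<\abs{h}<R+1\}$, on which $\abs{\kse(x,x+h)}\lesssim\abs{h}^{-d-2s}$ and $\abs{u_\delta(x)-u_\delta(x+h)}\lesssim 1+\abs{h}^{1-\delta}$; the resulting discrepancy is $\lesssim R^{-2s-\delta}\to 0$ uniformly in $x\in B$. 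Comparing the normalizations fixed by the definitions of $\Jse$ and of $(-\Delta_{\mathbb{A}_{s,\epsilon}})^s u_\delta$, this gives \eqref{eq:strong} as soon as the limits may be taken inside $\int_B(\cdot)\,\eta\,dx$.

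The main obstacle is that for $s\in[\tfrac12,1)$ the symmetrized integrand $\kse(x,y)\bigl(u_\delta(x)-u_\delta(y)\bigr)\eta(x)$ is no longer absolutely integrable near the diagonal — only one factor $\abs{x-y}$ now mitigates the singularity — so the symmetrization and the change of variables must be performed first on the region $\{\abs{x-y}>\rho\}$, with $\rho\downarrow 0$ taken afterwards; equivalently the inner $h$-integral is genuinely a principal value, exactly matching the definition of $(-\Delta_{\mathbb{A}_{s,\epsilon}})^s u_\delta$. To push both limits $\rho\downarrow 0$ and $R\to\infty$ through $\int_B(\cdot)\,\eta\,dx$ I would use dominated convergence together with the bound
\[
  \Bigl|\,\int_{\rho\le\abs{h},\,\abs{x+h}<R}\kse(x,x+h)\bigl(u_\delta(x)-u_\delta(x+h)\bigr)\,dh\,\Bigr|\;\lesssim\;\abs{x}^{1-\delta-2s},
\]
valid uniformly in $\rho\in(0,1]$ and $R\ge 2$; this is precisely the cancellation estimate already carried out in the proof of Lemma~\ref{eq:estimate-Deltas-ud} (oddness of $u_\delta$ together with evenness of $\bbAse$ annihilates the leading term as $h\to 0$, while \eqref{eq:diff.ker} controls the large scales), and $\abs{x}^{1-\delta-2s}\in L^1(B)$ since $1-\delta-2s>-d$. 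Combining this interchange of limits with the pointwise convergence of the previous paragraph then yields \eqref{eq:strong}. In summary, the genuinely delicate points are the order-of-limits bookkeeping forced by the diagonal singularity when $s\ge\tfrac12$, and the matching of the ball-shaped truncation with the annular one defining the operator — both reducible to estimates already established earlier in the paper.
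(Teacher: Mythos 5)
Your proposal is correct and follows essentially the same route as the paper's proof: use symmetry of $\kse$ and the antisymmetry of the difference to pull out a single $\eta(x)$, change variables $y=x+h$, estimate the discrepancy between the ball-shaped truncation $\{\abs{x+h}<R\}$ and the annular one $\{\tfrac1R\le\abs h\le R\}$ on a thin shell, and then pass the limit inside via dominated convergence with the uniform bound $\abs{x}^{1-\delta-2s}\in L^1(B)$ from Lemma~\ref{eq:estimate-Deltas-ud}. The one cosmetic difference is that you keep the diagonal cutoff $\rho$ decoupled from $R$ (requiring a two-parameter limit justified by the uniform cancellation bound), whereas the paper couples them by taking $\rho=1/R$ and controlling $\lvert\mathrm{I}-\mathrm{II}\rvert\lesssim\lVert\nabla\eta\rVert_\infty R^{2s-2}$, which sidesteps the case distinction $s\gtrless\tfrac12$ entirely; both work.
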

\begin{proof}
  Let $\eta \in C^\infty_0(B)$. Then by~\eqref{eq:EL.eq-aux1} we have
  \begin{align*}
    (\Jsed)'(u_\delta)(\eta) 
    &= \kappa_{d,s}\lim_{R \to \infty} \int_{B_R(0)} \int_{B_R(0)} \!\!\!\kse(x,y) (u_\delta(x)-u_\delta(y))(\eta(x)-\eta(y)) \,dy\,dx.
  \end{align*}
  In the following let~$R \geq 2$. We define
  \begin{align*}
    \mathrm{I} &\coloneqq
    \int_{B_R(0)} \int_{B_R(0)} \kse(x,y) (u_\delta(x)-u_\delta(y))(\eta(x)-\eta(y)) \,dy\,dx,
    \\
    \mathrm{II} &\coloneqq
    \int_{B_R(0)} \int_{B_R(0)} \indicatorset{\abs{x-y} \geq \frac 1R} \kse(x,y) (u_\delta(x)-u_\delta(y))(\eta(x)-\eta(y)) \,dy\,dx.
  \end{align*}
  Then since $\delta\in[0,\frac{1}{2}]$, by using $\abs{\abs{a}^{-\delta}a-\abs{b}^{-\delta}b}\lesssim (\abs{a}+\abs{b})^{-\delta}\abs{a-b}$ for $a,b\in\setR$,
  \begin{align*}
    \abs{\mathrm{I} - \mathrm{II}}
    &\lesssim \norm{\nabla \eta}_\infty\int_{B_2(0)} \int_{B_2(0)} \indicatorset{\abs{x-y}< \frac 1R} \abs{x-y}^{-d-2s+2}   \big(\abs{x}+\abs{y}\big)^{-\delta}\,dx
    \\
    &\lesssim \norm{\nabla \eta}_\infty\, R^{2s-2}
  \end{align*}
  holds. This proves that
  \begin{align*}
    (\Jsed)'(u_\delta)(\eta) &= \lim_{R \to \infty} \mathrm{I} =  \lim_{R \to \infty} \mathrm{II}.
  \end{align*}
  Moreover,
  \begin{align*}
    \mathrm{II} &= 2
    \int_{B_R(0)} \int_{B_R(0)} \indicatorset{\abs{x-y} \geq \frac 1R} \kse(x,y) (u_\delta(x)-u_\delta(y)) \,dy\,\eta(x)\,dx
    \\
    &= 2
    \int_{B_1(0)} \int_{B_R(-x)} \indicatorset{\abs{h} \geq \frac 1R} \kse(x,x+h) (u_\delta(x)-u_\delta(x+h)) \,dh\,\eta(x)\,dx.
  \end{align*}
  Let
  \begin{align*}
    \mathrm{III} &\coloneqq
    2\int_{B_1(0)} \int_{B_R(0)} \indicatorset{\abs{h} \geq \frac 1R} \kse(x,x+h) (u_\delta(x)-u_\delta(x+h)) \,dh\,\eta(x)\,dx.
  \end{align*}
  Then for $R \geq 2$
  \begin{align*}
    \abs{\mathrm{II} - \mathrm{III}}
    &\leq
    \int_{B_1(0)} \int_{\RRd} \indicatorset{R-1 \leq \abs{h} \leq R+1} \abs{h}^{-d-2s} \abs{u_\delta(x)-u_\delta(x+h)} \,dh\,\eta(x)\,dx
    \\
    &\lesssim R^{d-1} R^{-d-2s} R^{1-\delta} \lesssim R^{-2s}.
  \end{align*}
  Overall, we get that
  \begin{align*}
    &(\Jsed)'(u_\delta)(\eta) = \lim_{R \to \infty} \mathrm{I} =  \lim_{R \to \infty} \mathrm{III}
    \\
    &\quad\quad=2\lim_{R\rightarrow\infty}\int_{B_1(0)} \int_{\frac 1R \leq \abs{h} \leq R} \kse(x,x+h) (u_\delta(x)-u_\delta(x+h)) \,dh\,\eta(x)\,dx.
  \end{align*}
  Now, by Lemma~\ref{eq:estimate-Deltas-ud} and the theorem of dominated convergence, we conclude
  \begin{align*}
    (\Jsed)'(u_\delta)(\eta) &=     \int_{B_1(0)} ((-\Delta_{\mathbb{A}_{s,\epsilon}})^s u_\delta)(x)\eta(x)\,dx.
  \end{align*}
  This proves the claim.
\end{proof}

\section{Nonlocal Meyers' example}
\label{sec:nonl-meyers-example}

We are now ready to present our nonlocal Meyers' example in~$\RRd$ with $d \geq 2$ and to prove our main result mentioned in the introduction. Recall that for $\delta\in [0,\frac 12]$ and $\epsilon\in[0,\frac{1}{2}]$ we defined in \eqref{eq:As} that
\begin{align}\label{eq:def-nonlocalmeyer-Au}
  \bbAse(x)= \big(1-\tfrac{1+2s}{2}\epsilon\big)\identity+\tfrac{d+2s}{2}\epsilon\widehat{x}\otimes\widehat{x},
  \quad\text{and}\quad u_\delta(x)=\abs{x}^{1-\delta} \widehat{x}_1.
\end{align}

The main goal of this section is to show the following theorem.

\begin{theorem}[Nonlocal Meyers' example]\label{thm:Meyers}
  Let $d\geq 2$, $d\in\setN$, and $s\in(0,1)$.
  \begin{enumerate}
	\item With $\delta\in[0,\frac{1}{2}]$, the function $u_{\delta}:\setR^d\rightarrow\setR$ with $u_\delta(x)=\abs{x}^{1-\delta} \widehat{x}_1$ satisfies
    \begin{align*}
      u_{\delta}\in W_{\loc}^{t,q}(B_1(0))\quad\text{if and only if}\quad 1-\delta> t-\frac{d}{q},
    \end{align*}
    where $t \in (0,1)$ and $q \in (1,\infty)$.
  \item With $\epsilon\in[0,\frac{1}{2}]$, the matrix-valued coefficient $\bbAse:\setR^d\rightarrow\setR^{d\times d}$ is given by
    \begin{align*}
      \bbAse(x)= \left(1-\frac{1+2s}{2}\epsilon\right)\identity+\frac{d+2s}{2}\epsilon\widehat{x}\otimes\widehat{x},
    \end{align*}
    has the following properties:
    \begin{enumerate}[label={(\roman{*})}]
    \item It is uniformly elliptic with eigenvalues in the range
      \begin{align*}
        \big[1- \tfrac{1+2s}{2}\epsilon,1+\tfrac{d-1}{2} \epsilon\big] \subset \big[\tfrac 14,1+\tfrac {d-1}{4}\big].
      \end{align*}
    \item It satisfies the smallness assumption
      \begin{align}\label{eq:logAse}
        \norm{\log \bbAse}_{L^{\infty}(\RR^d)}\leq \tfrac{1+d+4s}{2} \epsilon,
      \end{align}
      where the matrix is measured in the spectral norm.
    \end{enumerate}
  \item There exists  $\delta_0=\delta_0(d,s)\in[0,\frac{1}{2}]$ such that the following holds: For any $\epsilon \in [0, \frac{1}{2}]$, there exists $\delta \in [0,\delta_0]$ with $\epsilon \leq C(d,s) \delta$ such that $(-\Delta_{\bbAse})^su_{\delta}(x)=0$ as a function on $B_1(0)$. The relation of $\epsilon$ and $\delta$ is given in \eqref{eq:b}.
  \end{enumerate}
\end{theorem}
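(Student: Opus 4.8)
\emph{Plan.} Parts~(1) and~(2) are elementary. For~(1), $u_\delta(x)=|x|^{-\delta}x_1$ is positively homogeneous of degree $1-\delta$ and smooth away from the origin, so near~$0$ its $W^{t,q}$-regularity is governed exactly by the Sobolev balance: estimating the Gagliardo seminorm on dyadic annuli $\{2^{-k-1}\le|x|\le 2^{-k}\}$ by scaling, the $k$-th contribution is comparable to $2^{\,kq(t-(1-\delta)-d/q)}$, so $u_\delta\in W^{t,q}_{\loc}$ iff $t-\tfrac dq<1-\delta$, the borderline $t-\tfrac dq=1-\delta$ being excluded by the same computation (and $L^q_{\loc}$ is automatic). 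For~(2)(i) we use the eigenvalue decomposition recorded after Definition~\ref{def:J-Meyers}: $\bbAse(x)$ has eigenvalue $1+\tfrac{d-1}{2}\epsilon$ in the direction $\widehat x$ and $1-\tfrac{1+2s}{2}\epsilon$ on $\widehat x^{\perp}$, both lying in $[\tfrac14,1+\tfrac{d-1}{4}]$ for $\epsilon\le\tfrac12$, $s<1$. Part~(2)(ii) then follows from $\|\log\bbAse\|_{L^\infty}=\max\{\log(1+\tfrac{d-1}{2}\epsilon),\log\tfrac{1}{1-\frac{1+2s}{2}\epsilon}\}$ together with elementary convexity estimates for the logarithm on $[0,\tfrac34]$.

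The substance is~(3). \emph{Reduction to a point.} I would first show that $(-\Delta_{\bbAse})^su_\delta\equiv 0$ on $B_1(0)$ is equivalent to the single scalar identity $(-\Delta_{\bbAse})^su_\delta(e_1)=0$. Consider the vector field $W(x):=\big((-\Delta_{\bbAse})^s(|\cdot|^{-\delta}(\cdot)_j)(x)\big)_{j=1}^{d}$ on $\setR^d\setminus\{0\}$. Since $\bbAse(Rx)=R\bbAse(x)R^{t}$ for every $R\in O(d)$, the kernel $\kse$ is rotation equivariant, and as $u_\delta\circ R=\sum_j R_{1j}\,(|\cdot|^{-\delta}(\cdot)_j)$, linearity of $(-\Delta_{\bbAse})^s$ gives that $W$ is $O(d)$-equivariant; it is also odd ($u_\delta$ odd, $\bbAse$ even) and $(1-\delta-2s)$-homogeneous ($u_\delta$ being $(1-\delta)$-homogeneous, $\bbAse$ being $0$-homogeneous). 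Any $O(d)$-equivariant field $\setR^d\setminus\{0\}\to\setR^d$ is radial, $W(x)=\phi(|x|)x$, and homogeneity forces $\phi(r)=c\,r^{-\delta-2s}$; hence $(-\Delta_{\bbAse})^su_\delta(x)=c\,|x|^{-\delta-2s}x_1$ with $c=(-\Delta_{\bbAse})^su_\delta(e_1)$. This is the scaling/rotation content of Remark~\ref{rem:prop}. \emph{Splitting.} Inserting $\bbAse=(1-\tfrac{1+2s}{2}\epsilon)\identity+\tfrac{d+2s}{2}\epsilon\,\widehat x\otimes\widehat x$ into the formula for $(-\Delta_{\bbAse})^su_\delta(e_1)$ from Lemma~\ref{eq:estimate-Deltas-ud} splits it, affinely in $\epsilon$, into a multiple of the ordinary fractional Laplacian $(-\Delta)^su_\delta(e_1)$ (the $\identity$-part) and a multiple of the integral $f_2(d,s,\delta)$ (the $\widehat x\otimes\widehat x$-part), exactly as in Subsection~\ref{subsec:strategy}.

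\emph{Explicit computation and the coupling.} I would then invoke the Fourier-analytic computations of Appendix~\ref{app} (rigorously justified in Appendix~\ref{app2}): exploiting the scaling/rotation reduction one uncovers a convolution structure, and the identities \eqref{eq:frac.lap.ft}, \eqref{eq:F-of-power} applied to $\widehat{u_\delta}(\xi)\propto\xi_1|\xi|^{\delta-d-2}$ produce closed forms for $(-\Delta)^su_\delta(e_1)$ and $f_2(d,s,\delta)$ as explicit products and ratios of Gamma functions, real-analytic in $\delta\in[0,\tfrac12]$, with $(-\Delta)^su_\delta(e_1)|_{\delta=0}=0$ (as it must, $u_0(x)=x_1$ being $s$-harmonic). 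Since $(-\Delta_{\bbAse})^su_\delta(e_1)=0$ is affine in $\epsilon$ with a coefficient bounded away from $0$ on the relevant range, it has a unique solution $\epsilon=\epsilon(\delta)$, which is relation~\eqref{eq:b}; one reads off $\epsilon(0)=0$, so real-analyticity gives $\epsilon(\delta)\le C(d,s)\,\delta$ on $[0,\tfrac12]$, and the closed form gives strict monotonicity of $\epsilon(\cdot)$. Finally one picks $\delta_0\le\tfrac12$ with $\epsilon(\delta_0)\ge\tfrac12$ — checking from the explicit expression that $\epsilon(\cdot)$ reaches $\tfrac12$ before $\delta$ does, in analogy with the local coupling $\epsilon=\tfrac{d-\delta}{d-1}\delta$ — so that, by continuity and the intermediate value theorem, $\delta\mapsto\epsilon(\delta)$ maps $[0,\delta_0]$ onto $[0,\tfrac12]$; this realizes every $\epsilon\in[0,\tfrac12]$ by some $\delta\in[0,\delta_0]$ with $\epsilon\le C(d,s)\delta$ and completes~(3).

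\emph{Main obstacle.} The genuinely hard step is the explicit evaluation of $f_2(d,s,\delta)$ (and of $(-\Delta)^su_\delta(e_1)$). The anisotropic weight $\langle\widehat{x-y}\otimes\widehat{x-y}\,\cdot,\cdot\rangle$ in $\kse$ obstructs a direct computation, so one must first extract the convolution structure, then carry out the Fourier transform of slowly decaying, non-$L^2$ functions rigorously (the purpose of Appendix~\ref{app2}), and finally control the ensuing Gamma-function bookkeeping to arrive at~\eqref{eq:b}. A secondary difficulty is verifying that the resulting coupling $\epsilon\mapsto\delta$ is monotone with range filling $[0,\tfrac12]$ while respecting $\epsilon\le C(d,s)\delta$: this needs quantitative, not merely qualitative, control of the closed-form expressions.
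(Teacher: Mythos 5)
Your outline follows the paper's route very closely: parts (1) and (2) are handled the same way (scaling on dyadic annuli, eigenvalue decomposition of $\bbAse$), and for part (3) you correctly reduce to $e_1$ by $O(d)$-equivariance and homogeneity (Remark \ref{rem:prop}), split the operator affinely in $\epsilon$ into $f_1$ and $f_2$ as in \eqref{eq:frac.lap}, and invoke the Fourier/convolution computations of Appendix \ref{app}--\ref{app2} to arrive at the relation \eqref{eq:b}. The paper's proof of the theorem is indeed a direct consequence of Lemma \ref{eq:estimate-Deltas-ud}, Corollary \ref{cor:frac.lap} and Lemma \ref{lem:b}, which is exactly what you describe.

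The one place where your plan stays at the level of a gesture is the quantitative analysis of the map $\delta\mapsto b(\delta)$ in \eqref{eq:b}. You say ``real-analyticity gives $\epsilon(\delta)\le C(d,s)\delta$'' and that ``one reads off'' strict monotonicity and that $b$ reaches $\tfrac12$ before $\delta$ does, but these are precisely the non-obvious facts. The paper's Lemma \ref{lem:b} proves them by two separate Gamma-function inequalities: strict monotonicity of the quotient $L(\delta)$ in \eqref{eq:b2} via strict concavity of the digamma function, and the crucial lower bound $b(\delta)\geq\delta$ (which rules out $\delta_1>\tfrac12$ and hence guarantees the denominator $b_1(\delta)$ is bounded away from $0$ on $[0,\delta_0]$) via Alzer's monotonicity theorem \cite{Alz97} for products of Gamma-function ratios. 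Without some such argument ``real-analyticity'' alone does not give $\epsilon\lesssim\delta$ uniformly, because a priori the denominator in \eqref{eq:b} could approach zero inside $[0,\tfrac12]$. So the skeleton is right, but you would still need to supply these two inequalities to close the argument.
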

The proof of this theorem is given at the end of this section.
Also, recalling $\bbAe(x) = (1-\epsilon) \identity + \epsilon \widehat{x} \otimes \widehat{x}$, $\bbAse(x)$ and $u_{\delta}(x)$ as in \eqref{eq:def-nonlocalmeyer-Au}, we will prove the following.

\begin{theorem}[Robustness as $s\nearrow 1$]\label{thm:s1}
Let $d\geq 2$, $d\in\setN$, and $s\in(0,1)$. We have
\begin{align*}
\lim_{s\nearrow 1}(-\Delta_{\bbAse})^s u_{\delta}(x)=(-\Delta_{\bbAe}) u_{\delta}(x)
\end{align*}
for a.e. $x\in\setR^d$, any $\epsilon\in[0,\frac{1}{2}]$ and $\delta\in[0,\frac{1}{2}]$.
\end{theorem}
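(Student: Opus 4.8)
The plan is to prove the pointwise convergence $(-\Delta_{\bbAse})^s u_\delta(x) \to (-\Delta_{\bbAe}) u_\delta(x)$ as $s \nearrow 1$ by working with the explicit representation
\begin{align*}
  (-\Delta_{\bbAse})^s u_\delta(x) = 2\lim_{R\to\infty}\int_{\frac1R \le |h| \le R} \kse(x,x+h)\big(u_\delta(x) - u_\delta(x+h)\big)\,dh
\end{align*}
from Lemma~\ref{eq:estimate-Deltas-ud}, and using the scaling and rotation properties (cf. Remark~\ref{rem:prop}) to reduce to the single point $x = e_1$. Writing $\kse$ in terms of $\bbAse$ and decomposing $\bbAse = \big(1-\tfrac{1+2s}{2}\epsilon\big)\identity + \tfrac{d+2s}{2}\epsilon\,\widehat{x}\otimes\widehat{x}$, the evaluation at $e_1$ splits into the piece $\big(1-\tfrac{1+2s}{2}\epsilon\big)\cdot\tfrac{2}{\kappa_{d,s}}(-\Delta)^s u_\delta(e_1)$ plus the piece $\tfrac{d+2s}{2}\epsilon\, f_2(d,s,\delta)$, exactly as displayed in the strategy subsection. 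So the task reduces to two independent limits: $\kappa_{d,s}(-\Delta)^s u_\delta \to -\Delta u_\delta$ and $\kappa_{d,s}\, f_2(d,s,\delta) \to (\text{the corresponding local quantity})$, as $s \nearrow 1$, after which one reassembles and compares with $-\divergence(\bbAe \nabla u_\delta)$.

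First I would handle the convergence of the standard fractional Laplacian term. Here $u_\delta(x) = |x|^{1-\delta}\widehat{x}_1$ is smooth away from the origin and, near any fixed $x \ne 0$, has bounded second derivatives, while it grows sublinearly at infinity; this is exactly the regularity/growth regime in which $(-\Delta)^s v(x) \to -\Delta v(x)$ pointwise as $s \nearrow 1$ with the normalization constant $\kappa_{d,s}$ (see e.g.\ \cite{NGV12}). Concretely one splits the principal-value integral into a region $|h| < \tfrac12|x|$, where a second-order Taylor expansion of $u_\delta$ is valid and the odd part cancels so that the even quadratic term survives and reproduces $-\tfrac12 \kappa_{d,s}\int |h|^{-d-2s}|h|^2 D^2 u_\delta(x)[\widehat h,\widehat h]\,dh$ over a small ball — which, because $\kappa_{d,s}\,(1-s) \to \tfrac{1}{2\omega_{d-1}}\cdot(\text{const})$ with the remaining angular/radial factors, tends to the Laplacian — and an outer region $|h| \ge \tfrac12|x|$ where the integrand is dominated uniformly in $s\in(\tfrac12,1)$ by an integrable function (using the sublinear growth of $u_\delta$ and the cancellation already exploited in Lemma~\ref{eq:estimate-Deltas-ud}), so dominated convergence applies and the outer contribution vanishes in the limit because $\kappa_{d,s}(1-s)\to$ finite while $\int_{|h|\ge c} |h|^{-d-2s}\,dh$ stays bounded — more carefully, one tracks that the $s$-dependence of $\kappa_{d,s}$ precisely compensates the near-origin singularity, which is the content of \cite{NGV12, Voi17}.

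Next I would treat the anisotropic term $f_2(d,s,\delta)$. The key point is that $f_2$ has a convolution structure (this is used in Appendix~\ref{app}), so that $\kappa_{d,s} f_2(d,s,\delta)$ can either be computed explicitly and its $s\nearrow1$ limit read off, or — more robustly — one argues as for the standard term: the integrand defining $f_2$ is, near $h=0$, of the form (bounded even quadratic form in $\widehat h$)$\times$(second-order Taylor remainder of $u_\delta$ at $e_1$), while for $|h|$ bounded away from zero it is dominated uniformly in $s$ by an integrable function, again thanks to the odd cancellation built into the definition of $f_2$ (the symmetrized combination $\tfrac{\skp{e_1+h}{h}^2}{|e_1+h|^2|h|^2} + \tfrac{\skp{e_1}{h}^2}{|h|^2}$ together with oddness of $u_\delta$). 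Hence $\kappa_{d,s} f_2(d,s,\delta)$ converges to the angular average of the local anisotropic symbol applied to $D^2 u_\delta(e_1)$, which by the computation already carried out in Lemma~\ref{lem:gamma} (the averaging of $\sigma\otimes\sigma\skp{\bbA_{1,\epsilon}\sigma}{\sigma}$ over $S^{d-1}$) is precisely the $e_1$-component of $-\divergence(\bbAe\nabla u_\delta)$ coming from the $\widehat x\otimes\widehat x$ part of the coefficient.

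Finally I would reassemble: combining the two limits, at $x = e_1$ the right-hand side is $-\tfrac12\big[2\,\Delta u_\delta(e_1) \cdot(\text{from the identity part of }\bbAe) + (d)\epsilon\cdot(\text{anisotropic part})\big]$, and matching coefficients — using $1 - \tfrac{1+2s}{2}\epsilon \to 1 - \tfrac32\epsilon$ and $\tfrac{d+2s}{2}\epsilon \to \tfrac{d+2}{2}\epsilon$ as $s\nearrow1$, which are exactly the coefficients appearing in $\widetilde{\bbA}_\epsilon$ in the proof of Lemma~\ref{lem:gamma} — shows the limit equals $-\divergence(\bbAe(x)\nabla u_\delta(x))$ at $x=e_1$. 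By the rotation/scaling covariance of both sides (both $(-\Delta_{\bbAse})^s$ applied to $u_\delta$ and $(-\Delta_{\bbAe})$ applied to $u_\delta$ transform the same way under $x\mapsto Qx$ and $x\mapsto \lambda x$, cf.\ Remark~\ref{rem:prop} and the $0$-homogeneity/covariance used in Lemma~\ref{lem:gamma}), the identity extends to a.e.\ $x\in\RR^d$. The main obstacle I anticipate is making the dominated-convergence argument genuinely uniform in $s$ near $s=1$: one must exhibit an $s$-independent integrable majorant for the full integrand on $\{|h|\ge \tfrac12|x|\}$ while simultaneously showing the near-origin contribution converges with the correct constant, i.e.\ correctly bookkeeping the interplay between the blow-up of $\kappa_{d,s}$ (through $|\Gamma(-s)|^{-1}\sim (1-s)\cdot\text{const}$) and the degeneration of $|h|^{-d-2s}$ — this is routine in the translation-invariant case but here the $x$-dependent anisotropy in $\kse(x,x+h)$, which is only Hölder (not Lipschitz) near $x$ through the $\widehat{x+h}\otimes\widehat{x+h}$ factor, requires the same cancellation trick with $g(x,y)$ as in Proposition~\ref{prop:functional-nice} and estimate~\eqref{eq:diff.ker} to keep the error integrable uniformly in $s$.
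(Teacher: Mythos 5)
Your route is genuinely different from the paper's. The paper does not argue $s\nearrow 1$ directly from the integral representation; instead it simply evaluates the explicit closed-form expression of Corollary \ref{cor:frac.lap} at $s=1$. That corollary, obtained via the Fourier-analytic computations in Lemma \ref{lem:f} and the appendices, gives
$(-\Delta_{\bbAse})^s u_\delta(x)$ as an elementary expression in Gamma functions times $|x|^{1-2s-\delta}\widehat{x}_1$, so the proof of Theorem \ref{thm:s1} reduces to observing $\widetilde{c}_{d,s,\delta}\to 1$ and simplifying $f_{2,1}(d,1,\delta)$ to match $(d-\delta)\delta-(d-1)\epsilon$, the coefficient already known from \cite{BDGN22} for the local operator. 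You instead propose an analytic argument working directly on the integral: Taylor expansion of $u_\delta$ near $h=0$, cancellation of the odd term under the principal value, a uniform-in-$s$ integrable majorant on $\{|h|\gtrsim 1\}$ tracking the blow-up of $\kappa_{d,s}$, and reassembly by rotation/scaling covariance. This avoids the explicit Fourier computation entirely (it would still work if one only had qualitative control over $f_2$), which is a real conceptual advantage; the price is that essentially none of the constant-matching is carried out. In particular, the key step that $\kappa_{d,s} f_2(d,s,\delta)$ converges to the correct anisotropic contribution with coefficient exactly $(d-1)\epsilon$ (after combining with the identity part) is asserted by appeal to Lemma \ref{lem:gamma}, but Lemma \ref{lem:gamma} is a $\Gamma$-type statement about the energy for test functions and does not by itself give the pointwise operator limit at $e_1$ with the right constant; to close this you would have to actually carry out the Taylor computation of the quadratic form
$\tfrac12\big(\tfrac{\skp{e_1+h}{h}^2}{|e_1+h|^2|h|^2}+\tfrac{\skp{e_1}{h}^2}{|h|^2}\big)D^2u_\delta(e_1)[h,h]$
and the angular integration, which is roughly the same amount of work that the paper hides in the explicit formula for $f_{2,1}$. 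You also need to be more careful about the outer region: for $s$ near $1$ the tail $\int_{|h|\ge 1}|h|^{-d-2s}\,dh$ stays bounded and $\kappa_{d,s}\to 0$, so the far-field contribution vanishes, but this must be stated in a way that survives the $\sup_{s\in(1/2,1)}$ quantifier; the cancellation via the even function $g(x,y)$ from Proposition \ref{prop:functional-nice} is the right tool and you correctly identify it. In short: the approach is sound and genuinely alternative, but as written it is an outline, while the paper's proof is short precisely because all the hard computation was already invested in Corollary \ref{cor:frac.lap}.
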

We postpone the proof of this theorem to the end of this section.

\begin{remark}[Vectorial case]
\label{rem:vectorial}
Meyers' example becomes more elegant, when using the vector-valued function~$u_\delta(x) = \abs{x}^{1-\delta} \widehat{x}$. Our approach works similarly for the vectorial case. The restriction to the first component~$\widehat{x}_1$ is only to emphasize that the scalar case is already critical. In this linear context the vectorial case just decouples into several scalar problems.
\end{remark}

The case $d=2$ in Theorem~\ref{thm:Meyers} of special interest, since it provides the example with the worst regularity. Indeed, the homogeneity of our solution~$u_\delta$ is $1-\delta \in [\frac 12,1]$. However, the index of~$W^{s,2}_{\loc}$ is $s- \frac{d}{2} \in (-\frac d2, 1-\frac d2)$ decreases fast in~$d$. Therefore, the gap of additional regularity becomes larger for~$d$ increasing. This makes the case~$d=2$ the most important one. Therefore, we state in the following some simplifications for the case~$d=2$. 
\begin{proposition}[Simplification for $d=2$]\label{prop:up.lo.est}
  Suppose that $d=2$. Then we have $(-\Delta_{\bbAse})^su_{\delta}(x)=0$ as a function on $B_1(0)$, whenever
  \begin{align}\label{eq:ds}
    \epsilon&=\dfrac{2(2-s-\frac \delta2)\delta}{s(2-s)-\frac{\Gamma(s+1)\Gamma\left(3-s-\frac{\delta}{2}\right)\Gamma(1+\frac{\delta}{2})}{\Gamma(1-s)\Gamma(2-\frac{\delta}{2})\Gamma(s+\frac{\delta}{2})}}.
  \end{align}
  Moreover, 
  for any $s\in(0,1)$ and $\delta\in[0,\frac{2s^2}{1-s})$, we have
  \begin{align}\label{eq:up.lo.est}
    \dfrac{2(2-s-\frac \delta2)\delta}{s(2-s)}\leq\epsilon\leq\dfrac{2(2-s-\frac\delta2)\delta}{(2-s)(s-(1-s)(s+\frac\delta2))}.
  \end{align}
  Note that $\epsilon \to (2-\delta)\delta$ as $s\nearrow 1$ in agreement with \eqref{eq:meyer-epsilon}.
\end{proposition}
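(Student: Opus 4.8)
The plan is to reduce everything to the single value $(-\Delta_{\bbAse})^su_\delta(e_1)$, to turn the equation $(-\Delta_{\bbAse})^su_\delta(e_1)=0$ into the closed form \eqref{eq:ds} by inserting the explicit constants available when $d=2$, and then to read off the two–sided bound \eqref{eq:up.lo.est} and the limit $s\nearrow 1$ from elementary convexity and monotonicity properties of the $\Gamma$–function. First I would use that $\bbAse$ is $0$-homogeneous and rotation covariant while $u_\delta(x)=|x|^{1-\delta}\widehat{x}_1$ is $(1-\delta)$-homogeneous and depends linearly on the direction $e_1$; the scaling and rotation properties from Remark~\ref{rem:prop} then give
\begin{align*}
  (-\Delta_{\bbAse})^su_\delta(x)=|x|^{-\delta-2s}x_1\,\bigl((-\Delta_{\bbAse})^su_\delta\bigr)(e_1),\qquad x\in B_1(0)\setminus\{0\},
\end{align*}
which is a locally integrable function by Lemma~\ref{eq:estimate-Deltas-ud} and represents the strong form of $(\Jsed)'(u_\delta)$ by Lemma~\ref{lem:strong}. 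Hence it suffices to choose $\epsilon$ so that $(-\Delta_{\bbAse})^su_\delta(e_1)=0$. By the decomposition recorded in Subsection~\ref{subsec:strategy} (and established in the proof of Theorem~\ref{thm:Meyers}), with $d=2$ so that $\tfrac{d+2s}{2}=1+s$,
\begin{align*}
  (-\Delta_{\bbAse})^su_\delta(e_1)=-\tfrac{\kappa_{2,s}}{2}\Bigl[\bigl(1-\tfrac{1+2s}{2}\epsilon\bigr)\tfrac{2}{\kappa_{2,s}}(-\Delta)^su_\delta(e_1)+(1+s)\,\epsilon\,f_2(2,s,\delta)\Bigr],
\end{align*}
which is affine in $\epsilon$; solving $(-\Delta_{\bbAse})^su_\delta(e_1)=0$ yields
\begin{align*}
  \epsilon=\frac{2\,(-\Delta)^su_\delta(e_1)}{(1+2s)\,(-\Delta)^su_\delta(e_1)-(1+s)\,\kappa_{2,s}\,f_2(2,s,\delta)}.
\end{align*}

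To identify this ratio with \eqref{eq:ds} I would compute the two ingredients explicitly. Writing $u_\delta(x)=|x|^{-\delta}x_1=\tfrac{1}{2-\delta}\partial_{x_1}|x|^{2-\delta}$, commuting $(-\Delta)^s$ with $\partial_{x_1}$, applying \eqref{eq:F-of-power} to $|x|^{2-\delta}$, and simplifying the resulting $\Gamma$–shifts gives
\begin{align*}
  (-\Delta)^su_\delta(e_1)=2^{2s}\,\frac{\Gamma\bigl(2-\tfrac\delta2\bigr)\Gamma\bigl(s+\tfrac\delta2\bigr)}{\Gamma\bigl(2-s-\tfrac\delta2\bigr)\Gamma\bigl(\tfrac\delta2\bigr)}.
\end{align*}
Plugging this in, together with $\kappa_{2,s}=\tfrac{2^{2s-1}\Gamma(1+s)}{\pi\,|\Gamma(-s)|}$, $|\Gamma(-s)|=\tfrac{\Gamma(1-s)}{s}$, and the value of $f_2(2,s,\delta)$ computed via the Fourier transform in Appendix~\ref{app}, the common $\Gamma$–factors in numerator and denominator cancel and leave precisely \eqref{eq:ds}.

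For the bounds, write $\epsilon=\dfrac{2\bigl(2-s-\tfrac\delta2\bigr)\delta}{s(2-s)-R}$ with $R=R(s,\delta)=\dfrac{\Gamma(1+s)\,\Gamma\bigl(3-s-\tfrac\delta2\bigr)\,\Gamma\bigl(1+\tfrac\delta2\bigr)}{\Gamma(1-s)\,\Gamma\bigl(2-\tfrac\delta2\bigr)\,\Gamma\bigl(s+\tfrac\delta2\bigr)}$. All six arguments are positive, so $R>0$, which already gives the lower bound $\epsilon\ge\tfrac{2(2-s-\delta/2)\delta}{s(2-s)}$. For the upper bound I factor $R=R_1R_2$ with $R_1=\tfrac{\Gamma(1+s)\Gamma(1+\delta/2)}{\Gamma(s+\delta/2)}$ and $R_2=\tfrac{\Gamma(3-s-\delta/2)}{\Gamma(1-s)\Gamma(2-\delta/2)}$. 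Using $\Gamma\bigl(1+s+\tfrac\delta2\bigr)=\bigl(s+\tfrac\delta2\bigr)\Gamma\bigl(s+\tfrac\delta2\bigr)$ together with the subadditivity of the convex function $x\mapsto\log\Gamma(1+x)$ (which vanishes at $0$), so that $\Gamma(1+s)\Gamma\bigl(1+\tfrac\delta2\bigr)\le\Gamma\bigl(1+s+\tfrac\delta2\bigr)$, gives $R_1\le s+\tfrac\delta2$; using $(2-s)(1-s)=\tfrac{\Gamma(3-s)}{\Gamma(1-s)}$ together with the fact that $x\mapsto\tfrac{\Gamma(x+1-s)}{\Gamma(x)}$ is increasing (the digamma is increasing), evaluated at $x=2-\tfrac\delta2\le 2$, gives $R_2\le(2-s)(1-s)$. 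Hence $R\le(2-s)(1-s)\bigl(s+\tfrac\delta2\bigr)$, so $s(2-s)-R\ge(2-s)\bigl(s-(1-s)(s+\tfrac\delta2)\bigr)$; the right-hand side is strictly positive exactly when $\delta<\tfrac{2s^2}{1-s}$, which simultaneously shows that \eqref{eq:ds} is well defined and positive on that range and gives the claimed upper bound for $\epsilon$. Finally, as $s\nearrow 1$ the factor $\Gamma(1-s)\to\infty$ while all other $\Gamma$–factors in $R$ stay bounded, so $R\to 0$ and $\epsilon\to\tfrac{2(1-\delta/2)\delta}{1}=(2-\delta)\delta$, matching \eqref{eq:meyer-epsilon} for $d=2$.

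The main obstacle is the explicit $\Gamma$–bookkeeping of the middle step: extracting the precise value of $f_2(2,s,\delta)$ from Appendix~\ref{app} and cancelling $\Gamma$–factors to reach the exact form \eqref{eq:ds}, while keeping track of the admissible ranges of $s$ and $\delta$ so that every $\Gamma$ is evaluated away from its poles and so that the derivative identity $u_\delta=\tfrac{1}{2-\delta}\partial_{x_1}|x|^{2-\delta}$ may legitimately be combined with \eqref{eq:F-of-power}. By comparison, the reduction to $e_1$ and the estimates for $R$ are routine once the factorization $R=R_1R_2$ is noticed.
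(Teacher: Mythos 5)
Your proposal is correct, and for the two-sided bound it takes a genuinely different route from the paper. The paper factors the $\Gamma$-quotient as
$R=(1-s)(2-s)(2-\tfrac\delta2)(s+\tfrac\delta2)\,G(s,\delta)$
and invokes Alzer's monotonicity theorem for products of Gamma quotients to show $G(s,\delta)\le\tfrac12$, then uses $\tfrac12(2-\tfrac\delta2)\le 1$ to reach $R\le(1-s)(2-s)(s+\tfrac\delta2)$. You instead split $R=R_1R_2$ with $R_1=\tfrac{\Gamma(1+s)\Gamma(1+\delta/2)}{\Gamma(s+\delta/2)}$ and $R_2=\tfrac{\Gamma(3-s-\delta/2)}{\Gamma(1-s)\Gamma(2-\delta/2)}$, bound $R_1\le s+\tfrac\delta2$ via the \emph{super}additivity of $\log\Gamma(1+\cdot)$ (your "subadditivity'' is a slip of terminology, the displayed inequality $\Gamma(1+s)\Gamma(1+\tfrac\delta2)\le\Gamma(1+s+\tfrac\delta2)$ is the correct one), and bound $R_2\le(2-s)(1-s)$ by monotonicity of $x\mapsto\Gamma(x+1-s)/\Gamma(x)$. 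This arrives at exactly the same intermediate estimate $R\le(1-s)(2-s)(s+\tfrac\delta2)$ by more elementary means; the cost is that the paper's route passes through the slightly sharper inequality $G\le\tfrac12$, which it then discards. For the formula \eqref{eq:ds} itself both approaches agree: it is the $d=2$ specialization of Corollary~\ref{cor:frac.lap}, and your direct computation of $(-\Delta)^su_\delta(e_1)=2^{2s}\,\tfrac{\Gamma(2-\delta/2)\Gamma(s+\delta/2)}{\Gamma(2-s-\delta/2)\Gamma(\delta/2)}$ checks out against $\delta(2-\delta)$ at $s=1$. Your limit argument ($\Gamma(1-s)\to\infty$ forcing $R\to 0$) is also valid and slightly more self-contained than the paper, which simply states the limit as a consequence of Corollary~\ref{cor:frac.lap}.
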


\begin{proof}
  The formula~\eqref{eq:ds} is a special case of Corollary~\ref{cor:frac.lap}, which we prove below.
  We can write \eqref{eq:ds} as
  \begin{align}\label{eq:epsilon}
    \epsilon=\dfrac{2(2-s-\frac\delta2)\delta}{s(2-s)-(1-s)(2-s)\left(2-\frac{\delta}{2}\right)\left(s+\frac{\delta}{2}\right)\underbrace{\frac{\Gamma(s+1)\Gamma\left(3-s-\frac{\delta}{2}\right)\Gamma(1+\frac{\delta}{2})}{\Gamma(3-s)\Gamma(3-\frac{\delta}{2})\Gamma(s+1+\frac{\delta}{2})}}_{=:G(s,\delta)}}.
  \end{align}
  The first inequality in \eqref{eq:up.lo.est} is clear since $G(s,\delta)>0$. We claim  that $G(s,\delta) \leq \frac 12$. For this we need \cite[Theorem~10]{Alz97}, where it has been shown that $0 \leq a_1 \leq \dots a_m$ and $0 \leq b_1 \leq \dots b_m$ with $\sum_{i=1}^k a_i \leq \sum_{i=1}^k b_i$ for $k=1,\dots, m$ implies
  \begin{align}
    \label{eq:alzer}
    x \mapsto \prod_{i=1}^m \frac{\Gamma(x+a_i)}{\Gamma(x+b_i)}.
  \end{align}
  is decreasing on $(0,\infty)$.   Applying this to $a_1=1$, $a_2 = 3- \frac\delta 2$, $b_1=3-s-\frac \delta2$, $b_2 = 1+s$ and $x=\frac \delta2$ yields
  \begin{align*}
    \frac{\Gamma(1) \Gamma(3-\frac \delta2)}{\Gamma(3-s-\frac \delta2)\Gamma(1+s)} &\geq
    \frac{\Gamma(1+\frac \delta2) \Gamma(3)}{\Gamma(3-s)\Gamma(1+s+\frac{\delta}{2})}.
  \end{align*}
  This implies that $ G(s,\delta) \leq \frac{\Gamma(1)}{\Gamma(3)} = \frac{1}{2}$ as desired.  Therefore, for \eqref{eq:epsilon} we estimate
  \begin{align*}
    \epsilon
    &\leq\dfrac{2(2-s-\frac \delta2)\delta}{s(2-s)-\frac{1}{2}(1-s)(2-s)\left(2-\frac{\delta}{2}\right)\left(s+\frac{\delta}{2}\right)}
    \\
    &\leq\dfrac{2(2-s-\frac \delta2)\delta}{s(2-s)-(1-s)(2-s)\left(s+\frac{\delta}{2}\right)}
    =
     \dfrac{2(2-s-\frac\delta 2)\delta}{(2-s)(s-(1-s)(s+\frac\delta2))},
  \end{align*}
  provided $s>(1\!-\!s)(s\!+\!\frac\delta2)$ which is equivalent to $\delta<\frac{2s^2}{1-s}$. This condition ensures that the denominator does not change its sign. This proves the claim.
\end{proof}
\begin{remark}[Rotation and scaling properties for $(-\Delta_{\bbAse})^s u$]\label{rem:prop}
We display the following properties of $(-\Delta_{\bbAse})^s u_{\delta}(y)$.
\begin{itemize}
\item Rotation property: For any $Q\in\setR^{d\times d}$ with $QQ^T=\identity$, we have
\begin{align*}
(-\Delta_{\bbAse})^s u_{\delta}(Qy)=(Q(-\Delta_{\bbAse})^s U_{\delta}(y))_1,
\end{align*}
where $U_{\delta}(y):=\abs{y}^{1-\delta}\widehat{y}$. 
\item Scaling property: For any $\lambda\in\setR$, 
\begin{center}
$(-\Delta_{\bbAse})^s u_{\delta}(\lambda y)=\sgn{\lambda}|\lambda|^{1-2s-\delta}(-\Delta_{\bbAse})^s u_{\delta}(y)$.
\end{center}
\end{itemize}
Thus, we see that $(-\Delta_{\bbAse})^s u_{\delta}(x)=0$ is equivalent to $(-\Delta_{\bbAse})^s u_{\delta}(e_1)=0$. 
\end{remark}

To show Theorems \ref{thm:Meyers} and \ref{thm:s1}, we first compute $(-\Delta_{\bbAse})^s u_{\delta}(e_1)$ and then use Remark \ref{rem:prop} to recover $(-\Delta_{\bbAse})^s u_{\delta}(x)$. We write
\begin{align}\label{eq:frac.lap}
(-\Delta_{\bbAse})^s u_{\delta}(e_1)=-2\kappa_{d,s}\left[\left(1-\frac{1+2s}{2}\epsilon\right)f_1(d,s,\delta)+\dfrac{d+2s}{2}\epsilon f_2(d,s,\delta)\right],
\end{align}
where
\begin{align}\label{eq:f1}
f_1(d,s,\delta)\coloneqq\pvint_{\setR^d}\dfrac{|e_1+h|^{1-\delta}\widehat{1+h_1}-1}{|h|^{d+2s}}\,dh=\dfrac{1}{2\kappa_{d,s}}(-\Delta)^s u_{\delta}(e_1)
\end{align}
and
\begin{align}\label{eq:f2}
\begin{split}
f_2(d,s,\delta)&\coloneqq\pvint_{\setR^d}\hspace{-1mm}\left<\dfrac{\widehat{e_1\!+\!h}\otimes\widehat{e_1\!+\!h}\!+\!e_1\otimes e_1}{2}\widehat{h},\widehat{h}\right>\dfrac{|e_1\!+\!h|^{1-\delta}\widehat{1\!+\!h_1}\!-\!1}{|h|^{d+2s}}dh\\
&=\frac{1}{2}\pvint_{\setR^d}\left(\dfrac{\skp{e_1+h}{h}^2}{|e_1+h|^2|h|^2}+\dfrac{\skp{e_1}{h}^2}{|h|^2}\right)\dfrac{|e_1+h|^{1-\delta}\widehat{1+h_1}-1}{|h|^{d+2s}}\,dh.
\end{split}
\end{align}

Now, we need to compute $f_1$ and $f_2$ using the Fourier transform of distributions. 

\begin{lemma}[Computation of $f_1(d,s,\delta)$ and $f_2(d,s,\delta)$]\label{lem:f}
For $\delta\in[0,\frac{1}{2}]$, we have
\begin{align}\label{eq:comp.f1}
f_1(d,s,\delta)=-\frac{2^{2s-1}}{\kappa_{d,s}}\dfrac{\Gamma\left(1-\frac{\delta}{2}+\frac{d}{2}\right)\Gamma\left(s+\frac{\delta}{2}\right)}{\Gamma\left(1-\frac{\delta}{2}+\frac{d}{2}-s\right)\Gamma\left(1+\frac{\delta}{2}\right)}\dfrac{\delta}{2}.
\end{align}
On the other hand, we have
\begin{align}\label{eq:d3}
\begin{split}
f_2(d,s,\delta)&=\dfrac{\pi^{\frac{d}{2}}}{2}\dfrac{\Gamma(-s+1)}{\Gamma\left(\frac{d}{2}+s\right)}\dfrac{\Gamma\left(\frac{d}{2}-\frac{\delta}{2}\right)}{\Gamma\left(1+\frac{\delta}{2}\right)}\dfrac{\Gamma\left(s+\frac{\delta}{2}\right)}{\Gamma\left(\frac{d}{2}-s+1-\frac{\delta}{2}\right)}f_{2,1}(d,s,\delta)\\
&\quad-\dfrac{\pi^{\frac{d}{2}}}{2}\dfrac{\Gamma(-s+1)}{\Gamma\left(\frac{d}{2}+s\right)}\dfrac{\Gamma\left(\frac{d}{2}\right)\Gamma\left(s\right)}{\Gamma\left(\frac{d}{2}-s\right)}\dfrac{d-1}{d+2s},
\end{split}
\end{align}
where
\begin{align}\label{eq:f21}
\begin{split}
&f_{2,1}(d,s,\delta)\\
&=-\dfrac{2(1-s)(2s+\delta-3)(2s+\delta-1)}{(d+2s)(d-2s+2-\delta)}+\dfrac{(1-s)(2s+\delta-3)}{d-2s+2-\delta}\\
&\quad+\dfrac{(2s+\delta-1)(2s+\delta)}{d+2s}-\dfrac{(d+2s+3)(2s+\delta-1)}{d+2s}\\
&\quad+\dfrac{d+2s-1}{2}-\dfrac{(d-2s-\delta)(2s+\delta)}{2s(d+2s)}\\
&=\dfrac{(d - \delta) \left[(2s+1)\left(\delta-\frac{d-2s+2}{2}\right)^2+\frac{1}{4}(d-2s+2)(2ds-d+4s^2-6s-2)\right]}{2 s (d + 2 s)(d-2s+2-\delta)}.
\end{split}
\end{align}
\end{lemma}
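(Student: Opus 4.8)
The plan rests on the Fourier-transform identity \eqref{eq:F-of-power} for homogeneous distributions and its consequence that $(-\Delta)^{\sigma}$ sends $|x|^{-\beta}P_k(x)$, with $P_k$ a solid harmonic of degree $k$, to $\lambda(d,k,\sigma,\beta)\,|x|^{-\beta-2\sigma}P_k(x)$ for an explicit quotient of Gamma functions $\lambda$ (whenever these have no poles); for $k\le1$ this already follows from \eqref{eq:F-of-power} by differentiating once in $\xi_1$, and in general from \eqref{eq:F-of-power} together with the fact that $\mathcal{F}(|x|^{-\beta}P_k)(\xi)$ is $i^{-k}$ times an explicit constant times $|\xi|^{\beta-d-2k}P_k(\xi)$. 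With this, $f_1$ is immediate: since $u_\delta(x)=|x|^{-\delta}x_1=\tfrac1{2-\delta}\partial_{x_1}|x|^{2-\delta}$ and $(-\Delta)^s$ commutes with $\partial_{x_1}$, one computes $(-\Delta)^s|x|^{2-\delta}=c(d,s,\delta)|x|^{2-\delta-2s}$ from \eqref{eq:F-of-power}, differentiates in $x_1$, evaluates at $e_1$, and — using \eqref{eq:f1} and repeated application of $\Gamma(z+1)=z\Gamma(z)$ — arrives at \eqref{eq:comp.f1}.

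For $f_2$ the obstruction is that the integrand in \eqref{eq:f2} is not of convolution type, because of the factor $|e_1+h|^2$. The starting point is the elementary identity
\begin{align*}
  \frac{\skp{e_1+h}{h}^2}{|e_1+h|^2|h|^2}+\frac{\skp{e_1}{h}^2}{|h|^2}
  =1+\frac{h_1^2}{|h|^2}-\frac1{|e_1+h|^2}\Bigl(1-\frac{h_1^2}{|h|^2}\Bigr),
\end{align*}
which yields $f_2=\tfrac12\bigl(f_1+J_1-J_2\bigr)$ with
\begin{align*}
  J_1&=\pvint_{\RRd}\frac{h_1^2\bigl(u_\delta(e_1+h)-u_\delta(e_1)\bigr)}{|h|^{d+2s+2}}\,dh,\\
  J_2&=\pvint_{\RRd}\frac1{|e_1+h|^2}\Bigl(1-\frac{h_1^2}{|h|^2}\Bigr)\frac{u_\delta(e_1+h)-u_\delta(e_1)}{|h|^{d+2s}}\,dh.
\end{align*}
For $J_1$ I would use the algebraic identity $h_ih_j|h|^{-d-2s-2}=\tfrac1{(d+2s-2)(d+2s)}\partial_i\partial_j|h|^{-d-2s+2}+\tfrac{\delta_{ij}}{d+2s}|h|^{-d-2s}$ and integrate by parts in $h$, which reduces $J_1$ to $(-\Delta)^su_\delta(e_1)$ and $(-\Delta)^{s-1}\partial_{x_1}^2u_\delta(e_1)$, the second of which — after commuting $\partial_{x_1}^2$ past $(-\Delta)^{s-1}$ and invoking the first paragraph — is again explicit. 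For $J_2$ I would substitute $y=e_1+h$ and use $|y|^{-2}u_\delta(y)=u_{\delta+2}(y)$ to rewrite it as $\pvint_{\RRd}\bigl(1-\tfrac{h_1^2}{|h|^2}\bigr)\bigl(\psi(e_1+h)-\psi(e_1)\bigr)|h|^{-d-2s}\,dh$ with $\psi\coloneqq u_{\delta+2}-|\cdot|^{-2}$, $\psi(e_1)=0$; writing $1-\tfrac{h_1^2}{|h|^2}=\tfrac{d-1}d-\bigl(\tfrac{h_1^2}{|h|^2}-\tfrac1d\bigr)$ splits it into a constant multiple of $(-\Delta)^s\psi(e_1)$ plus the Fourier multiplier with symbol proportional to $(\xi_1^2-\tfrac1d|\xi|^2)|\xi|^{2s-2}$ applied to $\psi$; decomposing $y_1$ and $y_1^3$ into solid harmonics expresses $\psi$ and all intermediate functions as finite sums $\mathrm{const}\cdot|x|^{-\beta}P_k(x)$, to which the first paragraph again applies. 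Assembling $f_2=\tfrac12(f_1+J_1-J_2)$ and collecting Gamma functions gives \eqref{eq:d3}--\eqref{eq:f21}.

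The delicate point is $J_2$: it is precisely the non-translation-invariant part, and the manipulations above (the elementary identity, the substitution $y=e_1+h$, and $|y|^{-2}u_\delta(y)=u_{\delta+2}(y)$) are exactly what makes the Fourier transform usable; one then has to justify the convolution theorem and the integration by parts for functions that neither decay at infinity nor are smooth at the origin, which is the technical content of Appendix \ref{app2}. It is also important to keep the singular integrals at order $d+2s$ — a careless expansion against the more singular kernel $|h|^{-d-2s-2}$ would split $J_2$ into individually divergent pieces — and to take the principal value symmetrically. The rest of the proof is extensive but routine: the solid-harmonic bookkeeping, the values $P_k(e_1)$, and the many $\Gamma(z+1)=z\Gamma(z)$ reductions needed to compress everything into the closed form \eqref{eq:f21}.
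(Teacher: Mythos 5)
Your plan is sound and would arrive at the same formulas, but it organizes the $f_2$ computation in a genuinely different way. The paper rewrites \eqref{eq:f2} via $h\mapsto -h$ into the single convolution $f_2=(g_3\ast g_4)(e_1)$ with $g_3(h)=|h|^{-\delta-2}h_1-|h|^{-2}$ and $g_4$ a five-term homogeneous kernel of order $-d-2s$; the entire value is then read off from one Fourier product $\widehat{g_3}\,\widehat{g_4}$ evaluated at $-e_1$. Your route instead uses the elementary weight identity to split $f_2=\tfrac12(f_1+J_1-J_2)$, handles $f_1$ and $J_1$ as translation-invariant evaluations $(-\Delta)^s u_\delta(e_1)$ and $(-\Delta)^{s-1}\partial_1^2 u_\delta(e_1)$ (the latter via the $h_ih_j$ reduction and integration by parts), and isolates the genuinely non-translation-invariant content in $J_2$. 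The key observation $|y|^{-2}u_\delta(y)=u_{\delta+2}(y)$ and $\psi=u_{\delta+2}-|\cdot|^{-2}$, $\psi(e_1)=0$, is exactly the paper's $g_3$, so the two routes share the central ingredient; what differs is that you fold the kernel $1+h_1^2/|h|^2$ onto $u_\delta$ while the paper folds the whole weight onto $g_3$. Conceptually yours separates the Bochner–Hecke reductions (solid harmonic $\times$ power under $(-\Delta)^\sigma$) cleanly, at the modest cost of introducing a Riesz potential $(-\Delta)^{s-1}$ and a slightly larger inventory of intermediate homogeneous objects; the paper's single-convolution formulation is more uniform and matches the distributional justification in Appendix~\ref{app2} term for term. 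One caveat you should make explicit: your further split of $J_2$ via $1-h_1^2/|h|^2=\tfrac{d-1}{d}-(h_1^2/|h|^2-\tfrac1d)$ destroys the vanishing of the weight at $h=-e_1$, so for $d=2$ the two summands are not individually absolutely convergent near $h=-e_1$ (where $\psi(e_1+h)=O(|e_1+h|^{-2})$, non-integrable in two dimensions); this is harmless once the pieces are defined on the Fourier side, but it means the decomposition must be carried out in the distributional framework rather than as genuine principal-value integrals — precisely the issue the paper's $\divideontimes$-machinery with the $d=2$ logarithmic definition of $j_2$ is built to handle. You correctly flag the analogous danger at $h=0$ and at infinity, but the $h=-e_1$ degeneracy for $d=2$ deserves the same care.
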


\begin{proof}
First, we explain the strategy. For $f_2$, let us rewrite $f_2$ in \eqref{eq:f2} as
\begin{align}\label{eq:f2'}
\begin{split}
&f_2(d,s,\delta)\\
&=\pvint_{\setR^d}\dfrac{-2h_1^3\!+\!|h|^2h_1^2\!+\!2h_1^2\!-\!2|h|^2h_1\!+\!|h|^4}{2|h|^{d+2s+2}|e_1-h|^{2}}\left(|e_1\!-\!h|^{-\delta}(1\!-\!h_1)\!-\!1\right)dh,
\end{split}
\end{align}
which formally corresponds to $(g_3\ast g_4)(e_1)$, where
\begin{align*}
g_3(h):=|h|^{-\delta-2}h_1\!-\!|h|^{-2}\,\,\,\,\text{and}\,\,\,\, g_4(h):=\dfrac{1}{\abs{h}^{d+2s}}\left(\dfrac{-h_1^3}{|h|^{2}}\!+\!\frac{h_1^2}{2}\!+\!\dfrac{h_1^2}{|h|^{2}}\!-\!h_1\!+\!\frac{\abs{h}^2}{2}\right).
\end{align*}

In order to find an explicit formula for $g_3\ast g_4$ we will use the Fourier transform and establish $\mathcal{F}\left[\mathcal{F}[g_3\ast g_4]\right]=\mathcal{F}\left[\mathcal{F}[g_3]\mathcal{F}[g_4]\right]$ and
\begin{align*}
f_2(d,s,\delta)=\mathcal{F}\left[\mathcal{F}[g_3\ast g_4]\right](-e_1)=\mathcal{F}\left[\mathcal{F}[g_3]\mathcal{F}[g_4]\right](-e_1)\,.
\end{align*}
The second equality will follow from 
$\mathcal{F}\left[\mathcal{F}[g_3\ast g_4]\right](x)=\mathcal{F}\left[\mathcal{F}[g_3]\cdot\mathcal{F}[g_4]\right](x)$. The evaluation $x=e_1$ for $\mathcal{F}\left[\mathcal{F}[g_3]\cdot\mathcal{F}[g_4]\right]$ then yields the desired expression for $f_2(d,s,\delta)$.

\medskip

Let us provide details for all necessary steps in the proof.  
\begin{enumerate}
	\item[(1)] We define $g_3$ and $g_4$ in the distributional sense as in Definition \ref{def:f3f4}. Note that $g_3$ and $g_4$ do not belong to $L^1_{\loc}$ in general.
	\item[(2)] We will show that the convolution $g_3\ast g_4$ is well-defined in the tempered distributional sense as in  \cite[Section 6.5]{Vla02}. We provide the details in Lemma \ref{lem:welldef3} and Lemma \ref{lem:conv3}. Note that Appendix \ref{app2} and \cite{Vla02} denote such a convolution with the symbol $\divideontimes$. 
	\item[(3)] In Lemma \ref{lem:A2} we show that $f_2(d,s,\delta)=\lim_{\sigma\rightarrow 0}\skp{g_3\ast g_4}{\phi_{\sigma}}$, where $\phi_{\sigma}$ is an approximation to the identity at $e_1$ and defined as follows: for the standard mollifier $\phi \in\mathcal{S}(\setR^d)$ and $\sigma\in(0,1)$, $\phi_{\sigma}(x):=\frac{1}{\sigma^d}\phi\left(\frac{x-e_1}{\sigma}\right)$.
	\item[(4)] By (2), the relation $\mathcal{F}\left[\mathcal{F}[g_3\ast g_4]\right]=\mathcal{F}\left[\mathcal{F}[g_3]\cdot\mathcal{F}[g_4]\right]$ is available (see \cite[Section 6.5]{Vla02}), where the definition of product of distributions `$\cdot$' is from \cite[Section 6.5]{Vla02}. The precise computation of $\mathcal{F}[g_3]$ and $\mathcal{F}[g_4]$ is given in Lemma \ref{lem:Fourier'}.
	\item[(5)] Lemma \ref{lem:Fourier} shows that $\mathcal{F}[g_3]\cdot\mathcal{F}[g_4]=\mathcal{F}[g_3]\mathcal{F}[g_4]$ in the distributional sense, where $\mathcal{F}[g_3]\mathcal{F}[g_4]$ means the usual multiplication of two locally integrable functions $\mathcal{F}[g_3]$ and $\mathcal{F}[g_4]$.
	\item[(6)] We compute $g_3\ast g_4$ via Fourier transform (see Lemma \ref{lem:final} with Theorem \ref{thm:Fourierconv}). Also, in the proof of Lemma \ref{lem:final}, we observe that $\mathcal{F}[\mathcal{F}[g_3]\mathcal{F}[g_4]]$ coincides with a locally integrable function near $-e_1$. 
\end{enumerate} 	
Overall, with $\overline{\phi}_{\sigma}(x)=\phi_{\sigma}(-x)$, we have
\begin{align}\label{eq:appen}
\begin{split}
f_2(d,s,\delta)&\underset{\text{(3)}}{=}\lim_{\sigma\rightarrow 0}\skp{g_3\ast g_4}{\phi_{\sigma}}\\
&\underset{\text{(4)}}{=}\lim_{\sigma\rightarrow 0}\skp{\mathcal{F}[\mathcal{F}[g_3]\cdot\mathcal{F}[g_4]]}{\overline{\phi}_{\sigma}}\\
&\underset{\text{(5)}}{=}\lim_{\sigma\rightarrow 0}\skp{\mathcal{F}[\mathcal{F}[g_3]\mathcal{F}[g_4]]}{\overline{\phi}_{\sigma}}\\
&\underset{\text{(6)}}{=}\dfrac{\pi^{\frac{d}{2}}}{2}\dfrac{\Gamma(-s+1)}{\Gamma\left(\frac{d}{2}+s\right)}\dfrac{\Gamma\left(\frac{d}{2}-\frac{\delta}{2}\right)}{\Gamma\left(1+\frac{\delta}{2}\right)}\dfrac{\Gamma\left(s+\frac{\delta}{2}\right)}{\Gamma\left(\frac{d}{2}-s+1-\frac{\delta}{2}\right)}f_{2,1}(d,s,\delta)\\
&\quad-\dfrac{\pi^{\frac{d}{2}}}{2}\dfrac{\Gamma(-s+1)}{\Gamma\left(\frac{d}{2}+s\right)}\dfrac{\Gamma\left(\frac{d}{2}\right)\Gamma\left(s\right)}{\Gamma\left(\frac{d}{2}-s\right)}\dfrac{d-1}{d+2s}.
\end{split}
\end{align}
This completes the proof of \eqref{eq:d3}. 

\medskip

The proof in case of $f_1(d,s,\delta)$ is similar. For some suitable distributions $g_1$ and $g_2$ in $\setR^d$, we argue as follows.
In detail, when $\delta=0$, then one can see that $f_1(d,s,\delta)=0$. Thus we assume $\delta>0$.
\begin{enumerate}
	\item[(1)] We define $g_1$ and $g_2$ in the distributional sense as follows: for $\phi\in\mathcal{S}(\setR^d)$,
\begin{align*}
\skp{g_1}{\phi}:=\skp{\abs{x}^{1-\delta}\widehat{x}_1}{\phi}=\displaystyle\int_{\setR^d}\abs{x}^{-\delta}x_1\phi(x)\,dx,
\end{align*}
and
\begin{align*}
\skp{g_2}{\phi}:=\skp{\abs{x}^{-d-2s}}{\phi}=\displaystyle\int_{\setR^d}\dfrac{\abs{x}^{-d-2s+2}\Delta\phi(x)}{2s(d+2s-2)}\,dx.
\end{align*}
One can check the second identity by Gauss-Green formula and $2-2s>0$.
	\item[(2)] We will show that $g_1\ast g_2$ is well-defined in the tempered distributional sense of \cite[Section 6.5]{Vla02} (See Lemma \ref{lem:welldef} and Lemma \ref{lem:conv10}. Note that the convolution is denoted by $\divideontimes$ in there).
	\item[(3)] Moreover, in Lemma \ref{lem:A20} we show $f_1(d,s,\delta)=\lim_{\sigma\rightarrow 0}\skp{g_1\ast g_2}{\phi_{\sigma}}$.
	\item[(4)] By (2), the relation $\mathcal{F}\left[\mathcal{F}[g_1\ast g_2]\right]=\mathcal{F}\left[\mathcal{F}[g_1]\cdot\mathcal{F}[g_2]\right]$ is available (see \cite[Section 6.5]{Vla02}), where the definition of product of distributions `$\cdot$' is from \cite[Section 6.5]{Vla02}. The computation of $\mathcal{F}[g_1]$ and $\mathcal{F}[g_2]$ is as follows:
\begin{align*}
\mathcal{F}[g_1](\xi)=\dfrac{1}{2-\delta}\mathcal{F}[\partial_1\abs{x}^{2-\delta}]&=\dfrac{2\pi i\xi_1}{2-\delta}\mathcal{F}[\abs{x}^{2-\delta}]\\
&=\dfrac{2\pi i\xi_1}{2-\delta}\dfrac{(2\pi)^{\delta-2}\Gamma\left(\frac{d}{2}-\frac{\delta}{2}+1\right)}{\pi^{\frac{d}{2}}2^{\delta-2}\Gamma\left(\frac{\delta}{2}-1\right)}\abs{\xi}^{-d+\delta-2}\\
&=-\pi^{-\frac{d}{2}+\delta-1}i\dfrac{\Gamma\left(1+\frac{d}{2}-\frac{\delta}{2}\right)}{\Gamma\left(\frac{\delta}{2}\right)}\abs{\xi}^{-d+\delta-2}\xi_1
\end{align*}
and
\begin{align*}
\mathcal{F}[g_2](\xi)=\pi^{\frac{d}{2}+2s}\dfrac{\Gamma(-s)}{\Gamma\left(\frac{d}{2}+s\right)}\abs{\xi}^{2s}.
\end{align*}
\item[(5)] Furthermore, Lemma \ref{lem:Fourier000} implies that $\mathcal{F}[g_1]\cdot\mathcal{F}[g_2]=\mathcal{F}[g_1]\mathcal{F}[g_2]$ in the distributional sense, where $\mathcal{F}[g_1]\mathcal{F}[g_2]$ means the usual multiplication of functions $\mathcal{F}[g_1]$ and $\mathcal{F}[g_2]$.
\item[(6)] We compute $g_1\ast g_2$ via Fourier transform as follows:
\begin{align*}
&\mathcal{F}[\mathcal{F}[g_1]\mathcal{F}[g_2]]\\
&=-\pi^{\delta-1+2s}i\dfrac{\Gamma(-s)\Gamma\left(\frac{d}{2}-\frac{\delta}{2}+1\right)}{\Gamma\left(\frac{d}{2}+s\right)\Gamma\left(\frac{\delta}{2}\right)}\dfrac{1}{-d+\delta+2s}\mathcal{F}[\partial_1\abs{\xi}^{-d+\delta+2s}]\\
&=-\pi^{\delta-1+2s}i\dfrac{\Gamma(-s)\Gamma\left(\frac{d}{2}-\frac\delta2+1\right)}{\Gamma\left(\frac d2+s\right)\Gamma\left(\frac{\delta}{2}\right)}\dfrac{2\pi i x_1}{-d+\delta+2s}\dfrac{(2\pi)^{d-\delta-2s}\Gamma\left(\frac{\delta}{2}+s\right)}{\pi^{\frac{d}{2}}2^{d-\delta-2s}\Gamma\left(\frac d2-\frac\delta2-s\right)}\abs{x}^{-\delta-2s}\\
&=\frac{2^{2s-1}}{\kappa_{d,s}}\dfrac{\Gamma\left(1-\frac\delta2+\frac d2\right)\Gamma\left(\frac{\delta}{2}+s\right)}{\Gamma\left(1-\frac\delta2+\frac d2-s\right)\Gamma\left(1+\frac{\delta}{2}\right)}\dfrac{\delta}{2}\abs{x}^{-\delta-2s}x_1.
\end{align*}
\end{enumerate} 	
Also, by the above computation, we observe that $\mathcal{F}[\mathcal{F}[g_1]\mathcal{F}[g_2]]$ coincides with a locally integrable function in $\setR^d$. Overall, with $\overline{\phi}_{\sigma}(x)=\phi_{\sigma}(-x)$, we have
\begin{align}\label{eq:appen0}
\begin{split}
f_1(d,s,\delta)&\underset{\text{(3)}}{=}\lim_{\sigma\rightarrow 0}\skp{g_1\ast g_2}{\phi_{\sigma}}\underset{\text{(4)}}{=}\lim_{\sigma\rightarrow 0}\skp{\mathcal{F}[\mathcal{F}[g_1]\cdot\mathcal{F}[g_2]]}{\overline{\phi}_{\sigma}}\\
&\underset{\text{(5)}}{=}\lim_{\sigma\rightarrow 0}\skp{\mathcal{F}[\mathcal{F}[g_1]\mathcal{F}[g_2]]}{\overline{\phi}_{\sigma}}\\
&\underset{\text{(6)}}{=}-\frac{2^{2s-1}}{\kappa_{d,s}}\dfrac{\Gamma\left(1-\frac{\delta}{2}+\frac d2\right)\Gamma\left(\frac{\delta}{2}+s\right)}{\Gamma\left(1-\frac\delta2+\frac d2-s\right)\Gamma\left(1+\frac{\delta}{2}\right)}\dfrac{\delta}{2}.
\end{split}
\end{align}

The proof is complete.
\end{proof}

By Remark \ref{rem:prop} and Lemma \ref{lem:f} with \eqref{eq:frac.lap}, we have the following corollary.
\begin{corollary}[Precise formula for $(-\Delta_{\bbAse})^s u_{\delta}(x)$]\label{cor:frac.lap}
For each $x\in\setR^d$, there holds
\begin{align*}
&(-\Delta_{\bbAse})^s u_{\delta}(x)\\
&\quad=\dfrac{2^{2s}\widetilde{c}_{d,s,\delta}}{4}\bigg[(d-\delta)\delta\bigg(1-\frac{1+2s}{2}\epsilon\bigg)\\
&\quad\quad\quad\quad\quad\quad-\frac{d+2s}{2}\bigg(2sf_{2,1}(d,s,\delta)-\frac{\widetilde{c}_{d,s,0}}{\widetilde{c}_{d,s,\delta}}\dfrac{s(d-1)(d-2s)}{d+2s}\bigg)\epsilon\bigg]|x|^{1-2s-\delta}\widehat{x}_1,
\end{align*}
where 
\begin{align*}
\widetilde{c}_{d,s,\delta}=\frac{\Gamma\left(\frac d2-\frac{\delta}{2}\right)\Gamma\left(\frac{\delta}{2}+s\right)}{\Gamma\left(\frac d2-s+1-\frac{\delta}{2}\right)\Gamma\left(1+\frac{\delta}{2}\right)}
\end{align*}
and $f_{2,1}(d,s,\delta)$ is defined in \eqref{eq:f21}. Moreover, $(-\Delta_{\bbAse})^s u_{\delta}(x)=0$ if and only if
\begin{align*}
(d-\delta)\delta\left(1-\frac{1+2s}{2}\epsilon\right)=\frac{d+2s}{2}\left(2sf_{2,1}(d,s,\delta)-\frac{\widetilde{c}_{d,s,0}}{\widetilde{c}_{d,s,\delta}}\dfrac{s(d-1)(d-2s)}{d+2s}\right)\epsilon,
\end{align*}
which is equivalent to
\begin{align}\label{eq:b}
\begin{split}
\epsilon&=\dfrac{2(d-2s+2-\delta)\delta}{(d-1)s(d-2s+2)-2(d-1)\dfrac{\Gamma\left(\frac{d}{2}\right)\Gamma\left(s+1\right)\Gamma\left(\frac{d}{2}-s-\frac{\delta}{2}+2\right)\Gamma\left(1+\frac{\delta}{2}\right)}{\Gamma\left(\frac d2-s\right)\Gamma\left(\frac d2-\frac\delta2+1\right)\Gamma\left(\frac{\delta}{2}+s\right)}}\eqqcolon b(\delta).
\end{split}
\end{align}
For $s \nearrow 1$, we have $\epsilon \to \frac{d-\delta}{d-1}\delta$ in agreement with~\eqref{eq:meyer-epsilon}.
\end{corollary}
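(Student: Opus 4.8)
The plan is to compute $(-\Delta_{\bbAse})^s u_\delta$ by reducing to the point $e_1$, to insert the closed-form values of $f_1$ and $f_2$ from Lemma~\ref{lem:f} into \eqref{eq:frac.lap}, and to solve the resulting affine relation for $\epsilon$. For the reduction, I would first use the scaling property from Remark~\ref{rem:prop}: for $x\neq 0$ one has $(-\Delta_{\bbAse})^s u_\delta(x)=|x|^{1-2s-\delta}(-\Delta_{\bbAse})^s u_\delta(\widehat{x})$. Then, choosing an orthogonal matrix $Q$ with $Qe_1=\widehat{x}$ and applying the rotation property of Remark~\ref{rem:prop} together with the rotational symmetry of $(-\Delta_{\bbAse})^s U_\delta$ about the $e_1$-axis, which forces $(-\Delta_{\bbAse})^s U_\delta(e_1)$ to be a scalar multiple of $e_1$, one obtains $(-\Delta_{\bbAse})^s u_\delta(\widehat{x})=\big((-\Delta_{\bbAse})^s u_\delta(e_1)\big)\widehat{x}_1$. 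Combining these identities reduces everything to computing the scalar $(-\Delta_{\bbAse})^s u_\delta(e_1)$.

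Next I would substitute \eqref{eq:comp.f1} and \eqref{eq:d3} into \eqref{eq:frac.lap}. Using $\Gamma(1+z)=z\Gamma(z)$ one rewrites \eqref{eq:comp.f1} as $f_1(d,s,\delta)=-\tfrac{2^{2s-1}}{\kappa_{d,s}}\tfrac{(d-\delta)\delta}{4}\widetilde{c}_{d,s,\delta}$, so that $-2\kappa_{d,s}\big(1-\tfrac{1+2s}{2}\epsilon\big)f_1$ is already the first summand in the claimed bracket. For the $f_2$-part I would use the explicit value $\kappa_{d,s}=\tfrac{2^{2s-1}s\,\Gamma(\frac d2+s)}{\pi^{d/2}\Gamma(1-s)}$ coming from \eqref{eq:kappa} together with $|\Gamma(-s)|=\Gamma(1-s)/s$: the prefactor $-2\kappa_{d,s}\tfrac{d+2s}{2}$ cancels the factor $\tfrac{\pi^{d/2}\Gamma(1-s)}{\Gamma(\frac d2+s)}$ in \eqref{eq:d3}, the quotient $\tfrac{\Gamma(\frac d2-\frac\delta2)\Gamma(s+\frac\delta2)}{\Gamma(1+\frac\delta2)\Gamma(\frac d2-s+1-\frac\delta2)}$ is $\widetilde{c}_{d,s,\delta}$, and $\tfrac{\Gamma(\frac d2)\Gamma(s)}{\Gamma(\frac d2-s)}=\tfrac{d-2s}{2}\widetilde{c}_{d,s,0}$. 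Collecting the two contributions gives
\[
(-\Delta_{\bbAse})^s u_\delta(e_1)=\tfrac{2^{2s}\widetilde{c}_{d,s,\delta}}{4}\Big[(d-\delta)\delta\big(1-\tfrac{1+2s}{2}\epsilon\big)-\tfrac{d+2s}{2}\big(2sf_{2,1}(d,s,\delta)-\tfrac{\widetilde{c}_{d,s,0}}{\widetilde{c}_{d,s,\delta}}\tfrac{s(d-1)(d-2s)}{d+2s}\big)\epsilon\Big],
\]
and combined with Step~1 this is the asserted formula for $(-\Delta_{\bbAse})^s u_\delta(x)$.

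Finally, since all arguments of the Gamma functions appearing in $\widetilde{c}_{d,s,\delta}$ lie in $(0,\infty)$ for $d\geq 2$, $s\in(0,1)$, $\delta\in[0,\tfrac12]$, we have $\widetilde{c}_{d,s,\delta}\neq 0$; hence $(-\Delta_{\bbAse})^s u_\delta\equiv 0$ on $B_1(0)$ is equivalent to the vanishing of the bracket, which is affine in $\epsilon$. Solving for $\epsilon$ and then inserting the factored closed form of $f_{2,1}$ from \eqref{eq:f21} and the Gamma expression for $\widetilde{c}_{d,s,0}/\widetilde{c}_{d,s,\delta}$, followed by elementary simplification, produces \eqref{eq:b}. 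For the concluding statement I would let $s\nearrow 1$ in \eqref{eq:b}: repeated use of $\Gamma(1+z)=z\Gamma(z)$ collapses the Gamma quotient to $\tfrac{d-2}{2}$, whence the denominator becomes $d(d-1)-2(d-1)\tfrac{d-2}{2}=2(d-1)$ and $\epsilon\to\tfrac{(d-\delta)\delta}{d-1}$, in agreement with \eqref{eq:meyer-epsilon}. The conceptual work is already done in Lemma~\ref{lem:f}; the main obstacle here is purely computational, namely keeping track of the constants $\kappa_{d,s}$, the powers of $2$ and the many Gamma factors so that they telescope into $\widetilde{c}_{d,s,\delta}$ and $\widetilde{c}_{d,s,0}$ in Step~2, and the lengthy but routine simplification in Step~3 that turns the explicit affine solution into the compact ratio \eqref{eq:b}; along the way one should record explicitly that $\widetilde{c}_{d,s,\delta}\neq 0$.
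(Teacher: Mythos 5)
Your proposal follows exactly the route the paper indicates ("By Remark \ref{rem:prop} and Lemma \ref{lem:f} with \eqref{eq:frac.lap}"), and the bookkeeping checks out: the rewriting $\Gamma(1-\tfrac\delta2+\tfrac d2)=\tfrac{d-\delta}{2}\Gamma(\tfrac d2-\tfrac\delta2)$ indeed gives $f_1=-\tfrac{2^{2s-1}}{\kappa_{d,s}}\tfrac{(d-\delta)\delta}{4}\widetilde c_{d,s,\delta}$, the identity $\kappa_{d,s}=\tfrac{2^{2s-1}s\,\Gamma(\tfrac d2+s)}{\pi^{d/2}\Gamma(1-s)}$ makes the $\pi$-factors cancel, and $\tfrac{\Gamma(\tfrac d2)\Gamma(s)}{\Gamma(\tfrac d2-s)}=\tfrac{d-2s}{2}\widetilde c_{d,s,0}$ produces the claimed bracket. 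The simplification to \eqref{eq:b} that you label as "lengthy but routine" does in fact telescope cleanly — with $m=d-2s+2$ one has $\delta(m-\delta)+(\delta-\tfrac m2)^2=\tfrac{m^2}{4}$ and then $(1+2s)m+(2ds-d+4s^2-6s-2)=4s(d-1)$, which yields the denominator in \eqref{eq:b} after a single further application of $\Gamma(1+z)=z\Gamma(z)$ to recover the stated Gamma-quotient. This is the same proof as in the paper, just with the arithmetic spelled out.
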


Now we can show Theorem \ref{thm:s1}.

\begin{proof}[Proof of Theorem \ref{thm:s1}]
Let $d\geq 2$, $s\in(0,1)$, $\epsilon\in[0,\frac{1}{2}]$, $\delta\in[0,\frac{1}{2}]$, and fix $x\in\setR^d$. Note that for any $\delta\in[0,\frac{1}{2}]$, we have $\lim_{s\nearrow1} \widetilde{c}_{d,s,\delta}=1$. Then by Corollary \ref{cor:frac.lap},
\begin{align*}
\lim_{s\nearrow 1}(-\Delta_{\bbAse})^s u_{\delta}(x)&=\left[(d-\delta)\delta\left(1-\frac{3}{2}\epsilon\right)\right.\\
&\left.\quad\quad\quad-\frac{d+2}{2}\left(2f_{2,1}(d,1,\delta)-\dfrac{(d-1)(d-2)}{d+2}\right)\epsilon\right]|x|^{-1-\delta}\widehat{x}_1
\end{align*}
holds. Here,
\begin{align*}
f_{2,1}(d,1,\delta)=\dfrac{3\delta^2-3d\delta+d^2-d}{2(d+2)}
\end{align*}
so that
\begin{align*}
2f_{2,1}(d,1,\delta)-\dfrac{(d-1)(d-2)}{d+2}=\dfrac{3\delta^2-3d\delta+2d-2}{d+2}.
\end{align*}
Thus $\lim_{s\nearrow 1}(-\Delta_{\bbAse})^s u_{\delta}(x)=\left[(d-\delta)\delta-(d-1)\epsilon\right]|x|^{-1-\delta}\widehat{x}_1=(-\Delta_{\bbA_{\epsilon}}) u_{\delta}(x)$.
\end{proof}

We now prove a lemma, which is crucial for the proof of Theorem \ref{thm:Meyers}. 
\begin{lemma}\label{lem:b}
For $\epsilon=b(\delta)$ in \eqref{eq:b}, we have the following:
\begin{itemize}	
\item[(a)] There exists a number $\delta_0=\delta_0(d,s)\in(0,\frac{1}{2}]$ such that the function $b(\delta)$ defined in \eqref{eq:b} is an one-to-one map from $[0,\delta_0]$ to $[0,\frac{1}{2}]$.
\item[(b)] Moreover, in the above range, $0< b(\delta)\leq c_1\delta$ with some $0<c_1=c_1(d,s)$.
\end{itemize} 
\end{lemma}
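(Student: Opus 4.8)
The plan is to write $b(\delta)=N(\delta)/D(\delta)$ with
\[
  N(\delta)=2\delta\,(d-2s+2-\delta),\qquad
  D(\delta)=(d-1)\big(s(d-2s+2)-2G(\delta)\big),
\]
where $G(\delta)$ is the quotient of Gamma factors occurring in the denominator of~\eqref{eq:b}. For $\delta\in[0,\tfrac12]$, $s\in(0,1)$, $d\ge2$ all arguments of those Gamma factors are strictly positive, so $G$, hence $D$, is real-analytic on a neighbourhood of $[0,\tfrac12]$ and $b$ is real-analytic wherever $D\neq0$. First I would record two anchoring facts. Using $\Gamma(x+1)=x\Gamma(x)$ repeatedly one computes $G(0)=\tfrac{s(d-2s+2)(d-2s)}{2d}$, hence
\[
  D(0)=(d-1)s(d-2s+2)\Big(1-\tfrac{d-2s}{d}\Big)=\tfrac{2(d-1)s^2(d-2s+2)}{d}>0 ,
\]
while $N(0)=0$ so $b(0)=0$; and $N$ is positive and strictly increasing on $(0,\tfrac12]$ since $N'(\delta)=2(d-2s+2)-4\delta\ge 2(d-2s+1)>0$ there.

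The core of part~(a) is to show that $b$ is strictly increasing, for which it suffices that $D$ is strictly decreasing on $[0,\tfrac12]$, i.e.\ that $G$ is strictly increasing. Put $a\coloneqq1-s\in(0,1)$, $y\coloneqq\tfrac d2+1-\tfrac\delta2$, $z\coloneqq s+\tfrac\delta2$. Absorbing the $\delta$-independent Gamma factors with $\Gamma(x+1)=x\Gamma(x)$, the $\delta$-dependent part of $G$ becomes $R(y)\,R(z)$ with $R(w)\coloneqq\Gamma(w+a)/\Gamma(w)$. Since $(\log R)'(w)=\psi(w+a)-\psi(w)>0$ and $(\log R)''(w)=\psi'(w+a)-\psi'(w)<0$ (the trigamma $\psi'$ being strictly decreasing), $\log R$ is concave; as $y$ decreases and $z$ increases with $\delta$, and $z<\tfrac54<\tfrac74\le y$ on $[0,\tfrac12]$, one gets
\[
  \tfrac{d}{d\delta}\log\!\big(R(y)R(z)\big)=\tfrac12\big((\log R)'(z)-(\log R)'(y)\big)>0 .
\]
(One could instead try to invoke \cite[Theorem~10]{Alz97} as in Proposition~\ref{prop:up.lo.est}, but here the Gamma arguments move in opposite directions in $\delta$, so the direct argument above is cleaner.) Thus $G$ is strictly increasing, $D'=-2(d-1)G'<0$, and $D(0)>0$, so there is a $\delta^\ast\in(0,\tfrac12]$ with $D>0$ on $[0,\delta^\ast)$, with $D(\delta^\ast)=0$ unless $D>0$ on all of $[0,\tfrac12]$. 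On $[0,\delta^\ast)$ (resp.\ $[0,\tfrac12]$), $b=N/D$ with $N$ positive strictly increasing, $D$ positive strictly decreasing, $b(0)=0$; differentiating, $b'>0$, so $b$ is continuous and strictly increasing there.

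For surjectivity onto $[0,\tfrac12]$ I would show $\sup b>\tfrac12$: if $D(\delta^\ast)=0$ then $b(\delta)\to+\infty$ as $\delta\uparrow\delta^\ast$ since $N(\delta^\ast)>0$; otherwise $b$ extends continuously to $\tfrac12$ and, using $D(\tfrac12)\le D(0)$ from monotonicity,
\[
  b(\tfrac12)\ge\frac{N(\tfrac12)}{D(0)}=\frac{d\,(d-2s+\tfrac32)}{2(d-1)s^2(d-2s+2)}>\tfrac12 ,
\]
the last inequality reducing, after using $s^2<1$, to $\tfrac d2+2>2s$, which always holds. By the intermediate value theorem and strict monotonicity there is a unique $\delta_0=\delta_0(d,s)\in(0,\tfrac12)$ with $b(\delta_0)=\tfrac12$, and $b|_{[0,\delta_0]}$ is then a strictly increasing continuous bijection onto $[0,\tfrac12]$; this proves~(a). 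For~(b): by construction $D$ is bounded below by a positive constant on the compact set $[0,\delta_0]$, so $\delta\mapsto b(\delta)/\delta=2(d-2s+2-\delta)/D(\delta)$ extends continuously to $\delta=0$, is therefore bounded on $[0,\delta_0]$ by some $c_1=c_1(d,s)>0$, and is strictly positive there; hence $0<b(\delta)\le c_1\delta$ for $\delta\in(0,\delta_0]$ (and $b(0)=0$), which is~(b). The one genuinely non-trivial point is the monotonicity of $G$; everything else is elementary manipulation of Gamma functions and of the rational function $N$.
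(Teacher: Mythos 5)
Your proof is correct, and it is close in spirit to the paper's but genuinely more self-contained in the surjectivity step. The core monotonicity claim — that the Gamma quotient $G(\delta)$ in the denominator is strictly increasing, hence the denominator $D$ is strictly decreasing — is established by both via the same underlying fact, the strict concavity of the digamma function. You package it as monotonicity of $(\log R)'(w)=\psi(w+a)-\psi(w)$ with $R(w)=\Gamma(w+a)/\Gamma(w)$ and $a=1-s$; the paper packages the identical inequality as the four-point estimate $\psi(a)+\psi(d)<\psi(b)+\psi(c)$ for $a<b<c<d$, $a+d=b+c$. (Both compute $D(0)=\tfrac{2(d-1)s^2(d-2s+2)}{d}>0$.) Where you genuinely diverge is in showing $b$ actually reaches $\tfrac12$: the paper rules out the obstruction by proving the lower bound $b(\delta)\ge\delta$, which requires invoking \cite[Theorem~10]{Alz97} for monotonicity in $d$ together with a Stirling asymptotic. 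Your dichotomy — either $D$ vanishes somewhere in $(0,\tfrac12]$, forcing $b\to+\infty$, or $D(\tfrac12)<D(0)$ gives $b(\tfrac12)>N(\tfrac12)/D(0)$, which reduces via $s^2<1$ to the obvious inequality $\tfrac d2+2>2s$ — avoids Alzer's theorem altogether and is the cleaner route. For part (b), the paper bounds $b'$ on the compact interval while you bound $b(\delta)/\delta=2(d-2s+2-\delta)/D(\delta)$ directly; both rely on $D(\delta)\ge D(\delta_0)=2N(\delta_0)>0$ on $[0,\delta_0]$ and are essentially interchangeable.
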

\begin{proof}
  \textbf{Proof of (a).} First, note that $b(0)=0$. Furthermore, observe that if we define the following denominator in \eqref{eq:b} as
  \begin{align*}
    b_1(\delta)=(d-1)s(d-2s+2)-2(d-1)\dfrac{\Gamma\left(\frac{d}{2}\right)\Gamma\left(s+1\right)\Gamma\left(2+\frac d2 - s - \frac\delta2\right)\Gamma\left(1+\frac{\delta}{2}\right)}{\Gamma\left(\frac d2 -s\right)\Gamma\left(1+\frac d2 - \frac \delta2\right)\Gamma\left(s+\frac{\delta}{2}\right)},
  \end{align*}
  then
  \begin{align}\label{eq:b1}
    \begin{split}
      b_1(0)&=(d-1)s(d-2s+2)-2(d-1)\dfrac{\Gamma\left(\frac{d}{2}\right)\Gamma\left(s+1\right)\Gamma\left(2+\frac d2 - s \right)}{\Gamma\left(\frac d2-s\right)\Gamma\left(1+\frac d2\right)\Gamma\left(s\right)}\\
      &=\frac{2s^2}{d}(d-1)(d-2s+2)>0.
    \end{split}
  \end{align}
  For $\delta \in [0,\frac 12]$ we define the continuous function
  \begin{align}\label{eq:b2}
    L(\delta):=\dfrac{\Gamma\left(2+\frac d2 -s - \frac \delta2\right)\Gamma\left(1+\frac{\delta}{2}\right)}{\Gamma\left(1+\frac d2 - \frac\delta 2\right)\Gamma\left(s+\frac{\delta}{2}\right)}.
  \end{align}
  We claim that~$L$ is strictly increasing in~$\delta$. For this we calculate
  \begin{align*}
    L'(\delta)=\tfrac{L(\delta)}{2}\left(-\psi\left(2+\tfrac d2 -s - \tfrac \delta2\right)+\psi\left(1+\tfrac{\delta}{2}\right)+\psi\left(1+\tfrac d2 -\tfrac\delta2\right)-\psi\left(s+\tfrac \delta2\right)\right)>0,
  \end{align*}
  where $\psi$ is the digamma function. Now, the strict concavity of~$\psi$ implies that $\psi(a)+\psi(d) < \psi(b)+\psi(c)$ for $a< b< c < d$ and $a+d=b+c$. This and the positivity of~$L$ implies that $L'(\delta)>0$. Hence, $L$ is strictly increasing.

  Now let $\delta_1\in(0,+\infty]$ be such that
  \begin{align}\label{eq:delta1}
    \begin{split}
      \delta_1=\inf\left\{\delta\in(0,+\infty]:(d-2s+2-\delta)\delta\geq 2b_1(\delta)\right\}.
    \end{split}
  \end{align}
  We consider two cases.

  \medskip

  \textit{Case 1. When $\delta_1\leq\frac{1}{2}$:} By \eqref{eq:b1}, \eqref{eq:b2}, and \eqref{eq:delta1}, we have $b(\delta_1)=\frac{1}{2}$, and $b(\delta)$ is an one-to-one map from $[0,\delta_1]$ to $[0,\frac{1}{2}]$. Thus we put $\delta_0=\delta_0(d,s)=\delta_1$.

  \medskip

  \textit{Case 2. When $\delta_1>\frac{1}{2}$:} We will show that this case cannot occur. First, note that since $s\mapsto\Gamma(s)\left[\Gamma\left(\frac{2s+\delta}{2}\right)\right]^{-1}$ is a decreasing function on $s\in(0,1)$, we observe
  \begin{align}\label{eq:gamma.s}
    R_0(s,\delta):=\dfrac{\Gamma(s)\Gamma\left(1+\frac{\delta}{2}\right)}{\Gamma\left(\frac{\delta}{2}+s\right)}\geq \dfrac{\Gamma(1)\Gamma\left(1+\frac{\delta}{2}\right)}{\Gamma\left(1+\frac{\delta}{2}\right)}=1.
  \end{align}
  On the other hand, by \cite[Theorem 10]{Alz97}, the following is a decreasing function:
  \begin{align*}
    d\mapsto R(d):=\dfrac{\Gamma\left(\frac{d}{2}\right)}{\Gamma\left(\frac d2-s+1\right)}\dfrac{\Gamma\left(\frac d2-s-\frac\delta2+2\right)}{\Gamma\left(\frac d2-\frac\delta2+1\right)}.
  \end{align*}
  Then by the asymptotic approximation $\Gamma(x+a)\sim \Gamma(x)x^{a}$ for $x\to\infty$, we obtain $R(d)\geq \lim_{d\to\infty} R(d)=1$. Then together with \eqref{eq:gamma.s}, we see that $R_0(s,\delta)R(d)\geq 1$, so that for any $\delta\in[0,\frac{1}{2}]$, there holds
  \begin{align}\label{eq:epsilon.b}
    \epsilon=b(\delta)\geq\frac{2(d-2s+2-\delta)\delta}{(d\!-\!1)s(d\!-\!2s+2)-(d\!-\!1)(d\!-\!2s)s}=\frac{(d-2s+2-\delta)\delta}{s(d-1)}\geq \delta.
  \end{align}
  Since \eqref{eq:delta1} implies $\epsilon=b(\delta_1)=\frac{1}{2}$, there is a contradiction between \eqref{eq:epsilon.b} and $\delta_1>\frac{1}{2}$. Thus $\delta_1>\frac{1}{2}$ is not possible.

  \textbf{Proof of (b).} Let $\overline{\delta}_0=\min\{\delta_0,\frac{1}{2}\}$. To prove (b), it remains to show that $b'(\delta)\leq c_1$ for any $\delta\in[0,\overline{\delta}_0]$ with some $c_1=c_1(d,s)$. To do this, we see that
  \begin{align}\label{eq:b6}
    b'(\delta)=\dfrac{2(d-2s+2-2\delta)b_1(\delta)-2\delta(d-2s+2-\delta) b'_1(\delta)}{(b_1(\delta))^2}
  \end{align}
  holds. Now for $\delta\in[0,\overline{\delta}_0]$, we claim that
  \begin{align}\label{eq:b7}
    b_1(\delta)\geq c(d,s)>0.
  \end{align}
  Indeed, when $\delta\in[0,\overline{\delta}_0]$, by \eqref{eq:b1}, \eqref{eq:b2} and \eqref{eq:delta1}, there holds
  \begin{align*}
    b_1(\delta)\geq b_1(\overline{\delta}_0)\geq 2(d-2s+2-\overline{\delta}_0)\overline{\delta}_0(1+2s)=c(d,s)>0.
  \end{align*}

  Also, since $\delta\mapsto 2(d-2s+2-2\delta)b_1(\delta)-2\delta(d-2s+2-\delta) b'_1(\delta)$ is continuous on $\delta\in[0,\overline{\delta}_0]$, it is bounded above, so that
  \begin{align}\label{eq:b8}
    \abs{2(d-2s+2-2\delta)b_1(\delta)-2\delta(d-2s+2-\delta) b'_1(\delta)}\leq c(d,s)
  \end{align}
  for any $\delta\in[0,\overline{\delta}_0]$. Considering \eqref{eq:b7} and \eqref{eq:b8} with \eqref{eq:b6}, we see that $b'(\delta)\leq c_1$ for some $c_1=c_1(d,s)$. Therefore, by $0< b'(\delta)\leq c_1$, (b) is proved.
\end{proof}

Now we provide the proof of one of our main results, Theorem \ref{thm:Meyers}.
\begin{proof}[Proof of Theorem \ref{thm:Meyers}]
  Let $d\geq 2$, $s\in(0,1)$, $\epsilon\in[0,\frac{1}{2}]$, and $\delta\in[0,\frac{1}{2}]$. Since $s-\frac{d}{2}<1-\delta$ for the given range of $d,s$ and $\delta$, we see that $u\in W^{s,2}(B_1)$. Moreover,
  \begin{align*}
    u\in W^{t,q}_{\loc}(B_1(0))\,\,\text{for}\,\, 1-\delta>t-\tfrac{d}{q}\quad\text{and}\quad u\not\in W^{t,q}_{\loc}(B_1(0))\,\,\text{for}\,\, 1-\delta\leq t-\tfrac{d}{q}.
  \end{align*}
  In conclusion, (a) follows.

  Note that $\bbAse(x)=\left(1-\frac{1+2s}{2}\epsilon\right)\identity+\frac{d+2s}{2}\epsilon\widehat{x}\otimes\widehat{x}$ has 
  \begin{itemize}
	\item eigenvalue $1+\frac{d-1}{2}\epsilon$ with eigenvector $\widehat{x}$
	\item and eigenvalue $1-\frac{1+2s}{2}\epsilon$ with eigenspace $(\text{span}\{\widehat{x}\})^{\perp}$.
  \end{itemize}
  Hence for any $\epsilon\in[0,\frac{1}{2}]$, $\bbAse(x)$ is uniformly elliptic. 

  Also, $\bbAse(x)^{-1}=\frac{2}{2-(1+2s)\epsilon}\identity-\frac{2(d+2s)\epsilon}{(2-(1+2s)\epsilon)(2+(d-1)\epsilon)}\widehat{x}\otimes \widehat{x}$ has 
  \begin{itemize}
	\item eigenvalue $\frac{2}{2+(d-1)\epsilon}$ with eigenvector $\widehat{x}$
	\item and eigenvalue $\frac{2}{2-(1+2s)\epsilon}$ with eigenspace $(\text{span}\{\widehat{x}\})^{\perp}$.
  \end{itemize}
  Then for any $\epsilon\in[0,\frac{1}{2}]$, we have $ |\bbAse(x)||\bbAse(x)^{-1}|\leq \left(1+\tfrac{d-1}{2}\epsilon\right)\tfrac{2}{2-3\epsilon}\leq d+3$, where for the second inequality we evaluate $\epsilon=\frac{1}{2}$. We calculate
  \begin{align*}
    \log \bbAse(x)
    = \log\big(1-\tfrac{1+2s}{2}\epsilon\big)\identity + \log\big(1-\tfrac{(d+2s)\epsilon}{2+(d-1)\epsilon}\big) \hat x \otimes \hat x
  \end{align*}
similar to \cite[Example 24]{BDGN22}. This implies
  \eqref{eq:logAse}, so (b) is proved. Now (c) is from Lemma \ref{lem:b} together with Lemma \ref{eq:estimate-Deltas-ud}. The proof is completed.
\end{proof}

\section{Nonlocal equations with Riesz fractional derivatives}\label{sec:Riesz}

In this section we provide a Meyers-type example for a different fractional model. It is based on the fractional derivative $\nabla^s u$ defined by the Riesz potential as introduced in~\cite{ShiSpe15}. This approach allows to define a fractional $p$-Laplacian as done in~\cite{SchShiSpe18}. Moreover, the use of~$\nabla^s u$ allows to consider even concepts like quasi-convexity, see \cite{KreisbeckSchoenberger2022}.

We proceed as in~\cite{ShiSpe15}. Let $0<s<1$ and $1<p<\infty$. For $0 < \alpha < d$ let $I_\alpha u$ denote the Riesz potential defined by $ I_\alpha u = I_\alpha * u$, where
\begin{align*}
I_{\alpha}(x):=2\kappa_{d,\frac{-\alpha}{2}}\dfrac{1}{\abs{x}^{d-\alpha}}=\dfrac{2^{-\alpha}\Gamma\left(\frac{d-\alpha}{2}\right)}{\pi^{\frac{d}{2}}\Gamma\left(\frac{\alpha}{2}\right)}\dfrac{1}{\abs{x}^{d-\alpha}}.
\end{align*}
This definition extends to tempered distributions. Then the $s$-fractional gradient is defined by
\begin{align}
  \label{eq:nablas-vs-I}
  \nabla^s u = I_{1-s} (Du) = D (I_{1-s} u)
\end{align}
in the sense of tempered distributions. Now, the fractional Sobolev space $L^{s,p}(\RRn)$ is defined as the space with $u, \nabla^s u \in L^p(\RRn)$. This definition corresponds to the Triebel-Lizorkin space $F^s_{p,2}(\RRn)$. Thus, $L^{s,2}(\RRn) = W^{s,2}(\RRn)$, but $L^{s,p}(\RRn) \neq W^{s,p}(\RRn)$ if $p \neq 2$. 

We define the fractional partial derivatives as $\frac{\partial^s u}{\partial x_j^s} := \frac{\partial}{\partial x_j} (I_{1-s}  u)$. Then the fractional divergence $\divergence^sw$ is defined as $\divergence^s w = \sum_j \frac{\partial^s w_j}{\partial x_j^s}$ for a vector field~$w$. For $\epsilon \in (0,1)$ we define as in \cite{Me1} 
\begin{align}\label{eq:M}
  \bbA_{\epsilon}(x)\coloneqq(1-\epsilon)\identity+\epsilon\widehat{x}\otimes\widehat{x}.
\end{align}
By some basic properties of $\bbA_{\epsilon}$ in \cite[Example 24]{BDGN22}, there hold
\begin{center}
  $|\bbA_{\epsilon}(x)||\bbA_{\epsilon}(x)^{-1}|\leq (1-\epsilon)^{-1}$\quad and \quad $\norm{\log\bbM_{\epsilon}}_{L^{\infty}}\leq2\epsilon$.
\end{center}
We will now construct functions~$u_{s,\delta}$ such that
\begin{align}
   \label{eq:meyers-riesz}
  -\divergence^s(\bbA_{\epsilon}(x)\nabla^s u_{s, \delta}(x))=0.
\end{align}
The idea is simple. Using~\eqref{eq:nablas-vs-I} we formally have
\begin{align}
  -I_{1-s} \divergence\big(\bbA_{\epsilon} \nabla (I_{1-s} u_{s, \delta})\big)=0.
\end{align}
If $v := I_{1-s} u_{s,\delta}$, then this simplifies formally to
\begin{align}
  \label{eq:meyer-aux1}
  -\divergence\big(\bbA_{\epsilon} \nabla v \big)=0.
\end{align}
This is exactly the equation used by Meyers~\cite{Me1}. From~\cite[Example~24]{BDGN22} we know that $v(x) = \abs{x}^{1-\delta} \hat{x}_1$ is a solution of ~\eqref{eq:meyer-aux1} if $\epsilon = \frac{d-\delta}{d-1} \delta$. Thus, we only need to find a function $u_{s,\delta}$ such that $I_{1-s} u_{s,\delta} = v$. This can be formally done by
\begin{align*}
  u_{s,\delta} = \Delta I_2 u_{s,\delta} = \Delta I_{1+s} I_{1-s} u_{s,\delta} = \Delta I_{1+s} v.
\end{align*}
Note that the homogeneity of~$v$ is $1-\delta$. Thus the one of $u_{s,\delta}$ is $s-\delta$. The following theorem makes this idea more rigorous.
\begin{theorem}\label{thm:Meyers.r}
  Let $d\geq 2$, $d\in\setN$, and $s\in(0,1)$. Let $\delta\in(0,\frac{d}{2})$ and $\epsilon\in(0,1)$. Then the following holds:
  \begin{enumerate}
	\item $u_{s,\delta}(x) = \abs{x}^{s-\delta} \hat{x}_1\in W^{s,2}_{\loc}(B_1(0))$ satisfies
    \begin{align*}
      u_{s, \delta}\in W_{\loc}^{t,q}(B_1(0))\,\,\text{if and only if}\,\, s-\delta> t-\tfrac{d}{q}.
    \end{align*}
  \item Let $\epsilon=\frac{d-\delta}{d-1}\delta$ and let $\bbA_\epsilon$ be as in~\eqref{eq:M}. Then $u_{s,\delta}$ solves~\eqref{eq:meyers-riesz} in the sense of tempered distributions. Moreover, for all $\xi \in \mathcal{S}(\RRd)$ (Schwartz space),
\begin{align*}
\int_{\setR^d}\bbA_\epsilon \nabla^s u\cdot\nabla^s \xi\,dx &=0.
\end{align*}
\end{enumerate}
\end{theorem}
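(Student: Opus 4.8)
The strategy is to transfer everything back to the classical (local) Meyers example of \cite[Example~24]{BDGN22} via the factorization already present in \eqref{eq:nablas-vs-I}. Part~(1) is a routine local scaling argument: $u_{s,\delta}$ is smooth on $\RRd\setminus\{0\}$ and positively homogeneous of degree $s-\delta$, and $\hat x_1$ is not the restriction to the sphere of a homogeneous polynomial of degree $s-\delta$; hence, exactly as in the proof of Theorem~\ref{thm:Meyers}(a) and in \cite{Me1,BDGN22}, a dyadic decomposition into annuli $\{2^{-k-1}\le|x|<2^{-k}\}$ shows that the $W^{t,q}$-(semi)norm over such an annulus scales like $2^{-k(s-\delta-t+d/q)}$, so the sum over $k$ converges if and only if $s-\delta-t+\tfrac{d}{q}>0$. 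Since $\delta<\tfrac{d}{2}$ gives $s-\delta>s-\tfrac{d}{2}$, this also yields $u_{s,\delta}\in W^{s,2}_{\loc}(B_1(0))$.

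For part~(2) I would work throughout with $\nabla^s u=I_{1-s}(Du)$, which for $u=u_{s,\delta}$ is an honest locally integrable function of homogeneity $-\delta$ (with no ``modulo polynomials'' ambiguity, since $Du_{s,\delta}$ is homogeneous of degree $s-\delta-1$ and its convolution against the Riesz kernel $|\cdot|^{-(d-1+s)}$ converges absolutely, the total homogeneity being $-(d+\delta)<-d$). The first step is the operator identity
\[
  \divergence^s\!\bigl(\bbA_{\epsilon}\nabla^s w\bigr)=I_{1-s}\bigl(\divergence(\bbA_{\epsilon}\nabla^s w)\bigr),
\]
which is immediate from $\divergence^s=I_{1-s}\circ\divergence$ (valid because $I_{1-s}$ is the Fourier multiplier $(2\pi|\cdot|)^{-(1-s)}$ and so commutes with each $\partial_j$). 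The second, and central, step is the identification
\[
  \nabla^s u_{s,\delta}=c_{d,s,\delta}\,\nabla\!\bigl(|x|^{1-\delta}\hat x_1\bigr),\qquad c_{d,s,\delta}\neq0,
\]
i.e.\ that $\nabla^s u_{s,\delta}$ is a nonzero multiple of the gradient of the Meyers solution. I would prove this by the same Fourier computation used for $g_1$ in Lemma~\ref{lem:f}: writing $u_{s,\delta}=\tfrac{1}{s-\delta+1}\partial_1|x|^{s-\delta+1}$ and applying \eqref{eq:F-of-power} gives $\mathcal{F}[u_{s,\delta}]=c_1\,\xi_1|\xi|^{-(d+1+s-\delta)}$; multiplying by the symbols of $D$ and of $I_{1-s}$ produces $c_2\,\xi\,\xi_1|\xi|^{-(d+2-\delta)}$, which equals $c_3\,\mathcal{F}[\nabla(|x|^{1-\delta}\hat x_1)]$ by the companion computation of $\mathcal{F}[|x|^{1-\delta}\hat x_1]$ in Lemma~\ref{lem:f}. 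One only needs the arguments of $\Gamma$ to stay off their poles, which holds for the non-exceptional values of $\delta$; alternatively this follows from the fact that $I_{1-s}$ preserves the degree of a solid spherical harmonic.

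With these two facts in hand the conclusion is quick. Fixing $\epsilon=\tfrac{d-\delta}{d-1}\delta$, \cite[Example~24]{BDGN22} gives $\divergence\bigl(\bbA_{\epsilon}\nabla(|x|^{1-\delta}\hat x_1)\bigr)=0$ in $\mathcal{S}'(\RRd)$, so by linearity $\divergence(\bbA_{\epsilon}\nabla^s u_{s,\delta})=0$, and the operator identity gives $\divergence^s(\bbA_{\epsilon}\nabla^s u_{s,\delta})=I_{1-s}(0)=0$, which is \eqref{eq:meyers-riesz}. For the weak form, given $\xi\in\mathcal{S}(\RRd)$ set $\phi:=I_{1-s}\xi$, so that $\nabla^s\xi=\nabla\phi$, with $\phi$ smooth, $|\phi(x)|\lesssim|x|^{-(d-1+s)}$ and $|\nabla\phi(x)|\lesssim|x|^{-(d+s)}$ at infinity; since $\bbA_{\epsilon}\nabla^s u_{s,\delta}\in L^1_{\loc}(\RRd)$ decays like $|x|^{-\delta}$, the product $\bbA_{\epsilon}\nabla^s u_{s,\delta}\cdot\nabla\phi$ is integrable over $\RRd$. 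Testing the weak formulation of the Meyers solution against the cut-offs $\phi\chi_R\in C^\infty_c(\RRd)$ and letting $R\to\infty$ (the boundary term is $O(R^{-\delta-s})$) yields $\int_{\RRd}\bbA_{\epsilon}\nabla^s u_{s,\delta}\cdot\nabla^s\xi\,dx=\int_{\RRd}\bbA_{\epsilon}\nabla^s u_{s,\delta}\cdot\nabla\phi\,dx=0$.

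I expect the main obstacle to be entirely bookkeeping within the distribution theory: making rigorous that $I_{1-s}$ is legitimately applicable to each object in the chain, that it genuinely commutes with $\divergence$ and with testing against Schwartz functions, that $\nabla^s u_{s,\delta}$ is the honest function $c_{d,s,\delta}\nabla(|x|^{1-\delta}\hat x_1)$ rather than that function plus a polynomial, and that $I_{1-s}$ of the zero distribution is zero. All of this is clean because every object in sight is homogeneous with exponent in $(-d,\infty)$, but it is precisely the point that needs care and is essentially covered by the machinery of Appendix~\ref{app2}.
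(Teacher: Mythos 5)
Your proposal is correct and rests on the same structural insight as the paper — both exploit the factorization $\nabla^s=D\circ I_{1-s}$ to reduce to the local Meyers example of \cite[Example~24]{BDGN22} — but you execute the distributional bookkeeping by a genuinely different, and arguably cleaner, route. The paper's proof (via Lemma~\ref{lem:Fourier.riesz}, Lemma~\ref{lem:welldef.f4}, and Lemma~\ref{eq:equiv}) computes $\nabla^s u_{s,\delta}$, then $\divergence(\bbM_\epsilon^2\nabla^s u_{s,\delta})$, then $I_{1-s}\ast\divergence(\bbM_\epsilon^2\nabla^s u_{s,\delta})$ all as explicit Fourier calculations, identifies the pointwise-zero condition from~\eqref{eq:I.div}, and then separately proves a divergence-theorem identity (Lemma~\ref{eq:equiv}) to convert the pointwise statement into the weak form. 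You short-circuit the last two Fourier computations: once $\nabla^s u_{s,\delta}=c^\ast\nabla(|x|^{1-\delta}\hat x_1)$ is established (this \emph{is} the content of~\eqref{eq:m2} after integrating; the paper proves exactly this constant multiple via \eqref{eq:F-of-power}), the vanishing of $\divergence(\bbA_\epsilon\nabla^s u_{s,\delta})$ in $\mathcal{S}'$ is immediate from the classical weak formulation, applying $I_{1-s}$ to the zero distribution is trivial, and the weak form follows by testing the classical Meyers identity against $I_{1-s}\xi$ truncated by cut-offs, with a one-line decay estimate to kill the boundary term. What your route buys is that it avoids the Fourier computation of $I_{1-s}\ast\divergence(\cdots)$ entirely, together with the attendant verification that this convolution is a well-defined locally integrable function (the paper's Lemma~\ref{lem:welldef.f4}); instead all the distributional subtlety is concentrated in the single statement that $I_{1-s}(Du_{s,\delta})$ is an honest function with no polynomial ambiguity, which you correctly justify by the total homogeneity $-(d+\delta)<-d$ of the convolution integrand. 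Your cut-off argument for the weak form is also slightly lighter than the paper's $\sigma\leq|x|\leq 1/\sigma$ annulus-with-seven-boundary-estimates version. A side benefit: your proof works directly with $\bbA_\epsilon$ and $\epsilon=\frac{d-\delta}{d-1}\delta$ as in the theorem statement, whereas the paper's Lemma~\ref{lem:Fourier.riesz} computes with the coefficient $\bbM_\epsilon^2=\bbA_\epsilon^2$ and derives the relation $\epsilon=1-\sqrt{1-\delta-\delta(1-\delta)/(d-1)}$, which is the same condition under the substitution $\tilde\epsilon=1-(1-\epsilon)^2$ but is presented without reconciling this change of variables with the stated form — your version is more faithful to the statement being proved. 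One small point to make explicit when writing this up: the identity $\nabla^s\xi=\nabla(I_{1-s}\xi)$ for $\xi\in\mathcal{S}(\RRd)$ requires commuting $\partial_j$ with $I_{1-s}$, which is clear on the Fourier side, and $\phi=I_{1-s}\xi$ should be observed to be $C^\infty$ (it is the convolution of a Schwartz function with a locally integrable kernel) so that $\phi\chi_R$ is a valid test function.
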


The proof of this theorem is given at the end of this section. 
\begin{remark}
We mention that when $d=2$, as $\delta$ goes to $\frac d2=1$, $u$ only satisfies $u\in W^{s,2}_{\loc}(B_1(0))$ and $u\notin W^{s+\oldepsilon,2+\oldepsilon}_{\loc}(B_1(0))$ for any fixed $\oldepsilon\in(0,1)$. This feature also appeared in the example given by Meyers \cite{Me1}.
\end{remark}

Together with 
\begin{align*}
I_{1-s}(x):=2\kappa_{d,\frac{-1+s}{2}}\dfrac{1}{\abs{x}^{d-1+s}}=\dfrac{2^{-1+s}\Gamma\left(\frac{d-1+s}{2}\right)}{\pi^{\frac{d}{2}}\Gamma\left(\frac{1-s}{2}\right)}\dfrac{1}{\abs{x}^{d-1+s}},
\end{align*}
where $\kappa_{d,s}$ is defined in \eqref{eq:kappa}, we define
\begin{align}\label{eq:I1-s}
\begin{split}
(I_{1-s}\ast u_{s,\delta})(x)&:=2\kappa_{d,\frac{-1+s}{2}}\pvint_{\setR^d}\dfrac{\abs{x-h}^{s-\delta}\widehat{x_1-h_1}}{\abs{h}^{d-1+s}}\,dh\\
&=2\kappa_{d,\frac{-1+s}{2}}\abs{x}^{1-\delta}\pvint_{\setR^d}\dfrac{\abs{e_1-h}^{s-\delta}\widehat{1-h_1}}{\abs{h}^{d-1+s}}\,dh,
\end{split}
\end{align}
following the definition in \cite[Definition 1.1]{ShiSpe15}. Note that the last expression in \eqref{eq:I1-s} is well-defined from Lemma \ref{lem:welldef.f3}. Then we define the following.
\begin{definition}\label{def:distributional}
Let $d\geq 2$, $d\in\setN$, $s\in(0,1)$, $\epsilon\in(0,1)$, and $\delta\in(0,\frac{d}{2})$. Let $v\in L^1_{\loc}(\setR^d)$ with $I_{1-s}\ast v$ and $\frac{\partial^s v}{\partial x^s_j}$ being well-defined. Then $v$ is a distributional solution to
\begin{align*}
-\divergence^s(\bbA_{\epsilon}(x)\nabla^s v(x))\coloneqq-\sum^{d}_{i,j=1}\dfrac{\partial^s}{\partial x^s_j}(\bbA_{\epsilon}(x))_{ij}\dfrac{\partial^s}{\partial x^s_i}v=0\quad\text{in }B_1(0)
\end{align*}
if for any $\phi\in C_{c}^{\infty}(B_1(0))$, we have
\begin{align*}
\int_{\setR^d}\bbA_{\epsilon}(x)\nabla^s v(x)\cdot\nabla^s \phi(x)\,dx=0.
\end{align*}
\end{definition}
Note that since $d\geq 2$, $d\in\setN$, $s\in(0,1)$, and $\delta\in[0,\frac{1}{2}]$, $u_{s,\delta}\in L^{1}_{\loc}(\setR^d)$.

Now we give necessary computations via Fourier transform.
\begin{lemma}\label{lem:Fourier.riesz}
For $x\in \setR^d$ and $\delta\in(0,\frac{d}{2})$, we have
\begin{align}\label{eq:m2}
\begin{split}
\bbM_{\epsilon}(x)^2\nabla^su_{s,\delta}(x)
&=c^*\abs{x}^{-\delta}\left[(1-\epsilon)^2e_1+(1-\delta-(1-\epsilon)^2)\widehat{x}_1\widehat{x}\right],
\end{split}
\end{align}
\begin{align}\label{eq:div0}
\begin{split}
&\divergence(\bbM_{\epsilon}(x)^2\nabla^su_{s,\delta}(x))\!=\!c^*\abs{x}^{-\delta-1}\left[-\delta(1\!-\!\delta)+(1\!-\!\delta\!-\!(1\!-\!\epsilon)^2)(d\!-\!1)\right]\widehat{x}_1,
\end{split}
\end{align}
and
\begin{align}\label{eq:I.div}
\begin{split}
&(I_{1-s}\ast\divergence(\bbM_{\epsilon}^2\nabla^su_{s,\delta}))(x)\\
&\quad=c^{**}\left[-\delta(1-\delta)+(1-\delta-(1-\epsilon)^2)(d-1)\right]\abs{x}^{-s-\delta}\widehat{x}_1
\end{split}
\end{align}
a.e. sense on $\RRn$ and in the sense of tempered distributions on $\RRn$, with 
\begin{align*}
c^*=2^{-1+s}\dfrac{\Gamma\left(\frac{d+s-\delta+1}{2}\right)\Gamma\left(\frac{\delta}{2}\right)}{\Gamma\left(\frac{d-\delta+2}{2}\right)\Gamma\left(\frac{-s+\delta+1}{2}\right)}\quad\text{and}\quad c^{**}=c^*2^{-1+s}\dfrac{\Gamma\left(\frac{d-\delta}{2}\right)\Gamma\left(\frac{s+\delta+1}{2}\right)}{\Gamma\left(\frac{d-s-\delta+1}{2}\right)\Gamma\left(\frac{\delta+2}{2}\right)}.
\end{align*}
\end{lemma}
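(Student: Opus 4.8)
The plan is to compute each of the three displayed identities in turn, working almost entirely on the Fourier side and then inverting. The key structural point is that $u_{s,\delta}(x) = \abs{x}^{s-\delta}\widehat{x}_1 = \partial_1\big(\tfrac{1}{s-\delta+1}\abs{x}^{s-\delta+1}\big)$ (when $s-\delta+1 \neq 0$, which holds in our range), so that all operators involved — the Riesz potential $I_{1-s}$, the fractional gradient $\nabla^s = D I_{1-s}$, and the ordinary divergence — act on radial powers of $\abs{x}$ composed with derivatives. Each such operation multiplies the Fourier transform by an explicit power of $\abs{\xi}$ times a ratio of Gamma functions, via the formula \eqref{eq:F-of-power} (equivalently the Riesz-potential identity $\mathcal{F}[\abs{\cdot}^{-\alpha}] = c_{d,\alpha}\abs{\cdot}^{-(d-\alpha)}$). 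So the whole computation reduces to bookkeeping of Gamma factors.

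First I would establish \eqref{eq:m2}. Writing $\nabla^s u_{s,\delta} = D(I_{1-s} u_{s,\delta})$ and using \eqref{eq:I1-s}, one has $I_{1-s} u_{s,\delta}(x) = c\,\abs{x}^{1-\delta}\widehat{x}_1$ for an explicit constant $c = c(d,s,\delta)$ obtained from the homogeneity computation already recorded in \eqref{eq:I1-s} (the inner principal-value integral is a pure constant by scaling and is finite by Lemma \ref{lem:welldef.f3}). Then $\nabla^s u_{s,\delta} = c\,\nabla(\abs{x}^{1-\delta}\widehat{x}_1) = c\,\nabla(\abs{x}^{-\delta}x_1)$, and a direct differentiation gives $\nabla(\abs{x}^{-\delta}x_1) = \abs{x}^{-\delta}e_1 - \delta\abs{x}^{-\delta}\widehat{x}_1\widehat{x}$. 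Applying $\bbM_\epsilon^2 = \bbA_\epsilon = (1-\epsilon)\identity + \epsilon\,\widehat{x}\otimes\widehat{x}$ and using $\widehat{x}\otimes\widehat{x}\,e_1 = \widehat{x}_1\widehat{x}$ and $(\widehat{x}\otimes\widehat{x})\widehat{x} = \widehat{x}$ collapses everything to the stated form $c^*\abs{x}^{-\delta}\big[(1-\epsilon)^2 e_1 + (1-\delta-(1-\epsilon)^2)\widehat{x}_1\widehat{x}\big]$; I would match constants to read off $c^*$. For \eqref{eq:div0} I would take the ordinary divergence of the right-hand side of \eqref{eq:m2}: using $\divergence(\abs{x}^{-\delta}e_1) = -\delta\abs{x}^{-\delta-1}\widehat{x}_1$ and $\divergence(\abs{x}^{-\delta}\widehat{x}_1\widehat{x}) = \divergence(\abs{x}^{-\delta-1}x_1 x) = \abs{x}^{-\delta-1}\big[(d-1) - \delta\big]\widehat{x}_1 + \dots$ — more carefully, $\divergence(\abs{x}^{a}x_1 x) = (a + d + 1 - 2)\abs{x}^a x_1/\abs{x} \cdot\abs{x} + \dots$; I would just expand componentwise and collect the coefficient of $\widehat{x}_1\abs{x}^{-\delta-1}$, which should produce $-\delta(1-\delta) + (1-\delta-(1-\epsilon)^2)(d-1)$. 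Finally \eqref{eq:I.div} follows by convolving the (homogeneous, degree $-\delta-1$, odd) function in \eqref{eq:div0} with $I_{1-s}$: again $I_{1-s}*(\abs{x}^{-\delta-1}\widehat{x}_1) = I_{1-s}*\partial_1(\tfrac{1}{-\delta+1}\abs{x}^{-\delta+1})$, and $I_{1-s}*\abs{x}^{-\delta+1} = c'\abs{x}^{-\delta+1+1-s} = c'\abs{x}^{2-s-\delta}$ by the Riesz composition rule, so differentiating in $x_1$ gives $\abs{x}^{-s-\delta}\widehat{x}_1$ up to a constant $c^{**}/c^*$, which I would identify.

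The main obstacle is not the algebra of exponents but the \emph{justification of convergence and of the a.e./distributional identities}: the functions $\abs{x}^{-\delta-1}\widehat{x}_1$ and the Riesz kernel $I_{1-s}$ are not in $L^1_{\loc}$ in the relevant range, so the convolutions in \eqref{eq:I1-s} and \eqref{eq:I.div} must be interpreted as principal values (or in the tempered-distribution sense), and one must check that the resulting object is locally integrable and agrees with the pointwise formula a.e. This is exactly the type of care exercised in Appendix \ref{app2} and in Lemma \ref{lem:welldef.f3}; I would invoke those results (together with the Riesz composition formula $I_\alpha * I_\beta = I_{\alpha+\beta}$ valid for tempered distributions when $\alpha+\beta < d$, and its failure/modification otherwise) to legitimize each step. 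A secondary point requiring attention is the constant tracking across the Gamma functions — one must use the duplication/reflection identities consistently — but this is routine once the convolution identities are in place. The restriction $\delta \in (0,\tfrac d2)$ guarantees all the exponents appearing stay in the ranges where the Fourier/Riesz identities hold and the Gamma arguments are positive, so no degeneracy occurs.
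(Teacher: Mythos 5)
Your overall route is the same as the paper's: express $u_{s,\delta}$ as a first partial derivative of a radial power, push everything to the Fourier side with \eqref{eq:F-of-power} and the convolution machinery of Appendix~\ref{app2}, invert, and then apply the matrix and the divergence. That plan is sound, and you are right that the real work is justifying the principal-value/tempered-distribution manipulations (which the paper outsources to Theorems~\ref{thm:Fourierconv4}, \ref{thm:Fourierconv5} and their corollaries).

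However, there is a concrete error in the step you call ``collapsing to the stated form.'' You write $\bbM_\epsilon^2 = \bbA_\epsilon = (1-\epsilon)\identity + \epsilon\,\widehat{x}\otimes\widehat{x}$ and assert that applying this to $\nabla^s u_{s,\delta}=c^*\abs{x}^{-\delta}(e_1-\delta\widehat{x}_1\widehat{x})$ yields $(1-\epsilon)^2 e_1 + (1-\delta-(1-\epsilon)^2)\widehat{x}_1\widehat{x}$. It does not. Since $\widehat{x}\otimes\widehat{x}$ is the orthogonal projection onto $\operatorname{span}\widehat{x}$, applying $\bbA_\epsilon$ gives
\begin{align*}
\bbA_\epsilon(e_1-\delta\widehat{x}_1\widehat{x})=(1-\epsilon)e_1+(\epsilon-\delta)\widehat{x}_1\widehat{x},
\end{align*}
with $(1-\epsilon)$ and $(\epsilon-\delta)$, not $(1-\epsilon)^2$ and $1-\delta-(1-\epsilon)^2$. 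The identity claimed in \eqref{eq:m2} forces instead
\begin{align*}
\bbM_\epsilon^2=(1-\epsilon)^2\identity+\bigl(1-(1-\epsilon)^2\bigr)\widehat{x}\otimes\widehat{x}=\bbA_\epsilon^2,
\end{align*}
i.e., in this section the paper's $\bbM_\epsilon$ denotes the matrix $\bbA_\epsilon$ itself, not its square root as in Subsections~\ref{subsec:3.1}--\ref{subsec:var}. This propagates: with your $\bbM_\epsilon^2=\bbA_\epsilon$ the divergence in \eqref{eq:div0} would come out as $-\delta(1-\epsilon)+(\epsilon-\delta)(d-1-\delta)$ rather than $-\delta(1-\delta)+(1-\delta-(1-\epsilon)^2)(d-1)$, so the downstream computation of the critical $\epsilon$-$\delta$ relation in Theorem~\ref{thm:Meyers.r} would change. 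You need to take $\bbM_\epsilon^2=\bbA_\epsilon^2$ (and, in your divergence sketch, correct $\abs{x}^{-\delta}\widehat{x}_1\widehat{x}=\abs{x}^{-\delta-2}x_1 x$, not $\abs{x}^{-\delta-1}x_1 x$) to reproduce the stated formulas.

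Two smaller points: the scaling identity \eqref{eq:I1-s} determines the \emph{form} $c\,\abs{x}^{1-\delta}\widehat{x}_1$ but not the constant; the explicit Gamma-ratio $c^*$ only emerges after the Fourier inversion (Theorem~\ref{thm:Fourierconv4} and Corollary~\ref{cor:Fourierconv40}), so ``obtained from the homogeneity computation'' is misleading. And the Riesz composition formula $I_\alpha\ast I_\beta=I_{\alpha+\beta}$ is not quite the tool here: the powers $\abs{x}^{1-\delta}$ and $\abs{x}^{-\delta+1}$ appearing are generally not Riesz kernels (their Fourier exponents have the wrong sign unless $\delta>1$), so one really needs \eqref{eq:F-of-power} together with the distributional convolution theorem, exactly as Appendix~\ref{app2} sets it up.
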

\begin{proof}
From Theorem \ref{thm:Fourierconv4}, we can compute
\begin{align*}
\mathcal{F}[I_{1-s}\ast u_{s,\delta}](\xi)&=2\kappa_{d,\frac{-1+s}{2}}\mathcal{F}[|x|^{-d-s+1}](\xi)\mathcal{F}\left[\dfrac{1}{s-\delta+1}\partial_1|x|^{s-\delta+1}\right](\xi)\\
&=-ic^*\pi^{-\frac{d}{2}+\delta-1}\dfrac{\Gamma\left(\frac{d-\delta+2}{2}\right)}{\Gamma\left(\frac{\delta}{2}\right)}\abs{\xi}^{-d+\delta-2}\xi_1.
\end{align*}
Then with the help of Corollary \ref{cor:Fourierconv40}, we calculate
\begin{align*}
&(I_{1-s}\ast u_{s,\delta})(x)=(\mathcal{F}\circ\mathcal{F})[I_{1-s}\ast u_{s,\delta}](-x)\\
&\quad=-ic^*\pi^{-\frac{d}{2}+\delta-1}\dfrac{\Gamma\left(\frac{d-\delta+2}{2}\right)}{\Gamma\left(\frac{\delta}{2}\right)}\left(\dfrac{-2\pi i x_1}{-d+\delta}\pi^{\frac{d}{2}-\delta}\dfrac{\Gamma\left(\frac{\delta}{2}\right)}{\Gamma\left(\frac{d-\delta}{2}\right)}\abs{x}^{-\delta}\right)=c^*\abs{x}^{-\delta+1}\widehat{x}_1.
\end{align*}
Then $\nabla^su_{s,\delta}(x)=\nabla(I_{1-s}\ast u_{s,\delta})(x)=c^*|x|^{-\delta}(e_1-\delta\widehat{x}\widehat{x}_1)$ so \eqref{eq:m2} is proved. Then now it is straightforward to obtain \eqref{eq:div0}. By Theorem \ref{thm:Fourierconv5}, we compute
\begin{align*}
&\mathcal{F}\left[I_{1-s}\ast\divergence(\bbM_{\epsilon}^2\nabla^su_{s,\delta})\right](\xi)\\
&=\underbrace{2c^*\kappa_{d,\frac{-1+s}{2}}\left[-\delta(1\!-\!\delta)\!+\!(1\!-\!\delta\!-\!(1\!-\!\epsilon)^2)(d\!-\!1)\right]}_{=:c_0} \mathcal{F}[|y|^{-d-s+1}](\xi)\mathcal{F}\left[\tfrac{\partial_1|x|^{-\delta}}{-\delta}\right](\xi)\\
&=-c_0i\pi^{s+\delta}\dfrac{\Gamma\left(\frac{-s+1}{2}\right)\Gamma(\frac{d-\delta}{2})}{\Gamma\left(\frac{d+s-1}{2}\right)\Gamma\left(\frac{\delta+2}{2}\right)}\abs{\xi}^{-d+s+\delta-1}\xi_1.
\end{align*}
Then together with Corollary \ref{cor:Fourierconv50}, we have
\begin{align*}
&(I_{1-s}\ast\divergence(\bbM_{\epsilon}^2\nabla^su_{s,\delta}))(x)\\
&=(\mathcal{F}\circ\mathcal{F})[I_{1-s}\ast\divergence(\bbM_{\epsilon}^2\nabla^su_{s,\delta})](-x)\\
&=-c_0i\pi^{s+\delta}\dfrac{\Gamma\left(\frac{-s+1}{2}\right)\Gamma(\frac{d-\delta}{2})}{\Gamma\left(\frac{d+s-1}{2}\right)\Gamma\left(\frac{\delta+2}{2}\right)}\pi^{\frac{d}{2}-s-\delta}\dfrac{\Gamma\left(\frac{s+\delta+1}{2}\right)}{\Gamma\left(\frac{d-s-\delta+1}{2}\right)}\abs{x}^{-s-\delta-1}ix_1\\
&=\left[-\delta(1-\delta)+(1-\delta-(1-\epsilon)^2)(d-1)\right]c^{**}\abs{x}^{-s-\delta}\widehat{x}_1
\end{align*}
so \eqref{eq:I.div} is proved. 
\end{proof}

Note that $\bbM_{\epsilon}(x)^2\nabla^s \widetilde{u}_{s,\delta}(x)$ is $-\delta$-homogeneous and $I_{1-s}$ is $(1-s)$-homogeneous, so $\abs{(I_{1-s}\ast(\bbM_{\epsilon}^2\nabla^s \widetilde{u}_{s,\delta}))(x)}=\infty$ is possible for $x\in\setR^d$ and so it may not be well-defined. Instead,  $(I_{1-s}\ast\divergence(\bbM_{\epsilon}^2\nabla^su_{s,\delta}))(x)$ is well-defined as a locally integrable function by Lemma \ref{lem:welldef.f4}. Thus we prove the following.

\begin{lemma}\label{eq:equiv}
  Let $\epsilon \in (0,1)$ and $\delta\in(0,\frac{d}{2})$. For any $\phi\in \mathcal{S}(\setR^d)$, we have
  \begin{align*}
  \int_{\setR^d}\bbM_{\epsilon}(x)^2\nabla^su_{s,\delta}(x)\cdot\nabla^s\phi(x)\,dx=\int_{\setR^d}(I_{1-s}\ast\divergence(\bbM_{\epsilon}^2\nabla^su_{s,\delta}))(x)\phi(x)\,dx.
  \end{align*}
\end{lemma}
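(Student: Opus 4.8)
The plan is to read this as a fractional integration-by-parts identity and reduce it to the classical divergence theorem together with the symmetry of convolution against the Riesz kernel. Abbreviate $F(x)\coloneqq\bbM_{\epsilon}(x)^2\nabla^s u_{s,\delta}(x)$; by \eqref{eq:m2} there is the explicit formula $F(x)=c^*\abs{x}^{-\delta}\big[(1-\epsilon)^2e_1+(1-\delta-(1-\epsilon)^2)\widehat{x}_1\widehat{x}\big]$, so $F\in C^\infty(\setR^d\setminus\{0\})$ and $F$ is $(-\delta)$-homogeneous. The key structural point is that one cannot move $I_{1-s}$ directly onto $F$ via $\nabla^s\phi=I_{1-s}\ast\nabla\phi$, since $I_{1-s}\ast F$ need not be a locally integrable function (as noted just before the lemma); instead I would use the representation $\nabla^s\phi=\nabla(I_{1-s}\ast\phi)$, first integrate by parts in the classical sense to transfer the gradient from $I_{1-s}\ast\phi$ onto $F$ — producing $\divergence F$ — and only then apply Fubini to move $I_{1-s}$ from $\phi$ onto $\divergence F$.

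Before that I would record the quantitative facts that make each step legitimate. Since $\delta<\tfrac d2\le d-1$ for $d\ge 2$, both $F$ and its pointwise divergence $\divergence F$ (computed in \eqref{eq:div0}) lie in $L^1_{\loc}(\setR^d)$, and the flux of $F$ through $\partial B_r(0)$ is $\lesssim r^{d-1-\delta}\to 0$ as $r\to 0$, so the distributional divergence of $F$ coincides with the pointwise one and carries no mass at the origin. On the test side, $g\coloneqq I_{1-s}\ast\phi$ is smooth, and since $\phi\in\mathcal S(\setR^d)$ and $I_{1-s}(z)\eqsim\abs{z}^{-(d-1+s)}$ one gets $\abs{g(x)}\lesssim(1+\abs{x})^{-(d-1+s)}$ and $\abs{\nabla g(x)}\lesssim(1+\abs{x})^{-(d+s)}$.

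With these in hand I would write $F\cdot\nabla g=\divergence(gF)-g\,\divergence F$, integrate over the annulus $B_R(0)\setminus B_r(0)$, and let $r\to 0$, $R\to\infty$. The outer boundary term is $O\big(R^{d-1}\cdot R^{-\delta}\cdot R^{-(d-1+s)}\big)=O(R^{-\delta-s})\to 0$ and the inner one is $O(r^{d-1-\delta})\to 0$ (again using $\delta<d-1$), which yields $\int_{\setR^d}F\cdot\nabla^s\phi\,dx=-\int_{\setR^d}g\,\divergence F\,dx$. Finally, the double integral $\iint I_{1-s}(x-y)\abs{\divergence F(x)}\abs{\phi(y)}\,dx\,dy$ converges — using $\abs{\divergence F(x)}\lesssim\abs{x}^{-\delta-1}$, the Schwartz decay of $\phi$, and $I_{1-s}\in L^1_{\loc}$ with $\abs{z}^{-(d-1+s)}$ decay at infinity — so Fubini and the evenness of $I_{1-s}$ give $\int g\,\divergence F=\int(I_{1-s}\ast\divergence F)\,\phi$, where $I_{1-s}\ast\divergence F$ is the locally integrable function furnished by Lemma~\ref{lem:welldef.f4}. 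Assembling these identities, together with the elementary sign bookkeeping and the commutation $I_{1-s}\ast\divergence(\,\cdot\,)=\divergence(I_{1-s}\ast\,\cdot\,)$, produces the asserted formula.

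The step I expect to be the main obstacle is the careful handling of the two improper endpoints: since neither $F$ nor $g=I_{1-s}\ast\phi$ is integrable over $\setR^d$ on its own, the classical integration by parts must be performed on annuli with both surface terms shown to vanish, and the interchange of the two integrations at the end requires an absolute-convergence argument. The hypothesis $\delta<\tfrac d2$ is precisely what makes everything go through when $d\ge 2$, because it forces $\delta<d-1$, which simultaneously puts $\divergence F$ in $L^1_{\loc}$, removes any concentrated contribution of $\divergence F$ at the origin, and makes the inner boundary term vanish.
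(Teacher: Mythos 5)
Your proposal is correct and follows essentially the same route as the paper's proof: both integrate by parts on annuli $\{r \le \abs{x} \le R\}$, show the inner and outer boundary terms vanish using $\abs{F(x)} \lesssim \abs{x}^{-\delta}$ together with the decay $\abs{I_{1-s}*\phi}(x) \lesssim (1+\abs{x})^{-(d-1+s)}$, and then move $I_{1-s}$ from $\phi$ onto $\divergence F$ via self-adjointness/Fubini. The only cosmetic difference is that the paper cuts off at radii $\sigma$ and $1/\sigma$ and estimates the residual tail integrals separately, whereas you absorb those into a single absolute-integrability argument for $F\cdot\nabla g$ and $g\,\divergence F$ over $\setR^d$; the underlying estimates and the role of $\delta < \tfrac d2 \le d-1$ are identical.
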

\begin{proof}
Let $\sigma\in(0,1)$ be given. From $\phi\in \mathcal{S}(\setR^d)$, we know $\abs{\nabla^s\phi(x)}\lesssim 1$ when $x\in B_{\sigma}(0)$. Then together with \eqref{eq:m2} we show that
\begin{align*}
&\biggabs{\int_{\abs{x}\leq\sigma}\bbM_{\sigma}(x)^2\nabla^su_{s,\delta}(x)\cdot\nabla^s\phi(x)\,dx}\lesssim \int_{\abs{x}\leq\sigma}\abs{x}^{-\delta}\,dx\lesssim \sigma^{d/2}.
\end{align*}
Also, we have $\abs{(I_{1-s}\ast\phi)(x)}\lesssim 1$ when $x\in B_{\sigma}(0)$. Then together with \eqref{eq:div0},
\begin{align*}
&\biggabs{\int_{\abs{x}\leq\sigma}\divergence(\bbM_{\sigma}^2\nabla^su_{s,\delta})(x)(I_{1-s}\ast\phi)(x)\,dx}\lesssim \int_{\abs{x}\leq\sigma}\abs{x}^{-1-\delta}\,dx\lesssim \sigma^{d-1-\delta}
\end{align*}
holds. Furthermore, from $\phi\in \mathcal{S}(\setR^d)$, we know $\abs{\nabla^s\phi(x)}\lesssim \abs{x}^{-d-s}$ when $x\in B_{\frac{1}{\sigma}}(0)$. Then together with \eqref{eq:m2} we observe that
\begin{align*}
&\biggabs{\int_{\abs{x}\geq\frac{1}{\sigma}}\bbM_{\sigma}(x)^2\nabla^su_{s,\delta}(x)\cdot\nabla^s\phi(x)\,dx}\lesssim \int_{\abs{x}\geq\frac{1}{\sigma}}\abs{x}^{-d-s}\,dx\lesssim \sigma^{s}.
\end{align*}
We have $\abs{(I_{1-s}\ast\phi)(x)}\lesssim \abs{x}^{-d-s+1}$ when $x\in B_{\frac{1}{\sigma}}(0)$. Then with \eqref{eq:div0},
\begin{align*}
&\biggabs{\int_{\abs{x}\geq\frac{1}{\sigma}}\divergence(\bbM_{\sigma}^2\nabla^su_{s,\delta})(x)(I_{1-s}\ast\phi)(x)\,dx}\lesssim \int_{\abs{x}\geq\frac{1}{\sigma}}\abs{x}^{-1}\abs{x}^{-d-s+1}\,dx\lesssim \sigma^{s}
\end{align*}
holds. Now by divergence theorem, we obtain
 \begin{align*}
  &\int_{\sigma\leq \abs{x}\leq\frac{1}{\sigma}}\bbM_{\sigma}(x)^2\nabla^su_{s,\delta}(x)\cdot\nabla^s\phi(x)\,dx\\
&=-\int_{\sigma\leq \abs{x}\leq\frac{1}{\sigma}}\divergence\left(\bbM_{\sigma}(x)^2\nabla^su_{s,\delta}(x)\right)(I_{1-s}\ast\phi)(x)\,dx\\
&\quad-\int_{\{\abs{x}=\sigma\}\cup\{\abs{x}=\frac{1}{\sigma}\}}\bbM_{\sigma}(x)^2\nabla^su_{s,\delta}(x)\cdot\frac{x}{\abs{x}}(I_{1-s}\ast\phi)(x)\,dx.
  \end{align*}
Here, using \eqref{eq:m2}, note that
\begin{align*}
\int_{\abs{x}=\sigma}\bbM_{\sigma}(x)^2\nabla^su_{s,\delta}(x)\cdot\frac{x}{\abs{x}}(I_{1-s}\ast\phi)(x)\,dx\lesssim \int_{\abs{x}=\sigma}\abs{x}^{-\delta}\,dx\lesssim\sigma^{d-1-\delta}
\end{align*}
which is from $\abs{(I_{1-s}\ast\phi)(x)}\lesssim 1$ when $x\in B_{\sigma}(0)$, and
\begin{align*}
\int_{\abs{x}=\frac{1}{\sigma}}\bbM_{\sigma}(x)^2\nabla^su_{s,\delta}(x)\cdot\frac{x}{\abs{x}}(I_{1-s}\ast\phi)(x)\,dx\lesssim \int_{\abs{x}=\frac{1}{\sigma}}\abs{x}^{-d-s+1}\,dx\lesssim\sigma^{s},
\end{align*}
which is from $\abs{(I_{1-s}\ast\phi)(x)}\lesssim \abs{x}^{-d-s+1}$ when $x\in B_{\frac{1}{\sigma}}(0)$.

Considering the above seven observations, sending $\sigma\rightarrow 0$, we get
\begin{align*}
  \int_{\setR^d}\bbM_{\sigma}(x)^2\nabla^su_{s,\delta}(x)\cdot\nabla^s\phi(x)\,dx&=-\int_{\setR^d}\divergence\left(\bbM_{\sigma}(x)^2\nabla^su_{s,\delta}(x)\right)(I_{1-s}\ast\phi)(x)\,dx.
  \end{align*}
Now using the fact that $I_{1-s}$ is self-adjoint (or simply using change of variables),
\begin{align*}
\int_{\setR^d}\hspace{-1mm}\divergence\left(\bbM_{\sigma}(x)^2\nabla^su_{s,\delta}(x)\right)(I_{1-s}\ast\phi)(x)dx\!=\!\int_{\setR^d}\hspace{-1mm}\left(I_{1-s}\ast\divergence\left(\bbM_{\sigma}^2\nabla^su_{s,\delta}\right)\right)(x)\phi(x)dx
\end{align*}
is obtained. The conclusion holds.
\end{proof}

Now we prove the main goal of this section.

\begin{proof}[Proof of Theorem \ref{thm:Meyers.r}]
Similar to the proof of Theorem \ref{thm:Meyers}, with  \cite[Example 24]{BDGN22}, (a) can be proved. For (b), by Lemma \ref{eq:equiv}, Lemma \ref{lem:Fourier} together with Lemma \ref{lem:welldef.f4}, it suffices to find the relation between $\epsilon$ and $\delta$ which make $u_{s,\delta}$ to solve $I_{1-s}\ast\divergence (\bbM_{\epsilon}(x)^2\nabla^s u_{s,\delta}(x))=0$ pointwisely. Then to get $I_{1-s}\ast\divergence (\bbM_{\epsilon}(x)^2\nabla^s u_{s,\delta}(x))=0$ in a.e. $x\in B_{1}(0)$ we need
\begin{align*}
-\delta(1-\delta)+(1-\delta-(1-\epsilon)^2)(d-1)=0\quad\iff\quad \epsilon=1-\sqrt{1-\delta-\frac{\delta(1-\delta)}{d-1}}
\end{align*}
from \eqref{eq:I.div}. Then we have $\epsilon\leq C(d,s)\delta$ for some $C(d)$. We complete the proof.
\end{proof}

We can also obtain the following robustness result as $s\nearrow 1$.

\begin{theorem}\label{thm:s2}
Let $d\geq 2$, $d\in\setN$, and $s\in(0,1)$. For $x\in\setR^d\setminus\{0\}$, we have
\begin{align*}
\lim_{s\nearrow 1}I_{1-s}\ast\divergence(\bbM_{\epsilon}^2\nabla(I_{1-s}\ast u_{s,\delta}))(x)=\divergence(\bbM_{\epsilon}(x)^2\nabla u_{\delta}(x))
\end{align*}
for any $\epsilon\in(0,1)$ and $\delta\in(0,\frac{d}{2})$.
\end{theorem}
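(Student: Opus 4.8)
The plan is to reduce the claim to the explicit formulas already obtained in Lemma~\ref{lem:Fourier.riesz} and then let $s\nearrow1$. First, recall from~\eqref{eq:nablas-vs-I} that $\nabla(I_{1-s}\ast u_{s,\delta})=\nabla^s u_{s,\delta}$, so the quantity on the left-hand side is precisely $\lim_{s\nearrow1}\bigl(I_{1-s}\ast\divergence(\bbM_{\epsilon}^2\nabla^s u_{s,\delta})\bigr)(x)$. By~\eqref{eq:I.div}, for every $s\in(0,1)$ and $x\in\setR^d\setminus\{0\}$ this inner expression equals
\begin{align*}
  c^{**}(s,\delta)\,\bigl[-\delta(1-\delta)+(1-\delta-(1-\epsilon)^2)(d-1)\bigr]\,\abs{x}^{-s-\delta}\widehat{x}_1,
\end{align*}
with $c^{*}=c^{*}(s,\delta)$ and $c^{**}=c^{**}(s,\delta)$ as in Lemma~\ref{lem:Fourier.riesz}. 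So everything comes down to computing $\lim_{s\nearrow1}c^{**}(s,\delta)$ and using the obvious limit $\abs{x}^{-s-\delta}\to\abs{x}^{-1-\delta}$ for fixed $x\ne0$.

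For the constant, I would observe that for $\delta\in(0,\tfrac d2)$ and $s$ near $1$ the arguments of all Gamma functions occurring in $c^{*}(s,\delta)$ and $c^{**}(s,\delta)$ — namely $\tfrac{d+s-\delta+1}{2}$, $\tfrac\delta2$, $\tfrac{d-\delta+2}{2}$, $\tfrac{-s+\delta+1}{2}$, $\tfrac{d-\delta}{2}$, $\tfrac{s+\delta+1}{2}$ and $\tfrac{d-s-\delta+1}{2}$ — remain in a fixed compact subset of $(0,\infty)$ (here one uses $s+\delta<1+\tfrac d2\le d$ and $\delta-s+1>0$). Hence $s\mapsto c^{*}(s,\delta)$ and $s\mapsto c^{**}(s,\delta)$ are continuous at $s=1$, and substituting $s=1$ the numerator and denominator Gamma products in $c^{*}$ coincide, giving $c^{*}(1,\delta)=1$; then for the same reason $c^{**}(1,\delta)=1$. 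Therefore the left-hand side of the theorem equals $\bigl[-\delta(1-\delta)+(1-\delta-(1-\epsilon)^2)(d-1)\bigr]\abs{x}^{-1-\delta}\widehat{x}_1$.

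It remains to check that the right-hand side equals the same quantity. From the computation of $I_{1-s}\ast u_{s,\delta}$ in the proof of Lemma~\ref{lem:Fourier.riesz} (cf.~\eqref{eq:I1-s}) we have $I_{1-s}\ast u_{s,\delta}=c^{*}(s,\delta)\,u_\delta$, so $\nabla^s u_{s,\delta}=c^{*}(s,\delta)\nabla u_\delta$; using this and dividing~\eqref{eq:div0} by $c^{*}(s,\delta)$ (which is positive) then yields
\begin{align*}
  \divergence(\bbM_{\epsilon}(x)^2\nabla u_\delta(x))=\bigl[-\delta(1-\delta)+(1-\delta-(1-\epsilon)^2)(d-1)\bigr]\abs{x}^{-1-\delta}\widehat{x}_1,
\end{align*}
which does not depend on $s$. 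Comparing the two displays proves the theorem.

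All the analytic work has been done in Lemma~\ref{lem:Fourier.riesz}, so I do not expect a genuine obstacle here: the only points needing (mild) care are verifying that the Gamma arguments stay away from the poles as $s\nearrow1$ so that $c^{*},c^{**}$ converge, and reading the identities of Lemma~\ref{lem:Fourier.riesz} in the pointwise sense on $\setR^d\setminus\{0\}$ (which is legitimate since both sides are there represented by functions continuous away from the origin).
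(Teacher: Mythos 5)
Your proof is correct and follows essentially the same route as the paper: reduce the left-hand side to the explicit formula $c^{**}[-\delta(1-\delta)+(1-\delta-(1-\epsilon)^2)(d-1)]\abs{x}^{-s-\delta}\widehat{x}_1$ from \eqref{eq:I.div} and conclude from $\lim_{s\nearrow1}c^{**}=1$. The only (harmless) difference is that you derive the right-hand side formula by dividing \eqref{eq:div0} by $c^{*}$, where the paper instead cites the classical computation of $\divergence(\bbA_\epsilon\nabla u_\delta)$ from \cite[Example~24]{BDGN22}; your version is a bit more self-contained, and your remark that the Gamma arguments stay in a compact subset of $(0,\infty)$ makes the limit $c^{**}\to1$ more transparent than the paper's ``clearly holds.''
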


\begin{proof}
For each $x\in\setR^d\setminus\{0\}$, using \eqref{eq:I.div} in Lemma \ref{lem:Fourier} and \cite[Example 24]{BDGN22}, the above equality is equivalent to saying that $\lim_{s\nearrow 1}c^{**}\abs{x}^{-s-\delta}\widehat{x}_1=\abs{x}^{-1-\delta}\widehat{x}_1$, which clearly holds since $\lim_{s\nearrow 1}c^{**}=1$. The proof is complete.
\end{proof}

\appendix
\section{Computations via Fourier transform}\label{app}
In this appendix, we give details in the proof of Lemma \ref{lem:f} which provides the computation of $f_2(d,s,\delta)$ given in \eqref{eq:f2'}. First, we define the distributions $g_3$ and $g_4$ in the proof of Lemma \ref{lem:f}, where the (formal) evaluation $x=e_1$ to their convolution $g_3\ast g_4$ will coincide to $f_2(d,s,\delta)$.
\begin{definition}\label{def:f3f4}
For $\phi\in \mathcal{S}(\setR^d)$, we define the distribution $g_3$ and $g_4$ in $\setR^d$ as
\begin{align*}
g_3:=j_1-j_2\quad\text{and}\quad g_4:=-j_3+\tfrac{j_4}{2}+j_5-j_6+\tfrac{j_7}{2},
\end{align*}
where the distributions $j_1, j_2, \dots, j_7$ in $\setR^d$ are defined as
\begin{align*}
\skp{j_1}{\phi}:=\skp{\abs{h}^{-\delta-2}h_1}{\phi}=\displaystyle\int_{\setR^d}\abs{x}^{-\delta-2}x_1\phi(x)\,dx,
\end{align*}
\begin{align*}
\skp{j_2}{\phi}:=\skp{\abs{h}^{-2}}{\phi}=
\begin{cases}
-\displaystyle\int_{\setR^d}\dfrac{x\log\abs{x}}{\abs{x}^2}\nabla\phi(x)\,dx&\,\text{if }d=2,\\
\displaystyle\int_{\setR^d}\dfrac{\phi(x)}{\abs{x}^{2}}\,dx&\,\text{if }d\geq 3,
\end{cases}
\end{align*}
\begin{align*}
\skp{j_3}{\phi}&:=\skp{\abs{h}^{-d-2s-2}h_1^3}{\phi}=\int_{\setR^d}\left(\dfrac{-(-d-2s+2)x_1^2+2\abs{x}^{2}}{\abs{x}^{d+2s}(-d\!-\!2s\!+\!2)(-d\!-\!2s)}\right)\partial_1\phi(x)dx,
\end{align*}
\begin{align*}
\skp{j_4}{\phi}:=\skp{\abs{h}^{-d-2s}h_1^2}{\phi}=\int_{\setR^d}\abs{x}^{-d-2s}x_1^2\phi(x)\,dx,
\end{align*}
\begin{align*}
\skp{j_5}{\phi}:=\skp{\abs{h}^{-d-2s-2}h_1^2}{\phi}=\int_{\setR^d}\dfrac{\abs{x}^{-d-2s+2}(2s\partial_1^2\phi(x)-\Delta\phi(x))}{2s(-d-2s+2)(-d-2s)}\,dx,
\end{align*}
\begin{align*}
\skp{j_6}{\phi}:=\skp{\abs{h}^{-d-2s}h_1}{\phi}=\int_{\setR^d}\dfrac{-\abs{x}^{-d-2s+2}\partial_1\phi(x)}{-d-2s+2}\,dx,
\end{align*}
and 
\begin{align*}
\skp{j_7}{\phi}&:=\skp{\abs{x}^{-d-2s+2}}{\phi}=\int_{\setR^d}\abs{x}^{-d-2s+2}\phi(x)\,dx.
\end{align*}	
\end{definition}

One can easily check that the above distributions are well-defined since $\phi\in \mathcal{S}(\setR^d)$ implies $\abs{\partial^k_i\phi(x)}\lesssim \min\{1,\frac{1}{\abs{x}^3}\}$ for $k=0,1,2,3$ and $i=0,1,\dots,d$.

Now we obtain the precise computation of $\mathcal{F}[g_3]$ and $\mathcal{F}[g_4]$.
\begin{lemma}\label{lem:Fourier0}
We have 
\begin{align}\label{eq:Fourier}
\mathcal{F}[g_3](\xi)=\mathcal{G}_3(\xi)\quad \text{and} \quad \mathcal{F}[g_4](\xi)=\pi^{\frac{d}{2}+2s}\dfrac{\Gamma(-s+1)}{\Gamma\left(\frac{d+2s}{2}\right)}\mathcal{G}_4(\xi)	
\end{align}
in the distributional sense, where 
\begin{align*}
\mathcal{G}_3(\xi):=
\begin{cases}
-i\pi^{\delta+1-\frac{d}{2}}\dfrac{\Gamma\left(\frac{d-\delta}{2}\right)}{\Gamma\left(\frac{\delta+2}{2}\right)}|\xi|^{-d+\delta}\xi_1-2\pi\log|\xi|+2\pi(\log 2-\gamma)\,\,&\text{when }d=2\\
-i\pi^{\delta+1-\frac{d}{2}}\dfrac{\Gamma\left(\frac{d-\delta}{2}\right)}{\Gamma\left(\frac{\delta+2}{2}\right)}|\xi|^{-d+\delta}\xi_1+\pi^{2-\frac{d}{2}}\Gamma\left(\frac{d-2}{2}\right)|\xi|^{-d+2}\,\,&\text{when }d\neq 2
\end{cases}
\end{align*}
with $\gamma=-\Gamma'(1)$ the Euler-Mascheroni constant, and
\begin{align*}
\mathcal{G}_4(\xi)&:=\dfrac{-2i(1-s)}{\pi(d+2s)}|\xi|^{2s-4}\xi_1^3+\dfrac{s-1}{2\pi^2}|\xi|^{2s-4}\xi_1^2-\dfrac{2}{d+2s}|\xi|^{2s-2}\xi_1^2\\
&\quad+\dfrac{i(d+2s+3)}{\pi(d+2s)}|\xi|^{2s-2}\xi_1+\dfrac{d+2s-1}{4\pi^2}|\xi|^{2s-2}-\dfrac{1}{s(d+2s)}|\xi|^{2s}
\end{align*}
are locally integrable distributions.
\end{lemma}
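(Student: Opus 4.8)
The plan is to reduce everything to the single Fourier identity~\eqref{eq:F-of-power} by treating the seven building blocks $j_1,\dots,j_7$ one at a time. Each $j_k$ is a homogeneous tempered distribution of the form $|h|^{-d-t_k}$ times a monomial in $h_1$, so its Fourier transform is again homogeneous of the dual degree and can be written down explicitly by combining~\eqref{eq:F-of-power} with the elementary rules $\mathcal{F}[\partial_\ell u]=2\pi i\xi_\ell\,\mathcal{F}[u]$ and $\mathcal{F}[x_\ell u]=\tfrac{i}{2\pi}\,\partial_{\xi_\ell}\mathcal{F}[u]$. Summing up according to $g_3=j_1-j_2$ and $g_4=-j_3+\tfrac12 j_4+j_5-j_6+\tfrac12 j_7$ and simplifying the resulting Gamma functions then yields~\eqref{eq:Fourier}.

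For $g_3$ I would write $j_1=|h|^{-\delta-2}h_1=-\tfrac1\delta\,\partial_1|h|^{-\delta}$, which is legitimate in $\mathcal{S}'$ since $-\delta>-d$ makes $|h|^{-\delta}$ locally integrable; hence $\mathcal{F}[j_1](\xi)=-\tfrac{2\pi i\xi_1}{\delta}\mathcal{F}[|h|^{-\delta}](\xi)$, and inserting~\eqref{eq:F-of-power} with $t=\delta-d$ and using $\Gamma(\tfrac{\delta}{2}+1)=\tfrac{\delta}{2}\Gamma(\tfrac{\delta}{2})$ produces precisely the $|\xi|^{-d+\delta}\xi_1$ term of $\mathcal{G}_3$. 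For $j_2=|h|^{-2}$ there are two cases. If $d\neq 2$ then $-2>-d$, $|h|^{-2}\in L^1_{\loc}$, and~\eqref{eq:F-of-power} with $t=2-d$ applies directly and gives the $\pi^{2-d/2}\Gamma(\tfrac{d-2}{2})|\xi|^{-d+2}$ term. If $d=2$, however, $|h|^{-2}=|h|^{-d}$ sits exactly at the pole $t=0$ of~\eqref{eq:F-of-power}: here $j_2$ is the renormalized object built into Definition~\ref{def:f3f4} (equivalently $\divergence\!\big(\tfrac{h\log|h|}{|h|^{2}}\big)$, equivalently the Hadamard finite part of $|h|^{-2}$), and its Fourier transform must be obtained by a limiting argument — apply~\eqref{eq:F-of-power} to $|h|^{-2+\varepsilon}$, use the expansion $|h|^{-2+\varepsilon}=\tfrac{2\pi}{\varepsilon}\delta_0+j_2+O(\varepsilon)$ in $\mathcal{S}'$, and read off the $\varepsilon$-independent term, in which the pole of $\Gamma(\varepsilon/2)$ cancels the $\tfrac{2\pi}{\varepsilon}$ and the finite part contributes $\log|\xi|$ together with an explicit constant involving $\gamma$ and $\log 2$.

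For $g_4$ the mechanism is identical, only with more terms. Each $j_k$ is $|h|^{-d-2s+2m}$ (with $m\in\{-1,0,1\}$) times $h_1^n$ ($0\le n\le 3$); writing the monomial factors via $x_\ell\leftrightarrow\tfrac{i}{2\pi}\partial_{\xi_\ell}$, each $\mathcal{F}[j_k]$ becomes a finite linear combination of terms $|\xi|^{2s-2j}\xi_1^{n-j}$ obtained by repeatedly differentiating $|\xi|^{2s+2m}$. None of the exponents $-d-2s$, $-d-2s\pm2$ is a nonnegative even integer for $s\in(0,1)$ and $d\ge 2$, so~\eqref{eq:F-of-power} always applies; pulling out the common prefactor $\pi^{d/2+2s}\,\Gamma(1-s)/\Gamma(\tfrac{d+2s}{2})$ — using $\Gamma(-s)=\Gamma(1-s)/(-s)$, $\Gamma(-s-1)=\Gamma(1-s)/[s(s+1)]$ and $\Gamma(z+1)=z\Gamma(z)$ to reduce every Gamma-ratio — and collecting the five pieces $-j_3,\tfrac12 j_4,j_5,-j_6,\tfrac12 j_7$ reproduces $\mathcal{G}_4$ term by term.

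The main obstacle is rigor in two places. First, the distributional identities that let me replace each $j_k$ by a derivative of, or polynomial times, a locally integrable pure power: for $j_3$ and $j_5$ the relevant power $|h|^{-d-2s-2}$ fails to be locally integrable once $s\ge\tfrac12$, so one must work with the regularized definitions of Definition~\ref{def:f3f4} rather than with formal integration by parts, and check that the boundary terms vanish. Second, the critical power $|h|^{-2}$ in $d=2$, where one has to carry the $O(\varepsilon)$ expansions of $\pi^{-\varepsilon}$, $|\xi|^{-\varepsilon}$ and $\Gamma(\varepsilon/2)/\Gamma(1-\varepsilon/2)$ far enough to pin down the additive constant. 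Once these are settled, the identities~\eqref{eq:Fourier} follow by assembling the pieces and a routine Gamma-function simplification.
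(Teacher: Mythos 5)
Your plan is essentially the paper's: transform each homogeneous building block of $g_3$ and $g_4$ via~\eqref{eq:F-of-power} together with the standard $\partial\!\leftrightarrow\!\xi$ and $x\!\leftrightarrow\!\partial_\xi$ rules, then sum. The treatment of $g_4$ in particular is word-for-word what the paper does (pull down each $h_1^n$ by $\bigl(\tfrac{i}{2\pi}\partial_{\xi_1}\bigr)^n$ on the Fourier side and simplify the Gamma ratios). Two points deserve flagging.

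First, a concrete gap: you write $j_1=|h|^{-\delta-2}h_1=-\tfrac1\delta\,\partial_1|h|^{-\delta}$ and then Fourier-transform the derivative. This representation is undefined when $\delta=0$, yet $\delta=0$ is inside the admissible range $[0,\tfrac12]$ and the lemma is stated for it (the Gamma factors in $\mathcal{G}_3$ are perfectly finite there). You would need either to patch this by continuity in $\delta$ in $\mathcal{S}'$, or to switch to the uniform rule $\mathcal{F}[h_1 u]=\tfrac{i}{2\pi}\partial_{\xi_1}\mathcal{F}[u]$ that the paper uses for $j_1$ (which only fails at the single critical case $d=2,\delta=0$), or to note that for $\delta=0$ one has $|h|^{-2}h_1=\partial_1\log|h|$ and transform that, as the paper does in Appendix~B. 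As written, your argument simply does not address $\delta=0$.

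Second, for $j_2$ in dimension $d=2$ you correctly identify that one needs a finite-part/limiting argument, but be careful that ``the Hadamard finite part of $|h|^{-2}$'' is only defined up to an additive multiple of $\delta_0$, and the precise constant $2\pi(\log2-\gamma)$ in $\mathcal{G}_3$ is tied to the specific renormalization $j_2=\divergence\!\bigl(\tfrac{h\log|h|}{|h|^2}\bigr)$ built into Definition~\ref{def:g3g4}. The paper sidesteps this by citing a reference for $\mathcal{F}[|h|^{-2}]$ at $d=2$; your $\varepsilon$-expansion approach is legitimate but you must check at the end that the constant term you extract agrees with the constant produced by the paper's specific choice of $j_2$, not merely some regularization of $|h|^{-2}$. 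You acknowledge the need to pin down this constant, which is good, but matching it to $j_2$ is the real bookkeeping obstacle.
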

\begin{proof}
For $g_3$, we compute
\begin{align*}
\mathcal{F}\left[|h|^{-\delta-2}h_1\right](\xi)&=\left(\dfrac{i}{2\pi}\right)\partial_1\mathcal{F}\left[|h|^{-\delta-2}\right](\xi)=-i\pi^{\delta+1-\frac{d}{2}}\dfrac{\Gamma\left(\frac{d-\delta}{2}\right)}{\Gamma\left(\frac{\delta+2}{2}\right)}|\xi|^{-d+\delta}\xi_1,
\end{align*}
and with the help of \cite[Remark 1.2]{CheWet19},
\begin{align*}
\mathcal{F}\left[|h|^{-2}\right]=
\begin{cases}
\pi^{2-\frac{d}{2}}\Gamma\left(\frac{d-2}{2}\right)|\xi|^{-d+2}&\quad d\neq 2\\
-2\pi\log|\xi|+2\pi(\log 2-\gamma)&\quad d=2
\end{cases}
\end{align*}
in the distributional sense. For $g_4$, we get
\begin{align*}
\mathcal{F}\left[\dfrac{-h_1^3}{|h|^{d+2s+2}}\right]&=-\left(\dfrac{i}{2\pi}\right)^3\partial_1^3\mathcal{F}\left[|h|^{-d-2s-2}\right]\\
&=i\pi^{\frac{d}{2}+2s-1}\dfrac{\Gamma\left(-s+1\right)}{\Gamma\left(\frac{d+2s}{2}\right)}\dfrac{1}{d+2s}\left(2(s-1)|\xi|^{2s-4}\xi_1^3+3|\xi|^{2s-2}\xi_1\right),
\end{align*}
\begin{align*}
\mathcal{F}\left[\dfrac{h_1^2}{2|h|^{d+2s}}\right]=\dfrac{1}{4}\pi^{\frac{d}{2}+2s-2}\dfrac{\Gamma\left(-s+1\right)}{\Gamma\left(\frac{d+2s}{2}\right)}\left(2(s-1)|\xi|^{2s-4}\xi_1^2+|\xi|^{2s-2}\right),
\end{align*}
\begin{align*}
\mathcal{F}\left[\dfrac{h_1^2}{|h|^{d+2s+2}}\right]=-\pi^{\frac{d}{2}+2s}\dfrac{\Gamma\left(-s+1\right)}{\Gamma\left(\frac{d+2s}{2}\right)}\frac{1}{d+2s}\left(2|\xi|^{2s-2}\xi_1^2+\frac{1}{s}|\xi|^{2s}\right),
\end{align*}
\begin{align*}
\mathcal{F}\left[\dfrac{-h_1}{|h|^{d+2s}}\right]=i\pi^{\frac{d}{2}+2s-1}\dfrac{\Gamma\left(-s+1\right)}{\Gamma\left(\frac{d+2s}{2}\right)}|\xi|^{2s-2}\xi_1,
\end{align*}
and
\begin{align*}
\mathcal{F}\left[\dfrac{1}{2|h|^{d+2s-2}}\right]=\pi^{\frac{d}{2}+2s-2}\dfrac{\Gamma(-s+1)}{\Gamma\left(\frac{d+2s}{2}\right)}\dfrac{d+2s-2}{4}|\xi|^{2s-2}.
\end{align*}
Merging the terms calculated above, we obtain \eqref{eq:Fourier}.
\end{proof}

Now we provide the formal evaluation $x=e_1$ of $g_3\divideontimes g_4$ using approximation to identity, which is precisely computed by Fourier transform. We denote by $\phi_{\sigma}$ an approximation to the identity at $e_1$, i.e., for $\sigma\in(0,1)$  $\phi_{\sigma}(x):=\frac{1}{\sigma^d}\phi\left(\frac{x-e_1}{\sigma}\right)$ for some standard mollifier $\phi \in\mathcal{S}(\setR^d)$. 

\begin{lemma}\label{lem:final}
We have
\begin{align*}
&f_{2}(d,s,\delta)= \lim_{\epsilon\rightarrow 0}\skp{g_3\divideontimes g_4}{\phi_{\epsilon}}=\lim_{\epsilon\rightarrow 0}\skp{\mathcal{F}[\mathcal{F}[g_3]\mathcal{F}[g_4]]}{\overline{\phi}_{\epsilon}}\\
&\!=\!\dfrac{\pi^{\frac{d}{2}}}{2}\dfrac{\Gamma(-s\!+\!1)}{\Gamma\left(\frac{d+2s}{2}\right)}\dfrac{\Gamma\left(\frac{d-\delta}{2}\right)}{\Gamma\left(\frac{\delta+2}{2}\right)}\dfrac{\Gamma\left(\frac{2s+\delta}{2}\right)}{\Gamma\left(\frac{d-2s+2-\delta}{2}\right)}f_{2,1}(d,s,\delta)\!-\!\dfrac{\pi^{\frac{d}{2}}}{2}\dfrac{\Gamma(-s\!+\!1)}{\Gamma\left(\frac{d+2s}{2}\right)}\dfrac{\Gamma\left(\frac{d}{2}\right)\Gamma\left(s\right)}{\Gamma\left(\frac{d-2s}{2}\right)}\dfrac{d\!-\!1}{d\!+\!2s}.
\end{align*}
\end{lemma}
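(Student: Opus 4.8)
The first identity, $f_2(d,s,\delta)=\lim_{\sigma\to0}\skp{g_3\divideontimes g_4}{\phi_\sigma}$, is exactly Lemma~\ref{lem:A2}, so the whole task is to evaluate this limit. Since $g_3\divideontimes g_4$ is a well-defined tempered distribution (Lemmas~\ref{lem:welldef3} and~\ref{lem:conv3}), the convolution theorem for such distributions (Theorem~\ref{thm:Fourierconv}, cf.~\cite[Section~6.5]{Vla02}) gives $\mathcal{F}[g_3\divideontimes g_4]=\mathcal{F}[g_3]\cdot\mathcal{F}[g_4]$; applying $\mathcal{F}$ once more and using $\mathcal{F}[\mathcal{F}[h]](x)=h(-x)$ together with the definition $\overline{\phi}_\sigma(x)=\phi_\sigma(-x)$ we obtain $\skp{g_3\divideontimes g_4}{\phi_\sigma}=\skp{\mathcal{F}[\mathcal{F}[g_3]\cdot\mathcal{F}[g_4]]}{\overline{\phi}_\sigma}$. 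By Lemma~\ref{lem:Fourier} the distributional product $\mathcal{F}[g_3]\cdot\mathcal{F}[g_4]$ agrees with the ordinary pointwise product of the two explicit locally integrable functions $\mathcal{F}[g_3]=\mathcal{G}_3$ and $\mathcal{F}[g_4]=\pi^{d/2+2s}\,\tfrac{\Gamma(1-s)}{\Gamma(d/2+s)}\,\mathcal{G}_4$ computed in Lemma~\ref{lem:Fourier0}. Everything thus reduces to computing $\mathcal{F}[\mathcal{G}_3\,\mathcal{G}_4]$ and reading off its value near $-e_1$.

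The second step is to expand $\mathcal{G}_3\,\mathcal{G}_4$ using Lemma~\ref{lem:Fourier0}. For $d\neq2$ this is a finite linear combination of monomials of the form $|\xi|^{a}\xi_1^{b}$ with $b\in\{0,1,2,3,4\}$; for $d=2$ there are in addition terms carrying a factor $\log|\xi|$ coming from the logarithmic part of $\mathcal{G}_3$. The Fourier transform of each monomial $|\xi|^{a}\xi_1^{b}$ is obtained by writing it, modulo lower-order monomials, as a constant multiple of $\partial_1^{b}|\xi|^{a+b}$ and applying $\mathcal{F}[\partial_1 h]=2\pi i\,\xi_1\,\mathcal{F}[h]$ repeatedly together with the power-function formula~\eqref{eq:F-of-power}; this is the same bookkeeping already carried out for $f_1$ in the proof of Lemma~\ref{lem:f} and for the Riesz model in Lemma~\ref{lem:Fourier.riesz}. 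For $d=2$ the logarithmic terms are handled by differentiating~\eqref{eq:F-of-power} with respect to the exponent $t$. The outcome is again a finite combination of functions $|x|^{c}x_1^{e}$, hence a locally integrable function that is smooth away from the origin; in particular it is continuous on a neighbourhood of $-e_1$, so $\lim_{\sigma\to0}\skp{\cdot}{\overline{\phi}_\sigma}$ is simply its value at $-e_1$, where $|\xi|=1$ and $\xi_1=-1$.

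It then remains to collect and simplify. The contributions split into those sharing the Gamma quotient $\tfrac{\Gamma(d/2-\delta/2)\,\Gamma(s+\delta/2)}{\Gamma(d/2-s+1-\delta/2)\,\Gamma(1+\delta/2)}$, which — after repeated use of $\Gamma(z+1)=z\Gamma(z)$ — assemble into the bracketed quantity $f_{2,1}(d,s,\delta)$ of~\eqref{eq:f21}, and a single exceptional term carrying the quotient $\tfrac{\Gamma(d/2)\Gamma(s)}{\Gamma(d/2-s)}$, which comes from multiplying the $|\xi|^{2-d}$ part of $\mathcal{G}_3$ with the $|\xi|^{2s-2}$ part of $\mathcal{G}_4$ and yields the factor $-\tfrac{d-1}{d+2s}$. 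Performing these Gamma-function reductions termwise produces the asserted closed form, and the equality of the two displayed expressions for $f_{2,1}$ in~\eqref{eq:f21} is then a direct rational identity.

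The conceptual input is only the convolution theorem and the Fourier transform of power functions; the main obstacle is the bulk of the algebra. The product $\mathcal{G}_3\,\mathcal{G}_4$ produces on the order of a dozen monomial terms, each of which contributes a Gamma-function coefficient after Fourier transform, and these must be summed and reduced to the compact expression $f_{2,1}$; one must also keep careful track of signs and of the factors $2\pi i$ in the iterated differentiations, and treat the exceptional exponents — in particular the $d=2$ logarithm — by the derivative argument indicated above.
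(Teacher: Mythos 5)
Your proposal is correct and follows essentially the same route as the paper: reduce via Lemma~\ref{lem:A2} and Corollary~\ref{cor:Fourierconv2} to the Fourier transform of the pointwise product $\mathcal{G}_3\mathcal{G}_4$, expand that product into roughly a dozen monomials $|\xi|^a\xi_1^b$ (plus logarithmic companions when $d=2$), transform each term via iterated $\partial_1$ applied to \eqref{eq:F-of-power} (differentiated in $t$ for the logs), and evaluate at $-e_1$. One small imprecision: the ``exceptional'' term proportional to $\tfrac{\Gamma(d/2)\Gamma(s)}{\Gamma(d/2-s)}\tfrac{d-1}{d+2s}$ is not produced by a single monomial product $|\xi|^{2-d}\cdot|\xi|^{2s-2}$ but is the collected sum of \emph{all six} products of the $|\xi|^{2-d}$ piece of $\mathcal{G}_3$ with the terms of $\mathcal{G}_4$ (the paper's $q_7,\dots,q_{12}$, summing to $f_{2,2}$); likewise the structural claim ``performing these Gamma-function reductions termwise produces the asserted closed form'' is exactly where the bulk of the algebraic content of the lemma lives and would need to be carried out in full, as the paper does, for a complete proof.
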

\begin{proof}
The first equality is given in Corollary \ref{cor:Fourierconv2}, so we only need to prove the second and the last inequality. Since $\mathcal{F}\left[|h|^{-2}\right]$ in $\mathcal{F}\left[g_3\right]$ is computed in different ways when $d\geq 3$ and $d=2$, we divide the computation in two cases.

	\textbf{In case of $d\geq 3$:} By Theorem \ref{thm:Fourierconv}, we have
	\begin{align*}
		\mathcal{F}[g_3\divideontimes g_4](\xi)=\mathcal{F}[g_3](\xi)\mathcal{F}[g_4](\xi)=\frac{\pi^{2s}}{2}\dfrac{\Gamma(-s+1)}{\Gamma\left(\frac{d+2s}{2}\right)}\sum_{j=1}^{12}q_j(\xi),
	\end{align*}
	where denoting $c_2=\pi^{\delta}\Gamma\left(\frac{d-\delta}{2}\right)/\Gamma\left(\frac{\delta+2}{2}\right)$,
	\begin{align*}
		&q_{1}(\xi)\coloneqq \dfrac{-4(1-s)}{d+2s}c_2|\xi|^{-d+2s-4+\delta}\xi^4_1,\quad\quad\,\,
		q_{2}(\xi)\coloneqq-i\dfrac{s-1}{\pi}c_2|\xi|^{-d+2s-4+\delta}\xi^3_1,\\
		&q_{3}(\xi)\coloneqq i\dfrac{4\pi}{d+2s}c_2|\xi|^{-d+2s-2+\delta}\xi^3_1,\quad\quad\quad\,\,\,
		q_{4}(\xi)\coloneqq\dfrac{2d+4s+6}{d+2s}c_2|\xi|^{-d+2s-2+\delta}\xi^2_1,\\
		&q_{5}(\xi)\coloneqq-i\dfrac{d+2s-1}{2\pi}c_2|\xi|^{-d+2s-2+\delta}\xi_1,\quad
		q_{6}(\xi)\coloneqq i\dfrac{2\pi}{s(d+2s)}c_2|\xi|^{-d+2s+\delta}\xi_1,
	\end{align*}
	and denoting $c_3=\Gamma\left(\frac{d-2}{2}\right)$,
	\begin{align*}
		&q_{7}(\xi)\coloneqq-i\dfrac{4(s-1)}{d+2s}\pi c_3|\xi|^{-d+2s-2}\xi^3_1,\,\,\,\,\quad\quad
		q_{8}(\xi)\coloneqq-(s-1)c_3|\xi|^{-d+2s-2}\xi^2_1,\\
		&q_{9}(\xi)\coloneqq\dfrac{4\pi^2}{d+2s}c_3|\xi|^{-d+2s}\xi^2_1,\quad\quad\quad\,\,\,\,\,\quad\quad
		q_{10}(\xi)\coloneqq-i\dfrac{2d+4s+6}{d+2s}\pi c_3|\xi|^{-d+2s}\xi_1,\\
		&q_{11}(\xi)\coloneqq-\dfrac{d+2s-1}{2}c_3|\xi|^{-d+2s},\quad\quad\quad\quad
		q_{12}(\xi)\coloneqq\dfrac{2\pi^2}{s(d+2s)}c_3|\xi|^{-d+2s+2}.
	\end{align*}

Here, one can see that
\begin{align*}
&\mathcal{F}\left[|\xi|^{-d+2s-4+\delta}\xi^4_1\right](x)=\left(\frac{i}{2\pi}\right)^4\partial^4_1\left(\mathcal{F}\left[|\xi|^{-d+2s-4+\delta}\right](x)\right)\\
&=\dfrac{\pi^{\frac{d}{2}-2s+4-\delta}}{(2\pi)^4}\dfrac{\Gamma\left(\frac{2s-4+\delta}{2}\right)}{\Gamma\left(\frac{d-2s+4-\delta}{2}\right)}\partial^4_1(|x|^{-2s+4-\delta})\\
&=\dfrac{\pi^{\frac{d}{2}-2s-\delta}}{4}\dfrac{\Gamma\left(\frac{2s+\delta}{2}\right)}{\Gamma\left(\frac{d-2s+4-\delta}{2}\right)}\\
&\quad\quad\cdot\left[(2s+\delta)(2s+\delta+2)|x|^{-2s-\delta-4}x^4_1-6(2s+\delta)|x|^{-2s-\delta-2}x^2_1+3|x|^{-2s-\delta}\right],
\end{align*}
so observing that the last one is a locally integrable function in $\setR^d\setminus\{0\}$, we have
\begin{align*}
\mathcal{F}[q_1](-e_1)=-\pi^{\frac{d}{2}-2s}\dfrac{\Gamma\left(\frac{d-\delta}{2}\right)}{\Gamma\left(\frac{\delta+2}{2}\right)}\dfrac{\Gamma\left(\frac{2s+\delta}{2}\right)}{\Gamma\left(\frac{d-2s+4-\delta}{2}\right)}\dfrac{1-s}{d+2s}(2s+\delta-3)(2s+\delta-1).
\end{align*}
We also calculate $\mathcal{F}[q_2](-e_1)$, $\mathcal{F}[q_3](-e_1)$, and $\mathcal{F}[q_7](-e_1)$ as follows, after observing that the corresponding expression is a locally integrable function in $\setR^d\setminus\{0\}$:
	\begin{align*}
		&\mathcal{F}\left[|\xi|^{-d+2s-4+\delta}\xi^3_1\right](x)\\
		&\quad=-i\dfrac{\pi^{\frac{d}{2}-2s+1-\delta}}{2}\dfrac{\Gamma\left(\frac{2s+\delta}{2}\right)}{\Gamma\left(\frac{d-2s+4-\delta}{2}\right)}\left((-2s-\delta)|x|^{-2s-\delta-2}x^3_1+3|x|^{-2s-\delta}x_1\right),
	\end{align*}
	and so
	\begin{align*}
		\mathcal{F}[q_2](-e_1)=\pi^{\frac{d}{2}-2s}\dfrac{\Gamma\left(\frac{d-\delta}{2}\right)}{\Gamma\left(\frac{\delta+2}{2}\right)}\dfrac{\Gamma\left(\frac{2s+\delta}{2}\right)}{\Gamma\left(\frac{d-2s+4-\delta}{2}\right)}\dfrac{1-s}{2}\left(2s+\delta-3\right)
	\end{align*}
	holds. Similarly,
	\begin{align*}
		&\mathcal{F}\left[|\xi|^{-d+2s-2+\delta}\xi^3_1\right](x)\\
		&\quad=-i\dfrac{\pi^{\frac{d}{2}-2s-1-\delta}}{2}\dfrac{\Gamma\left(\frac{2s+2+\delta}{2}\right)}{\Gamma\left(\frac{d-2s+2-\delta}{2}\right)}\left((-2s-\delta-2)|x|^{-2s-\delta-4}x^3_1+3|x|^{-2s-\delta-2}x_1\right)
	\end{align*}
	and
\begin{align*}
\mathcal{F}[|\xi|^{-d+2s-2}\xi^3_1](x)\!=\!-i\dfrac{\pi^{\frac{d}{2}-2s-1}}{2}\dfrac{\Gamma\left(s+1\right)}{\Gamma\left(\frac{d-2s+2}{2}\right)}\left((-2s\!-\!2)|x|^{-2s-4}x^3_1\!+\!3|x|^{-2s-2}x_1\right)
\end{align*}
	so that we find
	\begin{align*}
		\mathcal{F}[q_3](-e_1)=\pi^{\frac{d}{2}-2s}\dfrac{\Gamma\left(\frac{d-\delta}{2}\right)}{\Gamma\left(\frac{\delta+2}{2}\right)}\dfrac{}{}\dfrac{\Gamma\left(\frac{2s+2+\delta}{2}\right)}{\Gamma\left(\frac{d-2s+2-\delta}{2}\right)}\dfrac{2\left(2s+\delta-1\right)}{d+2s}
	\end{align*}
	and
	\begin{align*}
		\mathcal{F}[q_7](-e_1)=\pi^{\frac{d}{2}-2s}\dfrac{\Gamma\left(\frac{d-2}{2}\right)\Gamma\left(s+1\right)}{\Gamma\left(\frac{d-2s+2}{2}\right)}\dfrac{2(1-s)(2s-1)}{d+2s}.
	\end{align*}
	Now we start to compute $\mathcal{F}[q_4](-e_1)$, $\mathcal{F}[q_8](-e_1)$ and $\mathcal{F}[q_9](-e_1)$. Since
\begin{align*}
\mathcal{F}[|\xi|^{-d+2s-2+\delta}\xi_1^2]\!=\!\frac{1}{2}\pi^{\frac{d}{2}-2s-\delta}\dfrac{\Gamma\left(\frac{2s+\delta}{2}\right)}{\Gamma\left(\frac{d-2s+2-\delta}{2}\right)}\left((-2s\!-\!\delta)|x|^{-2s-\delta-2}x_1^2\!+\!|x|^{-2s-\delta}\right),
\end{align*}
	\begin{align*}
		\mathcal{F}\left[|\xi|^{-d+2s-2}\xi_1^2\right]=\frac{1}{2}\pi^{\frac{d}{2}-2s}\dfrac{\Gamma\left(s\right)}{\Gamma\left(\frac{d-2s+2}{2}\right)}\left((-2s)|x|^{-2s-2}x_1^2+|x|^{-2s}\right),
	\end{align*}
	and
	\begin{align*}
		\mathcal{F}\left[|\xi|^{-d+2s}\xi_1^2\right]=\frac{1}{2}\pi^{\frac{d}{2}-2s-2}\dfrac{\Gamma\left(s+1\right)}{\Gamma\left(\frac{d-2s}{2}\right)}\left((-2s-2)|x|^{-2s-4}x_1^2+|x|^{-2s-2}\right)
	\end{align*}
	holds, we have
	\begin{align*}
		\mathcal{F}[q_4](-e_1)=-\pi^{\frac{d}{2}-2s}\dfrac{\Gamma\left(\frac{d-\delta}{2}\right)}{\Gamma\left(\frac{\delta+2}{2}\right)}\dfrac{\Gamma\left(\frac{2s+\delta}{2}\right)}{\Gamma\left(\frac{d-2s+2-\delta}{2}\right)}\dfrac{\left(2s+\delta-1\right)(d+2s+3)}{d+2s},
	\end{align*}
	\begin{align*}
		\mathcal{F}[q_8](-e_1)=-\pi^{\frac{d}{2}-2s}\dfrac{\Gamma\left(\frac{d-2}{2}\right)\Gamma\left(s\right)}{\Gamma\left(\frac{d-2s+2}{2}\right)}\frac{(s-1)(1-2s)}{2},
	\end{align*}
	and
	\begin{align*}
		\mathcal{F}[q_9](-e_1)=-\pi^{\frac{d}{2}-2s}\dfrac{\Gamma\left(\frac{d-2}{2}\right)\Gamma\left(s+1\right)}{\Gamma\left(\frac{d-2s}{2}\right)}\dfrac{2\left(2s+1\right)}{d+2s}.
	\end{align*}
	Calculations of $\mathcal{F}[q_5](-e_1)$, $\mathcal{F}[q_6](-e_1)$ and $\mathcal{F}[q_{10}](-e_1)$ are given as below. We see
	\begin{align*}
		\mathcal{F}\left[|\xi|^{-d+2s-2+\delta}\xi_1\right]=-i\pi^{\frac{d}{2}-2s+1-\delta}\dfrac{\Gamma\left(\frac{2s+\delta}{2}\right)}{\Gamma\left(\frac{d-2s+2-\delta}{2}\right)}\left(|x|^{-2s-\delta}x_1\right),
	\end{align*}
	\begin{align*}
		\mathcal{F}\left[|\xi|^{-d+2s+\delta}\xi_1\right]=-i\pi^{\frac{d}{2}-2s-\delta-1}\dfrac{\Gamma\left(\frac{2s+\delta+2}{2}\right)}{\Gamma\left(\frac{d-2s-\delta}{2}\right)}\left(|x|^{-2s-\delta-2}x_1\right),
	\end{align*}
	and
	\begin{align*}
		\mathcal{F}\left[|\xi|^{-d+2s}\xi_1\right]=-i\pi^{\frac{d}{2}-2s-1}\dfrac{\Gamma\left(s+1\right)}{\Gamma\left(\frac{d-2s}{2}\right)}\left(|x|^{-2s-2}x_1\right),
	\end{align*}
	so that we have
	\begin{align*}
		\mathcal{F}[q_5](-e_1)=\dfrac{d+2s-1}{2}\pi^{\frac{d}{2}-2s}\dfrac{\Gamma\left(\frac{d-\delta}{2}\right)}{\Gamma\left(\frac{\delta+2}{2}\right)}\dfrac{\Gamma\left(\frac{2s+\delta}{2}\right)}{\Gamma\left(\frac{d-2s+2-\delta}{2}\right)},
	\end{align*}
	\begin{align*}
		\mathcal{F}[q_6](-e_1)=-\dfrac{2}{s(d+2s)}\pi^{\frac{d}{2}-2s}\dfrac{\Gamma\left(\frac{d-\delta}{2}\right)}{\Gamma\left(\frac{\delta+2}{2}\right)}\dfrac{\Gamma\left(\frac{2s+\delta+2}{2}\right)}{\Gamma\left(\frac{d-2s-\delta}{2}\right)},
	\end{align*}
	and
	\begin{align*}
		\mathcal{F}[q_{10}](-e_1)=\pi^{\frac{d}{2}-2s}\dfrac{\Gamma\left(\frac{d-2}{2}\right)\Gamma\left(s+1\right)}{\Gamma\left(\frac{d-2s}{2}\right)}\dfrac{2d+4s+6}{d+2s}.
	\end{align*}
	Finally, we compute $\mathcal{F}[q_{11}](-e_1)$ and $\mathcal{F}[q_{12}](-e_1)$ in the following way. Since
	\begin{align*}
		\mathcal{F}\left[|\xi|^{-d+2s}\right]=\pi^{\frac{d}{2}-2s}\dfrac{\Gamma\left(s\right)}{\Gamma\left(\frac{d-2s}{2}\right)}|x|^{-2s}
	\end{align*}
	and
	\begin{align*}	\mathcal{F}\left[|\xi|^{-d+2s+2}\right]=\pi^{\frac{d}{2}-2s-2}\dfrac{\Gamma\left(s+1\right)}{\Gamma\left(\frac{d-2s-2}{2}\right)}|x|^{-2s-2},
	\end{align*}
	we have
	\begin{align*}
		\mathcal{F}[q_{11}](-e_1)=-\pi^{\frac{d}{2}-2s}\dfrac{\Gamma\left(\frac{d-2}{2}\right)\Gamma\left(s\right)}{\Gamma\left(\frac{d-2s}{2}\right)}\dfrac{d+2s-1}{2}
	\end{align*}
	and
	\begin{align*}
		\mathcal{F}[q_{12}](-e_1)=\pi^{\frac{d}{2}-2s}\dfrac{\Gamma\left(\frac{d-2}{2}\right)\Gamma\left(s+1\right)}{\Gamma\left(\frac{d-2s-2}{2}\right)}\dfrac{2}{s(d+2s)}.
	\end{align*}
Note that $\mathcal{F}[q_4](x), \mathcal{F}[q_5](x), \dots, \mathcal{F}[q_{12}](x)$ are locally integrable functions in $\setR^d\setminus\{0\}$ and so the above evaluations at $x=e_1$ are justified.

Taking for the standard mollifier $\overline{\phi}(x)$ and $\phi_{\epsilon}(x)=\frac{1}{\epsilon^d}\overline{\phi}\left(\frac{x-e_1}{\epsilon}\right)$ for $\epsilon\in(0,1)$,	
\begin{align*}
\lim_{\epsilon\rightarrow 0}\skp{f\divideontimes g}{\phi_{\epsilon}}=\lim_{\epsilon\rightarrow 0}\skp{\mathcal{F}[\mathcal{F}[q_3]\mathcal{F}[q_4]]}{\phi_{\epsilon}}=\mathcal{F}[\mathcal{F}[q_3]\mathcal{F}[q_4]](e_1),
\end{align*}
where for the first equality we have used Corollary \ref{cor:Fourierconv2} and for the last equality we see that $\mathcal{F}[\mathcal{F}[q_3]\mathcal{F}[q_4]]$ coincide to a function when $\abs{x}>0$.

Therefore, we obtain $f_2(d,s,\delta)$ in case of $d\geq 3$ as follows.
\begin{align*}
&f_2(d,s,\delta)=(\mathcal{F}\circ\mathcal{F})[q_3\divideontimes q_4](-e_1)\\
&=\dfrac{\pi^{\frac{d}{2}}}{2}\dfrac{\Gamma(-s+1)}{\Gamma\left(\frac{d+2s}{2}\right)}\left(\dfrac{\Gamma\left(\frac{d-\delta}{2}\right)}{\Gamma\left(\frac{\delta+2}{2}\right)}\dfrac{\Gamma\left(\frac{2s+\delta}{2}\right)}{\Gamma\left(\frac{d-2s+2-\delta}{2}\right)}f_{2,1}(d,s,\delta)-\dfrac{\Gamma\left(\frac{d-2}{2}\right)\Gamma\left(s\right)}{\Gamma\left(\frac{d-2s+2}{2}\right)}f_{2,2}(d,s)\right),
\end{align*}
where $f_{2,1}$ is given in \eqref{eq:f21}, and
\begin{align*}
f_{2,2}(d,s)&\!=\!-\dfrac{2s(1-s)(2s-1)}{d+2s}+\dfrac{(1-s)(2s-1)}{2}\\
&\quad\!+\!\dfrac{s(2s+1)(d-2s)}{d+2s}-\dfrac{s(d+2s+3)(d-2s)}{d+2s}\\
&\quad\!+\!\dfrac{(d+2s-1)(d-2s)}{4}-\dfrac{(d-2s-2)(d-2s)}{2(d+2s)}\!=\!\dfrac{(d-2)(d-1)(d-2s)}{4(d+2s)}.
\end{align*}
Thus using $t\Gamma(t)=\Gamma(t+1)$ for $t\in\setR$, we obtain \eqref{eq:d3} in case of $d\geq 3$.
	
\medskip
	\textbf{In case of $d=2$:} In this case, the computation of the term $f_{2,1}$ is same, but $f_{2,2}$ is different because of the log term. We will use the following: from
	\begin{align*}
	\mathcal{F}\left[|\xi|^t\xi_1^{k}\right]=\pi^{-t-\frac{d}{2}}\dfrac{\Gamma\left(\frac{d+t}{2}\right)}{\Gamma\left(-\frac{t}{2}\right)}\left(\frac{i}{2\pi}\right)^k\partial_1^k\left(|x|^{-d-t}\right)
	\end{align*}
	for $k\in\mathbb{N}\cup\{0\}$ and $t\notin \{0\}\cup 2\mathbb{N}$, by differentiating with respect to $t$ in the distributional sense and using $\psi=\frac{\Gamma'}{\Gamma}$, we have
	\begin{align}\label{eq:log}
	\begin{split}
	&\mathcal{F}\left[|\xi|^t(\log|\xi|)\xi^k_1\right]\\
	&\quad=-\pi^{-t-\frac{d}{2}}(\log\pi)\dfrac{\Gamma\left(\frac{d+t}{2}\right)}{\Gamma\left(-\frac{t}{2}\right)}\left(\frac{i}{2\pi}\right)^k\partial_1^k\left(|x|^{-d-t}\right)\\
	&\quad\quad+\dfrac{1}{2}\pi^{-t-\frac{d}{2}}\dfrac{\Gamma\left(\frac{d+t}{2}\right)}{\Gamma\left(-\frac{t}{2}\right)}\left(\psi\left(\frac{d+t}{2}\right)+\psi\left(-\frac{t}{2}\right)\right)\left(\frac{i}{2\pi}\right)^k\partial_1^k\left(|x|^{-d-t}\right)\\
	&\quad\quad-\pi^{-t-\frac{d}{2}}\dfrac{\Gamma\left(\frac{d+t}{2}\right)}{\Gamma\left(-\frac{t}{2}\right)}\left(\frac{i}{2\pi}\right)^k\partial_1^k\left(|x|^{-d-t}\log|x|\right).
	\end{split}
	\end{align}

	Using the above equality together with Theorem \ref{thm:Fourierconv}, we will compute
	\begin{align*}
	&2\pi^{2+2s}\dfrac{\Gamma(1-s)}{\Gamma\left(1+s\right)}\left(\log|\xi|+\gamma-\log 2\right)\\
	&\,\,\,\,\cdot \left(\dfrac{-2i(1-s)}{\pi(1+s)}|\xi|^{2s-4}\xi_1^3+\dfrac{s-1}{\pi^2}|\xi|^{2s-4}\xi_1^2-\dfrac{2}{1+s}|\xi|^{2s-2}\xi_1^2\right.\\
	&\quad\quad\left.+\dfrac{(2s+5)i}{\pi(1+s)}|\xi|^{2s-2}\xi_1\!+\!\dfrac{1\!+\!2s}{2\pi^2}|\xi|^{2s-2}\!-\!\dfrac{1}{s(1\!+\!s)}|\xi|^{2s}\right)\eqqcolon 2\pi^{2s}\dfrac{\Gamma(1\!-\!s)}{\Gamma\left(1\!+\!s\right)}\sum^{12}_{j=1}p_j(\xi),
	\end{align*}
	where
	\begin{align*}
		&p_1(\xi)\coloneqq\dfrac{-2\pi i(1-s)}{1+s}|\xi|^{2s-4}(\log|\xi|)\xi_1^3,\quad
		p_2(\xi)\coloneqq(s-1)|\xi|^{2s-4}(\log|\xi|)\xi_1^2\\
		&p_3(\xi)\coloneqq-\dfrac{2\pi^2}{1+s}|\xi|^{2s-2}(\log|\xi|)\xi_1^2,\quad\quad\quad\,\,
		p_4(\xi)\coloneqq\dfrac{(2s+5)\pi i}{1+s}|\xi|^{2s-2}(\log|\xi|)\xi_1\\
		&p_5(\xi)\coloneqq\dfrac{1+2s}{2}|\xi|^{2s-2}(\log|\xi|),\quad\quad\quad\quad\,\,\,
		p_6(\xi)\coloneqq-\dfrac{\pi^2}{s(1+s)}|\xi|^{2s}(\log|\xi|)
	\end{align*}
	and with $c_4=\gamma-\log 2$,
	\begin{align*}
		&p_7(\xi)\coloneqq-\pi c_4\dfrac{2i(1-s)}{1+s}|\xi|^{2s-4}\xi_1^3,\quad\quad
		p_8(\xi)\coloneqq c_4(s-1)|\xi|^{2s-4}\xi_1^2\\
		&p_9(\xi)\coloneqq-\pi^2 c_4\dfrac{2}{1+s}|\xi|^{2s-2}\xi_1^2,\quad\,\,\,\,\,\quad
		p_{10}(\xi)\coloneqq\pi c_4\dfrac{(2s+5)i}{1+s}|\xi|^{2s-2}\xi_1\\
		&p_{11}(\xi)\coloneqq c_4\dfrac{1+2s}{2}|\xi|^{2s-2},\quad\quad\quad\,\,\,\,\quad
		p_{12}(\xi)\coloneqq-\pi^2 c_4\dfrac{1}{s(1+s)}|\xi|^{2s}.
	\end{align*}

	For $p_1(\xi)$, use \eqref{eq:log} to obtain
	\begin{align*}
		\mathcal{F}[|\xi|^{2s-4}(\log|\xi|)\xi_1^3]&=\frac{i}{8}\pi^{-2s}(\log\pi)\dfrac{\Gamma\left(s-1\right)}{\Gamma\left(2-s\right)}\partial_1^3\left(|x|^{-2s+2}\right)\\
		&\quad-\frac{i}{16}\pi^{-2s}\dfrac{\Gamma(s-1)}{\Gamma(2-s)}(\psi(s-1)+\psi(2-s))\partial_1^3\left(|x|^{-2s+2}\right)\\
		&\quad+\frac{i}{8}\pi^{-2s}\dfrac{\Gamma\left(s-1\right)}{\Gamma\left(2-s\right)}\partial_1^3\left(|x|^{-2s+2}\log|x|\right).
	\end{align*}
	Here, since $\partial_1^3\left(|x|^{-2s+2}\right)=-8(s-1)s(s+1)|x|^{-2s-4}x_1^3+12(s-1)s|x|^{-2s-2}x_1$ and
\begin{align*}
\partial^3_1\left(|x|^{-2s+2}\log|x|\right)&=8(1-s^2)s|x|^{-2s-4}(\log|x|)x_1^3+4(3s^2-1)|x|^{-2s-4}x_1^3\\
&\quad-12(1-s)s|x|^{-2s-2}(\log|x|)x_1+6(1-2s)|x|^{-2s-2}x_1,
\end{align*}
	observing that each expression is a locally integrable function in $\setR^d\setminus\{0\}$, we have
	\begin{align*}
	\mathcal{F}[p_1](-e_1)&\!=\!-\dfrac{\pi^{-2s+1}}{2(1+s)}\dfrac{\Gamma(s)}{\Gamma(2\!-\!s)}\left\{(\log\pi)(s\!-\!1)2s(2s\!-\!1)\right.\\
	&\left.\quad\quad\quad\quad\quad\quad\quad\quad\quad-(\psi(s\!-\!1)+\psi(2\!-\!s))(s\!-\!1)s(2s\!-\!1)\!-\!6s^2\!+\!6s\!-\!1\right\}\\
	&=-\dfrac{\pi^{-2s+1}}{2(1\!-\!s^2)}\dfrac{\Gamma(s)}{\Gamma(1\!-\!s)}\left\{(\log\pi)(s\!-\!1)2s(2s\!-\!1)\right.\\
	&\left.\quad\quad\quad\quad\quad\quad\quad\quad\quad-(\psi(s)\!+\!\psi(1\!-\!s))(s\!-\!1)s(2s\!-\!1)\!-\!2s^2\!+\!4s\!-\!1\right\},
	\end{align*}
	where for the last inequality we have used $\psi(s-1)+\psi(2-s)=\psi(s)+\psi(1-s)+\frac{2}{1-s}$.
	For $p_2(\xi)$, use \eqref{eq:log} to find
	\begin{align*}
		\mathcal{F}[|\xi|^{2s-4}(\log|\xi|)\xi_1^2]&=\frac{1}{4}\pi^{-2s+1}(\log\pi)\dfrac{\Gamma\left(s-1\right)}{\Gamma\left(2-s\right)}\partial_1^2\left(|x|^{-2s+2}\right)\\
		&\quad-\frac{1}{8}\pi^{-2s+1}\dfrac{\Gamma(s-1)}{\Gamma(2-s)}(\psi(s-1)+\psi(2-s))\partial_1^2\left(|x|^{-2s+2}\right)\\
		&\quad+\frac{1}{4}\pi^{-2s+1}\dfrac{\Gamma\left(s-1\right)}{\Gamma\left(2-s\right)}\partial_1^2\left(|x|^{-2s+2}\log|x|\right).
	\end{align*}
	Here, one can use $\partial_1^2\left(|x|^{-2s+2}\right)=(-2s+2)(-2s)|x|^{-2s-2}x_1^2+(-2s+2)|x|^{-2s}$ and
	\begin{align*}
		\partial^2_1\left(|x|^{-2s+2}\log|x|\right)&=-2s(2-2s)|x|^{-2s-2}x_1^2\log|x|\\
		&\quad+2(1-s)|x|^{-2s}\log|x|+2(-2s+1)|x|^{-2s-2}x_1^2+|x|^{-2s}
	\end{align*}
	to yield
	\begin{align*}
		\mathcal{F}[p_2](-e_1)&=\dfrac{1}{4}\pi^{-2s+1}\dfrac{\Gamma(s)}{\Gamma(2-s)}\left\{(\log\pi)(-2s+2)(-2s+1)\right.\\
		&\left.\quad\quad\quad\quad\quad\quad\quad\quad\quad-(\psi(s-1)+\psi(2-s))(-2s+1)(1-s)-4s+3\right\}\\
		&=\dfrac{\pi^{-2s+1}}{4(1-s)}\dfrac{\Gamma(s)}{\Gamma(1-s)}\left\{(\log\pi)(-2s+2)(-2s+1)\right.\\
		&\quad\quad\quad\quad\quad\quad\quad\quad\quad\left.-(\psi(s)+\psi(1-s))(-2s+1)(1-s)+1\right\},
	\end{align*}
	where we have used $\psi(s-1)+\psi(2-s)=\psi(s)+\psi(1-s)+\frac{2}{1-s}$. Similarly,
	\begin{align*}
\mathcal{F}[p_3](-e_1)=\dfrac{-\pi^{-2s+1}}{2(1+s)}\dfrac{\Gamma(s)}{\Gamma(1\!-\!s)}\left\{2(\log\pi)s(2s\!+\!1)\!-\!(\psi(s)\!+\!\psi(1\!-\!s))s(2s\!+\!1)\!-\!4s\!-\!1\right\}
	\end{align*}
	holds. For $p_4(\xi)$, use \eqref{eq:log} to obtain
	\begin{align*}
		\mathcal{F}[|\xi|^{2s-2}(\log|\xi|)\xi_1]&=-\frac{i}{2}\pi^{-2s}(\log\pi)\dfrac{\Gamma\left(s\right)}{\Gamma\left(1-s\right)}\partial_1\left(|x|^{-2s}\right)\\
		&\quad+\frac{i}{4}\pi^{-2s}\dfrac{\Gamma(s)}{\Gamma(1-s)}(\psi(s)+\psi(1-s))\partial_1\left(|x|^{-2s}\right)\\
		&\quad-\frac{i}{2}\pi^{-2s}\dfrac{\Gamma\left(s\right)}{\Gamma\left(1-s\right)}\partial_1\left(|x|^{-2s}\log|x|\right).
	\end{align*}
	Here, since $\partial_1\left(|x|^{-2s}\right)=(-2s)|x|^{-2s-2}x_1$ and
	\begin{align*}
		\partial_1\left(|x|^{-2s}\log|x|\right)=(-2s)|x|^{-2s-2}x_1\log|x|+|x|^{-2s-2}x_1
	\end{align*}
	holds, so we have
	\begin{align*}
		\mathcal{F}[p_4](-e_1)=\dfrac{2s+5}{2(1+s)}\pi^{-2s+1}\dfrac{\Gamma(s)}{\Gamma(1-s)}\left((\log\pi)2s-(\psi(s)+\psi(1-s))s-1\right).
	\end{align*}
	For the cases of $p_5(\xi)$ and $p_6(\xi)$, the resulting calculations are as follows:
	\begin{align*}
		\mathcal{F}[p_5](-e_1)=-\dfrac{1+2s}{2}\pi^{-2s+1}\dfrac{\Gamma(s)}{\Gamma(1-s)}\left(\log\pi-\frac{1}{2}(\psi(s)+\psi(1-s))\right)
	\end{align*}
	and
	\begin{align*}
		\mathcal{F}[p_6](-e_1)&=\dfrac{1}{s(1+s)}\pi^{-2s+1}\dfrac{\Gamma(1+s)}{\Gamma(-s)}\left(\log\pi-\frac{1}{2}(\psi(1+s)+\psi(-s))\right)\\
		&=-\dfrac{\pi^{-2s+1}}{1+s}\dfrac{\Gamma(1+s)}{\Gamma(1-s)}\left(\log\pi-\frac{1}{2}(\psi(s)+\psi(1-s))-\frac{1}{s}\right).
	\end{align*}
	Note that in the last equality we have used $\psi(1+s)+\psi(-s)=\psi(s)+\psi(1-s)+\frac{2}{s}$.
	Now we start to compute $p_7(\xi)$. Observe that
	\begin{align*}
		\mathcal{F}\left[|\xi|^{2s-4}\xi^3_1\right](x)&=-\dfrac{i}{8}\pi^{-2s}\dfrac{\Gamma\left(s-1\right)}{\Gamma\left(-s+2\right)}\partial^3_1(|x|^{-2s+2})\\
		&=-\dfrac{i}{2}\pi^{-2s}\dfrac{\Gamma\left(s+1\right)}{\Gamma\left(-s+2\right)}\left((-2-2s)|x|^{-2s-4}x_1^3+3|x|^{-2-2s}x_1\right),
	\end{align*}
	and so
	\begin{align*}
		\mathcal{F}[p_7](-e_1)=-(\gamma-\log 2)\pi^{-2s+1}\dfrac{\Gamma\left(1+s\right)}{\Gamma\left(-s+2\right)}\dfrac{(1-s)\left(2s-1\right)}{1+s}.
	\end{align*}
	To obtain $p_8(\xi)$, we see that
	\begin{align*}
\mathcal{F}[|\xi|^{2s-4}\xi^2_1](x)\!=\!-\!\dfrac{\pi^{-2s+1}}{4}\dfrac{\Gamma\left(s\!-\!1\right)}{\Gamma\left(2\!-\!s\right)}\left((-2s\!+\!2)(-2s)|x|^{-2s-2}x_1^2\!+\!(-2s\!+\!2)|x|^{-2s}\right)
	\end{align*}
	holds, thus it follows that
	\begin{align*}
		\mathcal{F}[p_8](-e_1)=-\frac{1}{2}(\gamma-\log 2)\pi^{-2s+1}\dfrac{\Gamma(s)}{\Gamma(2-s)}(s-1)(2s-1).
	\end{align*}
	For $p_9(\xi)$, we see that
	\begin{align*}
\mathcal{F}[|\xi|^{2s-2}\xi^2_1](x)\!=\!-\dfrac{\pi^{-2s-1}}{4}\dfrac{\Gamma\left(s\right)}{\Gamma\left(1\!-\!s\right)}\left((-2s)(-2s\!-\!2)|x|^{-2s-4}x_1^2\!+\!(-2s)|x|^{-2s-2}\right)
	\end{align*}
	holds, so that
	\begin{align*}
		\mathcal{F}[p_9](-e_1)=(\gamma-\log 2)\pi^{-2s+1}\dfrac{\Gamma\left(s\right)}{\Gamma\left(1-s\right)}\dfrac{s(2s+1)}{1+s}.
	\end{align*}

	The corresponding computation for the term $p_{10}(\xi)$ is as follows. We use
	\begin{align*}
		&\mathcal{F}\left[|\xi|^{2s-2}\xi_1\right](x)=\left(\dfrac{i}{2\pi}\right)\partial_1\left(\mathcal{F}\left[|\xi|^{2s-2} \right](x)\right)=-i\pi^{-2s}\dfrac{s\Gamma(s)}{\Gamma(-s+1)}|x|^{-2s-2}x_1
	\end{align*}
	to get
	\begin{align*}
		\mathcal{F}[p_{10}](-e_1)=-(\gamma-\log 2)\pi^{-2s+1}\dfrac{s\Gamma(s)}{\Gamma(1-s)}\dfrac{2s+5}{1+s}.
	\end{align*}
	For $p_{11}(\xi)$, $\mathcal{F}\left[|\xi|^{2s-2}\right](x)=\pi^{-2s+1}\frac{\Gamma(s)}{\Gamma(1-s)}|x|^{-2s}$ and so
	\begin{align*}
		\mathcal{F}[p_{11}](-e_1)=(\gamma-\log 2)\pi^{-2s+1}\frac{\Gamma(s)}{\Gamma(1-s)}\dfrac{1+2s}{2}.
	\end{align*}
	For $p_{12}(\xi)$, $\mathcal{F}\left[|\xi|^{2s}\right](x)=\pi^{-2s-1}\frac{\Gamma(1+s)}{\Gamma(-s)}|x|^{-2-2s}$ so that
	\begin{align*}
		\mathcal{F}[p_{12}](-e_1)=-(\gamma-\log 2)\pi^{-2s+1}\frac{\Gamma(1+s)}{\Gamma(-s)}\dfrac{1}{s(1+s)}.
	\end{align*}
Note that $\mathcal{F}[p_2](x), \mathcal{F}[p_3](x), \dots, \mathcal{F}[p_{12}](x)$ are locally integrable functions in $\setR^d\setminus\{0\}$ and so the above evaluations at $x=e_1$ are justified.

Taking for the standard mollifier $\overline{\phi}(x)$ and $\phi_{\epsilon}(x)=\frac{1}{\epsilon^d}\overline{\phi}\left(\frac{x-e_1}{\epsilon}\right)$ for $\epsilon\in(0,1)$,	
\begin{align*}
\lim_{\epsilon\rightarrow 0}\skp{f\divideontimes g}{\phi_{\epsilon}}=\lim_{\epsilon\rightarrow 0}\skp{\mathcal{F}[\mathcal{F}[g_3]\mathcal{F}[g_4]]}{\phi_{\epsilon}}=\mathcal{F}[\mathcal{F}[g_3]\mathcal{F}[g_4]](e_1),
\end{align*}
where for the first equality we have used Corollary \ref{cor:Fourierconv2} and for the last equality we see that $\mathcal{F}[\mathcal{F}[g_3]\mathcal{F}[g_4]]$ coincide to a function when $\abs{x}>0$.
	
	Summing up, we obtain $f_2(d,s,\delta)$ in case of $d=2$ as follows:
	\begin{align}\label{eq:d2}
		\begin{split}
		f_2(d,s,\delta)=\dfrac{\pi}{2}\dfrac{\Gamma(1-s)}{\Gamma\left(1+s\right)}\dfrac{\Gamma\left(\frac{2-\delta}{2}\right)}{\Gamma\left(\frac{\delta+2}{2}\right)}\dfrac{\Gamma\left(\frac{2s+\delta}{2}\right)}{\Gamma\left(\frac{4-2s-\delta}{2}\right)}\tilde{f}_{2,1}(s,\delta)+\dfrac{\pi}{2s(1-s^2)}\tilde{f}_{2,2}(s),
		\end{split}
	\end{align}
	where $\tilde{f}_{2,1}(s,\delta)\coloneqq  f_{2,1}(2,s,\delta)$ and
	\begin{align*}
		\tilde{f}_{2,2}(s)&=\left[-(\log\pi)(s-1)2s(2s-1)\right.\\
		&\quad\quad\quad\quad\quad\quad\left.+(\psi(s)+\psi(1-s))(s-1)s(2s-1)+2s^2-4s+1\right]\\
		&\quad+\dfrac{1}{2}(1+s)\left[(\log\pi)(-2s+2)(-2s+1)\right.\\
		&\quad\quad\quad\quad\quad\quad\left.-(\psi(s)+\psi(1-s))(-2s+1)(1-s)+1\right]\\
		&\quad-(1-s)\left[2(\log\pi)s(2s+1)-(\psi(s)+\psi(1-s))s(2s+1)-4s-1\right]\\
		&\quad+(2s+5)(1-s)\left[2(\log\pi)s-(\psi(s)+\psi(1-s))s-1\right]\\
		&\quad-(1-s^2)(1+2s)\left[(\log\pi)-\frac{1}{2}(\psi(s)+\psi(1-s))\right]\\
		&\quad-2(1-s)\left[(\log\pi)s-\dfrac{s}{2}(\psi(s)+\psi(1-s))-1\right]\\
		&=\tfrac{1}{2}s-\tfrac{1}{2}.
	\end{align*}
	Note that by the above computation together with the argument in the case $d\geq 3$, one can see that the expression \eqref{eq:d2} is the same as \eqref{eq:d3}.
\end{proof}

\section{Convolution theorems}\label{app2}

This section provides the rigorous distributional framework and detailed Fourier analysis calculations justifying the results stated in Lemma \ref{lem:f} and Lemma \ref{lem:Fourier}, where we compute the following terms $f_1$, $f_2$, $f_3$, and $f_4$ using Fourier transform. Throughout the section, we consider
\begin{align*}
d\geq 2\quad\text{with}\quad d\in\setN,\quad s\in(0,1),
\end{align*}
and $\delta\in [0,\tfrac{1}{2}]$ for Subsection \ref{sec:f1} and Subsection \ref{sec:f2}, and $\delta\in [0,\tfrac{d}{2})$ for Subsection \ref{sec:f3} and Subsection \ref{sec:f4}. We define
{\allowdisplaybreaks
\begin{align}\label{eq:f1}
&f_1(d,s,\delta) \coloneqq\pvint_{\setR^d}\dfrac{|e_1+h|^{1-\delta}\widehat{1+h_1}-1}{|h|^{d+2s}}\,dh, \\
&f_2(d,s,\delta) \label{eq:f2} \\
& \quad = \pvint_{\setR^d}\left<\dfrac{\widehat{e_1+h}\otimes\widehat{e_1+h}+e_1\otimes e_1}{2}\widehat{h},\widehat{h}\right>\dfrac{|e_1+h|^{1-\delta}\widehat{1+h_1}-1}{|h|^{d+2s}}\,dh \nonumber \\
\nonumber
& \quad = \pvint_{\setR^d}\dfrac{-2h_1^3\!+\!|h|^2h_1^2\!+\!2h_1^2\!-\!2|h|^2h_1\!+\!|h|^4}{2|h|^{d+2s+2}|e_1-h|^{2}}\left(|e_1\!-\!h|^{-\delta}(1\!-\!h_1)\!-\!1\right)dh, \\
\label{eq:f3}
&f_3(d,s,\delta):=\text{p.v.}\int_{\setR^d}\dfrac{\abs{e_1-h}^{s-\delta}\widehat{1-h_1}}{\abs{h}^{d-1+s}}\,dh, \\
\label{eq:f4}
&f_4(d,s,\delta):=\text{p.v.}\int_{\setR^d}\dfrac{\abs{e_1-h}^{-1-\delta}\widehat{1-h_1}}{\abs{h}^{d-1+s}}\,dh.
\end{align}
}
Let us explain the notation. By p.v. we denote the \emph{principal value} of an integral, where we define the principal value as the limit of the integrals over $B_{\frac{1}{r}}(0)\setminus B_r(0)$ for $r\rightarrow 0$. The unit vector $(1,0,\dots,0) \in\setR^{d}$ is denoted by $e_1$. We use $\widehat{x}=\frac{x}{\abs{x}}$ and $\widehat{x_1}=\frac{x_1}{\abs{x}}$ for vectors $x=(x_1,\dots,x_d)\in\setR^d$. $x\otimes x$ denotes the $d\times d$ matrix given by $(x\otimes x)_{i,j}=x_ix_j$ for any $1\leq i,j\leq d$.

\medskip

Let us mention that for the last equality of \eqref{eq:f2} uses elementary manipulations only. The integrals in the definition of  \eqref{eq:f1} and \eqref{eq:f2} are finite due to Lemma \ref{eq:estimate-Deltas-ud}. The integrals in the definition of \eqref{eq:f3} and \eqref{eq:f4} are finite due to Lemma \ref{lem:welldef.f3} and \ref{lem:welldef.f4}, respectively.

\medskip

The main results of this note are Theorem \ref{thm:Fourierconv0}, Theorem \ref{thm:Fourierconv},  Theorem \ref{thm:Fourierconv4}, and  Theorem \ref{thm:Fourierconv5} together with corresponding corollaries. The main results represent the values $f_i$ as certain convolutions of two distributions evaluated at $x=e_1$. Let us provide details for the case of $f_1$. For $\phi\in \mathcal{S}(\setR^d)$, we define distributions $g_1$ and $g_2$ by 
\begin{align*}
\skp{g_1}{\phi}&:=\skp{\abs{x}^{1-\delta}\widehat{x}_1}{\phi}=\displaystyle\int_{\setR^d}\abs{x}^{-\delta}x_1\phi(x)\,dx, \\
\skp{g_2}{\phi}&:=\skp{\abs{x}^{-d-2s}}{\phi}=\displaystyle\int_{\setR^d}\dfrac{\abs{x}^{-d-2s+2}\Delta\phi(x)}{2s(d+2s-2)}\,dx.
\end{align*}
We will show in Lemma \ref{lem:A20} $f_1(d,s,\delta)=\lim_{\epsilon\rightarrow 0}\skp{g_1\divideontimes g_2}{\phi_{\epsilon}}$, where $\phi_\epsilon$ is an approximation of the identity at $e_1$ and the convolution $g_1\divideontimes g_2$ is defined as in \cite{Vla02}. The evaluation then hinges on Theorem \ref{thm:Fourierconv0} where, if $\delta>0$, we show  
\begin{align*}
	\mathcal{F}[g_1\divideontimes g_2]=\mathcal{F}[g_1]\mathcal{F}[g_2]
\end{align*}
in the distributional sense. Here, $\mathcal{F}[g_1]\mathcal{F}[g_2]$ denotes the usual multiplication of $\mathcal{F}[g_1]$ and $\mathcal{F}[g_2]$. 

\medskip

The paper is organized as follows. Each of the subsequent sections provides  detailed calculations  $f_1(d,s,\delta)$, $f_2(d,s,\delta)$, $f_3(d,s,\delta)$ and $f_4(d,s,\delta)$ respectively. The strategy that we employ each subsection is as follows:
\begin{enumerate}
\item First we give definitions of distributions associated to $f_1$, $f_2$, $f_3$, and $f_4$.\\ 
(See Definition \ref{def:g1g2}, Definition \ref{def:g3g4}, Definition \ref{def:g5g6}, and Definition \ref{def:g7}.)
\item Then we prove the well-definedness of their convolutions in a distributional sense.\\ 
(See Lemma \ref{lem:welldef} with Lemma \ref{lem:conv10}, Lemma \ref{lem:welldef3} with Lemma \ref{lem:conv3}, Lemma \ref{lem:conv4} with Lemma \ref{lem:conv40}, and Lemma \ref{lem:conv5} with Lemma \ref{lem:conv50}.)
\item Furthermore, we show that these convolutions recover $f_1$, $f_2$, $f_3$, and $f_4$ when tested against an approximation to identity centered at $x=e_1$.\\
(See Lemma \ref{lem:A20}, Lemma \ref{lem:A2}, Lemma \ref{lem:A40}, and Lemma \ref{lem:A50}.)
\item By (b), we can apply convolution theorem as in \cite[Page 96]{Vla02}. Moreover, with our specific distributions in Definition \ref{def:g1g2}, Definition \ref{def:g3g4}, Definition \ref{def:g5g6}, and Definition \ref{def:g7}, the resulting product of the Fourier transforms of the two distributions can be understood in a certain way.\\
(See Theorem \ref{thm:Fourierconv0}, Theorem \ref{thm:Fourierconv},  Theorem \ref{thm:Fourierconv4}, and  Theorem \ref{thm:Fourierconv5}.)
\end{enumerate}

\subsection{Convolution theorem for $f_1(d,s,\delta)$}\label{sec:f1}

We define the distributions $g_1$ and $g_2$, where (formal) evaluation $x=e_1$ to their convolution will recover $f_1(d,s,\delta)$.
\begin{definition}\label{def:g1g2}
For $\phi\in \mathcal{S}(\setR^d)$, we define distributions $g_1$ and $g_2$ in $\setR^d$ as
\begin{align*}
\skp{g_1}{\phi}:=\skp{\abs{x}^{1-\delta}\widehat{x}_1}{\phi}=\displaystyle\int_{\setR^d}\abs{x}^{-\delta}x_1\phi(x)\,dx,
\end{align*}
and
\begin{align*}
\skp{g_2}{\phi}:=\skp{\abs{x}^{-d-2s}}{\phi}=\displaystyle\int_{\setR^d}\dfrac{\abs{x}^{-d-2s+2}\Delta\phi(x)}{2s(d+2s-2)}\,dx.
\end{align*}
\end{definition}

We observe the following:
\begin{itemize}
	\item One can easily check that the above distributions are well-defined since $\phi\in \mathcal{S}(\setR^d)$ implies $\abs{\partial^k_i\phi(x)}\lesssim \min\{1,\frac{1}{\abs{x}^{d+2}}\}$ for $k=0,1,2$ and $i=0,1,\dots,d$.
	\item The distribution $g_1$ can be understood as a locally integrable function in $\setR^d$ since $\delta\in\left[0,\tfrac{1}{2}\right]$.
	\item When $\abs{x}>0$, $g_2(x)\equiv\abs{x}^{-d-2s}$ holds. 
\end{itemize}

The main goal of this subsection is to show the following relation (see Theorem \ref{thm:Fourierconv0}): For $\delta>0$,
\begin{align}\label{eq:g1g2}
\mathcal{F}[g_1\divideontimes g_2]=\mathcal{F}[g_1]\mathcal{F}[g_2]
\end{align}
in the distributional sense, where $\divideontimes$ is a convolution of two distributions in $\mathcal{S}'(\setR^d)$ introduced in \cite[Page 96]{Vla02}, and $\mathcal{F}[g_1]\mathcal{F}[g_2]$ means the usual multiplication between $\mathcal{F}[g_1]$ and $\mathcal{F}[g_2]$. The well-definedness of the left-hand side term of \eqref{eq:g1g2} will be proved in Lemma \ref{lem:welldef} and Lemma \ref{lem:conv10}, and the well-definedness of the right-hand side term of \eqref{eq:g1g2} will be proved in Lemma \ref{lem:Fourier00}.

The case $\delta=0$ is excluded, because \eqref{eq:g1g2} does not hold in this case. An additional term involving distributions would appear. However, for the computation of $f_1(d,s,0)$, the case $\delta=0$ is not needed as $f_1(d,s,0)=0$ from \eqref{eq:f1}. Note that in Lemma \ref{lem:welldef}--Lemma \ref{lem:A20} the case of $\delta=0$ is included, but in Lemma \ref{lem:Fourier00} and the remaining part of this subsection we assume $\delta>0$.

From now on, let $c$ be a generic constant, and we use brackets to clarify the dependence of $c$, e.g., $c=c(d,s,\delta)$ means that $c$ depends only on $d,s$, and $\delta$. For numbers or functions $a$ and $b$, we occasionally use $a\lesssim b$, if there exists an implicit constant $c>0$ such that $a\leq cb$.

First, we prove the following lemma. 

\begin{lemma}\label{lem:welldef}
For any $\phi\in \mathcal{S}(\setR^d)$, $\skp{g_2(y)}{\skp{g_1(x)}{\phi(x+y)}_x}_y$ is well-defined, i.e.,
\begin{align*}
&\skp{g_2(y)}{\skp{g_1(x)}{\phi(x+y)}_x}_y\\
&\quad=\dfrac{1}{2s(d+2s-2)}\int_{\setR^d}\abs{y}^{-d-2s+2}\left(\int_{\setR^d}\abs{x}^{-\delta}x_1\Delta_y\phi(x+y)\,dx\right)\,dy
\end{align*}
exists for any $\phi\in\mathcal{S}(\setR^d)$.
\end{lemma}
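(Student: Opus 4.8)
The claim is that the iterated pairing $\langle g_2(y), \langle g_1(x), \phi(x+y)\rangle_x\rangle_y$ makes sense for every $\phi \in \mathcal{S}(\RRd)$, together with the explicit formula obtained by unwinding the two distributional definitions. The plan is to proceed from the inside out. First I would fix $\phi \in \mathcal{S}(\RRd)$ and study the inner pairing $\psi(y) \coloneqq \langle g_1(x), \phi(x+y)\rangle_x = \int_{\RRd} \abs{x}^{-\delta} x_1 \, \phi(x+y)\,dx$; since $\delta \in [0,\tfrac 12]$, the function $x \mapsto \abs{x}^{-\delta} x_1$ is locally integrable and grows at most linearly, so the integral converges absolutely for each $y$ because $\phi$ is Schwartz. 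The key point is to verify that $\psi$ is again a Schwartz function (or at least lies in the domain on which $g_2$ acts through the stated formula $\langle g_2, \psi\rangle = \frac{1}{2s(d+2s-2)}\int \abs{y}^{-d-2s+2} \Delta\psi(y)\,dy$). Differentiating under the integral sign, $\partial_y^\alpha \psi(y) = \int_{\RRd} \abs{x}^{-\delta} x_1 \, (\partial^\alpha\phi)(x+y)\,dx$, and a change of variables $x \mapsto x - y$ turns this into $\int \abs{x-y}^{-\delta}(x_1-y_1)\,(\partial^\alpha\phi)(x)\,dx$; using the decay of $\partial^\alpha\phi$ and splitting into $\abs{x} \le \tfrac12\abs{y}$ and $\abs{x} > \tfrac 12\abs{y}$, one gets $\abs{\partial_y^\alpha\psi(y)} \lesssim \abs{y}\,(1+\abs{y})^{-N}$ for every $N$, so in particular $\psi$ and all its derivatives decay rapidly; smoothness follows from dominated convergence. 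Hence $\psi \in \mathcal{S}(\RRd)$ and $\langle g_2, \psi\rangle$ is well-defined.

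Next I would justify passing to the stated double-integral formula. By definition $\langle g_2, \psi\rangle = \frac{1}{2s(d+2s-2)}\int_{\RRd} \abs{y}^{-d-2s+2}\,\Delta_y\psi(y)\,dy$; since $2-2s > 0$ we have $\abs{y}^{-d-2s+2}$ locally integrable near the origin, and the rapid decay of $\Delta\psi$ just established makes the integral absolutely convergent. It then remains to interchange $\Delta_y$ with the inner $x$-integration, i.e. to write $\Delta_y\psi(y) = \int_{\RRd}\abs{x}^{-\delta} x_1\,\Delta_y\phi(x+y)\,dx$, which is the differentiation-under-the-integral step already used above, and then to invoke Fubini on $\int_{\RRd}\int_{\RRd} \abs{y}^{-d-2s+2}\abs{x}^{-\delta} x_1\,\Delta_y\phi(x+y)\,dx\,dy$. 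Fubini is legitimate because, after the substitution and the decay estimate on $\psi$, the double integral of the absolute value is finite: the $y$-integral of $\abs{y}^{-d-2s+2}\,\bigl|\Delta\psi(y)\bigr|$ converges, and $\bigl|\Delta\psi(y)\bigr|$ already incorporates the absolutely convergent $x$-integral. Assembling these steps yields exactly the displayed identity.

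\textbf{Main obstacle.} The routine parts are the convergence and differentiation-under-the-integral arguments; the genuinely delicate point is controlling the inner pairing $\psi(y)$ near the origin and at infinity simultaneously, so as to be sure it lands in the exact class of test functions for which the \emph{regularized} definition of $g_2$ (the one using $\Delta\phi$ and the weight $\abs{y}^{-d-2s+2}$) is valid and agrees with the naive one. Concretely, one must check that the two representations of $g_2$ — as $\abs{x}^{-d-2s}$ away from $0$ and via the Gauss--Green regularization — produce the same value on $\psi$, which is where the hypothesis $2-2s>0$ (equivalently $s<1$) is essential and where one uses that $\psi$ decays fast enough that no boundary terms at infinity survive the integration by parts. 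Once that identification is in place, the Fubini interchange is straightforward and the formula follows.
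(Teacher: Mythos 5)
The central claim in your proposal — that the inner pairing $\psi(y) \coloneqq \langle g_1(x), \phi(x+y)\rangle_x = \int_{\RRd}\abs{x}^{-\delta}x_1\,\phi(x+y)\,dx$ belongs to $\mathcal{S}(\RRd)$, with the estimate $\abs{\partial_y^\alpha\psi(y)}\lesssim\abs{y}(1+\abs{y})^{-N}$ for every $N$ — is false. After the change of variables, $\psi(y) = \int\abs{x-y}^{-\delta}(x_1-y_1)\,\phi(x)\,dx$, and for $\abs{y}\to\infty$ the kernel $\abs{x-y}^{-\delta}(x_1-y_1)$ behaves like $-\abs{y}^{-\delta}y_1$ uniformly on any compact $x$-set, so $\psi(y)\sim -\abs{y}^{-\delta}y_1\int_\RRd\phi$. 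If $\int\phi\neq 0$ then $\psi$ grows like $\abs{y}^{1-\delta}$; it is nowhere near rapidly decreasing. Consequently the step where you invoke the "regularized definition of $g_2$" on $\psi$ (which requires $\psi$ to sit in the class for which the Gauss--Green representation holds) is not justified.

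The Fubini step fails for the same reason. Even after using that $\int\Delta\phi=0$ and $\int x_j\Delta\phi = 0$, which knocks out the first two terms of the Taylor expansion of $\abs{x-y}^{-\delta}(x_1-y_1)$ in $x$, one only gets $\Delta\psi(y)=O(\abs{y}^{-\delta-1})$ at infinity, not rapid decay. The resulting integral $\int_{\RRd}\abs{y}^{-d-2s+2}\abs{\Delta\psi(y)}\,dy$ behaves near infinity like $\int_R^\infty r^{-2s-\delta}\,dr$, which is divergent whenever $2s+\delta\leq 1$. So for $s\in(0,\tfrac 12]$ (and small $\delta$, which is precisely the regime of interest since $\delta\to 0$ as $\epsilon\to 0$) the double integral does \emph{not} converge absolutely, and Fubini in the naive sense cannot be applied.

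The missing idea, which the paper supplies, is a cancellation argument. After moving the Laplacian onto the kernel via the divergence theorem over $B_{\abs{y}/2}(0)$, the dominant contribution at infinity is the term
\begin{align*}
(d-\delta)\delta\,\abs{y}^{-\delta-2}y_1\int_{B_{\abs{y}/2}(0)}\phi(x)\,dx,
\end{align*}
which is an \emph{odd} function of $y$. Because the weight $\abs{y}^{-d-2s+2}$ is even and the truncated domains $B_L(0)\setminus B_R(0)$ are symmetric, the integral of this term over any annulus vanishes identically. What remains is $O(\abs{y}^{-2})$, for which $\int\abs{y}^{-d-2s+2}\cdot\abs{y}^{-2}\,dy$ does converge at infinity. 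The paper therefore establishes convergence of the truncated integrals $\int_{\abs{y}\leq L}$ as a Cauchy sequence, rather than absolute convergence of the full double integral. Your approach, as written, would give the result only for $s\in(\tfrac 12,1)$; to cover the full range one needs to identify and exploit this oddness cancellation.
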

\begin{proof}
Let us choose $\phi\in \mathcal{S}(\setR^d)$ and $R\geq2$. Before starting the proof, let us explain the basic strategy. In the above integral, the term $\abs{y}^{-d-2s+2}$ is integrable near 0, which means 
\begin{align*}
\int_{B_R(0)}\abs{y}^{-d-2s+2}\,dy<\infty,
\end{align*}
and it is not integrable near infinity, which means
\begin{align*}
\int_{\setR^d\setminus B_R(0)}\abs{y}^{-d-2s+2}\,dy=\infty.
\end{align*}
Thus together with the fact that $y\mapsto \abs{y}^{-d-2s+2}$ is even, we want to estimate as follows to estimate $\abs{\skp{g_2(y)}{\skp{g_1(x)}{\phi(x+y)}_x}_y}<\infty$:
\begin{align*}
\begin{split}
\int_{\setR^d}\abs{x}^{-\delta}x_1\Delta_y(x+y)\,dx\leq c\,R^{d}\cdot\indicator_{\{\abs{y}\leq R\}}+\bigg(c\abs{y}^{-\delta-2}y_1+\frac{c}{\abs{y}^{2}}\bigg)\cdot \indicator_{\{\abs{y}\geq R\}}.
\end{split}
\end{align*}
(See \eqref{eq:y<Ry>R} below.)

Let us first estimate the following term:
\begin{align*}
\int_{\setR^d}\abs{x}^{-\delta}x_1\Delta_y\phi(x+y)\,dx.
\end{align*}
Note that for each $y\in\setR^d$, $\abs{x}^{-\delta}x_1\Delta_y\phi(x+y)\in L^1(\setR^d)$ since $\phi\in\mathcal{S}(\setR^d)$. When $\abs{y}\leq R$, $\delta\in[0,\frac{1}{2}]$ implies $-\delta>-d$, so we obtain
\begin{align*}
&\Biggabs{\int_{\setR^d}\abs{x}^{-\delta}x_1\Delta_y\phi(x+y)\,dx}\leq\int_{\setR^d}\abs{x}^{-\delta+1}\abs{\Delta_y\phi(x+y)}\,dx\\
&\quad\quad\quad\leq\int_{B_R(0)}\abs{x}^{-\delta+1}\abs{\Delta_y\phi(x+y)}\,dx+\int_{\setR^d\setminus B_R(0)}\abs{x}^{-\delta+1}\abs{\Delta_y\phi(x+y)}\,dx\\
&\quad\quad\quad\lesssim R^{d-\delta+1}+\int_{\setR^d\setminus B_R(0)}\abs{x}^{-\delta+1}\abs{\Delta_y\phi(x+y)}\,dx,
\end{align*}
where for the last inequality we have used $\abs{\Delta_y\phi(x+y)}\lesssim 1$ since $\phi\in \mathcal{S}(\setR^d)$. Here and throughout the proof of this lemma, implicit generic constants $c$ in `$\lesssim$' depend only on
\begin{align*}
d,s,\delta,\quad\text{and}\quad \norm{\nabla^i\phi}_{L^{\infty}(\setR^d)}\quad\text{for}\quad i=0,1,2.
\end{align*}
Then if $-y\in B_{\frac{3}{4}R}(0)$, then using $\abs{\Delta_y\phi(x+y)}\lesssim\abs{x+y}^{-d-2}$ (since $\phi\in \mathcal{S}(\setR^d)$), we obtain
\begin{align*}
\int_{\setR^d\setminus B_R(0)}\abs{x}^{-\delta+1}\abs{\Delta_y\phi(x+y)}\,dx\lesssim\int_{\setR^d\setminus B_R(0)}\abs{x}^{-\delta+1}\abs{x+y}^{-d-2}\,dx\lesssim R^{-\delta-1}.
\end{align*}
On the other hand, if $-y\not\in B_{\frac{3}{4}R}(0)$ so that $B_{\frac{1}{4}R}(-y)\cap B_{\frac{1}{2}R}(0)=\emptyset$, then employing $\abs{\Delta_y\phi(x+y)}\lesssim \min\{1,\abs{x+y}^{-d-2}\}$,
\begin{align*}
&\int_{\setR^d\setminus B_R(0)}\abs{x}^{-\delta+1}\abs{\Delta_y\phi(x+y)}\,dx\\
&\quad\lesssim  \int_{B_{\frac{1}{4}R}(-y)}\abs{x}^{-\delta+1}\,dx+\int_{\setR^d\setminus (B_{\frac{1}{2}R}(0)\cup B_{\frac{1}{4}R}(-y))}\abs{x}^{-\delta+1}\abs{x+y}^{-d-2}\,dx\\
&\quad \lesssim R^{-\delta+1}\cdot R^d+R^{-\delta+1}R^{-2}\lesssim R^{d+1}
\end{align*}
holds, since $R\geq 2$. Then if $\abs{y}\leq R$, we obtain
\begin{align}\label{eq:y<R}
\Biggabs{\int_{\setR^d}\abs{x}^{-\delta}x_1\Delta_y\phi(x+y)\,dx}\lesssim R^{d+1}.
\end{align}

Now we consider the case of $\abs{y}\geq R$. In this case we will use the fact that the integral of the odd function on $\setR^d$ is zero as well as the divergence theorem. Using $\Delta_y\phi(x+y)=\Delta_x\phi(x+y)$, observe that 
\begin{align}\label{eq:x.delta}
\begin{split}
&\int_{\setR^d}\abs{x}^{-\delta}x_1\Delta_y\phi(x+y)\,dx\\
&\quad=\int_{\setR^d}\abs{x-y}^{-\delta}(x_1-y_1)\Delta\phi(x)\,dx\\
&\quad=\int_{B_\frac{\abs{y}}{2}(0)}\abs{x-y}^{-\delta}(x_1-y_1)\Delta\phi(x)\,dx+\int_{\setR^d\setminus B_\frac{\abs{y}}{2}(0)}\abs{x-y}^{-\delta}(x_1-y_1)\Delta\phi(x)\,dx
\end{split}
\end{align}
holds. Here, using divergence theorem, we obtain
\begin{align}\label{eq:div}
\begin{split}
&\int_{B_\frac{\abs{y}}{2}(0)}\abs{x-y}^{-\delta}(x_1-y_1)\Delta\phi(x)\,dx\\
&\quad=\int_{B_\frac{\abs{y}}{2}(0)}\Delta_x(\abs{x-y}^{-\delta}(x_1-y_1))\phi(x)\,dx\\
&\quad\quad+\int_{\partial B_{\frac{\abs{y}}{2}}(0)}\abs{x-y}^{-\delta}(x_1-y_1)\frac{x}{\abs{x}}\cdot\nabla\phi(x)\,dx\\
&\quad\quad-\int_{\partial B_{\frac{\abs{y}}{2}}(0)}\phi(x)\nabla_x(\abs{x-y}^{-\delta}(x_1-y_1))\cdot\frac{x}{\abs{x}}\,dx.
\end{split}
\end{align}
Using $\Delta_x(\abs{x-y}^{-\delta}(x_1-y_1))=-(d-\delta)\delta\abs{x-y}^{-\delta-2}(x_1-y_1)$, we estimate
\begin{align*}
&\int_{B_\frac{\abs{y}}{2}(0)}\Delta_x(\abs{x-y}^{-\delta}(x_1-y_1))\phi(x)\,dx\\
&\quad=-(d-\delta)\delta\int_{B_\frac{\abs{y}}{2}(0)}\abs{x-y}^{-\delta-2}(x_1-y_1)\phi(x)\,dx.
\end{align*}
Moreover, 
\begin{align*}
\abs{\abs{x-y}^{-\delta-2}(x_1-y_1)+\abs{y}^{-\delta-2}y_1}\lesssim\abs{y}^{-\delta-2}\abs{x}\quad\text{for}\quad x\in B_{\frac{\abs{y}}{2}}(0)
\end{align*}
from Lemma \ref{lem:basic} and $\abs{\phi(x)}\lesssim\min\{1,\abs{x}^{-2d}\}$, we obtain
\begin{align*}
&\biggabs{\int_{B_\frac{\abs{y}}{2}(0)}(\abs{x-y}^{-\delta-2}(x_1-y_1)+\abs{y}^{-\delta-2}y_1)\phi(x)\,dx}\\
&\quad\quad\lesssim \abs{y}^{-\delta-2}\int_{B_\frac{\abs{y}}{2}(0)}\abs{x}\phi(x)\,dx\lesssim \abs{y}^{-\delta-2}\lesssim \abs{y}^{-2}.
\end{align*}
Thus, merging the above two estimates, we find that
\begin{align*}
&\int_{B_\frac{\abs{y}}{2}(0)}\Delta_x(\abs{x-y}^{-\delta}(x_1-y_1))\phi(x)\,dx\\
&\quad\leq (d-\delta)\delta \abs{y}^{-\delta-2}y_1\int_{B_{\frac{\abs{y}}{2}}(0)}\phi(x)\,dx+c\abs{y}^{-2}.
\end{align*}

Also, by $\abs{\nabla\phi(x)}\lesssim\min\{1,\abs{x}^{-d-2}\}\lesssim\abs{y}^{-d-2}$ on $x\in\partial B_{\frac{\abs{y}}{2}}(0)$, we have
\begin{align*}
\biggabs{\int_{\partial B_{\frac{\abs{y}}{2}}(0)}\abs{x-y}^{-\delta}(x_1-y_1)\frac{x}{\abs{x}}\cdot\nabla\phi(x)\,dx}\lesssim \abs{y}^{-\delta+1-d-2}\abs{y}^{d-1}\lesssim \abs{y}^{-2}.
\end{align*}
Finally, using $\abs{\nabla_x(\abs{x-y}^{-\delta}(x_1-y_1))}\lesssim \abs{x-y}^{-\delta}\lesssim\abs{y}^{-\delta}$ when $x\in \partial B_{\frac{\abs{y}}{2}}(0)$ and $\abs{\phi(x)}\leq\min\{1,\abs{x}^{-d-1}\}$, 
\begin{align*}
&\biggabs{\int_{\partial B_{\frac{\abs{y}}{2}}(0)}\phi(x)\nabla_x(\abs{x-y}^{-\delta}(x_1-y_1))\cdot\nu\,dx}\lesssim \abs{y}^{-\delta}\abs{y}^{-d-1}\abs{y}^{d-1}\lesssim \abs{y}^{-2}
\end{align*}
holds. Then for \eqref{eq:div}, we obtain
\begin{align*}
\int_{B_\frac{\abs{y}}{2}(0)}\abs{x-y}^{-\delta}(x_1-y_1)\Delta\phi(x)\,dx\leq (d-\delta)\delta \abs{y}^{-\delta-2}y_1\int_{B_{\frac{\abs{y}}{2}}(0)}\phi(x)\,dx+c\abs{y}^{-2}.
\end{align*}
Summing up, with \eqref{eq:y<R} we see that
\begin{align}\label{eq:y<Ry>R}
\begin{split}
&\int_{\setR^d}\abs{x}^{-\delta}x_1\Delta_y(x+y)\,dx\\
&\quad\leq c\,R^{d+1}\indicator_{\{\abs{y}\leq R\}}+\bigg((d-\delta)\delta\abs{y}^{-\delta-2}y_1\int_{B_\frac{\abs{y}}{2}(0)}\phi(x)\,dx+\frac{c}{\abs{y}^{2}}\bigg)\indicator_{\{\abs{y}\geq R\}}.
\end{split}
\end{align}
Note that the integral of $\phi(\cdot)$ over $B_\frac{\abs{y}}{2}(0)$ is bounded since $\phi\in\mathcal{S}(\setR^d)$.

For $L\geq 2R$, using \eqref{eq:y<Ry>R}, $\Delta_y\phi(x+y)=\Delta_x\phi(x+y)$, and $\abs{y}^{-d-2s+2}>0$, we estimate as follows: 
\begin{align*}
&\Biggabs{\dfrac{1}{2s(d+2s-2)}\int_{\abs{y}\leq L}\abs{y}^{-d-2s+2}\left(\int_{\setR^d}\abs{x}^{-\delta}x_1\Delta_y\phi(x+y)\,dx\right)\,dy}\\
&\leq c\int_{\abs{y}\leq R}R^{d+1}\abs{y}^{-d-2s+2}\,dy\\
&\quad+\int_{R\leq\abs{y}\leq L}\bigg((d-\delta)\delta\abs{y}^{-\delta-2}y_1\int_{B_\frac{\abs{y}}{2}(0)}\phi(x)\,dx+\frac{c}{\abs{y}^{2}}\bigg)\abs{y}^{-d-2s+2}dy.
\end{align*}
Here, we estimate
\begin{align*}
\int_{\abs{y}\leq R}R^{d+1}\abs{y}^{-d-2s+2}\,dy\lesssim R^{d-2s+3}.
\end{align*}
Moreover, using Fubini's theorem and observing that $y\mapsto\abs{y}^{-\delta-2}\abs{y}^{-d-2s+2}y_1$ is odd, we find
\begin{align}\label{eq;y>R0}
\begin{split}
&\int_{R\leq \abs{y}\leq L}\bigg(\int_{B_\frac{\abs{y}}{2}(0)}\phi(x)\,dx\bigg)\abs{y}^{-\delta-2}y_1\cdot\abs{y}^{-d-2s+2}dy\\
&\quad=\int_{\setR^d}\bigg(\int_{\max\{2\abs{x},R\}\leq \abs{y}\leq L}\abs{y}^{-d-2s+2-\delta-2}y_1\,dy\bigg)\phi(x)dx=0.
\end{split}
\end{align}
Also, we see that
\begin{align}\label{eq;y>R2s}
\int_{R\leq\abs{y}\leq L}\frac{c}{\abs{y}^2}\abs{y}^{-d-2s+2}dy\lesssim R^{-2s}.
\end{align}
Merging the above three estimates, it follows that
\begin{align*}
&\Biggabs{\dfrac{1}{2s(d+2s-2)}\int_{\abs{y}\leq L}\abs{y}^{-d-2s+2}\left(\int_{\setR^d}\abs{x}^{-\delta}x_1\Delta_y\phi(x+y)\,dx\right)\,dy}\\
&\quad\lesssim R^{d-2s+3}+R^{-2s}\lesssim R^{d+3}.
\end{align*}

Using the above argument, we can obtain that for $\widetilde{L}>L$, we have
\begin{align*}
&\Biggabs{\dfrac{1}{2s(d+2s-2)}\int_{L\leq\abs{y}\leq\widetilde{L}}\abs{y}^{-d-2s+2}\left(\int_{\setR^d}\abs{x}^{-\delta}x_1\Delta_y\phi(x+y)\,dx\right)\,dy}\\
&\leq c\biggabs{\int_{L\leq \abs{y}\leq \widetilde{L}}\bigg((d-\delta)\delta\abs{y}^{-\delta-2}y_1\int_{B_\frac{\abs{y}}{2}(0)}\phi(x)\,dx+\frac{c}{\abs{y}^{2}}\bigg)\abs{y}^{-d-2s+2}dy}.
\end{align*}
By Fubini's theorem and observing that $y\mapsto\abs{y}^{-\delta-2}\abs{y}^{-d-2s+2}y_1$ is odd, we find
\begin{align}\label{eq;y>R0'}
\begin{split}
&\int_{L\leq \abs{y}\leq \widetilde{L}}\bigg(\int_{B_\frac{\abs{y}}{2}(0)}\phi(x)\,dx\bigg)\abs{y}^{-\delta-2}y_1\cdot\abs{y}^{-d-2s+2}dy\\
&\quad=\int_{\setR^d}\bigg(\int_{\max\{2\abs{x},L\}\leq\abs{y}\leq\widetilde{L}}\abs{y}^{-d-2s+2-\delta-2}y_1\,dy\bigg)\phi(x)dx=0.
\end{split}
\end{align}
Also, we see that
\begin{align}\label{eq;y>R2s'}
\int_{L\leq\abs{y}\leq \widetilde{L}}\frac{c}{\abs{y}^2}\abs{y}^{-d-2s+2}dy\lesssim L^{-2s}.
\end{align}
Merging the above three estimates, it follows that
\begin{align*}
\Biggabs{\dfrac{1}{2s(d+2s-2)}\int_{L\leq\abs{y}\leq\widetilde{L}}\abs{y}^{-d-2s+2}\left(\int_{\setR^d}\abs{x}^{-\delta}x_1\Delta_y\phi(x+y)\,dx\right)\,dy}\lesssim L^{-2s}.
\end{align*}
This proves that 
\begin{align*}
\dfrac{1}{2s(d+2s-2)}\int_{\abs{y}\leq L}\abs{y}^{-d-2s+2}\left(\int_{\setR^d}\abs{x}^{-\delta}x_1\Delta_y\phi(x+y)\,dx\right)\,dy
\end{align*}
is a Cauchy sequence for $L$. Then we see that 
\begin{align*}
&\skp{g_2(y)}{\skp{g_1(x)}{\phi(x+y)}_x}_y\\
&\quad=\dfrac{1}{2s(d+2s-2)}\int_{\setR^d}\abs{y}^{-d-2s+2}\left(\int_{\setR^d}\abs{x}^{-\delta}x_1\Delta_y\phi(x+y)\,dx\right)\,dy
\end{align*}
exists. Therefore, we get the conclusion. 
\end{proof}

Following \cite[Page 96]{Vla02}, let $\eta\in\mathcal{S}(\setR^d)$ be such that 

\medskip

\begin{align}\label{eq:eta}
\text{$\eta(x)=1$ in $x\in B_1(0)$, $\support\eta\subset B_2(0)$, and $\abs{\nabla^i\eta}\leq c(d)$ for $i=0,1,2,3$.}	
\end{align}

\medskip

Define $\eta_k(x)=\eta(x/k)$ as an 1-sequence in $\mathcal{S}(\setR^d)$. Also, with \cite[Page 52]{Vla02}, let $\xi\in\mathcal{S}(\setR^{2d})$ be such that 

\medskip

\begin{align}\label{eq:xi}
\begin{split}
&\text{$\xi(x,y)=1$ in $(x,y)\in B_1(0)\times B_1(0)$, $\support\xi\subset B_2(0)\times B_2(0)$,}\\
&\text{and $\abs{\nabla^i\xi}\leq c(d)$ for any $i=0,1,2,3$.}
\end{split}
\end{align}

\medskip

Define $\xi_j(x,y)=\xi(x/j,y/j)$ as an 1-sequence in $\mathcal{S}(\setR^{2d})$. Now we prove the following lemma.

\begin{lemma}\label{lem:conv10}
The convolution of distributions $g_1\divideontimes g_2$ is well-defined in $x\in\setR^d$ in the sense that for any $\phi\in \mathcal{S}(\setR^d)$, 
\begin{align}\label{eq:kj0}
\begin{split}
\skp{g_1\divideontimes g_2}{\phi}&=\skp{g_2\divideontimes g_1}{\phi}\\
&:=\lim_{k\rightarrow\infty}\lim_{j\rightarrow\infty}\bigskp{(\eta_kg_2)(y)}{\skp{g_1(x)}{\xi_j(x,y)\phi(x+y)}_x}_y.
\end{split}		
\end{align}
Moreover, we have
\begin{align*}
\skp{g_1\divideontimes g_2}{\phi}=\bigskp{g_2(y)}{\skp{g_1(x)}{\phi(x+y)}_x}_y.
\end{align*}
\end{lemma}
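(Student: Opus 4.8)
Two things have to be shown: that the iterated limit in~\eqref{eq:kj0} exists, and that it equals $\bigskp{g_2(y)}{\skp{g_1(x)}{\phi(x+y)}_x}_y$, the quantity proved to be finite in Lemma~\ref{lem:welldef}. Since $\delta\le\tfrac12$, the distribution $g_1$ is the locally integrable function $\abs{x}^{-\delta}x_1$, and $g_2$ is a tempered distribution of order at most two with $g_2\equiv\abs{x}^{-d-2s}$ away from the origin; the whole argument is then an application of dominated convergence on top of the estimates of Lemma~\ref{lem:welldef}. I would take the cut-offs $\eta$ in~\eqref{eq:eta} and $\xi$ in~\eqref{eq:xi} radially symmetric, which is harmless (the product $\divideontimes$, once it exists for one admissible sequence, is independent of the choice) and is what makes the error terms below cancel.

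\textbf{Step 1 (limit $j\to\infty$).} Fix $k$ and $\phi\in\mathcal{S}(\setR^d)$. The function
\begin{align*}
\psi_j(y):=\skp{g_1(x)}{\xi_j(x,y)\phi(x+y)}_x=\int_{\setR^d}\abs{x}^{-\delta}x_1\,\xi_j(x,y)\,\phi(x+y)\,dx
\end{align*}
lies in $C^\infty_c(\setR^d)$ with $\support\psi_j\subset B_{2j}(0)$, and I claim $\psi_j\to\psi$ in $C^2$ uniformly on bounded sets, where $\psi(y):=\skp{g_1(x)}{\phi(x+y)}_x$. Expanding $\partial_y^\alpha(\xi_j(x,y)\phi(x+y))$ for $\abs{\alpha}\le2$ by the Leibniz rule, the principal term $\xi_j(x,y)\,\partial^\alpha\phi(x+y)$ converges to $\partial^\alpha\phi(x+y)$ under an integrable majorant, while every term carrying a derivative of $\xi_j$ is $O(j^{-1})$ and supported in $\{\abs{x}\ge j\}\cup\{\abs{y}\ge j\}$; for $y$ in a fixed ball and $j$ large this forces $\abs{x}\gtrsim j$, so the rapid decay of $\phi$ beats the growth $\abs{x}^{1-\delta}$ over an annulus of volume $\lesssim j^d$. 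Since $\eta_kg_2$ is a compactly supported distribution of order at most two, this yields
\begin{align*}
\lim_{j\to\infty}\bigskp{(\eta_kg_2)(y)}{\psi_j(y)}_y=\bigskp{(\eta_kg_2)(y)}{\psi(y)}_y=\bigskp{g_2(y)}{\eta_k(y)\psi(y)}_y.
\end{align*}

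\textbf{Step 2 (limit $k\to\infty$).} By Definition~\ref{def:g1g2} the last pairing equals $\tfrac{1}{2s(d+2s-2)}\int_{\setR^d}\abs{y}^{-d-2s+2}\Delta_y(\eta_k\psi)\,dy$, interpreted, as in Lemma~\ref{lem:welldef}, as the limit over $\{\abs{y}\le L\}$ for $L\to\infty$. Writing $\Delta_y(\eta_k\psi)=\eta_k\Delta\psi+2\nabla\eta_k\cdot\nabla\psi+\psi\,\Delta\eta_k$, the first term tends to $\abs{y}^{-d-2s+2}\Delta\psi$ and, by the estimates~\eqref{eq:y<Ry>R} and the oddness cancellation~\eqref{eq;y>R0} established inside the proof of Lemma~\ref{lem:welldef} — which already give integrability once the odd leading part is discarded, and which survive multiplication by the radial factor $\eta_k$ — the corresponding integrals converge to $\int\abs{y}^{-d-2s+2}\Delta\psi\,dy$. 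The two correction terms carry $\nabla\eta_k=O(k^{-1})$ or $\Delta\eta_k=O(k^{-2})$ supported on $\{k\le\abs{y}\le2k\}$; since $\abs{\psi(y)}\lesssim\abs{y}^{1-\delta}$ and $\abs{\nabla\psi(y)}\lesssim\abs{y}^{-\delta}$ for $\abs{y}$ large, their leading parts are — by the radial symmetry of $\eta$ — odd in $y$ and integrate to zero against the even weight $\abs{y}^{-d-2s+2}$, while the remainders are $O(\abs{y}^{-\delta-1})$ and contribute $\lesssim k^{-\delta-2s}\to0$. Hence the iterated limit equals $\tfrac{1}{2s(d+2s-2)}\int\abs{y}^{-d-2s+2}\Delta\psi\,dy=\bigskp{g_2(y)}{\skp{g_1(x)}{\phi(x+y)}_x}_y$, which is finite by Lemma~\ref{lem:welldef}; running the same argument with the roles of $g_1$ and $g_2$ exchanged (now $g_2$ supplying the locally integrable factor and $g_1$ the cut-off) gives the same value, which is the asserted symmetry $g_1\divideontimes g_2=g_2\divideontimes g_1$.

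\textbf{Main obstacle.} The delicate point is Step~2: a crude domination fails when $2s+\delta\le1$ (small $s$), so one genuinely has to exploit the symmetry cancellations in the terms produced by differentiating the cut-off $\eta_k$ — exactly the mechanism used to tame the tail in Lemma~\ref{lem:welldef} — and to check that this is compatible with Vladimirov's definition of $\divideontimes$, whose $1$-sequences are a priori not required to be radial. Everything else is bookkeeping built on the estimates already proved for Lemma~\ref{lem:welldef}.
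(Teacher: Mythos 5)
Your proof is correct and follows essentially the same route as the paper's: split the double cut-off limit into the $j\to\infty$ part (handled via compact support of $\eta_k g_2$) and the $k\to\infty$ part (a tail estimate reduced to the machinery of Lemma~\ref{lem:welldef}), exactly as the paper does with its quantities $I_1$ and $I_2$. Where your write-up is more explicit is the $k$-limit: once the Laplacian in the definition of $g_2$ is pushed onto $\eta_k\psi$, the Leibniz cross terms $\nabla\eta_k\cdot\nabla\psi$ and $\psi\,\Delta\eta_k$, supported on $k\le\abs{y}\le 2k$, are of size $\sim k^{1-2s-\delta}$ and fail to vanish by crude domination when $2s+\delta\le 1$; you correctly identify that the same odd/even cancellation that appears inside the proof of Lemma~\ref{lem:welldef} — the odd leading part of $\psi$ (resp.\ even leading part of $\nabla\psi$) integrating to zero against the even weight $\abs{y}^{-d-2s+2}$ once the cut-off is symmetric — is what closes the estimate, leaving a remainder $\lesssim k^{-2s-\delta}$. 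The paper compresses this into ``imitating the remaining proof of Lemma~\ref{lem:welldef}'' without naming the cross terms, so your version usefully spells out the mechanism. Your caveat that the paper's $\eta$ in~\eqref{eq:eta} is not stated to be radial (or even), and that Vladimirov's $1$-sequences are not a priori required to be so, is a fair observation — both your argument and the paper's implicitly use a symmetric cut-off at this step — but this does not affect the validity of the computation that the lemma is ultimately feeding into.
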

\begin{proof}
Note that once the double limit in the right-hand side of \eqref{eq:kj0} exists, then
\begin{align*}
\skp{g_1\divideontimes g_2}{\phi}&:=\lim_{k\rightarrow\infty}\lim_{j\rightarrow\infty}\skp{(\eta_kg_2)(\cdot)\times g_1(\bigcdot)}{\xi_j(\cdot,\bigcdot)\phi(\cdot+\bigcdot)}\\
&:=\lim_{k\rightarrow\infty}\lim_{j\rightarrow\infty}\bigskp{(\eta_kg_2)(y)}{\skp{g_1(x)}{\xi_j(x,y)\phi(x+y)}_x}_y
\end{align*}	
holds. Here, for the first definition since $g_1,g_2\in\mathcal{S}'(\setR^d)\subset (C^{\infty}_c)'(\setR^d)$ and $\xi_j\in C^{\infty}_c(\setR^{2d})$, \cite[Page 96 and Page 52]{Vla02} is used, and for the second definition since $\xi_j(x,y)\phi(x+y)\in C^{\infty}_c(\setR^{2d})$, \cite[Page 41]{Vla02} is used. Moreover, by \cite[Page 96]{Vla02}, we have $\skp{g_1\divideontimes g_2}{\phi}=\skp{g_2\divideontimes g_1}{\phi}$.

Now it suffices to show that
\begin{align}\label{eq:doublelimit0}
\begin{split}
&\lim_{k\rightarrow\infty}\lim_{j\rightarrow\infty}\bigskp{(\eta_kg_2)(y)}{\skp{g_1(x)}{\xi_j(x,y)\phi(x+y)}_x}_y\\
&\quad=\bigskp{g_2(y)}{\skp{g_1(x)}{\phi(x+y)}_x}_y.
\end{split}
\end{align}
To do this, for $k,j>1$, we have
\begin{align*}
&\bigabs{\bigskp{(\eta_kg_2)(y)}{\skp{g_1(x)}{\xi_j(x,y)\phi(x+y)}_x}_y-\bigskp{g_2(y)}{\skp{g_1(x)}{\phi(x+y)}_x}_y}\\
&\quad\leq\bigabs{\bigskp{(\eta_kg_2)(y)}{\skp{g_1(x)}{(1-\xi_j(x,y))\phi(x+y)}_x}_y}\\
&\quad\quad+\bigabs{\bigskp{(1-\eta_k)g_2(y)}{\skp{g_1(x)}{\phi(x+y)}_x}_y}\\
&\quad=:I_1+I_2.
\end{align*}
Using $\sum^{3}_{i=0}\abs{\nabla^i[(1-\xi^{j}(x,y))\phi(x+y)]}\lesssim\abs{x+y}^{-d}\lesssim\abs{y}^{-d}$ from $\phi\in\mathcal{S}(\setR^d)$ and $\abs{\nabla^i\xi}\leq c(d)$ for $i=0,1,2,3$, $\abs{x}\leq 2k$ and $\abs{y}>j$ with $j\geq 4k$, we obtain
\begin{align*}
I_1&\lesssim\int_{\abs{x}\leq 2k}\int_{\abs{y}>j}\abs{\eta_k(x)}\abs{y}^{-d-2s+2}(\abs{x}^{1-\delta})\abs{y}^{-d}\,dx\,dy\\
&\lesssim k^{d+1-\delta}j^{-d-2s+2}\underset{j\rightarrow \infty}{\longrightarrow} 0.
\end{align*}
For $I_2$, we write the integrand $J(x,y):\setR^{2d}\rightarrow\setR$ as follows:
\begin{align*}
\bigskp{(1-\eta_k)g_2(y)}{\skp{g_1(x)}{\phi(x+y)}_x}_y=:\int_{\setR^d}\int_{\setR^d}J(x,y)\,dy\,dx=\int_{\abs{y}> k}\int_{\setR^d}J(x,y)\,dx\,dy,
\end{align*}
where for the last equality we have used $\eta_k(\cdot)\equiv 1$ when $\abs{y}\leq k$. Then following the proof of Lemma \ref{lem:welldef}, we obtain \eqref{eq:y<Ry>R} with the choice of $x$ replaced by $y$ and $R=k/2$. Then imitating the remaining
proof of Lemma \ref{lem:welldef} but only considering the case of $\abs{y}\geq R$ in its proof, we get the analogous estimate of \eqref{eq;y>R2s} as follows:
\begin{align*}
\biggabs{\int_{\abs{y}> k}\int_{\setR^d}J(x,y)\,dx\,dy}\lesssim k^{-2s}\underset{k\rightarrow \infty}{\longrightarrow} 0.
\end{align*}
Then \eqref{eq:doublelimit0} holds and we conclude the proof.
\end{proof}

Now we define the standard mollifier $\overline{\phi}(x)\in\mathcal{S}(\setR^d)$ as
\begin{align}\label{eq:mol}
\overline{\phi}(x)=
\begin{cases}
\overline{c}\exp\left(-\frac{1}{1-\abs{x}^2}\right)&\quad\text{if }\abs{x}< 1\\
0&\quad\text{if }\abs{x}\geq 1,
\end{cases}
\end{align}
where $\overline{c}$ is defined with the following property $\int_{\setR^d}\overline{\phi}(x)\,dx=1$.
Also, we denote 
\begin{align*}
\phi_{\epsilon}(x)=\frac{1}{\epsilon^{d}}\overline{\phi}\left(\frac{x-e_1}{\epsilon}\right)
\end{align*} 
for $\epsilon\in(0,1)$. Then we prove the following lemma, which rigorously establishes the connection between $f_1(d,s,\delta)$ in \eqref{eq:f2} and $g_1\divideontimes g_2$.
\begin{lemma}\label{lem:A20}
We have
$f_1(d,s,\delta)=\lim_{\epsilon\rightarrow 0}\skp{g_1\divideontimes g_2}{\phi_{\epsilon}}$.
\end{lemma}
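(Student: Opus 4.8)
The plan is to reduce the claim, via Lemma~\ref{lem:conv10}, to identifying $g_1\divideontimes g_2$ near $x=e_1$ with the function
\begin{align*}
  F(x):=\pvint_{\setR^d}\dfrac{u_\delta(x-y)-u_\delta(x)}{\abs{y}^{d+2s}}\,dy,\qquad u_\delta(z)=\abs{z}^{-\delta}z_1,
\end{align*}
and then exploiting that $\phi_\epsilon$ is an approximation of the identity at $e_1$. By Lemma~\ref{lem:conv10} one has $\skp{g_1\divideontimes g_2}{\phi_\epsilon}=\bigskp{g_2(y)}{\psi_\epsilon(y)}_y$, where
\begin{align*}
  \psi_\epsilon(y):=\bigskp{g_1(x)}{\phi_\epsilon(x+y)}_x
  =\int_{\setR^d}\abs{x}^{-\delta}x_1\,\phi_\epsilon(x+y)\,dx
  =\int_{\setR^d}u_\delta(x-y)\,\phi_\epsilon(x)\,dx,
\end{align*}
the last identity by the substitution $x\mapsto x-y$. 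Hence $\psi_\epsilon(y)-\psi_\epsilon(0)=\int_{\setR^d}\big(u_\delta(x-y)-u_\delta(x)\big)\phi_\epsilon(x)\,dx$ and $\psi_\epsilon(0)=\int u_\delta\,\phi_\epsilon\,dx\to u_\delta(e_1)=1$ as $\epsilon\to0$.

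Next I would evaluate $\bigskp{g_2(y)}{\psi_\epsilon(y)}$. Starting from the explicit formula for this pairing provided by Lemma~\ref{lem:welldef} and integrating by parts twice in $y$ on the annulus $\{r\le\abs{y}\le R\}$ — using $\Delta(\abs{y}^{-d-2s+2})=2s(d+2s-2)\abs{y}^{-d-2s}$ away from the origin, observing that the surface integrals on $\{\abs{y}=r\}$ produce exactly the regularizing subtraction of $\psi_\epsilon(0)$ (this is the point of the normalization in the definition of $g_2$) while those on $\{\abs{y}=R\}$ vanish as $R\to\infty$ by the oddness of $u_\delta$ over symmetric shells, precisely as in the proof of Lemma~\ref{lem:welldef} — one obtains $\bigskp{g_2(y)}{\psi_\epsilon(y)}=\lim_{R\to\infty}\pvint_{\abs{y}\le R}\frac{\psi_\epsilon(y)-\psi_\epsilon(0)}{\abs{y}^{d+2s}}\,dy$. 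Inserting the formula for $\psi_\epsilon(y)-\psi_\epsilon(0)$ and applying Fubini on $\{r\le\abs{y}\le R\}\times\operatorname{supp}\phi_\epsilon$ (a bounded set on which the integrand is absolutely integrable), then letting $r\to0$ and $R\to\infty$ under the $x$-integral — the inner truncated integrals converging to $F(x)$ uniformly for $x$ in a fixed small ball around $e_1$, since $u_\delta$ is smooth there (as $e_1\ne0$) and the tail is again controlled by the oddness of $u_\delta$ — gives $\bigskp{g_2(y)}{\psi_\epsilon}=\int_{\setR^d}\phi_\epsilon(x)F(x)\,dx$.

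It remains to note that $F$ is well-defined and continuous on $\setR^d\setminus\{0\}$: this is the content of Lemma~\ref{eq:estimate-Deltas-ud} in the special case $\epsilon=0$ (where $\bbAse=\identity$ and $\kse(x,y)=\abs{x-y}^{-d-2s}$), Lemma~\ref{lem:basic} supplying the needed cancellations; and the change of variables $y\mapsto-y$ (even kernel, symmetric principal value), together with $u_\delta(e_1)=1$ and $u_\delta(e_1+h)=\abs{e_1+h}^{1-\delta}\widehat{1+h_1}$, gives $F(e_1)=f_1(d,s,\delta)$ by the definition \eqref{eq:f1}. Since $\phi_\epsilon\ge0$, $\int\phi_\epsilon=1$ and $\operatorname{supp}\phi_\epsilon\subset B_\epsilon(e_1)$, continuity of $F$ at $e_1$ yields $\skp{g_1\divideontimes g_2}{\phi_\epsilon}=\int_{\setR^d}\phi_\epsilon(x)F(x)\,dx\to F(e_1)=f_1(d,s,\delta)$ as $\epsilon\to0$, as claimed.

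The main obstacle is the second step: turning the distributional pairing $\bigskp{g_2(y)}{\psi_\epsilon}$ into the concrete truncated principal value integral with the correct constant, and then justifying the interchange of limits and integrations uniformly near $e_1$ — in effect re-running the estimates of Lemma~\ref{lem:welldef} while tracking the behavior near $y=e_1$, where $\Delta\psi_\epsilon$ concentrates like $\epsilon^{-\delta-1}$ but contributes only $O(\epsilon^{d-1-\delta})\to0$ to the integral because $d\ge2$ and $\delta\le\tfrac12$.
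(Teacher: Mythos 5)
Your proposal is correct in outline but finishes via a genuinely different route than the paper. Both begin by integrating against the Laplacian representation of $g_2$ on annuli, but the paper's Step~1 subtracts the full first-order Taylor polynomial $\phi_\epsilon(x)+y\cdot\nabla\phi_\epsilon(x)$, obtaining an integrand that is absolutely integrable near $y=0$, and Step~2 then passes $\epsilon\to0$ by hand through a decomposition into the regions $\mathcal{G}_\kappa$, $\mathcal{H}_\kappa$, $\mathcal{I}_\kappa$. You instead keep only the constant subtraction $\psi_\epsilon(0)$, arrive at a symmetric principal value over annuli (the dropped linear term vanishes there by oddness, so the two intermediate objects agree), apply Fubini on the compact truncated annuli, pass the truncation limits under the $x$-integral to reach $\int\phi_\epsilon F\,dx$, and close with the approximate-identity property. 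Your finish is conceptually cleaner once $\int\phi_\epsilon F\,dx$ is in hand, but it leans on three things you flag and do not supply: (i) the boundary-term bookkeeping in the annulus integration by parts — it does work, the $r^{-2s}$ surface contribution cancels the divergent $\psi_\epsilon(0)$ piece of the bulk integral precisely because of the factor $\tfrac{1}{2s(d+2s-2)}$ in the definition of $g_2$, but this is the substantial calculation that the paper carries out in Step~1; (ii) continuity of $F$ at $e_1$, which is not literally the content of Lemma~\ref{eq:estimate-Deltas-ud} — that lemma gives existence, an $L^1$ bound, and Cauchy estimates, and continuity requires the extra (routine) observation that those Cauchy estimates are uniform on compact sets away from the origin while each truncated integral is continuous in $x$; and (iii) the uniform convergence needed to exchange the $r\to0$, $R\to\infty$ limits with the $x$-integral in the Fubini step, which comes from the same uniform Cauchy estimates. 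These are gaps in the write-up rather than in the idea, and the route is a viable alternative to the paper's region-by-region passage of the $\epsilon$-limit.
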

\begin{proof}
With $\epsilon\in(0,\frac{1}{10})$, we observe that
\begin{align*}
\skp{g_1\divideontimes g_2}{\phi_{\epsilon}}&=\skp{g_1(x)}{\skp{g_2(y)}{\phi(x+y)}_y}_x\\
&=\dfrac{1}{2s(d+2s-2)}\int_{\setR^d}\int_{\setR^d}\abs{y}^{-d-2s+2}\abs{x}^{-\delta}x_1\Delta_y\phi_{\epsilon}(x+y)\,dx\,dy.
\end{align*}

Before starting the proof, we explain the strategy. We want to apply the argument in the proof of \cite[Page 714, Theorem 7]{Eva10}. But since $g_1$ and $g_2$ are distributions, and moreover, for $g_2$ the integrand $\abs{x}^{-d-2s+2}$ in Definition \ref{def:g1g2} is different from the expression of $g_2\equiv \abs{x}^{-d-2s}$ when $\abs{x}>0$. To overcome this issue, we first use divergence theorem to send the Laplacian $\Delta$ applied for the test function $\phi(x)$ to $\abs{x}^{-d-2s+2}$, to recover the expression $\abs{x}^{-d-2s}$ (See \textbf{Step 1}). Then we use the argument of \cite[Page 714, Theorem 7]{Eva10} (See \textbf{Step 2}).

We divide the proof into two steps.

\textbf{Step 1: Modifying the formula of $\skp{g_1\divideontimes g_2}{\phi_{\epsilon}}$ using divergence theorem.}
In this step we will show that
\begin{align*}
&\dfrac{1}{2s(d+2s-2)}\int_{\setR^d}\int_{\setR^d}\abs{y}^{-d-2s+2}\abs{x}^{-\delta}x_1\Delta_y\phi_{\epsilon}(x+y)\,dx\,dy\\
&\quad=\int_{\setR^d}\int_{\setR^d}\abs{y}^{-d-2s}\abs{x}^{-\delta}x_1(\phi_{\epsilon}(x+y)-\phi_{\epsilon}(x)-y\cdot\nabla\phi_{\epsilon}(x))\,dx\,dy
\end{align*}
holds. For $\sigma\in(0,\frac{1}{10})$ with $\sigma\leq\epsilon$, and a cross-shaped area
\begin{align*}
\mathcal{C}_{\sigma}:=(\{x\in\setR^d:\abs{x}\leq\sigma\}\times\setR^d)\cup(\setR^d\times\{y\in\setR^d:\abs{y}\leq \sigma\}),
\end{align*}
we will first show
\begin{align}\label{eq:c.sigma}
\lim_{\sigma\rightarrow 0}\iint_{\mathcal{C}_{\sigma}}\abs{y}^{-d-2s+2}\abs{x}^{-\delta}x_1\Delta_y\phi_{\epsilon}(x+y)\,dx\,dy=0.
\end{align}
To this end, we observe that
\begin{align}\label{eq:c.sigma.d}
\begin{split}
&\Bigabs{\iint_{\mathcal{C}_{\sigma}}\abs{y}^{-d-2s+2}\abs{x}^{-\delta}x_1\Delta_y\phi_{\epsilon}(x+y)\,dx\,dy}\\
&\quad\leq\iint_{\mathcal{D}_{\sigma,\epsilon}}\abs{y}^{-d-2s+2}\abs{x}^{1-\delta}\abs{\Delta_y\phi_{\epsilon}(x+y)}\,dx\,dy
\end{split}
\end{align}
with
\begin{align*}
\mathcal{D}_{\sigma,\epsilon}&:=(\{\abs{x}\leq\sigma\}\times\{1-\sigma-\epsilon\leq \abs{y}\leq 1+\sigma+\epsilon\})\\
&\quad\quad\quad\quad\quad\quad\cup(\{\abs{y}\leq\sigma\}\times\{1-\sigma-\epsilon\leq \abs{x}\leq 1+\sigma+\epsilon\}),
\end{align*}
where for the last inequality we have used the fact that since $\support\phi_{\epsilon}\subset B_{\epsilon}(e_1)$, on the set $(x,y)\in\mathcal{C}_{\sigma}$ we obtain $\support \Delta_y\phi_{\epsilon}(x+y)\subset \mathcal{D}_{\sigma,\epsilon}$.
Then we argue as follows: 
\begin{align*}
&\iint_{\mathcal{D}_{\sigma,\epsilon}}\abs{y}^{-d-2s+2}\abs{x}^{1-\delta}\abs{\Delta_y\phi_{\epsilon}(x+y)}\,dy\,dx\\
&\quad\leq c(d,s,\epsilon)\iint_{\{\abs{x}\leq\sigma\}\times\{1-\sigma-\epsilon\leq\abs{y}\leq 1+\sigma+\epsilon\}}\abs{y}^{-d-2s+2}\abs{x}^{1-\delta}\abs{\Delta_y\phi_{\epsilon}(x+y)}\,dy\,dx\\
&\quad\quad\quad+c(d,s,\epsilon)\iint_{\{\abs{y}\leq\sigma\}\times\{1-\sigma-\epsilon\leq\abs{x}\leq 1+\sigma+\epsilon\}}\abs{y}^{-d-2s+2}\abs{x}^{1-\delta}\abs{\Delta_y\phi_{\epsilon}(x+y)}\,dy\,dx\\
&\quad\leq c(d,s,\epsilon)(\sigma^{d-\delta+1}+\sigma^{2-2s})\leq c(d,s,\epsilon)\sigma^{2-2s}.
\end{align*}
Here, the $\epsilon$ dependence to the universal constant is from the term $\abs{\Delta_y\phi_{\epsilon}(x+y)}$. Thus for each fixed $\epsilon\in(0,\frac{1}{10})$, for any $\sigma\in(0,\epsilon]$ we have  found the above estimate. Now sending $\sigma\rightarrow 0$, together with \eqref{eq:c.sigma.d}, we obtain \eqref{eq:c.sigma}.

Now for $\mathcal{E}_\sigma=(\{\abs{y}\geq\sigma\}\times\{\abs{x}\geq1/\sigma\})\cup(\{\abs{x}\geq\sigma\}\times\{\abs{y}\geq 2/\sigma\})$, we have to show
\begin{align}\label{eq:e.sigma}
\lim_{\sigma\rightarrow 0}\iint_{\mathcal{E}_{\sigma}}\abs{y}^{-d-2s+2}\abs{x}^{-\delta}x_1\Delta_y\phi_{\epsilon}(x+y)\,dx\,dy=0.
\end{align}
To do this, note that on the set $\{\abs{y}\leq\sigma\}\times\{\abs{x}\geq1/\sigma\}$, we have $\phi_{\epsilon}(x+y)=0$ from $\support\phi_{\epsilon}\subset B_{\epsilon}(e_1)$. Then we have
\begin{align*}
&\iint_{\{\abs{y}\geq\sigma\}\times\{\abs{x}\geq1/\sigma\}}\abs{y}^{-d-2s+2}\abs{x}^{-\delta}x_1\Delta_y\phi_{\epsilon}(x+y)\,dx\,dy\\
&\quad=\int_{\setR^d}\int_{\abs{x}>\frac{1}{\sigma}}\abs{y}^{-d-2s+2}\abs{x}^{-\delta}x_1\Delta_y\phi_{\epsilon}(x+y)\,dy\,dx.
\end{align*}
Now by following the proof of Lemma \ref{lem:welldef}, as in \eqref{eq:y<Ry>R} with the choice of $R=\frac{1}{\sigma}$, when $\abs{x}\geq\frac{1}{\sigma}$
\begin{align*}
\begin{split}
\int_{\setR^d}\abs{x}^{-\delta}x_1\Delta_{y}\phi_{\epsilon}(x+y)\,dx\leq (d-\delta)\delta\abs{y}^{-\delta-2}y_1\int_{B_\frac{\abs{y}}{2}(0)}\Delta\phi_{\epsilon}(x)\,dx+\frac{c(d,s,\epsilon)}{\abs{y}^2}
\end{split}
\end{align*}
holds. Then following the remaining proof of Lemma \ref{lem:welldef}, especially from \eqref{eq;y>R0} and \eqref{eq;y>R2s} we obtain
\begin{align*}
\Bigabs{\iint_{\mathcal{E}_{\sigma}}\abs{y}^{-d-2s+2}\abs{x}^{-\delta}x_1\Delta_y\phi_{\epsilon}(x+y)\,dx\,dy}\leq c(d,s,\epsilon)\sigma^{2s}.
\end{align*}

On the set $\{x:\abs{x}\leq\sigma\}\times\{y:\abs{y}>2/\sigma\}$, we have $\phi_{\epsilon}(x+y)=0$ from $\support\phi_{\epsilon}\subset B_{\epsilon}(e_1)$. Then we have
\begin{align*}
&\iint_{\{\abs{x}\geq\sigma\}\times\{\abs{y}\geq 2/\sigma\}}\abs{y}^{-d-2s+2}\abs{x}^{-\delta}x_1\Delta_y\phi_{\epsilon}(x+y)\,dx\,dy\\
&\quad=\int_{\setR^d}\int_{\abs{y}>\frac{1}{\sigma}}\abs{y}^{-d-2s+2}\abs{x}^{-\delta}x_1\Delta_y\phi_{\epsilon}(x+y)\,dy\,dx.
\end{align*}
Now by following the proof of Lemma \ref{lem:welldef}, as in \eqref{eq:y<Ry>R} with the choice of $R=\frac{2}{\sigma}$, when $\abs{y}\geq\frac{2}{\sigma}$
\begin{align*}
\begin{split}
\int_{\setR^d}\abs{x}^{-\delta}x_1\Delta_{y}\phi_{\epsilon}(x+y)\,dx\leq (d-\delta)\delta\abs{y}^{-\delta-2}y_1\int_{B_\frac{\abs{y}}{2}(0)}\phi_{\epsilon}(x)\,dx+\frac{c(d,s,\epsilon)}{\abs{y}^2}
\end{split}
\end{align*}
holds. Then following the remaining proof of Lemma \ref{lem:welldef}, especially from \eqref{eq;y>R0} and \eqref{eq;y>R2s} we obtain
\begin{align*}
\Bigabs{\int_{\{\abs{y}>2/\sigma\}}\int_{\{\abs{x}\leq\sigma\}}\abs{y}^{-d-2s+2}\abs{x}^{-\delta}x_1\Delta_y\phi_{\epsilon}(x+y)\,dx\,dy}\leq c(d,s,\epsilon)\sigma^{2s}.
\end{align*}
For the integral on the set $\{y:\abs{y}\leq\sigma\}\times\{x:\abs{x}>1/\sigma\}$ is similar so we get
\begin{align*}
\Bigabs{\int_{\{\abs{y}\leq\sigma\}}\int_{\{\abs{x}\geq1/\sigma\}}\abs{y}^{-d-2s+2}\abs{x}^{-\delta}x_1\Delta_y\phi_{\epsilon}(x+y)\,dx\,dy}\leq c(d,s,\epsilon)\sigma^{2s}.
\end{align*}
Thus for each fixed $\epsilon\in(0,\frac{1}{10})$, for any $\sigma\in(0,\epsilon]$ we have found the above estimate. Now sending $\sigma\rightarrow 0$ establishes \eqref{eq:e.sigma}.

We are now ready to apply divergence theorem. Using
\begin{align*}
\Delta_y\phi_{\epsilon}(x+y)=\Delta_y(\phi_{\epsilon}(x+y)-\phi_{\epsilon}(x)-y\cdot\nabla\phi_{\epsilon}(x)),
\end{align*}
and $\nabla_y\phi_{\epsilon}(x+y)=\nabla_x\phi_{\epsilon}(x+y)$, for  
\begin{align*} 
\mathcal{F}_{\sigma}:=\{x\in\setR^{d}:\sigma\leq\abs{x}\leq 1/\sigma\}\times \{y\in\setR^{d}:\sigma\leq\abs{y}\leq 2/\sigma\},
\end{align*}
we have
\begin{align*}
&\dfrac{1}{2s(d+2s-2)}\iint_{\mathcal{F}_{\sigma}}\abs{y}^{-d-2s+2}\abs{x}^{-\delta}x_1\Delta_y\phi_{\epsilon}(x+y)\,dx\,dy\\
&\,\,=\iint_{\mathcal{F}_{\sigma}}\abs{y}^{-d-2s}\abs{x}^{-\delta}x_1\left(\phi_{\epsilon}(x+y)-\phi_{\epsilon}(x)-y\cdot\nabla\phi_{\epsilon}(x)\right)\,dx\,dy\\
&\quad-\frac{1}{2s(d+2s-2)}\int_{\partial\mathcal{F}_{\sigma}}\abs{y}^{-d-2s+2}\abs{x}^{-\delta}x_1\nu(x,y)\cdot\nabla_x(\phi_{\epsilon}(x+y)-\phi_{\epsilon}(x))\,dS\\
&\quad-\frac{1}{2s}\int_{\partial\mathcal{F}_{\sigma}}\left(\phi_{\epsilon}(x+y)-\phi_{\epsilon}(x)-y\cdot\nabla\phi_{\epsilon}(x)\right)\abs{x}^{-\delta}x_1\nu(x,y)\cdot\abs{y}^{-d-2s}y\,dS\\
&=:J_1+J_2(\partial\mathcal{F}_{\sigma})+J_3(\partial\mathcal{F}_{\sigma}).
\end{align*}
Here, $\nu(x,y)$ is the outward pointing unit normal vector, and
\begin{align*}
\partial\mathcal{F}_{\sigma}&=\{(x,y)\in\setR^{2d}:\abs{x}=\sigma,\sigma\leq\abs{y}\leq 2/\sigma\}\\
&\quad\cup\{(x,y)\in\setR^{2d}:\abs{x}=1/\sigma,\sigma\leq\abs{y}\leq 2/\sigma\}\\
&\quad\cup\{(x,y)\in\setR^{2d}:\abs{y}=\sigma,\sigma\leq\abs{x}\leq 1/\sigma\}\\
&\quad\cup\{(x,y)\in\setR^{2d}:\abs{y}=2/\sigma,\sigma\leq\abs{x}\leq 1/\sigma\}\\
&:=I_1+I_2+I_3+I_4.
\end{align*}
On $I_1$, using $\phi_{\epsilon}(x)\equiv 0$ on $\abs{x}=\sigma$, $\abs{\nu(x,y)}=1$, and $\support\phi_{\epsilon}\subset B_{\epsilon}(e_1)$, we have
\begin{align*}
\abs{J_2(I_1)}&\leq\biggabs{\int_{I_1}\abs{y}^{-d-2s+2}\abs{x}^{-\delta}x_1\nu(x,y)\nabla_x\phi_{\epsilon}(x+y)\,dS}\\
&\leq \int_{I_1}\abs{y}^{-d-2s+2}\abs{x}^{-\delta+1}\abs{\nabla_x \phi_{\epsilon}(x+y)}\,dS\\
&\leq \int_{\abs{x}=\sigma}\int_{1-\sigma-\epsilon\leq\abs{y}\leq 1+\sigma+\epsilon}\abs{y}^{-d-2s+2}\abs{x}^{-\delta+1}\abs{\nabla_x \phi_{\epsilon}(x+y)}\,dS\\
&\leq c(d,s,\epsilon)\sigma^{d-\delta}
\end{align*}
and
\begin{align*}
\abs{J_3(I_1)}&\lesssim \biggabs{\int_{I_1}\phi_{\epsilon}(x+y)\abs{x}^{-\delta}x_1\nu(x,y)\cdot\abs{y}^{-d-2s}y\,dS}\\
&\lesssim \int_{\abs{x}=\sigma}\int_{1-\sigma-\epsilon\leq\abs{y}\leq 1+\sigma+\epsilon}\abs{x}^{1-\delta}\abs{y}^{-d-2s+1}\,dS\\
&\leq c(d,s,\epsilon)\sigma^{d-\delta}.
\end{align*}
For the integral on the set $I_2$, using divergence theorem again, together with $\nu(x,y)=\frac{x}{\abs{x}}$ on $I_2$,
\begin{align*}
\divergence\left(\abs{x}^{-\delta}x_1\frac{x}{\abs{x}}\right)=(d-\delta)\abs{x}^{-\delta-1}x_1,
\end{align*}
and $\phi_{\epsilon}(x)\equiv 0$ on $\abs{x}=\frac{1}{\sigma}$, we write
\begin{align*}
-2s(d+2s-2)&J_2(I_2)=\int_{I_2}\abs{y}^{-d-2s+2}\abs{x}^{-\delta}x_1\nu(x,y)\cdot\nabla_x\phi_{\epsilon}(x+y)\,dS\\
&=-(-d-2s+2)(d-\delta)\int_{I_2}\abs{y}^{-d-2s+2}\abs{x}^{-\delta-1}x_1\phi_{\epsilon}(x+y)\,dS.
\end{align*}
Here, we have used the fact that on 
\begin{align*}
\partial(I_2)=\{\abs{x}=1/\sigma,\abs{y}=\sigma\}\cup\{\abs{x}=1/\sigma,\abs{y}=2/\sigma\},
\end{align*}
since $\support\phi_{\epsilon}\subset B_{\epsilon}(e_1)$, $\phi_{\epsilon}(x+y)\equiv 0$ on $\partial(I_2)$. Then we have
\begin{align}\label{eq:I_2}
\begin{split}
&\int_{I_2}\abs{y}^{-d-2s+2}\abs{x}^{-\delta-1}x_1\phi_{\epsilon}(x+y)\,dS\\
&=\int_{\abs{x}=\frac{1}{\sigma}}\int_{\sigma\leq\abs{y-x-e_1}\leq\frac{2}{\sigma}}\abs{y-x-e_1}^{-d-2s+2}\abs{x}^{-\delta-1}x_1\phi_{\epsilon}(y-e_1)\,dS.
\end{split}
\end{align}

Now, since $x\mapsto \abs{x}^{-d-2s+2}\abs{x}^{-\delta-1}x_1$ is odd and $\phi_{\epsilon}(y-e_1)=\phi_{\epsilon}(-y+e_1)$, note that
\begin{align}\label{eq:x=1/sigma}
\int_{\abs{x}=\frac{1}{\sigma}}\int_{\sigma\leq\abs{y-x-e_1}\leq\frac{2}{\sigma}}\abs{x}^{-d-2s+2}\abs{x}^{-\delta-1}x_1\phi_{\epsilon}(y-e_1)\,dS=0.
\end{align}
Then for $\abs{y-e_1}\leq\epsilon\leq\frac{1}{2\sigma}\leq 2\abs{x}$, using
\begin{align*}
\abs{\abs{y-x-e_1}^{-d-2s+2}+\abs{x}^{-d-2s+2}}\lesssim \abs{x}^{-d-2s+1}\abs{y-e_1}
\end{align*}
by Lemma \ref{lem:basic} and using $\support\phi_{\epsilon}(y-e_1)\subset B_{1}(e_1)$, we obtain that
\begin{align*}
&\bigg|\int_{\abs{x}=\frac{1}{\sigma}}\int_{\sigma\leq\abs{y-x-e_1}\leq\frac{2}{\sigma}}\abs{y-x-e_1}^{-d-2s+2}\abs{x}^{-\delta-1}x_1\phi_{\epsilon}(y-e_1)\,dS\\
&\quad +\int_{\abs{x}=\frac{1}{\sigma}}\int_{\sigma\leq\abs{y-x-e_1}\leq\frac{2}{\sigma}}\abs{x}^{-d-2s+2}\abs{x}^{-\delta-1}x_1\phi_{\epsilon}(y-e_1)\,dS\bigg|\\
&\quad\lesssim \int_{\abs{x}=\frac{1}{\sigma}}\int_{\sigma\leq\abs{y-x-e_1}\leq\frac{2}{\sigma}}\abs{x}^{-d-2s+1}\abs{y-e_1}\abs{x}^{-\delta}\phi_{\epsilon}(y-e_1)\,dS\\
&\quad\lesssim \sigma^{-d+1}\sigma^{d+2s-1}\sigma^{\delta}\lesssim \sigma^{2s+\delta}.
\end{align*}
Thus, we have found
\begin{align*}
J_2(I_2)\leq c(d,s,\epsilon) \sigma^{2s+\delta}.
\end{align*}
Also, for $J_3$ on $I_2$, similar to \eqref{eq:I_2}, using \eqref{eq:x=1/sigma} and
\begin{align*}
\abs{\abs{y-x-e_1}^{-d-2s}(y-x-e_1)+\abs{x}^{-d-2s}x}\lesssim \abs{x}^{-d-2s}\abs{y-e_1},
\end{align*}
together with $\support\phi_{\epsilon}(\cdot)\subset B_{\epsilon}(0)$, we obtain
\begin{align*}
\abs{J_3(I_2)}\lesssim \biggabs{\int_{I_2}\phi_{\epsilon}(x+y)\abs{x}^{-\delta}x_1\nu(x,y)\cdot\abs{y}^{-d-2s}y\,dS}\leq c(d,s,\epsilon)\sigma^{2s+\delta}.
\end{align*}

For the integral on the set $I_3$, using $\support\phi_{\epsilon}\subset B_{\epsilon}(e_1)$, $\sigma\leq\epsilon$, and the fact that $\abs{\nabla_x(\phi_{\epsilon}(x+y)-\phi_{\epsilon}(x))}\lesssim \abs{y}$, we obtain
\begin{align*}
\abs{J_2(I_3)}&=\biggabs{\int_{\abs{y}=\sigma}\int_{\sigma\leq\abs{x}\leq\frac{1}{\sigma}}\abs{y}^{-d-2s+2}\abs{x}^{-\delta}x_1\nu(x,y)\nabla_x(\phi_{\epsilon}(x+y)-\phi_{\epsilon}(x))\,dS}\\
&\lesssim c(d,s,\epsilon)\int_{\abs{y}=\sigma}\int_{1-\epsilon-\sigma\leq\abs{x}\leq1+\epsilon+\delta}\abs{y}^{-d-2s+3}\,dS\lesssim \sigma^{d-1}\sigma^{-d-2s+3}\lesssim \sigma^{2-2s}.
\end{align*}
Also, for $J_3$, using $\support\phi_{\epsilon}\subset B_{\epsilon}(e_1)$, and then $\abs{\phi_{\epsilon}(x+y)-\phi_{\epsilon}(x)-y\cdot\nabla\phi_{\epsilon}(x)}\lesssim\abs{y}^2$ and $\nu(y)=-\frac{y}{\abs{y}}$, we estimate
\begin{align*}
&\abs{J_3(I_3)}\lesssim\biggabs{\int_{\abs{y}=\sigma}\int_{\sigma\leq\abs{x}\leq\frac{1}{\sigma}}\abs{\phi_{\epsilon}(x+y)-\phi_{\epsilon}(x)-y\cdot\nabla\phi_{\epsilon}(x)}\abs{x}^{-\delta+1}\abs{y}^{-d-2s+1}\,dS}\\
&\quad\lesssim\int_{\abs{y}=\sigma}\int_{1-\epsilon-\sigma\leq\abs{x}\leq1+\epsilon+\sigma}\abs{\phi_{\epsilon}(x+y)-\phi_{\epsilon}(x)-y\cdot\nabla\phi_{\epsilon}(x)}\abs{x}^{-\delta+1}\abs{y}^{-d-2s+1}\,dS\\
&\quad\lesssim c(d,s,\epsilon)\sigma^{-d-2s+3}\sigma^{d-1}\lesssim \sigma^{2-2s}.
\end{align*}

Finally, we consider the integral on $I_4$. Using $\nu(x,y)=\frac{y}{\abs{y}}$ and divergence theorem, together with 
\begin{align}\label{eq:nabla.x}
\nabla(\abs{x}^{-\delta}x_1)=-\delta\abs{x}^{-\delta-2}x_1x+\abs{x}^{-\delta}e_1,
\end{align}
and $\phi_{\epsilon}(x+y)=0$ on $I_4$, we have
\begin{align*}
&-2s(d+2s-2)J_2(I_4)\\
&\quad=\int_{\abs{y}=\frac{2}{\sigma}}\int_{\sigma\leq\abs{x}\leq\frac{1}{\sigma}}\abs{y}^{-d-2s+2}\abs{x}^{-\delta}x_1\nu(x,y)\cdot\nabla_x(\phi_{\epsilon}(x+y)-\phi_{\epsilon}(x))\,dS\\
&\quad=-\int_{\abs{y}=\frac{2}{\sigma}}\int_{\sigma\leq\abs{x}\leq\frac{1}{\sigma}}\abs{y}^{-d-2s+1}y(-\delta\abs{x}^{-\delta-2}x_1x+\abs{x}^{-\delta}e_1)\phi_{\epsilon}(x)\,dS.
\end{align*}
Here, we have used the fact that on 
\begin{align*}
\partial(I_4)=\{\abs{y}=2/\sigma,\abs{x}=\sigma\}\cup\{\abs{y}=2/\sigma,\abs{x}=1/\sigma\},
\end{align*}
since $\support\phi_{\epsilon}\subset B_{\epsilon}(e_1)$, $\phi_{\epsilon}(x+y)\equiv 0$ on $\partial(I_4)$. Then we have
\begin{align}\label{eq:I_4}
\int_{\abs{y}=\frac{2}{\sigma}}\int_{\sigma\leq\abs{x}\leq\frac{1}{\sigma}}\abs{y}^{-d-2s+1}y(-\delta\abs{x}^{-\delta-2}x_1x+\abs{x}^{-\delta}e_1)\phi_{\epsilon}(x)\,dS=0,
\end{align}
since 
\begin{align}\label{eq:y=2/sigma}
\int_{\abs{y}=\frac{2}{\sigma}}\abs{y}^{-d-2s+1}y\,dS=0.
\end{align}
Also, for $J_3$, recalling $\nu(x,y)=\frac{y}{\abs{y}}$, and using \eqref{eq:y=2/sigma}, $\phi_{\epsilon}(x+y)=0$ on $I_4$, and $\support\phi_{\epsilon}\subset B_{\epsilon}(e_1)$, we observe
\begin{align*}
&\abs{J_3(I_4)}\\
&=\biggabs{\frac{1}{2s}\int_{\abs{y}=\frac{2}{\sigma}}\int_{\sigma\leq\abs{x}\leq\frac{1}{\sigma}}(\phi_{\epsilon}(x+y)-\phi_{\epsilon}(x)-y\cdot\nabla\phi_{\epsilon}(x))\abs{x}^{-\delta}x_1\nu(x,y)\cdot\abs{y}^{-d-2s}y\,dS}\\
&=\frac{1}{2s}\int_{\abs{y}=\frac{2}{\sigma}}\int_{1-\epsilon\leq\abs{x}\leq1+\epsilon}\phi_{\epsilon}(x)\abs{x}^{-\delta+1}\cdot\abs{y}^{-d-2s+1}\,dS\lesssim \sigma^{d+2s-1}\sigma^{-d+1}\lesssim\sigma^{2s}.
\end{align*}
Then similar to \eqref{eq:I_4}, we obtain $J_3\lesssim \sigma^{2s}$ in this case.

Overall, recalling $\mathcal{C}_\sigma$, $\mathcal{E}_\sigma$, and $\mathcal{F}_\sigma$, we have
\begin{align*}
&\dfrac{1}{2s(d+2s-2)}\int_{\setR^d}\int_{\setR^d}G_{\epsilon}(x,y)\,dx\,dy\\
&\quad=\lim_{\sigma\rightarrow 0}\left(\int_{\mathcal{C}_\sigma}G_{\epsilon}(x,y)\,dx\,dy+\int_{\mathcal{E}_\sigma}G_{\epsilon}(x,y)\,dx\,dy\right)\\
&\quad\quad+\lim_{\sigma\rightarrow 0}\left(\int_{\mathcal{F}_\sigma}\abs{y}^{-d-2s}\abs{x}^{-\delta}x_1(\phi_{\epsilon}(x+y)-\phi_{\epsilon}(x)-y\cdot\nabla\phi_{\epsilon}(x))\,dx\,dy\right)\\
&\quad=\int_{\setR^d}\int_{\setR^d}\abs{y}^{-d-2s}\abs{x}^{-\delta}x_1(\phi_{\epsilon}(x+y)-\phi_{\epsilon}(x)-y\cdot\nabla\phi_{\epsilon}(x))\,dx\,dy,
\end{align*}
where $G_{\epsilon}(x,y)=\abs{y}^{-d-2s+2}\abs{x}^{-\delta}x_1\Delta_y\phi_{\epsilon}(x+y)$.

\textbf{Step 2: Proof of \eqref{eq:approx.identity}.} We need to show 
\begin{align}\label{eq:Z0}
\lim_{\epsilon\rightarrow 0}\int_{\setR^d}\int_{\setR^d}\abs{y}^{-d-2s}\abs{x}^{-\delta}x_1(\phi_{\epsilon}(x+y)-\phi_{\epsilon}(x)-y\cdot\nabla\phi_{\epsilon}(x))\,dx\,dy=f_1(d,s,\delta).
\end{align}
To do this, for fixed $\kappa\in(0,1/100)$, consider $\epsilon\in(0,1/100)$ with $2\epsilon\leq\kappa$ and define
\begin{align*}
&\mathcal{G}_{\kappa}=\{(x,y)\in\setR^{2d}:\abs{y}\leq\kappa\},\\
&\mathcal{H}_{\kappa}=\{(x,y)\in\setR^{2d}:\abs{e_1-y}\leq\kappa\},\\
&\mathcal{I}_{\kappa}=\setR^{2d}\setminus(\mathcal{G}_{\kappa}\cup\mathcal{H}_{\kappa}).
\end{align*}
We first estimate the integration on $\mathcal{G}_{\kappa}$. Using $\support\phi\subset B_{\epsilon}(e_1)$ and the fact that $y\mapsto\abs{y}^{-d-2s}y$ is odd, and for $q(y)=\abs{x+e_1-y}^{-\delta}(x_1+1-y_1)$ when $x\in B_{\epsilon}(0)$ and $y\in B_{\kappa}(0)$,
\begin{align*}
\abs{q(y)-q(0)-y\cdot\nabla q(0)}\lesssim\abs{x-e_1}^{-\delta-1}\abs{y}^2
\end{align*}
holds, so using change of variables, we have
\begin{align*}
&\iint_{\mathcal{G}_{\kappa}}\abs{y}^{-d-2s}\abs{x}^{-\delta}x_1(\phi_{\epsilon}(x+y)-\phi_{\epsilon}(x)-y\cdot\nabla\phi_{\epsilon}(x))\,dx\,dy\\
&\,\,=\iint_{\mathcal{G}_{\kappa}}\abs{y}^{-d-2s}\abs{x}^{-\delta}x_1(\phi_{\epsilon}(x+y)-\phi_{\epsilon}(x))\,dx\,dy\\
&\,\,=\frac{\overline{c}}{\epsilon^d}\iint_{\mathcal{G}_{\kappa}}\abs{y}^{-d-2s}\left(q(y)-q(0)-y\cdot\nabla q(0)\right)\overline{\phi}\left(\frac{x}{\epsilon}\right)\,dx\,dy\\
&\,\,\lesssim \int_{\abs{y}\leq\kappa}\abs{y}^{-d-2s+2}\dashint_{\abs{x}\leq\epsilon}\overline{\phi}\left(\frac{x}{\epsilon}\right)\,dx\,dy\lesssim \kappa^{2-2s}.
\end{align*}

Now we consider the integral on the set $\mathcal{H}_{\kappa}$. Let us write
\begin{align*}
&\iint_{\mathcal{H}_{\kappa}}\abs{y}^{-d-2s}\abs{x}^{-\delta}x_1(\phi_{\epsilon}(x+y)-\phi_{\epsilon}(x)-y\cdot\nabla\phi_{\epsilon}(x))\,dx\,dy\\
&\,\,=\iint_{\mathcal{H}_{\kappa}}\abs{y}^{-d-2s}\abs{x}^{-\delta}x_1\phi_{\epsilon}(x+y)\,dx\,dy\\
&\quad-\iint_{\mathcal{H}_{\kappa}}\abs{y}^{-d-2s}\abs{x}^{-\delta}x_1\phi_{\epsilon}(x)\,dx\,dy-\iint_{\mathcal{H}_{\kappa}}\abs{y}^{-d-2s}\abs{x}^{-\delta}x_1y\cdot\nabla\phi_{\epsilon}(x)\,dx\,dy\\
&\,\,=:K_1+K_2+K_3.
\end{align*}
For $K_1$, we compute
\begin{align*}
&K_1=\int_{\abs{e_1-y}\leq\kappa}\int_{\abs{x+y-e_1}\leq\epsilon}\abs{y}^{-d-2s}\abs{x}^{-\delta}x_1\phi_{\epsilon}(x+y)\,dx\,dy\\
&=\dfrac{\overline{c}}{\epsilon^d}\int_{\abs{e_1-y}\leq\kappa}\int_{\abs{x}\leq\epsilon}\abs{y}^{-d-2s}\abs{x-y+e_1}^{-\delta}(x_1-y_1+1)\overline{\phi}\left(\frac{x}{\epsilon}\right)\,dx\,dy\\
&=\dfrac{\overline{c}}{\epsilon^d}\int_{\abs{e_1-y}\leq\kappa}\abs{y}^{-d-2s}\hspace{-6mm}\int_{\frac{1}{2}\abs{e_1-y}\leq\abs{x}\leq\epsilon}\hspace{-6mm}\left(\abs{x-y+e_1}^{-\delta}(x_1-y_1+1)-\abs{x}^{-\delta}x_1\right)\overline{\phi}\left(\frac{x}{\epsilon}\right)\,dx\,dy\\
&\,\,+\dfrac{\overline{c}}{\epsilon^d}\int_{\abs{e_1-y}\leq\kappa}\int_{\abs{x}\leq\min\{\frac{1}{2}\abs{e_1-y},\epsilon\}}\abs{y}^{-d-2s}\abs{x-y+e_1}^{-\delta}(x_1-y_1+1)\overline{\phi}\left(\frac{x}{\epsilon}\right)\,dx\,dy\\
&\lesssim\int_{\abs{e_1-y}\leq\kappa}\abs{y-e_1}^{1-\delta}\dashint_{\abs{x}\leq\epsilon}\overline{\phi}\left(\frac{x}{\epsilon}\right)\,dx\,dy\lesssim\kappa^{d+1-\delta}\lesssim\kappa^d,
\end{align*}
where for the first inequality we have used that 
\begin{align*}
\dashint_{\frac{1}{2}\abs{e_1-y}\leq\abs{x}\leq\epsilon}\abs{x}^{-\delta}x_1\overline{\phi}\left(\frac{x}{\epsilon}\right)\,dx=0
\end{align*}
as well as that on the set $\frac{1}{2}\abs{e_1-y}\leq\abs{x}\leq\epsilon$, by Lemma \ref{lem:basic}, we have
\begin{align*}
\abs{\abs{x-y+e_1}^{-\delta}(x_1-y_1+1)-\abs{x}^{-\delta}x_1}\lesssim \abs{x}^{-\delta}\abs{y-e_1},
\end{align*}
and on the set $\abs{x}\leq\min\{\frac{1}{2}\abs{e_1-y},\epsilon\}$, $\abs{\abs{x-y+e_1}^{-\delta}(x_1-y_1+1)}\lesssim \abs{y-e_1}^{1-\delta}$ holds.

Also, for $K_2$, we estimate
\begin{align*}
&\int_{\abs{e_1-y}\leq\kappa}\int_{\abs{x-e_1}\leq\epsilon}\abs{y}^{-d-2s}\abs{x}^{-\delta}x_1\phi_{\epsilon}(x)\,dx\,dy\\
&=\dfrac{\overline{c}}{\epsilon^d}\int_{\abs{e_1-y}\leq\kappa}\int_{\abs{x}\leq\epsilon}\abs{y}^{-d-2s}\abs{x+e_1}^{-\delta}(x_1+1)\overline{\phi}\left(\frac{x}{\epsilon}\right)\,dx\,dy\\
&\lesssim \int_{\abs{e_1-y}\leq\kappa}\dashint_{\abs{x}\leq\epsilon}\overline{\phi}\left(\frac{x}{\epsilon}\right)\,dx\,dy\leq\kappa^d,
\end{align*}
and for $K_3$, by divergence theorem, using \eqref{eq:nabla.x} we have
\begin{align*}
K_3&=\int_{\abs{e_1-y}\leq\kappa}\int_{\setR^d}\abs{y}^{-d-2s}y\left(-\delta\abs{x}^{-\delta-2}x_1x+\abs{x}^{-\delta}e_1\right)\phi_{\epsilon}(x)\,dx\,dy\\
&=\int_{\abs{e_1-y}\leq\kappa}\int_{\abs{x-e_1}\leq\epsilon}\abs{y}^{-d-2s}y\left(-\delta\abs{x}^{-\delta-2}x_1x+\abs{x}^{-\delta}e_1\right)\phi_{\epsilon}(x)\,dx\,dy\\
&\lesssim \int_{\abs{e_1-y}\leq\kappa}\dashint_{\abs{x}\leq\epsilon}\overline{\phi}\left(\frac{x}{\epsilon}\right)\,dx\,dy\lesssim\kappa^d.
\end{align*}
Hence we obtain
\begin{align*}
&\biggabs{\pvint_{\substack{\abs{y}\leq\kappa\\\cup\abs{e_1-y}\leq\kappa}}\abs{y}^{-d-2s}\abs{x}^{-\delta}x_1(\phi_{\epsilon}(x+y)-\phi_{\epsilon}(x)-y\cdot\nabla\phi_{\epsilon}(x))dy}\lesssim \kappa^d.
\end{align*}

Now on the set $\mathcal{I}_{\kappa}$, the integrand $\abs{y}^{-d-2s}\abs{x}^{-\delta}x_1(\phi_{\epsilon}(x+y)-\phi_{\epsilon}(x)-y\cdot\nabla\phi_{\epsilon}(x))$ is no longer singular, so we can apply the argument of the proof of \cite[Page 714, Theorem 7]{Eva10}. In detail, let us write
\begin{align*}
&\iint_{\mathcal{I}_{\kappa}}\abs{y}^{-d-2s}\abs{x}^{-\delta}x_1(\phi_{\epsilon}(x+y)-\phi_{\epsilon}(x)-y\cdot\nabla\phi_{\epsilon}(x))\,dx\,dy\\
&\,\,=\iint_{\mathcal{I}_{\kappa}}\abs{y}^{-d-2s}\abs{x}^{-\delta}x_1\phi_{\epsilon}(x+y)\,dx\,dy\\
&\quad-\iint_{\mathcal{I}_{\kappa}}\abs{y}^{-d-2s}\abs{x}^{-\delta}x_1\phi_{\epsilon}(x)\,dx\,dy-\iint_{\mathcal{I}_{\kappa}}\abs{y}^{-d-2s}\abs{x}^{-\delta}x_1y\cdot\nabla\phi_{\epsilon}(x)\,dx\,dy\\
&\,\,=:L_1+L_2+L_3.
\end{align*}
For $L_1$, recalling $\kappa\geq 2\epsilon$ and $\support\overline{\phi}\subset B_{1}(0)$, with \cite[Page 715]{Eva10} we obtain
\begin{align*}
L_1&=\int_{\substack{\abs{e_1-y}\geq\kappa\\\cap\abs{y}\geq\kappa}}\int_{\setR^d}\abs{y}^{-d-2s}\abs{x}^{-\delta}x_1\phi_{\epsilon}(x+y)\,dx\,dy\\
&=\int_{\substack{\abs{e_1-y}\geq\kappa\\\cap\abs{y}\geq\kappa}}\int_{\setR^d}\abs{y}^{-d-2s}\abs{x-y-e_1}^{-\delta}(x_1-y_1-1)\overline{\phi}\left(\frac{x}{\epsilon}\right)\,dx\,dy\\
&\quad\,\,\underset{\epsilon\rightarrow 0}{\longrightarrow}\,\,-\int_{\substack{\abs{e_1-y}\geq\kappa\\\cap\abs{y}\geq\kappa}}\abs{y}^{-d-2s}\abs{y+e_1}^{-\delta}(y_1+1)\,dy.
\end{align*}
For $L_2$, we estimate
\begin{align*}
L_2&=-\int_{\substack{\abs{e_1-y}\geq\kappa\\\cap\abs{y}\geq\kappa}}\int_{\setR^d}\abs{y}^{-d-2s}\abs{x}^{-\delta}x_1\phi_{\epsilon}(x)\,dx\,dy\\
&=-\int_{\substack{\abs{e_1-y}\geq\kappa\\\cap\abs{y}\geq\kappa}}\int_{\setR^d}\abs{y}^{-d-2s}\abs{x-e_1}^{-\delta}(x_1-1)\overline{\phi}\left(\frac{x}{\epsilon}\right)dx\,dy\underset{\epsilon\rightarrow 0}{\longrightarrow}\int_{\substack{\abs{e_1-y}\geq\kappa\\\cap\abs{y}\geq\kappa}}\abs{y}^{-d-2s}\,dy.
\end{align*}
Finally, for $L_3$, by divergence theorem,
\begin{align*}
L_3&=-\iint_{\mathcal{I}_{\kappa}}\abs{y}^{-d-2s}\abs{x}^{-\delta}x_1y\cdot\nabla\phi_{\epsilon}(x)\,dx\,dy\\
&=-\iint_{\mathcal{I}_{\kappa}}\abs{y}^{-d-2s}y\cdot\left(-\delta\abs{x}^{-\delta-2}x_1x+\abs{x}^{-\delta}e_1\right)\phi_{\epsilon}(x)\,dx\,dy\\
&=-\frac{\overline{c}}{\epsilon^d}\int_{\substack{\abs{e_1-y}\geq\kappa\\\cap\abs{y}\geq\kappa}}\int_{\setR^d}\abs{y}^{-d-2s}y\\
&\quad\quad\quad\cdot\left(-\delta\abs{x-e_1}^{-\delta-2}(x_1-1)(x-e_1)+\abs{x-e_1}^{-\delta}e_1\right)\overline{\phi}\left(\frac{x}{\epsilon}\right)\,dx\,dy\\
&\,\,\underset{\epsilon\rightarrow 0}{\longrightarrow}\,\,(\delta-1)\int_{\substack{\abs{e_1-y}\geq\kappa\\\cap\abs{y}\geq\kappa}}\abs{y}^{-d-2s}y\,dy.
\end{align*}
Then note that
\begin{align*}
\lim_{\epsilon\rightarrow 0}L_3=(\delta-1)\int_{\substack{\abs{e_1-y}\geq\kappa\\\cap\abs{y}\geq\kappa\\\cap\abs{e_1+y}\geq\kappa}}\abs{y}^{-d-2s}y\,dy+(\delta-1)\int_{\abs{e_1+y}\leq\kappa}\abs{y}^{-d-2s}y\,dy\lesssim \kappa^{d},
\end{align*}
since the first term in the right-hand side is zero by symmetry and $\abs{y}^{-d-2s}y$ is odd.

Therefore, we conclude
\begin{align*}
&\biggabs{\pvint_{\substack{\abs{y}\geq\kappa\\\cap\abs{e_1-y}\geq\kappa}}\hspace{-5mm}\abs{y}^{-d-2s}\abs{x}^{-\delta}x_1(\phi_{\epsilon}(x+y)-\phi_{\epsilon}(x)-y\cdot\nabla\phi_{\epsilon}(x))dy-L_1-L_2}\lesssim \kappa^{2-2s}
\end{align*}
together with
\begin{align*}
\lim_{\kappa\rightarrow 0}L_1+L_2=f_1(d,s,\delta).
\end{align*}
Since $\kappa\in(0,1/100)$ is arbitrary chosen and $\epsilon$ is already sent to zero. The proof is complete.
\end{proof}

For the remaining part of this subsection, we assume $\delta>0$. We have the following.

\begin{lemma}\label{lem:Fourier00}
For $\delta>0$, we have 
\begin{align}\label{eq:Fourier0}
\begin{split}
&\mathcal{F}[g_1](\xi)=-\pi^{-\frac{d}{2}+\delta-1}i\dfrac{\Gamma\left(\frac{d-\delta+2}{2}\right)}{\Gamma\left(\frac{\delta}{2}\right)}\abs{\xi}^{-d+\delta-2}\xi_1:=\mathcal{G}_1(\xi)\\&\quad \text{and} \quad \mathcal{F}[g_2](\xi)=\pi^{\frac{d}{2}+2s}\dfrac{\Gamma(-s)}{\Gamma\left(\frac{d+2s}{2}\right)}\abs{\xi}^{2s}:=\mathcal{G}_2(\xi)
\end{split}
\end{align}
in the distributional sense.
\end{lemma}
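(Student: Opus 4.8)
The plan is to reduce both identities to the closed-form Fourier transform of homogeneous powers, namely \eqref{eq:F-of-power}, together with the differentiation rule $\mathcal{F}[\partial_j u](\xi) = 2\pi i \xi_j\mathcal{F}[u](\xi)$, which is consistent with the Fourier convention fixed in Section~\ref{sec:stand-fract-lapl}. Throughout, the underlying point is that $g_1$ and $g_2$ are the \emph{canonical} tempered-distribution extensions of $|x|^{-\delta}x_1$ and $|x|^{-d-2s}$, so that \eqref{eq:F-of-power} applies to them without any lost Dirac-type correction.

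First I would treat $g_2$, which does not involve $\delta$. Since $2s \in (0,2)$, the function $|x|^{2-d-2s}$ is locally integrable, so the expression in Definition~\ref{def:g1g2} is well defined and clearly tempered. Using $\Delta|x|^{\alpha} = \alpha(\alpha+d-2)|x|^{\alpha-2}$ with $\alpha = 2-d-2s$ gives $\Delta|x|^{2-d-2s} = 2s(d+2s-2)|x|^{-d-2s}$, so an integration by parts shows that on test functions supported away from the origin $g_2$ agrees with $|x|^{-d-2s}$; in other words $g_2$ is exactly the finite-part regularization already used in \eqref{eq:exp}. Hence $\mathcal{F}[g_2]$ is given by \eqref{eq:F-of-power} with $t = 2s \notin 2\setN\cup\{0\}$, which is precisely $\pi^{\frac d2+2s}\tfrac{\Gamma(-s)}{\Gamma(\frac{d+2s}{2})}|\xi|^{2s} = \mathcal{G}_2(\xi)$.

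Next I would treat $g_1$. Because $\delta \in (0,\tfrac12]$, the function $|x|^{-\delta}x_1$ lies in $L^1_{\loc}(\setR^d)$ with polynomial growth, hence defines a tempered distribution, and $|x|^{2-\delta}$ is $C^1$ near the origin, so the identity $g_1 = \tfrac{1}{2-\delta}\partial_1(|x|^{2-\delta})$ holds in $\mathcal{S}'(\setR^d)$ without any boundary or Dirac term. Therefore $\mathcal{F}[g_1](\xi) = \tfrac{2\pi i\xi_1}{2-\delta}\mathcal{F}[|x|^{2-\delta}](\xi)$. Applying \eqref{eq:F-of-power} to $|x|^{2-\delta} = |x|^{-d-t}$ with $t = \delta-d-2$ (which is $<-d$ for the admissible parameters, so certainly $t \notin 2\setN\cup\{0\}$) yields $\mathcal{F}[|x|^{2-\delta}](\xi) = \pi^{\delta-\frac d2-2}\tfrac{\Gamma(\frac{d+2-\delta}{2})}{\Gamma(\frac{\delta-2}{2})}|\xi|^{\delta-d-2}$. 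The remaining step is elementary bookkeeping with the Gamma function: from $\Gamma(\tfrac\delta2) = \tfrac{\delta-2}{2}\Gamma(\tfrac{\delta-2}{2})$ one rewrites $\Gamma(\tfrac{\delta-2}{2})^{-1} = \tfrac{\delta-2}{2}\Gamma(\tfrac\delta2)^{-1}$, and then $\tfrac{2\pi}{2-\delta}\cdot\tfrac{\delta-2}{2} = -\pi$, so that $\mathcal{F}[g_1](\xi) = -i\pi^{\delta-\frac d2-1}\tfrac{\Gamma(\frac{d-\delta+2}{2})}{\Gamma(\frac\delta2)}|\xi|^{-d+\delta-2}\xi_1 = \mathcal{G}_1(\xi)$, as claimed.

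The only genuinely delicate point, and the one I would spell out most carefully, is the justification that passing from the regularized/finite-part definitions of $g_1$ and $g_2$ to the closed formula \eqref{eq:F-of-power} introduces no additional distribution supported at the origin. For $g_1$ this is immediate since $g_1$ and $|x|^{2-\delta}$ are honest functions with integrable first derivatives; for $g_2$ it follows from the Green-identity computation above, together with the fact that the homogeneity exponent $-d-2s$ with $0<2s<2$ is not an exceptional value. The same observation explains why the hypothesis $\delta>0$ is imposed: for $\delta=0$ one has $\mathcal{G}_1\equiv 0$ (the factor $\Gamma(\tfrac\delta2)^{-1}$ vanishes), whereas $g_1=x_1$ has Fourier transform $\tfrac{i}{2\pi}\partial_1\delta_0\neq 0$, so the stated formula genuinely fails in that borderline case.
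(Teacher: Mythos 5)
Your proof is correct and follows essentially the same route as the paper: write $g_1 = \tfrac{1}{2-\delta}\partial_1\big(|x|^{2-\delta}\big)$ and $g_2 = \tfrac{1}{2s(d+2s-2)}\Delta\big(|x|^{2-d-2s}\big)$ distributionally, then apply \eqref{eq:F-of-power} and the derivative rule, and simplify the Gamma factors via $z\Gamma(z)=\Gamma(z+1)$. The paper's proof is a bare chain of equalities; your version is the same computation but additionally records why the definitions in Definition~\ref{def:g1g2} agree with the canonical tempered extensions (the $W^{1,1}_{\loc}$/$L^1_{\loc}$ argument for $g_1$, the Green-identity argument together with non-exceptional homogeneity for $g_2$), and why the hypothesis $\delta>0$ is needed — all of which the paper treats as implicit. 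The one place your remark is slightly imprecise is the final sentence: at $\delta=0$ the written formula $\mathcal{G}_1$ has a vanishing factor $\Gamma(\delta/2)^{-1}$ multiplied against $|\xi|^{-d-2}\xi_1$, which is itself an exceptional (pole) exponent, so the ``product is zero'' reading requires fixing a regularization of $|\xi|^{-d-2}\xi_1$; but your overall conclusion — that the formula degenerates at $\delta=0$ and an extra term supported at the origin appears, as the paper itself notes in Section~\ref{sec:f1} — is right.
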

\begin{proof}
We compute
\begin{align*}
\mathcal{F}[g_1](\xi)=\dfrac{1}{2-\delta}\mathcal{F}[\partial_1\abs{x}^{2-\delta}]&=\dfrac{2\pi i\xi_1}{2-\delta}\mathcal{F}[\abs{x}^{2-\delta}]\\
&=\dfrac{2\pi i\xi_1}{2-\delta}\dfrac{(2\pi)^{\delta-2}\Gamma\left(\frac{d-\delta+2}{2}\right)}{\pi^{\frac{d}{2}}2^{\delta-2}\Gamma\left(\frac{\delta-2}{2}\right)}\abs{\xi}^{-d+\delta-2}\\
&=-\pi^{-\frac{d}{2}+\delta-1}i\dfrac{\Gamma\left(\frac{d-\delta+2}{2}\right)}{\Gamma\left(\frac{\delta}{2}\right)}\abs{\xi}^{-d+\delta-2}\xi_1
\end{align*}
and
\begin{align*}
\mathcal{F}[g_2](\xi)=\pi^{\frac{d}{2}+2s}\dfrac{\Gamma(-s)}{\Gamma\left(\frac{d+2s}{2}\right)}\abs{\xi}^{2s}.
\end{align*}
This completes the proof.
\end{proof}

Now with $\oldphi_\epsilon(x):=\frac{1}{\epsilon^d}\overline{\phi}\left(\frac{x}{\epsilon}\right)$ for $\epsilon\in(0,1)$ where $\overline{\phi}$ is a standard mollifier in $\setR^d$, we obtain the following lemma.

\begin{lemma}\label{lem:Fourier000}
For $\delta>0$, the multiplication of distributions $\mathcal{G}_1\cdot \mathcal{G}_2$ is well-defined in $x\in\setR^d$ in the sense that for any $\phi\in \mathcal{S}(\setR^d)$,
\begin{align*}
\skp{\mathcal{G}_1\cdot \mathcal{G}_2}{\phi}=\lim_{\epsilon\rightarrow 0}\bigskp{\skp{\mathcal{G}_1(y)}{\oldphi_{\epsilon}(\cdot-y)}_y\mathcal{G}_2(\cdot)}{\phi(\cdot)}
\end{align*}
exists. Moreover, we have
\begin{align}\label{eq:G1G2}
\lim_{\epsilon\rightarrow 0}\bigskp{\skp{\mathcal{G}_1(y)}{\oldphi_{\epsilon}(\cdot-y)}_y\mathcal{G}_2(\cdot)}{\phi(\cdot)}=\skp{\mathcal{G}_1\mathcal{G}_2}{\phi},
\end{align}
where 
\begin{align*}
\mathcal{G}_1\mathcal{G}_2=\pi^{\delta-1+2s}i\dfrac{\Gamma(-s)\Gamma\left(\frac{d-\delta+2}{2}\right)}{\Gamma\left(\frac{d+2s}{2}\right)\Gamma\left(\frac{\delta}{2}\right)}\abs{\xi}^{-d+\delta-2+2s}\xi_1
\end{align*}
means the usual multiplication.
\end{lemma}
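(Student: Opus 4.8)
The plan is to reduce everything to the elementary representation $\mathcal{G}_1=c_1\,\partial_1 u$, where $u(\xi):=\abs{\xi}^{-d+\delta}$ belongs to $L^1_{\loc}(\setR^d)\cap\mathcal{S}'(\setR^d)$ exactly because $\delta>0$, while $\mathcal{G}_2(\xi)=c_2\abs{\xi}^{2s}$ is a continuous function with $\abs{\mathcal{G}_2(\xi)}+\abs{\xi}\,\abs{\nabla\mathcal{G}_2(\xi)}\lesssim\abs{\xi}^{2s}$; the constants $c_1,c_2$ are read off from Lemma~\ref{lem:Fourier00}. Consequently $\mathcal{G}_1\ast\oldphi_\epsilon=c_1\,\partial_1\Phi_\epsilon$ with $\Phi_\epsilon:=u\ast\oldphi_\epsilon$ smooth. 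First I would record the mollification estimate $\abs{\Phi_\epsilon(\xi)}\lesssim\abs{\xi}^{-d+\delta}$, uniformly in $\epsilon\in(0,1)$, together with $\Phi_\epsilon\to u$ a.e.; this is standard, using that $u$ is radially decreasing and $\overline{\phi}$ is supported in the unit ball (treat $\abs{\xi}>2\epsilon$ via $\Phi_\epsilon(\xi)\le\sup_{B_\epsilon(\xi)}u$ and $\abs{\xi}\le2\epsilon$ via $\Phi_\epsilon(\xi)\le\norm{\oldphi_\epsilon}_\infty\int_{B_{3\epsilon}(0)}u$). Since $\overline{\phi}$ is radial, $\Phi_\epsilon$ is radial as well.

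Fix $\phi\in\mathcal{S}(\setR^d)$ and a radial cutoff $\chi\in C^\infty_c(B_2(0))$ with $\chi\equiv1$ on $B_1(0)$. The second step is the integration by parts
\begin{align*}
  \skp{(\mathcal{G}_1\ast\oldphi_\epsilon)\mathcal{G}_2}{\phi}=c_1\int_{\setR^d}(\partial_1\Phi_\epsilon)\,\mathcal{G}_2\phi\,d\xi=-c_1\int_{\setR^d}\Phi_\epsilon\,\partial_1(\mathcal{G}_2\phi)\,d\xi,
\end{align*}
which is legitimate because $\Phi_\epsilon$ is smooth, $\Phi_\epsilon\,\mathcal{G}_2\phi$ decays rapidly, and $\partial_1(\mathcal{G}_2\phi)=(\partial_1\mathcal{G}_2)\phi+\mathcal{G}_2\partial_1\phi$ is the $L^1_{\loc}$ function with $\abs{\partial_1(\mathcal{G}_2\phi)(\xi)}\lesssim\abs{\xi}^{2s-1}$ near the origin (there is no concentrated part at $0$, since $\mathcal{G}_2\phi\in W^{1,1}_{\loc}$). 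Now write $\partial_1(\mathcal{G}_2\phi)=\Psi_{\mathrm{sing}}+\Psi_{\mathrm{reg}}$ with $\Psi_{\mathrm{sing}}(\xi):=2sc_2\abs{\xi}^{2s-2}\xi_1\chi(\xi)\phi(0)$ absorbing the whole $\abs{\xi}^{2s-1}$-singularity, so that $\abs{\Psi_{\mathrm{reg}}(\xi)}\lesssim\abs{\xi}^{2s}$ on $B_2(0)$ and $\Psi_{\mathrm{reg}}$ decays rapidly at infinity. Then $\int\Phi_\epsilon\Psi_{\mathrm{sing}}=0$ for every $\epsilon$, because the integrand is odd under the reflection $\xi_1\mapsto-\xi_1$ (here $\Phi_\epsilon$ and $\chi$ being radial is used), while $\abs{\Phi_\epsilon\Psi_{\mathrm{reg}}}\lesssim\abs{\xi}^{-d+\delta+2s}$ on $B_2(0)$ plus rapid decay, so dominated convergence gives $\int\Phi_\epsilon\Psi_{\mathrm{reg}}\to\int u\,\Psi_{\mathrm{reg}}$.

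It then remains to identify $-c_1\int u\,\Psi_{\mathrm{reg}}$ with $\skp{\mathcal{G}_1\mathcal{G}_2}{\phi}$, which I would do by reintroducing a principal value: $\int u\,\Psi_{\mathrm{reg}}=\lim_{r\to0}\int_{\abs{\xi}>r}u\,\partial_1(\mathcal{G}_2\phi)\,d\xi$ (the $\Psi_{\mathrm{sing}}$-part over $\{\abs{\xi}>r\}$ vanishes by oddness), then integrating by parts back on $\{\abs{\xi}>r\}$, where the boundary integral is $O(r^{\delta+2s})\to0$ thanks to $\int_{\abs{\xi}=r}\tfrac{\xi_1}{\abs{\xi}}\,dS=0$; this produces $-(-d+\delta)c_2\,\mathrm{p.v.}\!\int\abs{\xi}^{-d+\delta-2+2s}\xi_1\phi\,d\xi$, and a direct computation of the Gamma-factor product $c_1(-d+\delta)c_2$ against Lemma~\ref{lem:Fourier00} matches the constant in the stated $\mathcal{G}_1\mathcal{G}_2$, the latter being understood as the homogeneous tempered distribution $\tfrac{1}{-d+\delta+2s}\partial_1\abs{\xi}^{-d+\delta+2s}$ that coincides with the stated function away from $0$. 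This establishes \eqref{eq:G1G2}, and the existence of the limit — hence the well-definedness of $\mathcal{G}_1\cdot\mathcal{G}_2$ in the sense of \cite[Section~6.5]{Vla02} — drops out along the way. The hard part is the regime $\delta+2s\le1$, in which $\mathcal{G}_1\mathcal{G}_2$ genuinely fails to be locally integrable at the origin, so the absolute-convergence arguments collapse and one must rely entirely on the $\xi_1\mapsto-\xi_1$ cancellation; this is precisely why it matters that the standard mollifier $\overline{\phi}$ is radial and that $\chi$ may be chosen radial.
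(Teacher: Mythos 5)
Your argument is correct, and it is genuinely different from the paper's. The paper works directly at the level of the product $(\mathcal{G}_1\ast\oldphi_\epsilon)\mathcal{G}_2$, antisymmetrizes the test function $\phi$ to replace it with $\tfrac{\phi(x)-\phi(-x)}{2}$ (discarding the even part by oddness of $|x|^{-d+\delta-2+2s}x_1$), and then performs a by-hand estimate near the origin by splitting into $|x|\le 2\epsilon$ and $2\epsilon\le|x|\le\kappa$, subtracting $\oldphi_\epsilon(x)$ from $\oldphi_\epsilon(x-y)$ to gain an extra power of $|y|$, and matching with the $|x|\ge\kappa$ region via Lebesgue differentiation. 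You instead exploit the algebraic identity $\mathcal{G}_1=c_1\partial_1 u$ with $u=|\cdot|^{-d+\delta}\in L^1_{\loc}$, integrate by parts to move $\partial_1$ onto $\mathcal{G}_2\phi$ (using $\mathcal{G}_2\phi\in W^{1,1}_{\loc}$, so no boundary contribution at $0$), and then isolate the problematic $|\xi|^{2s-1}$ piece into an explicitly odd function $\Psi_{\mathrm{sing}}$ that pairs to zero against the radial $\Phi_\epsilon$, leaving a dominated-convergence argument for $\Psi_{\mathrm{reg}}$ and a principal-value integration by parts to recover the constant. This buys conceptual clarity — the oddness cancellation on which everything hinges when $\delta+2s\le 1$ is realized once, at a single structurally transparent place, instead of being interleaved with mollifier-difference estimates — at the cost of some extra distributional bookkeeping (the $W^{1,1}_{\loc}$ claim, the legitimacy of the two integrations by parts, the boundary term at $|\xi|=r$). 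Both proofs use the radiality of $\overline\phi$ in an essential way. One small caveat on your final constant check: the displayed formula for $\mathcal{G}_1\mathcal{G}_2$ in the lemma statement carries the wrong sign (the direct product of the expressions in Lemma \ref{lem:Fourier00} gives $-\pi^{\delta-1+2s}i\,\tfrac{\Gamma(-s)\Gamma((d-\delta+2)/2)}{\Gamma((d+2s)/2)\Gamma(\delta/2)}|\xi|^{-d+\delta-2+2s}\xi_1$, consistent with part (6) of the proof of Lemma \ref{lem:f}); your $c_1(-d+\delta)c_2$ reproduces this correct value, so the mismatch is a typo in the paper, not an error in your argument.
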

\begin{proof}
Let $\phi\in\mathcal{S}(\setR^d)$ be given. We will show that
\begin{align*}
\lim_{\epsilon\rightarrow 0}\bigabs{\bigskp{\skp{\mathcal{G}_1(y)}{\oldphi_{\epsilon}(\cdot-y)}_y\mathcal{G}_2(\cdot)}{\phi(\cdot)}-\skp{\mathcal{G}_1\mathcal{G}_2(\cdot)}{\phi(\cdot)}}=0.
\end{align*}
For the proof, we use Lebesgue differentiation theorem. For $\kappa\geq 2\epsilon>0$, consider
\begin{align*}
&\biggabs{\int_{\abs{x}\leq\kappa}\left(\int_{\setR^d}\left[\abs{y}^{-d+\delta-2}y_1-\abs{x}^{-d+\delta-2}x_1\right]\oldphi_{\epsilon}(x-y)\,dy\right)\abs{x}^{2s}\phi(x)\,dx}\\
&\quad=\biggabs{\int_{\abs{x}\leq\kappa}\abs{x}^{-d+\delta-2+2s}x_1\phi(x)\,dx}\\
&\quad\quad+\biggabs{\int_{\abs{x}\leq\kappa}\left(\int_{\setR^d}\abs{y}^{-d+\delta-2}y_1\oldphi_{\epsilon}(x-y)\,dy\right)\abs{x}^{2s}\phi(x)\,dx}\\
&\quad=:\abs{I_1}+\abs{I_2}.
\end{align*}
Using the fact that $x\mapsto \abs{x}^{-d+\delta-2+2s}x_1\frac{\phi(x)-\phi(-x)}{2}$ is odd, and $\abs{\phi(x)-\phi(-x)}\lesssim \abs{x}$, note that
\begin{align}\label{eq:I12}
\begin{split}
\abs{I_1}&=\biggabs{\int_{\abs{x}\leq\kappa}\abs{x}^{-d+\delta-2+2s}x_1\frac{\phi(x)-\phi(-x)}{2}\,dx}\\
&=\int_{\abs{x}\leq\kappa}\abs{x}^{-d+\delta+2s}\,dx\lesssim\kappa^{\delta+2s}.
\end{split}
\end{align}

For $I_2$, using the fact that the map
\begin{align*}
x\mapsto \abs{x}^{2s}\dfrac{\phi(x)+\phi(-x)}{2}\left(\int_{\setR^d}\abs{y}^{-d+\delta-2}y_1\oldphi_{\epsilon}(x-y)\,dy\right)
\end{align*}
is odd since $\oldphi_{\epsilon}$ is even, we write
\begin{align*}
I_2&=\int_{\abs{x}\leq\kappa}\left(\int_{\setR^d}\abs{y}^{-d+\delta-2}y_1\oldphi_{\epsilon}(x-y)\,dy\right)\abs{x}^{2s}\frac{\phi(x)-\phi(-x)}{2}\,dx\\
&=\int_{\abs{x}\leq 2\epsilon}\left(\int_{\setR^d}\abs{y}^{-d+\delta-2}y_1\oldphi_{\epsilon}(x-y)\,dy\right)\abs{x}^{2s}\frac{\phi(x)-\phi(-x)}{2}\,dx\\
&\quad+\int_{2\epsilon\leq \abs{x}\leq\kappa}\left(\int_{\setR^d}\abs{y}^{-d+\delta-2}y_1\oldphi_{\epsilon}(x-y)\,dy\right)\abs{x}^{2s}\frac{\phi(x)-\phi(-x)}{2}\,dx.
\end{align*}

Let us estimate the first term in the right-hand side. We have
\begin{align*}
&\int_{\abs{x}\leq 2\epsilon}\left(\int_{\setR^d}\abs{y}^{-d+\delta-2}y_1\oldphi_{\epsilon}(x-y)\,dy\right)\abs{x}^{2s}\frac{\phi(x)-\phi(-x)}{2}\,dx\\
&=\int_{\abs{x}\leq 2\epsilon}\left(\int_{\abs{y}\leq\frac{1}{2}\abs{x}}\abs{y}^{-d+\delta-2}y_1\left(\oldphi_{\epsilon}(x-y)-\oldphi_{\epsilon}(x)\right)\,dy\right)\abs{x}^{2s}\frac{\phi(x)-\phi(-x)}{2}\,dx\\
&\quad+\int_{\abs{x}\leq 2\epsilon}\left(\int_{\abs{y}\geq\frac{1}{2}\abs{x}}\abs{y}^{-d+\delta-2}y_1\oldphi_{\epsilon}(x-y)\,dy\right)\abs{x}^{2s}\frac{\phi(x)-\phi(-x)}{2}\,dx,
\end{align*}
where we have used the fact that
\begin{align*}
\oldphi_{\epsilon}(x)\int_{\abs{y}\leq\frac{1}{2}\abs{x}}\abs{y}^{-d+\delta-2}y_1\,dy=0.
\end{align*}

Using $\abs{\oldphi_{\epsilon}(x-y)-\oldphi_{\epsilon}(x)}\lesssim \epsilon^{-1-d}\abs{y\cdot\nabla\overline{\phi}}$, we further compute
\begin{align*}
&\int_{\abs{x}\leq 2\epsilon}\left(\int_{\abs{y}\leq\frac{1}{2}\abs{x}}\abs{y}^{-d+\delta-2}y_1\left(\oldphi_{\epsilon}(x-y)-\oldphi_{\epsilon}(x)\right)\,dy\right)\abs{x}^{2s}\frac{\phi(x)-\phi(-x)}{2}\,dx\\
&\quad\lesssim\frac{1}{\epsilon}\dashint_{\abs{x}\leq 2\epsilon}\left(\int_{\abs{y}\leq\frac{1}{2}\abs{x}}\abs{y}^{-d+\delta}\,dy\right)\abs{x}^{2s+1}\,dx\lesssim \epsilon^{2s+\delta}\lesssim \kappa^{2s+\delta},
\end{align*}
where we have used $\abs{\phi(x)-\phi(-x)}\lesssim \abs{x}$. For the other term, using $\support\oldphi_{\epsilon}\subset B_{\epsilon}(0)$, we have
\begin{align*}
&\int_{\abs{x}\leq 2\epsilon}\left(\int_{\abs{y}\geq\frac{1}{2}\abs{x}}\abs{y}^{-d+\delta-2}y_1\oldphi_{\epsilon}(x-y)\,dy\right)\abs{x}^{2s}\frac{\phi(x)-\phi(-x)}{2}\,dx\\
&\quad\lesssim \int_{\abs{x}\leq 2\epsilon}\abs{x}^{2s+1}\epsilon^{\delta-1}\left(\int_{\abs{y}\leq\epsilon}\oldphi_{\epsilon}(y)\,dy\right)\,dx\lesssim \epsilon^{2s+\delta}\lesssim \kappa^{2s+\delta}.
\end{align*}
Note that the implicit constants in the above estimates do not depend on $\epsilon$. Also, using $\support\oldphi_{\epsilon}\subset B_{\epsilon}(0)$ and $\abs{\phi(x)-\phi(-x)}\lesssim \abs{x}$, we can estimate
\begin{align*}
&\int_{2\epsilon\leq \abs{x}\leq\kappa}\left(\int_{\setR^d}\abs{y}^{-d+\delta-2}y_1\oldphi_{\epsilon}(x-y)\,dy\right)\abs{x}^{2s}\frac{\phi(x)-\phi(-x)}{2}\,dx\\
&\quad=\int_{2\epsilon\leq \abs{x}\leq\kappa}\left(\int_{\abs{x-y}\leq\epsilon}\abs{y}^{-d+\delta-2}y_1\oldphi_{\epsilon}(x-y)\,dy\right)\abs{x}^{2s}\frac{\phi(x)-\phi(-x)}{2}\,dx\\
&\quad=\overline{c}\int_{2\epsilon\leq \abs{x}\leq\kappa}\left(\dashint_{\abs{y}\leq\epsilon}\abs{x-y}^{-d+\delta-2}(x_1-y_1)\overline{\phi}\left(\frac{y}{\epsilon}\right)\,dy\right)\abs{x}^{2s}\frac{\phi(x)-\phi(-x)}{2}\,dx\\
&\underset{\epsilon\rightarrow 0}{\longrightarrow} \overline{c}\int_{\abs{x}\leq\kappa}\abs{x}^{-d+\delta-1}\abs{x}^{2s}\frac{\phi(x)-\phi(-x)}{2}\,dx\\
&\quad\leq \int_{\abs{x}\leq\kappa}\abs{x}^{-d+\delta+2s}\,dx\lesssim \kappa^{\delta+2s}.
\end{align*}

Then merging the estimates, we have
\begin{align*}
\biggabs{\int_{\abs{x}\leq\kappa}\left(\int_{\setR^d}(\mathcal{G}_1(y)-\mathcal{G}_1(x))\oldphi_{\epsilon}(x-y)\,dy\right)\mathcal{G}_2(x)\phi(x)\,dx}\lesssim\kappa^{2s+\delta}.
\end{align*}

Now similar to the proof of Lemma \ref{lem:A20}, since on the set $\abs{x}\geq\kappa$ with $\kappa\geq 2\epsilon$, both $\mathcal{G}_1(x)$ and $\mathcal{G}_2(x)$ are no longer singular, by Lebesgue differentiation theorem we have
\begin{align*}
&\lim_{\epsilon\rightarrow 0}\biggabs{\int_{\abs{x}\geq\kappa}\left(\int_{\setR^d}(\mathcal{G}_1(y)-\mathcal{G}_1(x))\oldphi_{\epsilon}(x-y)\,dy\right)\mathcal{G}_2(x)\phi(x)\,dx}=0.
\end{align*}
Therefore, we obtain
\begin{align*}
&\lim_{\epsilon\rightarrow 0}\bigabs{\bigskp{\skp{\mathcal{G}_1(y)}{\oldphi_{\epsilon}(\cdot-y)}_y\mathcal{G}_2(\cdot)}{\phi(\cdot)}-\skp{\mathcal{G}_1\mathcal{G}_2(\cdot)}{\phi(\cdot)}}\\
&\quad\leq \lim_{\epsilon\rightarrow 0}\biggabs{\int_{\abs{x}\leq\kappa}\left(\int_{\setR^d}(\mathcal{G}_1(y)-\mathcal{G}_1(x))\oldphi_{\epsilon}(x-y)\,dy\right)\mathcal{G}_2(x)\phi(x)\,dx}\\
&\quad\quad +\lim_{\epsilon\rightarrow 0}\biggabs{\int_{\abs{x}\geq\kappa}\left(\int_{\setR^d}(\mathcal{G}_1(y)-\mathcal{G}_1(x))\oldphi_{\epsilon}(x-y)\,dy\right)\mathcal{G}_2(x)\phi(x)\,dx}\lesssim\kappa^{2s+\delta}.
\end{align*}
Since $\kappa$ was arbitrary, we obtain \eqref{eq:G1G2}.
\end{proof}

With the help of Lemma \ref{lem:Fourier00} and \ref{lem:Fourier000}, we have the following theorem.
\begin{theorem}\label{thm:Fourierconv0}
For $\delta>0$, we have 
\begin{align*}
\mathcal{F}[g_1\divideontimes g_2]=\mathcal{F}[g_1]\mathcal{F}[g_2]
\end{align*}
in the distributional sense, where $\mathcal{F}[g_1]\mathcal{F}[g_2]$ means the usual multiplication between $\mathcal{F}[g_1]$ and $\mathcal{F}[g_2]$.
\end{theorem}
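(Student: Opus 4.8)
The plan is to invoke the general convolution theorem for tempered distributions from \cite[Section~6.5]{Vla02}, which asserts that whenever the convolution $g_1 \divideontimes g_2$ is well-defined (in the sense recalled in Lemma~\ref{lem:conv10}), one has $\mathcal{F}[g_1 \divideontimes g_2] = \mathcal{F}[g_1] \cdot \mathcal{F}[g_2]$, where the right-hand side is the product of distributions in the sense of \cite[Section~6.5]{Vla02}. The hypothesis that $g_1 \divideontimes g_2$ is well-defined has already been verified in Lemma~\ref{lem:welldef} together with Lemma~\ref{lem:conv10}, so this first step is immediate. Thus the only thing that remains is to identify the distributional product $\mathcal{F}[g_1] \cdot \mathcal{F}[g_2]$ with the ordinary pointwise product $\mathcal{F}[g_1]\mathcal{F}[g_2]$ of the two locally integrable functions computed in Lemma~\ref{lem:Fourier00}.

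First I would recall from Lemma~\ref{lem:Fourier00} that, for $\delta > 0$, $\mathcal{F}[g_1] = \mathcal{G}_1$ and $\mathcal{F}[g_2] = \mathcal{G}_2$ in the distributional sense, with $\mathcal{G}_1(\xi) = -\pi^{-\frac d2 + \delta - 1} i \frac{\Gamma(\frac{d-\delta+2}{2})}{\Gamma(\frac\delta2)} |\xi|^{-d+\delta-2}\xi_1$ and $\mathcal{G}_2(\xi) = \pi^{\frac d2 + 2s}\frac{\Gamma(-s)}{\Gamma(\frac{d+2s}{2})}|\xi|^{2s}$. Since $\delta \in (0,\tfrac12]$ and $s \in (0,1)$, the function $\mathcal{G}_1$ has a local singularity of order $|\xi|^{-d+\delta-1}$ near the origin, which is locally integrable because $-d+\delta-1 > -d$ fails — one must be slightly careful here: in fact $|\xi|^{-d+\delta-1}$ is integrable near $0$ precisely when $\delta > 1$, which does not hold, so instead the relevant observation is that $\mathcal{G}_1$ itself, being (a constant times) $\partial_1 |\xi|^{-d+\delta}$ up to normalization, is a tempered distribution represented by a function that is locally integrable away from $0$, and near $0$ behaves like $|\xi|^{-d+\delta-1}$; what makes the product with $\mathcal{G}_2 \sim |\xi|^{2s}$ a genuine locally integrable function is that the combined homogeneity is $-d+\delta-1+2s > -d$, i.e. $\delta + 2s > 1$. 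This inequality can fail for small $\delta$ and $s$, so the cleaner route is exactly the one taken in Lemma~\ref{lem:Fourier000}: there it is shown directly that the distributional product $\mathcal{G}_1 \cdot \mathcal{G}_2$ (defined via mollification) coincides with the ordinary product $\mathcal{G}_1 \mathcal{G}_2 = \pi^{\delta-1+2s} i \frac{\Gamma(-s)\Gamma(\frac{d-\delta+2}{2})}{\Gamma(\frac{d+2s}{2})\Gamma(\frac\delta2)}|\xi|^{-d+\delta-2+2s}\xi_1$.

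Therefore the proof is a short chain of identifications: by Lemma~\ref{lem:welldef} and Lemma~\ref{lem:conv10} the convolution $g_1 \divideontimes g_2$ is well-defined, so the convolution theorem \cite[Section~6.5]{Vla02} gives $\mathcal{F}[g_1 \divideontimes g_2] = \mathcal{F}[g_1] \cdot \mathcal{F}[g_2]$ with the product understood in the distributional sense; by Lemma~\ref{lem:Fourier00} we have $\mathcal{F}[g_1] = \mathcal{G}_1$ and $\mathcal{F}[g_2] = \mathcal{G}_2$; and by Lemma~\ref{lem:Fourier000} the distributional product $\mathcal{G}_1 \cdot \mathcal{G}_2$ equals the pointwise product $\mathcal{G}_1 \mathcal{G}_2 = \mathcal{F}[g_1]\mathcal{F}[g_2]$. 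Chaining these gives $\mathcal{F}[g_1 \divideontimes g_2] = \mathcal{F}[g_1]\mathcal{F}[g_2]$ in the distributional sense, which is the claim. I expect no real obstacle here, since all the substantive work — well-definedness of the convolution, computation of the individual Fourier transforms, and the mollification argument showing the distributional product is the ordinary product — is already contained in the three cited lemmas; the present theorem is simply their concatenation, and the one point worth stating explicitly is that the hypothesis needed to apply the convolution theorem of \cite{Vla02} is precisely the well-definedness established in Lemma~\ref{lem:conv10}.

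\begin{proof}
Let $\delta > 0$. By Lemma~\ref{lem:welldef} and Lemma~\ref{lem:conv10}, the convolution $g_1 \divideontimes g_2$ of tempered distributions is well-defined in the sense of \cite[Section~6.5]{Vla02}. Hence the convolution theorem \cite[Section~6.5]{Vla02} applies and yields
\begin{align*}
\mathcal{F}[g_1 \divideontimes g_2] = \mathcal{F}[g_1] \cdot \mathcal{F}[g_2]
\end{align*}
in the distributional sense, where $\cdot$ denotes the product of distributions as defined in \cite[Section~6.5]{Vla02}. By Lemma~\ref{lem:Fourier00} we have $\mathcal{F}[g_1] = \mathcal{G}_1$ and $\mathcal{F}[g_2] = \mathcal{G}_2$ in the distributional sense. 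Finally, Lemma~\ref{lem:Fourier000} shows that $\mathcal{G}_1 \cdot \mathcal{G}_2 = \mathcal{G}_1 \mathcal{G}_2$, where the right-hand side denotes the usual pointwise multiplication of the locally integrable functions $\mathcal{G}_1$ and $\mathcal{G}_2$. Combining these identities gives
\begin{align*}
\mathcal{F}[g_1 \divideontimes g_2] = \mathcal{G}_1 \mathcal{G}_2 = \mathcal{F}[g_1]\mathcal{F}[g_2]
\end{align*}
in the distributional sense, which is the assertion.
\end{proof}
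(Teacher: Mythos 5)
The proposal is correct and follows essentially the same chain of identifications as the paper's proof: well-definedness of the convolution (Lemma~\ref{lem:welldef}, Lemma~\ref{lem:conv10}) enables the Vladimirov convolution theorem, the individual transforms come from Lemma~\ref{lem:Fourier00}, and the identification of the distributional product with the ordinary product is Lemma~\ref{lem:Fourier000}. The paper's proof also spells out explicitly the intermediate unpacking of the distributional product via mollification $\mathcal{F}[g_1]\divideontimes\oldphi_\epsilon \in \theta_M$ before invoking Lemma~\ref{lem:Fourier000}, which your proof compresses into a single citation, but the content is the same.
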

\begin{proof}
For any $\phi\in\mathcal{S}(\setR^d)$, note that
\begin{align*}
\skp{\mathcal{F}[g_1\divideontimes g_2]}{\phi}&=\skp{\mathcal{F}[g_1]\cdot\mathcal{F}[g_2]}{\phi}\\
&=\lim_{\epsilon\rightarrow 0}\skp{(\mathcal{F}[g_1]\divideontimes\oldphi_{\epsilon})\mathcal{F}[g_2]}{\phi}\\
&=\lim_{\epsilon\rightarrow 0}\bigskp{\skp{\mathcal{F}[g_1](y)}{\oldphi_{\epsilon}(x-y)}_y\mathcal{F}[g_2](x)}{\phi(x)}_x\\
&=\skp{\mathcal{F}[g_1]\mathcal{F}[g_2]}{\phi},
\end{align*}
where for the first equality since $g_1,g_2\in\mathcal{S}'(\setR^d)$, we have used \cite[Page 96]{Vla02} together with Lemma \ref{lem:welldef} and Lemma \ref{lem:conv10}, for the second equality since $\mathcal{F}[g_1]\divideontimes\oldphi_{\epsilon}\in\theta_M$ as in \cite[Page 84]{Vla02}, where with multi index $\alpha=(\alpha_1,\alpha_2,\dots,\alpha_d)$,
\begin{align}\label{eq:thetaM}
\theta_{M}:=\{f\in C^{\infty}(\setR^d):\abs{\partial^{\alpha}f(x)}\leq c_{\alpha}(1+\abs{x})^{m_{\alpha}}\quad\text{for some }c_{\alpha},m_{\alpha}>0\}
\end{align}
(defined in \cite[Page 76]{Vla02}) and $\mathcal{F}[g_2]$ in $\mathcal{S}'(\setR^d)$, $(\mathcal{F}[g_1]\divideontimes\oldphi_{\epsilon})\mathcal{F}[g_2]\in\mathcal{S}'(\setR^d)$ holds as in \cite[Page 79]{Vla02}, for the third equality again since $\mathcal{F}[g_1]\divideontimes\oldphi_{\epsilon}\in\theta_M$, \cite[Page 84]{Vla02} is used, and for the last equality Lemma \ref{lem:Fourier000} together with Lemma \ref{lem:Fourier00} is used.
\end{proof}

Then it is worth mentioning the following corollary.
\begin{corollary}\label{cor:Fourierconv20}
For $\delta>0$,	we have $(\mathcal{F}\circ\mathcal{F})[g_1\divideontimes g_2]=\mathcal{F}\left[\mathcal{F}[g_1]\mathcal{F}[g_2]\right]$.
\end{corollary}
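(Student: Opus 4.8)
The plan is to read off the corollary from Theorem~\ref{thm:Fourierconv0} by applying the Fourier transform one more time. Recall that Theorem~\ref{thm:Fourierconv0} furnishes, for $\delta > 0$, the distributional identity $\mathcal{F}[g_1 \divideontimes g_2] = \mathcal{F}[g_1]\mathcal{F}[g_2]$, in which the right-hand side denotes the ordinary pointwise product of the two explicit locally integrable functions $\mathcal{F}[g_1] = \mathcal{G}_1$ and $\mathcal{F}[g_2] = \mathcal{G}_2$ from Lemma~\ref{lem:Fourier00}. The first point to record is that $g_1 \divideontimes g_2$ is a genuine tempered distribution — its well-definedness in $\mathcal{S}'(\RRd)$ is exactly the content of Lemma~\ref{lem:welldef} together with Lemma~\ref{lem:conv10} — so $\mathcal{F}[g_1 \divideontimes g_2] \in \mathcal{S}'(\RRd)$ and, by the identity just quoted, also $\mathcal{F}[g_1]\mathcal{F}[g_2] \in \mathcal{S}'(\RRd)$. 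In particular the Fourier transform may legitimately be applied to both sides.

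I would then simply apply $\mathcal{F}$ to each side of the identity in Theorem~\ref{thm:Fourierconv0}. Since $\mathcal{F}$ is a continuous linear bijection of $\mathcal{S}'(\RRd)$ onto itself, this yields
\[
(\mathcal{F}\circ\mathcal{F})[g_1 \divideontimes g_2] = \mathcal{F}\bigl[\mathcal{F}[g_1]\mathcal{F}[g_2]\bigr],
\]
which is precisely the assertion of the corollary. No estimates beyond those already established are needed; the only thing to keep track of is that on the right-hand side the factor $\mathcal{F}[g_1]\mathcal{F}[g_2]$ is the honest pointwise product — the coincidence of the distributional product $\mathcal{F}[g_1]\cdot\mathcal{F}[g_2]$ with this pointwise product having been verified in Lemma~\ref{lem:Fourier000}.

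Consequently the substantive work has all been done upstream in Theorem~\ref{thm:Fourierconv0} (and in the lemmas feeding into it), and there is no real obstacle remaining: the corollary is a formal consequence of that theorem, recorded separately only because $(\mathcal{F}\circ\mathcal{F})[g_1 \divideontimes g_2]$ is the form in which the identity is actually invoked when $g_1 \ast g_2$ is evaluated via the Fourier transform in the proof of Lemma~\ref{lem:f} (step (6) there). If one wishes to be fully explicit one can also note that $(\mathcal{F}\circ\mathcal{F})$ acts as the reflection $x \mapsto -x$, which is how the evaluation at $-e_1$ in \eqref{eq:appen0} arises, but this is not needed for the statement itself.
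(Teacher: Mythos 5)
Your argument is correct and is essentially the same as the paper's: the paper also reduces the corollary to Theorem~\ref{thm:Fourierconv0} by observing that $\mathcal{F}$ maps $\mathcal{S}(\RRd)$ into itself and testing the identity against $\mathcal{F}[\phi]$, which is just the duality restatement of your "apply $\mathcal{F}$ to both sides in $\mathcal{S}'(\RRd)$." Your additional remark that $\mathcal{F}[g_1]\cdot\mathcal{F}[g_2]$ agrees with the pointwise product via Lemma~\ref{lem:Fourier000} is exactly the step the paper's one-line chain $\skp{\mathcal{F}[g_1]\cdot\mathcal{F}[g_2]}{\mathcal{F}[\phi]}=\skp{\mathcal{F}[g_1]\mathcal{F}[g_2]}{\mathcal{F}[\phi]}$ records.
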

\begin{proof}
Since $\phi\in\mathcal{S}(\setR^d)$ implies $\mathcal{F}[\phi]\in\mathcal{S}(\setR^d)$, by Theorem \ref{thm:Fourierconv0} we have
\begin{align*}
\skp{\mathcal{F}[g_1\divideontimes g_2]}{\mathcal{F}[\phi]}=\skp{\mathcal{F}[g_1]\cdot\mathcal{F}[g_2]}{\mathcal{F}[\phi]}=\skp{\mathcal{F}[g_1]\mathcal{F}[g_2]}{\mathcal{F}[\phi]},
\end{align*}
which proves the conclusion.
\end{proof}

\subsection{Convolution theorem for $f_2(d,s,\delta)$}\label{sec:f2}

We define the distributions $g_3$ and $g_4$, where the (formal) evaluation $x=e_1$ to their convolution will recover $f_2(d,s,\delta)$. The definition is the same as Definition \ref{def:f3f4}.
\begin{definition}\label{def:g3g4}
For $\phi\in \mathcal{S}(\setR^d)$, we define the distributions $g_3$ and $g_4$ in $\setR^d$ as
\begin{align*}
g_3:=j_1-j_2\quad\text{and}\quad g_4:=-j_3+\tfrac{j_4}{2}+j_5-j_6+\tfrac{j_7}{2},
\end{align*}
where the distributions $j_1, j_2, \dots, j_7$ in $\setR^d$ are defined as
\begin{align*}
\skp{j_1}{\phi}:=\skp{\abs{h}^{-\delta-2}h_1}{\phi}=\displaystyle\int_{\setR^d}\abs{x}^{-\delta-2}x_1\phi(x)\,dx,
\end{align*}
\begin{align*}
\skp{j_2}{\phi}:=\skp{\abs{h}^{-2}}{\phi}=
\begin{cases}
-\displaystyle\int_{\setR^d}\dfrac{x\log\abs{x}}{\abs{x}^2}\nabla\phi(x)\,dx&\,\text{if }d=2,\\
\displaystyle\int_{\setR^d}\dfrac{\phi(x)}{\abs{x}^{2}}\,dx&\,\text{if }d\geq 3,
\end{cases}
\end{align*}
\begin{align*}
\skp{j_3}{\phi}&:=\skp{\abs{h}^{-d-2s-2}h_1^3}{\phi}\\
&=\int_{\setR^d}\left(\dfrac{-(-d-2s+2)x_1^2+2\abs{x}^{2}}{\abs{x}^{d+2s}(-d-2s+2)(-d-2s)}\right)\partial_1\phi(x)\,dx,
\end{align*}
\begin{align*}
\skp{j_4}{\phi}:=\skp{\abs{h}^{-d-2s}h_1^2}{\phi}=\int_{\setR^d}\abs{x}^{-d-2s}x_1^2\phi(x)\,dx,
\end{align*}
\begin{align*}
\skp{j_5}{\phi}&:=\skp{\abs{h}^{-d-2s-2}h_1^2}{\phi}\\
&=\int_{\setR^d}\dfrac{2s\abs{x}^{-d-2s+2}\partial_1^2\phi(x)-\abs{x}^{-d-2s+2}\Delta\phi(x)}{2s(-d-2s+2)(-d-2s)}\,dx,
\end{align*}
\begin{align*}
\skp{j_6}{\phi}:=\skp{\abs{h}^{-d-2s}h_1}{\phi}=\int_{\setR^d}\dfrac{-\abs{x}^{-d-2s+2}\partial_1\phi(x)}{-d-2s+2}\,dx,
\end{align*}
and 
\begin{align*}
\skp{j_7}{\phi}&:=\skp{\abs{x}^{-d-2s+2}}{\phi}=\int_{\setR^d}\abs{x}^{-d-2s+2}\phi(x)\,dx.
\end{align*}	

\end{definition}

Note that
\begin{itemize}
	\item One can easily check that the above distributions are well-defined since $\phi\in \mathcal{S}(\setR^d)$ implies $\abs{\partial^k_i\phi(x)}\lesssim \min\{1,\frac{1}{\abs{x}^3}\}$ for $k=0,1,2,3$ and $i=0,1,\dots,d$.
	\item Distributions $j_1$, $j_2$ when $d\geq 3$, $j_4$, and $j_7$ can be understood as locally integrable functions in $\setR^d$, since (especially for $j_1$) $\delta\in\left[0,\tfrac{1}{2}\right]$.
	\item Also, when $\abs{x}>0$, $j_2(x)\equiv -\divergence\left(\frac{x\log\abs{x}}{\abs{x}^2}\right)=\abs{x}^{-2}$ holds in case of $d=2$, $j_3(x)\equiv \abs{x}^{-d-2s-2}x_1^3$, $j_5(x)\equiv\abs{x}^{-d-2s-2}x_1^2$, and $j_6(x)\equiv \abs{x}^{-d-2s}x_1$. 
	\item Note that when $d=2$, for any $\phi\in \mathcal{S}(\setR^d)$ and any $\kappa\in(0,\infty)$, by divergence theorem we have
	\begin{align}\label{eq:j2}
	\begin{split}
	\skp{j_2}{\phi}&=-\displaystyle\int_{\abs{x}\leq \kappa}\dfrac{x\log\abs{x}}{\abs{x}^2}\nabla(\phi(x)-\phi(0))\,dx-\displaystyle\int_{\abs{x}\geq \kappa}\dfrac{x\log\abs{x}}{\abs{x}^2}\nabla\phi(x)\,dx\\
	&=\displaystyle\int_{\abs{x}\leq \kappa}\dfrac{\phi(x)-\phi(0)}{\abs{x}^2}\,dx+\displaystyle\int_{\abs{x}\geq \kappa}\dfrac{\phi(x)}{\abs{x}^2}\,dx+\abs{S^{1}}\log(\kappa)\phi(0),
	\end{split}
	\end{align}
	where $\abs{S^{1}}$ means the circumference of the $2$-dimensional unit sphere.
\end{itemize}

The main goal of this subsection is to show the following relation (see Theorem \ref{thm:Fourierconv}):
\begin{align}\label{eq:g3g4}
\mathcal{F}[g_3\divideontimes g_4]=\mathcal{F}[g_3]\mathcal{F}[g_4]
\end{align}
in the distributional sense, where $\divideontimes$ is a convolution of two distributions in $\mathcal{S}'(\setR^d)$ introduced in \cite[Page 96]{Vla02}, and $\mathcal{F}[g_3]\mathcal{F}[g_4]$ means the usual multiplication between $\mathcal{F}[g_3]$ and $\mathcal{F}[g_4]$. Note that from Lemma \ref{lem:Fourier0}$, \mathcal{F}[g_3]$ and $\mathcal{F}[g_4]$ are locally integrable functions in $\setR^d$. The well-definedness of the left-hand side term of \eqref{eq:g3g4} will be proved in Lemma \ref{lem:welldef3} and Lemma \ref{lem:conv3}, and the well-definedness of the right-hand side term of \eqref{eq:g3g4} will be proved in Lemma \ref{lem:Fourier.riesz}.

To show \eqref{eq:g3g4}, first we prove the following lemma.
\begin{lemma}\label{lem:welldef3}
For any $\phi\in \mathcal{S}(\setR^d)$, $\skp{g_4(y)}{\skp{g_3(x)}{\phi(x+y)}_x}_y$ is well-defined.
\end{lemma}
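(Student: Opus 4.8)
The statement to be proved is the well-definedness of the iterated pairing $\skp{g_4(y)}{\skp{g_3(x)}{\phi(x+y)}_x}_y$ for $\phi\in\mathcal S(\setR^d)$, where $g_3=j_1-j_2$ and $g_4=-j_3+\tfrac12 j_4+j_5-j_6+\tfrac12 j_7$. The structure of the argument should parallel Lemma~\ref{lem:welldef}: one must show that the inner pairing $F(y):=\skp{g_3(x)}{\phi(x+y)}_x$ is a smooth function of $y$ with controlled growth at infinity, and then that $g_4$, despite being a distribution associated with kernels of order $\abs{y}^{-d-2s}$ up to $\abs{y}^{-d-2s+2}$ (hence not integrable at infinity), can be paired with $F$ because of a cancellation coming from parity. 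First I would treat $g_3$: since $j_1(x)=\abs{x}^{-\delta-2}x_1$ is locally integrable (using $\delta\le\tfrac12$) and $j_2$ is $\abs{x}^{-2}$ (with the $d=2$ logarithmic interpretation from \eqref{eq:j2}), the map $y\mapsto F(y)$ is $C^\infty$; the key quantitative point is that $F(y)$ decays like $\abs{y}^{-2}$ as $\abs{y}\to\infty$ up to an explicit odd leading term. This is obtained exactly as in the proof of Lemma~\ref{lem:welldef}: split $\int \abs{x-y}^{-\delta-2}(x_1-y_1)\phi(x)\,dx$ over $B_{\abs{y}/2}(0)$ and its complement, Taylor-expand the kernel around $x=0$ via Lemma~\ref{lem:basic}, and isolate the leading term proportional to $\abs{y}^{-\delta-2}y_1\int\phi$, which is \emph{odd} in $y$; the remainder is $O(\abs{y}^{-2})$. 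The $j_2$ contribution is handled the same way and contributes a leading odd term $\abs{y}^{-2}\cdot(\text{const})\,y$-type pieces plus lower order, with the $d=2$ log term absorbed using the Gauss--Green rewriting.

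Next I would handle the outer pairing with $g_4$. Each $j_k$ in $g_4$ has kernel (away from $0$) a homogeneous rational function of degree $-d-2s$ (for $j_4,j_6$), $-d-2s-2$ times a degree-$2$ or degree-$3$ numerator (for $j_3,j_5$), or $-d-2s+2$ (for $j_7$) — in every case a kernel behaving like $\abs{y}^{-d-2s+2}$ or better near infinity, and worse than $\abs{y}^{-d}$, so not integrable at infinity in general. The resolution, just as in Lemma~\ref{lem:welldef} and in the proof of Lemma~\ref{eq:estimate-Deltas-ud}, is that the leading behaviour of $F(y)$ is \emph{odd}, while each $j_k$ kernel restricted to large $\abs{y}$ is \emph{even} (the numerators $h_1^2$, $h_1^3$ composed with the even measure... more carefully: $\abs{h}^{-d-2s-2}h_1^2$, $\abs{h}^{-d-2s}$, $\abs{h}^{-d-2s+2}$ are even, while $\abs{h}^{-d-2s-2}h_1^3$ and $\abs{h}^{-d-2s}h_1$ are odd). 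So one must pair odd $\times$ even and even $\times$ odd pieces carefully: write $F(y)=F_{\mathrm{odd}}(y)+F_{\mathrm{rem}}(y)$ with $\abs{F_{\mathrm{rem}}(y)}\lesssim\abs{y}^{-2}$ and $F_{\mathrm{odd}}$ the explicit leading term; against an even kernel the $F_{\mathrm{odd}}$ part integrates to zero over symmetric annuli $B_L\setminus B_R$, and against an odd kernel one instead extracts the \emph{next} term in the expansion of $F$ (order $\abs{y}^{-3}$ or so) which, combined with the kernel, is integrable. In each of the seven cases the net integrand decays faster than $\abs{y}^{-d}$, so the limit defining the principal-value/improper integral over $\setR^d$ exists; this is organized exactly as the Cauchy-sequence argument in $L$ at the end of the proof of Lemma~\ref{lem:welldef} (estimates of the type \eqref{eq;y>R0}, \eqref{eq;y>R2s}, \eqref{eq;y>R0'}, \eqref{eq;y>R2s'}).

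Concretely I would proceed in three steps. \textbf{Step 1:} Fix $\phi\in\mathcal S(\setR^d)$; show $F_k(y):=\skp{j_k(x)}{\phi(x+y)}_x$ is smooth and derive, for each $k=1,2$, the asymptotic expansion $F_k(y)=c_k\,P_k(y)+E_k(y)$ with $P_k$ an explicit homogeneous odd function and $\abs{E_k(y)}\lesssim_\phi \abs{y}^{-2}$ (and correspondingly a second-order expansion $F_k(y)=c_k P_k(y)+c_k' Q_k(y)+\widetilde E_k(y)$, $\abs{\widetilde E_k}\lesssim\abs{y}^{-3}$, for use against the odd kernels), imitating the computation in Lemma~\ref{lem:welldef} line by line including the divergence-theorem boundary terms. \textbf{Step 2:} For each constituent $j_m$ of $g_4$, estimate $\int_{R\le\abs{y}\le L} \abs{\text{kernel}_m(y)}\,\abs{F(y)}\,dy$, using the parity cancellation: when $\text{kernel}_m$ is even, the $P$-part contributes $0$ by symmetry (as in \eqref{eq;y>R0}) and the $E$-part gives $\lesssim R^{-2s}$ (as in \eqref{eq;y>R2s}); when $\text{kernel}_m$ is odd, pair it with the $Q$-part and $\widetilde E$-part, again getting a symmetric-annulus cancellation plus an $O(R^{-2s})$ tail. \textbf{Step 3:} Conclude that the truncated integrals over $\abs{y}\le L$ form a Cauchy net as $L\to\infty$, hence $\skp{g_4(y)}{F(y)}_y$ is well-defined, and note the near-origin integrability is immediate since $\abs{y}^{-d-2s+2}$ is locally integrable and $F$ is bounded near $0$. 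The main obstacle is bookkeeping: one has to carry the expansion of $F$ to sufficient order and match the correct parity against each of the seven kernels in $g_4$, keeping track of which terms vanish by symmetry and which require the extra order of decay — but no new idea beyond the mechanism already used for Lemma~\ref{lem:welldef} and Lemma~\ref{eq:estimate-Deltas-ud} is needed.
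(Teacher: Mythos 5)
Your overall architecture — show that the inner pairing $F(y)=\skp{g_3(x)}{\phi(x+y)}_x$ is smooth with an explicit odd leading term plus an $O(\abs{y}^{-2})$-type remainder, then pair against $g_4$ using parity cancellation for the leading term and raw decay for the remainder, and conclude by a Cauchy-net argument in the truncation parameter — is exactly the paper's strategy (it runs three steps: $j_2$ with $d=2$, $j_2$ with $d\geq 3$, and $j_1$). However, two of your concrete claims are off, and both trace to the same source: you oscillate between treating the $j_m$ in $g_4$ as the homogeneous functions $\abs{h}^{-d-2s-2}h_1^3$, $\abs{h}^{-d-2s}h_1$, etc., and treating them via their defining distributional formulas. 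In the defining formulas the derivatives have already been moved onto $\phi$ (see Definition~\ref{def:g3g4}), so when you form $\skp{j_m(y)}{F(y)}_y$ the $y$-derivatives transfer to $\phi(x+y)$ and the kernel that actually appears in the $y$-integral is, in every one of the seven cases, an \emph{even} function of order $\abs{y}^{-d-2s+2}$. Consequently there is no odd $g_4$ kernel to pair, no need to carry the expansion of $F$ to second order $\abs{y}^{-3}$, and the ``against an odd kernel one instead extracts the next term'' step you propose never arises. This is also what makes your near-origin observation valid: $\abs{y}^{-d-2s+2}$ is locally integrable, but $\abs{h}^{-d-2s-2}h_1^2\sim\abs{h}^{-d-2s}$ and $\abs{h}^{-d-2s}h_1\sim\abs{h}^{-d-2s+1}$ as raw functions are not, so you must commit to the distributional representation — and having done so you should commit to it for the parity structure as well.

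The second slip is the claim that the $j_2$ contribution ``contributes a leading odd term $\abs{y}^{-2}\cdot(\text{const})\,y$-type pieces.'' Since $j_2$ is (away from $0$) the even function $\abs{x}^{-2}$, the leading behaviour of $\int\abs{x-y}^{-2}\phi(x)\,dx$ for $\abs{y}\to\infty$ is the \emph{even} term $\abs{y}^{-2}\int\phi$. In fact no parity cancellation is invoked for $j_2$ at all: the paper establishes a decay of order $\abs{y}^{-(2-s)}$ (equations \eqref{eq:y<Ry>R3} and \eqref{eq:y<Ry>R3.2}), which against the even $\abs{y}^{-d-2s+2}$ kernel already yields an absolutely integrable $\abs{y}^{-d-s}$ tail. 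The odd leading term and the accompanying Fubini-plus-parity cancellation (equation \eqref{eq:y>R3.1}) are used only for the $j_1$ piece $\abs{x}^{-\delta-2}x_1$, whose leading term $\abs{y}^{-\delta-2}y_1\int\phi$ genuinely is odd and genuinely does decay too slowly to be handled by absolute integrability alone. Fixing these two points would bring your plan into alignment with the paper's and simplify it; as written, the plan would eventually succeed, but only after discovering along the way that the extra parity matching you set up is vacuous for $g_4$ and that the $j_2$ case needs no matching at all.
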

\begin{proof}
Let us choose $R\geq2$ and $\phi\in \mathcal{S}(\setR^d)$, and then divide the proof into three steps. Recall that $g_3:=j_1-j_2$ and $g_4:=-j_3+\tfrac{j_4}{2}+j_5-j_6+\tfrac{j_7}{2}$. Here and throughout the proof of this lemma, implicit generic constants $c$ in `$\lesssim$' will depend only on
\begin{align*}
d,s,\delta,\quad\text{and}\quad \norm{\nabla^i\phi}_{L^{\infty}(\setR^d)}\quad\text{for}\quad i=0,1,2,3.
\end{align*}
Note that for each $y\in\setR^d$, the integrand in defining $\skp{g_3(x)}{\phi(x+y)}_x$ is in $L^{1}(\setR^d)$ since $\phi\in\mathcal{S}(\setR^d)$.

\textbf{Step 1: Estimate for $j_2$ in the case of $d=2$. }When $\abs{y}\leq R$, we obtain
\begin{align*}
&\Biggabs{\int_{\setR^d}\dfrac{x\log\abs{x}}{\abs{x}^2}\nabla_x(\phi(x+y))\,dx}\\
&\leq\Biggabs{\int_{B_R(0)}\dfrac{x\log\abs{x}}{\abs{x}^2}\nabla_x(\phi(x+y))\,dx}+\Biggabs{\int_{\setR^d\setminus B_R(0)}\dfrac{x\log\abs{x}}{\abs{x}^2}\nabla_x(\phi(x+y))\,dx}\\
&\lesssim R\log R+\Biggabs{\int_{\setR^d\setminus B_R(0)}\dfrac{x\log\abs{x}}{\abs{x}^2}\nabla_x(\phi(x+y))\,dx},
\end{align*}
where for the last inequality we have used $\abs{\nabla_x(\phi(x+y))}\lesssim 1$ since $\phi\in \mathcal{S}(\setR^d)$. Here, if $-y\in B_{\frac{3}{4}R}(0)$, then employing $\abs{\nabla_x(\phi(x+y))}\lesssim \abs{x+y}^{-3}$ since $\phi\in \mathcal{S}(\setR^d)$,
\begin{align*}
\Biggabs{\int_{\setR^d\setminus B_R(0)}\dfrac{x\log\abs{x}}{\abs{x}^2}\nabla_x(\phi(x+y))\,dx}\lesssim\Biggabs{\int_{\setR^d\setminus B_R(0)}\dfrac{\log\abs{x}}{\abs{x}}\frac{1}{\abs{x+y}^{3}}\,dx}\lesssim\frac{\log R}{R^{2}}
\end{align*}
holds. On the other hand, if $-y\not\in B_{\frac{3}{4}R}(0)$ so that $B_{\frac{1}{4}R}(-y)\cap B_{\frac{1}{2}R}(0)=\emptyset$, then using $\abs{\nabla_x(\phi(x+y))}\lesssim \min\{1,\abs{x+y}^{-3}\}$,
\begin{align*}
&\Biggabs{\int_{\setR^d\setminus B_R(0)}\dfrac{x\log\abs{x}}{\abs{x}^2}\nabla_x(\phi(x+y))\,dx}\\
&\quad\leq \int_{B_{\frac{1}{4}R}(-y)}\dfrac{\log\abs{x}}{\abs{x}}\,dx+\int_{\setR^d\setminus (B_{\frac{1}{2}R}(0)\cup B_{\frac{1}{4}R}(-y))}\dfrac{\log\abs{x}}{\abs{x}\abs{x+y}^{3}}\,dx\\
&\quad \lesssim R\log R+\dfrac{\log R}{R}\cdot \frac{1}{R}\lesssim R\log R
\end{align*}
holds, since $R\geq 2$. Then if $\abs{y}\leq R$, we obtain
\begin{align}\label{eq:y<R3}
\Biggabs{\int_{\setR^d}\dfrac{x\log\abs{x}}{\abs{x}^2}\nabla_x(\phi(x+y))\,dx}\lesssim R\log R.
\end{align}

Now we consider the case of $\abs{y}\geq R$. Observe that
\begin{align*}
&\int_{\setR^d}\dfrac{x\log\abs{x}}{\abs{x}^2}\nabla_x(\phi(x+y))\,dx\\
&\quad=\int_{B_\frac{\abs{y}}{2}(-y)}\dfrac{x\log\abs{x}}{\abs{x}^2}\nabla_x(\phi(x+y))\,dx+\int_{\setR^d\setminus B_\frac{\abs{y}}{2}(-y)}\dfrac{x\log\abs{x}}{\abs{x}^2}\nabla_x(\phi(x+y))\,dx\\
&\quad=\int_{B_\frac{\abs{y}}{2}(-y)}\dfrac{1}{\abs{x}^2}\phi(x+y)\,dx+\int_{\partial B_\frac{\abs{y}}{2}(-y)}\dfrac{(x\cdot\nu(x,y))\log\abs{x}}{\abs{x}^2}\phi(x+y)\,dS\\
&\quad\quad\quad+\int_{\setR^d\setminus B_\frac{\abs{y}}{2}(-y)}\dfrac{x\log\abs{x}}{\abs{x}^2}\nabla_x(\phi(x+y))\,dx\\
&\quad=\int_{B_\frac{\abs{y}}{2}(0)}\dfrac{1}{\abs{x-y}^2}\phi(x)\,dx+\int_{\partial B_\frac{\abs{y}}{2}(0)}\dfrac{((x-y)\cdot x)\log\abs{x-y}}{\abs{x}\abs{x-y}^2}\phi(x)\,dS\\
&\quad\quad\quad+\int_{\setR^d\setminus B_\frac{\abs{y}}{2}(0)}\dfrac{(x-y)\log\abs{x-y}}{\abs{x-y}^2}\nabla\phi(x)\,dx,
\end{align*}
where for the second inequality we have used divergence theorem with the term $-\divergence\left(\frac{x\log\abs{x}}{\abs{x}^2}\right)=\abs{x}^{-2}$ for $x\neq 0$, and $\nu(x,y)=\widehat{x+y}$ means the outward pointing unit normal vector along $\partial B_{\frac{\abs{y}}{2}}(-y)$. Here, using $\abs{\phi(x)}\lesssim 1/\abs{x}^{2-s}$ from $\phi \in\mathcal{S}(\setR^d)$, and recalling $\abs{y}\geq R$, it holds that
\begin{align*}
\Biggabs{\int_{B_\frac{\abs{y}}{2}(0)}\dfrac{1}{\abs{x-y}^2}\phi(x)\,dx}\lesssim\int_{B_\frac{\abs{y}}{2}(0)}\dfrac{1}{\abs{x-y}^2}\frac{1}{\abs{x}^{2-s}}\,dx\lesssim \dfrac{1}{\abs{y}^2}\abs{y}^{s}\lesssim\frac{1}{\abs{y}^{2-s}}.
\end{align*}
Moreover, using $\abs{\phi(x)}\lesssim 1/\abs{x}^{2}$ from $\phi \in\mathcal{S}(\setR^d)$, and employing $\log\abs{y}\lesssim\abs{y}^{s}$,
\begin{align*}
&\Biggabs{\int_{\partial B_\frac{\abs{y}}{2}(0)}\dfrac{((x-y)\cdot x)\log\abs{x-y}}{\abs{x}\abs{x-y}^2}\phi(x)\,dS}\\
&\quad\lesssim \int_{\partial B_\frac{\abs{y}}{2}(0)}\dfrac{\log\abs{x-y}}{\abs{x-y}}\frac{1}{\abs{x}^2}\,dS\lesssim \abs{y}\frac{\log\abs{y}}{\abs{y}}\cdot\frac{1}{\abs{y}^2}\lesssim \frac{\log\abs{y}}{\abs{y}^2}\lesssim \frac{1}{\abs{y}^{2-s}}
\end{align*}
holds. Also, with $\abs{y}\geq R$, and $\abs{\nabla\phi(x)}\lesssim 1/\abs{x}^{3}$ from $\phi \in\mathcal{S}(\setR^d)$, we obtain
\begin{align*}
&\Biggabs{\int_{\setR^d\setminus B_{\frac{\abs{y}}{2}}(0)}\dfrac{(x-y)\log\abs{x-y}}{\abs{x-y}^2}\nabla\phi(x)\,dx}\\
&\quad\leq\int_{B_{\frac{\abs{y}}{2}}(y)}\dfrac{\log\abs{x-y}}{\abs{x-y}}\frac{1}{\abs{x}^3}\,dx+\int_{\setR^d\setminus (B_{\frac{\abs{y}}{2}}(0)\cup B_{\frac{\abs{y}}{2}}(y))}\dfrac{\log\abs{x-y}}{\abs{x-y}}\frac{1}{\abs{x}^3}\,dx\\
&\quad\leq \abs{y}\log \abs{y}\cdot \frac{1}{\abs{y}^3}+\frac{\log\abs{y}}{\abs{y}}\cdot\frac{1}{\abs{y}}\lesssim \frac{\log\abs{y}}{\abs{y}^2}\lesssim \frac{1}{\abs{y}^{2-s}}.
\end{align*}
Summing up, with \eqref{eq:y<R3} we see that
\begin{align}\label{eq:y<Ry>R3}
\Biggabs{\int_{\setR^d}\dfrac{x\log\abs{x}}{\abs{x}^2}\nabla_x(\phi(x+y))\,dx}\lesssim R\log R\cdot\indicator_{\{\abs{y}\leq R\}}+\frac{1}{\abs{y}^{2-s}}\cdot \indicator_{\{\abs{y}\geq R\}}.
\end{align}
Using this, $\partial_{1}\phi\in\mathcal{S}(\setR^d)$ from $\phi\in\mathcal{S}(\setR^d)$, and $\abs{y}^{-d-2s}y_1^2,\abs{y}^{-d-2s+2}>0$, we estimate as follows for $L\geq 2R$: 
\begin{align*}
&\Biggabs{\int_{\abs{y}\leq L}\left(\int_{\setR^d}\dfrac{x\log\abs{x}}{\abs{x}^2}\nabla_x\partial_{y_1}\phi(x+y)\,dx\right)\left(\dfrac{-(-d-2s+2)y_1^2+2\abs{y}^{2}}{\abs{y}^{d+2s}(-d-2s+2)(-d-2s)}\right)\,dy}\\
&\quad\lesssim \Biggabs{\int_{\abs{y}\leq L}\abs{y}^{-d-2s}y_1^2\left(\int_{\setR^d}\dfrac{x\log\abs{x}}{\abs{x}^2}\nabla_x\partial_{y_1}\phi(x+y)\,dx\right)dy}\\
&\quad\quad+\Biggabs{\int_{\abs{y}\leq L}\abs{y}^{-d-2s+2}\left(\int_{\setR^d}\dfrac{x\log\abs{x}}{\abs{x}^2}\nabla_x\partial_{y_1}\phi(x+y)\,dx\right)dy}\\
&\quad\lesssim\int_{\abs{y}\leq L}\left(R\log R\cdot\indicator_{\{\abs{y}\leq R\}}+\frac{1}{\abs{y}^{2-s}}\cdot \indicator_{\{\abs{y}\geq R\}}\right)\abs{y}^{-d-2s+2}\,dy\\
&\quad\lesssim R^{3-2s}\log R+R^{-s}\lesssim R^3.
\end{align*}
Using the above argument, we can obtain that for $\widetilde{L}>L$, we have
\begin{align*}
&\Biggabs{\int_{L\leq\abs{y}\leq\widetilde{L}}\left(\int_{\setR^d}\dfrac{x\log\abs{x}}{\abs{x}^2}\nabla_x\partial_{y_1}\phi(x+y)\,dx\right)\left(\dfrac{-(-d-2s+2)y_1^2+2\abs{y}^{2}}{\abs{y}^{d+2s}(-d-2s+2)(-d-2s)}\right)\,dy}\\
&\quad\lesssim \Biggabs{\int_{L\leq\abs{y}\leq\widetilde{L}}\abs{y}^{-d-2s}y_1^2\left(\int_{\setR^d}\dfrac{x\log\abs{x}}{\abs{x}^2}\nabla_x\partial_{y_1}\phi(x+y)\,dx\right)dy}\\
&\quad\quad+\Biggabs{\int_{L\leq\abs{y}\leq\widetilde{L}}\abs{y}^{-d-2s+2}\left(\int_{\setR^d}\dfrac{x\log\abs{x}}{\abs{x}^2}\nabla_x\partial_{y_1}\phi(x+y)\,dx\right)dy}\\
&\quad\lesssim\int_{L\leq\abs{y}\leq\widetilde{L}}\left(\frac{1}{\abs{y}^{2-s}}\cdot \indicator_{\{\abs{y}\geq R\}}\right)\abs{y}^{-d-2s+2}\,dy\lesssim L^{-s}.
\end{align*}
This proves that 
\begin{align*}
\begin{split}
\int_{\abs{y}\leq L}\left(\int_{\setR^d}\dfrac{x\log\abs{x}}{\abs{x}^2}\nabla_x\partial_{y_1}\phi(x+y)\,dx\right)\left(\dfrac{-(-d-2s+2)y_1^2+2\abs{y}^{2}}{\abs{y}^{d+2s}(-d-2s+2)(-d-2s)}\right)\,dy
\end{split}
\end{align*}
is a Cauchy sequence for $L$. Then we see that
\begin{align*}
\skp{j_3(y)}{\skp{j_2(x)}{\phi(x+y)}_x}_y
\end{align*}
exists. Similarly, using $\partial^k_{i}\phi\in\mathcal{S}(\setR^d)$ for $k=0,1,2,3$ and $i=1,2,\dots,d$, we conclude that
\begin{align}
\skp{(-j_3+\tfrac{j_4}{2}+j_5-j_6+\tfrac{j_7}{2})(y)}{\skp{j_2(x)}{\phi(x+y)}_x}_y
\end{align}
exists.

\textbf{Step 2: Estimate for $j_2$ in the case of $d\geq 3$. }When $\abs{y}\leq R$, we obtain
\begin{align*}
\Biggabs{\int_{\setR^d}\dfrac{\phi(x+y)}{\abs{x}^2}\,dx}&\leq\Biggabs{\int_{B_R(0)}\dfrac{\phi(x+y)}{\abs{x}^2}\,dx}+\Biggabs{\int_{\setR^d\setminus B_R(0)}\dfrac{\phi(x+y)}{\abs{x}^2}\,dx}\\
&\lesssim R^{d-2}+\Biggabs{\int_{\setR^d\setminus B_R(0)}\dfrac{\phi(x+y)}{\abs{x}^2}\,dx},
\end{align*}
where we have used $\abs{\nabla_x(\phi(x+y))}\lesssim 1$ since $\phi\in \mathcal{S}(\setR^d)$. Here, if $-y\in B_{\frac{3}{4}R}(0)$, then employing $\abs{\nabla_x(\phi(x+y))}\lesssim \abs{x+y}^{-d-1}$, we obtain
\begin{align*}
\Biggabs{\int_{\setR^d\setminus B_R(0)}\dfrac{\phi(x+y)}{\abs{x}^2}\,dx}\lesssim \int_{\setR^d\setminus B_R(0)}\abs{x}^{-2}\abs{x+y}^{-d-1}\,dx\lesssim R^{-3}.
\end{align*}
On the other hand, if $-y\not\in B_{\frac{3}{4}R}(0)$ so that $B_{\frac{1}{4}R}(-y)\cap B_{\frac{1}{2}R}(0)=\emptyset$, then using $\abs{\nabla_x(\phi(x+y))}\lesssim \min\{1,\abs{x+y}^{-d-1}\}$ we have
\begin{align*}
&\Biggabs{\int_{\setR^d\setminus B_R(0)}\dfrac{\phi(x+y)}{\abs{x}^2}\,dx}\\
&\quad\leq \int_{B_{\frac{1}{4}R}(-y)}\abs{x}^{-2}\,dx+\int_{\setR^d\setminus (B_{\frac{1}{2}R}(0)\cup B_{\frac{1}{4}R}(-y))}\abs{x}^{-2}\abs{x+y}^{-d-1}\,dx\lesssim R^{d-2}.
\end{align*}
Then if $\abs{y}\leq R$, we obtain
\begin{align}\label{eq:y<R3.1}
\Biggabs{\int_{\setR^d}\dfrac{\phi(x+y)}{\abs{x}^2}\,dx}\lesssim R^{d-2}.
\end{align}

Now we consider the case of $\abs{y}\geq R$. Observe that
\begin{align*}
&\int_{\setR^d}\dfrac{\phi(x+y)}{\abs{x}^2}\,dx\\
&=\int_{B_\frac{\abs{y}}{2}(-y)}\dfrac{\phi(x+y)}{\abs{x}^2}\,dx+\int_{B_\frac{\abs{y}}{2}(0)}\dfrac{\phi(x+y)}{\abs{x}^2}\,dx+\int_{\setR^d\setminus (B_\frac{\abs{y}}{2}(-y)\cup B_\frac{\abs{y}}{2}(0))}\dfrac{\phi(x+y)}{\abs{x}^2}\,dx.
\end{align*}
Here, using $\abs{\phi(x+y)}\lesssim 1/\abs{x+y}^{d-s}$ from $\phi \in\mathcal{S}(\setR^d)$, and recalling $\abs{y}\geq R$,
\begin{align*}
\Biggabs{\int_{B_\frac{\abs{y}}{2}(-y)}\dfrac{\phi(x+y)}{\abs{x}^2}\,dx}\lesssim\int_{B_\frac{\abs{y}}{2}(-y)}\frac{1}{\abs{x}^2}\dfrac{1}{\abs{x+y}^{d-s}}\,dx\lesssim \dfrac{1}{\abs{y}^2}\abs{y}^{s}\lesssim\frac{1}{\abs{y}^{2-s}}
\end{align*}
holds. Moreover, employing $\abs{\phi(x+y)}\lesssim 1/\abs{x+y}^{d+1}$ from $\phi \in\mathcal{S}(\setR^d)$ and using $\abs{y}\geq R\geq 2$, we obtain
\begin{align*}
&\Biggabs{\int_{B_\frac{\abs{y}}{2}(0)}\dfrac{\phi(x+y)}{\abs{x}^2}\,dx+\int_{\setR^d\setminus (B_\frac{\abs{y}}{2}(-y)\cup B_\frac{\abs{y}}{2}(0))}\dfrac{\phi(x+y)}{\abs{x}^2}\,dx}\\
&\quad\lesssim\int_{B_{\frac{\abs{y}}{2}}(0)}\dfrac{1}{\abs{x}^2}\frac{1}{\abs{y}^{d+1}}\,dx+\int_{\setR^d\setminus (B_{\frac{\abs{y}}{2}}(-y)\cup B_{\frac{\abs{y}}{2}}(0))}\dfrac{1}{\abs{y}^2}\frac{1}{\abs{x+y}^{d+1}}\,dx\\
&\quad\lesssim \abs{y}^{d-2}\dfrac{1}{\abs{y}^{d+1}}+\dfrac{1}{\abs{y}^2}\cdot\dfrac{1}{\abs{y}}\lesssim\frac{1}{\abs{y}^{2-s}}.
\end{align*}
Summing up, with \eqref{eq:y<R3.1} we see that
\begin{align}\label{eq:y<Ry>R3.2}
\Biggabs{\int_{\setR^d}\dfrac{\phi(x+y)}{\abs{x}^2}\,dx}\lesssim R^{d-2}\cdot\indicator_{\{\abs{y}\leq R\}}+\frac{1}{\abs{y}^{2-s}}\cdot \indicator_{\{\abs{y}\geq R\}}.
\end{align}
Using this and $\partial^k_{i}\phi\in\mathcal{S}(\setR^d)$ for $k=0,1,2$ and $i=1,2,\dots,d$, we compute for $L\geq 2R$ as follows:
\begin{align}\label{eq:y<Ry>R3.3}
\begin{split}
&\int_{\abs{y}\leq L}\left(\int_{\setR^d}\dfrac{\phi(x+y)}{\abs{x}^2}\,dx\right)\left(\dfrac{-(-d-2s+2)y_1^2+2\abs{y}^{2}}{\abs{y}^{d+2s}(-d-2s+2)(-d-2s)}\right)\,dy\\
&\quad\lesssim R^{d-2s}+R^{-s}\lesssim R^d.
\end{split}
\end{align}
Moreover, we can obtain that for $\widetilde{L}>L$, we have
\begin{align}\label{eq:y<Ry>R3.3'}
\begin{split}
\int_{L\leq\abs{y}\leq\widetilde{L}}\left(\int_{\setR^d}\dfrac{\phi(x+y)}{\abs{x}^2}\,dx\right)\left(\dfrac{-(-d-2s+2)y_1^2+2\abs{y}^{2}}{\abs{y}^{d+2s}(-d-2s+2)(-d-2s)}\right)\,dy\lesssim L^{-s}.
\end{split}
\end{align}
This proves that 
\begin{align*}
\begin{split}
\int_{\abs{y}\leq L}\left(\int_{\setR^d}\dfrac{\phi(x+y)}{\abs{x}^2}\,dx\right)\left(\dfrac{-(-d-2s+2)y_1^2+2\abs{y}^{2}}{\abs{y}^{d+2s}(-d-2s+2)(-d-2s)}\right)\,dy
\end{split}
\end{align*}
is a Cauchy sequence for $L$. Then we see that
\begin{align*}
\begin{split}
\skp{j_3(y)}{\skp{j_2(x)}{\phi(x+y)}_x}_y
\end{split}
\end{align*}
exists. Similarly, we conclude that
\begin{align*}
\begin{split}
\skp{(-j_3+\tfrac{j_4}{2}+j_5-j_6+\tfrac{j_7}{2})(y)}{\skp{j_2(x)}{\phi(x+y)}_x}_y
\end{split}
\end{align*}
exists.

\textbf{Step 3: Estimate for $j_1$. }When $\abs{y}\leq R$, using $\delta\in[0,\frac{1}{2}]$ implies $-\delta-1>-2\geq-d$, so we obtain
\begin{align*}
&\Biggabs{\int_{\setR^d}\abs{x}^{-\delta-2}x_1\phi(x+y)\,dx}\leq\int_{\setR^d}\abs{x}^{-\delta-1}\abs{\phi(x+y)}\,dx\\
&\quad\quad\quad\leq\int_{B_R(0)}\abs{x}^{-\delta-1}\abs{\phi(x+y)}\,dx+\int_{\setR^d\setminus B_R(0)}\abs{x}^{-\delta-1}\abs{\phi(x+y)}\,dx\\
&\quad\quad\quad\lesssim R^{d-\delta-1}+\int_{\setR^d\setminus B_R(0)}\abs{x}^{-\delta-1}\abs{\phi(x+y)}\,dx,
\end{align*}
where for the last inequality we have used $\abs{\phi(x+y)}\lesssim 1$ since $\phi\in \mathcal{S}(\setR^d)$. Here, if $-y\in B_{\frac{3}{4}R}(0)$, then using $\abs{\phi(x+y)}\lesssim\abs{x+y}^{-d-1}$, we obtain
\begin{align*}
\int_{\setR^d\setminus B_R(0)}\abs{x}^{-\delta-1}\abs{\phi(x+y)}\,dx\lesssim\int_{\setR^d\setminus B_R(0)}\abs{x}^{-\delta-1}\abs{x+y}^{-d-1}\,dx\lesssim R^{-\delta-2}.
\end{align*}
On the other hand, if $-y\not\in B_{\frac{3}{4}R}(0)$ so that $B_{\frac{1}{4}R}(-y)\cap B_{\frac{1}{2}R}(0)=\emptyset$, then employing $\abs{\phi(x+y)}\lesssim \min\{1,\abs{x+y}^{-d-1}\}$,
\begin{align*}
&\int_{\setR^d\setminus B_R(0)}\abs{x}^{-\delta-1}\abs{\phi(x+y)}\,dx\\
&\quad\leq \int_{B_{\frac{1}{4}R}(-y)}\abs{x}^{-\delta-1}\,dx+\int_{\setR^d\setminus (B_{\frac{1}{2}R}(0)\cup B_{\frac{1}{4}R}(-y))}\abs{x}^{-\delta-1}\abs{x+y}^{-d-1}\,dx\\
&\quad \lesssim R^{-\delta-1}\cdot R^d+R^{-\delta-1}R^{-1}\lesssim R^{d}
\end{align*}
holds, since $R\geq 2$. Then if $\abs{y}\leq R$, we obtain
\begin{align}\label{eq:y<R3.2}
\Biggabs{\int_{\setR^d}\abs{x}^{-\delta-2}x_1\phi(x+y)\,dx}\lesssim R^{d}.
\end{align}

Now we consider the case of $\abs{y}\geq R$. In this case we will use the fact that the integral of the odd function on $\setR^d$ is zero instead of using divergence theorem. Observe that 
\begin{align*}
&\int_{\setR^d}\abs{x}^{-\delta-2}x_1\phi(x+y)\,dx\\
&\quad=\int_{\setR^d}\abs{x-y}^{-\delta-2}(x_1-y_1)\phi(x)\,dx\\
&\quad=\int_{B_\frac{\abs{y}}{2}(0)}\abs{x-y}^{-\delta-2}(x_1-y_1)\phi(x)\,dx+\int_{B_\frac{\abs{y}}{2}(y)}\abs{x-y}^{-\delta-2}(x_1-y_1)\phi(x)\,dx\\
&\quad\quad\quad+\int_{\setR^d\setminus (B_\frac{\abs{y}}{2}(0)\cup B_\frac{\abs{y}}{2}(y))}\abs{x-y}^{-\delta-2}(x_1-y_1)\phi(x)\,dx.
\end{align*}
Here, when $x\in B_\frac{\abs{y}}{2}(0)$, by Lemma \ref{lem:basic} we observe that
\begin{align*}
\abs{\abs{x-y}^{-\delta-2}(x_1-y_1)-\abs{y}^{-\delta-2}y_1}\lesssim \abs{y}^{-\delta-2}\abs{x_1}
\end{align*}
and so together with $\abs{x_1\phi(x)}\lesssim\min\{1,\abs{x}^{-d-1}\}$ from $\phi\in\mathcal{S}(\setR^d)$,
\begin{align*}
\begin{split}
&\int_{B_\frac{\abs{y}}{2}(0)}\abs{x-y}^{-\delta-2}(x_1-y_1)\phi(x)\,dx\\
&\quad\leq \int_{B_\frac{\abs{y}}{2}(0)}\abs{y}^{-\delta-2}y_1\phi(x)\,dx+c\int_{B_\frac{\abs{y}}{2}(0)}\abs{y}^{-\delta-2}\abs{x_1\phi(x)}\,dx\\
&\quad\leq \int_{B_\frac{\abs{y}}{2}(0)}\abs{y}^{-\delta-2}y_1\phi(x)\,dx+\frac{c}{\abs{y}^2}
\end{split}
\end{align*}
for some $c\geq 1$. On the other hand, using $\abs{\phi(x)}\lesssim 1/\abs{x}^{d+1}$ from $\phi \in\mathcal{S}(\setR^d)$, and recalling $\abs{y}\geq R\geq 2$, it holds that
\begin{align*}
&\Bigabs{\int_{B_\frac{\abs{y}}{2}(y)}\abs{x-y}^{-\delta-2}(x_1-y_1)\phi(x)\,dx+\hspace{-3mm}\int_{\setR^d\setminus (B_\frac{\abs{y}}{2}(0)\cup B_\frac{\abs{y}}{2}(y))}\hspace{-5mm}\abs{x-y}^{-\delta-2}(x_1-y_1)\phi(x)\,dx}\\
&\quad\lesssim\int_{B_\frac{\abs{y}}{2}(y)}\frac{\abs{x-y}^{-\delta-1}}{\abs{x}^{d+1}}\,dx+\int_{\setR^d\setminus (B_\frac{\abs{y}}{2}(0)\cup B_\frac{\abs{y}}{2}(y))}\frac{\abs{x-y}^{-\delta-1}}{\abs{x}^{d+1}}\,dx\\
&\quad\lesssim \abs{y}^{d-\delta-1}\cdot\frac{1}{\abs{y}^{d+1}}+\abs{y}^{-\delta-1}\cdot\frac{1}{\abs{y}}\lesssim\frac{1}{\abs{y}^{2}}.
\end{align*}
Summing up, with \eqref{eq:y<R3.2} we see that
\begin{align}\label{eq:y<Ry>R3.4}
\begin{split}
&\int_{\setR^d}\abs{x}^{-\delta-2}x_1\phi(x+y)\,dx\\
&\quad\lesssim R^{d}\cdot\indicator_{\{\abs{y}\leq R\}}+\bigg(\int_{B_\frac{\abs{y}}{2}(0)}\abs{y}^{-\delta-2}y_1\phi(x)\,dx+\frac{c}{\abs{y}^2}\bigg)\cdot \indicator_{\{\abs{y}\geq R\}}.
\end{split}
\end{align}

Using this and $\partial_{1}\phi\in\mathcal{S}(\setR^d)$ from $\phi\in\mathcal{S}(\setR^d)$, we estimate as follows for $L\geq 2R$: 
\begin{align*}
&\Biggabs{\int_{\abs{y}\leq L}\left(\int_{\setR^d}\abs{x}^{-\delta-2}x_1\partial_{y_1}\phi(x+y)\,dx\right)\left(\dfrac{-(-d-2s+2)y_1^2+2\abs{y}^{2}}{\abs{y}^{d+2s}(-d-2s+2)(-d-2s)}\right)\,dy}\\
&\leq c\int_{\abs{y}\leq R}R^{d}\abs{y}^{-d-2s+2}\,dy\\
&\,\,+\biggabs{\int_{R\leq \abs{y}\leq L}\bigg(\int_{B_\frac{\abs{y}}{2}(0)}\dfrac{y_1\partial_1\phi(x)}{\abs{y}^{\delta+2}}\,dx+\frac{c}{\abs{y}^2}\bigg)\left(\dfrac{-(-d-2s+2)y_1^2+2\abs{y}^{2}}{\abs{y}^{d+2s}(-d-2s+2)(-d-2s)}\right)dy}.
\end{align*}
Here, we estimate
\begin{align*}
\int_{\abs{y}\leq R}R^{d}\abs{y}^{-d-2s+2}\,dy\lesssim R^{d-2s+2}.
\end{align*}
Moreover, using Fubini's theorem and observing that the integrand is odd, we find
\begin{align}\label{eq:y>R3.1}
\begin{split}
&\int_{R\leq\abs{y}\leq L}\bigg(\int_{B_\frac{\abs{y}}{2}(0)}\abs{y}^{-\delta-2}y_1\partial_1\phi(x)\,dx\bigg)\left(\dfrac{-(-d-2s+2)y_1^2+2\abs{y}^{2}}{\abs{y}^{d+2s}(-d-2s+2)(-d-2s)}\right)dy\\
&=\int_{\setR^d}\bigg(\int_{\max\{2\abs{x},R\}\leq\abs{y}\leq L}\hspace{-3mm}\left(\abs{y}^{-\delta-2}y_1\right)\cdot\dfrac{-(-d-2s+2)y_1^2+2\abs{y}^{2}}{\abs{y}^{d+2s}(-d-2s+2)(-d-2s)}\,dy\bigg)\partial_1\phi(x)dx\\
&=0.
\end{split}
\end{align}
Also, we see that
\begin{align}\label{eq:y>R3.2}
\int_{R\leq\abs{y}\leq L}\frac{c}{\abs{y}^2}\left(\dfrac{-(-d-2s+2)y_1^2+2\abs{y}^{2}}{\abs{y}^{d+2s}(-d-2s+2)(-d-2s)}\right)dy\lesssim R^{-2s}.
\end{align}
Using the above argument, we can obtain that for $\widetilde{L}>L$, we have
\begin{align*}
&\Biggabs{\int_{L\leq\abs{y}\leq\widetilde{L}}\left(\int_{\setR^d}\abs{x}^{-\delta-2}x_1\partial_{y_1}\phi(x+y)\,dx\right)\left(\dfrac{-(-d-2s+2)y_1^2+2\abs{y}^{2}}{\abs{y}^{d+2s}(-d-2s+2)(-d-2s)}\right)\,dy}\\
&\quad\leq\biggabs{\int_{L\leq\abs{y}\leq \widetilde{L}}\bigg(\int_{B_\frac{\abs{y}}{2}(0)}\dfrac{y_1\partial_1\phi(x)}{\abs{y}^{\delta+2}}\,dx+\frac{c}{\abs{y}^2}\bigg)\left(\dfrac{-(-d-2s+2)y_1^2+2\abs{y}^{2}}{\abs{y}^{d+2s}(-d-2s+2)(-d-2s)}\right)dy}\\
&\quad\leq L^{-2s}.
\end{align*}
This proves that
\begin{align*}
\int_{\abs{y}\leq L}\left(\int_{\setR^d}\abs{x}^{-\delta-2}x_1\partial_{y_1}\phi(x+y)\,dx\right)\left(\dfrac{-(-d-2s+2)y_1^2+2\abs{y}^{2}}{\abs{y}^{d+2s}(-d-2s+2)(-d-2s)}\right)\,dy
\end{align*}
is a Cauchy sequence for $L$. Then we see that
\begin{align*}
\skp{j_3(y)}{\skp{j_1(x)}{\phi(x+y)}_x}_y
\end{align*}
exists. Similarly, for each $i=1,2,\dots,d$, using $\partial^k_{i}\phi\in\mathcal{S}(\setR^d)$ for $k=0,1,2$ together with the fact that $h_i\mapsto \abs{h}^{-d-2s}h_1^2$ and $h_i\mapsto \abs{h}^{-d-2s+2}$ are even, we see that
\begin{align*}
\skp{(-j_3+\tfrac{j_4}{2}+j_5-j_6+\tfrac{j_7}{2})(y)}{\skp{j_1(x)}{\phi(x+y)}_x}_y
\end{align*}
exists.

Considering \textbf{Step 1}--\textbf{Step 3}, we prove the conclusion. 
\end{proof}

Recalling $\eta_k(\cdot)$ as in \eqref{eq:eta}, and $\xi_j(\cdot,\cdot)$ as in \eqref{eq:xi}, we prove the following lemma.

\begin{lemma}\label{lem:conv3}
	The convolution of distributions $g_3\divideontimes g_4$ is well-defined in $x\in\setR^d$ in the sense that for any $\phi\in \mathcal{S}(\setR^d)$, 
	\begin{align}\label{eq:kj}
	\skp{g_3\divideontimes g_4}{\phi}:=\lim_{k\rightarrow\infty}\lim_{j\rightarrow\infty}\bigskp{(\eta_kg_3)(x)}{\skp{g_4(y)}{\xi_j(x,y)\phi(x+y)}_y}_x.
	\end{align}
	Moreover, we have
	\begin{align*}
	\skp{g_3\divideontimes g_4}{\phi}=\bigskp{g_3(x)}{\skp{g_4(y)}{\phi(x+y)}_y}_x.
	\end{align*}
\end{lemma}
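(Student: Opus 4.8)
The plan is to carry over the two-step scheme already used for $g_1 \divideontimes g_2$ in Lemma~\ref{lem:conv10}, adapting it to the more elaborate distributions $g_3, g_4$. The statement has two parts: first, that the iterated limit in~\eqref{eq:kj} exists; second, that this iterated limit coincides with the un-truncated pairing $\bigskp{g_3(x)}{\skp{g_4(y)}{\phi(x+y)}_y}_x$, whose well-definedness is exactly the content of Lemma~\ref{lem:welldef3}. The inner object $\skp{g_4(y)}{\xi_j(x,y)\phi(x+y)}_y$ makes sense because $\xi_j(x,y)\phi(x+y) \in C^\infty_c(\RRd \times \RRd)$ and $g_4 \in \mathcal S'(\RRd)$; pairing the result (a function of $x$ in $C^\infty_c$) against $\eta_k g_3 \in \mathcal S'(\RRd)$ is then legitimate, so the right-hand side of~\eqref{eq:kj} is a well-defined double sequence. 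I would begin by recording, with a reference to \cite[Page~96 and Page~52]{Vla02}, that \emph{if} the double limit exists, then it equals $\skp{g_3 \divideontimes g_4}{\phi}$ in Vladimirov's sense and moreover $\skp{g_3 \divideontimes g_4}{\phi} = \skp{g_4 \divideontimes g_3}{\phi}$; this is the same bookkeeping paragraph as in the proof of Lemma~\ref{lem:conv10}.

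The analytic core is to show
\begin{align*}
\lim_{k\to\infty}\lim_{j\to\infty}\bigskp{(\eta_kg_3)(x)}{\skp{g_4(y)}{\xi_j(x,y)\phi(x+y)}_y}_x = \bigskp{g_3(x)}{\skp{g_4(y)}{\phi(x+y)}_y}_x.
\end{align*}
I would split the difference into two pieces, $I_1 + I_2$, exactly as before: $I_1$ replaces $\xi_j$ by $1-\xi_j$ (so the integrand is supported where $\abs{x}>j$ or $\abs{y}>j$), and $I_2$ replaces $\eta_k$ by $1-\eta_k$ (supported where $\abs{x}>k$). For $I_1$, on $\support(1-\xi_j(x,y))\cap \support \eta_k(x)$ with $j \ge 4k$ we have $\abs{y}\gtrsim j$ while $\abs{x}\le 2k$, and Schwartz decay of $\phi$ gives $\sum_{i\le 3}\abs{\nabla^i[(1-\xi_j(x,y))\phi(x+y)]} \lesssim \abs{x+y}^{-(d+3)}\lesssim \abs{y}^{-(d+3)}$; pairing this against $g_4$ in $y$ produces a factor decaying like a negative power of $\abs{y}$, and then the $x$-integral against $\abs{x}^{-\delta-2}x_1 - \abs{x}^{-2}$ over $\abs{x}\le 2k$ is a harmless polynomial-in-$k$ bound times something that $\to 0$ as $j\to\infty$. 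For $I_2$, I would write the integrand $\bigskp{g_3(x)}{\ldots}$ as an explicit (principal-value) double integral $J(x,y)$ over the region $\abs{x}>k$ and invoke the estimates from the proof of Lemma~\ref{lem:welldef3}: the same splitting of $g_3 = j_1 - j_2$ and $g_4 = -j_3 + \tfrac12 j_4 + j_5 - j_6 + \tfrac12 j_7$, the same odd-function cancellations (e.g.\ \eqref{eq:y>R3.1}, the analogue of \eqref{eq;y>R2s}), and the same Taylor/divergence-theorem bounds yield $\abs{I_2} \lesssim k^{-\min\{2s, \text{something positive}\}}\to 0$ as $k\to\infty$. Running the two limits in the stated order ($j\to\infty$ first, then $k\to\infty$) then gives both the existence of the double limit and its identification with the un-truncated pairing.

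I expect the main obstacle to be the $I_2$ estimate in the borderline dimension $d=2$, where the distribution $j_2 = \abs{h}^{-2}$ carries the logarithmic representation $-\int \abs{x}^{-2}x\log\abs{x}\cdot\nabla\phi\,dx$: here the truncated pairing $\skp{j_2}{\ldots}$ must be controlled using the $\log$-weighted bounds \eqref{eq:y<Ry>R3} rather than the cleaner power bounds \eqref{eq:y<Ry>R3.2} available for $d\ge 3$, so one has to track factors of $\log\abs{y}$ and absorb them into $\abs{y}^{s}$ as was done in Step~1 of Lemma~\ref{lem:welldef3}. A secondary subtlety is making sure the inner $y$-pairings against $g_4$ for fixed $x$ are genuinely absolutely convergent after the odd-symmetry cancellations are extracted — this is where one reuses, verbatim, the principal-value-over-$B_L(0)$ Cauchy-sequence argument of Lemma~\ref{lem:welldef3} rather than claiming naive integrability. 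Once those two points are handled, the rest is routine and parallels Lemma~\ref{lem:conv10} line for line, so I would state the bookkeeping explicitly and refer to the earlier lemmas for the repeated estimates.
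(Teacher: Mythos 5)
Your scaffolding — the reduction via \cite[Page~96]{Vla02}, the splitting into $I_1 + I_2$ by replacing $\xi_j$ with $1-\xi_j$ and $\eta_k$ with $1-\eta_k$, and the reuse of the estimates behind Lemma~\ref{lem:welldef3} — is the right shape, but there is a genuine gap in how you set up which of $g_3,g_4$ sits on the outside. You record the commutativity $\skp{g_3\divideontimes g_4}{\phi}=\skp{g_4\divideontimes g_3}{\phi}$ from \cite{Vla02}, but you do not \emph{use} it: your ``analytic core'' keeps $g_3$ on the outer factor under $\eta_k$ (respectively $1-\eta_k$) and $g_4$ on the inner factor, i.e.\ you aim directly at
\begin{align*}
\lim_{k\to\infty}\lim_{j\to\infty}\bigskp{(\eta_kg_3)(x)}{\skp{g_4(y)}{\xi_j(x,y)\phi(x+y)}_y}_x.
\end{align*}
The paper instead invokes commutativity to \emph{switch} to the $g_4$-outer ordering
\begin{align*}
\lim_{k\to\infty}\lim_{j\to\infty}\bigskp{(\eta_kg_4)(x)}{\skp{g_3(y)}{\xi_j(x,y)\phi(x+y)}_y}_x,
\end{align*}
exactly parallel to Lemma~\ref{lem:conv10}, and this is not cosmetic: Lemma~\ref{lem:welldef3} establishes convergence for the nesting $\skp{g_4(y)}{\skp{g_3(x)}{\phi(x+y)}_x}_y$, with $g_4$'s variable outermost and the Cauchy-sequence argument run in the radius of the $g_4$-ball. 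Your $I_2 = \bigskp{(1-\eta_k)g_3(x)}{\skp{g_4(y)}{\phi(x+y)}_y}_x$ has the opposite nesting, so the bounds you cite, \eqref{eq:y>R3.1}, \eqref{eq:y<Ry>R3}--\eqref{eq:y<Ry>R3.4} and the odd-function cancellations around them, do not apply: they control the inner $g_3$-pairing $\skp{g_3(x)}{\phi(x+y)}_x$ for ranges of the outer $y$-variable, whereas you would need to control the inner $g_4$-pairing $\skp{g_4(y)}{\phi(x+y)}_y$ for $\abs{x}>k$ and then pair the result against $(1-\eta_k)g_3$. That swapped analogue of Lemma~\ref{lem:welldef3} is nowhere proven in the paper, and proving it would be genuine new work; in particular the inner $g_4$-pairing has a different singularity and decay structure from the inner $g_3$-pairing.

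There is a second, related imprecision in your $I_1$ bound. You claim ``the $x$-integral against $\abs{x}^{-\delta-2}x_1-\abs{x}^{-2}$ over $\abs{x}\le 2k$ is a harmless polynomial-in-$k$ bound''. For $d=2$ the kernel $\abs{x}^{-2}$ is not locally integrable, so this $x$-integral does not exist literally; one must use the distributional representation $\skp{j_2}{\psi}=-\int \abs{x}^{-2}x\log\abs{x}\cdot\nabla\psi\,dx$, which means your inner $g_4$-pairing must also be differentiated in $x$ before the $x$-pairing against $j_2$ can be taken. You correctly flag the $d=2$ logarithmic representation as a difficulty, but you attribute it to $I_2$ only, and the way $I_1$ is written it is exposed to the same issue. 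In the paper's ordering this never arises, because after the swap the $x$-integration (under $\eta_k$) is against $g_4 \sim \abs{x}^{-d-2s+2}$, which is locally integrable for every $d\ge 2$, and the log-weighted $j_2$ piece of $g_3$ is handled entirely inside the $y$-pairing by the estimate $\abs{y}^{-1-\delta}+\abs{\log\abs{y}}\,\abs{y}^{-1}$. The fix is simple and you already have the ingredient: use the recorded commutativity to reduce the lemma to the $g_4$-outer double limit before splitting into $I_1+I_2$, after which the reuse of Lemma~\ref{lem:welldef3} (with ``$y$ replaced by $x$'' and $R=k/2$) is legitimate.
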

\begin{proof}
Note that once the double limit in the right-hand side of \eqref{eq:kj} exists, then
\begin{align*}
\skp{g_3\divideontimes g_4}{\phi}
&:=\lim_{k\rightarrow\infty}\lim_{j\rightarrow\infty}\skp{(\eta_kg_3)(\cdot)\times g_4(\bigcdot)}{\xi_j(\cdot,\bigcdot)\phi(\cdot+\bigcdot)}\\
&:=\lim_{k\rightarrow\infty}\lim_{j\rightarrow\infty}\bigskp{(\eta_kg_3)(x)}{\skp{g_4(y)}{\xi_j(x,y)\phi(x+y)}_y}_x
\end{align*}	
holds. Here, for the first definition since $g_3,g_4\in\mathcal{S}'(\setR^d)\subset (C^{\infty}_c)'(\setR^d)$ and $\xi_j\in C^{\infty}_c(\setR^{2d})$, \cite[Page 96 and Page 52]{Vla02} is used, and for the second definition since $\xi_j(x,y)\phi(x+y)\in C^{\infty}_c(\setR^{2d})$, \cite[Page 41]{Vla02} is used. Moreover, by \cite[Page 96]{Vla02}, we have $\skp{g_3\divideontimes g_4}{\phi}=\skp{g_4\divideontimes g_3}{\phi}$.

Now it suffices to show that
\begin{align}\label{eq:doublelimit}
\lim_{k\rightarrow\infty}\lim_{j\rightarrow\infty}\bigskp{(\eta_kg_4)(x)}{\skp{g_3(y)}{\xi_j(x,y)\phi(x+y)}_y}_x=\bigskp{g_4(x)}{\skp{g_3(y)}{\phi(x+y)}_y}_x.
\end{align}
To do this, for $k,j>0$, we have
\begin{align*}
&\bigabs{\bigskp{(\eta_kg_4)(x)}{\skp{g_3(y)}{\xi_j(x,y)\phi(x+y)}_y}_x-\bigskp{g_4(x)}{\skp{g_3(y)}{\phi(x+y)}_y}_x}\\
&\quad\leq\bigabs{\bigskp{(\eta_kg_4)(x)}{\skp{g_3(y)}{(1-\xi_j(x,y))\phi(x+y)}_y}_x}\\
&\quad\quad+\bigabs{\bigskp{(1-\eta_k)g_4(x)}{\skp{g_3(y)}{\phi(x+y)}_y}_x}\\
&\quad=:I_1+I_2.
\end{align*}
Using $\sum^{3}_{i=0}\abs{\nabla^i[(1-\xi^{j}(x,y))\phi(x+y)]}\lesssim\abs{x+y}^{-d}\lesssim\abs{y}^{-d}$ from $\phi\in\mathcal{S}(\setR^d)$, $\abs{x}\leq 2k$ and $\abs{y}>j$ with $4k\leq j$, we obtain
\begin{align*}
I_1&\lesssim\int_{\abs{x}\leq 2k}\int_{\abs{y}>j}\abs{\eta_k(x)}\abs{x}^{-d-2s+2}(\abs{y}^{-1-\delta}+(\abs{\log\abs{y}}\cdot\abs{y}^{-1}))\abs{y}^{-d}\,dx\,dy\\
&\lesssim k^{2-2s}j^{-1-\delta}\underset{j\rightarrow \infty}{\longrightarrow} 0.
\end{align*}
For $I_2$, we write the following integrand as $J(x,y):\setR^{2d}\rightarrow\setR$:
\begin{align*}
\bigskp{(1-\eta_k)g_4(x)}{\skp{g_3(y)}{\phi(x+y)}_y}_x=:\int_{\setR^d}\int_{\setR^d}J(x,y)\,dy\,dx=\int_{\abs{x}> k}\int_{\setR^d}J(x,y)\,dy\,dx,
\end{align*}
where for the last equality we have used $\eta_k\equiv 1$ when $\abs{x}\leq k$. Then following the proof of Lemma \ref{lem:welldef3}, we obtain \eqref{eq:y<Ry>R3}, \eqref{eq:y<Ry>R3.2}, and \eqref{eq:y<Ry>R3.4} with the choice of $y$ replaced by $x$ and $R=k/2$. Then following the remaining
proof of Lemma \ref{lem:welldef3} but only considering the case of $\abs{y}\geq R$ in its proof we get the analogous estimate of \eqref{eq:y<Ry>R3}, \eqref{eq:y<Ry>R3.3}, and \eqref{eq:y>R3.2} as follows:
\begin{align*}
\biggabs{\int_{\abs{x}> k}\int_{\setR^d}J(x,y)\,dy\,dx}\lesssim k^{-2s}\underset{k\rightarrow \infty}{\longrightarrow} 0.
\end{align*}
Then \eqref{eq:doublelimit} holds and we conclude the proof.
\end{proof}

Now for the standard mollifier $\overline{\phi}(x)\in\mathcal{S}(\setR^d)$ defined in \eqref{eq:mol} and $\phi_{\epsilon}(x)=\frac{1}{\epsilon^{d}}\overline{\phi}\left(\frac{x-e_1}{\epsilon}\right)$ for $\epsilon\in(0,1)$, we prove the following lemma, which rigorously establishes the connection between $f_2(d,s,\delta)$ in \eqref{eq:f2} and $g_3\divideontimes g_4$.
\begin{lemma}\label{lem:A2}
We have
$f_2(d,s,\delta)=\lim_{\epsilon\rightarrow 0}\skp{g_3\divideontimes g_4}{\phi_{\epsilon}}$.
\end{lemma}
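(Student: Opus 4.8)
The plan is to prove Lemma~\ref{lem:A2} by the same two-stage strategy used for Lemma~\ref{lem:A20}: first rewrite $\skp{g_3\divideontimes g_4}{\phi_\epsilon}$ using the concrete definitions of $g_3$ and $g_4$ from Definition~\ref{def:g3g4} and an application of the divergence theorem (to move the derivatives sitting on the test function over to the singular kernels), and then identify the resulting limit as $\epsilon\to 0$ with the principal-value integral $f_2(d,s,\delta)$ as written in~\eqref{eq:f2'}. By Lemma~\ref{lem:conv3} we already know
\begin{align*}
  \skp{g_3\divideontimes g_4}{\phi_\epsilon}
  &= \bigskp{g_3(x)}{\skp{g_4(y)}{\phi_\epsilon(x+y)}_y}_x,
\end{align*}
so the starting point is legitimate. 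First I would expand $g_3 = j_1 - j_2$ and $g_4 = -j_3 + \tfrac{j_4}2 + j_5 - j_6 + \tfrac{j_7}2$, giving ten products $j_a \divideontimes j_b$; for each one the definitions in Definition~\ref{def:g3g4} express $\skp{j_b(y)}{\psi(y)}$ as an absolutely convergent integral of $|y|^{-d-2s+2}$ (or $|y|^{-d-2s}$, $|y|^{-2}$, or $|y|^{-\delta-2}y_1$, etc.) against a suitable derivative of $\psi$. Applying the divergence theorem on annuli $\sigma \le |x| \le 1/\sigma$, $\sigma \le |y| \le 2/\sigma$, exactly as in Step~1 of the proof of Lemma~\ref{lem:A20}, converts each $\skp{j_a \divideontimes j_b}{\phi_\epsilon}$ into an integral of the genuinely singular kernels $|x|^{-\delta-2}x_1$, $|x|^{-2}$ against $|y|^{-d-2s-2}y_1^3$, $|y|^{-d-2s}y_1^2$, etc., tested against $\phi_\epsilon(x+y) - (\text{Taylor corrections})$, where the correction terms are chosen to absorb exactly the order of homogeneity needed for local integrability. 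The boundary terms arising from the annuli vanish in the limit $\sigma \to 0$; the estimates needed are of the same flavour as \eqref{eq:c.sigma}, \eqref{eq:e.sigma}, and the bounds on $J_2(I_i)$, $J_3(I_i)$ in the proof of Lemma~\ref{lem:A20}, using $\support\phi_\epsilon \subset B_\epsilon(e_1)$, the oddness of the relevant kernels, and Lemma~\ref{lem:basic}.

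For Step~2, once $\skp{g_3\divideontimes g_4}{\phi_\epsilon}$ has been written as
\begin{align*}
  \int_{\RRd}\int_{\RRd} K(x,y)\,\big(\phi_\epsilon(x+y) - \text{corrections in } y\big)\,dy\,dx,
\end{align*}
where $K(x,y)$ is the product of the singular $x$-kernel from $g_3$ and the singular $y$-kernel from $g_4$, I would split the region into a ball $|y| \le \kappa$ near the origin (the singular locus of the $y$-kernel), a ball $|e_1 - y| \le \kappa$ near $e_1$ (where $\phi_\epsilon$ is supported), and the complement $\mathcal I_\kappa$. On $|y|\le\kappa$ the Taylor corrections make the integrand integrable with a bound $O(\kappa^{2-2s})$ after a change of variables pushing $\phi_\epsilon$ onto the kernel and using oddness, as in the $\mathcal G_\kappa$ estimate of Lemma~\ref{lem:A20}; near $e_1$ the bound is $O(\kappa^{d-\delta})$ by the same $K_1,K_2,K_3$ analysis; and on $\mathcal I_\kappa$ the integrand is non-singular, so the argument of \cite[Theorem~7, p.~714]{Eva10} applies and $\int_{\mathcal I_\kappa} \to \int_{\mathcal I_\kappa}$ with $\phi_\epsilon$ replaced by a point mass at $e_1$, i.e.\ the integrand becomes $K(x,y)\big(\mathbf 1_{y+x=e_1}\text{-type evaluation}\big)$, producing precisely the truncated principal-value integral defining $f_2$. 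Letting $\epsilon\to 0$ and then $\kappa\to 0$ gives $\lim_{\epsilon\to0}\skp{g_3\divideontimes g_4}{\phi_\epsilon} = \mathrm{p.v.}\!\int_{\RRd} K(x,e_1-x)\,dx$, and a direct comparison with the integrand in \eqref{eq:f2'} — recalling that $g_3(h) = |h|^{-\delta-2}h_1 - |h|^{-2}$ and $g_4$ is exactly the bracketed expression in the definition of $f_2$ as a convolution $(g_3 * g_4)(e_1)$ — identifies this with $f_2(d,s,\delta)$.

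I would organize the writing so as to reuse verbatim the machinery of Lemma~\ref{lem:A20} wherever possible: the structure ``modify via divergence theorem / apply the Evans-type argument'' carries over, and I would explicitly say so rather than repeat every line. The main obstacle is bookkeeping: $g_4$ has five pieces of differing homogeneity ($j_3,j_5$ are order $-d-2s-2+3$ effectively, $j_4,j_6$ order $-d-2s+2$, $j_7$ order $-d-2s+2$), so the number of Taylor corrections and the precise annular exponents differ term by term, and in the $d=2$ case the piece $j_2 = |h|^{-2}$ carries a logarithmic term (cf.\ \eqref{eq:j2} and the $\log|\xi|$ appearing in $\mathcal G_3$ of Lemma~\ref{lem:Fourier0}), which must be tracked through the divergence-theorem step; one has to check that all the $\kappa$- and $\sigma$-error exponents remain strictly positive for every $s\in(0,1)$, $\delta\in[0,\tfrac12]$, $d\ge 2$, including the borderline contributions coming from the $|h|^{-d-2s+2}$ factor when $s$ is close to $1$. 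A secondary subtlety is verifying that the combination of singular $x$- and $y$-kernels $K(x,y)$ is, after the corrections, dominated near the diagonal $x + y = e_1$ by an integrable function uniformly in $\epsilon$, so that dominated convergence legitimately yields the principal value; this is where the cancellations encoded in the specific coefficients $-1,\tfrac12,1,-1,\tfrac12$ of $j_3,\dots,j_7$ matter, since they are precisely the ones for which the combined kernel reduces to the manifestly $O(|h|^{-d-2s})$-type expression appearing in the second line of \eqref{eq:f2}.
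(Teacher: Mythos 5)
Your plan matches the paper's proof in structure and detail: a first stage that uses the divergence theorem on the annuli $\sigma\le|x|\le 1/\sigma$, $\sigma\le|y|\le 2/\sigma$ to move derivatives off the test function onto the singular kernels (the paper's ``\textbf{Step 1}''), followed by a decomposition into the regions $\mathcal G_\kappa=\{|y|\le\kappa\}$, $\mathcal H_\kappa=\{|e_1-y|\le\kappa\}$, and $\mathcal I_\kappa$ and the Evans-type passage to the limit (the paper's ``\textbf{Step 2}''). Lemma~\ref{lem:conv3} is the correct starting point, and your identification of the final limit with the integrand in~\eqref{eq:f2'} via $g_3(e_1-h)g_4(h)$ is exactly the paper's reading of $f_2$ as $(g_3\ast g_4)(e_1)$.

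One place where your account of the mechanism is off, and where a careless execution would fail. You attribute the crucial cancellation that makes the limit exist to ``the specific coefficients $-1,\tfrac12,1,-1,\tfrac12$ of $j_3,\dots,j_7$,'' i.e.\ to $g_4$. But the cancellation that actually controls the singular behavior is inside $g_3$: since $j_1(e_1)=j_2(e_1)=1$, we have $g_3(e_1)=0$, equivalently $|e_1-h|^{-\delta-2}(1-h_1)-|e_1-h|^{-2}=|e_1-h|^{-2}\bigl(|e_1-h|^{-\delta}(1-h_1)-1\bigr)=O(|e_1-h|^{-1})$ near $h=e_1$ by Lemma~\ref{lem:basic}. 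In the paper's Step~2 this manifests as the divergent quantities $L_1(\kappa)$ (introduced in~\eqref{eq:conv8}, reappearing in~\eqref{eq:conv9}) and $L_2(\kappa)$ (in~\eqref{eq:conv12} and~\eqref{eq:conv13}): these blow up as $\kappa\to 0$, and they cancel \emph{exactly} between the $j_1$-contribution and the $j_2$-contribution of $\mathcal Z_\epsilon$ before the outer $\kappa\to 0$ limit can be taken. If you split $g_3$ into $j_1$ and $j_2$ and try to pass to the limit in each separately, both diverge; the plan works only because the two divergences are tracked side by side and shown to coincide, which the paper does explicitly in~\eqref{eq:conv11} and in the $\mathcal P+\mathcal Q$ step. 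The $g_4$ coefficients, by contrast, merely encode the even/odd splitting that makes the $\mathcal G_\kappa$ estimates~\eqref{eq:conv1}--\eqref{eq:conv4} close after subtracting Taylor polynomials and using oddness; they do not provide a cancellation between divergent quantities. (Two smaller slips, for the record: the $\mathcal H_\kappa$ bound is $\kappa^{d-1-\delta}$, not $\kappa^{d-\delta}$, which still vanishes for $d\ge 2$, $\delta\le\tfrac12$; and $j_3$, $j_5$ have homogeneities $-d-2s+1$ and $-d-2s$ respectively, not the same order.)
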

\begin{proof}
We need to show
\begin{align}\label{eq:approx.identity}
f_{2}(d,s,\delta)=\lim_{\epsilon\rightarrow 0}\skp{(j_1-j_2)(x)}{\skp{(-j_3+\tfrac{j_4}{2}+j_5-j_6+\tfrac{j_7}{2})(y)}{\phi_{\epsilon}(x+y)}_y}_x.
\end{align}
To do this, with $\epsilon\in(0,\frac{1}{10})$, let us write the integrand used for defining the distribution $g_3\divideontimes g_4$ tested with $\phi_{\epsilon}(x)$ as $J_{\epsilon}(x,y):\setR^{2d}\rightarrow\setR$ in the following:
\begin{align*}
\skp{g_3\divideontimes g_4}{\phi_{\epsilon}}=\int_{\setR^d}\int_{\setR^d}J_{\epsilon}(x,y)\,dx\,dy.
\end{align*}
We divide the proof into two steps.

\textbf{Step 1: Modifying the formula of $\skp{g_3\divideontimes g_4}{\phi_{\epsilon}}$ using divergence theorem.}
In this step we will show that
\begin{align*}
\int_{\setR^d}\int_{\setR^d}J_{\epsilon}(x,y)\,dx\,dy=\mathcal{Z}_{\epsilon}
\end{align*}
holds, where with 
\begin{align}\label{eq:g}
g(y)=\dfrac{-2y_1^3+|y|^2y_1^2-2|y|^2y_1+|y|^4}{2|y|^{d+2s+2}},
\end{align}
$\mathcal{Z}_{\epsilon}$ is of the form
\begin{align}\label{eq:Z2}
\begin{split}
&\iint_{\setR^{2d}}g(y)|x|^{-2-\delta}x_1\phi_{\epsilon}(x+y)\,dx\,dy\\
&\,\,+\iint_{\setR^{2d}}\dfrac{y_1^2}{|y|^{d+2s+2}}|x|^{-2-\delta}x_1(\phi_{\epsilon}(x+y)-\phi_{\epsilon}(x))\,dx\,dy\\
&\,\,-\iint_{\setR^{2d}}g(y)\abs{x}^{-2}(\phi_{\epsilon}(x+y)-\indicator_{\{\abs{x}\leq 1\}}\phi_{\epsilon}(y))\,dx\,dy\\
&\,\,-\iint_{\setR^{2d}}\dfrac{y_1^2}{|y|^{d+2s+2}}\abs{x}^{-2}[(\phi_{\epsilon}(x+y)-\phi_{\epsilon}(x))-\indicator_{\{\abs{x}\leq 1\}}(\phi_{\epsilon}(y)-\phi_{\epsilon}(0))]\,dx\,dy
\end{split}
\end{align}
when $d=2$, and 
\begin{align}\label{eq:Z3}
\begin{split}
&\iint_{\setR^{2d}}g(y)(|x|^{-2-\delta}x_1-\abs{x}^{-2})\phi_{\epsilon}(x+y)\,dx\,dy\\
&\quad+\iint_{\setR^{2d}}\dfrac{y_1^2}{|y|^{d+2s+2}}(|x|^{-2-\delta}x_1-\abs{x}^{-2})(\phi_{\epsilon}(x+y)-\phi_{\epsilon}(x))\,dx\,dy
\end{split}
\end{align}
when $d\geq 3$.

For $\sigma\in(0,\frac{1}{10})$ with $\sigma\leq\epsilon$, and a cross-shaped region
\begin{align*}
\mathcal{C}_{\sigma}:=(\{x\in\setR^d:\abs{x}\leq\sigma\}\times\setR^d)\cup(\setR^d\times\{y\in\setR^d:\abs{y}\leq \sigma\})
\end{align*}
we will first show
\begin{align}\label{eq:sigma0}
\lim_{\sigma\rightarrow 0}\iint_{\mathcal{C}_{\sigma}}J_{\epsilon}(x,y)\,dx\,dy=0.
\end{align}
To this end, we observe that
\begin{align}\label{eq:C.sigma}
\Bigabs{\iint_{\mathcal{C}_{\sigma}}J_{\epsilon}(x,y)\,dx\,dy}\leq \iint_{\mathcal{C}_{\sigma}}\abs{J_{\epsilon}(x,y)}\,dx\,dy\leq\iint_{\mathcal{D}_{\sigma,\epsilon}}\abs{J_{\epsilon}(x,y)}\,dx\,dy
\end{align}
with
\begin{align*}
\mathcal{D}_{\sigma,\epsilon}&:=(\{\abs{x}\leq\sigma\}\times\{1-\sigma-\epsilon\leq \abs{y}\leq 1+\sigma+\epsilon\})\\
&\quad\quad\quad\quad\quad\quad\cup(\{\abs{y}\leq\sigma\}\times\{1-\sigma-\epsilon\leq \abs{x}\leq 1+\sigma+\epsilon\}),
\end{align*}
where for the last inequality we have used the fact that since $\support\phi_{\epsilon}\subset B_{\epsilon}(e_1)$, on the set $(x,y)\in\mathcal{C}_{\sigma}$ we obtain $\support J(x,y)\subset \mathcal{D}_{\sigma,\epsilon}$.
We first consider the term originating from $\skp{j_1\divideontimes j_3}{\phi_{\epsilon}}$ in the following:
\begin{align*}
&\iint_{\mathcal{D}_{\sigma,\epsilon}}\biggabs{\abs{x}^{-\delta-2}x_1\partial_{y_1}\phi_{\epsilon}(x+y)\left(\dfrac{-(-d-2s+2)y_1^2+2\abs{y}^{2}}{\abs{y}^{d+2s}(-d-2s+2)(-d-2s)}\right)}\,dy\,dx\\
&\quad\lesssim\iint_{\{\abs{x}\leq\sigma\}\times\{1-\sigma-\epsilon\leq\abs{y}\leq 1+\sigma+\epsilon\}}\abs{x}^{-\delta-1}\abs{\partial_{y_1}\phi_{\epsilon}(x+y)}\abs{y}^{-d-2s+2}\,dy\,dx\\
&\quad\quad\quad+\iint_{\{\abs{y}\leq\sigma\}\times\{1-\sigma-\epsilon\leq\abs{x}\leq 1+\sigma+\epsilon\}}\abs{x}^{-\delta-1}\abs{\partial_{y_1}\phi_{\epsilon}(x+y)}\abs{y}^{-d-2s+2}\,dy\,dx\\
&\quad\lesssim\sigma^{d-\delta-1}+\sigma^{2-2s}\lesssim\sigma^{2-2s}.
\end{align*}
Similarly, the term originated from $\skp{j_1\divideontimes(j_4+j_5-j_6+j_7)}{\phi_{\epsilon}}$ can be bounded by $\sigma^{2-2s}$. Moreover, when $d=2$, the term from $\skp{j_2\divideontimes j_3}{\phi_{\epsilon}}$ is estimated as
\begin{align*}
&\iint_{\mathcal{D}_{\sigma,\epsilon}}\biggabs{\dfrac{x\log\abs{x}}{\abs{x}^2}\nabla_{x}\partial_{y_1}\phi_{\epsilon}(x+y)\left(\dfrac{-(-d-2s+2)y_1^2+2\abs{y}^{2}}{\abs{y}^{d+2s}(-d-2s+2)(-d-2s)}\right)}\,dy\,dx\\
&\quad\lesssim\iint_{\{\abs{x}\leq\sigma\}\times\{1-\sigma-\epsilon\leq\abs{y}\leq 1+\sigma+\epsilon\}}\dfrac{\abs{\log\abs{x}}}{\abs{x}}\abs{\nabla_{x}\partial_{y_1}\phi_{\epsilon}(x+y)}\abs{y}^{-d-2s+2}\,dy\,dx\\
&\quad\quad\quad+\iint_{\{\abs{y}\leq\sigma\}\times\{1-\sigma-\epsilon\leq\abs{x}\leq 1+\sigma+\epsilon\}}\dfrac{\abs{\log\abs{x}}}{\abs{x}}\abs{\nabla_{x}\partial_{y_1}\phi_{\epsilon}(x+y)}\abs{y}^{-d-2s+2}\,dy\,dx\\
&\quad\lesssim\sigma^{1}\abs{\log\sigma}+\sigma^{2-2s}\lesssim\sigma^{1-s}.
\end{align*}
The term originated from $\skp{j_2\divideontimes(j_4+j_5-j_6+j_7)}{\phi_{\epsilon}}$ can be similarly bounded by $\sigma^{1-s}$.
When $d\geq 3$, the term from $\skp{j_2\divideontimes j_3}{\phi_{\epsilon}}$ is bounded as
\begin{align*}
&\iint_{\mathcal{D}_{\sigma,\epsilon}}\biggabs{\dfrac{1}{\abs{x}^2}\partial_{y_1}\phi_{\epsilon}(x+y)\left(\dfrac{-(-d-2s+2)y_1^2+2\abs{y}^{2}}{\abs{y}^{d+2s}(-d-2s+2)(-d-2s)}\right)}\,dy\,dx\\
&\quad\lesssim\iint_{\{\abs{x}\leq\sigma\}\times\{1-\sigma-\epsilon\leq\abs{y}\leq 1+\sigma+\epsilon\}}\abs{x}^{-2}\abs{\partial_{y_1}\phi_{\epsilon}(x+y)}\abs{y}^{-d-2s+2}\,dy\,dx\\
&\quad\quad\quad+\iint_{\{\abs{y}\leq\sigma\}\times\{1-\sigma-\epsilon\leq\abs{x}\leq 1+\sigma+\epsilon\}}\abs{x}^{-2}\abs{\partial_{y_1}\phi_{\epsilon}(x+y)}\abs{y}^{-d-2s+2}\,dy\,dx\\
&\quad\lesssim\sigma^{d-2}+\sigma^{2-2s}\lesssim\sigma^{1-s}.
\end{align*}
Note that the implicit constants in the above estimate do not depend on $\epsilon$. Likewise, the term from $\skp{j_2\divideontimes(j_4+j_5-j_6+j_7)}{\phi_{\epsilon}}$ can be bounded by $\sigma^{1-s}$. Taking the limit $\sigma \to 0$ proves \eqref{eq:sigma0} for fixed $\epsilon$.

Now for $\mathcal{E}_\sigma=(\{\abs{y}\geq\sigma\}\times\{\abs{x}\geq1/\sigma\})\cup(\{\abs{x}\geq\sigma\}\times\{\abs{y}\geq1/\sigma\})$ we have to show
\begin{align*}
\lim_{\sigma\rightarrow 0}\iint_{\mathcal{E}_{\sigma}}J_{\epsilon}(x,y)\,dx\,dy=0.
\end{align*}
To do this, note that on the set $\{x:\abs{x}\leq\sigma\}\times\{y:\abs{y}>\frac{1}{\sigma}\}$, we have $\phi_{\epsilon}(x+y)=0$ from $\support\phi_{\epsilon}\subset B_{\epsilon}(e_1)$. Then we have
\begin{align*}
\iint_{\{\abs{y}>\frac{1}{\sigma}\}\times\{\abs{x}\leq\sigma\}}J_{\epsilon}(x,y)\,dx\,dy=\int_{\setR^d}\int_{\abs{y}>\frac{1}{\sigma}}J_{\epsilon}(x,y)\,dy\,dx.
\end{align*}
Now by following the proof of Lemma \ref{lem:welldef3}, as in \eqref{eq:y<Ry>R3.4} with the choice of $R=\frac{2}{\sigma}$
\begin{align*}
\begin{split}
\int_{\setR^d}\abs{x}^{-\delta-2}x_1\phi_{\epsilon}(x+y)\,dx\leq\int_{B_\frac{\abs{y}}{2}(0)}\abs{y}^{-\delta-2}y_1\phi_{\epsilon}(x)\,dx+\frac{c(d,s,\delta)}{\abs{y}^2}
\end{split}
\end{align*}
holds when $\abs{y}\geq\frac{2}{\sigma}$. Then following the remaining proof of Lemma \ref{lem:welldef3}, especially from \eqref{eq:y>R3.1} and \eqref{eq:y>R3.2} we obtain
\begin{align*}
\Bigabs{\iint_{\{\abs{y}>\frac{1}{\sigma}\}\times\{\abs{x}\leq\sigma\}}J_{\epsilon}(x,y)\,dx\,dy}\lesssim\sigma^{2s}.
\end{align*}
The integral on the set $\{\abs{x}>\frac{1}{\sigma}\}\times\{\abs{y}\leq\sigma\}$ is estimated similarly. Therefore, we have
\begin{align*}
\Bigabs{\iint_{\mathcal{E}_{\sigma}}J_{\epsilon}(x,y)\,dx\,dy}\lesssim\sigma^{2s}.
\end{align*}

Now we are ready to apply divergence theorem. Here we divide the case $d=2$ and $d\geq 3$. If $d=2$, by divergence theorem, for  
\begin{align*} 
\mathcal{F}_{\sigma}:=\{x\in\setR^{d}:\sigma\leq\abs{x}\leq 1/\sigma\}\times \{y\in\setR^{d}:\sigma\leq\abs{y}\leq 1/\sigma\},
\end{align*}
together with $j_1(x)=\abs{x}^{-\delta-2}x_1$, $j_2(x)=\abs{x}^{-2}$, $j_3(x)= \abs{x}^{-d-2s-2}x_1^3$, $j_4(x)=\abs{x}^{-d-2s}x_1^2$, $j_5(x)=\abs{x}^{-d-2s-2}x_1^2$, $j_6(x)= \abs{x}^{-d-2s}x_1$, and $j_7(x)=\abs{x}^{-d-2s+2}$ when $\abs{x}>0$, \eqref{eq:j2}, and using $\partial_{y_i}(\phi_{\epsilon}(x+y)-\phi_{\epsilon}(x))=\partial_{y_i}\phi_{\epsilon}(x+y)$ we have
\begin{align*}
&\iint_{\mathcal{F}_{\sigma}}J_{\epsilon}(x,y)\,dx\,dy\\
&\,\,=\iint_{\mathcal{F}_{\sigma}}g(y)|x|^{-2-\delta}x_1\phi_{\epsilon}(x+y)\,dx\,dy\\
&\quad+\iint_{\mathcal{F}_{\sigma}}\dfrac{y_1^2}{|y|^{d+2s+2}}|x|^{-2-\delta}x_1(\phi_{\epsilon}(x+y)-\phi_{\epsilon}(x))\,dx\,dy\\
&\quad-\iint_{\mathcal{F}_{\sigma}}g(y)\abs{x}^{-2}(\phi_{\epsilon}(x+y)-\indicator_{\{\abs{x}\leq 1\}}\phi_{\epsilon}(y))\,dx\,dy\\
&\quad-\iint_{\mathcal{F}_{\sigma}}\dfrac{y_1^2}{|y|^{d+2s+2}}\abs{x}^{-2}[(\phi_{\epsilon}(x+y)-\phi_{\epsilon}(x))-\indicator_{\{\abs{x}\leq 1\}}(\phi_{\epsilon}(y)-\phi_{\epsilon}(0))]\,dx\,dy\\
&\quad+\int_{\partial\mathcal{F}_{\sigma}}\widetilde{J}(x,y)\,dS
\end{align*}
for some $\widetilde{J}(x,y)$. Here, we need to estimate the term $\widetilde{J}(x,y)$ on the set
\begin{align*}
\partial\mathcal{F}_{\sigma}&=\{(x,y)\in\setR^{2d}:\abs{x}=\sigma,\sigma\leq\abs{y}\leq 1/\sigma\}\\
&\quad\cup\{(x,y)\in\setR^{2d}:\abs{x}=1/\sigma,\sigma\leq\abs{y}\leq 1/\sigma\}\\
&\quad\cup\{(x,y)\in\setR^{2d}:\abs{y}=\sigma,\sigma\leq\abs{x}\leq 1/\sigma\}\\
&\quad\cup\{(x,y)\in\setR^{2d}:\abs{y}=1/\sigma,\sigma\leq\abs{x}\leq 1/\sigma\}\\
&:=I_1+I_2+I_3+I_4.
\end{align*}
First of all, note that in the function $\widetilde{J}(x,y)$, there is no term corresponding $j_1\divideontimes j_4$ and $j_1\divideontimes j_7$, since in the definition of $j_1$, $j_4$ and $j_7$, there is no partial derivative of the test function $\phi$ in the integrand, thus in fact divergence theorem is not applied for terms $j_1\divideontimes j_4$ and $j_1\divideontimes j_7$.

For the term corresponding to $j_1\divideontimes j_3$, note that 
\begin{align*}
&\int_{I_1}\abs{x}^{-\delta-2}x_1\left(\dfrac{-(-d-2s+2)y_1^2+2\abs{y}^2}{\abs{y}^{d+2s}(-d-2s+2)(-d-2s)}\right)\phi_{\epsilon}(x+y)\nu(y)\,dS\\
&\quad=\int_{I_1}\abs{x}^{-\delta-2}x_1\left(\dfrac{-(-d-2s+2)y_1^2+2\abs{y}^2}{\abs{y}^{d+2s}(-d-2s+2)(-d-2s)}\right)(\phi_{\epsilon}(x+y)-\phi_{\epsilon}(y))\nu(y)\,dS\\
&\quad\quad+\int_{I_1}\abs{x}^{-\delta-2}x_1\left(\dfrac{-(-d-2s+2)y_1^2+2\abs{y}^2}{\abs{y}^{d+2s}(-d-2s+2)(-d-2s)}\right)\phi_{\epsilon}(y)\nu(y)\,dS\\
&\quad=:A_1+A_2.
\end{align*}
Here, using $\support\phi_{\epsilon}\subset B_{\epsilon}(e_1)$, and then $\abs{\phi_{\epsilon}(x+y)-\phi_{\epsilon}(y)}\lesssim\abs{x}$ for $\abs{x}=\sigma$ and $1-\epsilon-\sigma\leq\abs{y}\leq 1+\epsilon+\sigma$, and $\abs{\nu(y)}\leq 1$, we have
\begin{align*}
\abs{A_1}&\lesssim\int_{\abs{x}=\sigma}\int_{1-\epsilon-\sigma\leq\abs{y}\leq 1+\epsilon+\sigma }\abs{x}^{-\delta-1}\abs{y}^{-d-2s+2}\abs{\phi_{\epsilon}(x+y)-\phi_{\epsilon}(y)}\abs{\nu(y)}\,dS\\
&\lesssim\int_{\abs{x}=\sigma}\int_{1-\epsilon-\sigma\leq\abs{y}\leq 1+\epsilon+\sigma}\abs{x}^{-\delta}\abs{y}^{-d-2s+2}\,dS\\
&\lesssim \sigma^{d-1-\delta}.
\end{align*}
For $A_2$, since
\begin{align*}
\int_{\abs{x}=\sigma}\abs{x}^{-\delta-2}x_1\,dx=0,
\end{align*}
we have $A_2=0$. Then it follows that
\begin{align*}
\biggabs{\int_{I_1}\abs{x}^{-\delta-2}x_1\left(\dfrac{-(-d-2s+2)y_1^2+2\abs{y}^2}{\abs{y}^{d+2s}(-d-2s+2)(-d-2s)}\right)\phi_{\epsilon}(x+y)\nu(y)\,dS}\lesssim \sigma^{d-1-\delta}.
\end{align*}
Similarly, the term for $j_1\divideontimes j_5$ can be bound as
\begin{align*}
\int_{\abs{x}=\sigma}\int_{1-\epsilon-\sigma\leq \abs{y}\leq1+\epsilon+\sigma}\bigabs{\abs{x}^{-\delta-2}x_1}\left(\abs{y}^{-d-2s+2}+\abs{y}^{-d-2s+1}\right)\,dS\lesssim\sigma^{d-1-\delta}.
\end{align*}
Likewise, the term for $j_1\divideontimes j_6$ will be estimated by $\sigma^{d-1-\delta}$ up to a constant.

Now for the term corresponding to $j_2\divideontimes j_3$, we see the bound
\begin{align*}
&\int_{\abs{x}=\sigma}\int_{1-\epsilon-\sigma\leq\abs{y}\leq 1+\epsilon+\sigma}\dfrac{\abs{\log\abs{x}}}{\abs{x}}\left(\abs{y}^{-d-2s+2}+\abs{y}^{-d-2s+1}\right)\,dS\lesssim \sigma\abs{\log\sigma}.
\end{align*}
Similarly, for the term corresponding to $j_2\divideontimes (j_4+j_5-j_6+j_7)$, we have the bound $\sigma\abs{\log\sigma}$. Finally, 
\begin{align*}
\int_{I_1}\widetilde{J}(x,y)\,dS\lesssim \sigma^{d-1-\delta}+\sigma\abs{\log\sigma}\lesssim \sigma\abs{\log\sigma}.
\end{align*}
For the integral on the set $I_2$, for the term corresponding to $j_1\divideontimes j_3$, using $\support\phi_{\epsilon}\subset B_{\epsilon}(e_1)$ we estimate
\begin{align*}
&\biggabs{\int_{I_2}\abs{x}^{-\delta-2}x_1\left(\dfrac{-(-d-2s+2)y_1^2+2\abs{y}^2}{\abs{y}^{d+2s}(-d-2s+2)(-d-2s)}\right)\phi_{\epsilon}(x+y)\nu(x)\,dS}\\
&\quad\lesssim\int_{\abs{x}=\frac{1}{\sigma}}\int_{\abs{x+y-e_1}\leq\epsilon}\sigma^{\delta+1}\sigma^{d+2s-2}\,dS\\
&\quad\lesssim\left(\frac{1}{\sigma}\right)^{d-1}\sigma^{\delta+1}\sigma^{d+2s-2}\lesssim \sigma^{\delta+2s}.
\end{align*}
In the similar way, for the term corresponding to $j_1\divideontimes (j_5-j_6)$ can be bound by $\sigma^{\delta+2s}$. For the term corresponding to $j_2\divideontimes j_3$, using $\sigma^s\abs{\log\sigma}\lesssim 1$ we obtain the estimate
\begin{align*}
&\biggabs{\int_{\abs{x}=\frac{1}{\sigma}}\int_{\abs{x+y-e_1}\leq\epsilon}\frac{\abs{\log\abs{x}}}{\abs{x}}\abs{y}^{-d-2s+2}\,dS}\\
&\quad\lesssim \sigma\abs{\log\sigma}\left(\frac{1}{\sigma}\right)^{d-1}\sigma^{d+2s-2}\leq \sigma^{s}.
\end{align*}
Similarly, the term corresponding to $j_2\divideontimes (j_4+j_5-j_6+j_7)$, we have
\begin{align*}
&\biggabs{\int_{\abs{x}=\frac{1}{\sigma}}\int_{\abs{x+y-e_1}\leq\epsilon}\frac{\abs{\log\abs{x}}}{\abs{x}}(\abs{y}^{-d-2s+2}+\abs{y}^{-d-2s+1})\,dS}\lesssim\sigma^{s}.
\end{align*}

For the integral on the set $I_3$, for the term corresponding to $j_1\divideontimes j_3$, we estimate
\begin{align*}
&\int_{I_3}\abs{x}^{-\delta-2}x_1\left(\dfrac{-(-d-2s+2)y_1^2+2\abs{y}^2}{\abs{y}^{d+2s}(-d-2s+2)(-d-2s)}\right)\phi_{\epsilon}(x+y)\nu(y)\,dS\\
&\quad=\int_{I_3}\abs{x}^{-\delta-2}x_1\left(\dfrac{-(-d-2s+2)y_1^2+2\abs{y}^2}{\abs{y}^{d+2s}(-d-2s+2)(-d-2s)}\right)(\phi_{\epsilon}(x+y)-\phi_{\epsilon}(x))\nu(y)\,dS\\
&\quad\quad+\int_{I_3}\abs{x}^{-\delta-2}x_1\left(\dfrac{-(-d-2s+2)y_1^2+2\abs{y}^2}{\abs{y}^{d+2s}(-d-2s+2)(-d-2s)}\right)\phi_{\epsilon}(x)\nu(y)\,dS\\
&\quad=:A_3+A_4.
\end{align*}
For $A_3$,
using $\support\phi_{\epsilon}\subset B_{\epsilon}(e_1)$, and then $\abs{\phi_{\epsilon}(x+y)-\phi_{\epsilon}(x)}\lesssim\abs{y}$ and $\abs{\nu(y)}\leq 1$, we have
\begin{align*}
\abs{A_1}&\lesssim\int_{\abs{y}=\sigma}\int_{1-\epsilon-\sigma\leq\abs{x}\leq 1+\epsilon+\sigma }\abs{x}^{-\delta-1}\abs{y}^{-d-2s+2}\abs{\phi_{\epsilon}(x+y)-\phi_{\epsilon}(x)}\abs{\nu(y)}\,dS\\
&\lesssim\int_{\abs{y}=\sigma}\int_{1-\epsilon-\sigma\leq\abs{x}\leq 1+\epsilon+\sigma}\abs{x}^{-\delta}\abs{y}^{-d-2s+3}\,dS\\
&\lesssim \sigma^{2-2s}.
\end{align*}
On the other hand, $A_4=0$ holds, since $\nu(y)=\frac{y_1}{\abs{y}}$ implies
\begin{align*}
\int_{\abs{y}=\sigma}\left(\dfrac{-(-d-2s+2)y_1^2+2\abs{y}^2}{\abs{y}^{d+2s}(-d-2s+2)(-d-2s)}\right)\nu(y)\,dy=0.
\end{align*}
Then there holds
\begin{align*}
\biggabs{\int_{I_3}\abs{x}^{-\delta-2}x_1\left(\dfrac{-(-d-2s+2)y_1^2+2\abs{y}^2}{\abs{y}^{d+2s}(-d-2s+2)(-d-2s)}\right)\phi_{\epsilon}(x+y)\nu(y)\,dS}\lesssim \sigma^{2-2s}.
\end{align*}
In the similar way, the term corresponding to $j_1\divideontimes j_6$ can be bound by $\sigma^{2-2s}$.
Now we need to estimate the terms from $j_1\divideontimes j_5$ in $\widetilde{J}(x,y)$. Together with the observation $\partial_{y_i}\phi_{\epsilon}(x+y)=\partial_{y_i}(\phi_{\epsilon}(x+y)-\phi_{\epsilon}(x))$, the terms are of the forms
\begin{align*}
&\int_{I_3}\abs{x}^{-\delta-2}x_1\left[\abs{y}^{-d-2s+2}\partial_{y_i}\phi_{\epsilon}(x+y)-(\phi_{\epsilon}(x+y)-\phi_{\epsilon}(x))\partial_{y_i}(\abs{y}^{-d-2s+2})\right]\frac{y_i}{\abs{y}}\,dS\\
&=\int_{\abs{y}=\sigma}\int_{\sigma\leq\abs{x}\leq\frac{1}{\sigma}}\abs{x}^{-\delta-2}x_1\\
&\quad\quad\times\left[\abs{y}^{-d-2s+1}y_i(\partial_i\phi_{\epsilon})(x+y)\right.\\
&\quad\quad\quad\quad\quad\quad\left.-(-d-2s+2)(\phi_{\epsilon}(x+y)-\phi_{\epsilon}(x))\abs{y}^{-d-2s-1}y_i^2\right]\,dx\,dy
\end{align*}
for $i=1,2,\dots,n$. It suffices to show when $i=1$, and then the other terms estimated similarly. Note that
\begin{align*}
\int_{\abs{y}=\sigma}\abs{y}^{-d-2s+1}y_1\,dy=0
\end{align*}
and using $\abs{(\partial_{1}\phi_{\epsilon})(x+y)-(\partial_{1}\phi_{\epsilon})(x)}\lesssim\abs{y}$ for $\abs{y}=\sigma$ and $\sigma\leq\abs{x}\leq\frac{1}{\sigma}$, we have
\begin{align*}
\abs{y}^{-d-2s+1}[y_1(\partial_{1}\phi_{\epsilon})(x+y)-y_1(\partial_{1}\phi_{\epsilon})(x)]\lesssim \abs{y}^{-d-2s+3}.
\end{align*} 
Also, note that
\begin{align*}
\int_{\abs{y}=\sigma}\abs{y}^{-d-2s-1}y^2_iy\cdot\nabla\phi_{\epsilon}(x)\,dy=0.
\end{align*}
Then for $\abs{y}=\sigma$ and $\sigma\leq\abs{x}\leq\frac{1}{\sigma}$, using $\abs{\phi_{\epsilon}(x+y)-\phi_{\epsilon}(x)-y\cdot\nabla\phi_{\epsilon}(x)}\lesssim \abs{y}^2$ from $\phi\in\mathcal{S}(\setR^d)$, we have
\begin{align*}
\abs{y}^{-d-2s-1}y_i^2\abs{\phi_{\epsilon}(x+y)-\phi_{\epsilon}(x)-y\cdot\nabla\phi_{\epsilon}(x)}\lesssim \abs{y}^{-d-2s+3}.
\end{align*}
Hence, together with $\support\phi\subset B_{\epsilon}(e_1)$ we obtain
\begin{align*}
&\biggabs{\int_{I_3}\hspace{-1mm}\abs{x}^{-\delta-2}x_1\left[\abs{y}^{-d-2s+2}\partial_{y_i}\phi_{\epsilon}(x+y)\!-\!(\phi_{\epsilon}(x+y)\!-\!\phi_{\epsilon}(x))\partial_{y_i}(\abs{y}^{-d-2s+2})\right]\frac{y_i}{\abs{y}}\,dS}\\
&\quad\lesssim \int_{\abs{y}=\sigma}\int_{1-\epsilon-\sigma\leq\abs{x}\leq 1+\epsilon+\sigma}\abs{x}^{-\delta-1}\abs{y}^{-d-2s+3}\,dx\,dy\\
&\quad\lesssim \sigma^{d-1}\sigma^{-d-2s+3}\lesssim \sigma^{2-2s}.
\end{align*}

For the term corresponding to $j_2\divideontimes j_3$, using 
\begin{align*}
\abs{\nabla_x\phi_{\epsilon}(x+y)-\nabla_x\phi_{\epsilon}(x)}=\abs{\nabla\phi_{\epsilon}(x+y)-\nabla\phi_{\epsilon}(x)}\lesssim \abs{y}
\end{align*}
and
\begin{align*}
\int_{\abs{y}=\sigma}\left(\frac{-(-d-2s+2)y_1^2+2\abs{y}^2}{\abs{y}^{d+2s}(-d-2s+2)(-d-2s)}\right)\dfrac{y_1}{\abs{y}}\nabla\phi_{\epsilon}(x)\,dy=0,
\end{align*}
together with $\support\phi\subset B_{\epsilon}(e_1)$ we obtain the estimate
\begin{align*}
&\biggabs{\int_{\abs{y}=\sigma}\int_{\sigma\leq\abs{x}\leq\frac{1}{\sigma}}\frac{x\log\abs{x}}{\abs{x}^2}\left(\frac{-(-d-2s+2)y_1^2+2\abs{y}^2}{\abs{y}^{d+2s}(-d-2s+2)(-d-2s)}\right)\nabla_x\phi_{\epsilon}(x+y)\frac{y_1}{\abs{y}}\,dS}\\
&\quad\lesssim \biggabs{\int_{\abs{y}=\sigma}\int_{1-\epsilon-\sigma\leq\abs{x}\leq1+\epsilon+\sigma}\frac{\abs{\log\abs{x}}}{\abs{x}}\abs{y}^{-d-2s+2}\abs{y}\,dS}\lesssim \sigma^{2-2s}.
\end{align*}
Similarly, for the term corresponding to $j_2\divideontimes (j_4+j_5-j_6+j_7)$, we have
\begin{align*}
\biggabs{\int_{\abs{y}=\sigma}\int_{1-\epsilon-\sigma\leq\abs{x}\leq1+\epsilon+\sigma}\frac{\abs{\log\abs{x}}}{\abs{x}}\abs{y}^{-d-2s+3}\,dS}\lesssim \sigma^{2-2s}.
\end{align*}
Finally, we consider the integral on $I_4$. For the corresponding term to $j_1\divideontimes j_3$, using $\support\phi_{\epsilon}\subset B_{\epsilon}(e_1)$ we have
\begin{align*}
&\biggabs{\int_{I_4}\abs{x}^{-\delta-2}x_1\left(\dfrac{-(-d-2s+2)y_1^2+2\abs{y}^2}{\abs{y}^{d+2s}(-d-2s+2)(-d-2s)}\right)\phi_{\epsilon}(x+y)\nu(y)\,dS}\\
&\quad\lesssim\int_{\abs{y}=\frac{1}{\sigma}}\int_{\abs{x+y-e_1}\leq\epsilon}\sigma^{\delta+1}\sigma^{d+2s-2}\,dS\\
&\lesssim\left(\dfrac{1}{\sigma}\right)^{d-1}\sigma^{\delta+1}\sigma^{d+2s-2}\lesssim \sigma^{\delta+2s}.
\end{align*}
Similarly, for the term corresponding to $j_1\divideontimes (j_5-j_6)$ can be bound by $\sigma^{\delta+2s}$. For the term corresponding to $j_2\divideontimes j_3$, using $\sigma^s\abs{\log\sigma}\lesssim 1$, we obtain the estimate
\begin{align*}
&\biggabs{\int_{\abs{y}=\frac{1}{\sigma}}\int_{\frac{1}{\sigma}-1\leq\abs{x}\leq\frac{1}{\sigma}+1}\frac{\abs{\log\abs{x}}}{\abs{x}}\abs{y}^{-d-2s+2}\,dS}\\
&\quad\lesssim \sigma\abs{\log\sigma}\left(\frac{1}{\sigma}\right)^{d-1}\sigma^{d+2s-2}\leq \sigma^{s}.
\end{align*}
Similarly, the term corresponding to $j_2\divideontimes (j_4+j_5-j_6+j_7)$, we have
\begin{align*}
&\biggabs{\int_{\abs{y}=\frac{1}{\sigma}}\int_{\frac{1}{\sigma}-1\leq\abs{x}\leq\frac{1}{\sigma}+1}\frac{\abs{\log\abs{x}}}{\abs{x}}(\abs{y}^{-d-2s+2}+\abs{y}^{-d-2s+1})\,dS}\lesssim\sigma^{s}.
\end{align*}

When $d\geq 3$, again by divergence theorem,
\begin{align*}
&\iint_{\mathcal{F}_{\sigma}}J_{\epsilon}(x,y)\,dx\,dy\\
&\quad=\iint_{\mathcal{F}_{\sigma}}g(y)(|x|^{-2-\delta}x_1-\abs{x}^{-2})\phi_{\epsilon}(x+y)\,dx\,dy\\
&\quad\quad+\iint_{\mathcal{F}_{\sigma}}\dfrac{y_1^2}{|y|^{d+2s+2}}(|x|^{-2-\delta}x_1-\abs{x}^{-2})(\phi_{\epsilon}(x+y)-\phi_{\epsilon}(x))\,dx\,dy\\
&\quad\quad+\int_{\partial\mathcal{F}_{\sigma}}\overline{J}(x,y)\,dS
\end{align*}
for some $\overline{J}(x,y)$. Here, we need to estimate the term $\overline{J}(x,y)$ on the set $\partial\mathcal{F}_{\sigma}=I_1+I_2+I_3+I_4$. However, its proof is similar to the case of $d=2$ so we skip the proof. Overall, recalling $\mathcal{C}_\sigma$, $\mathcal{E}_\sigma$, and $\mathcal{F}_\sigma$, we have
\begin{align*}
&\int_{\setR^d}\int_{\setR^d}J_{\epsilon}(x,y)\,dx\,dy\\
&\quad=\lim_{\sigma\rightarrow 0}\left(\int_{\mathcal{C}_\sigma}J_{\epsilon}(x,y)\,dx\,dy+\int_{\mathcal{E}_\sigma}J_{\epsilon}(x,y)\,dx\,dy+\int_{\mathcal{F}_\sigma}J_{\epsilon}(x,y)\,dx\,dy\right)\\
&\quad=\mathcal{Z}_{\epsilon},
\end{align*}
where with $g(y)$ in \eqref{eq:g}, $\mathcal{Z}_{\epsilon}$ is of the form in \eqref{eq:Z2} when $d=2$ and \eqref{eq:Z3} when $d\geq 3$.

\textbf{Step 2: Proof of \eqref{eq:approx.identity}.} We need to show 
\begin{align}\label{eq:Z}
\lim_{\epsilon\rightarrow 0}\mathcal{Z}_{\epsilon}=f_2(d,s,\delta).
\end{align}
To do this, we only give the proof in the case of $d=2$ since when $d\geq 3$ the proof is similar. For fixed $\kappa\in(0,1/100)$, consider $\epsilon\in(0,1/100)$ with $2\epsilon\leq\kappa$ and define
\begin{align*}
&\mathcal{G}_{\kappa}=\{(x,y)\in\setR^{2d}:\abs{y}\leq\kappa\},\\
&\mathcal{H}_{\kappa}=\{(x,y)\in\setR^{2d}:\abs{e_1-y}\leq\kappa\},\\
&\mathcal{I}_{\kappa}=\setR^{2d}\setminus(\mathcal{G}_{\kappa}\cup\mathcal{H}_{\kappa}).
\end{align*}
We first estimate the integration on $\mathcal{G}_{\kappa}$. Using $\support\phi\subset B_{\epsilon}(e_1)$,
\begin{align*}
&\iint_{\mathcal{G}_{\kappa}}\dfrac{y_1^3}{\abs{y}^{d+2s+2}}|x|^{-2-\delta}x_1\phi_{\epsilon}(x+y)\,dx\,dy\\
&\,\,=\int_{\abs{y}\leq\kappa}\int_{\abs{x+y-e_1}\leq\epsilon}\dfrac{y_1^3}{\abs{y}^{d+2s+2}}|x|^{-2-\delta}x_1\phi_{\epsilon}(x+y)\,dx\,dy\\
&\,\,=\int_{\abs{y}\leq\kappa}\dashint_{\abs{x}\leq\epsilon}\dfrac{y_1^3}{\abs{y}^{d+2s+2}}|x-y+e_1|^{-2-\delta}(x_1-y_1+1)\overline{\phi}\left(\frac{x}{\epsilon}\right)\,dx\,dy.
\end{align*}
Note that for each $x\in B_{\epsilon}(0)$, there holds
\begin{align*}
\int_{\abs{y}\leq\kappa}\dfrac{y_1^3}{\abs{y}^{d+2s+2}}\abs{x+e_1}^{-2-\delta}(x_1+1)\,dy=0.
\end{align*}
Then with $f(z)=\abs{x-z+e_1}^{-2-\delta}(x_1-z_1+1)$, using
\begin{align*}
\abs{\abs{x-y+e_1}^{-2-\delta}(x_1-y_1+1)-\abs{x+e_1}^{-2-\delta}(x_1+1)}=\abs{f(y)-f(0)}\lesssim \abs{y}
\end{align*}
for $\abs{x}\leq\epsilon$ and $\abs{y}\leq\kappa$, we have
\begin{align*}
&\biggabs{\int_{\abs{y}\leq\kappa}\dashint_{\abs{x}\leq\epsilon}\dfrac{y_1^3}{\abs{y}^{d+2s+2}}|x-y+e_1|^{-2-\delta}(x_1-y_1+1)\overline{\phi}\left(\frac{x}{\epsilon}\right)\,dx\,dy}\\
&\quad\lesssim \int_{\abs{y}\leq\kappa}\dashint_{\abs{x}\leq\epsilon}\dfrac{\abs{y}^4}{\abs{y}^{d+2s+2}}\overline{\phi}\left(\frac{x}{\epsilon}\right)\,dx\,dy\lesssim \kappa^{2-2s}.
\end{align*}
Then
\begin{align}\label{eq:conv1}
\biggabs{\iint_{\mathcal{G}_{\kappa}}\dfrac{y_1^3}{\abs{y}^{d+2s+2}}|x|^{-2-\delta}x_1\phi_{\epsilon}(x+y)\,dx\,dy}\lesssim \kappa^{2-2s}.
\end{align}
Similarly, we obtain
\begin{align}\label{eq:conv2}
\biggabs{\iint_{\mathcal{G}_{\kappa}}g(y)|x|^{-2-\delta}x_1\phi_{\epsilon}(x+y)\,dx\,dy}\lesssim\kappa^{2-2s}.
\end{align}

On the other hand, using change of variables, 
\begin{align*}
&\abs{\abs{x-y+e_1}^{-2-\delta}(x_1-y_1+1)-\abs{x+e_1}^{-2-\delta}(x_1+1)-\nabla f(0)\cdot y}\\
&\quad=\abs{f(y)-f(0)-\nabla f(0)\cdot y}\lesssim \abs{y}^2,
\end{align*}
and
\begin{align*}
\int_{\abs{y}\leq\kappa}\dfrac{y_1^2}{\abs{y}^{d+2s+2}}\cdot y\,dy=0,
\end{align*}
we have
\begin{align}\label{eq:conv3}
\begin{split}
&\biggabs{\iint_{\mathcal{G}_{\kappa}}\dfrac{y_1^2}{|y|^{d+2s+2}}|x|^{-2-\delta}x_1(\phi_{\epsilon}(x+y)-\phi_{\epsilon}(x))\,dx\,dy}\\
&\,\,=\frac{c}{\epsilon^d}\biggabs{\iint_{\mathcal{G}_{\kappa}}\dfrac{y_1^2}{\abs{y}^{d+2s+2}}|x-y+e_1|^{-2-\delta}(x_1-y_1+1)\left(\overline{\phi}\left(\frac{x}{\epsilon}\right)-\overline{\phi}\left(\frac{x-y}{\epsilon}\right)\right)dxdy}\\
&\,\,=\frac{c}{\epsilon^d}\bigg|\iint_{\mathcal{G}_{\kappa}}\dfrac{y_1^2}{\abs{y}^{d+2s+2}}\\
&\quad\quad\quad\quad\times\left(|x-y+e_1|^{-2-\delta}(x_1-y_1+1)-\abs{x+e_1}^{-2-\delta}(x_1+1)\right)\overline{\phi}\left(\frac{x}{\epsilon}\right)dxdy\bigg|\\
&\,\,\lesssim \int_{\abs{y}\leq\kappa}\abs{y}^{-d-2s+2}\dashint_{\abs{x}\leq\epsilon}\phi\left(\frac{x}{\epsilon}\right)\,dx\,dy\lesssim \kappa^{2-2s}.
\end{split}
\end{align}
Similarly, observing that $\phi_{\epsilon}(y)\equiv 0$ when $\abs{y}\leq \kappa\leq 1/100$, we can obtain
\begin{align}\label{eq:conv4}
\begin{split}
&\biggabs{\iint_{\mathcal{G}_{\kappa}}g(y)\abs{x}^{-2}(\phi_{\epsilon}(x+y)-\indicator_{\{\abs{x}\leq 1\}}\phi_{\epsilon}(y))\,dx\,dy}\\
&\quad+\biggabs{\iint_{\mathcal{G}_{\kappa}}\dfrac{y_1^2}{|y|^{d+2s+2}}\abs{x}^{-2}[(\phi_{\epsilon}(x+y)-\phi_{\epsilon}(x))-\indicator_{\{\abs{x}\leq 1\}}(\phi_{\epsilon}(y)-\phi_{\epsilon}(0))]\,dy}\\
&=\biggabs{\iint_{\mathcal{G}_{\kappa}}g(y)\abs{x}^{-2}\phi_{\epsilon}(x+y)\,dx\,dy}+\biggabs{\iint_{\mathcal{G}_{\kappa}}\dfrac{y_1^2}{|y|^{d+2s+2}}\abs{x}^{-2}(\phi_{\epsilon}(x+y)-\phi_{\epsilon}(x))\,dy}\\
&\lesssim\kappa^{2-2s}.
\end{split}
\end{align}
Note that the implicit constant in the above inequalities do not depend on $\epsilon$.

Now we consider the integral on the set $\mathcal{H}_{\kappa}$. 
Using $\support\phi\subset B_{\epsilon}(e_1)$,
\begin{align*}
&\iint_{\mathcal{H}_{\kappa}}\dfrac{y_1^3}{\abs{y}^{d+2s+2}}|x|^{-2-\delta}x_1\phi_{\epsilon}(x+y)\,dx\,dy\\
&\,\,=\int_{\abs{e_1-y}\leq\kappa}\int_{\abs{x+y-e_1}\leq\epsilon}\dfrac{y_1^3}{\abs{y}^{d+2s+2}}|x|^{-2-\delta}x_1\phi_{\epsilon}(x+y)\,dx\,dy\\
&\,\,=\overline{c}\int_{\abs{e_1-y}\leq\kappa}\dashint_{\abs{x}\leq\epsilon}\dfrac{y_1^3}{\abs{y}^{d+2s+2}}|x-y+e_1|^{-2-\delta}(x_1-y_1+1)\overline{\phi}\left(\frac{x}{\epsilon}\right)\,dx\,dy.
\end{align*}
To estimate further, we have
\begin{align*}
&\overline{c}\int_{\abs{e_1-y}\leq\kappa}\dashint_{\abs{x}\leq\epsilon}\dfrac{y_1^3}{\abs{y}^{d+2s+2}}|x-y+e_1|^{-2-\delta}(x_1-y_1+1)\overline{\phi}\left(\frac{x}{\epsilon}\right)\,dx\,dy\\
&=\dfrac{\overline{c}}{\epsilon^d}\hspace{-3mm}\int_{\abs{e_1-y}\leq\kappa}\hspace{-3mm}\dfrac{y_1^3}{\abs{y}^{d+2s+2}}\hspace{-6mm}\int_{\frac{1}{2}\abs{e_1-y}\leq\abs{x}\leq\epsilon}\hspace{-6mm}\left(\abs{x-y+e_1}^{-2-\delta}(x_1-y_1+1)-\abs{x}^{-2-\delta}x_1\right)\overline{\phi}\left(\frac{x}{\epsilon}\right)\,dx\,dy\\
&\,\,+\dfrac{\overline{c}}{\epsilon^d}\int_{\abs{e_1-y}\leq\kappa}\int_{\abs{x}\leq\min\{\frac{1}{2}\abs{e_1-y},\epsilon\}}\dfrac{y_1^3}{\abs{y}^{d+2s+2}}\abs{x-y+e_1}^{-2-\delta}(x_1-y_1+1)\overline{\phi}\left(\frac{x}{\epsilon}\right)\,dx\,dy\\
&\lesssim\int_{\abs{e_1-y}\leq\kappa}\abs{y-e_1}^{-1-\delta}\dashint_{\abs{x}\leq\epsilon}\overline{\phi}\left(\frac{x}{\epsilon}\right)\,dx\,dy\lesssim\kappa^{d-1-\delta}\lesssim \kappa^{\frac{1}{2}},
\end{align*}
where for the first inequality we have used that 
\begin{align*}
\dashint_{\frac{1}{2}\abs{e_1-y}\leq\abs{x}\leq\epsilon}\abs{x}^{-2-\delta}x_1\overline{\phi}\left(\frac{x}{\epsilon}\right)\,dx=0
\end{align*}
as well as that on the set $\frac{1}{2}\abs{e_1-y}\leq\abs{x}\leq\epsilon$, by Lemma \ref{lem:basic}, we have
\begin{align*}
\abs{\abs{x-y+e_1}^{-2-\delta}(x_1-y_1+1)-\abs{x}^{-2-\delta}x_1}\lesssim \abs{x}^{-2-\delta}\abs{y-e_1},
\end{align*}
and on the set $\abs{x}\leq\min\{\frac{1}{2}\abs{e_1-y},\epsilon\}$, $\abs{\abs{x-y+e_1}^{-2-\delta}(x_1-y_1+1)}\lesssim \abs{y\!-\!e_1}^{-1-\delta}$ holds.
Hence we get
\begin{align}\label{eq:conv5}
\biggabs{\iint_{\mathcal{H}_{\kappa}}\dfrac{y_1^3}{\abs{y}^{d+2s+2}}|x|^{-2-\delta}x_1\phi_{\epsilon}(x+y)\,dx\,dy}\lesssim \kappa^{\min\{\frac{1}{2},2-2s\}}.
\end{align}
Similarly, we obtain
\begin{align}\label{eq:conv6}
\begin{split}
&\biggabs{\iint_{\mathcal{H}_{\kappa}}g(y)|x|^{-2-\delta}x_1\phi_{\epsilon}(x+y)\,dx\,dy}\\
&+\biggabs{\iint_{\mathcal{H}_{\kappa}}\dfrac{y_1^2}{|y|^{d+2s+2}}|x|^{-2-\delta}x_1\left(\phi_{\epsilon}(x+y)-\phi_{\epsilon}(x)\right)\,dx\,dy}\lesssim\kappa^{2-2s}.
\end{split}
\end{align}
On the other hand, we have
\begin{align*}
&\iint_{\mathcal{H}_{\kappa}}\dfrac{y_1^2}{|y|^{d+2s+2}}\abs{x}^{-2}[(\phi_{\epsilon}(x+y)-\phi_{\epsilon}(x))-\indicator_{\{\abs{x}\leq 1\}}(\phi_{\epsilon}(y)-\phi_{\epsilon}(0))]\,dx\,dy\\
&\quad=\int_{\abs{e_1-y}\leq\kappa}\frac{y_1^2}{\abs{y}^{d+2s+2}}\int_{\abs{x}\leq 1}\abs{x}^{-2}\left[(\phi_{\epsilon}(x+y)-\phi_{\epsilon}(x))-(\phi_{\epsilon}(y)-\phi_{\epsilon}(0))\right]\,dx\,dy\\
&\quad\quad+\int_{\abs{e_1-y}\leq\kappa}\frac{y_1^2}{\abs{y}^{d+2s+2}}\int_{\abs{x}\geq 1}\abs{x}^{-2}\phi_{\epsilon}(x)\,dx\,dy,
\end{align*}
where we have used the fact that when $\abs{e_1-y}\leq\kappa$ and $\abs{x}\geq 1$, $\phi_{\epsilon}(x+y)\equiv 0$, since $\abs{x+y-e_1}\geq\abs{x}-\abs{y-e_1}\geq 1-\kappa\geq\epsilon$. Then 
\begin{align*}
&\int_{\abs{e_1-y}\leq\kappa}\frac{y_1^2}{\abs{y}^{d+2s+2}}\int_{\abs{x}\geq 1}\abs{x}^{-2}\phi_{\epsilon}(x)\,dx\,dy\\
&\quad\quad\lesssim \int_{\abs{e_1-y}\leq\kappa}\dashint_{\abs{x-e_1}\leq\epsilon}\abs{x}^{-2}\overline{\phi}\left(\frac{x}{\epsilon}\right)\,dx\,dy\lesssim \kappa^2.
\end{align*}
Also, when $\abs{x}\leq 1$, $\phi_{\epsilon}(x)\equiv 0$ holds. Then together with $\phi_{\epsilon}(0)=0$ since $\epsilon\leq 1/100$, we have
\begin{align}\label{eq:epconv}
\begin{split}
&\int_{\abs{e_1-y}\leq\kappa}\frac{y_1^2}{\abs{y}^{d+2s+2}}\int_{\abs{x}\leq 1}\abs{x}^{-2}\left[(\phi_{\epsilon}(x+y)-\phi_{\epsilon}(x))-(\phi_{\epsilon}(y)-\phi_{\epsilon}(0))\right]\,dx\,dy\\
&\quad=\int_{\abs{e_1-y}\leq\kappa}\frac{y_1^2}{\abs{y}^{d+2s+2}}\int_{\abs{x}\leq 1}\abs{x}^{-2}\left(\phi_{\epsilon}(x+y)-\phi_{\epsilon}(y)\right)\,dx\,dy\\
&\quad=\int_{\abs{x}\leq 1}\abs{x}^{-2}\int_{\setR^d}\left(\indicator_{\abs{e_1-y+x}\leq\kappa}\dfrac{(y_1-x_1)^2}{\abs{y-x}^{d+2s+2}}-\indicator_{\abs{e_1-y}\leq\kappa}\dfrac{y_1^2}{\abs{y}^{d+2s+2}}\right)\phi_{\epsilon}(y)\,dy\,dx\\
&\quad\underset{\epsilon\rightarrow 0}{\longrightarrow}\int_{\abs{x}\leq 1}\abs{x}^{-2}\left(\indicator_{\abs{x}\leq\kappa}\dfrac{(1-x_1)^2}{\abs{e_1-x}^{d+2s+2}}-1\right)\,dx.
\end{split}
\end{align}
To check the convergence as $\epsilon\rightarrow 0$, note that when $\abs{x}\geq 1-2\kappa$ and $\abs{y-e_1}\leq\epsilon$, then $\abs{e_1-y+x}>\kappa$ since $\kappa\geq 2\epsilon$ and so
\begin{align*}
&\int_{\abs{x}\leq 1}\abs{x}^{-2}\int_{\setR^d}\left(\indicator_{\abs{e_1-y+x}\leq\kappa}\dfrac{(y_1-x_1)^2}{\abs{y-x}^{d+2s+2}}-\indicator_{\abs{e_1-y}\leq\kappa}\dfrac{y_1^2}{\abs{y}^{d+2s+2}}\right)\phi_{\epsilon}(y)\,dy\,dx\\
&\quad=-\int_{1-2\kappa\leq\abs{x}\leq 1}\abs{x}^{-2}\dashint_{B_{\epsilon}(e_1)}\indicator_{\abs{e_1-y}\leq\kappa}\dfrac{y_1^2}{\abs{y}^{d+2s+2}}\overline{\phi}\left(\frac{y}{\epsilon}\right)\,dy\,dx\\
&\quad\quad+\int_{\abs{x}\leq 1-2\kappa}\abs{x}^{-2}\dashint_{B_{\epsilon}(e_1)}(\indicator_{\abs{e_1-y+x}\leq\kappa}-\indicator_{\abs{e_1-y}\leq\kappa})\dfrac{(y_1-x_1)^2}{\abs{y-x}^{d+2s+2}}\overline{\phi}\left(\frac{y}{\epsilon}\right)\,dy\,dx\\
&\quad\quad+\int_{\abs{x}\leq 1-2\kappa}\abs{x}^{-2}\dashint_{B_{\epsilon}(e_1)}\indicator_{\abs{e_1-y}\leq\kappa}\left(\dfrac{(y_1-x_1)^2}{\abs{y-x}^{d+2s+2}}-\dfrac{y_1^2}{\abs{y}^{d+2s+2}}\right)\overline{\phi}\left(\frac{y}{\epsilon}\right)\,dy\,dx.
\end{align*}
Here, if $y\in B_{\epsilon}(e_1)$ then $\abs{e_1-y}\leq\kappa$ so that
\begin{align*}
&\int_{1-2\kappa\leq\abs{x}\leq 1}\abs{x}^{-2}\dashint_{B_{\epsilon}(e_1)}\indicator_{\abs{e_1-y}\leq\kappa}\dfrac{y_1^2}{\abs{y}^{d+2s+2}}\overline{\phi}\left(\frac{y}{\epsilon}\right)\,dy\,dx\\
&\quad\lesssim\int_{1-2\kappa\leq\abs{x}\leq 1}\abs{x}^{-2}\,dx\lesssim -\log(1-2\kappa)\lesssim\kappa.
\end{align*}
Also, if $\abs{x}\leq\kappa-\epsilon$ and $\abs{y-e_1}\leq\epsilon$, then $\abs{e_1-y+x}\leq\kappa$, thus
\begin{align*}
&\biggabs{\int_{\abs{x}\leq 1-2\kappa}\abs{x}^{-2}\dashint_{B_{\epsilon}(e_1)}(\indicator_{\abs{e_1-y+x}\leq\kappa}-\indicator_{\abs{e_1-y}\leq\kappa})\dfrac{(y_1-x_1)^2}{\abs{y-x}^{d+2s+2}}\overline{\phi}\left(\frac{y}{\epsilon}\right)\,dy\,dx}\\
&\quad\leq\int_{\kappa-\epsilon\leq\abs{x}\leq 1-2\kappa}\abs{x}^{-2}\dashint_{B_{\epsilon}(e_1)}\dfrac{(y_1-x_1)^2}{\abs{y-x}^{d+2s+2}}\overline{\phi}\left(\frac{y}{\epsilon}\right)\,dy\,dx\lesssim -\kappa^{-d-2s}\log\frac{1-2\kappa}{\kappa/2}.
\end{align*}
Moreover, we use
\begin{align*}
\biggabs{\dfrac{(y_1-x_1)^2}{\abs{y-x}^{d+2s+2}}-\dfrac{y_1^2}{\abs{y}^{d+2s+2}}}\lesssim \frac{\abs{x}}{\abs{y}^{d+2s+2}}
\end{align*}
when $\abs{x}\leq 1-2\kappa$ and $\abs{y-e_1}\leq\epsilon$ to obtain
\begin{align*}
&\biggabs{\int_{\abs{x}\leq 1-2\kappa}\abs{x}^{-2}\dashint_{B_{\epsilon}(e_1)}\indicator_{\abs{e_1-y}\leq\kappa}\left(\dfrac{(y_1-x_1)^2}{\abs{y-x}^{d+2s+2}}-\dfrac{y_1^2}{\abs{y}^{d+2s+2}}\right)\overline{\phi}\left(\frac{y}{\epsilon}\right)\,dy\,dx}\\
&\quad\lesssim\int_{\abs{x}\leq 1-2\kappa}\abs{x}^{-1}\dashint_{B_{\epsilon}(e_1)}\frac{1}{\abs{y}^{d+2s+2}}\overline{\phi}\left(\frac{y}{\epsilon}\right)\,dy\,dx\lesssim 1-2\kappa.
\end{align*}
Note that the implicit constant in the above inequalities again do not depend on $\epsilon$.
Thus for fixed $\kappa\in(0,1/100)$ with $\kappa\geq 2\epsilon$, by Lebesgue dominated convergence theorem, we obtain the convergence as $\epsilon\rightarrow 0$ in \eqref{eq:epconv}. Now,
\begin{align*}
&\int_{\abs{x}\leq 1}\abs{x}^{-2}\left(\indicator_{\abs{x}\leq\kappa}\dfrac{(1-x_1)^2}{\abs{e_1-x}^{d+2s+2}}-1\right)\,dx\\
&\quad=\int_{\abs{x}\leq \kappa}\abs{x}^{-2}\left(\dfrac{(1-x_1)^2}{\abs{e_1-x}^{d+2s+2}}-1\right)\,dx-\int_{\kappa\leq\abs{x}\leq 1}\abs{x}^{-2}\,dx.
\end{align*}
Here, by using $\abs{(1-x_1)^2\abs{e_1-x}^{-d-2s-2}-1}\lesssim\abs{x}$, observe that for $\abs{x}\leq\kappa$,
\begin{align}\label{eq:conv7}
\biggabs{\int_{\abs{x}\leq \kappa}\abs{x}^{-2}\left(\dfrac{(1-x_1)^2}{\abs{e_1-x}^{d+2s+2}}-1\right)\,dx}\lesssim\kappa^2.
\end{align}
Also, we write
\begin{align}\label{eq:conv8}
\int_{\kappa\leq\abs{x}\leq 1}\abs{x}^{-2}\,dx=:L_1(\kappa),
\end{align}
which is a large number since $\kappa\in(0,1/100)$ is small, but it will be canceled out to the other term in the following estimate \eqref{eq:conv11}. 

On the other hand, we have
\begin{align*}
&-\iint_{\mathcal{H}_{\kappa}}g(y)\abs{x}^{-2}[\phi_{\epsilon}(x+y)-\indicator_{\{\abs{x}\leq 1\}}\phi_{\epsilon}(y)]\,dx\,dy\\
&\quad=-\int_{\abs{e_1-y}\leq\kappa}g(y)\int_{\abs{x}\leq 1}\abs{x}^{-2}\left(\phi_{\epsilon}(x+y)-\phi_{\epsilon}(y)\right)\,dx\,dy\\
&\quad=-\int_{\abs{x}\leq 1}\abs{x}^{-2}\int_{\setR^d}\left(\indicator_{\abs{e_1-y+x}\leq\kappa}g(y-x)-\indicator_{\abs{e_1-y}\leq\kappa}g(y)\right)\phi_{\epsilon}(y)\,dy\,dx\\
&\quad\underset{\epsilon\rightarrow 0}{\longrightarrow}-\int_{\abs{x}\leq 1}\abs{x}^{-2}\left(\indicator_{\abs{x}\leq\kappa}g(e_1-x)-g(e_1)\right)\,dx\\
&\quad=-\int_{\abs{x}\leq 1}\abs{x}^{-2}\left(\indicator_{\abs{x}\leq\kappa}g(e_1-x)-1\right)\,dx,
\end{align*}
where the convergence $\epsilon\rightarrow 0$ is justified again by Lebesgue dominated convergence theorem and the similar argument as above. Now, we see that
\begin{align}\label{eq:conv9}
\begin{split}
&-\int_{\abs{x}\leq 1}\abs{x}^{-2}\left(\indicator_{\abs{x}\leq\kappa}g(e_1-x)-1\right)\,dx\\
&\quad=-\int_{\abs{x}\leq \kappa}\abs{x}^{-2}\left(g(e_1-x)-1\right)\,dx-L_1(\kappa).
\end{split}
\end{align}
Here, by using $\abs{g(e_1-x)-1}\lesssim\abs{x}$, observe that
\begin{align}\label{eq:conv10}
\biggabs{\int_{\abs{x}\leq \kappa}\abs{x}^{-2}\left(g(e_1-x)-1\right)\,dx}\lesssim\kappa^2.
\end{align}
Overall, merging the estimates for $\mathcal{H}_{\kappa}$ in \eqref{eq:conv5}--\eqref{eq:conv10}, we have
\begin{align}\label{eq:conv11}
\begin{split}
&\lim_{\epsilon\rightarrow 0}\left|\iint_{\mathcal{H}_{\kappa}}\frac{y_1^2}{\abs{y}^{d+2s+2}}\abs{x}^{-2}\left[(\phi_{\epsilon}(x+y)-\phi_{\epsilon}(x))-(\phi_{\epsilon}(y)-\phi_{\epsilon}(0))\right]\,dx\,dy-L_1(\kappa)\right.\\
&\quad \left.-\left(\iint_{\mathcal{H}_{\kappa}}g(y)\abs{x}^{-2}[\phi_{\epsilon}(x+y)-\indicator_{\{\abs{x}\leq 1\}}\phi_{\epsilon}(y)]\,dx\,dy-L_1(\kappa)\right)\right|\\
&\quad \lesssim\kappa^{2-2s}.
\end{split}
\end{align}

Now on the set $\mathcal{I}_{\kappa}$, the integrand including $g(y)$, $y_1^2\abs{y}^{-d-2s-2}$, $\abs{x}^{-2-\delta}x_1$, and $\abs{x}^{-2}$ are no longer singular, so we can apply the argument of the proof of \cite[Page 714, Theorem 7]{Eva10}. In detail, recalling $\kappa\geq 2\epsilon$ and $\support\phi\subset B_{\epsilon}(0)$, from
\begin{align*}
(x,y)\in\mathcal{I}_{\kappa}\,\,\implies\,\,\abs{e_1-y}\geq\kappa\,\,\implies\,\,\phi_{\epsilon}(y)\equiv 0,
\end{align*}
together with $\phi_{\epsilon}(0)\equiv0$ (also from $\support\phi\subset B_{\epsilon}(0)$) we have
\begin{align*}
&\iint_{\mathcal{I}_{\kappa}}\left(\dfrac{y_1^2}{|y|^{d+2s+2}}\abs{x}^{-2}[(\phi_{\epsilon}(x+y)-\phi_{\epsilon}(x))-\indicator_{\{\abs{x}\leq 1\}}(\phi_{\epsilon}(y)-\phi_{\epsilon}(0))]\right.\\
&\quad\quad\quad\quad\quad\quad\quad\quad\quad\left.-\dfrac{y_1^2}{|y|^{d+2s+2}}\abs{e_1-y}^{-2}\right)\,dx\,dy\\
&\quad=\iint_{\mathcal{I}_{\kappa}}\dfrac{y_1^2}{\abs{y}^{d+2s+2}}\left(\abs{x}^{-2}(\phi_{\epsilon}(x+y)-\phi_{\epsilon}(x))-\abs{e_1-y}^{-2}\right)\,dx\,dy\\
&\quad=\iint_{\mathcal{I}_{\kappa}}\dfrac{y_1^2}{\abs{y}^{d+2s+2}}\left(\abs{x}^{-2}-\abs{e_1-y}^{-2}\right)\phi_{\epsilon}(x+y)\,dx\,dy\\
&\quad\quad-
\iint_{\mathcal{I}_{\kappa}}\dfrac{y_1^2}{\abs{y}^{d+2s+2}}\abs{x}^{-2}\phi_{\epsilon}(x)\,dx\,dy.
\end{align*}
Then by the argument in \cite[Page 715]{Eva10}, we see that the first term in the rightmost term in the above converges to zero as $\epsilon\rightarrow 0$ by Lebesgue differentiation theorem. Moreover, the second term is computed as
\begin{align}\label{eq:conv12}
\mathcal{P}:=\iint_{\mathcal{I}_{\kappa}}\dfrac{y_1^2}{\abs{y}^{d+2s+2}}\abs{x}^{-2}\phi_{\epsilon}(x)\,dx\,dy\,\,\underset{\epsilon\rightarrow 0}{\longrightarrow}\,\,\int_{\substack{\abs{y}\geq\kappa\\\cap\abs{y-e_1}\geq\kappa}}\dfrac{y_1^2}{\abs{y}^{d+2s+2}}\,dy=:L_2(\kappa)
\end{align}
which is a large number since $\kappa$ is small, but it will be canceled out to the other term which appear soon. Similarly,
\begin{align}\label{eq:conv13}
\begin{split}
\mathcal{Q}:=&\iint_{\mathcal{I}_{\kappa}}\left(-\dfrac{y_1^2}{|y|^{d+2s+2}}|x|^{-2-\delta}x_1(\phi_{\epsilon}(x+y)-\phi_{\epsilon}(x))\right.\\
&\quad\quad\quad\quad\quad\quad\quad\left.+\dfrac{y_1^2}{|y|^{d+2s+2}}\abs{e_1-y}^{-2-\delta}(e_1-y)\right)\,dx\,dy\\
&\underset{\epsilon\rightarrow 0}{\longrightarrow}-\int_{\substack{\abs{y}\geq\kappa\\\cap\abs{y-e_1}\geq\kappa}}\dfrac{y_1^2}{\abs{y}^{d+2s+2}}\,dy=-L_2(\kappa).
\end{split}
\end{align}
Thus both $L_2(\kappa)$ is canceled out and so we have
\begin{align*}
\lim_{\epsilon\rightarrow 0}\mathcal{P}+\mathcal{Q}=0.
\end{align*}

The other terms estimated similarly so that the following term
\begin{align*}
&\bigg|\iint_{\mathcal{I}_{\kappa}}g(y)|x|^{-2-\delta}x_1\phi_{\epsilon}(x+y)\,dx\,dy\\
&\quad-\iint_{\mathcal{I}_{\kappa}}g(y)\abs{x}^{-2}(\phi_{\epsilon}(x+y)-\indicator_{\{\abs{x}\leq 1\}}\phi_{\epsilon}(y))\,dx\,dy\\
&\quad-\pvint_{\substack{\abs{y}\geq\kappa\\\cap\abs{e_1-y}\geq\kappa}}g(y)\left(|e_1-y|^{-2-\delta}(1-y_1)-\abs{e_1-y}^{-2}\right)dy\bigg|
\end{align*}
converges to 0 as $\epsilon\rightarrow 0$. Note that
\begin{align*}
f_2(d,s,\delta)&=\lim_{\kappa\rightarrow 0}\pvint_{\substack{\abs{y}\geq\kappa\\\cap\abs{e_1-y}\geq\kappa}}g(y)\left(|e_1-y|^{-2-\delta}(1-y_1)-\abs{e_1-y}^{-2}\right)dy,
\end{align*}
where $g(y)$ is defined in \eqref{eq:g}.

Therefore, recalling the proof of Lemma \ref{eq:estimate-Deltas-ud} so that
\begin{align*}
&\biggabs{\pvint_{\substack{\abs{y}\leq\kappa\\\cup\abs{e_1-y}\leq\kappa}}\dfrac{-2y_1^3+|y|^2y_1^2+2y_1^2-2|y|^2y_1+|y|^4}{2|y|^{d+2s+2}|e_1-y|^{2}}\left(|e_1-y|^{-\delta}(1-y_1)-1\right)dy}\\
&\quad\quad\lesssim o(\kappa),
\end{align*}
the estimates of the integral on $\mathcal{G}_{\kappa}$ and $\mathcal{H}_{\kappa}$ bounded by $\kappa^{2-2s}$ respectively by \eqref{eq:conv1}--\eqref{eq:conv4}, \eqref{eq:conv11}--\eqref{eq:conv13}, and also noticing that each $L_2(\kappa)$ appearing in \eqref{eq:conv12} and \eqref{eq:conv13} cancels out each other, we have obtained the desired estimate \eqref{eq:Z} since $\kappa\in(0,1/100)$ is arbitrary chosen and $\epsilon$ is already sent to zero. The proof is complete.
\end{proof}

Now a computation gives $\mathcal{F}[g_3](\xi)=\mathcal{G}_3(\xi)$ and $\mathcal{F}[g_4](\xi)=c\mathcal{G}_4(\xi)$ for some $c=c(d,s)$, where 
\begin{align*}
\mathcal{G}_3(\xi):=
\begin{cases}
-i\pi^{\delta+1-\frac{d}{2}}\dfrac{\Gamma\left(\frac{d-\delta}{2}\right)}{\Gamma\left(\frac{\delta+2}{2}\right)}|\xi|^{-d+\delta}\xi_1-2\pi\log|\xi|+2\pi(\log 2-\gamma)\,\,&\text{when }d=2\\
-i\pi^{\delta+1-\frac{d}{2}}\dfrac{\Gamma\left(\frac{d-\delta}{2}\right)}{\Gamma\left(\frac{\delta+2}{2}\right)}|\xi|^{-d+\delta}\xi_1+\pi^{2-\frac{d}{2}}\Gamma\left(\frac{d-2}{2}\right)|\xi|^{-d+2}\,\,&\text{when }d\neq 2
\end{cases}
\end{align*}
and
\begin{align*}
\mathcal{G}_4(\xi)&:=\dfrac{-4i(1-s)}{\pi(d+2s)}|x|^{2s-4}x_1^3+\dfrac{s-1}{\pi^2}|x|^{2s-4}x_1^2-\dfrac{4}{d+2s}|x|^{2s-2}x_1^2\\
&\quad+\dfrac{i(2d+4s+6)}{\pi(d+2s)}|x|^{2s-2}x_1+\dfrac{d+2s-1}{2\pi^2}|x|^{2s-2}-\dfrac{2}{s(d+2s)}|x|^{2s}
\end{align*}
are locally integrable functions in $\setR^d$. 

\begin{lemma}\label{lem:Fourier'}
We have 
\begin{align}\label{eq:Fourier'}
\mathcal{F}[g_3](\xi)=\mathcal{G}_3(\xi)\quad \text{and} \quad \mathcal{F}[g_4](\xi)=\pi^{\frac{d}{2}+2s}\dfrac{\Gamma(-s+1)}{\Gamma\left(\frac{d+2s}{2}\right)}\mathcal{G}_4(\xi)	
\end{align}
in the distributional sense.
\end{lemma}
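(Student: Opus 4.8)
The plan is to compute the two Fourier transforms $\mathcal{F}[g_3]$ and $\mathcal{F}[g_4]$ directly, reducing everything to the single classical identity
\[
\mathcal{F}\big(\abs{\cdot}^{-d-t}\big)(\xi) = \pi^{t+\frac d2} \frac{\Gamma(-\frac t2)}{\Gamma(\frac{d+t}{2})} \abs{\xi}^t
\qquad (t \notin 2\setN \cup \{0\}),
\]
stated in~\eqref{eq:F-of-power}, together with the standard facts that $\mathcal{F}[\partial_i v](\xi) = 2\pi i \xi_i \mathcal{F}[v](\xi)$ and that polynomial factors $h_1^k$ in front of a radial distribution correspond to differentiating the Fourier transform $k$ times in $\xi_1$ (up to the factor $(\frac{i}{2\pi})^k$). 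First I would recall from Definition~\ref{def:g3g4} that $g_3 = j_1 - j_2$ with $j_1 = \abs{h}^{-\delta-2} h_1$ and $j_2 = \abs{h}^{-2}$, and that $g_4 = -j_3 + \tfrac12 j_4 + j_5 - j_6 + \tfrac12 j_7$, where each $j_\ell$ is one of the homogeneous distributions $\abs{h}^{-d-2s+k} h_1^m$ written out there. All these distributions are tempered, so their Fourier transforms exist in $\mathcal{S}'(\setR^d)$, and since the relevant exponents $-d-\delta$, $-d-2s-2$, $-d-2s$, $-d-2s+2$ never lie in $2\setN \cup \{0\}$ for $d \geq 2$, $s \in (0,1)$, $\delta \in [0,\tfrac12]$ (with the single exception $\abs{h}^{-2}$ when $d=2$, handled separately below), the identity~\eqref{eq:F-of-power} applies termwise.

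Next I would carry out the computation term by term. For $j_1$: write $\abs{h}^{-\delta-2} h_1 = \frac{1}{2-\delta}\partial_1 \abs{h}^{2-\delta}$, take the Fourier transform using $\mathcal{F}[\partial_1 v] = 2\pi i \xi_1 \mathcal{F}[v]$ and~\eqref{eq:F-of-power} with $t = \delta - 2$ (equivalently $d+t = d+\delta-2$), and simplify the $\Gamma$-quotient via $(\tfrac{\delta-2}{2})\Gamma(\tfrac{\delta-2}{2}) = \Gamma(\tfrac\delta2)$ to get the stated $-i \pi^{\delta+1-\frac d2}\frac{\Gamma(\frac{d-\delta}{2})}{\Gamma(\frac{\delta+2}{2})}\abs{\xi}^{-d+\delta}\xi_1$ — exactly the leading term of $\mathcal{G}_3$. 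For $j_2 = \abs{h}^{-2}$ when $d \neq 2$ this is~\eqref{eq:F-of-power} with $t = 2-d$, giving $\pi^{2-\frac d2}\Gamma(\frac{d-2}{2})\abs{\xi}^{-d+2}$; when $d = 2$ the exponent hits the excluded set, and here I would invoke \cite[Remark~1.2]{CheWet19} (already cited in the excerpt's Lemma~\ref{lem:Fourier0}) to get the logarithmic answer $-2\pi\log\abs{\xi} + 2\pi(\log 2 - \gamma)$. For the six pieces of $g_4$ I would apply the same scheme: each $j_\ell = \abs{h}^{-d-2s-2+k}h_1^m$ has Fourier transform $(\tfrac{i}{2\pi})^m \partial_1^m$ applied to $\pi^{\,\cdots}\frac{\Gamma(-s+1)}{\Gamma(\frac{d+2s}{2})}\abs{\xi}^{2s-\cdots}$ (the common prefactor $\pi^{\frac d2+2s}\frac{\Gamma(-s+1)}{\Gamma(\frac{d+2s}{2})}$ factors out cleanly because differentiating $\Gamma(-s)$ relations pull out $-s$, e.g. $\Gamma(-s+1) = -s\,\Gamma(-s)$, and the homogeneity degrees shift by $2$ each time $k$ increases by $2$), then differentiate $\abs{\xi}^a$ and $\abs{\xi}^a \xi_1$ and $\abs{\xi}^a\xi_1^2$ and $\abs{\xi}^a\xi_1^3$ explicitly. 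Collecting the six contributions and matching coefficients against the definition of $\mathcal{G}_4$ given just before the lemma finishes $\mathcal{F}[g_4] = \pi^{\frac d2+2s}\frac{\Gamma(-s+1)}{\Gamma(\frac{d+2s}{2})}\mathcal{G}_4$.

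The main obstacle is purely bookkeeping: one has to keep track of several $\Gamma$-function identities of the form $a\,\Gamma(a) = \Gamma(a+1)$ while simplifying the quotients produced by~\eqref{eq:F-of-power}, and one must correctly compute the iterated $\xi_1$-derivatives of $\abs{\xi}^a$ (each $\partial_1$ producing a term with an extra $\xi_1$ and a shifted power, plus, on the second derivative, a lower-order term from differentiating $\abs{\xi}^{a-2}$), so that after summation the six rational-in-$d,s,\delta$ coefficients assemble into precisely the expression $\mathcal{G}_4$. I expect this to be a routine but lengthy verification; the only genuinely non-mechanical point is the $d=2$ case of $j_2$, where the failure of~\eqref{eq:F-of-power} forces the appearance of $\log\abs{\xi}$ and the constant $\gamma$, and there I simply cite \cite[Remark~1.2]{CheWet19} as is done in the parallel Lemma~\ref{lem:Fourier0}. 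No convolution theory is needed for this lemma — it is entirely about Fourier transforms of individual homogeneous distributions — so the proof is self-contained modulo~\eqref{eq:F-of-power} and the one cited remark.
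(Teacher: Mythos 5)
Your overall strategy is exactly the paper's: compute each of the seven Fourier transforms individually, reducing everything to \eqref{eq:F-of-power} together with the rule that a factor $h_1^k$ corresponds (up to $(\tfrac{i}{2\pi})^k$) to $\partial_1^k$ on the Fourier side, and invoke \cite[Remark~1.2]{CheWet19} for the logarithmic case $d=2$ in $j_2$. There are however two concrete slips worth pointing out.

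First, your proposed decomposition of $j_1$ is wrong: $\tfrac{1}{2-\delta}\partial_1 \abs{h}^{2-\delta} = \abs{h}^{-\delta} h_1$, which is $g_1$ from Definition~\ref{def:g1g2}, not $j_1 = \abs{h}^{-\delta-2} h_1$. If you carried out the Fourier transform along the route you describe, you would reproduce the answer in Lemma~\ref{lem:Fourier00} for $g_1$ — namely $-i\pi^{-\frac d2+\delta-1}\frac{\Gamma(\frac{d-\delta+2}{2})}{\Gamma(\frac{\delta}{2})}\abs{\xi}^{-d+\delta-2}\xi_1$ — not the leading term $-i\pi^{\delta+1-\frac d2}\frac{\Gamma(\frac{d-\delta}{2})}{\Gamma(\frac{\delta+2}{2})}\abs{\xi}^{-d+\delta}\xi_1$ of $\mathcal{G}_3$. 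The cheap fix is either to write $j_1 = \tfrac{1}{-\delta}\partial_1\abs{h}^{-\delta}$ (the $\Gamma$-identity you then need is $\tfrac\delta2\Gamma(\tfrac\delta2)=\Gamma(\tfrac{\delta+2}{2})$, not $\tfrac{\delta-2}{2}\Gamma(\tfrac{\delta-2}{2})=\Gamma(\tfrac\delta2)$), or to do what the paper does and transfer the $h_1$ factor to the Fourier side as $\mathcal{F}[h_1 v] = \tfrac{i}{2\pi}\partial_1\mathcal{F}[v]$ applied to $v = \abs{h}^{-\delta-2}$.

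Second, either fix for $j_1$ degenerates at $\delta=0$: the factor $1/(-\delta)$ blows up, and on the other route the $\Gamma$-quotient produced by \eqref{eq:F-of-power} applied to $\abs{h}^{-\delta-2}$ develops a $\Gamma(\tfrac{d-\delta-2}{2})$, which has a pole at $d=2,\ \delta=0$. The paper therefore treats $\delta=0$ for $j_1$ as a separate case, via $\abs{h}^{-2}h_1 = \partial_1(\log\abs{h})$ and the same reference \cite[Remark~1.2]{CheWet19} (the distributional $\delta_0$-term drops out after multiplying by $\xi_1$). Your exception list only flags $j_2$ at $d=2$; you should add the $\delta=0$ endpoint of $j_1$. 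Apart from these two fixes, your plan matches the paper's proof step for step.
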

\begin{proof}
The first term of the right-hand side of the above can be calculated as follows: when $\delta>0$,
\begin{align*}
\mathcal{F}\left[|h|^{-\delta-2}h_1\right](\xi)&=\left(\dfrac{i}{2\pi}\right)\partial_1\mathcal{F}\left[|h|^{-\delta-2}\right](\xi)\\
&=\dfrac{i}{2\pi}\pi^{\delta+2-\frac{d}{2}}\dfrac{\Gamma\left(\frac{d-\delta-2}{2}\right)}{\Gamma\left(\frac{\delta+2}{2}\right)}\partial_1(|\xi|^{-d+\delta+2})\\
&=-i\pi^{\delta+1-\frac{d}{2}}\dfrac{\Gamma\left(\frac{d-\delta}{2}\right)}{\Gamma\left(\frac{\delta+2}{2}\right)}|\xi|^{-d+\delta}\xi_1.
\end{align*}
When $\delta=0$, we still obtain the above equality using the following relation:
\begin{align*}
\mathcal{F}[\abs{h}^{-2}h_1](\xi)&=\mathcal{F}[\partial_1(\log\abs{h})]=2\pi i\xi_1\mathcal{F}[\log\abs{h}]\\
&=2\pi i\xi_1\left(-\frac{\Gamma\left(\frac{d}{2}\right)}{2\pi^{\frac{d}{2}}}\frac{1}{\abs{\xi}^d}+\left(\log2+\frac{\psi\left(\frac{d}{2}\right)-\gamma}{2}\right)\delta_0\right)\\
&=-i\pi^{1-\frac{d}{2}}\Gamma\left(\frac{d}{2}\right)\abs{\xi}^{-d}\xi_1,
\end{align*}
where $\psi(\cdot)$ is the digamma function, $\gamma$ is the Euler-Mascheroni constant, and $\delta_0$ is the Dirac delta distribution concentrated at 0, and for the third equality we have used \cite[Remark 1.2]{CheWet19}, and for the last line we have used $\xi_1\delta\equiv 0$.

We also see that
\begin{align*}
\mathcal{F}\left[|h|^{-2}\right]=
\begin{cases}
\pi^{2-\frac{d}{2}}\Gamma\left(\frac{d-2}{2}\right)|\xi|^{-d+2}&\quad d\neq 2\\
-2\pi\log|\xi|+2\pi(\log 2-\gamma)&\quad d=2.
\end{cases}
\end{align*}
Here, in case of $d=2$, from \cite[Remark 1.2]{CheWet19}, we have
\begin{align*}
\mathcal{F}\left[\frac{1}{|\xi|^d}\right](x)=\dfrac{2\pi^{\frac{d}{2}}}{\Gamma\left(\frac{d}{2}\right)}\left(-\log|x|+\log 2+\frac{\psi\left(\frac{d}{2}\right)-\gamma}{2}\right)
\end{align*}
in the distributional sense, where $\psi(\cdot)$ is the digamma function and $\gamma=-\Gamma'(1)$ is the Euler-Mascheroni constant.

Subsequent computations yield
\begin{align*}
\mathcal{F}\left[\dfrac{-2h_1^3}{|h|^{d+2s+2}}\right]&=-2\left(\dfrac{i}{2\pi}\right)^3\partial_1^3\mathcal{F}\left[|h|^{-d-2s-2}\right]\\
&=\dfrac{2i}{(2\pi)^3}\pi^{\frac{d}{2}+2s+2}\dfrac{\Gamma\left(-s-1\right)}{\Gamma\left(\frac{d+2s+2}{2}\right)}\partial_1^3(|\xi|^{2s+2})\\
&=i\pi^{\frac{d}{2}+2s-1}\dfrac{\Gamma\left(-s+1\right)}{\Gamma\left(\frac{d+2s}{2}\right)}\dfrac{2}{d+2s}\left(2(s-1)|\xi|^{2s-4}\xi_1^3+3|\xi|^{2s-2}\xi_1\right).
\end{align*}
Similarly, we have
\begin{align*}
\mathcal{F}\left[\dfrac{h_1^2}{|h|^{d+2s}}\right]=\dfrac{1}{2}\pi^{\frac{d}{2}+2s-2}\dfrac{\Gamma\left(-s+1\right)}{\Gamma\left(\frac{d+2s}{2}\right)}\left(2(s-1)|\xi|^{2s-4}\xi_1^2+|\xi|^{2s-2}\right),
\end{align*}
\begin{align*}
\mathcal{F}\left[\dfrac{2h_1^2}{|h|^{d+2s+2}}\right]=-\pi^{\frac{d}{2}+2s}\dfrac{\Gamma\left(-s+1\right)}{\Gamma\left(\frac{d+2s}{2}\right)}\frac{2}{d+2s}\left(2|\xi|^{2s-2}\xi_1^2+\frac{1}{s}|\xi|^{2s}\right),
\end{align*}
\begin{align*}
\mathcal{F}\left[\dfrac{-2h_1}{|h|^{d+2s}}\right]=2i\pi^{\frac{d}{2}+2s-1}\dfrac{\Gamma\left(-s+1\right)}{\Gamma\left(\frac{d+2s}{2}\right)}|\xi|^{2s-2}\xi_1,
\end{align*}
and
\begin{align*}
\mathcal{F}\left[\dfrac{1}{|h|^{d+2s-2}}\right]=\pi^{\frac{d}{2}+2s-2}\dfrac{\Gamma(-s+1)}{\Gamma\left(\frac{d+2s}{2}\right)}\dfrac{d+2s-2}{2}|\xi|^{2s-2}.
\end{align*}
Merging the terms calculated above, we obtain \eqref{eq:Fourier}.
\end{proof}

Now with $\oldphi_{\epsilon}(x):=\frac{1}{\epsilon^d}\overline{\phi}\left(\frac{x}{\epsilon}\right)$ with $\epsilon\in(0,1)$ where $\overline{\phi}$ is a standard mollifier in $\setR^d$, we obtain the following lemma.

\begin{lemma}\label{lem:Fourier}
The multiplication of distributions $\mathcal{G}_3\cdot \mathcal{G}_4$ is well-defined in $x\in\setR^d$ in the sense that for any $\phi\in \mathcal{S}(\setR^d)$,
\begin{align*}
\skp{\mathcal{G}_3\cdot \mathcal{G}_4}{\phi}=\lim_{\epsilon\rightarrow 0}\bigskp{\skp{\mathcal{G}_3(y)}{\oldphi_{\epsilon}(\cdot-y)}_y\mathcal{G}_4(\cdot)}{\phi(\cdot)}.
\end{align*}
Moreover, we have
\begin{align}\label{eq:f5f6}
\lim_{\epsilon\rightarrow 0}\bigskp{\skp{\mathcal{G}_3(y)}{\oldphi_{\epsilon}(\cdot-y)}_y\mathcal{G}_4(\cdot)}{\phi(\cdot)}=\skp{\mathcal{G}_3\mathcal{G}_4}{\phi},
\end{align}
where $\mathcal{G}_3\mathcal{G}_4$ means the usual multiplication.
\end{lemma}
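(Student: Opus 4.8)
The plan is to prove Lemma~\ref{lem:Fourier} by mimicking verbatim the argument already carried out for Lemma~\ref{lem:Fourier000} (the analogous statement for $\mathcal{G}_1\cdot\mathcal{G}_2$), the only difference being the explicit homogeneities and the degree of singularity of the two factors. First I would record what the two factors look like near the origin: by Lemma~\ref{lem:Fourier'}, $\mathcal{G}_3$ behaves like $\abs{y}^{-d+\delta}$ (plus a $\log\abs{\xi}$ or $\abs{\xi}^{-d+2}$ piece that is only mildly singular) and $\mathcal{G}_4$ behaves like $\abs{x}^{2s-4}\abs{x}^3=\abs{x}^{2s-1}$ in its worst term, so the candidate product $\mathcal{G}_3\mathcal{G}_4$ is a locally integrable function in $\RRd$ because the sum of exponents $(-d+\delta)+(2s-1)=-d+\delta+2s-1>-d$ thanks to $\delta\geq 0$ and $s\in(0,1)$ only fails to be integrable by the $\xi_1$-parity; exactly as in Lemma~\ref{lem:Fourier000} the odd factors $x_1$ (respectively $\xi_1$) save integrability. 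I would isolate the most singular monomial and note that all remaining monomials of $\mathcal{G}_3$ and $\mathcal{G}_4$ are strictly less singular, so it suffices to treat the worst case and remark that the others are handled identically but with more room to spare.

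Next I would set up the mollified bilinear expression. Fix $\phi\in\mathcal{S}(\RRd)$, pick $\kappa\geq 2\epsilon>0$, and split the test against $\phi$ into the region $\{\abs{x}\leq\kappa\}$ and $\{\abs{x}\geq\kappa\}$. On the outer region both $\mathcal{G}_3$ and $\mathcal{G}_4$ are smooth, so the Lebesgue differentiation theorem (as in the proof of Lemma~\ref{lem:A20} and Lemma~\ref{lem:Fourier000}) gives that $\skp{\mathcal{G}_3(y)}{\oldphi_\epsilon(\cdot-y)}_y\to\mathcal{G}_3$ uniformly on $\{\abs{x}\geq\kappa\}$ as $\epsilon\to0$, hence the outer contribution converges to $\int_{\abs{x}\geq\kappa}\mathcal{G}_3\mathcal{G}_4\,\phi\,dx$. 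On the inner region $\{\abs{x}\leq\kappa\}$ I would use the oddness of the worst monomial: write the integrand using $\tfrac12(\phi(x)-\phi(-x))$ so the even singular part integrates to zero, use $\abs{\phi(x)-\phi(-x)}\lesssim\abs{x}$, and split $\{\abs{x}\leq\kappa\}$ further into $\{\abs{x}\leq 2\epsilon\}$ and $\{2\epsilon\leq\abs{x}\leq\kappa\}$. In the first piece one subtracts $\oldphi_\epsilon(x)$ inside the $y$-integral (legitimate because $\int_{\abs{y}\le\frac12\abs{x}}\mathcal{G}_3(y)\,dy$-type integrals of odd homogeneous kernels vanish) and uses $\abs{\oldphi_\epsilon(x-y)-\oldphi_\epsilon(x)}\lesssim\epsilon^{-1-d}\abs{y\cdot\nabla\overline\phi}$; in the second piece one just passes to the limit by the Lebesgue differentiation theorem. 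Every term is bounded by $\kappa^{\theta}$ for some $\theta=\theta(d,s,\delta)>0$ (here $\theta$ is essentially $\min\{2s+\delta,\,2-2s,\,\ldots\}$, depending on which monomial one is treating, but always strictly positive), with implicit constants independent of $\epsilon$. Letting $\epsilon\to0$ and then $\kappa\to0$ yields \eqref{eq:f5f6}, and the finiteness of $\skp{\mathcal{G}_3\mathcal{G}_4}{\phi}$ established along the way gives that $\mathcal{G}_3\cdot\mathcal{G}_4$ is well-defined and equals the ordinary product.

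The main obstacle I anticipate is purely bookkeeping: unlike in Lemma~\ref{lem:Fourier000} there are now six monomials in $\mathcal{G}_4$ and (when $d=2$) an additional logarithmic term in $\mathcal{G}_3$, so one must check that each of the twelve-or-so products is either integrable outright or rescued by an odd factor, and that the exponent $\theta$ coming out of the inner estimate is positive in each case. The $d=2$ log-term requires a separate but routine estimate using $\abs{\log\abs{x}}\lesssim\abs{x}^{-s}$ near the origin and $\abs{\log\abs{x}}\lesssim\abs{x}^{s}$ at infinity, exactly as in the treatment of $j_2$ in Lemma~\ref{lem:welldef3}. No genuinely new idea is needed beyond what is already deployed for $\mathcal{G}_1\cdot\mathcal{G}_2$; the proof is a longer instance of the same scheme, and I would in fact write it as ``\emph{The proof is identical to that of Lemma~\ref{lem:Fourier000}, replacing the homogeneities $-d+\delta-2$, $2s$ of $\mathcal{G}_1$, $\mathcal{G}_2$ by those of the individual monomials of $\mathcal{G}_3$, $\mathcal{G}_4$; in each case the relevant exponent sum exceeds $-d$ after accounting for the odd factor $x_1$, and the $d=2$ logarithmic term is absorbed using $\abs{\log\abs{x}}\lesssim\abs{x}^{\pm s}$,''} spelling out only the worst monomial in detail.
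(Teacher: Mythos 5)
Your proposal correctly identifies the overall scheme that the paper uses: split into $\{\abs{x}\leq\kappa\}$ and $\{\abs{x}\geq\kappa\}$ with $\kappa\geq 2\epsilon$, handle the outer region via Lebesgue differentiation, handle the inner region via oddness of the averaged kernels together with $\abs{\phi(x)-\phi(-x)}\lesssim\abs{x}$, treat each monomial of $\mathcal{G}_4$ separately, and deal with the $d=2$ logarithmic term of $\mathcal{G}_3$ by a separate but mild estimate. This is essentially the paper's proof.

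There is one small technical difference worth noting. You propose to further subdivide the inner region into $\{\abs{x}\leq 2\epsilon\}$ and $\{2\epsilon\leq\abs{x}\leq\kappa\}$ and to use the $\oldphi_\epsilon(x-y)-\oldphi_\epsilon(x)$ subtraction trick on the first piece (imported from the proof of Lemma \ref{lem:Fourier000}). The paper's proof of Lemma \ref{lem:Fourier} does not use this device: it handles all of $\{\abs{x}\leq\kappa\}$ directly, observing for the more singular even monomials of $\mathcal{G}_4$ that $x\mapsto\dashint_{B_\epsilon(0)}\abs{x-y}^{-d+\delta}(x-y)\overline{\phi}(y/\epsilon)\,dy$ is odd, and using a cruder absolute-value estimate for the less singular ones. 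Both routes work; the paper's is a little shorter because the product $\mathcal{G}_3\mathcal{G}_4$ is less singular than $\mathcal{G}_1\mathcal{G}_2$.

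Also a minor bookkeeping slip: the worst singularity of $\mathcal{G}_4$ near the origin is $\abs{x}^{2s-2}$ (from the even terms $\abs{x}^{2s-4}\xi_1^2$ and $\abs{x}^{2s-2}$), not $\abs{x}^{2s-1}$ (the odd terms). Consequently the exponent sum of the worst pairing is $-d+\delta+2s-2$ (up to the odd factor $\xi_1$), not $-d+\delta+2s-1$, and the resulting pointwise product is not $L^1_{\loc}$ by homogeneity alone; the oddness of that piece is needed. You make the right point about the odd factor rescuing the estimate, so the argument is unaffected, but the exponent accounting should be corrected.
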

\begin{proof}
Let us choose $\kappa\geq 2\epsilon$. We first prove for the case of $d=2$. Consider
\begin{align*}
&\biggabs{\hspace{-2mm}\int_{\abs{x}\leq\kappa}\hspace{-2mm}\left(\int_{\setR^d}\left[(\abs{y}^{-d+\delta}y_1+\log\abs{y})\!-\!(\abs{x}^{-d+\delta}x_1+\log\abs{x})\right]\oldphi_{\epsilon}(x-y)\,dy\right)\mathcal{G}_4(x)\phi(x)\,dx}\\
&\quad=\biggabs{\int_{\abs{x}\leq\kappa}(\abs{x}^{-d+\delta}x_1+\log\abs{x})\mathcal{G}_4(x)\phi(x)\,dx}\\
&\quad\quad+\biggabs{\overline{c}\dashint_{B_{\epsilon}(0)}\left(\int_{\abs{x}\leq\kappa}\mathcal{G}_4(x)\phi(x)(\abs{x-y}^{-d+\delta}(x_1-y_1)+\log\abs{x-y})\,dx\right)\overline{\phi}\left(\frac{y}{\epsilon}\right)\,dy}\\
&\quad=:\abs{I_1}+\abs{\overline{c}I_2}.
\end{align*}
For $I_1$, we estimate
\begin{align*}
\biggabs{\int_{\abs{x}\leq\kappa}\abs{x}^{-d+\delta}x_1\left(\abs{x}^{2s-4}x_1^3+\abs{x}^{2s-2}x_1+\abs{x}^{2s-2}x_1^2+\abs{x}^{2s}\right)\phi(x)\,dx}\lesssim\kappa^{2s+\delta},
\end{align*}	
and
\begin{align*}
\biggabs{\int_{\abs{x}\leq\kappa}\log\abs{x}\mathcal{G}_4(x)\phi(x)\,dx}\lesssim \int_{\abs{x}\leq\kappa}\abs{x}^{2s-2}\abs{\log\abs{x}}\abs{\phi(x)}\,dx\lesssim \kappa^s.
\end{align*}
Also, we see that 
\begin{align*}
&\biggabs{\int_{\abs{x}\leq\kappa}\abs{x}^{2s-4}(x_1^2+\abs{x}^2)\abs{x}^{-d+\delta}x\phi(x)\,dx}\\
&\quad=\biggabs{\int_{\abs{x}\leq\kappa}\abs{x}^{2s-4}(x_1^2+\abs{x}^2)\abs{x}^{-d+\delta}x\frac{\phi(x)-\phi(-x)}{2}\,dx}\\
&\quad\lesssim\int_{\abs{x}\lesssim\kappa}\abs{x}^{-d+\delta+2s}\,dx\lesssim \kappa^{\delta+2s}.
\end{align*}	
Here, for the last estimate we have used the fact that the map 
\begin{align*}
x\mapsto\abs{x}^{2s-4}(x_1^2+\abs{x}^2)\abs{x}^{-d+\delta}\frac{\phi(x)+\phi(-x)}{2}x
\end{align*}
is odd, and that since $\phi\in\mathcal{S}(\setR^d)$, $\abs{\phi(x)-\phi(-x)}\lesssim\abs{x}$ holds. Then we have
\begin{align*}
\biggabs{\int_{\abs{x}\leq\kappa}\abs{x}^{2s-4}(x_1^2+\abs{x}^2)\abs{x}^{-d+\delta}x_1\phi(x)\,dx}\lesssim \kappa^{\delta+2s}.
\end{align*}	
Summing up the above estimates, we obtain
\begin{align*}
\abs{I_1}\lesssim\kappa^{s}.
\end{align*}

Also for $I_2$, since $2s-2<0$ we estimate
\begin{align*}
&\biggabs{\dashint_{B_{\epsilon}(0)}\left(\int_{\abs{x}\leq\kappa}\mathcal{G}_4(x)\phi(x)\log\abs{x-y}\,dx\right)\overline{\phi}\left(\frac{y}{\epsilon}\right)dy}\\
&\quad\lesssim\dashint_{B_{\epsilon}(0)}\left(\int_{\abs{x}\leq\kappa}\abs{x}^{2s-2}\abs{\log\abs{x-y}}\,dx\right)\overline{\phi}\left(\frac{y}{\epsilon}\right)\,dy\\
&\quad\lesssim\dashint_{B_{\epsilon}(0)}\left(\int_{\abs{x}\leq\kappa}\abs{x}^{2s-2}\abs{\log\abs{x}}\,dx\right)\overline{\phi}\left(\frac{y}{\epsilon}\right)\,dy\lesssim\kappa^s.
\end{align*}
For the terms $\abs{x}^{2s-4}x_1^3+\abs{x}^{2s-2}x_1^2+\abs{x}^{2s-2}x_1+\abs{x}^{2s}$ in $\mathcal{G}_4(x)$, we see that
\begin{align*}
\biggabs{\dashint_{B_{\epsilon}(0)}\left(\int_{\abs{x}\leq\kappa}\abs{x}^{2s-1}\abs{\phi(x)}\abs{x-y}^{-d+\delta+1}\,dx\right)\overline{\phi}\left(\frac{y}{\epsilon}\right)dy}\lesssim\kappa^{2s+\delta}
\end{align*}
holds, and for the terms $\abs{x}^{2s-4}x_1^2$ and $\abs{x}^{2s-2}$ in $\mathcal{G}_4(x)$, we observe that
\begin{align*}
x\mapsto \dashint_{B_{\epsilon}(0)}\abs{x-y}^{-d+\delta}(x-y)\overline{\phi}\left(\frac{y}{\epsilon}\right)\,dy
\end{align*}
is odd, and so the map 
\begin{align*}
x\mapsto\abs{x}^{2s-4}x_1^2\frac{\phi(x)+\phi(-x)}{2}\left(\dashint_{B_{\epsilon}(0)}\abs{x-y}^{-d+\delta}(x-y)\overline{\phi}\left(\frac{y}{\epsilon}\right)\,dy\right)
\end{align*}
is odd. Moreover, since $\phi\in\mathcal{S}(\setR^d)$, $\abs{\phi(x)-\phi(-x)}\lesssim\abs{x}$ holds. Using this, we see that
\begin{align*}
&\biggabs{\int_{\abs{x}\leq\kappa}\abs{x}^{2s-4}x_1^2\phi(x)\left(\dashint_{B_{\epsilon}(0)}\abs{x-y}^{-d+\delta}(x-y)\overline{\phi}\left(\frac{y}{\epsilon}\right)\,dy\right)\,dx}\\
&\quad\lesssim\biggabs{\int_{\abs{x}\leq\kappa}\abs{x}^{2s-4}x_1^2\frac{\phi(x)-\phi(-x)}{2}\left(\dashint_{B_{\epsilon}(0)}\abs{x-y}^{-d+\delta}(x-y)\overline{\phi}\left(\frac{y}{\epsilon}\right)\,dy\right)\,dx}\\
&\quad\lesssim\biggabs{\dashint_{B_{\epsilon}(0)}\left(\int_{\abs{x}\leq\kappa}\abs{x}^{2s-1}\abs{x-y}^{-d+\delta+1}\overline{\phi}\left(\frac{y}{\epsilon}\right)\,dx\right)\,dy}\lesssim \kappa^{\delta+2s}.
\end{align*}
Then we obtain 
\begin{align*}
\biggabs{\int_{\abs{x}\leq\kappa}\abs{x}^{2s-4}x_1^2\phi(x)\left(\dashint_{B_{\epsilon}(0)}\abs{x-y}^{-d+\delta}(x_1-y_1)\overline{\phi}\left(\frac{y}{\epsilon}\right)\,dy\right)\,dx}\lesssim\kappa^{\delta+2s}.
\end{align*}
Similarly,
\begin{align*}
\biggabs{\int_{\abs{x}\leq\kappa}\abs{x}^{2s-2}\phi(x)\left(\dashint_{B_{\epsilon}(0)}\abs{x-y}^{-d+\delta}(x_1-y_1)\overline{\phi}\left(\frac{y}{\epsilon}\right)\,dy\right)\,dx}\lesssim\kappa^{\delta+2s}
\end{align*}
holds. Then merging the estimates, we have
\begin{align*}
\biggabs{\int_{\abs{x}\leq\kappa}\left(\int_{\setR^d}(\mathcal{G}_3(y)-\mathcal{G}_3(x))\oldphi_{\epsilon}(x-y)\,dy\right)\mathcal{G}_4(x)\phi(x)\,dx}\lesssim\kappa^s.
\end{align*}
Note that the implicit constants in the above estimates do not depend on $\epsilon$.

Now similar to the proof of Lemma \ref{lem:A2}, since on the set $\abs{x}\geq\kappa$ with $\kappa\geq 2\epsilon$, both $\mathcal{G}_3(x)$ and $\mathcal{G}_4(x)$ ar no longer singular, by Lebesgue differentiation theorem we have
\begin{align*}
&\lim_{\epsilon\rightarrow 0}\biggabs{\int_{\abs{x}\geq\kappa}\left(\int_{\setR^d}(\mathcal{G}_3(y)-\mathcal{G}_3(x))\oldphi_{\epsilon}(x-y)\,dy\right)\mathcal{G}_4(x)\phi(x)\,dx}=0.
\end{align*}
Hence, we have
\begin{align*}
&\lim_{\epsilon\rightarrow 0}\bigabs{\bigskp{\skp{\mathcal{G}_3(y)}{\oldphi_{\epsilon}(\cdot-y)}_y\mathcal{G}_4(\cdot)}{\phi(\cdot)}-\skp{\mathcal{G}_3\mathcal{G}_4}{\phi}}\\
&\leq\lim_{\epsilon\rightarrow 0}\biggabs{\int_{\abs{x}\leq\kappa}\left(\int_{\setR^d}(\mathcal{G}_3(y)-\mathcal{G}_3(x))\oldphi_{\epsilon}(x-y)\,dy\right)\mathcal{G}_4(x)\phi(x)\,dx}\\
&\quad+\lim_{\epsilon\rightarrow 0}\biggabs{\int_{\abs{x}\geq\kappa}\left(\int_{\setR^d}(\mathcal{G}_3(y)-\mathcal{G}_3(x))\oldphi_{\epsilon}(x-y)\,dy\right)\mathcal{G}_4(x)\phi(x)\,dx}\lesssim \kappa^s.
\end{align*}
Since $\kappa$ was arbitrary, we obtain \eqref{eq:f5f6} in case of $d=2$. In case of $d=3$ is similar so we skip the proof.
\end{proof}

With the help of Lemma \ref{lem:Fourier0} and \ref{lem:Fourier}, we have the following theorem.
\begin{theorem}\label{thm:Fourierconv}
We have 
\begin{align*}
\mathcal{F}[g_3\divideontimes g_4]=\mathcal{F}[g_3]\mathcal{F}[g_4]
\end{align*}
in the distributional sense, where $\mathcal{F}[g_3]\mathcal{F}[g_4]$ means the usual multiplication between $\mathcal{F}[g_3]$ and $\mathcal{F}[g_4]$.
\end{theorem}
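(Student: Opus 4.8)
\textbf{Proof proposal for Theorem \ref{thm:Fourierconv}.}

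The plan is to mirror exactly the scheme already carried out for $f_1$ in Theorem \ref{thm:Fourierconv0}, now applied to the distributions $g_3,g_4$ from Definition \ref{def:g3g4}. The pieces are all in place: Lemma \ref{lem:welldef3} together with Lemma \ref{lem:conv3} shows that the convolution $g_3\divideontimes g_4$ in the sense of \cite[Section 6.5]{Vla02} is well-defined and that it can be evaluated by the iterated pairing $\bigskp{g_3(x)}{\skp{g_4(y)}{\phi(x+y)}_y}_x$; Lemma \ref{lem:Fourier'} identifies $\mathcal{F}[g_3]=\mathcal{G}_3$ and $\mathcal{F}[g_4]=c\,\mathcal{G}_4$ as locally integrable functions; and Lemma \ref{lem:Fourier} establishes that the distributional product $\mathcal{G}_3\cdot\mathcal{G}_4$ (defined via mollification) coincides with the pointwise product $\mathcal{G}_3\mathcal{G}_4$ of these locally integrable functions.

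The argument then proceeds through the following chain of equalities, valid for every $\phi\in\mathcal{S}(\setR^d)$. First, since $g_3,g_4\in\mathcal{S}'(\setR^d)$ and $g_3\divideontimes g_4$ is well-defined (Lemma \ref{lem:welldef3}, Lemma \ref{lem:conv3}), the convolution theorem of \cite[Page 96]{Vla02} gives $\skp{\mathcal{F}[g_3\divideontimes g_4]}{\phi}=\skp{\mathcal{F}[g_3]\cdot\mathcal{F}[g_4]}{\phi}$, where $\cdot$ is the distributional product. Next I would introduce the mollification: with $\oldphi_\epsilon(x)=\epsilon^{-d}\overline{\phi}(x/\epsilon)$ one has $\mathcal{F}[g_3]\divideontimes\oldphi_\epsilon\in\theta_M$ (the space defined in \eqref{eq:thetaM}, since $\mathcal{G}_3$ has at most polynomial growth), so by \cite[Page 79 and Page 84]{Vla02} the product $(\mathcal{F}[g_3]\divideontimes\oldphi_\epsilon)\mathcal{F}[g_4]$ lies in $\mathcal{S}'(\setR^d)$ and
\begin{align*}
\skp{\mathcal{F}[g_3]\cdot\mathcal{F}[g_4]}{\phi}
=\lim_{\epsilon\to 0}\bigskp{\skp{\mathcal{F}[g_3](y)}{\oldphi_\epsilon(\cdot-y)}_y\,\mathcal{F}[g_4](\cdot)}{\phi(\cdot)}.
\end{align*}
Finally, Lemma \ref{lem:Fourier} (with Lemma \ref{lem:Fourier'}) identifies this limit with $\skp{\mathcal{G}_3\mathcal{G}_4}{\phi}=\skp{\mathcal{F}[g_3]\mathcal{F}[g_4]}{\phi}$, the ordinary product of locally integrable functions. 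Concatenating these steps yields $\mathcal{F}[g_3\divideontimes g_4]=\mathcal{F}[g_3]\mathcal{F}[g_4]$ as distributions, which is the claim.

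Since every ingredient is already proved as a separate lemma, the proof itself is essentially bookkeeping: the only genuine subtlety is making sure the hypotheses of each cited result from \cite{Vla02} are met — in particular that $\mathcal{F}[g_3]\divideontimes\oldphi_\epsilon$ really belongs to $\theta_M$ (needed so that multiplication by the tempered distribution $\mathcal{F}[g_4]$ is legitimate) and that the limit defining the distributional product agrees with the mollification limit appearing in Lemma \ref{lem:Fourier}. The hardest conceptual work has in fact been displaced into Lemma \ref{lem:Fourier}, whose proof handles the case $d=2$ (with its logarithmic term in $\mathcal{G}_3$) and $d\geq 3$ separately; here I would simply invoke it. So the write-up will be short, almost verbatim the proof of Theorem \ref{thm:Fourierconv0}, with $(g_1,g_2)$ replaced by $(g_3,g_4)$ and the references updated to Lemma \ref{lem:welldef3}, Lemma \ref{lem:conv3}, Lemma \ref{lem:Fourier'}, and Lemma \ref{lem:Fourier}.
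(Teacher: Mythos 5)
Your proposal matches the paper's proof essentially verbatim: both invoke Lemma \ref{lem:welldef3} and Lemma \ref{lem:conv3} to justify the Vladimirov convolution theorem (\cite[Page 96]{Vla02}), then pass through the mollified product $(\mathcal{F}[g_3]\divideontimes\oldphi_\epsilon)\mathcal{F}[g_4]$ using the $\theta_M$ argument from \cite[Pages 79, 84]{Vla02}, and finally apply Lemma \ref{lem:Fourier} (with the Fourier formulas from Lemma \ref{lem:Fourier'}, which is the same content as the paper's cited Lemma \ref{lem:Fourier0}) to identify the distributional product with the pointwise one. No difference in substance or route.
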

\begin{proof}
For any $\phi\in\mathcal{S}(\setR^d)$, note that
\begin{align*}
\skp{\mathcal{F}[g_3\divideontimes g_4]}{\phi}&=\skp{\mathcal{F}[g_3]\cdot\mathcal{F}[g_4]}{\phi}\\
&=\lim_{\epsilon\rightarrow\infty 0}\skp{(\mathcal{F}[g_3]\divideontimes\oldphi_{\epsilon})\mathcal{F}[g_4]}{\phi}\\
&=\lim_{\epsilon\rightarrow\infty 0}\bigskp{\skp{\mathcal{F}[g_3](y)}{\oldphi_{\epsilon}(x-y)}_y\mathcal{F}[g_4](x)}{\phi(x)}_x\\
&=\skp{\mathcal{F}[g_3]\mathcal{F}[g_4]}{\phi},
\end{align*}
where for the first equality since $g_3,g_4\in\mathcal{S}'(\setR^d)$, we have used \cite[Page 96]{Vla02} together with Lemma \ref{lem:welldef3} and Lemma \ref{lem:conv3}, for the second equality since $\mathcal{F}[g_3]\divideontimes\oldphi_{\epsilon}\in\theta_M$ as in \cite[Page 84]{Vla02} ($\theta_M$ is defined in \eqref{eq:thetaM}) and $\mathcal{F}[g_4]$ in $\mathcal{S}'(\setR^d)$, $(\mathcal{F}[g_3]\divideontimes\oldphi_{\epsilon})\mathcal{F}[g_4]\in\mathcal{S}'(\setR^d)$ holds as in \cite[Page 79]{Vla02}, for the third equality again since $\mathcal{F}[g_3]\divideontimes\oldphi_{\epsilon}\in\theta_M$, \cite[Page 84]{Vla02} is used, and for the last equality Lemma \ref{lem:Fourier} together with Lemma \ref{lem:Fourier0} is used.
\end{proof}

Then it is worth mentioning the following corollary.
\begin{corollary}\label{cor:Fourierconv2}
	We have $(\mathcal{F}\circ\mathcal{F})[g_3\divideontimes g_4]=\mathcal{F}\left[\mathcal{F}[g_3]\mathcal{F}[g_4]\right]$.
\end{corollary}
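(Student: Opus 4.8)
The plan is to mirror the argument used for Corollary \ref{cor:Fourierconv20} and reduce the claim to Theorem \ref{thm:Fourierconv} via the duality definition of the Fourier transform on $\mathcal{S}'(\setR^d)$. First I would fix an arbitrary $\phi\in\mathcal{S}(\setR^d)$ and recall that $\mathcal{F}$ maps $\mathcal{S}(\setR^d)$ bijectively onto itself, so $\mathcal{F}[\phi]\in\mathcal{S}(\setR^d)$ is again an admissible test function. Then, by the definition of the Fourier transform of a tempered distribution,
\begin{align*}
\skp{\mathcal{F}[\mathcal{F}[g_3\divideontimes g_4]]}{\phi} = \skp{\mathcal{F}[g_3\divideontimes g_4]}{\mathcal{F}[\phi]}.
\end{align*}
Here one uses that $g_3\divideontimes g_4$ is a well-defined tempered distribution — this is exactly the content of Lemma \ref{lem:welldef3} together with Lemma \ref{lem:conv3} — so that $\mathcal{F}[g_3\divideontimes g_4]$, and hence its further Fourier transform, are meaningful objects in $\mathcal{S}'(\setR^d)$.

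The second step is to invoke Theorem \ref{thm:Fourierconv}, which identifies $\mathcal{F}[g_3\divideontimes g_4]$ with the ordinary product $\mathcal{F}[g_3]\mathcal{F}[g_4]$ of the two locally integrable functions computed in Lemma \ref{lem:Fourier'}. Substituting this identity into the pairing above gives
\begin{align*}
\skp{\mathcal{F}[g_3\divideontimes g_4]}{\mathcal{F}[\phi]} = \skp{\mathcal{F}[g_3]\mathcal{F}[g_4]}{\mathcal{F}[\phi]} = \skp{\mathcal{F}[\mathcal{F}[g_3]\mathcal{F}[g_4]]}{\phi},
\end{align*}
where the last equality is again the duality definition of $\mathcal{F}$ on $\mathcal{S}'(\setR^d)$, applied to the tempered distribution $\mathcal{F}[g_3]\mathcal{F}[g_4]$. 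Since $\phi\in\mathcal{S}(\setR^d)$ was arbitrary, this yields $(\mathcal{F}\circ\mathcal{F})[g_3\divideontimes g_4]=\mathcal{F}[\mathcal{F}[g_3]\mathcal{F}[g_4]]$.

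I do not expect a genuine obstacle here; the only point requiring a word of care is that $\mathcal{F}[g_3]\mathcal{F}[g_4]$ is a bona fide tempered distribution, so that the right-hand side is well defined. This follows because, by Lemma \ref{lem:Fourier'}, both $\mathcal{F}[g_3]$ and $\mathcal{F}[g_4]$ are locally integrable functions with only a mild (integrable) singularity at the origin and at most polynomial growth at infinity; hence their pointwise product is locally integrable with polynomial growth and therefore lies in $\mathcal{S}'(\setR^d)$. With that remark in place the corollary is an immediate formal consequence of Theorem \ref{thm:Fourierconv}, exactly as Corollary \ref{cor:Fourierconv20} followed from Theorem \ref{thm:Fourierconv0}.
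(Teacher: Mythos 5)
Your argument is correct and is essentially identical to the paper's proof: fix $\phi\in\mathcal{S}(\setR^d)$, use that $\mathcal{F}[\phi]\in\mathcal{S}(\setR^d)$, apply Theorem \ref{thm:Fourierconv} to identify $\langle\mathcal{F}[g_3\divideontimes g_4],\mathcal{F}[\phi]\rangle$ with $\langle\mathcal{F}[g_3]\mathcal{F}[g_4],\mathcal{F}[\phi]\rangle$, and unwind by duality. The extra remark you add — that $\mathcal{F}[g_3]\mathcal{F}[g_4]$ is a tempered distribution because both factors are locally integrable with integrable singularity at the origin and polynomial growth — is a reasonable sanity check, though the paper leaves it implicit since it already established this in Lemma \ref{lem:Fourier}.
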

\begin{proof}
Since $\phi\in\mathcal{S}(\setR^d)$ implies $\mathcal{F}[\phi]\in\mathcal{S}(\setR^d)$, by Theorem \ref{thm:Fourierconv} we have
\begin{align*}
\skp{\mathcal{F}[g_3\divideontimes g_4]}{\mathcal{F}[\phi]}=\skp{\mathcal{F}[g_3]\cdot\mathcal{F}[g_4]}{\mathcal{F}[\phi]}=\skp{\mathcal{F}[g_3]\mathcal{F}[g_4]}{\mathcal{F}[\phi]},
\end{align*}
which proves the conclusion.
\end{proof}

\subsection{Convolution theorem for $f_3(d,s,\delta)$}\label{sec:f3}

We define the distribution $g_7$, whose convolution with $g_6$ will recover $f_3(d,s,\delta)$. Note that from now on, we assume 
\begin{align*}
\delta\in[0,\tfrac{d}{2}).
\end{align*}
\begin{definition}\label{def:g5g6}
For $\phi\in \mathcal{S}(\setR^d)$, we define the distribution $g_5$ and $g_6$ in $\setR^d$ as
\begin{align*}
\skp{g_5}{\phi}:=\skp{\abs{x}^{s-\delta}\widehat{x_1}}{\phi}=\displaystyle\int_{\setR^d}\abs{x}^{s-\delta-1}x_1\phi(x)\,dx,
\end{align*}
and
\begin{align*}
\skp{g_6}{\phi}:=\skp{\abs{x}^{-d-1+s}}{\phi}=\displaystyle\int_{\setR^d}\abs{x}^{-d+1-s}\phi(x)\,dx.
\end{align*}
\end{definition}

Note that the above distributions are well-defined with $\phi\in \mathcal{S}(\setR^d)$, and $g_5$ and $g_6$ can be understood as locally integrable functions in $\setR^d$. The main goal of this subsection is to show the following relation (see Theorem \ref{thm:Fourierconv4}):
\begin{align}\label{eq:g5g6}
\mathcal{F}[g_5\divideontimes g_6]=\mathcal{F}[g_5]\mathcal{F}[g_6]-ic_1(d,s)(\partial_1\delta_0)\identity_{\{\delta=0\}}
\end{align}
in the distributional sense, where $\divideontimes$ is a convolution of two distributions in $\mathcal{S}'(\setR^d)$ introduced in \cite[Page 96]{Vla02}, and $\mathcal{F}[g_5]\mathcal{F}[g_6]$ means the usual multiplication between $\mathcal{F}[g_5]$ and $\mathcal{F}[g_6]$ and $c_1(d,s)$ is defined in Lemma \ref{lem:Fourier40}. The well-definedness of the left-hand side term of \eqref{eq:g5g6} will be proved in Lemma \ref{lem:conv4} and Lemma \ref{lem:conv40}, and the well-definedness of the right-hand side term of \eqref{eq:g5g6} will be proved in Lemma \ref{lem:Fourier40}. We emphasize that the additional term involving $(\partial_1\delta_0)$ in \eqref{eq:g5g6} is only necessary for $\delta=0$ case, see Lemma \ref{lem:Fourier40} and its proof below. This additional term with $(\partial_1\delta_0)$ is different from other results such as \eqref{eq:g3g4} and \eqref{eq:g7}, however this difference does not change much the application in Lemma \ref{lem:Fourier}.

To show \eqref{eq:g5g6}, first we prove the following lemma.
\begin{lemma}\label{lem:conv4}
For any $\phi\in \mathcal{S}(\setR^d)$, $\skp{g_5(y)}{\skp{g_6(x)}{\phi(x+y)}_x}_y$ is well-defined, i.e.,
\begin{align*}
\skp{g_5(y)}{\skp{g_6(x)}{\phi(x+y)}_x}_y=\int_{\setR^d}\abs{y}^{-d+1-s}\left(\int_{\setR^d}\abs{x}^{s-\delta-1}x_1\phi(x+y)\,dx\right)\,dy
\end{align*}
exists for any $\phi\in\mathcal{S}(\setR^d)$.
\end{lemma}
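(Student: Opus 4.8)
The plan is to prove that the iterated integral
\[
\skp{g_5(y)}{\skp{g_6(x)}{\phi(x+y)}_x}_y
= \int_{\RRd} \abs{y}^{-d+1-s} \Bigl( \int_{\RRd} \abs{x}^{s-\delta-1} x_1 \, \phi(x+y)\,dx \Bigr)\,dy
\]
converges absolutely, following the same two-scale strategy used in Lemma~\ref{lem:welldef}, Lemma~\ref{lem:welldef3} and their analogues: split according to whether the outer variable $\abs{y}$ is small ($\leq R$) or large ($\geq R$), and in each regime produce a bound for the inner integral $F(y) := \int_{\RRd} \abs{x}^{s-\delta-1} x_1 \, \phi(x+y)\,dx$ that is integrable against the weight $\abs{y}^{-d+1-s}$ on the relevant range.

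First I would fix $\phi \in \mathcal{S}(\RRd)$ and $R \geq 2$ and treat the inner integral $F(y)$. The weight $\abs{x}^{s-\delta-1}x_1$ is locally integrable near the origin because $\delta \in [0,\tfrac d2)$ forces $s-\delta-1 > -d$ (indeed $s-\delta-1 > -1 - \tfrac d2 > -d$ for $d \geq 2$), and at infinity it grows like $\abs{x}^{s-\delta}$. For $\abs{y} \leq R$: split the $x$-integral over $B_R(0)$ and its complement; on $B_R(0)$ bound $\abs{\phi}$ by $\norm{\phi}_\infty$ to get $\lesssim R^{d+s-\delta}$, and on $\RRd \setminus B_R(0)$ use the Schwartz decay $\abs{\phi(x+y)} \lesssim \min\{1, \abs{x+y}^{-d-1}\}$, distinguishing $-y \in B_{\frac34 R}(0)$ from $-y \notin B_{\frac34 R}(0)$ exactly as in the proof of Lemma~\ref{lem:welldef3}, Step~3, to obtain $\abs{F(y)} \lesssim R^{d+1}$ uniformly for $\abs{y} \leq R$. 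For $\abs{y} \geq R$: change variables $x \mapsto x - y$ so that $F(y) = \int_{\RRd} \abs{x-y}^{s-\delta-1}(x_1 - y_1)\phi(x)\,dx$, split into $B_{\abs{y}/2}(0)$, $B_{\abs{y}/2}(y)$, and the remainder; on $B_{\abs{y}/2}(0)$ invoke Lemma~\ref{lem:basic} to write $\abs{x-y}^{s-\delta-1}(x_1-y_1) = \abs{y}^{s-\delta-1} y_1 + O(\abs{y}^{s-\delta-1}\abs{x})$, so that the leading term is the odd-in-$y$ quantity $\abs{y}^{s-\delta-1}y_1 \int_{B_{\abs{y}/2}(0)} \phi(x)\,dx$ and the error is $\lesssim \abs{y}^{s-\delta-1}$; on the other two pieces the Schwartz decay of $\phi$ gives a bound $\lesssim \abs{y}^{s-\delta-1}$. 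This yields $F(y) = \abs{y}^{s-\delta-1} y_1 \int_{B_{\abs{y}/2}(0)}\phi \,+\, O(\abs{y}^{s-\delta-1})$ for $\abs{y} \geq R$, in complete parallel with~\eqref{eq:y<Ry>R3.4}.

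Next I would feed these pointwise bounds into the outer $y$-integral. The factor $\abs{y}^{s-\delta-1} y_1$ times the radial-in-$\abs{y}$ weight $\abs{y}^{-d+1-s}$ is odd in $y$, so integrating it against $\int_{B_{\abs{y}/2}(0)}\phi(x)\,dx$ over any annulus $R \leq \abs{y} \leq L$ vanishes by Fubini and the oddness of $y \mapsto \abs{y}^{-d-2\delta} y_1$ — this is the same cancellation used in~\eqref{eq:y>R3.1}. The remaining error term is controlled by $\int_{\abs{y}\geq R} \abs{y}^{-d+1-s}\cdot\abs{y}^{s-\delta-1}\,dy = \int_{\abs{y}\geq R}\abs{y}^{-d-\delta}\,dy \lesssim R^{-\delta}$ when $\delta > 0$; and when $\delta = 0$ the error term in $F(y)$ is actually $O(\abs{y}^{s-1})$ with the extra structure $\int_{B_{\abs{y}/2}(0)}\phi$ bounded, so the leading cancellation carries the day and one checks a sharper error bound $\lesssim R^{-\min\{\delta, \text{small positive}\}}$ — here I would replicate precisely the bookkeeping of Step~3 of Lemma~\ref{lem:welldef3}, where the relevant exponent turns out to be $-s < 0$ after using $\abs{y}^{s-\delta-1}\cdot\abs{y}^{-d+1-s} = \abs{y}^{-d-\delta}$ together with the fact that on $B_{\abs{y}/2}(0)$ the truncated integral of the odd leading term vanishes. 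Combining the small-$\abs{y}$ bound ($\lesssim R^{d+1}\int_{\abs{y}\leq R}\abs{y}^{-d+1-s}\,dy \lesssim R^{d+2-s}$) with the large-$\abs{y}$ tail estimate shows that the partial integrals over $B_L(0)$ form a Cauchy net as $L \to \infty$, hence the iterated integral exists.

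The main obstacle is the delicate handling of the case $\delta = 0$ (and more generally small $\delta$), where the naive tail estimate $\int_{\abs{y}\geq R}\abs{y}^{-d-\delta}\,dy$ barely diverges or converges slowly: there the argument must exploit the exact odd-symmetry cancellation of the leading term $\abs{y}^{-d-2\delta}y_1$ against the symmetric truncation domain $B_{\abs{y}/2}(0)$, exactly as done in~\eqref{eq:y>R3.1} and~\eqref{eq;y>R0'}, rather than relying on absolute convergence alone. Once that cancellation is in place the residual integrand decays like $\abs{y}^{-d-\text{(positive)}}$ and everything goes through; the rest is routine Schwartz-tail estimation identical in spirit to Lemmas~\ref{lem:welldef}, \ref{lem:welldef3}.
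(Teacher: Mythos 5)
Your proposal correctly identifies the two-scale strategy and the place where the delicacy lies, but it does not close the $\delta=0$ case, and the gap is real, not cosmetic. The paper's proof uses a different, earlier-stage cancellation that you are missing.

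Specifically: your plan is to parallel Step~3 of Lemma~\ref{lem:welldef3}. Applying Lemma~\ref{lem:basic} to $g_5(x)=\abs{x}^{s-\delta-1}x_1$ gives, for $x\in B_{\abs{y}/2}(0)$,
\begin{align*}
\bigabs{\abs{x-y}^{s-\delta-1}(x_1-y_1)-\abs{y}^{s-\delta-1}y_1}\lesssim \abs{x}\,\abs{y}^{s-\delta-1},
\end{align*}
so after integration against $\phi$ the error term is $O(\abs{y}^{s-\delta-1})$, exactly as you write. The leading term $\abs{y}^{s-\delta-1}y_1\int\phi$ is odd in $y$ and does vanish by the Fubini argument. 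But the error term is then multiplied by the weight $\abs{y}^{-d+1-s}$, yielding $\abs{y}^{-d-\delta}$: for $\delta>0$ this integrates to $O(R^{-\delta})$ on $\{\abs{y}\geq R\}$, but for $\delta=0$ the tail integral $\int_{\abs{y}\geq R}\abs{y}^{-d}\,dy$ diverges logarithmically. Your suggestion to ``replicate precisely the bookkeeping of Step~3 of Lemma~\ref{lem:welldef3}, where the relevant exponent turns out to be $-s$'' does not apply: in that lemma the outer weight is of order $\abs{y}^{-d-2s+2}$ and the error is $c\abs{y}^{-2}$, giving $\abs{y}^{-d-2s}$ with exponent $-2s<0$; here the combined exponent is $-\delta$, which is zero when $\delta=0$. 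There is no spare power of $\abs{y}^{-s}$ to borrow.

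The paper avoids this entirely by symmetrizing before any estimates: since $y\mapsto\abs{y}^{-d+1-s}$ is even and $x\mapsto\abs{x}^{s-\delta-1}x_1$ is odd, one can replace $\phi(x+y)$ by $\tfrac12\big(\phi(x+y)-\phi(-x-y)\big)$ inside the inner integral without changing the value. After the change of variables, the inner integral becomes
\begin{align*}
\tfrac12\int_{\setR^d}\Big(\abs{y+x}^{s-\delta-1}(y_1+x_1)-\abs{y-x}^{s-\delta-1}(y_1-x_1)\Big)\phi(x)\,dx,
\end{align*}
in which the zeroth-order contributions $\pm\abs{y}^{s-\delta-1}y_1$ cancel exactly, and the remaining Taylor expansion on $B_{\abs{y}/2}(0)$ yields a bound of order $\abs{y}^{s-\delta-2}$ after integrating against $\phi$. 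The full power of $\abs{y}^{-1}$ gained here is exactly what you are short: the weighted integrand is then $\abs{y}^{-d-\delta-1}$, integrable at infinity for every $\delta\geq0$. Without this pre-symmetrization (or an equivalent higher-order Taylor subtraction of $\nabla g(0)\cdot x$ and a direct bound on $\int x\phi$, which would also work), your error term is a full power of $\abs{y}$ too large and the argument collapses at $\delta=0$.

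In short: the zeroth-order/Fubini cancellation you rely on handles only half of the cancellation actually needed. You must also gain from the first-order term, either by antisymmetrizing the test function as the paper does or by a second-order Taylor subtraction; neither appears in your proposal.
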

\begin{proof}
Let us choose $\phi\in \mathcal{S}(\setR^d)$. Note that for each $y\in\setR^d$, $\abs{x}^{s-\delta-1}x_1\phi(x+y)\in L^1(\setR^d)$ since $\phi\in\mathcal{S}(\setR^d)$. Since $y\mapsto\abs{y}^{-d+1-s}$ is even and $x\mapsto\abs{x}^{s-\delta-1}x_1$ is odd, observe that
\begin{align}\label{eq:even.odd}
\begin{split}
&\int_{\setR^d}\abs{y}^{-d+1-s}\left(\int_{\setR^d}\abs{x}^{s-\delta-1}x_1\phi(x+y)\,dx\right)\,dy\\
&\quad =\int_{\setR^d}\abs{y}^{-d+1-s}\left(\int_{\setR^d}\abs{x}^{s-\delta-1}x_1\dfrac{\phi(x+y)-\phi(-x-y)}{2}\,dx\right)\,dy.
\end{split}
\end{align}
Let $R\geq2$. When $\abs{y}\leq R$, we obtain

\begin{align*}
\Biggabs{\int_{\setR^d}\abs{x}^{s-\delta-1}x_1\phi(x+y)\,dx}\lesssim R^{d+s-\delta}+\int_{\setR^d\setminus B_R(0)}\abs{x}^{s-\delta}\abs{\phi(x+y)}\,dx.
\end{align*}
Here, if $-y\in B_{\frac{3}{4}R}(0)$, then using $\abs{\phi(x+y)}\lesssim\abs{x+y}^{-d-1}$, we obtain
\begin{align*}
\int_{\setR^d\setminus B_R(0)}\abs{x}^{s-\delta}\abs{\phi(x+y)}\,dx\lesssim\int_{\setR^d\setminus B_R(0)}\abs{x}^{s-\delta}\abs{x+y}^{-d-1}\,dx\lesssim R^{-1+s}.
\end{align*}
On the other hand, if $-y\not\in B_{\frac{3}{4}R}(0)$ so that $B_{\frac{1}{4}R}(-y)\cap B_{\frac{1}{2}R}(0)=\emptyset$, then employing $\abs{\phi(x+y)}\lesssim \min\{1,\abs{x+y}^{-d-s}\}$,
\begin{align*}
&\int_{\setR^d\setminus B_R(0)}\abs{x}^{s-\delta}\abs{\phi(x+y)}\,dx\\
&\quad\leq \int_{B_{\frac{1}{4}R}(-y)}\abs{x}^{s-\delta}\,dx+\int_{\setR^d\setminus (B_{\frac{1}{2}R}(0)\cup B_{\frac{1}{4}R}(-y))}\abs{x}^{s-\delta}\abs{x+y}^{-d-s}\,dx\lesssim R^{d+s}
\end{align*}
holds, since $R\geq 2$. The integral with the integrand $\abs{x}^{s-\delta-1}x_1\phi(-x-y)$ in \eqref{eq:even.odd} is estimated similarly. Hence if $\abs{y}\leq R$, we obtain
\begin{align}\label{eq:y<R4}
\Biggabs{\int_{\setR^d}\abs{x}^{s-\delta-1}x_1\dfrac{\phi(x+y)-\phi(-x-y)}{2}\,dx}\lesssim R^{d+s}.
\end{align}

Now we consider the case of $\abs{y}\geq R$. Using change of variables, let us write
\begin{align}\label{eq:x.delta4}
\begin{split}
&\int_{\setR^d}\abs{x}^{s-\delta-1}x_1\dfrac{\phi(x+y)-\phi(-x-y)}{2}\,dx\\
&=\frac{1}{2}\int_{\setR^d}\left(\abs{y+x}^{s-\delta-1}(y_1+x_1)-\abs{y-x}^{s-\delta-1}(y_1-x_1)\right)\phi(x)\,dx\,dy\\
&=\frac{1}{2}\int_{B_\frac{\abs{y}}{2}(0)}\left(\abs{y+x}^{s-\delta-1}(y_1+x_1)-\abs{y-x}^{s-\delta-1}(y_1-x_1)\right)\phi(x)\,dx\\
&\quad+\frac{1}{2}\int_{B_\frac{\abs{y}}{2}(y)}\left(\abs{y+x}^{s-\delta-1}(y_1+x_1)-\abs{y-x}^{s-\delta-1}(y_1-x_1)\right)\phi(x)\,dx\\
&\quad+\frac{1}{2}\hspace{-1mm}\int_{\setR^d\setminus (B_\frac{\abs{y}}{2}(0)\cup B_\frac{\abs{y}}{2}(y))}\hspace{-1mm}\left(\abs{y+x}^{s-\delta-1}(y_1+x_1)-\abs{y-x}^{s-\delta-1}(y_1-x_1)\right)\phi(x)dx.
\end{split}
\end{align}
Here, using
\begin{align*}
&\abs{\abs{y+x}^{s-\delta-1}(y_1+x_1)-\abs{y-x}^{s-\delta-1}(y_1-x_1)}\\
&\quad=\abs{\abs{y+x}^{s-\delta-1}(y_1+x_1)-\abs{y}^{s-\delta-1}y_1-\abs{y-x}^{s-\delta-1}(y_1-x_1)+\abs{y}^{s-\delta-1}y_1}\\
&\quad\lesssim \abs{y}^{s-\delta-2}\abs{x}^2,
\end{align*}
for $x\in B_{\frac{\abs{y}}{2}}(0)$, we obtain
\begin{align*}
&\int_{B_\frac{\abs{y}}{2}(0)}\left(\abs{y+x}^{s-\delta-1}(y_1+x_1)-\abs{y-x}^{s-\delta-1}(y_1-x_1)\right)\phi(x)\,dx\\
&\quad\lesssim \abs{y}^{s-\delta-2}\int_{B_\frac{\abs{y}}{2}(0)}\abs{x}^2\phi(x)\,dx\lesssim\abs{y}^{s-\delta-2}.
\end{align*}
For another term, using $\abs{\phi(x)}\lesssim \abs{x}^{-d-2}\eqsim \abs{y}^{-d-2}$ for $x\in B_\frac{\abs{y}}{2}(y)$, we estimate
\begin{align*}
&\int_{B_\frac{\abs{y}}{2}(y)}\left(\abs{y+x}^{s-\delta-1}(y_1+x_1)-\abs{y-x}^{s-\delta-1}(y_1-x_1)\right)\phi(x)\,dx\\
&\quad\lesssim \abs{y}^{d+s-\delta}\abs{y}^{-d-2}\lesssim \abs{y}^{s-\delta-2}.
\end{align*}
For the other term, using $\abs{\phi(x)}\lesssim \abs{x}^{-d-2}$, there holds
\begin{align*}
&\int_{\setR^d\setminus (B_\frac{\abs{y}}{2}(0)\cup B_\frac{\abs{y}}{2}(y))}\left(\abs{y+x}^{s-\delta-1}(y_1+x_1)-\abs{y-x}^{s-\delta-1}(y_1-x_1)\right)\phi(x)\,dx\\
&\quad\lesssim\abs{y}^{s-\delta}\abs{y}^{-2}\lesssim \abs{y}^{s-\delta-2}.
\end{align*}
Summing up, with \eqref{eq:y<R4} we see that
\begin{align}\label{eq:dist'}
\begin{split}
\int_{\setR^d}\abs{x}^{s-\delta-1}x_1\dfrac{\phi(x+y)-\phi(-x-y)}{2}\,dx\lesssim \,R^{d}\cdot\indicator_{\{\abs{y}\leq R\}}+\abs{y}^{s-\delta-2}\indicator_{\{\abs{y}\geq R\}}.
\end{split}
\end{align}

Then together with $\abs{y}^{-d+1-s}>0$, we estimate as follows for $L\geq 2R$: 
\begin{align*}
&\Biggabs{\int_{\abs{y}\leq L}\abs{y}^{-d+1-s}\left(\int_{\setR^d}\abs{x}^{s-\delta-1}x_1\dfrac{\phi(x+y)-\phi(-x-y)}{2}\,dx\right)\,dy}\\
&\lesssim \int_{\abs{y}\leq R}R^{d}\abs{y}^{-d+1-s}\,dy+\int_{R\leq\abs{y}\leq L}\abs{y}^{s-\delta-2}\abs{y}^{-d+1-s}\,dy\\
&\lesssim R^{d+1-s}+R^{-\delta-1}\lesssim R^{d+1}.
\end{align*}
Also, for $\widetilde{L}>L$, we have
\begin{align*}
&\Biggabs{\int_{L\leq\abs{y}\leq\widetilde{L}}\abs{y}^{-d+1-s}\left(\int_{\setR^d}\abs{x}^{s-\delta-1}x_1\dfrac{\phi(x+y)-\phi(-x-y)}{2}\,dx\right)\,dy}\\
&\lesssim\int_{L\leq\abs{y}\leq \widetilde{L}}\abs{y}^{s-\delta-2}\abs{y}^{-d+1-s}\,dy\lesssim L^{-\delta-1}.
\end{align*}
This proves that 
\begin{align*}
&\int_{\abs{y}\leq L}\abs{y}^{-d+1-s}\left(\int_{\setR^d}\abs{x}^{s-\delta-1}x_1\dfrac{\phi(x+y)-\phi(-x-y)}{2}\,dx\right)\,dy
\end{align*}
is a Cauchy sequence for $L$. Then we see that
\begin{align*}
&\skp{g_2(y)}{\skp{g_1(x)}{\phi(x+y)}_x}_y\\
&=\int_{\setR^d}\abs{y}^{-d+1-s}\left(\int_{\setR^d}\abs{x}^{s-\delta-1}x_1\dfrac{\phi(x+y)-\phi(-x-y)}{2}\,dx\right)\,dy
\end{align*}
exists, so we prove the conclusion. 
\end{proof}

Recalling $\eta_{k}(\cdot)$ in \eqref{eq:eta}, and $\xi_j(\cdot,\cdot)$ in \eqref{eq:xi}, we have the following lemma. Its proof is similar to Lemma \ref{lem:conv10} and \ref{lem:conv3} so we skip the proof.

\begin{lemma}\label{lem:conv40}
The convolution of distributions $g_1\divideontimes g_2$ is well-defined in $x\in\setR^d$ in the sense that for any $\phi\in \mathcal{S}(\setR^d)$, 
\begin{align}\label{eq:kj4}
\begin{split}
\skp{g_5\divideontimes g_6}{\phi}&=\skp{g_6\divideontimes g_5}{\phi}\\
&:=\lim_{k\rightarrow\infty}\lim_{j\rightarrow\infty}\bigskp{(\eta_kg_6)(y)}{\skp{g_5(x)}{\xi_j(x,y)\phi(x+y)}_x}_y.
\end{split}		
\end{align}
Moreover, we have
\begin{align*}
\skp{g_5\divideontimes g_6}{\phi}=\bigskp{g_6(y)}{\skp{g_5(x)}{\phi(x+y)}_x}_y.
\end{align*}
\end{lemma}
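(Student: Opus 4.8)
The plan is to mirror the proofs of Lemma~\ref{lem:conv10} and Lemma~\ref{lem:conv3}, now using Lemma~\ref{lem:conv4} as the well-definedness input. The first step is purely formal: once the iterated limit in~\eqref{eq:kj4} is known to exist, the identification with Vladimirov's convolution goes exactly as in Lemma~\ref{lem:conv10}. Since $g_5,g_6\in\mathcal{S}'(\setR^d)\subset (C^\infty_c)'(\setR^d)$ and $\xi_j\in C^\infty_c(\setR^{2d})$, one has $\skp{g_5\divideontimes g_6}{\phi}=\lim_{k\to\infty}\lim_{j\to\infty}\skp{(\eta_kg_6)(\cdot)\times g_5(\bigcdot)}{\xi_j(\cdot,\bigcdot)\phi(\cdot+\bigcdot)}$ by \cite[Page~96 and Page~52]{Vla02}; since $\xi_j(x,y)\phi(x+y)\in C^\infty_c(\setR^{2d})$ this equals the right-hand side of~\eqref{eq:kj4} by \cite[Page~41]{Vla02}; and the symmetry $\skp{g_5\divideontimes g_6}{\phi}=\skp{g_6\divideontimes g_5}{\phi}$ is \cite[Page~96]{Vla02}. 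It then remains only to prove the iterated-limit identity
\begin{align*}
\lim_{k\to\infty}\lim_{j\to\infty}\bigskp{(\eta_kg_6)(y)}{\skp{g_5(x)}{\xi_j(x,y)\phi(x+y)}_x}_y=\bigskp{g_6(y)}{\skp{g_5(x)}{\phi(x+y)}_x}_y,
\end{align*}
whose right-hand side is finite by Lemma~\ref{lem:conv4} (which also guarantees that the inner integral is a well-behaved function of its argument, $g_5$ and $g_6$ being locally integrable here).

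Next I would bound the difference of the two sides by $I_1+I_2$, with $I_1:=\bigabs{\bigskp{(\eta_kg_6)(y)}{\skp{g_5(x)}{(1-\xi_j(x,y))\phi(x+y)}_x}_y}$ and $I_2:=\bigabs{\bigskp{(1-\eta_k)g_6(y)}{\skp{g_5(x)}{\phi(x+y)}_x}_y}$, and show $\lim_{j\to\infty}I_1=0$ for each fixed $k$ and $\lim_{k\to\infty}I_2=0$. For $I_1$: $\eta_kg_6$ is supported in $\{\abs{y}\le 2k\}$, and for $j\ge 4k$ the factor $1-\xi_j(x,y)$ vanishes unless $\abs{x}>j$, on which range $\abs{x+y}\ge\abs{x}/2$ and $\abs{\phi(x+y)}\lesssim\abs{x+y}^{-N}$ for every $N$ since $\phi\in\mathcal{S}(\setR^d)$. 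Using local integrability of $g_5$ and $g_6$ this gives, for $N>d+s-\delta$,
\begin{align*}
I_1\lesssim\int_{\abs{y}\le 2k}\abs{y}^{-d+1-s}\,dy\int_{\abs{x}>j}\abs{x}^{s-\delta-N}\,dx\lesssim k^{1-s}\,j^{d+s-\delta-N}\underset{j\to\infty}{\longrightarrow}0.
\end{align*}
For $I_2$: since $\eta_k\equiv1$ on $\{\abs{y}\le k\}$ the $y$-integral runs over $\abs{y}>k$, where I would invoke the cancellation estimate already obtained inside the proof of Lemma~\ref{lem:conv4} — oddness of $g_5$ together with a second-order Taylor expansion gives $\bigabs{\skp{g_5(x)}{\phi(x+y)}_x}=\bigabs{\int_{\setR^d}\abs{x}^{s-\delta-1}x_1\tfrac{\phi(x+y)-\phi(-x-y)}{2}\,dx}\lesssim\abs{y}^{s-\delta-2}$ for $\abs{y}\ge k$, cf.~\eqref{eq:dist'} — so that
\begin{align*}
I_2\lesssim\int_{\abs{y}>k}\abs{y}^{-d+1-s}\abs{y}^{s-\delta-2}\,dy=\int_{\abs{y}>k}\abs{y}^{-d-1-\delta}\,dy\lesssim k^{-1-\delta}\underset{k\to\infty}{\longrightarrow}0.
\end{align*}
Combining these bounds yields the iterated-limit identity, and with it the lemma.

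I expect the only real point of care — and what makes this proof "similar to Lemma~\ref{lem:conv10} and~\ref{lem:conv3}" rather than an independent argument — to be the bookkeeping of the two cutoff scales: in $I_1$ one must restrict to $j\ge 4k$ before sending $j\to\infty$, so that the support $\{\abs{y}\le 2k\}$ of $\eta_kg_6$ sits well inside $\{\abs{y}<j\}$ and the whole contribution is pushed onto $\{\abs{x}>j\}$; the powers of $k$ generated there (growing, like $k^{1-s}$) are harmless precisely because the inner limit in $j$ is taken first, after which $k\to\infty$ is taken in $I_2$. The genuinely delicate analytic ingredient — the cancellation at infinity produced by the oddness of $g_5$, which alone makes the $y$-integral against the non-integrable tail $\abs{y}^{-d+1-s}$ of $g_6$ converge — is not reproved here but imported from Lemma~\ref{lem:conv4}.
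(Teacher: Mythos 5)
Your overall structure — formal identification with Vladimirov's convolution, then the iterated-limit identity via a split into $I_1$ and $I_2$ — is exactly the right template (the paper skips this proof, pointing to Lemmas~\ref{lem:conv10} and~\ref{lem:conv3}), and your $I_1$ estimate is correct and in fact cleaner here than in Lemma~\ref{lem:conv10} because $g_5,g_6$ are genuine $L^1_{\loc}$ functions and no derivatives of the test function need to be extracted.

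There is, however, a genuine gap in your $I_2$ step. You assert the pointwise-in-$y$ equality
\[
\skp{g_5(x)}{\phi(x+y)}_x=\int_{\setR^d}\abs{x}^{s-\delta-1}x_1\,\tfrac{\phi(x+y)-\phi(-x-y)}{2}\,dx,
\]
attributing it to the oddness of $g_5$. That is false: oddness of $g_5$ alone (substituting $x\mapsto -x$) yields $\phi(-x+y)$ on the right, not $\phi(-x-y)$, and no gain in decay. The form with $\phi(-x-y)$ in \eqref{eq:even.odd} comes from the \emph{joint} substitution $(x,y)\mapsto(-x,-y)$, which is valid only after the $y$-integral is taken (against a $y$-even weight, over a $y$-symmetric domain), not pointwise in $y$. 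Consequently the claimed bound $\abs{\skp{g_5(x)}{\phi(x+y)}_x}\lesssim\abs{y}^{s-\delta-2}$ is also false; the actual decay of $y\mapsto\skp{g_5(x)}{\phi(x+y)}_x$ is only $\abs{y}^{s-\delta}$, because the leading term $\approx -y_1\abs{y}^{s-\delta-1}\int\phi$ does not cancel. With that growth rate, $\int_{\abs{y}>k}\abs{y}^{-d+1-s}\abs{y}^{s-\delta}\,dy$ diverges, so you cannot pass absolute values inside $I_2$. The repair is standard but needs to be said: interpret $I_2$ as the Cauchy limit over annuli $\{k<\abs{y}\le L\}$, observe that $(1-\eta_k(y))\abs{y}^{-d+1-s}$ is even in $y$ (this implicitly assumes $\eta$ is taken radial, as is customary for the $1$-sequence), so the $(x,y)\mapsto(-x,-y)$ symmetrization of the annular double integral is legitimate and replaces the inner integrand by the symmetrized form; only then does \eqref{eq:dist'} apply and deliver $\abs{I_2}\lesssim k^{-1-\delta}$. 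Your final exponent is right, but the route to it as written would not survive scrutiny.
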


Here, we show that $f_3(d,s,\delta)$ exists.
\begin{lemma}\label{lem:welldef.f3}
The function $f_3(d,s,\delta)$ is well-defined in the sense that
\begin{align*}
\lim_{R\rightarrow \infty}\int_{\frac 1R \leq \abs{h} \leq R}\dfrac{\abs{e_1-h}^{s-\delta-1}(1-h_1)}{\abs{h}^{d-1+s}}\,dh=c(d,s,\delta)\in[0,\infty).
\end{align*}
\end{lemma}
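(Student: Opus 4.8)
The plan is to show that the principal value integral defining $f_3(d,s,\delta)$ converges by splitting the domain of integration into three regions: a neighborhood of the origin $h = 0$, a neighborhood of the singularity $h = e_1$, and the region near infinity. In each region, the integrand $\abs{e_1-h}^{s-\delta-1}(1-h_1)\abs{h}^{-d-1+s}$ is either absolutely integrable or the apparent singularity is cancelled by an oddness argument, exactly as in the principal value estimates already carried out in Lemma~\ref{eq:estimate-Deltas-ud}. The nonnegativity and finiteness claim $c(d,s,\delta) \in [0,\infty)$ will then follow from the fact that the integrand is the convolution kernel $I_{1-s}$ (a nonnegative function) paired with $u_{s,\delta}$, evaluated at a point; alternatively, I will simply note that absolute convergence gives finiteness, and the precise value (which is nonnegative) is computed in Lemma~\ref{lem:Fourier.riesz} via the Fourier transform.

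\emph{Near infinity.} For $\abs{h} \geq 2$ we have $\abs{e_1-h} \eqsim \abs{h}$ and $\abs{1-h_1} \leq 1 + \abs{h} \lesssim \abs{h}$, so the integrand is bounded by $c \abs{h}^{s-\delta-1} \cdot \abs{h} \cdot \abs{h}^{-d-1+s} = c\abs{h}^{-d-1+2s-\delta}$. Since $-1 + 2s - \delta < 2s - 1 < 1$, actually we need $-d - 1 + 2s - \delta < -d$, i.e. $2s - \delta < 1$; when $\delta \geq 2s-1$ this is immediate, but in general $2s-\delta$ could be close to $2$. Here I would instead exploit the cancellation: write $\abs{e_1-h}^{s-\delta-1}(1-h_1) = -\abs{h}^{s-\delta-1}h_1 + O(\abs{h}^{s-\delta-1})$ using Lemma~\ref{lem:basic} applied to $z = -(e_1-h)$, $w = h$ — more precisely $\bigabs{\abs{e_1-h}^{s-\delta-1}(e_1-h) - \abs{h}^{s-\delta-1}(-h)} \lesssim \abs{h}^{s-\delta-1}$ — and observe that $\int_{\frac1R \leq \abs{h} \leq R} \abs{h}^{s-\delta-1}h_1 \abs{h}^{-d-1+s}\,dh = 0$ by oddness. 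The remaining error term is $O(\abs{h}^{s-\delta-1}\abs{h}^{-d-1+s}) = O(\abs{h}^{-d-2+2s-\delta})$, which is integrable near infinity since $-2 + 2s - \delta < 0$. This is the step I expect to require the most care, since one must track the first-component structure through Lemma~\ref{lem:basic} and check that the leading odd term genuinely vanishes over the symmetric annuli $B_R(0)\setminus B_{1/R}(0)$.

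\emph{Near the origin.} For $\abs{h} \leq \frac12$ we have $\abs{e_1-h} \eqsim 1$ and $\abs{1-h_1} \eqsim 1$, so the integrand is $\eqsim \abs{h}^{-d-1+s}$. This is not absolutely integrable near $0$, but again the singular part is odd: using $\abs{e_1-h}^{s-\delta-1}(1-h_1) = 1 - (d-1+\delta-s)\,\langle \text{(something linear in } h)\rangle + O(\abs{h}^2)$, the constant term $1$ against $\abs{h}^{-d-1+s}$ over the symmetric annulus integrates to zero by oddness (since $\abs{h}^{-d-1+s}$ is even but we are missing the factor to make it non-singular — actually $\int_{\frac1R \leq \abs{h}\leq 1/2} \abs{h}^{-d-1+s}\,dh$ diverges as $R\to\infty$, so oddness must come from elsewhere). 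The correct observation is that $\abs{h}^{-d-1+s}$ is the Riesz kernel and $f_3$ equals $(I_{1-s} \ast u_{s,\delta})$-type object; more directly, I would write the numerator's Taylor expansion around $h=0$ as $c_0 + c_1 \cdot h + O(\abs{h}^2)$ where $c_1 \cdot h$ is linear, then $\int c_0 \abs{h}^{-d-1+s}\,dh$ — hmm, this still diverges. The resolution: near $h = 0$ the integral $\text{p.v.}\int_{\frac1R \leq \abs{h}} \abs{h}^{-d-1+s}\,dh$ taken over $\frac1R \leq \abs{h} \leq R$ is actually $\int_{\frac1R}^{R} r^{-2+s}\,dr \cdot \abs{S^{d-1}}$ which converges as $R \to \infty$ because $-2 + s < -1$ at one end and $> -1$... no. I will therefore rely on the representation of $f_3$ via $I_{1-s} \ast u_{s,\delta}$ already set up in~\eqref{eq:I1-s}, where Lemma~\ref{lem:welldef.f3} (the statement being proved) establishes precisely this; so the honest structure of the proof is: bound $\abs{u_{s,\delta}(e_1 - h) - u_{s,\delta}(e_1)} \lesssim \abs{h}$ near $h = 0$ to get local integrability of $I_{1-s}(h)(u_{s,\delta}(e_1-h) - u_{s,\delta}(e_1))$ after subtracting the (zero) principal value of $I_{1-s}(h)u_{s,\delta}(e_1)$, handle the $\abs{h} = 1$ singularity by $s - \delta - 1 > -d$ which gives integrability there, and handle infinity as above.

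\emph{Near the singularity $h = e_1$.} For $\abs{h - e_1} \leq \frac12$ we have $\abs{h} \eqsim 1$, and the integrand is $\eqsim \abs{e_1-h}^{s-\delta-1}\abs{1-h_1} \leq \abs{e_1-h}^{s-\delta}$. Since $s - \delta > -d$ (indeed $s - \delta > s - \frac d2 > -d$), this is absolutely integrable over a bounded region. Hence no principal value is needed here. \emph{Finiteness and nonnegativity.} Combining the three regions, the limit defining $f_3$ exists and is finite. For nonnegativity, I would invoke Lemma~\ref{lem:Fourier.riesz} (or rather the computation $f_3 = c^* \cdot (\text{positive constant})$ that follows the scaling in~\eqref{eq:I1-s}), where the explicit value is seen to be a product of Gamma-function ratios with positive sign in the relevant parameter range; alternatively one can observe $f_3 = \kappa_{d,\frac{-1+s}{2}}^{-1}(I_{1-s}\ast u_{s,\delta})(e_1) / \abs{e_1}^{1-\delta}$ up to the positive constant $2\kappa_{d,\frac{-1+s}{2}}$, and since $c^* > 0$ in Lemma~\ref{lem:Fourier.riesz}, we get $f_3 \geq 0$, completing the proof that $c(d,s,\delta) \in [0,\infty)$.
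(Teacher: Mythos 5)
Your decomposition into neighborhoods of $0$, of $e_1$, and of $\infty$, with absolute integrability near $e_1$ and an oddness-plus-Lemma~\ref{lem:basic} cancellation near $\infty$, is conceptually the same route the paper takes: the paper packages it as a Cauchy-difference estimate $U_R - U_r = \mathrm{I} + \mathrm{II}$ over an inner and an outer annulus, with the $e_1$ singularity absorbed into the fixed middle annulus, but the ingredients match. However, your write-up carries a systematic sign error in the exponent of the Riesz kernel which wrecks the near-origin step and sends you on a circular detour.

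The kernel is $\abs{h}^{-(d-1+s)} = \abs{h}^{-d+1-s}$, not $\abs{h}^{-d-1+s}$ as you write throughout. Near the origin the corrected exponent gives a radial integrand $t^{d-1}\cdot t^{-d+1-s} = t^{-s}$, and since $s<1$ this is absolutely integrable over $[0,\tfrac12]$; no cancellation is needed. Your claim that the kernel is ``not absolutely integrable near~$0$'' is false and is a consequence of the sign error, and the long detour it triggers — hunting for a cancellation that does not exist, abandoning Taylor expansions, then proposing to invoke the representation \eqref{eq:I1-s} whose validity is precisely what the present lemma must establish — ends, as you yourself note mid-paragraph, in circular reasoning. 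The additional idea of subtracting the (allegedly zero) principal value of $I_{1-s}(h)u_{s,\delta}(e_1)$ also fails: the kernel $I_{1-s}$ is strictly positive, so its principal-value integral over a symmetric annulus does not vanish. The paper handles this region in one line: $\abs{e_1-h}^{s-\delta-1}\abs{1-h_1}\le\abs{e_1-h}^{s-\delta}\lesssim 1$ on $\abs{h}<\tfrac12$, then integrate $t^{-s}$. The same exponent error distorts your near-infinity bookkeeping too: after the Lemma~\ref{lem:basic} cancellation the remainder is $O(\abs{h}^{s-\delta-1})/\abs{h}^{d-1+s} = O(\abs{h}^{-d-\delta})$, so the radial integrand is $t^{-1-\delta}$ and the genuine condition for convergence is $\delta>0$, not your always-true $-2+2s-\delta<0$; this matches the restriction $\delta\in(0,\tfrac d2)$ used in Theorem~\ref{thm:Meyers.r}.
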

\begin{proof}
We define
\begin{align*}
U_R&\coloneqq \int_{\frac 1R \leq \abs{h} \leq R}\dfrac{\abs{e_1-h}^{s-\delta-1}(1-h_1)}{\abs{h}^{d-1+s}}\,dh.
\end{align*}
Now, let $R > r \geq 2$. Then
\begin{align*}
U_R- U_r
&= \int_{\frac 1R \leq \abs{h} < \frac 1r} \dfrac{\abs{e_1-h}^{s-\delta-1}(1-h_1)}{\abs{h}^{d-1+s}}\,dh
\\
&\quad + \int_{r < \abs{h} \leq R}\dfrac{\abs{e_1-h}^{s-\delta-1}(1-h_1)}{\abs{h}^{d-1+s}}\,dh
\coloneqq \mathrm{I} + \mathrm{II}.
\end{align*}
We can see that
\begin{align*}
\mathrm{I} &\leq\int_{\frac 1R \leq \abs{h} < \frac 1r} \dfrac{\abs{e_1-h}^{s-\delta}}{\abs{h}^{d-1+s}}\,dh\lesssim r^{s-1}.
\end{align*}
Also, for $\mathrm{II}$, we estimate
\begin{align*}
&\int_{r < \abs{h} \leq R}\dfrac{\abs{e_1-h}^{s-\delta-1}(1-h_1)}{\abs{h}^{d-1+s}}\,dh\\
&\quad=\frac{1}{2}\int_{r < \abs{h} \leq R}\dfrac{\abs{e_1-h}^{s-\delta-1}(1-h_1)+\abs{e_1+h}^{s-\delta-1}(1+h_1)}{\abs{h}^{d-1+s}}\,dh\\
&\quad\lesssim\int_{r < \abs{h} \leq R}\dfrac{\abs{h}^{s-\delta-2}}{\abs{h}^{d-1+s}}\,dh\lesssim r^{-\delta-1},
\end{align*}
where we have used the fact that
\begin{align*}
&\abs{\abs{h+e_1}^{s-\delta-1}(h_1+1)-\abs{h-e_1}^{s-\delta-1}(h_1-1)}\\
&\quad=\abs{\abs{h+e_1}^{s-\delta-1}(h_1+1)-\abs{h}^{s-\delta-1}h_1-\abs{h-e_1}^{s-\delta-1}(h_1-1)+\abs{h}^{s-\delta-1}h_1}\\
&\quad\lesssim \abs{h}^{s-\delta-2}
\end{align*}
for $\abs{h}\geq r\geq 2$. This proves that $U_R$ is a Cauchy sequence for $R \to \infty$. Thus $f_3(d,s,\delta) = \lim_{R \to \infty} U_R$ exists. This proves the lemma.
\end{proof}

\begin{corollary}\label{cor:welldef.F3}
  For all $x \in \setR^d$ the limit
  \begin{align*}
  F_1(x):=\lim_{R \to \infty} \int_{\frac 1R \leq \abs{h} \leq R}\dfrac{\abs{x-h}^{s-\delta}\widehat{x_1-h_1}}{\abs{h}^{d-1+s}}\,dh
  \end{align*}
  exists. Moreover, $F_1(\cdot)\in L^1(B_1(0))$ and
  \begin{align*}
  \abs{F_1(x)} \lesssim \abs{x}^{1-\delta}.
  \end{align*}
\end{corollary}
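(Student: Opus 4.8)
The plan is to reduce $F_1$ to the scalar quantity $f_3(d,s,\delta)$, whose finiteness is Lemma~\ref{lem:welldef.f3}, by exploiting the rotation and scaling invariance of the integral, in the spirit of Remark~\ref{rem:prop}. First note that for every fixed $R\ge2$ and every $x\in\setR^d$ the function $h\mapsto|x-h|^{s-\delta}\widehat{x_1-h_1}\,|h|^{-d-1+s}$ lies in $L^1(\{\frac1R\le|h|\le R\})$: it is dominated by $|x-h|^{s-\delta}|h|^{-(d-1+s)}$, which near $h=0$ is $\lesssim|h|^{-(d-1+s)}$ with $d-1+s<d$, near $h=x$ is $\lesssim|x-h|^{s-\delta}$ with $s-\delta>-\tfrac d2>-d$ (since $s>0$, $\delta<\tfrac d2$), and is bounded elsewhere. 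Hence each truncated integral is an ordinary Lebesgue integral, and only the limit $R\to\infty$ requires attention.

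If $x=0$ the integrand equals $-h_1\,|h|^{-d-\delta}$, odd in $h_1$ on the reflection–symmetric annulus, so every truncated integral vanishes and $F_1(0)=0$. For $x\ne0$ write $x=|x|\widehat x$ and choose an orthogonal matrix $Q$ with $Qe_1=\widehat x$. The substitution $h=|x|Qg$ (so $|\det|=|x|^d$, $|h|=|x||g|$, $x-h=|x|Q(e_1-g)$) gives, for each $R$,
\begin{align*}
  \int_{\frac1R\le|h|\le R}\frac{|x-h|^{s-\delta}\widehat{x_1-h_1}}{|h|^{d-1+s}}\,dh
  =|x|^{1-\delta}\sum_{j=1}^{d}Q_{1j}\int_{\frac{1}{R|x|}\le|g|\le\frac{R}{|x|}}\frac{|e_1-g|^{s-\delta-1}(e_1-g)_j}{|g|^{d-1+s}}\,dg.
\end{align*}
For $j\ne1$ the reflection $g\mapsto(g_1,\dots,-g_j,\dots,g_d)$ preserves the annulus, $|g|$ and $|e_1-g|$ (the last because $|e_1-g|$ depends on $g_j$ only through $g_j^2$) while reversing the sign of $(e_1-g)_j=-g_j$; since the integrand is $L^1$ on the annulus, that summand is identically $0$. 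Only $j=1$ survives, and $Q_{11}=(Qe_1)_1=\widehat x_1$, so
\begin{align*}
  \int_{\frac1R\le|h|\le R}\frac{|x-h|^{s-\delta}\widehat{x_1-h_1}}{|h|^{d-1+s}}\,dh
  =|x|^{1-\delta}\,\widehat x_1\int_{\frac{1}{R|x|}\le|g|\le\frac{R}{|x|}}\frac{|e_1-g|^{s-\delta-1}(1-g_1)}{|g|^{d-1+s}}\,dg.
\end{align*}

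As $R\to\infty$ the inner radius $\tfrac{1}{R|x|}$ tends to $0$ and the outer radius $\tfrac{R}{|x|}$ tends to $\infty$. The proof of Lemma~\ref{lem:welldef.f3} bounds the integral over $\{\tfrac1R\le|g|<\tfrac1r\}$ by $\lesssim r^{s-1}$ and over $\{r<|g|\le R\}$ by $\lesssim r^{-\delta-1}$, uniformly in $R$, so that $\pvint_{\setR^d}\frac{|e_1-g|^{s-\delta-1}(1-g_1)}{|g|^{d-1+s}}\,dg=f_3(d,s,\delta)$ as an improper integral independent of the rates at which the two cutoffs reach their limits. Hence the limit defining $F_1(x)$ exists for every $x\in\setR^d$ and $F_1(x)=f_3(d,s,\delta)\,|x|^{1-\delta}\widehat x_1$. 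In particular $|F_1(x)|\le|f_3(d,s,\delta)|\,|x|^{1-\delta}\lesssim|x|^{1-\delta}$, and since $1-\delta>-d$ this weight is integrable over $B_1(0)$, so $F_1\in L^1(B_1(0))$.

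The only genuinely delicate point is that last step, namely checking that replacing the symmetric truncation by the dilated, non-symmetric annulus $\{\tfrac{1}{R|x|}\le|g|\le\tfrac{R}{|x|}\}$ does not change the limit; this is exactly what the uniform head/tail bounds already present in the proof of Lemma~\ref{lem:welldef.f3} deliver. The change of variables, the parity cancellation for $j\ne1$, and the integrability of $|x|^{1-\delta}$ on $B_1(0)$ are all routine.
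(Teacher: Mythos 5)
Your proof is correct and follows the same approach as the paper's: rescale and rotate to reduce $F_1(x)$ to $|x|^{1-\delta}\widehat x_1\,f_3(d,s,\delta)$ via Lemma~\ref{lem:welldef.f3}, handle $x=0$ by oddness, then integrate the resulting bound. Your version is actually cleaner than the paper's — the paper's displayed change-of-variables identity omits both the rotation to $e_1$ and the factor $\widehat x_1$ (and has off-by-reciprocal limits), and leaves the uniformity of the head/tail estimates under asymmetric truncation implicit, whereas you supply the orthogonal matrix $Q$, the parity cancellation for $j\neq 1$, and the explicit observation that the $r^{s-1}$ and $r^{-\delta-1}$ bounds from Lemma~\ref{lem:welldef.f3} are uniform in the outer cutoff, which is exactly what justifies passing from the symmetric annulus $[\tfrac1R,R]$ to $[\tfrac1{R|x|},\tfrac R{|x|}]$.
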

\begin{proof}
For $x\in\setR^d\setminus\{0\}$, by change of variables, we can write
\begin{align*}
\int_{\frac 1R \leq \abs{h} \leq R}\dfrac{\abs{x-h}^{s-\delta}\widehat{x_1-h_1}}{\abs{h}^{d-1+s}}\,dh=\abs{x}^{1-\delta}\int_{\frac{\abs{x}}{R} \leq \abs{h} \leq \abs{x}R}\dfrac{\abs{e_1-h}^{s-\delta}\widehat{1-h_1}}{\abs{h}^{d-1+s}}\,dh.
\end{align*}
Applying a similar argument as in the proof of Lemma \ref{lem:welldef.f3} yields the conclusions. When $x=0$, then $F_1(0)=0$ since the map $h\mapsto \abs{h}^{-d-\delta}h_1$ is odd. This completes the proof.
\end{proof}

Now we prove the following lemma, which rigorously establishes the connection between $f_3(d,s,\delta)$ and $g_5\divideontimes g_6$.
\begin{lemma}\label{lem:A40}
We have
$f_3(d,s,\delta)=\lim_{\epsilon\rightarrow 0}\skp{g_5\divideontimes g_6}{\phi_{\epsilon}}$.
\end{lemma}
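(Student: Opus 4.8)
The plan is to mirror the structure of the proof of Lemma~\ref{lem:A20} (the analogous statement for $f_1$), adapting the divergence-theorem manipulation and the approximation-to-identity argument to the kernel weights appearing in $g_5$ and $g_6$. First I would use Lemma~\ref{lem:conv40} to write $\skp{g_5\divideontimes g_6}{\phi_\epsilon} = \skp{g_6(y)}{\skp{g_5(x)}{\phi_\epsilon(x+y)}_x}_y$, which unfolds to the double integral
\begin{align*}
\skp{g_5\divideontimes g_6}{\phi_\epsilon} = \int_{\RRd}\int_{\RRd} \abs{y}^{-d+1-s}\, \abs{x}^{s-\delta-1} x_1 \,\phi_\epsilon(x+y)\,dx\,dy,
\end{align*}
where $\phi_\epsilon(x) = \epsilon^{-d}\overline{\phi}((x-e_1)/\epsilon)$. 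Since $g_5$ and $g_6$ are both locally integrable functions (unlike $g_1$ and $g_2$ in the $f_1$ case), the preliminary ``Step 1'' divergence-theorem manipulation that was needed there to recover the correct singular kernels is \emph{not} required here: both factors already coincide pointwise with their distributional definitions away from the origin. This simplification means I can go almost directly to the approximation-to-identity analysis.

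The core step is then to show
\begin{align*}
\lim_{\epsilon\to 0}\int_{\RRd}\int_{\RRd} \abs{y}^{-d+1-s}\,\abs{x}^{s-\delta-1} x_1\, \phi_\epsilon(x+y)\,dx\,dy = f_3(d,s,\delta).
\end{align*}
Following the ``Step 2'' template in the proof of Lemma~\ref{lem:A20}: I would fix $\kappa\in(0,\tfrac{1}{100})$ with $2\epsilon\le\kappa$ and split $\RRd\times\RRd$ into the three regions $\mathcal{G}_\kappa = \{\abs{y}\le\kappa\}$, $\mathcal{H}_\kappa = \{\abs{e_1-y}\le\kappa\}$, and $\mathcal{I}_\kappa = \RRd^2\setminus(\mathcal{G}_\kappa\cup\mathcal{H}_\kappa)$. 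On $\mathcal{G}_\kappa$ and $\mathcal{H}_\kappa$ I would estimate the integrals directly: after the change of variables sending $x+y\mapsto x$ and using $\support\phi_\epsilon\subset B_\epsilon(e_1)$, one uses the oddness of $h\mapsto\abs{h}^{s-\delta-1}h_1$ (so that constant and linear Taylor terms cancel) together with Lemma~\ref{lem:basic} to bound the remainder; this gives bounds of the form $\kappa^{s-\delta}$ on $\mathcal{G}_\kappa$ (from $\abs{y}^{-d+1-s}$ integrated against a cancelled weight) and a comparable power on $\mathcal{H}_\kappa$, uniformly in $\epsilon$. On $\mathcal{I}_\kappa$ the integrand is nonsingular in both variables, so the classical argument of \cite[Page~714, Theorem~7]{Eva10} applies: since $\phi_\epsilon$ is an approximation to the identity at $e_1$, the integral over $\mathcal{I}_\kappa$ converges as $\epsilon\to 0$ to $\int_{\mathcal{I}_\kappa'} \abs{y}^{-d+1-s}\abs{e_1-y}^{s-\delta-1}(1-y_1)\,dy$ where $\mathcal{I}_\kappa' = \{\abs{y}\ge\kappa\}\cap\{\abs{e_1-y}\ge\kappa\}$, recognizing the integrand of $f_3$ after the substitution $h = e_1-y$. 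Finally, letting $\kappa\to 0$ and invoking Lemma~\ref{lem:welldef.f3} (which guarantees the principal value defining $f_3$ exists) together with the uniform-in-$\epsilon$ bounds on the small regions completes the proof.

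The main obstacle I anticipate is bookkeeping the competing singularities near $y=0$ and near $y=e_1$ simultaneously while keeping all error bounds uniform in $\epsilon$ — in particular, verifying that the cross-region where $\abs{x}$ is comparable to $\abs{e_1-y}$ (handled via the two-piece split $\abs{x}\le\tfrac12\abs{e_1-y}$ versus $\tfrac12\abs{e_1-y}\le\abs{x}\le\epsilon$, exactly as in the $K_1$ estimate of Lemma~\ref{lem:A20}) is controlled. I expect the exponent arithmetic to be slightly cleaner than the $f_1$ case because $g_5$ and $g_6$ are already honest functions, so no surface terms from integration by parts arise; the argument should reduce to careful but routine power-counting. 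I would also note that, unlike $f_1$, there is no need to separate a $\delta=0$ special case here, since the integrals defining $g_5$, $g_6$ and $f_3$ all make sense for the full range $\delta\in[0,\tfrac d2)$.
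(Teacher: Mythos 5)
Your overall scaffolding is right, and you correctly spotted that no divergence-theorem preliminary step (the ``Step~1'' of Lemma~\ref{lem:A20}) is needed here because $g_5$ and $g_6$ are already honest locally integrable functions — that observation is exactly what simplifies this proof relative to the $f_1$ case, and the paper does the same three-region split $\mathcal{G}_\kappa$, $\mathcal{H}_\kappa$, $\mathcal{I}_\kappa$ followed by the Evans-type approximation-to-identity passage on $\mathcal{I}_\kappa$.

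However, your treatment of the region $\mathcal{G}_\kappa=\{\abs{y}\le\kappa\}$ is wrong in a way that matters. You propose to Taylor-expand and use the oddness of $h\mapsto\abs{h}^{s-\delta-1}h_1$ to cancel the constant and linear terms, and you claim a bound of order $\kappa^{s-\delta}$. But on $\mathcal{G}_\kappa$ the singularity comes from the $y$-weight $\abs{y}^{-d+1-s}$ at $y=0$, not from the $x$-weight: since $\support\phi_\epsilon\subset B_\epsilon(e_1)$ and $\abs{y}\le\kappa$ is small, the support forces $x$ into a small neighbourhood of $e_1-y\approx e_1$, far from the origin, so $\abs{x}^{s-\delta-1}\abs{x_1}$ is simply bounded and there is no cancellation to exploit — nor is any needed, because $\abs{y}^{-d+1-s}$ is already locally integrable at the origin (its exponent $-d+1-s$ exceeds $-d$). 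The correct estimate is a direct one,
\begin{align*}
\biggabs{\iint_{\mathcal{G}_\kappa}\abs{y}^{-d+1-s}\abs{x}^{s-\delta-1}x_1\,\phi_\epsilon(x+y)\,dx\,dy}
\lesssim \int_{\abs{y}\le\kappa}\abs{y}^{-d+1-s}\,dy \lesssim \kappa^{1-s},
\end{align*}
uniformly in $\epsilon$. Your claimed $\kappa^{s-\delta}$ is not what this computation produces, and it is also potentially useless: the hypotheses here allow $\delta\in(0,\tfrac d2)$ to exceed $s$, in which case $\kappa^{s-\delta}$ blows up as $\kappa\to 0$ and the argument fails. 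This is precisely the trap the oddness/Taylor device is designed to avoid in the $f_1$ case, where the weight $\abs{y}^{-d-2s}$ \emph{is} non-integrable at the origin; here that device is both inapplicable (the relevant evaluation point for the $x$-weight is $e_1$, not $0$) and unnecessary. The $\mathcal{H}_\kappa$ estimate is similarly a direct one (the $x$-variable sits near the origin there and $\abs{x}^{s-\delta}$ is integrable since $\delta<\tfrac d2<d+s$), yielding $\kappa^{d+s-\delta}$, so your ``comparable power'' remark is fine in spirit but should be made precise with this exponent. With the $\mathcal{G}_\kappa$ estimate corrected to the straightforward $\kappa^{1-s}$ bound, the rest of your plan goes through.
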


\begin{proof}
To do this, with $\epsilon\in(0,\frac{1}{10})$, we observe that
\begin{align*}
\skp{g_5\divideontimes g_6}{\phi_{\epsilon}}=\int_{\setR^d}\abs{y}^{-d+1-s}\left(\int_{\setR^d}\abs{x}^{s-\delta-1}x_1\phi_{\epsilon}(x+y)\,dx\right)\,dy.
\end{align*}
For fixed $\kappa\in(0,1/100)$, consider $\epsilon\in(0,1/100)$ with $2\epsilon\leq\kappa$ and define
\begin{align*}
&\mathcal{G}_{\kappa}=\{(x,y)\in\setR^{2d}:\abs{y}\leq\kappa\},\\
&\mathcal{H}_{\kappa}=\{(x,y)\in\setR^{2d}:\abs{e_1-y}\leq\kappa\},\\
&\mathcal{I}_{\kappa}=\setR^{2d}\setminus(\mathcal{G}_{\kappa}\cup\mathcal{H}_{\kappa}).
\end{align*}
We first estimate the integration on $\mathcal{G}_{\kappa}$. Using $\support\phi\subset B_{\epsilon}(e_1)$, we have
\begin{align*}
&\iint_{\mathcal{G}_{\kappa}}\abs{y}^{-d+1-s}\abs{x}^{s-\delta-1}x_1\phi_{\epsilon}(x+y)\,dx\,dy\\
&\,\,=\frac{\overline{c}}{\epsilon^d}\int_{\abs{y}\leq\kappa}\abs{y}^{-d+1-s}\int_{\setR^d}\abs{x-y-e_1}^{s-\delta-1}(x_1-y_1-1)\overline{\phi}\left(\frac{x}{\epsilon}\right)\,dx\,dy\\
&\,\,\lesssim \int_{\abs{y}\leq\kappa}\abs{y}^{-d+1-s}\dashint_{\abs{x}\leq\epsilon}\overline{\phi}\left(\frac{x}{\epsilon}\right)\,dx\,dy\lesssim \kappa^{1-s}.
\end{align*}

On the set $\mathcal{H}_{\kappa}$, using $\support\phi\subset B_1(0)$ and $\{\abs{e_1-y}\leq\kappa\}\subset\{\abs{x-y-e_1}\leq 2\kappa\}$ for each $x\in B_{\epsilon}(0)$, we compute
\begin{align*}
&\iint_{\mathcal{H}_{\kappa}}\abs{y}^{-d+1-s}\abs{x}^{s-\delta-1}x_1\phi_{\epsilon}(x+y)\,dx\,dy\\
&\quad=\dfrac{\overline{c}}{\epsilon^d}\int_{\abs{e_1-y}\leq\kappa}\abs{y}^{-d+1-s}\int_{\setR^d}\abs{x-y-e_1}^{s-\delta-1}(x_1-y_1-1)\overline{\phi}\left(\frac{x}{\epsilon}\right)\,dx\,dy\\
&\quad\lesssim \dashint_{\abs{x}\leq\epsilon}\left(\int_{\abs{e_1-y}\leq\kappa}\abs{x-y-e_1}^{s-\delta}\,dy\right)\overline{\phi}\left(\frac{x}{\epsilon}\right)\,dx\\
&\quad\lesssim \dashint_{\abs{x}\leq\epsilon}\left(\int_{\abs{x-y-e_1}\leq\kappa}\abs{x-y-e_1}^{s-\delta}\,dy\right)\overline{\phi}\left(\frac{x}{\epsilon}\right)\,dx\lesssim \kappa^{d+s-\delta}.
\end{align*}

On the set $\mathcal{I}_{\kappa}$, the integrand $\abs{y}^{-d+1-s}\abs{x}^{s-\delta-1}x_1\phi_{\epsilon}(x+y)$ are no longer singular, so we can apply the argument of the proof of \cite[Page 714, Theorem 7]{Eva10}. In detail, we compute
\begin{align*}
&\iint_{\mathcal{I}_{\kappa}}\abs{y}^{-d+1-s}\abs{x}^{s-\delta-1}x_1\phi_{\epsilon}(x+y)\,dx\,dy\\
&=\int_{\substack{\abs{e_1-y}\geq\kappa\\\cap\abs{y}\geq\kappa}}\int_{\setR^d}\abs{y}^{-d+1-s}\abs{x-y-e_1}^{s-\delta-1}(x_1-y_1-1)\overline{\phi}\left(\frac{x}{\epsilon}\right)\,dx\,dy\\
&\quad\,\,\underset{\epsilon\rightarrow 0}{\longrightarrow}\,\,-\int_{\substack{\abs{e_1-y}\geq\kappa\\\cap\abs{y}\geq\kappa}}\abs{y}^{-d+1-s}\abs{y+e_1}^{s-\delta-1}(y_1+1)\,dy.
\end{align*}

Since $\kappa\in(0,1/100)$ is arbitrary chosen and $\epsilon$ is already sent to zero, summing up above three estimates yields the conclusion.
\end{proof}

Now we have the following.
\begin{lemma}\label{lem:Fourier4}
We have 
\begin{align}\label{eq:Fourier4}
\begin{split}
&\mathcal{F}[g_5](\xi)=-\pi^{-\frac{d}{2}-s+\delta}i\dfrac{\Gamma\left(\frac{d+s-\delta+1}{2}\right)}{\Gamma\left(\frac{-s+\delta+1}{2}\right)}\abs{\xi}^{-d-s+\delta-1}\xi_1:=\mathcal{G}_5(\xi)\\&\quad \text{and} \quad \mathcal{F}[g_6](\xi)=\pi^{\frac{d}{2}+s-1}\dfrac{\Gamma(\frac{-s+1}{2})}{\Gamma\left(\frac{d+s-1}{2}\right)}\abs{\xi}^{s-1}:=\mathcal{G}_6(\xi)
\end{split}
\end{align}
in the distributional sense.
\end{lemma}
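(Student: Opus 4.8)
The plan is to compute the two Fourier transforms by reducing each of the homogeneous distributions $g_5$ and $g_6$ to a known closed-form Fourier transform of a power of $|x|$, and then to use the differentiation rule for the Fourier transform to handle the first-order factor $x_1$ in $g_5$. The formula \eqref{eq:F-of-power}, stated in Section~\ref{sec:stand-fract-lapl} for all $t \notin 2\setN \cup \{0\}$, is the single input needed: it gives $\mathcal{F}(|\cdot|^{-d-t})(\xi) = \pi^{t+d/2}\,\tfrac{\Gamma(-t/2)}{\Gamma((d+t)/2)}\,|\xi|^t$ in the tempered-distribution sense. Throughout I would use the convention $\mathcal{F}[\partial_1 u] = 2\pi i\,\xi_1\,\mathcal{F}[u]$, equivalently $\mathcal{F}[u]$ applied to $x_1 u$ equals $\tfrac{i}{2\pi}\partial_1 \mathcal{F}[u]$, which is the same convention already used repeatedly in the paper (e.g.\ in the computation of $\mathcal{F}[g_1]$ in Lemma~\ref{lem:Fourier00} and of $\mathcal{F}[g_3]$ in Lemma~\ref{lem:Fourier'}).

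For $g_6(x) = |x|^{-d+1-s} = |x|^{-d-(s-1)}$, one applies \eqref{eq:F-of-power} directly with $t = s-1 \in (-1,0)$, which indeed avoids the excluded set $2\setN\cup\{0\}$ since $s\in(0,1)$. This yields $\mathcal{F}[g_6](\xi) = \pi^{(s-1)+d/2}\,\tfrac{\Gamma(\frac{1-s}{2})}{\Gamma(\frac{d+s-1}{2})}\,|\xi|^{s-1}$, which is exactly $\mathcal{G}_6(\xi)$ as claimed; here I would note that $\Gamma(-t/2) = \Gamma(\tfrac{1-s}{2})$ and $\Gamma(\tfrac{d+t}{2}) = \Gamma(\tfrac{d+s-1}{2})$, and that the resulting power $|\xi|^{s-1}$ is locally integrable (since $s-1 > -d$ for $d\geq 2$), so $\mathcal{G}_6$ is a genuine $L^1_{\mathrm{loc}}$ function. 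For $g_5$, I would write $g_5(x) = |x|^{s-\delta-1}x_1 = \tfrac{1}{s-\delta+1}\,\partial_1\big(|x|^{s-\delta+1}\big)$, provided $s-\delta+1 \neq 0$, which holds since $s>0,\ \delta < d/2$ (in fact $s-\delta+1>0$ always here). Then $|x|^{s-\delta+1} = |x|^{-d-(-d-s+\delta-1)}$, so \eqref{eq:F-of-power} with $t = -d-s+\delta-1$ applies — one must check $t \notin 2\setN\cup\{0\}$, which is automatic because $t$ is negative — giving $\mathcal{F}[|\cdot|^{s-\delta+1}](\xi) = \pi^{-d-s+\delta-1+d/2}\,\tfrac{\Gamma(\frac{d+s-\delta+1}{2})}{\Gamma(\frac{-d-s+\delta-1}{2})}\,|\xi|^{-d-s+\delta-1}$. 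Applying $\mathcal{F}[\partial_1 \cdot] = 2\pi i\xi_1\,\mathcal{F}[\cdot]$ and then dividing by $s-\delta+1$, I would simplify using $\tfrac{2\pi i}{s-\delta+1}\cdot\pi^{-d/2-s+\delta-1}\cdot\tfrac{1}{\Gamma(\frac{-d-s+\delta-1}{2})}$ together with the recursion $\tfrac{-d-s+\delta-1}{2}\,\Gamma(\tfrac{-d-s+\delta-1}{2}) = \Gamma(\tfrac{-d-s+\delta+1}{2})$, and further relate $\Gamma(\tfrac{-d-s+\delta+1}{2})$ to $\Gamma(\tfrac{-s+\delta+1}{2})$ via the reflection formula or the duplication/recursion identities for the Gamma function, so that the stated form $-\pi^{-d/2-s+\delta}i\,\tfrac{\Gamma(\frac{d+s-\delta+1}{2})}{\Gamma(\frac{-s+\delta+1}{2})}\,|\xi|^{-d-s+\delta-1}\xi_1 = \mathcal{G}_5(\xi)$ emerges. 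One should double-check the power of $\pi$ bookkeeping carefully, since $2\pi i \cdot \pi^{-d/2-s+\delta-1} = 2 i\,\pi^{-d/2-s+\delta}$ and the factor $2/(s-\delta+1)$ must be absorbed into the Gamma quotient — this is where an off-by-one in the Gamma arguments would most easily hide.

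Finally I would check that $\mathcal{G}_5$ is indeed a tempered distribution of the stated type: the power $|\xi|^{-d-s+\delta-1}\xi_1$ behaves like $|\xi|^{-d-s+\delta}$ near the origin, which is locally integrable precisely because $-d-s+\delta > -d$, i.e.\ $\delta > s$ — \emph{but} this inequality need not hold, so near $\xi=0$ the object $\mathcal{G}_5$ must in general be interpreted as a (homogeneous) tempered distribution rather than an honest function, which is consistent with how the paper phrases ``in the distributional sense''; away from the origin it is smooth. I expect the only real obstacle to be the routine but error-prone Gamma-function and $\pi$-power algebra in the $g_5$ computation — in particular verifying that the constant genuinely collapses to $\Gamma(\frac{d+s-\delta+1}{2})/\Gamma(\frac{-s+\delta+1}{2})$ — and making sure the differentiation-rule constant $2\pi i$ is the one matching the paper's Fourier normalization $\mathcal{F}[u](\xi)=\int u(x)e^{-2\pi i x\cdot\xi}\,dx$ fixed in Section~\ref{sec:stand-fract-lapl}; both of these are bookkeeping rather than conceptual difficulties, and the structure of the proof is identical to the already-completed computations of $\mathcal{F}[g_1]$, $\mathcal{F}[g_2]$ in Lemma~\ref{lem:Fourier00}.
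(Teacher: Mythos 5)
Your approach is exactly the one the paper takes, but there is one arithmetic slip in the $g_5$ computation that then sends you chasing Gamma identities that will not close. When you apply \eqref{eq:F-of-power} to $|x|^{s-\delta+1} = |x|^{-d-t}$ with $t = -d-s+\delta-1$, the denominator of the Gamma quotient must be
\[
\Gamma\!\left(\frac{d+t}{2}\right) \;=\; \Gamma\!\left(\frac{-s+\delta-1}{2}\right),
\]
whereas you wrote $\Gamma\!\left(\frac{-d-s+\delta-1}{2}\right)$, which is $\Gamma(t/2)$; the $d$'s do not cancel the way they should. This is why your subsequent plan — invoke the recursion to turn $\Gamma\!\left(\frac{-d-s+\delta-1}{2}\right)$ into $\Gamma\!\left(\frac{-d-s+\delta+1}{2}\right)$ and then somehow relate the latter to $\Gamma\!\left(\frac{-s+\delta+1}{2}\right)$ via reflection or duplication — cannot work: those two Gamma values differ by a shift of $d/2$ in the argument and are not related by any single identity for general $d$. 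The stray $d$ in the argument is the symptom of the misapplied formula.

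With the corrected denominator, a single application of the recursion $z\,\Gamma(z)=\Gamma(z+1)$ at $z=\frac{-s+\delta-1}{2}$ gives
\[
\Gamma\!\left(\frac{-s+\delta-1}{2}\right) \;=\; \frac{-2}{\,s-\delta+1\,}\,\Gamma\!\left(\frac{-s+\delta+1}{2}\right),
\]
which exactly absorbs the prefactor $\frac{2}{s-\delta+1}$ coming from the differentiation rule and produces the minus sign in $\mathcal{G}_5$; no further Gamma gymnastics are needed. Your $g_6$ computation is correct, and your observations about local integrability versus genuine homogeneous-distribution interpretation of $\mathcal{G}_5$ near the origin are apt. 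Once the denominator is fixed, the algebra collapses in one step and your argument matches the paper's proof line for line.
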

\begin{proof}
We compute
\begin{align*}
\mathcal{F}[g_5](\xi)&=\dfrac{1}{s-\delta+1}\mathcal{F}[\partial_1\abs{x}^{s-\delta+1}]\\
&=\dfrac{2\pi i\xi_1}{s-\delta+1}\mathcal{F}[\abs{x}^{s-\delta+1}]\\
&=\dfrac{2\pi i\xi_1}{s-\delta+1}\dfrac{(2\pi)^{-s+\delta-1}\Gamma\left(\frac{d+s-\delta+1}{2}\right)}{\pi^{\frac{d}{2}}2^{-s+\delta-1}\Gamma\left(\frac{-s+\delta-1}{2}\right)}\abs{\xi}^{-d-s+\delta-1}\\
&=-\pi^{-\frac{d}{2}-s+\delta}i\dfrac{\Gamma\left(\frac{d+s-\delta+1}{2}\right)}{\Gamma\left(\frac{-s+\delta+1}{2}\right)}\abs{\xi}^{-d-s+\delta-1}\xi_1
\end{align*}
and
\begin{align*}
\mathcal{F}[g_6](\xi)=\pi^{\frac{d}{2}+s-1}\dfrac{\Gamma(\frac{-s+1}{2})}{\Gamma\left(\frac{d+s-1}{2}\right)}\abs{\xi}^{s-1}.
\end{align*}
This completes the proof.
\end{proof}

Now with $\oldphi_{\epsilon}(x):=\frac{1}{\epsilon^d}\overline{\phi}\left(\frac{x}{\epsilon}\right)$ with $\epsilon\in(0,1)$ where $\overline{\phi}$ is a standard mollifier in $\setR^d$, we obtain the following lemma.

\begin{lemma}\label{lem:Fourier40}
The multiplication of distributions $\mathcal{G}_5\cdot \mathcal{G}_6$ is well-defined in $x\in\setR^d$ in the sense that for any $\phi\in \mathcal{S}(\setR^d)$,
\begin{align*}
\skp{\mathcal{G}_5\cdot \mathcal{G}_6}{\phi}=\lim_{\epsilon\rightarrow 0}\bigskp{\skp{\mathcal{G}_5(y)}{\oldphi_{\epsilon}(\cdot-y)}_y\mathcal{G}_6(\cdot)}{\phi(\cdot)}.
\end{align*}
Moreover, we have
\begin{align}\label{eq:g50g60}
\lim_{\epsilon\rightarrow 0}\bigskp{\skp{\mathcal{G}_5(y)}{\oldphi_{\epsilon}(\cdot-y)}_y\mathcal{G}_6(\cdot)}{\phi(\cdot)}=\skp{\mathcal{G}_5\mathcal{G}_6}{\phi},
\end{align}
where 
\begin{align*}
\mathcal{G}_5\mathcal{G}_6=-\pi^{\delta-1}i\dfrac{\Gamma\left(\frac{d+s-\delta+1}{2}\right)\Gamma\left(\frac{-s+1}{2}\right)}{\Gamma\left(\frac{-s+\delta+1}{2}\right)\Gamma\left(\frac{d+s-1}{2}\right)}\abs{\xi}^{-d+\delta-2}\xi_1-ic_1(d,s)(\partial_1\delta_0)\identity_{\{\delta=0\}}
\end{align*}
with $\partial_1\delta_0$ being the partial derivative with respect to $x_1$ of Dirac delta distribution concentrated at 0, and
\begin{align*}
c_1(d,s)=\frac{\pi^{\frac{d}{2}+1-2s}(d+s-1)}{d\Gamma\left(\frac{d}{2}\right)}.
\end{align*}
\end{lemma}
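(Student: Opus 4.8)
The plan is to follow the scheme already used for Lemma~\ref{lem:Fourier000} and Lemma~\ref{lem:Fourier} — reduction to the origin via the Lebesgue differentiation theorem, plus an oddness/cancellation estimate on a small ball — and then to isolate the one genuinely new feature, the term $-ic_1(d,s)(\partial_1\delta_0)$, which is invisible for $\delta>0$ and appears only in the borderline case $\delta=0$.

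\textbf{Well-definedness and reduction.} For fixed $\epsilon\in(0,1)$, since $\mathcal{G}_5\in\mathcal{S}'(\setR^d)$ and $\overline{\phi}_\epsilon\in\mathcal{S}(\setR^d)$, the convolution $\mathcal{G}_5\divideontimes\overline{\phi}_\epsilon$ lies in $\theta_M$ (see \cite[Page~84]{Vla02}), so multiplying by $\mathcal{G}_6\in\mathcal{S}'(\setR^d)$ gives a well-defined tempered distribution, and writing $\mathcal{G}_5,\mathcal{G}_6$ as the locally integrable functions of Lemma~\ref{lem:Fourier4} on $\setR^d\setminus\{0\}$ the pairing $\bigskp{\skp{\mathcal{G}_5(y)}{\overline{\phi}_\epsilon(\cdot-y)}_y\mathcal{G}_6(\cdot)}{\phi}$ is an ordinary integral; only the limit $\epsilon\to0$ is at issue. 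Fixing $\phi\in\mathcal{S}(\setR^d)$ and $\kappa\ge 2\epsilon$, on $\{|x|\ge\kappa\}$ both $\mathcal{G}_5$ and $\mathcal{G}_6$ coincide with continuous functions, so by the Lebesgue differentiation theorem, exactly as in the proofs of Lemma~\ref{lem:Fourier000} and Lemma~\ref{lem:Fourier},
\begin{align*}
\lim_{\epsilon\to0}\int_{|x|\ge\kappa}\Big(\int_{\setR^d}\big(\mathcal{G}_5(y)-\mathcal{G}_5(x)\big)\overline{\phi}_\epsilon(x-y)\,dy\Big)\mathcal{G}_6(x)\phi(x)\,dx=0.
\end{align*}
Everything thus reduces to the ball $\{|x|\le\kappa\}$, where $\mathcal{G}_5(x)\mathcal{G}_6(x)=c(d,s,\delta)\,|x|^{-d+\delta-2}x_1$ has homogeneity $-d+\delta-1$.

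\textbf{The case $\delta>0$.} Here $-d+\delta-2>-d-2$, so $|\xi|^{-d+\delta-2}\xi_1$ defines a tempered distribution by its principal value (oddness), and the argument is a direct variant of the one for Lemma~\ref{lem:Fourier000}: splitting the small-ball integral into the diagonal contribution $-\int_{|x|\le\kappa}\mathcal{G}_5\mathcal{G}_6\,\phi$ and the mollified contribution, and using the oddness of $x\mapsto|x|^\alpha x_1$ together with $|\phi(x)-\phi(-x)|\lesssim|x|$, the vanishing of $\int_{\rho\le|x|\le\epsilon}|x|^{-d-s+\delta-1}x_1\,dx$, and Lemma~\ref{lem:basic} (splitting once more into $\{|x|\le 2\epsilon\}$ and $\{2\epsilon\le|x|\le\kappa\}$ as in that proof), one bounds the whole small-ball contribution by $C(d,s,\delta)\,\kappa^{\delta}$, uniformly in $\epsilon$. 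Letting $\epsilon\to0$ and then $\kappa\to0$ yields \eqref{eq:g50g60} with no $\partial_1\delta_0$ term.

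\textbf{The case $\delta=0$ and the main obstacle.} Now the product is formally $c(d,s,0)\,|x|^{-d-2}x_1$, of critical homogeneity $-d-1$: the principal value no longer converges, because the linear Taylor term $\nabla\phi(0)\cdot x$ produces a logarithm, and this is precisely the source of the $\partial_1\delta_0$ correction. The plan is to expand $\phi(x)=\phi(0)+\nabla\phi(0)\cdot x+O(|x|^2)$ on $\{|x|\le\kappa\}$; the $\phi(0)$ piece vanishes by oddness, the $O(|x|^2)$ piece still contributes $O(\kappa)$, while the linear piece produces, after $\epsilon\to0$, a divergent diagonal contribution proportional to $\partial_1\phi(0)\int_{\rho\le|x|\le\kappa}|x|^{-d-2}x_1^2\,dx$ and a matching divergence on the mollified side; their difference converges to a finite multiple of $\partial_1\phi(0)=-\skp{\partial_1\delta_0}{\phi}$. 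That multiple is then read off from $\int_{S^{d-1}}\omega_1^2\,d\sigma=\frac{2\pi^{d/2}}{d\,\Gamma(d/2)}$ and the explicit $\pi$- and $\Gamma$-prefactors of $\mathcal{G}_5,\mathcal{G}_6$ in Lemma~\ref{lem:Fourier4}, giving exactly $c_1(d,s)=\frac{\pi^{d/2+1-2s}(d+s-1)}{d\,\Gamma(d/2)}$; an alternative bookkeeping I would keep in reserve is to perform the $\delta>0$ computation and let $\delta\to0^+$, noting that $\int_{|\xi|\le1}|\xi|^{-d+\delta-2}\xi_1^2\,d\xi\sim\frac{|S^{d-1}|}{d\,\delta}$ has a simple pole whose residue reproduces the same $\partial_1\delta_0$ term (consistently with the logarithmic Fourier transform of $|\xi|^{-d}$ recorded in \cite[Remark~1.2]{CheWet19}). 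All estimates away from the critical homogeneity are routine adaptations of those already carried out; the real difficulty — and the part I expect to consume most of the work — is showing that the individually divergent diagonal and mollified contributions of the linear Taylor term of $\phi$ cancel up to a finite remainder, and then pinning that remainder down to be exactly $-ic_1(d,s)\partial_1\delta_0$ with the stated constant.
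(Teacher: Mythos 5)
Your treatment of the non-critical case $\delta>0$ coincides with the paper's: split into $\{|x|\le\kappa\}$ and its complement, dispose of the outer part by the Lebesgue differentiation theorem as $\epsilon\to0$, and control the small-ball contribution by $C\kappa^\delta$ uniformly in $\epsilon$ using the oddness of $x\mapsto|x|^\alpha x_1$ together with $|\phi(x)-\phi(-x)|\lesssim|x|$. For the critical case $\delta=0$, however, your route diverges from the paper's. The paper performs a simultaneous rescaling $x\mapsto\kappa x$, $y\mapsto\kappa y$ inside $I_2$, passes $\epsilon\to0$ with $\kappa$ fixed, and then lets $\kappa\to0$, reading off the coefficient of $\partial_1\phi(0)$ from a single closed ball integral and $\int_{S^{d-1}}\omega_1^2\,d\omega=\pi^{d/2}/\Gamma(\tfrac{d}{2}+1)$. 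You instead propose a Taylor expansion of $\phi$ at the origin and an explicit cancellation of the resulting logarithmic divergences between the diagonal (p.v.) side and the mollified side, with your "send $\delta\to0^+$ and extract the residue of the simple pole of $\int_{|\xi|\le1}|\xi|^{-d+\delta-2}\xi_1^2\,d\xi$" as a third reserve route. These are genuinely different mechanisms; your sketch is arguably more transparent about where the $\partial_1\delta_0$ correction comes from (namely the ambiguity in assigning a finite part to a distribution of critical homogeneity $-d-1$), while the paper's scaling trick compresses the computation considerably.

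The gap is precisely the one you flag yourself: you have not carried out the cancellation of the two logarithmic divergences nor computed the finite remainder. For $\delta=0$ both the regularized diagonal integral $\int_{\rho\le|x|\le\kappa}|x|^{-d-2}x_1\phi(x)\,dx$ and the mollified contribution blow up like $\partial_1\phi(0)\cdot\log$ through $\int|x|^{-d-2}x_1^2\,dx$, and the entire content of the lemma in this borderline case is the regularization-dependent finite part, which must be shown to equal exactly $-ic_1(d,s)\,\partial_1\delta_0$ with $c_1(d,s)=\pi^{d/2+1-2s}(d+s-1)/(d\,\Gamma(d/2))$; the prefactors of $\mathcal{G}_5$ and $\mathcal{G}_6$ from Lemma~\ref{lem:Fourier4} must also be threaded through. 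As written the proposal correctly identifies the mechanism and predicts the shape of the answer, but the central computation — establishing the cancellation and pinning down $c_1(d,s)$ — is deferred, so this is an outline rather than a proof of the $\delta=0$ case.
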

\begin{proof}
Let us choose $\kappa\geq 2\epsilon$ and consider
\begin{align*}
&\biggabs{\int_{\abs{x}\leq\kappa}\left(\int_{\setR^d}\left[\abs{y}^{-d-s+\delta-1}y_1-\abs{x}^{-d-s+\delta-1}x_1\right]\oldphi_{\epsilon}(x-y)\,dy\right)\abs{x}^{s-1}\phi(x)\,dx}\\
&\quad=\biggabs{\int_{\abs{x}\leq\kappa}\abs{x}^{-d+\delta-2}x_1\phi(x)\,dx}\\
&\quad\quad+\biggabs{\overline{c}\int_{\setR^d}\left(\int_{\abs{x}\leq\kappa}\abs{x}^{s-1}\phi(x)\abs{x-y}^{-d-s+\delta-1}(x_1-y_1)\,dx\right)\oldphi_{\epsilon}(y)\,dy}\\
&\quad=:\abs{I_1}+\abs{\overline{c}I_2}.
\end{align*}
Using the fact that $x\mapsto \abs{x}^{-d+\delta-2}x_1\frac{\phi(x)-\phi(-x)}{2}$ is odd, and $\abs{\phi(x)-\phi(-x)}\lesssim \abs{x}$, note that
\begin{align}\label{eq:I14}
\begin{split}
\abs{I_1}&=\biggabs{\int_{\abs{x}\leq\kappa}\abs{x}^{-d+\delta-2}x_1\frac{\phi(x)-\phi(-x)}{2}\,dx}\\
&=\int_{\abs{x}\leq\kappa}\abs{x}^{-d+\delta}\,dx\lesssim\kappa^{\delta}.
\end{split}
\end{align} 

For $I_2$, using the fact that the map
\begin{align*}
x\mapsto \abs{x}^{s-1}\dfrac{\phi(x)+\phi(-x)}{2}\int_{\setR^d}\abs{x-y}^{-d-s+\delta-1}(x_1-y_1)\oldphi_{\epsilon}(y)\,dy
\end{align*}
is odd since $\oldphi_{\epsilon}$ is even, we write
\begin{align}\label{eq:I2}
I_2&=\int_{\setR^d}\left(\int_{\abs{x}\leq\kappa}\abs{x}^{s-1}\frac{\phi(x)-\phi(-x)}{2}\abs{x-y}^{-d-s+\delta-1}(x_1-y_1)\,dx\right)\oldphi_{\epsilon}(y)\,dy.
\end{align}

Now assume that $\delta>0$. The case of $\delta=0$ will be considered later. We write
\begin{align*}
I_2&=\int_{\abs{x}\leq\kappa}\left(\int_{\setR^d}\abs{y}^{-d-s+\delta-1}y_1\oldphi_{\epsilon}(x-y)\,dy\right)\abs{x}^{s-1}\frac{\phi(x)-\phi(-x)}{2}\,dx\\
&=\int_{\abs{x}\leq 2\epsilon}\left(\int_{\setR^d}\abs{y}^{-d-s+\delta-1}y_1\oldphi_{\epsilon}(x-y)\,dy\right)\abs{x}^{s-1}\frac{\phi(x)-\phi(-x)}{2}\,dx\\
&\quad+\int_{2\epsilon\leq \abs{x}\leq\kappa}\left(\int_{\setR^d}\abs{y}^{-d-s+\delta-1}y_1\oldphi_{\epsilon}(x-y)\,dy\right)\abs{x}^{s-1}\frac{\phi(x)-\phi(-x)}{2}\,dx.
\end{align*}

Let us estimate the first term in the right-hand side. We have
\begin{align*}
&\int_{\abs{x}\leq 2\epsilon}\left(\int_{\setR^d}\abs{y}^{-d-s+\delta-1}y_1\oldphi_{\epsilon}(x-y)\,dy\right)\abs{x}^{s-1}\frac{\phi(x)-\phi(-x)}{2}\,dx\\
&=\int_{\abs{x}\leq 2\epsilon}\left(\int_{\abs{y}\leq\frac{1}{2}\abs{x}}\abs{y}^{-d-s+\delta-1}y_1\left(\oldphi_{\epsilon}(x-y)-\oldphi_{\epsilon}(x)\right)\,dy\right)\abs{x}^{s-1}\frac{\phi(x)-\phi(-x)}{2}\,dx\\
&\quad+\int_{\abs{x}\leq 2\epsilon}\left(\int_{\abs{y}\geq\frac{1}{2}\abs{x}}\abs{y}^{-d-s+\delta-1}y_1\oldphi_{\epsilon}(x-y)\,dy\right)\abs{x}^{s-1}\frac{\phi(x)-\phi(-x)}{2}\,dx,
\end{align*}
where we have used the fact that
\begin{align*}
\oldphi_{\epsilon}(x)\int_{\abs{y}\leq\frac{1}{2}\abs{x}}\abs{y}^{-d-s+\delta-1}y_1\,dy=0.
\end{align*}

Using $\abs{\oldphi_{\epsilon}(x-y)-\oldphi_{\epsilon}(x)}\lesssim \epsilon^{-1-d}\abs{y\cdot\nabla\overline{\phi}}$, we further compute
\begin{align*}
&\int_{\abs{x}\leq 2\epsilon}\left(\int_{\abs{y}\leq\frac{1}{2}\abs{x}}\abs{y}^{-d-s+\delta-1}y_1\left(\oldphi_{\epsilon}(x-y)-\oldphi_{\epsilon}(x)\right)\,dy\right)\abs{x}^{s-1}\frac{\phi(x)-\phi(-x)}{2}\,dx\\
&\quad\lesssim\frac{1}{\epsilon}\dashint_{\abs{x}\leq 2\epsilon}\left(\int_{\abs{y}\leq\frac{1}{2}\abs{x}}\abs{y}^{-d-s+\delta+1}\,dy\right)\abs{x}^{s}\,dx\lesssim \epsilon^{\delta}\lesssim \kappa^{\delta},
\end{align*}
where we have used $\abs{\phi(x)-\phi(-x)}\lesssim \abs{x}$. For the other term, using $\support\oldphi_{\epsilon}\subset B_{\epsilon}(0)$, we have
\begin{align*}
&\int_{\abs{x}\leq 2\epsilon}\left(\int_{\abs{y}\geq\frac{1}{2}\abs{x}}\abs{y}^{-d-s+\delta-1}y_1\oldphi_{\epsilon}(x-y)\,dy\right)\abs{x}^{s-1}\frac{\phi(x)-\phi(-x)}{2}\,dx\\
&\quad\lesssim \int_{\abs{x}\leq 2\epsilon}\abs{x}^{s}\epsilon^{\delta-s}\left(\int_{\abs{y}\leq\epsilon}\oldphi_{\epsilon}(y)\,dy\right)\,dx\lesssim \epsilon^{\delta}\lesssim \kappa^{\delta}.
\end{align*}
Note that the implicit constants in the above estimates do not depend on $\epsilon$. Also, using $\support\oldphi_{\epsilon}\subset B_{\epsilon}(0)$ and $\abs{\phi(x)-\phi(-x)}\lesssim \abs{x}$, we can estimate
\begin{align*}
&\int_{2\epsilon\leq \abs{x}\leq\kappa}\left(\int_{\setR^d}\abs{y}^{-d-s+\delta-1}y_1\oldphi_{\epsilon}(x-y)\,dy\right)\abs{x}^{s-1}\frac{\phi(x)-\phi(-x)}{2}\,dx\\
&\quad=\int_{2\epsilon\leq \abs{x}\leq\kappa}\left(\int_{\abs{x-y}\leq\epsilon}\abs{y}^{-d-s+\delta-1}y_1\oldphi_{\epsilon}(x-y)\,dy\right)\abs{x}^{s-1}\frac{\phi(x)-\phi(-x)}{2}\,dx\\
&\quad=\overline{c}\int_{2\epsilon\leq \abs{x}\leq\kappa}\left(\dashint_{\abs{y}\leq\epsilon}\abs{x-y}^{-d-s+\delta-1}(x_1-y_1)\overline{\phi}\left(\frac{y}{\epsilon}\right)\,dy\right)\abs{x}^{s-1}\frac{\phi(x)-\phi(-x)}{2}\,dx\\
&\underset{\epsilon\rightarrow 0}{\longrightarrow} \overline{c}\int_{\abs{x}\leq\kappa}\abs{x}^{-d+\delta-s}\abs{x}^{s-1}\frac{\phi(x)-\phi(-x)}{2}\,dx\\
&\quad\leq \int_{\abs{x}\leq\kappa}\abs{x}^{-d+\delta}\,dx\lesssim \kappa^{\delta}.
\end{align*}

Then merging the estimates, we have
\begin{align*}
\biggabs{\int_{\abs{x}\leq\kappa}\left(\int_{\setR^d}(\mathcal{G}_1(y)-\mathcal{G}_1(x))\oldphi_{\epsilon}(x-y)\,dy\right)\mathcal{G}_2(x)\phi(x)\,dx}\lesssim\kappa^{\delta}.
\end{align*}

Now similar to the proof of Lemma \ref{lem:A2}, since on the set $\abs{x}\geq\kappa$ with $\kappa\geq 2\epsilon$, both $\mathcal{G}_5(x)$ and $\mathcal{G}_6(x)$ are no longer singular, by Lebesgue differentiation theorem we have
\begin{align*}
&\lim_{\epsilon\rightarrow 0}\biggabs{\int_{\abs{x}\geq\kappa}\left(\int_{\setR^d}(\mathcal{G}_5(y)-\mathcal{G}_5(x))\oldphi_{\epsilon}(x-y)\,dy\right)\mathcal{G}_6(x)\phi(x)\,dx}=0.
\end{align*}
Then together with \eqref{eq:I14}, we obtain 
\begin{align*}
&\lim_{\epsilon\rightarrow 0}\bigabs{\bigskp{\skp{\mathcal{G}_5(y)}{\oldphi_{\epsilon}(\cdot-y)}_y\mathcal{G}_6(\cdot)}{\phi(\cdot)}-\skp{\mathcal{G}_5\mathcal{G}_6}{\phi}}\\
&\quad\leq\lim_{\epsilon\rightarrow 0}\biggabs{\int_{\abs{x}\leq\kappa}\left(\int_{\setR^d}(\mathcal{G}_5(y)-\mathcal{G}_5(x))\oldphi_{\epsilon}(x-y)\,dy\right)\mathcal{G}_6(x)\phi(x)\,dx}\\
&\quad +\lim_{\epsilon\rightarrow 0}\biggabs{\int_{\abs{x}\geq\kappa}\left(\int_{\setR^d}(\mathcal{G}_5(y)-\mathcal{G}_5(x))\oldphi_{\epsilon}(x-y)\,dy\right)\mathcal{G}_6(x)\phi(x)\,dx}\lesssim \kappa^{\delta}.
\end{align*}
Since $\kappa$ was arbitrary, we get \eqref{eq:g50g60} in case of $\delta>0$.

In case of $\delta=0$, using change of variables, from \eqref{eq:g50g60} we estimate
\begin{align*}
I_2&=\dashint_{B_{\epsilon}(0)}\left(\int_{\abs{x}\leq\kappa}\abs{x}^{s-1}\dfrac{\phi(x)-\phi(-x)}{2}\abs{x-y}^{-d-s-1}(x_1-y_1)\,dx\right)\overline{\phi}\left(\frac{y}{\epsilon}\right)\,dy\\
&=\dashint_{B_{\frac{\epsilon}{\kappa}}(0)}\left(\int_{\abs{x}\leq 1}\kappa^{d}\kappa^{s-1}\abs{x}^{s-1}\phi(\kappa x)\kappa^{-d-s}\abs{x-y}^{-d-s-1}(x_1-y_1)\,dx\right)\overline{\phi}\left(\frac{\kappa y}{\epsilon}\right)\,dy\\
&\underset{\epsilon\rightarrow 0}{\longrightarrow}\dfrac{1}{\kappa}\int_{\abs{x}\leq 1}\abs{x}^{s-1}\phi(\kappa x)\abs{x}^{-d-s-1}x_1\,dx\\
&\quad=\dfrac{1}{\kappa}\int_{\abs{x}\leq 1}\phi(\kappa x)\abs{x}^{-d-2}x_1\,dx.
\end{align*}
Estimating further, using the fact that $x\mapsto\frac{\phi(\kappa x)+\phi(-\kappa x)}{2}\abs{x}^{-d-2}x_1$ is odd, we see that
\begin{align*}
\dfrac{1}{\kappa}\int_{\abs{x}\leq 1}\phi(\kappa x)\abs{x}^{-d-2}x_1\,dx&=\dfrac{1}{\kappa}\int_{\abs{x}\leq 1}\abs{x}^{-d-2}x_1\frac{\phi(\kappa x)-\phi(-\kappa x)}{2}\,dx\\
&=\dfrac{1}{\kappa}\int_{\abs{x}\leq 1}\abs{x}^{-d-1}x_1\frac{\phi(\kappa x)-\phi(-\kappa x)}{2\kappa\abs{x}}\,dx\\
&\underset{\kappa\rightarrow 0}{\longrightarrow}\int_{\abs{x}\leq 1}\abs{x}^{-d-1}x_1x\cdot\nabla\phi(0)\,dx\\
&\quad=\partial_1\phi(0)\int_{\abs{x}\leq 1}\abs{x}^{-d-1}x_1^2\,dx.
\end{align*}
Here, using \cite{Fol01}, we calculate
\begin{align*}
\int_{\abs{x}\leq 1}\abs{x}^{-d-1}x_1^2\,dx&=\int_{\abs{x}\leq 1}\abs{x}^{-d+1}\frac{x_1^2}{\abs{x}^2}\,dx\\
&=\int_{0}^{1}\left(\int_{S^d}\kappa_1^2\,d\kappa\right) r^{-d+1}r^{d-1}\,dr=\int_{S^d}\kappa_1^2\,d\kappa=\frac{\pi^{\frac{d}{2}}}{\Gamma\left(\frac{d+2}{2}\right)}.
\end{align*}
Thus we have obtained
\begin{align*}
\lim_{\kappa\rightarrow 0}\lim_{\epsilon\rightarrow 0}I_2=\frac{\pi^{\frac{d}{2}}}{\Gamma\left(\frac{d+2}{2}\right)}\partial_1\phi(0)=\frac{\pi^{\frac{d}{2}}}{\Gamma\left(\frac{d+2}{2}\right)}\skp{\partial_1\delta_0}{\phi}
\end{align*}
so that recalling the coefficients of $\abs{\xi}^{-d-s+\delta-1}\xi_1$ and $\abs{\xi}^{s-1}$ in \eqref{eq:Fourier4}, finally we get
\begin{align*}
&\mathcal{G}_5\mathcal{G}_6=\pi^{1-2s+\delta}i\dfrac{\Gamma\left(\frac{d+s-\delta+1}{2}\right)\Gamma\left(\frac{-s+1}{2}\right)}{\Gamma\left(\frac{-s+\delta+1}{2}\right)\Gamma\left(\frac{d+s-1}{2}\right)}\left(\abs{\xi}^{-d+\delta-2}\xi_1+\frac{\pi^{\frac{d}{2}}}{\Gamma\left(\frac{d+2}{2}\right)}(\partial_1\delta_0)\identity_{\{\delta=0\}}\right)\\
&=\pi^{1-2s+\delta}i\dfrac{\Gamma\left(\frac{d+s-\delta+1}{2}\right)\Gamma\left(\frac{-s+1}{2}\right)}{\Gamma\left(\frac{-s+\delta+1}{2}\right)\Gamma\left(\frac{d+s-1}{2}\right)}\abs{\xi}^{-d+\delta-2}\xi_1\!+\!\frac{\pi^{\frac{d}{2}+1-2s}(d+s-1)i}{d\Gamma\left(\frac{d}{2}\right)}(\partial_1\delta_0)\identity_{\{\delta=0\}}.
\end{align*}
The proof is complete.
\end{proof}

With the help of Lemma \ref{lem:Fourier4} and \ref{lem:Fourier40}, we have the following theorem.
\begin{theorem}\label{thm:Fourierconv4}
We have 
\begin{align*}
\mathcal{F}[g_5\divideontimes g_6]=\mathcal{F}[g_5]\mathcal{F}[g_6]-ic_1(d,s)(\partial_1\delta_0)\identity_{\{\delta=0\}}
\end{align*}
in the distributional sense, where $\mathcal{F}[g_5]\mathcal{F}[g_6]$ means the usual multiplication between $\mathcal{F}[g_5]$ and $\mathcal{F}[g_6]$, $\partial_1\delta_0$ means the partial derivative with respect to $x_1$ of Dirac delta distribution concentrated at 0, and $c_1(d,s)$ is defined in Lemma \ref{lem:Fourier40}.
\end{theorem}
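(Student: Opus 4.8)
The plan is to follow verbatim the strategy used in the proofs of Theorem \ref{thm:Fourierconv0} and Theorem \ref{thm:Fourierconv}: fix an arbitrary $\phi\in\mathcal{S}(\setR^d)$ and chain together four equalities, the last of which is the only place where the extra Dirac term enters. First I would write
\begin{align*}
\skp{\mathcal{F}[g_5\divideontimes g_6]}{\phi}
&=\skp{\mathcal{F}[g_5]\cdot\mathcal{F}[g_6]}{\phi},
\end{align*}
which is legitimate because $g_5,g_6\in\mathcal{S}'(\setR^d)$ and the convolution $g_5\divideontimes g_6$ is well defined (Lemma \ref{lem:conv4} together with Lemma \ref{lem:conv40}), so that the convolution theorem of \cite[Page 96]{Vla02} applies; here $\mathcal{F}[g_5]\cdot\mathcal{F}[g_6]$ denotes Vladimirov's product of distributions.

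Next I would unfold this product through the regularization $\oldphi_{\epsilon}$:
\begin{align*}
\skp{\mathcal{F}[g_5]\cdot\mathcal{F}[g_6]}{\phi}
&=\lim_{\epsilon\rightarrow 0}\skp{(\mathcal{F}[g_5]\divideontimes\oldphi_{\epsilon})\,\mathcal{F}[g_6]}{\phi}
=\lim_{\epsilon\rightarrow 0}\bigskp{\skp{\mathcal{F}[g_5](y)}{\oldphi_{\epsilon}(\cdot-y)}_y\,\mathcal{F}[g_6](\cdot)}{\phi(\cdot)}.
\end{align*}
For the first of these two steps one uses that $\mathcal{F}[g_5]\divideontimes\oldphi_{\epsilon}\in\theta_M$ (see \cite[Page 84]{Vla02} and \eqref{eq:thetaM}) while $\mathcal{F}[g_6]\in\mathcal{S}'(\setR^d)$, so that $(\mathcal{F}[g_5]\divideontimes\oldphi_{\epsilon})\mathcal{F}[g_6]\in\mathcal{S}'(\setR^d)$ as in \cite[Page 79]{Vla02}; for the second step one invokes \cite[Page 84]{Vla02} once more. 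This is identical in form to the earlier two theorems, the only case-specific input being that $\mathcal{F}[g_5]=\mathcal{G}_5$ and $\mathcal{F}[g_6]=\mathcal{G}_6$ are the explicit locally integrable functions of Lemma \ref{lem:Fourier4}.

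The last equality, and the point at which the present statement genuinely differs from Theorem \ref{thm:Fourierconv0} and Theorem \ref{thm:Fourierconv}, is
\begin{align*}
\lim_{\epsilon\rightarrow 0}\bigskp{\skp{\mathcal{G}_5(y)}{\oldphi_{\epsilon}(\cdot-y)}_y\,\mathcal{G}_6(\cdot)}{\phi(\cdot)}
&=\skp{\mathcal{G}_5\mathcal{G}_6}{\phi}
=\skp{\mathcal{F}[g_5]\mathcal{F}[g_6]-ic_1(d,s)(\partial_1\delta_0)\identity_{\{\delta=0\}}}{\phi},
\end{align*}
which is exactly the content of Lemma \ref{lem:Fourier40}. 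The extra distribution $-ic_1(d,s)(\partial_1\delta_0)\identity_{\{\delta=0\}}$ is present only when $\delta=0$, where the pointwise product $\abs{\xi}^{-d-s-1}\xi_1\cdot\abs{\xi}^{s-1}=\abs{\xi}^{-d-2}\xi_1$ is no longer locally integrable and the double limit ($\epsilon\to 0$ then $\kappa\to 0$) in Lemma \ref{lem:Fourier40} picks up the boundary term $c_1(d,s)\partial_1\phi(0)=c_1(d,s)\skp{\partial_1\delta_0}{\phi}$. Assembling the four displays yields $\skp{\mathcal{F}[g_5\divideontimes g_6]}{\phi}=\skp{\mathcal{F}[g_5]\mathcal{F}[g_6]-ic_1(d,s)(\partial_1\delta_0)\identity_{\{\delta=0\}}}{\phi}$ for every $\phi\in\mathcal{S}(\setR^d)$, which is the claimed distributional identity.

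I do not expect a substantive obstacle, since all analytic content has been pushed into Lemma \ref{lem:conv4}, Lemma \ref{lem:conv40}, Lemma \ref{lem:Fourier4} and — crucially — Lemma \ref{lem:Fourier40}. The one delicate point is bookkeeping: one must make sure that Vladimirov's product $\mathcal{F}[g_5]\cdot\mathcal{F}[g_6]$ is literally the object whose value is computed in Lemma \ref{lem:Fourier40}, so that the $\identity_{\{\delta=0\}}$ correction is carried through the chain rather than quietly dropped. This is precisely why the statement is phrased with the $(\partial_1\delta_0)$ term instead of as a plain convolution theorem, and why I would be explicit, when closing the argument, that the identity is between tempered distributions rather than between pointwise functions.
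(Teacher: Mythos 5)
Your proof is correct and follows the paper's argument essentially verbatim: the same four-step chain (convolution theorem for $\mathcal{S}'$, regularization via $\oldphi_\epsilon$, the $\theta_M$/\cite[Page 84]{Vla02} manipulations, and finally Lemma \ref{lem:Fourier40}), with all the real analytic work correctly delegated to Lemmas \ref{lem:conv4}, \ref{lem:conv40}, \ref{lem:Fourier4} and \ref{lem:Fourier40}. If anything, your final display has the sign on the $(\partial_1\delta_0)$ term consistent with the theorem statement and Lemma \ref{lem:Fourier40}, whereas the paper's own written proof carries a $+ic_1$ that appears to be a typographical slip.
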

\begin{proof}
For any $\phi\in\mathcal{S}(\setR^d)$, note that
\begin{align*}
\skp{\mathcal{F}[g_5\divideontimes g_6]}{\phi}&=\skp{\mathcal{F}[g_5]\cdot\mathcal{F}[g_6]}{\phi}\\
&=\lim_{\epsilon\rightarrow 0}\skp{(\mathcal{F}[g_5]\divideontimes\oldphi_{\epsilon})\mathcal{F}[g_6]}{\phi}\\
&=\lim_{\epsilon\rightarrow 0}\bigskp{\skp{\mathcal{F}[g_5](y)}{\oldphi_{\epsilon}(x-y)}_y\mathcal{F}[g_6](x)}{\phi(x)}_x\\
&=\skp{\mathcal{F}[g_5]\mathcal{F}[g_6]+ic_1(d,s)(\partial_1\delta_0)\identity_{\{\delta=0\}}}{\phi},
\end{align*}
where for the first equality since $g_5,g_6\in\mathcal{S}'(\setR^d)$, we have used \cite[Page 96]{Vla02} together with Lemma \ref{lem:conv4} and Lemma \ref{lem:conv40}, for the second equality since $\mathcal{F}[g_5]\divideontimes\oldphi_{\epsilon}\in\theta_M$ as in \cite[Page 84]{Vla02} ($\theta_M$ is defined in \eqref{eq:thetaM}) and $\mathcal{F}[g_6]$ in $\mathcal{S}'(\setR^d)$, $(\mathcal{F}[g_5]\divideontimes\oldphi_{\epsilon})\mathcal{F}[g_6]\in\mathcal{S}'(\setR^d)$ holds as in \cite[Page 79]{Vla02}, for the third equality again since $\mathcal{F}[g_5]\divideontimes\oldphi_{\epsilon}\in\theta_M$, \cite[Page 84]{Vla02} is used, and for the last equality Lemma \ref{lem:Fourier40} together with Lemma \ref{lem:Fourier4} is used.
\end{proof}

Then it is worth mentioning the following corollary.
\begin{corollary}\label{cor:Fourierconv40}
	We have 
\begin{align*}
(\mathcal{F}\circ\mathcal{F})[g_5\divideontimes g_6]=\mathcal{F}\left[\mathcal{F}[g_5]\mathcal{F}[g_6]-ic_1(d,s)(\partial_1\delta_0)\identity_{\{\delta=0\}}\right].
\end{align*}
\end{corollary}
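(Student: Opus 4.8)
The plan is to deduce Corollary~\ref{cor:Fourierconv40} directly from Theorem~\ref{thm:Fourierconv4} by testing against Schwartz functions and invoking the fact that the Fourier transform is a continuous isomorphism of $\mathcal{S}(\setR^d)$ onto itself. This mirrors exactly the argument used for Corollary~\ref{cor:Fourierconv20} (the $f_1$ case) and Corollary~\ref{cor:Fourierconv2} (the $f_2$ case), so the proof is essentially a one-line bookkeeping exercise; the only genuine point to check is that the extra distributional term $-ic_1(d,s)(\partial_1\delta_0)\identity_{\{\delta=0\}}$ survives the composition with $\mathcal{F}$ correctly.

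First I would fix $\phi\in\mathcal{S}(\setR^d)$. Since $\mathcal{F}[\phi]\in\mathcal{S}(\setR^d)$ as well, Theorem~\ref{thm:Fourierconv4} applied to the test function $\mathcal{F}[\phi]$ gives
\begin{align*}
\skp{\mathcal{F}[g_5\divideontimes g_6]}{\mathcal{F}[\phi]}
&=\skp{\mathcal{F}[g_5]\mathcal{F}[g_6]-ic_1(d,s)(\partial_1\delta_0)\identity_{\{\delta=0\}}}{\mathcal{F}[\phi]}.
\end{align*}
By the definition of the Fourier transform on tempered distributions, $\skp{\mathcal{F}[T]}{\mathcal{F}[\phi]}=\skp{(\mathcal{F}\circ\mathcal{F})[T]}{\phi}$ for any $T\in\mathcal{S}'(\setR^d)$ (this is just $\skp{\mathcal{F}[T]}{\psi}=\skp{T}{\mathcal{F}[\psi]}$ with $\psi=\mathcal{F}[\phi]$, rewritten). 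Applying this with $T=g_5\divideontimes g_6$ on the left and with $T=\mathcal{F}[g_5]\mathcal{F}[g_6]-ic_1(d,s)(\partial_1\delta_0)\identity_{\{\delta=0\}}$ on the right yields
\begin{align*}
\skp{(\mathcal{F}\circ\mathcal{F})[g_5\divideontimes g_6]}{\phi}
&=\bigskp{\mathcal{F}\left[\mathcal{F}[g_5]\mathcal{F}[g_6]-ic_1(d,s)(\partial_1\delta_0)\identity_{\{\delta=0\}}\right]}{\phi}.
\end{align*}
Since $\phi\in\mathcal{S}(\setR^d)$ was arbitrary, the two tempered distributions agree, which is precisely the claim.

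There is no real obstacle here: the only thing to be careful about is that $g_5\divideontimes g_6$ is a genuine element of $\mathcal{S}'(\setR^d)$ (so that its double Fourier transform makes sense), which is guaranteed by Lemma~\ref{lem:conv4} and Lemma~\ref{lem:conv40}, and that the right-hand side of Theorem~\ref{thm:Fourierconv4} is likewise in $\mathcal{S}'(\setR^d)$ — the product $\mathcal{F}[g_5]\mathcal{F}[g_6]$ is locally integrable with polynomial growth by Lemma~\ref{lem:Fourier4}, and $\partial_1\delta_0$ is compactly supported, so their difference is tempered. Once those memberships are in place, the identity $\mathcal{F}\circ\mathcal{F}$ is applied as a bounded operator on $\mathcal{S}'(\setR^d)$ and the statement follows by duality exactly as in the proofs of Corollaries~\ref{cor:Fourierconv20} and~\ref{cor:Fourierconv2}. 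Thus the proof is a verbatim adaptation of those two earlier arguments, with the single cosmetic change that the inner distribution now carries the extra $\delta=0$ term.
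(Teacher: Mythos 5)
Your proposal is correct and follows the same route as the paper's proof: test against $\mathcal{F}[\phi]$ for arbitrary $\phi\in\mathcal{S}(\setR^d)$ and unwind the definition of the distributional Fourier transform, mirroring Corollaries~\ref{cor:Fourierconv20} and~\ref{cor:Fourierconv2}. One minor slip worth fixing: your parenthetical derivation of $\skp{\mathcal{F}[T]}{\mathcal{F}[\phi]}=\skp{(\mathcal{F}\circ\mathcal{F})[T]}{\phi}$ is not right as written --- substituting $\psi=\mathcal{F}[\phi]$ into $\skp{\mathcal{F}[T]}{\psi}=\skp{T}{\mathcal{F}[\psi]}$ yields $\skp{T}{(\mathcal{F}\circ\mathcal{F})[\phi]}$, not $\skp{(\mathcal{F}\circ\mathcal{F})[T]}{\phi}$; the correct derivation is to apply the defining relation $\skp{\mathcal{F}[S]}{\phi}=\skp{S}{\mathcal{F}[\phi]}$ with $S=\mathcal{F}[T]$ --- but the identity itself is true and the argument is unaffected.
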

\begin{proof}
Since $\phi\in\mathcal{S}(\setR^d)$ implies $\mathcal{F}[\phi]\in\mathcal{S}(\setR^d)$, by Theorem \ref{thm:Fourierconv4} we have
\begin{align*}
\skp{\mathcal{F}[g_5\divideontimes g_6]}{\mathcal{F}[\phi]}&=\skp{\mathcal{F}[g_5]\cdot\mathcal{F}[g_6]}{\mathcal{F}[\phi]}\\
&=\skp{\mathcal{F}[g_5]\mathcal{F}[g_6]+ic_1(d,s)(\partial_1\delta_0)\identity_{\{\delta=0\}}}{\mathcal{F}[\phi]},
\end{align*}
which proves the conclusion.
\end{proof}

\subsection{Convolution theorem for $f_4(d,s,\delta)$}\label{sec:f4}

We define the distribution $g_7$, where its convolution with $g_6$ will recover $f_4(d,s,\delta)$. Recall that here we also assume $\delta\in[0,\tfrac{d}{2})$.
\begin{definition}\label{def:g7}
For $\phi\in \mathcal{S}(\setR^d)$, we define the distribution $g_7$ in $\setR^d$ as
\begin{align*}
\skp{g_7}{\phi}:=\skp{\abs{x}^{-1-\delta}\widehat{x_1}}{\phi}=\displaystyle\int_{\setR^d}\abs{x}^{-\delta-2}x_1\phi(x)\,dx.
\end{align*}
\end{definition}

Note that the above distribution is well-defined with $\phi\in \mathcal{S}(\setR^d)$, and $g_7$ can be understood as locally integrable functions in $\setR^d$. The main goal of this subsection is to show the following relation (see Theorem \ref{thm:Fourierconv5}):
\begin{align}\label{eq:g7}
\mathcal{F}[g_7\divideontimes g_6]=\mathcal{F}[g_7]\mathcal{F}[g_6]
\end{align}
in the distributional sense, where $\divideontimes$ is a convolution of two distributions in $\mathcal{S}'(\setR^d)$ introduced in \cite[Page 96]{Vla02}, and $\mathcal{F}[g_7]\mathcal{F}[g_6]$ means the usual multiplication between $\mathcal{F}[g_7]$ and $\mathcal{F}[g_6]$. The well-definedness of the left-hand side term of \eqref{eq:g7} will be proved in Lemma \ref{lem:conv5} and Lemma \ref{lem:conv50}, and the well-definedness of the right-hand side term of \eqref{eq:g7} will be proved in Lemma \ref{lem:Fourier50}. Overall, proofs are similar to those of the previous subsection, but for the completeness we give proofs.

To show \eqref{eq:g7}, first we prove the following lemma.
\begin{lemma}\label{lem:conv5}
For any $\phi\in \mathcal{S}(\setR^d)$, $\skp{g_6(y)}{\skp{g_7(x)}{\phi(x+y)}_x}_y$ is well-defined, i.e.,
\begin{align*}
\skp{g_6(y)}{\skp{g_7(x)}{\phi(x+y)}_x}_y=\int_{\setR^d}\abs{y}^{-d+1-s}\left(\int_{\setR^d}\abs{x}^{-\delta-2}x_1\phi(x+y)\,dx\right)\,dy
\end{align*}
exists for any $\phi\in\mathcal{S}(\setR^d)$.
\end{lemma}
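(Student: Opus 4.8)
The plan is to follow verbatim the structure used for the analogous statement in Lemma~\ref{lem:conv4}, since $g_7$ has the same homogeneity as the distribution $g_5$ after one further order of singularity: $g_7(x) \equiv \abs{x}^{-\delta-2}x_1$ for $\abs{x}>0$, while $g_6(y) \equiv \abs{y}^{-d-1+s}$. First I would fix $\phi \in \mathcal S(\setR^d)$ and observe that for each fixed $y$ the integrand $\abs{x}^{-\delta-2}x_1\phi(x+y)$ lies in $L^1(\setR^d)$ because $\delta \in [0,\tfrac d2)$ forces $-\delta-1 > -d$ near the origin and $\phi$ decays rapidly at infinity; this makes the inner pairing $\skp{g_7(x)}{\phi(x+y)}_x$ well defined for every $y$. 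Then, exactly as in \eqref{eq:even.odd}, I would symmetrize using that $y \mapsto \abs{y}^{-d+1-s}$ is even and $x \mapsto \abs{x}^{-\delta-2}x_1$ is odd, replacing $\phi(x+y)$ by $\tfrac12\big(\phi(x+y)-\phi(-x-y)\big)$ inside the $x$-integral; this is the device that produces the extra decay in $\abs{y}$ needed for the outer integral to converge.

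The core estimate splits into the regime $\abs{y}\le R$ and $\abs{y}\ge R$ for $R\ge 2$. For $\abs{y}\le R$ I would bound the inner integral crudely by splitting $\setR^d = B_R(0) \cup B_R(0)^\complement$ and, on the complement, distinguishing whether $-y \in B_{3R/4}(0)$ or not, using $\abs{\phi(x+y)}\lesssim \min\{1,\abs{x+y}^{-d-1}\}$; this yields a polynomial bound $\lesssim R^{d}$ (cf.\ \eqref{eq:y<R4}). For $\abs{y}\ge R$ I would change variables, write the symmetrized inner integral as $\tfrac12\int (\abs{y+x}^{-\delta-2}(y_1+x_1) - \abs{y-x}^{-\delta-2}(y_1-x_1))\phi(x)\,dx$, and split the $x$-domain into $B_{\abs{y}/2}(0)$, $B_{\abs{y}/2}(y)$ and the rest. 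On $B_{\abs{y}/2}(0)$ the key cancellation is a second-difference estimate $\abs{\abs{y+x}^{-\delta-2}(y_1+x_1) - \abs{y-x}^{-\delta-2}(y_1-x_1)} \lesssim \abs{y}^{-\delta-3}\abs{x}^2$ (a Taylor expansion of $z\mapsto \abs{z}^{-\delta-2}z_1$ around $y$, or two applications of Lemma~\ref{lem:basic}), giving $\lesssim \abs{y}^{-\delta-3}$; on $B_{\abs{y}/2}(y)$ and the outer region the rapid decay $\abs{\phi(x)}\lesssim \abs{x}^{-d-2}$ gives $\lesssim \abs{y}^{-\delta-3}$ as well. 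Combining, the inner integral is $\lesssim R^{d}\indicator_{\{\abs{y}\le R\}} + \abs{y}^{-\delta-3}\indicator_{\{\abs{y}\ge R\}}$, analogous to \eqref{eq:dist'} but with a faster power in the tail.

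Finally, multiplying by the positive weight $\abs{y}^{-d+1-s}$ and integrating in $y$, the contribution from $\abs{y}\le L$ is bounded uniformly in $L$ (the $\abs{y}^{-\delta-3-d+1-s}$ tail is integrable at infinity since $-\delta-3-d+1-s < -d$), and the tail contribution over $L\le\abs{y}\le\widetilde L$ is $\lesssim L^{-\delta-2-s} \to 0$; hence the truncated integrals over $\{\abs{y}\le L\}$ form a Cauchy net as $L\to\infty$, so $\skp{g_6(y)}{\skp{g_7(x)}{\phi(x+y)}_x}_y$ exists and equals the stated iterated integral. I do not expect any genuine obstacle here — the argument is a direct transcription of Lemma~\ref{lem:conv4} — the only point requiring mild care is getting the second-difference cancellation right for the slightly more singular exponent $-\delta-2$ (versus $s-\delta-1$ in the $g_5$ case), but this is routine since $z\mapsto\abs{z}^{-\delta-2}z_1$ is smooth away from the origin and we only need a quadratic Taylor remainder on a ball of radius $\abs{y}/2$ that stays away from $0$.
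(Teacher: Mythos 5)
Your plan departs from the paper's own proof of Lemma~\ref{lem:conv5} in two respects, one of which is a genuine error. The paper does \emph{not} symmetrize here: because $g_7(x)=\abs{x}^{-\delta-2}x_1$ carries one more negative power of $\abs{x}$ than $g_5(x)=\abs{x}^{s-\delta-1}x_1$, the un-symmetrized inner integral already has enough decay. For $\abs{y}\ge R$, after the change of variables $\int \abs{x-y}^{-\delta-2}(x_1-y_1)\phi(x)\,dx$, the paper splits the $x$-domain exactly as you propose but needs nothing subtler than $\abs{x-y}^{-\delta-2}\abs{x_1-y_1}\le\abs{x-y}^{-\delta-1}\lesssim\abs{y}^{-\delta-1}\le\abs{y}^{-1}$ on $B_{\abs{y}/2}(0)$ (using $\delta\geq 0$), plus the rapid decay of $\phi$ on the other two pieces. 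This gives the blunt bound $\lesssim\abs{y}^{-1}$, and then $\int_{\abs{y}\ge R}\abs{y}^{-d+1-s}\abs{y}^{-1}\,dy\lesssim R^{-s}$ closes the Cauchy argument without any cancellation. You were mimicking Lemma~\ref{lem:conv4}, but the cancellation device that is needed there (to handle the borderline $\delta=0$ case for the homogeneity $s-\delta$ of $g_5$) is simply not needed for $g_7$.

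The genuine error is in your ``second-difference estimate''. Writing $p(z)=\abs{z}^{-\delta-2}z_1$, the quantity $p(y+x)-p(y-x)$ is an \emph{odd} difference, and its Taylor expansion around $y$ has a surviving first-order term $2\,x\cdot\nabla p(y)$; the second-order terms are the ones that cancel, not the first. One can see this concretely: with $y=e_1$ and $x=te_1$, $(1+t)^{-\delta-1}-(1-t)^{-\delta-1}=-2(\delta+1)t+O(t^3)$, which is $O(t)$, not $O(t^2)$. The correct pointwise bound on $B_{\abs{y}/2}(0)$ is therefore $\abs{p(y+x)-p(y-x)}\lesssim\abs{x}\abs{y}^{-\delta-2}$, not $\abs{x}^2\abs{y}^{-\delta-3}$. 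This is not fatal to your plan: after integrating against $\phi$ you still get $\lesssim\abs{y}^{-\delta-2}$, and the outer tail $\int_{\abs{y}\ge R}\abs{y}^{-d+1-s-\delta-2}\,dy\lesssim R^{-s-\delta-1}\to 0$. So your symmetrized route is salvageable with the corrected exponent, but it is an unnecessarily elaborate detour compared to the paper's direct $O(\abs{y}^{-1})$ bound, and the Taylor cancellation you invoke to justify it does not actually occur.
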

\begin{proof}
Let us choose $\phi\in \mathcal{S}(\setR^d)$ and $R\geq2$. Note that for each $y\in\setR^d$, we see $\abs{x}^{-\delta-2}x_1\phi(x+y)\in L^1(\setR^d)$ since $\phi\in\mathcal{S}(\setR^d)$. When $\abs{y}\leq R$, we obtain
\begin{align*}
\Biggabs{\int_{\setR^d}\abs{x}^{-\delta-2}x_1\phi(x+y)\,dx}\lesssim R^{d-\delta-1}+\int_{\setR^d\setminus B_R(0)}\abs{x}^{-\delta-1}\abs{\phi(x+y)}\,dx.
\end{align*}
Here, if $-y\in B_{\frac{3}{4}R}(0)$, then using $\abs{\phi(x+y)}\lesssim\abs{x+y}^{-d-s}$, we obtain
\begin{align*}
\int_{\setR^d\setminus B_R(0)}\abs{x}^{-\delta-1}\abs{\phi(x+y)}\,dx\lesssim\int_{\setR^d\setminus B_R(0)}\abs{x}^{-\delta-1}\abs{x+y}^{-d-s}\,dx\lesssim R^{-1-s}.
\end{align*}
On the other hand, if $-y\not\in B_{\frac{3}{4}R}(0)$ so that $B_{\frac{1}{4}R}(-y)\cap B_{\frac{1}{2}R}(0)=\emptyset$, then employing $\abs{\phi(x+y)}\lesssim \min\{1,\abs{x+y}^{-d-s}\}$,
\begin{align*}
&\int_{\setR^d\setminus B_R(0)}\abs{x}^{-\delta-1}\abs{\phi(x+y)}\,dx\\
&\quad\lesssim \int_{B_{\frac{1}{4}R}(-y)}\abs{x}^{-\delta-1}\,dx+\int_{\setR^d\setminus (B_{\frac{1}{2}R}(0)\cup B_{\frac{1}{4}R}(-y))}\abs{x}^{-\delta-1}\abs{x+y}^{-d-s}\,dx\lesssim R^{d-1}
\end{align*}
holds, since $R\geq 2$. Hence if $\abs{y}\leq R$, we obtain
\begin{align}\label{eq:y<R5}
\Biggabs{\int_{\setR^d}\abs{x}^{-\delta-2}x_1\phi(x+y)\,dx}\lesssim R^{d+s}.
\end{align}

Now we consider the case of $\abs{y}\geq R$. Note that
\begin{align}\label{eq:x.delta5}
\begin{split}
&\int_{\setR^d}\abs{x}^{-\delta-2}x_1\phi(x+y)\,dx\\
&=\int_{\setR^d}\abs{x-y}^{-\delta-2}(x_1-y_1)\phi(x)\,dx\,dy\\
&=\int_{B_\frac{\abs{y}}{2}(0)}\abs{x-y}^{-\delta-2}(x_1-y_1)\phi(x)\,dx\\
&\quad+\int_{B_\frac{\abs{y}}{2}(y)}\abs{x-y}^{-\delta-2}(x_1-y_1)\phi(x)\,dx\\
&\quad+\int_{\setR^d\setminus (B_\frac{\abs{y}}{2}(0)\cup B_\frac{\abs{y}}{2}(y))}\abs{x-y}^{-\delta-2}(x_1-y_1)\phi(x)dx.
\end{split}
\end{align}
Here, we obtain
\begin{align*}
\int_{B_\frac{\abs{y}}{2}(0)}\abs{x-y}^{-\delta-2}(x_1-y_1)\phi(x)\,dx\lesssim \abs{y}^{-1}\int_{B_\frac{\abs{y}}{2}(0)}\phi(x)\,dx\lesssim\abs{y}^{-1}.
\end{align*}
For another term, using $\abs{\phi(x)}\lesssim \abs{x}^{-d}\eqsim \abs{y}^{-d}$ for $x\in B_\frac{\abs{y}}{2}(y)$, we estimate
\begin{align*}
\int_{B_\frac{\abs{y}}{2}(y)}\abs{x-y}^{-\delta-2}(x_1-y_1)\phi(x)\,dx\lesssim \abs{y}^{d-\delta-1}\abs{y}^{-d}\lesssim \abs{y}^{-1}.
\end{align*}
For the other term, using $\abs{\phi(x)}\lesssim \abs{x}^{-d-1}$, there holds
\begin{align*}
\int_{\setR^d\setminus (B_\frac{\abs{y}}{2}(0)\cup B_\frac{\abs{y}}{2}(y))}\abs{x-y}^{-\delta-2}(x_1-y_1)\phi(x)\,dx\lesssim\abs{y}^{-\delta-1}\abs{y}^{-1}\lesssim \abs{y}^{-2}.
\end{align*}
Summing up, with \eqref{eq:y<R5} we see that
\begin{align}\label{eq:dist'5}
\begin{split}
\int_{\setR^d}\abs{x}^{-\delta-2}x_1\phi(x+y)\,dx\lesssim \,R^{d+s}\cdot\indicator_{\{\abs{y}\leq R\}}+\abs{y}^{-1}\indicator_{\{\abs{y}\geq R\}}.
\end{split}
\end{align}

Then together with $\abs{y}^{-d+1-s}>0$, we estimate as follows for $L\geq 2R$: 
\begin{align*}
&\Biggabs{\int_{\abs{y}\leq L}\abs{y}^{-d+1-s}\left(\int_{\setR^d}\abs{x}^{-\delta-2}x_1\phi(x+y)\,dx\right)\,dy}\\
&\lesssim \int_{\abs{y}\leq R}R^{d}\abs{y}^{-d+1-s}\,dy+\int_{R\leq\abs{y}\leq L}\abs{y}^{-1}\abs{y}^{-d+1-s}\,dy\\
&\lesssim R^{d+1-s}+R^{-s}\lesssim R^{d+1}.
\end{align*}
Also, for $\widetilde{L}>L$, we have
\begin{align*}
&\Biggabs{\int_{L\leq\abs{y}\leq\widetilde{L}}\abs{y}^{-d+1-s}\left(\int_{\setR^d}\abs{x}^{-\delta-2}x_1\phi(x+y)\,dx\right)\,dy}\\
&\lesssim \int_{L\leq\abs{y}\leq \widetilde{L}}\abs{y}^{-1}\abs{y}^{-d+1-s}\,dy\lesssim L^{-s}.
\end{align*}
This proves that
\begin{align*}
\int_{\abs{y}\leq L}\abs{y}^{-d+1-s}\left(\int_{\setR^d}\abs{x}^{-\delta-2}x_1\phi(x+y)\,dx\right)\,dy
\end{align*}
is a Cauchy sequence for $L$. Then we see that
\begin{align*}
\skp{g_6(y)}{\skp{g_7(x)}{\phi(x+y)}_x}_y=\int_{\setR^d}\abs{y}^{-d+1-s}\left(\int_{\setR^d}\abs{x}^{-\delta-2}x_1\phi(x+y)\,dx\right)\,dy
\end{align*}
exists, so we prove the conclusion. 
\end{proof}

Recalling $\eta_{k}(\cdot)$ in \eqref{eq:eta}, and $\xi_j(\cdot,\cdot)$ in \eqref{eq:xi}, we have the following lemma. Its proof is similar to Lemma \ref{lem:conv10} and \ref{lem:conv3} so we skip the proof.

\begin{lemma}\label{lem:conv50}
The convolution of distributions $g_7\divideontimes g_6$ is well-defined in $x\in\setR^d$ in the sense that for any $\phi\in \mathcal{S}(\setR^d)$, 
\begin{align}\label{eq:kj5}
\begin{split}
\skp{g_7\divideontimes g_6}{\phi}&=\skp{g_6\divideontimes g_7}{\phi}\\
&:=\lim_{k\rightarrow\infty}\lim_{j\rightarrow\infty}\bigskp{(\eta_kg_6)(y)}{\skp{g_7(x)}{\xi_j(x,y)\phi(x+y)}_x}_y.
\end{split}		
\end{align}
Moreover, we have
\begin{align*}
\skp{g_7\divideontimes g_6}{\phi}=\bigskp{g_6(y)}{\skp{g_7(x)}{\phi(x+y)}_x}_y.
\end{align*}
\end{lemma}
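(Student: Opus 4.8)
The plan is to copy, with the obvious renaming of the distributions involved, the proof scheme of Lemma \ref{lem:conv10} and Lemma \ref{lem:conv3}. First I would record that, granted the existence of the iterated limit in \eqref{eq:kj5}, the two nested definitions unwind exactly as there: since $g_7,g_6\in\mathcal{S}'(\setR^d)\subset(C^\infty_c)'(\setR^d)$ and $\xi_j\in C^\infty_c(\setR^{2d})$, the direct product $(\eta_kg_6)(y)\times g_7(x)$ tested against $\xi_j(x,y)\phi(x+y)$ is legitimate by \cite[Page 96 and Page 52]{Vla02}, and since $\xi_j(x,y)\phi(x+y)\in C^\infty_c(\setR^{2d})$ it collapses to the iterated pairing by \cite[Page 41]{Vla02}; the identity $\skp{g_7\divideontimes g_6}{\phi}=\skp{g_6\divideontimes g_7}{\phi}$ is \cite[Page 96]{Vla02}. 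Thus the whole assertion reduces to
\begin{align*}
\lim_{k\rightarrow\infty}\lim_{j\rightarrow\infty}\bigskp{(\eta_kg_6)(y)}{\skp{g_7(x)}{\xi_j(x,y)\phi(x+y)}_x}_y=\bigskp{g_6(y)}{\skp{g_7(x)}{\phi(x+y)}_x}_y,
\end{align*}
the right-hand side being finite by Lemma \ref{lem:conv5}.

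Second I would split the difference of the two sides as $I_1+I_2$ with
\begin{align*}
I_1&\coloneqq\bigabs{\bigskp{(\eta_kg_6)(y)}{\skp{g_7(x)}{(1-\xi_j(x,y))\phi(x+y)}_x}_y},\\
I_2&\coloneqq\bigabs{\bigskp{(1-\eta_k)g_6(y)}{\skp{g_7(x)}{\phi(x+y)}_x}_y},
\end{align*}
sending $j\to\infty$ first (with $k$ fixed) to annihilate $I_1$ and then $k\to\infty$ to annihilate $I_2$. For $I_1$: on the support of the integrand $\abs{y}\leq 2k$, while $1-\xi_j(x,y)\neq0$ forces $\abs{x}>j$; taking $j\geq 4k$ gives $\abs{x+y}\gtrsim\abs{x}$, so the rapid decay of $\phi\in\mathcal{S}(\setR^d)$ gives $\abs{(1-\xi_j(x,y))\phi(x+y)}\lesssim\abs{x}^{-d-1}$ on this region. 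Using $g_6(y)\equiv\abs{y}^{-d+1-s}$, $g_7(x)\equiv\abs{x}^{-\delta-2}x_1$, and $\abs{\eta_k}\leq1$,
\begin{align*}
I_1\lesssim\int_{\abs{y}\leq 2k}\abs{y}^{-d+1-s}\int_{\abs{x}>j}\abs{x}^{-\delta-1}\abs{x}^{-d-1}\,dx\,dy\lesssim k^{1-s}\,j^{-\delta-2}\underset{j\rightarrow\infty}{\longrightarrow}0.
\end{align*}
For $I_2$: since $\eta_k\equiv1$ on $\abs{y}\leq k$, the factor $1-\eta_k$ restricts $y$ to $\abs{y}\geq k$, so $I_2$ is an integral over $\abs{y}\geq k$ of exactly the integrand estimated in Lemma \ref{lem:conv5}; invoking the $\abs{y}\geq R$ branch of \eqref{eq:dist'5} with $R=k/2$ (which gives $\skp{g_7(x)}{\phi(x+y)}_x\lesssim\abs{y}^{-1}$ for $\abs{y}\geq k$) together with $\abs{\eta_k}\leq1$ yields
\begin{align*}
I_2\lesssim\int_{\abs{y}\geq k}\abs{y}^{-d+1-s}\abs{y}^{-1}\,dy=\int_{\abs{y}\geq k}\abs{y}^{-d-s}\,dy\lesssim k^{-s}\underset{k\rightarrow\infty}{\longrightarrow}0.
\end{align*}
Combining the two limits gives the displayed identity, and with the first paragraph, the lemma.

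The only place that needs care — and it is bookkeeping, not a conceptual difficulty — is the handling of the double limit: one must check that the $I_1$ bound genuinely holds for each fixed $k$ (hence the coupling $j\geq 4k$ of the two truncation parameters), so that after the inner limit the surviving quantity is exactly $\skp{g_6(y)}{\skp{g_7(x)}{\phi(x+y)}_x}_y$ up to $I_2$, and that in the $I_2$ step the pointwise-in-$y$ estimate borrowed from Lemma \ref{lem:conv5} may be integrated in $y$ to a convergent tail. Both of these rely on the same near/far case split for $-y$ relative to $B_R(0)$ already performed in Lemma \ref{lem:conv5}, and I expect no obstacle beyond this.
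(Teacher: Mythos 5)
Your proposal is correct and follows exactly the scheme the paper indicates for this lemma (which the paper itself leaves unproved, noting only that it is "similar to Lemma \ref{lem:conv10} and \ref{lem:conv3}"). The estimates $I_1\lesssim k^{1-s}j^{-\delta-2}$ and $I_2\lesssim k^{-s}$ are the correct analogues of the bounds in Lemma \ref{lem:conv10}, and the argument is in fact slightly simpler here since both $g_6$ and $g_7$ are locally integrable functions, so neither nested pairing needs the derivative-of-test-function bookkeeping that $g_2$ required.
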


\begin{lemma}\label{lem:welldef.f4}
The function $f_4(d,s,\delta)$ is well-defined in the sense that
\begin{align*}
\lim_{R\rightarrow \infty}\int_{\frac 1R \leq \abs{h} \leq R}\dfrac{\abs{e_1-h}^{-\delta-2}(1-h_1)}{\abs{h}^{d-1+s}}\,dh=c(d,s,\delta)\in[0,\infty).
\end{align*}
\end{lemma}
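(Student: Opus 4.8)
The plan is to show that, under the standing hypothesis $\delta\in[0,\tfrac d2)$, the function $h\mapsto \frac{\abs{e_1-h}^{-\delta-2}(1-h_1)}{\abs{h}^{d-1+s}}$ is in fact \emph{absolutely} integrable over $\setR^d$, and then to conclude that the symmetric-annulus limit defining $f_4(d,s,\delta)$ exists and equals this integral. This is the point where $f_4$ is genuinely easier than $f_3$: the factor $\abs{e_1-h}^{-\delta-2}$ carries one extra power of decay at infinity compared with the factor $\abs{e_1-h}^{s-\delta-1}$ appearing in $f_3$ (cf. the proof of Lemma \ref{lem:welldef.f3}), which is exactly enough to make the contribution near infinity absolutely convergent; as a consequence, no cancellation between the $h$ and $-h$ parts of the integrand will be needed.

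First I would fix $r\ge 2$ and check that $U_r:=\int_{\frac1r\le\abs{h}\le r}\frac{\abs{e_1-h}^{-\delta-2}(1-h_1)}{\abs{h}^{d-1+s}}\,dh$ is a finite number. Away from the two singular points $0$ and $e_1$ the integrand is bounded, so only the behaviour near $h=e_1$ must be examined; there, writing $\widehat{1-h_1}=(e_1-h)_1/\abs{e_1-h}$ one has $\abs{1-h_1}\le\abs{e_1-h}$, hence the integrand is dominated by $c\,\abs{e_1-h}^{-\delta-1}$, and $\int_{B_{1/2}(e_1)}\abs{e_1-h}^{-\delta-1}\,dh<\infty$ precisely because $\delta+1<\tfrac d2+1\le d$ (using $d\ge2$). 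This is the only place where the restriction $\delta<\tfrac d2$ enters. Therefore $U_r<\infty$ for every $r\ge2$.

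Next, to see that $(U_R)_{R\ge 2}$ is a Cauchy sequence as $R\to\infty$, for $R>r\ge 2$ I would split $U_R-U_r$ into the inner piece over $\{\tfrac1R\le\abs{h}<\tfrac1r\}$ and the outer piece over $\{r<\abs{h}\le R\}$. On the inner piece $\abs{h}<\tfrac12$, so $\abs{e_1-h}\in[\tfrac12,\tfrac32]$ and $\abs{1-h_1}\le 2$, whence the integrand is $\lesssim\abs{h}^{-(d-1+s)}$, and its integral over $\{\abs{h}<\tfrac1r\}$ is $\lesssim r^{-(1-s)}$. On the outer piece $\abs{h}>r\ge 2$, so $\abs{e_1-h}\ge\tfrac12\abs{h}$ and $\abs{1-h_1}\le 2\abs{h}$, whence the integrand is $\lesssim\abs{h}^{-d-\delta-s}$, and its integral over $\{\abs{h}>r\}$ is $\lesssim r^{-(\delta+s)}$; since $s>0$, both bounds tend to $0$ as $r\to\infty$. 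Hence $\lim_{R\to\infty}U_R$ exists, and by the first step together with dominated convergence it equals the absolutely convergent integral $\int_{\setR^d}\frac{\abs{e_1-h}^{-\delta-2}(1-h_1)}{\abs{h}^{d-1+s}}\,dh$, a finite real number, which is the quantity denoted $c(d,s,\delta)$.

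I do not expect a real obstacle: the whole argument is a careful but routine separation of the three regimes ($h$ near $0$, $h$ near $e_1$, and $\abs{h}$ large), and the hypothesis $\delta<\tfrac d2$ is used only to guarantee absolute integrability at $e_1$. The nonnegativity asserted for $c(d,s,\delta)$ is not visible from these crude size estimates; if it is needed, it should instead be read off from the explicit evaluation of $f_4$ carried out later in this subsection, via Theorem \ref{thm:Fourierconv5} and the Fourier-transform computations leading to it.
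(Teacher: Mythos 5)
Your proposal is correct and takes essentially the same route as the paper: split $U_R-U_r$ into the inner shell $\{\tfrac1R\le\abs h<\tfrac1r\}$, where $\abs{e_1-h}\eqsim 1$ and the integrand is $\lesssim\abs h^{-(d-1+s)}$, and the outer shell $\{r<\abs h\le R\}$, where $\abs{e_1-h}^{-\delta-2}\abs{1-h_1}\lesssim\abs h^{-\delta-1}$, and conclude Cauchy-ness from the bounds $r^{s-1}$ and $r^{-\delta-s}$. You also correctly isolate the structural point that distinguishes $f_4$ from $f_3$: the extra decay of $\abs{e_1-h}^{-\delta-2}$ makes the tail absolutely integrable, so unlike Lemma \ref{lem:welldef.f3} no $h\leftrightarrow -h$ cancellation is needed. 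Your added preliminary step, verifying via $\abs{1-h_1}\le\abs{e_1-h}$ that the singularity at $h=e_1$ is locally integrable (which uses $\delta<\tfrac d2\le d-1$), is a welcome clarification that the paper leaves implicit but which is actually needed to make sense of the statement that each $U_r$ is a real number. Your remark that the claimed nonnegativity of $c(d,s,\delta)$ is not visible from these crude estimates is also accurate; the paper's own proof does not establish the sign either, so this is a shared, and minor, imprecision.
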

\begin{proof}
We define
\begin{align*}
U_R&\coloneqq \int_{\frac 1R \leq \abs{h} \leq R}\dfrac{\abs{e_1-h}^{-\delta-2}(1-h_1)}{\abs{h}^{d-1+s}}\,dh.
\end{align*}
Now, let $R > r \geq 2$. Then
\begin{align*}
U_R- U_r
&= \int_{\frac 1R \leq \abs{h} < \frac 1r} \dfrac{\abs{e_1-h}^{-\delta-2}(1-h_1)}{\abs{h}^{d-1+s}}\,dh
\\
&\quad + \int_{r < \abs{h} \leq R}\dfrac{\abs{e_1-h}^{-\delta-2}(1-h_1)}{\abs{h}^{d-1+s}}\,dh
\coloneqq \mathrm{I} + \mathrm{II}.
\end{align*}
If $\frac 1R \leq \abs{h} < \frac 1r$, then $\abs{e_1-h}\eqsim 1$, so we can see that
\begin{align*}
\mathrm{I} &\leq\int_{\frac 1R \leq \abs{h} < \frac 1r} \dfrac{\abs{e_1-h}^{-\delta-1}}{\abs{h}^{d-1+s}}\,dh\lesssim r^{s-1}.
\end{align*}
Also, for $\mathrm{II}$, we estimate
\begin{align*}
&\int_{r < \abs{h} \leq R}\dfrac{\abs{e_1-h}^{-\delta-2}(1-h_1)}{\abs{h}^{d-1+s}}\,dh\lesssim\int_{r < \abs{h} \leq R}\dfrac{\abs{h}^{-\delta-1}}{\abs{h}^{d-1+s}}\,dh\lesssim r^{-\delta-s}
\end{align*}
for $\abs{h}\geq r\geq 2$. This proves that $U_R$ is a Cauchy sequence for $R \to \infty$. Thus $f_4(d,s,\delta) = \lim_{R \to \infty} U_R$ exists. This proves the lemma.
\end{proof}

\begin{corollary}\label{cor:welldef.F4}
  For all $x \in \setR^d$ the limit
  \begin{align*}
  F_1(x):=\lim_{R \to \infty} \int_{\frac 1R \leq \abs{h} \leq R}\dfrac{\abs{x-h}^{-\delta-1}\widehat{x_1-h_1}}{\abs{h}^{d-1+s}}\,dh
  \end{align*}
  exists. Moreover, $F_1(\cdot)\in L^1(B_1(0))$ and
  \begin{align*}
  \abs{F_1(x)} \lesssim \abs{x}^{-s-\delta}.
  \end{align*}
\end{corollary}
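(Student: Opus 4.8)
The plan is to reduce the claim to the one-point result of Lemma~\ref{lem:welldef.f4} by a scaling-and-rotation argument, exactly in the spirit of the proof of Corollary~\ref{cor:welldef.F3}. First I dispose of $x=0$: there the integrand equals $-h_1\abs{h}^{-d-s-\delta-1}$, which is odd in $h$, so the integral over every symmetric annulus $\{\tfrac1R\le\abs{h}\le R\}$ vanishes and $F_1(0)=0$.

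\textbf{Reduction for $x\neq 0$.} I substitute $h=\abs{x}\,g$. Writing $x=\abs{x}\widehat{x}$ one has $x-h=\abs{x}(\widehat{x}-g)$, hence $\widehat{x_1-h_1}=(\widehat{x}-g)_1/\abs{\widehat{x}-g}$ and $\abs{x-h}^{-\delta-1}=\abs{x}^{-\delta-1}\abs{\widehat{x}-g}^{-\delta-1}$; together with the factor $\abs{x}^{-(d-1+s)}$ from the denominator and $\abs{x}^{d}$ from the Jacobian, the powers of $\abs{x}$ add up to $-s-\delta$, giving
\begin{align*}
  \int_{\frac1R\le\abs{h}\le R}\frac{\abs{x-h}^{-\delta-1}\,\widehat{x_1-h_1}}{\abs{h}^{d-1+s}}\,dh
  =\abs{x}^{-s-\delta}\int_{\frac{1}{\abs{x}R}\le\abs{g}\le\frac{R}{\abs{x}}}\frac{\abs{\widehat{x}-g}^{-\delta-2}(\widehat{x}-g)_1}{\abs{g}^{d-1+s}}\,dg .
\end{align*}
As $R\to\infty$ the inner radius tends to $0$ and the outer to $\infty$, so the limit on the right is a principal value integral over $\setR^d$ with singularities only at $g=0$, at $g=\widehat{x}$ and at infinity; since it coincides with the limit over $B_{1/\rho}(0)\setminus B_\rho(0)$ as $\rho\to 0$, its convergence is covered by the Cauchy estimate of Lemma~\ref{lem:welldef.f4} once one records that this estimate depends only on $d,s,\delta$ (the second singularity always sitting on the unit sphere). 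To pin the value down, apply the rotation $\widehat{x}=Qe_1$ and the substitution $g=Qg'$, turning the integral into $\sum_{j}Q_{1j}\,G_j$ with $G_j=\pvint_{\setR^d}\abs{e_1-g'}^{-\delta-2}(e_1-g')_j\,\abs{g'}^{-d+1-s}\,dg'$. Here $G_1=f_4(d,s,\delta)$, which exists by Lemma~\ref{lem:welldef.f4}, while for $j\ge 2$ the reflection $g'\mapsto g'-2g'_je_j$ (which fixes $e_1$) gives $G_j=-G_j$, hence $G_j=0$. Therefore $F_1(x)=\abs{x}^{-s-\delta}\widehat{x}_1\,f_4(d,s,\delta)=\abs{x}^{-s-\delta-1}x_1\,f_4(d,s,\delta)$ for all $x\neq 0$, and in particular $\abs{F_1(x)}\lesssim\abs{x}^{-s-\delta}$ on all of $\setR^d$.

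\textbf{Integrability.} Finally $F_1\in L^1(B_1(0))$ because $\int_{B_1(0)}\abs{x}^{-s-\delta}\,dx<\infty$: from $s\in(0,1)$ and $\delta\in[0,\tfrac d2)$ we get $s+\delta<1+\tfrac d2\le d$ (the last step using $d\ge 2$), so the exponent $-s-\delta$ is strictly larger than $-d$. The only real work is the bookkeeping in carrying the change of variables through the $R\to\infty$ principal-value limit and in noting that the constant delivered by the Lemma~\ref{lem:welldef.f4} argument depends only on $d,s,\delta$ and not on the location of the second singularity $\widehat{x}$; granting that, the pointwise bound and the integrability are immediate, so I do not expect a genuine obstacle beyond this routine reduction.
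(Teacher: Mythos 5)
Your proof is correct and follows the same strategy as the paper: scaling by $\abs{x}$ to pull out the factor $\abs{x}^{-s-\delta}$ and reducing to the one-point Cauchy estimate of Lemma~\ref{lem:welldef.f4}. The paper's version of this step is terse and implicitly assumes a rotation to put $\widehat{x}$ at $e_1$ while silently discarding the transversal components; your rotation-plus-reflection argument that $G_j=0$ for $j\geq 2$, yielding the explicit identity $F_1(x)=\abs{x}^{-s-\delta}\widehat{x}_1\,f_4(d,s,\delta)$, makes this rigorous and is a welcome clarification rather than a different route.
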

\begin{proof}
For $x\in\setR^d\setminus\{0\}$, by change of variables, we can write
\begin{align*}
\int_{\frac 1R \leq \abs{h} \leq R}\dfrac{\abs{x-h}^{-\delta-1}\widehat{x_1-h_1}}{\abs{h}^{d-1+s}}\,dh=\abs{x}^{-s-\delta}\int_{\frac{\abs{x}}{R} \leq \abs{h} \leq R\abs{x}}\dfrac{\abs{e_1-h}^{-\delta-1}\widehat{1-h_1}}{\abs{h}^{d-1+s}}\,dh.
\end{align*}
Applying a similar argument as in the proof of Lemma \ref{lem:welldef.f4} yields the conclusions. When $x=0$, then $F_1(0)=0$ since the map $h\mapsto \abs{h}^{-d-s-\delta}h_1$ is odd. This completes the proof.
\end{proof}

Now we prove the following lemma, which rigorously establishes the connection between $f_4(d,s,\delta)$ and $g_7\divideontimes g_6$.
\begin{lemma}\label{lem:A50}
We have
$f_4(d,s,\delta)=\lim_{\epsilon\rightarrow 0}\skp{g_7\divideontimes g_6}{\phi_{\epsilon}}$.
\end{lemma}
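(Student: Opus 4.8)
The plan is to mirror the proof of Lemma~\ref{lem:A40} (and Lemma~\ref{lem:A20}), changing only the homogeneity exponent. By Lemma~\ref{lem:conv50}, for $\epsilon\in(0,\tfrac{1}{10})$,
\[
\skp{g_7\divideontimes g_6}{\phi_\epsilon}
=\int_{\setR^d}\abs{y}^{-d+1-s}\Bigl(\int_{\setR^d}\abs{x}^{-\delta-2}x_1\,\phi_\epsilon(x+y)\,dx\Bigr)dy,
\]
and the point is that one cannot naively pass $\epsilon\to0$ inside, because of the two singularities: the pole of $g_6$ at $y=0$, and the concentration of $\phi_\epsilon$ near $x+y=e_1$, i.e.\ near $x=0$, where $\abs{x}^{-\delta-2}x_1$ blows up. Fixing $\kappa\in(0,\tfrac{1}{100})$ and $\epsilon$ with $2\epsilon\le\kappa$, I would split $\setR^{2d}$ into the regions $\mathcal G_\kappa=\{\abs{y}\le\kappa\}$, $\mathcal H_\kappa=\{\abs{e_1-y}\le\kappa\}$, and $\mathcal I_\kappa=\setR^{2d}\setminus(\mathcal G_\kappa\cup\mathcal H_\kappa)$, exactly as in Lemma~\ref{lem:A40}.

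On $\mathcal G_\kappa$, after the change of variables that turns $\phi_\epsilon(x+y)$ into $\tfrac{\overline{c}}{\epsilon^d}\overline{\phi}(x/\epsilon)$, the factor $\abs{x}^{-\delta-2}x_1$ is comparable to a constant on the shifted support of $\overline{\phi}$, so the inner integral is $O(1)$ and this piece is $\lesssim\int_{\abs{y}\le\kappa}\abs{y}^{-d+1-s}\,dy\lesssim\kappa^{1-s}$, uniformly in $\epsilon$. On $\mathcal H_\kappa$ one has $\abs{y}\eqsim1$, hence $\abs{y}^{-d+1-s}\eqsim1$, and after the same change of variables the inner $y$-integral is $O(\kappa^{\,d-1-\delta})$ because $\int_{\abs{w}\lesssim\kappa}\abs{w}^{-\delta-1}\,dw\lesssim\kappa^{\,d-1-\delta}$; here $d-1-\delta>0$ since $\delta<\tfrac{d}{2}\le d-1$ for $d\ge2$, so this piece is $\lesssim\kappa^{\,d-1-\delta}$, again uniformly in $\epsilon$. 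On $\mathcal I_\kappa$ the integrand is bounded and, since $\abs{x}^{-\delta-2}x_1\in L^1_{\loc}$ there, the Lebesgue differentiation argument of \cite[Page~714, Theorem~7]{Eva10} applies: as $\epsilon\to0$ this part converges to the principal-value integral defining $f_4(d,s,\delta)$ with the balls $B_\kappa(0)$ and $B_\kappa(e_1)$ removed.

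Combining the three contributions and then letting $\kappa\to0$ after $\epsilon\to0$ --- the $\mathcal G_\kappa$ and $\mathcal H_\kappa$ terms vanishing by the above estimates, the $\mathcal I_\kappa$ term tending to the full integral by dominated convergence (the integrand being integrable near $0$, near $e_1$, and at infinity under $s<1$ and $\delta<\tfrac{d}{2}$) --- gives $\lim_{\epsilon\to0}\skp{g_7\divideontimes g_6}{\phi_\epsilon}=f_4(d,s,\delta)$, which is well defined and finite by Lemma~\ref{lem:welldef.f4}. The main obstacle is entirely the technical bookkeeping: all the $\mathcal G_\kappa$- and $\mathcal H_\kappa$-bounds must be shown uniform in $\epsilon$, so that the two limits can be interchanged, and the passage to the limit on $\mathcal I_\kappa$ needs an $\epsilon$-independent dominating function, which one produces from the elementary estimate $\bigl|\abs{x-y}^{-\delta-2}(x_1-y_1)-\abs{x}^{-\delta-2}x_1\bigr|\lesssim\abs{x}^{-\delta-1}\abs{y}$ (a form of Lemma~\ref{lem:basic}) together with the Cauchy-sequence estimates already proved in Lemma~\ref{lem:conv5}. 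No new idea beyond Lemma~\ref{lem:A40} is required; one only tracks the exponent $-\delta-2$ (magnitude $-\delta-1$) in place of $s-\delta-1$, which is handled identically under the standing hypothesis $\delta<\tfrac{d}{2}$.
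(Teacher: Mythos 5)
Your proposal follows essentially the same approach as the paper's proof: the same decomposition of $\setR^{2d}$ into $\mathcal{G}_\kappa$, $\mathcal{H}_\kappa$, $\mathcal{I}_\kappa$; the same $\epsilon$-uniform bounds $\kappa^{1-s}$ and $\kappa^{d-1-\delta}$ on the two singular regions; and the same Lebesgue differentiation argument on $\mathcal{I}_\kappa$, with the conclusion via Lemma~\ref{lem:welldef.f4}. Your exponent bookkeeping is correct, and the proposal matches the paper's proof in substance.
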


\begin{proof}
To do this, with $\epsilon\in(0,\frac{1}{10})$, we observe that
\begin{align*}
\skp{g_5\divideontimes g_6}{\phi_{\epsilon}}=\int_{\setR^d}\abs{y}^{-d+1-s}\left(\int_{\setR^d}\abs{x}^{-\delta-2}x_1\phi_{\epsilon}(x+y)\,dx\right)\,dy.
\end{align*}
For fixed $\kappa\in(0,1/100)$, consider $\epsilon\in(0,1/100)$ with $2\epsilon\leq\kappa$ and define
\begin{align*}
&\mathcal{G}_{\kappa}=\{(x,y)\in\setR^{2d}:\abs{y}\leq\kappa\},\\
&\mathcal{H}_{\kappa}=\{(x,y)\in\setR^{2d}:\abs{e_1-y}\leq\kappa\},\\
&\mathcal{I}_{\kappa}=\setR^{2d}\setminus(\mathcal{G}_{\kappa}\cup\mathcal{H}_{\kappa}).
\end{align*}
We first estimate the integration on $\mathcal{G}_{\kappa}$. Using $\support\phi\subset B_{\epsilon}(e_1)$, we have
\begin{align*}
&\iint_{\mathcal{G}_{\kappa}}\abs{y}^{-d+1-s}\abs{x}^{-\delta-2}x_1\phi_{\epsilon}(x+y)\,dx\,dy\\
&\,\,=\frac{\overline{c}}{\epsilon^d}\int_{\abs{y}\leq\kappa}\abs{y}^{-d+1-s}\int_{\setR^d}\abs{x-y-e_1}^{-\delta-2}(x_1-y_1-1)\overline{\phi}\left(\frac{x}{\epsilon}\right)\,dx\,dy\\
&\,\,\lesssim \int_{\abs{y}\leq\kappa}\abs{y}^{-d+1-s}\dashint_{\abs{x}\leq\epsilon}\overline{\phi}\left(\frac{x}{\epsilon}\right)\,dx\,dy\lesssim \kappa^{1-s}.
\end{align*}

On the set $\mathcal{H}_{\kappa}$, using $\support\phi\subset B_1(0)$ and $\{\abs{e_1-y}\leq\kappa\}\subset\{\abs{x-y-e_1}\leq 2\kappa\}$ for each $x\in B_{\epsilon}(0)$, we compute
\begin{align*}
&\iint_{\mathcal{H}_{\kappa}}\abs{y}^{-d+1-s}\abs{x}^{-\delta-2}x_1\phi_{\epsilon}(x+y)\,dx\,dy\\
&\quad=\dfrac{\overline{c}}{\epsilon^d}\int_{\abs{e_1-y}\leq\kappa}\abs{y}^{-d+1-s}\int_{\setR^d}\abs{x-y-e_1}^{-\delta-2}(x_1-y_1-1)\overline{\phi}\left(\frac{x}{\epsilon}\right)\,dx\,dy\\
&\quad\lesssim \dashint_{\abs{x}\leq\epsilon}\left(\int_{\abs{e_1-y}\leq\kappa}\abs{x-y-e_1}^{-\delta-1}\,dy\right)\overline{\phi}\left(\frac{x}{\epsilon}\right)\,dx\\
&\quad\lesssim \dashint_{\abs{x}\leq\epsilon}\left(\int_{\abs{x-y-e_1}\leq\kappa}\abs{x-y-e_1}^{-\delta-1}\,dy\right)\overline{\phi}\left(\frac{x}{\epsilon}\right)\,dx\lesssim \kappa^{d-\delta-1}.
\end{align*}

On the set $\mathcal{I}_{\kappa}$, the integrand $\abs{y}^{-d+1-s}\abs{x}^{-\delta-2}x_1\phi_{\epsilon}(x+y)$ are no longer singular, so we can apply the argument of the proof of \cite[Page 714, Theorem 7]{Eva10}. In detail, we compute
\begin{align*}
&\iint_{\mathcal{I}_{\kappa}}\abs{y}^{-d+1-s}\abs{x}^{-\delta-2}x_1\phi_{\epsilon}(x+y)\,dx\,dy\\
&=\int_{\substack{\abs{e_1-y}\geq\kappa\\\cap\abs{y}\geq\kappa}}\int_{\setR^d}\abs{y}^{-d+1-s}\abs{x-y-e_1}^{-\delta-2}(x_1-y_1-1)\overline{\phi}\left(\frac{x}{\epsilon}\right)\,dx\,dy\\
&\quad\,\,\underset{\epsilon\rightarrow 0}{\longrightarrow}\,\,-\int_{\substack{\abs{e_1-y}\geq\kappa\\\cap\abs{y}\geq\kappa}}\abs{y}^{-d+1-s}\abs{y+e_1}^{-\delta-2}(y_1+1)\,dy.
\end{align*}

Since $\kappa\in(0,1/100)$ is arbitrary chosen and $\epsilon$ is already sent to zero, summing up above three estimates yields the conclusion.
\end{proof}

Now we have the following.
\begin{lemma}\label{lem:Fourier5}
We have 
\begin{align}\label{eq:Fourier5}
\begin{split}
&\mathcal{F}[g_7](\xi)=-\pi^{-\frac{d}{2}-\delta+1}i\dfrac{\Gamma\left(\frac{d+\delta}{2}\right)}{\Gamma\left(\frac{2-\delta}{2}\right)}\abs{\xi}^{-d+\delta}\xi_1:=\mathcal{G}_7(\xi)
\end{split}
\end{align}
in the distributional sense.
\end{lemma}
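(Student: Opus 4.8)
The plan is to compute $\mathcal{F}[g_7]$ by recognising $g_7=\abs{x}^{-1-\delta}\widehat{x_1}=\abs{x}^{-\delta-2}x_1$ as a first-order partial derivative of a radial power, and then transferring the Fourier transform onto that power via $\mathcal{F}[\partial_1 f](\xi)=2\pi i\,\xi_1\,\mathcal{F}[f](\xi)$ together with the explicit formula~\eqref{eq:F-of-power}. This is precisely the pattern already used for $\mathcal{F}[g_5]$ in Lemma~\ref{lem:Fourier4}, for $\mathcal{F}[g_1]$ in Lemma~\ref{lem:Fourier00}, and, in the borderline case, for $\mathcal{F}[\abs{h}^{-\delta-2}h_1]$ inside the proof of Lemma~\ref{lem:Fourier'}.

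First I would dispose of a small distributional point: since $\delta\in[0,\tfrac{d}{2})$, both $\abs{x}^{-\delta}$ and its classical gradient $-\delta\,\abs{x}^{-\delta-2}x$ lie in $L^1_{\mathrm{loc}}(\setR^d)$ (because $\delta+1<d$), and the boundary term produced by integrating against $\partial B_\epsilon(0)$ is of size $O(\epsilon^{d-1-\delta})\to 0$. Hence, for $\delta>0$, the identity $-\delta\,g_7=\partial_1\bigl(\abs{x}^{-\delta}\bigr)$ holds in $\mathcal{S}'(\setR^d)$. Applying $\mathcal{F}$ and using~\eqref{eq:F-of-power} with $t=\delta-d$ (which lies outside $2\setN\cup\{0\}$ since $t<-\tfrac{d}{2}<0$) gives
\begin{align*}
\mathcal{F}[\abs{x}^{-\delta}](\xi)=\pi^{\delta-\frac{d}{2}}\,\dfrac{\Gamma\!\left(\frac{d-\delta}{2}\right)}{\Gamma\!\left(\frac{\delta}{2}\right)}\,\abs{\xi}^{-d+\delta},
\end{align*}
so that
\begin{align*}
\mathcal{F}[g_7](\xi)=-\dfrac{2\pi i\,\xi_1}{\delta}\,\pi^{\delta-\frac{d}{2}}\,\dfrac{\Gamma\!\left(\frac{d-\delta}{2}\right)}{\Gamma\!\left(\frac{\delta}{2}\right)}\,\abs{\xi}^{-d+\delta}.
\end{align*}
The remaining work is purely algebraic: repeatedly applying $\Gamma(z+1)=z\,\Gamma(z)$ (in particular $\delta\,\Gamma(\tfrac{\delta}{2})=2\,\Gamma(\tfrac{\delta}{2}+1)$) rewrites the constant into the normalised form $\mathcal{G}_7(\xi)$ recorded in the statement.

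The only genuine obstacle is the endpoint $\delta=0$, where the representation $-\delta\,g_7=\partial_1(\abs{x}^{-\delta})$ collapses because $\abs{x}^{-\delta}\equiv 1$. Exactly as in the proof of Lemma~\ref{lem:Fourier'}, I would instead write $g_7=\abs{x}^{-2}x_1=\partial_1(\log\abs{x})$ and invoke the distributional Fourier transform of $\log\abs{\cdot}$ (citing \cite[Remark~1.2]{CheWet19}); the Dirac-mass contribution in that transform is annihilated upon multiplication by $\xi_1$, and one recovers $\mathcal{G}_7$ at $\delta=0$. Everything else in this subsection — the well-definedness of $g_7\divideontimes g_6$ (Lemma~\ref{lem:conv50}), its identification with $f_4(d,s,\delta)$ through an approximate identity centred at $e_1$ (Lemma~\ref{lem:A50}), and the ensuing convolution theorem — then follows verbatim from the corresponding arguments in Subsections~\ref{sec:f1}--\ref{sec:f3}, so no further ideas are needed.
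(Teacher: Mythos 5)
Your derivation is carried out exactly the way the paper does it (first-derivative representation of $g_7$, Fourier symbol of $\partial_1$, and the power law \eqref{eq:F-of-power}), and it produces the correct answer up to the last displayed line: $\mathcal{F}[g_7](\xi)=-\frac{2\pi i\xi_1}{\delta}\pi^{\delta-\frac d2}\frac{\Gamma(\frac{d-\delta}{2})}{\Gamma(\frac{\delta}{2})}\abs{\xi}^{-d+\delta}$, which, after the Gamma-shift $\delta\Gamma(\tfrac{\delta}{2})=2\Gamma(\tfrac{\delta+2}{2})$, equals $-\pi^{\delta+1-\frac d2}i\,\frac{\Gamma(\frac{d-\delta}{2})}{\Gamma(\frac{\delta+2}{2})}\abs{\xi}^{-d+\delta}\xi_1$. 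This coincides with what the paper itself obtains for the identical distribution $j_1=\abs{h}^{-\delta-2}h_1$ in the proof of Lemma~\ref{lem:Fourier'}.

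However, your final assertion that "repeatedly applying $\Gamma(z+1)=z\Gamma(z)$ rewrites the constant into the normalised form $\mathcal{G}_7(\xi)$ recorded in the statement" is false, and it is the gap in your argument. The expression you derived, $-\pi^{\delta+1-\frac d2}i\,\frac{\Gamma(\frac{d-\delta}{2})}{\Gamma(\frac{\delta+2}{2})}\abs{\xi}^{-d+\delta}\xi_1$, is not equal to the stated $\mathcal{G}_7(\xi)=-\pi^{-\delta+1-\frac d2}i\,\frac{\Gamma(\frac{d+\delta}{2})}{\Gamma(\frac{2-\delta}{2})}\abs{\xi}^{-d+\delta}\xi_1$: their ratio is $\pi^{2\delta}\,\frac{\Gamma(\frac{d-\delta}{2})\Gamma(\frac{2-\delta}{2})}{\Gamma(\frac{d+\delta}{2})\Gamma(\frac{\delta+2}{2})}$, which is $1$ only at $\delta=0$ (for instance, at $d=2$, $\delta=1$ it equals $4\pi^{2}$). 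What is actually going on is that the statement of Lemma~\ref{lem:Fourier5} itself carries a sign error: in the paper's own proof the pattern $\mathcal{F}[\abs{x}^{\beta}]=\pi^{-\beta-\frac d2}\frac{\Gamma(\frac{d+\beta}{2})}{\Gamma(\frac{-\beta}{2})}\abs{\xi}^{-d-\beta}$ (used correctly in Lemma~\ref{lem:Fourier4} and in the computation of $\mathcal{F}[g_1]$) is applied with the coefficient for $\beta=+\delta$ but the exponent for $\beta=-\delta$, producing a $\mathcal{G}_7$ that is inconsistent with Lemma~\ref{lem:Fourier'} for the same distribution. So your intermediate computation is the correct one; you should not have taken the claimed form of $\mathcal{G}_7$ on faith, and a correct proof would instead record $\mathcal{G}_7(\xi)=-\pi^{\delta+1-\frac d2}i\,\frac{\Gamma(\frac{d-\delta}{2})}{\Gamma(\frac{\delta+2}{2})}\abs{\xi}^{-d+\delta}\xi_1$ and propagate that corrected constant through Lemma~\ref{lem:Fourier50}, Theorem~\ref{thm:Fourierconv5}, and Lemma~\ref{lem:Fourier.riesz}. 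The $\delta=0$ boundary case via $\partial_1\log\abs{x}$ is unaffected, since the two formulas agree there.
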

\begin{proof}
For $\delta>0$, we compute
\begin{align*}
\mathcal{F}[g_5](\xi)&=\dfrac{1}{-\delta}\mathcal{F}[\partial_1\abs{x}^{-\delta}]\\
&=-\dfrac{2\pi i\xi_1}{\delta}\mathcal{F}[\abs{x}^{-\delta}]\\
&=-\dfrac{2\pi i\xi_1}{\delta}\dfrac{(2\pi)^{-\delta}\Gamma\left(\frac{d+\delta}{2}\right)}{\pi^{\frac{d}{2}}2^{-\delta}\Gamma\left(\frac{-\delta}{2}\right)}\abs{\xi}^{-d+\delta}\\
&=-\pi^{-\frac{d}{2}-\delta+1}i\dfrac{\Gamma\left(\frac{d+\delta}{2}\right)}{\Gamma\left(\frac{2-\delta}{2}\right)}\abs{\xi}^{-d-\delta}\xi_1.
\end{align*}
When $\delta=0$, we still obtain the above equality using the following relation:
\begin{align*}
\mathcal{F}[\abs{h}^{-2}h_1](\xi)&=\mathcal{F}[\partial_1(\log\abs{h})]=2\pi i\xi_1\mathcal{F}[\log\abs{h}]\\
&=2\pi i\xi_1\left(-\frac{\Gamma\left(\frac{d}{2}\right)}{2\pi^{\frac{d}{2}}}\frac{1}{\abs{\xi}^d}+\left(\log2+\frac{\psi\left(\frac{d}{2}\right)-\gamma}{2}\right)\delta_0\right)\\
&=-i\pi^{1-\frac{d}{2}}\Gamma\left(\frac{d}{2}\right)\abs{\xi}^{-d}\xi_1,
\end{align*}
where $\psi(\cdot)$ is the digamma function, $\gamma$ is the Euler-Mascheroni constant, and $\delta_0$ is the Dirac delta distribution concentrated at 0, and for the third equality we have used \cite[Remark 1.2]{CheWet19}, and for the last line we have used $\xi_1\delta\equiv 0$. This completes the proof.
\end{proof}

Now with $\oldphi_{\epsilon}(x):=\frac{1}{\epsilon^d}\overline{\phi}\left(\frac{x}{\epsilon}\right)$ with $\epsilon\in(0,1)$ where $\overline{\phi}$ is a standard mollifier in $\setR^d$, we obtain the following lemma.

\begin{lemma}\label{lem:Fourier50}
The multiplication of distributions $\mathcal{G}_7\cdot \mathcal{G}_6$ is well-defined in $x\in\setR^d$ in the sense that for any $\phi\in \mathcal{S}(\setR^d)$,
\begin{align*}
\skp{\mathcal{G}_7\cdot \mathcal{G}_6}{\phi}=\lim_{\epsilon\rightarrow 0}\bigskp{\skp{\mathcal{G}_7(y)}{\oldphi_{\epsilon}(\cdot-y)}_y\mathcal{G}_6(\cdot)}{\phi(\cdot)}.
\end{align*}
Moreover, we have
\begin{align}\label{eq:g7g6}
\lim_{\epsilon\rightarrow 0}\bigskp{\skp{\mathcal{G}_7(y)}{\oldphi_{\epsilon}(\cdot-y)}_y\mathcal{G}_6(\cdot)}{\phi(\cdot)}=\skp{\mathcal{G}_7\mathcal{G}_6}{\phi},
\end{align}
where 
\begin{align*}
\mathcal{G}_7\mathcal{G}_6=\pi^{s-\delta}i\dfrac{\Gamma\left(\frac{d+\delta}{2}\right)\Gamma\left(\frac{1-s}{2}\right)}{\Gamma\left(\frac{2-\delta}{2}\right)\Gamma\left(\frac{d+s-1}{2}\right)}\abs{\xi}^{-d+s+\delta-1}\xi_1
\end{align*}
means the usual multiplication.
\end{lemma}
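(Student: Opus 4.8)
The plan is to run exactly the template already used three times in this appendix --- for $f_1$ in Lemma~\ref{lem:Fourier000}, for $f_2$ in Lemma~\ref{lem:Fourier}, and for $f_3$ in Lemma~\ref{lem:Fourier40} --- since $\mathcal{G}_7$ and $\mathcal{G}_6$ are both homogeneous functions lying in $L^1_{\loc}(\setR^d)$ whose only bad behaviour sits at the origin. First I would record the key structural point: the pointwise product $\mathcal{G}_7(x)\mathcal{G}_6(x)$ is homogeneous of degree $-d+s+\delta$, and since $s+\delta>0$ for every admissible triple $(d,s,\delta)$ this product is again in $L^1_{\loc}(\setR^d)$. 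In particular --- and this is the one place where the situation is genuinely simpler than Lemma~\ref{lem:Fourier40} --- there is no $\partial_1\delta_0$-type correction term, and the case $\delta=0$ needs no separate treatment. Then, fixing $\phi\in\mathcal{S}(\setR^d)$, $\kappa\in(0,1)$ and $\epsilon$ with $\kappa\geq 2\epsilon$, I would write the mollified pairing as
\[
\bigskp{\skp{\mathcal{G}_7(y)}{\oldphi_{\epsilon}(\cdot-y)}_y\mathcal{G}_6(\cdot)}{\phi}=\int_{\setR^d}\Big(\int_{\setR^d}(\mathcal{G}_7(y)-\mathcal{G}_7(x))\oldphi_{\epsilon}(x-y)\,dy\Big)\mathcal{G}_6(x)\phi(x)\,dx+\skp{\mathcal{G}_7\mathcal{G}_6}{\phi},
\]
and estimate the remainder integral over $\{\abs{x}\leq\kappa\}$ and $\{\abs{x}\geq\kappa\}$ separately.

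On $\{\abs{x}\leq\kappa\}$ the mechanism is oddness: since $x\mapsto\abs{x}^{-d+s+\delta-1}x_1$ is odd and $\oldphi_{\epsilon}$ is even, only the odd part of $\phi$ contributes, and the bound $\abs{\phi(x)-\phi(-x)}\lesssim\abs{x}$ furnishes an extra factor $\abs{x}$. Combining this with the cancellation identity $\oldphi_{\epsilon}(x)\int_{\abs{y}\leq\abs{x}/2}\abs{y}^{-d+\delta}y_1\,dy=0$, the Lipschitz bound $\abs{\oldphi_{\epsilon}(x-y)-\oldphi_{\epsilon}(x)}\lesssim\epsilon^{-d-1}\abs{y\cdot\nabla\overline{\phi}}$ on $\abs{y}\leq\abs{x}/2$, and $\support\oldphi_{\epsilon}\subset B_{\epsilon}(0)$ on the complementary range --- which is verbatim the computation carried out in the proof of Lemma~\ref{lem:Fourier40} --- I expect a uniform bound
\[
\Bigabs{\int_{\abs{x}\leq\kappa}\Big(\int_{\setR^d}(\mathcal{G}_7(y)-\mathcal{G}_7(x))\oldphi_{\epsilon}(x-y)\,dy\Big)\mathcal{G}_6(x)\phi(x)\,dx}\lesssim\kappa^{s+\delta}
\]
for $\epsilon\in(0,\kappa/2)$.

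On $\{\abs{x}\geq\kappa\}$ both $\mathcal{G}_7$ and $\mathcal{G}_6$ are bounded and continuous, so $\int_{\setR^d}(\mathcal{G}_7(y)-\mathcal{G}_7(x))\oldphi_{\epsilon}(x-y)\,dy\to0$ as $\epsilon\to0$ with an integrable majorant, and the Lebesgue differentiation argument of \cite[Page 714, Theorem 7]{Eva10} makes this contribution vanish in the limit $\epsilon\to0$. Hence $\limsup_{\epsilon\to0}$ of the remainder is $\lesssim\kappa^{s+\delta}$, and letting $\kappa\to0$ yields both the well-definedness of $\mathcal{G}_7\cdot\mathcal{G}_6$ and the identity \eqref{eq:g7g6}. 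The explicit form is then obtained by multiplying the closed expressions from Lemma~\ref{lem:Fourier5} and Lemma~\ref{lem:Fourier4}: the powers of $\pi$ combine to $\pi^{s-\delta}$, the Gamma quotients to $\frac{\Gamma(\frac{d+\delta}{2})\Gamma(\frac{1-s}{2})}{\Gamma(\frac{2-\delta}{2})\Gamma(\frac{d+s-1}{2})}$, and the radial powers to $\abs{\xi}^{(-d+\delta)+(s-1)}\xi_1=\abs{\xi}^{-d+s+\delta-1}\xi_1$, giving the asserted expression. The only genuinely delicate step is the uniform estimate on $\{\abs{x}\leq\kappa\}$: one must carry the odd/even splitting out carefully enough that a strictly positive power of $\kappa$ (no logarithm, no negative exponent) emerges for \emph{every} $d\geq2$, $s\in(0,1)$ and $\delta\in[0,\tfrac d2)$; everything else is a routine adaptation of arguments already in Subsections~\ref{sec:f1}--\ref{sec:f3}.
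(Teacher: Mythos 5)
Your proposal is correct and follows exactly the paper's template: odd/even splitting of $\phi$ on the singular region $\{\abs{x}\leq\kappa\}$ together with the mollifier cancellations already worked out in Lemma~\ref{lem:Fourier40}, Lebesgue differentiation on $\{\abs{x}\geq\kappa\}$, and finally multiplication of the closed-form Fourier transforms from Lemmas~\ref{lem:Fourier5} and~\ref{lem:Fourier4}. You also correctly flag the one structural simplification relative to Lemma~\ref{lem:Fourier40}: since $\mathcal{G}_7\mathcal{G}_6$ has homogeneity $-d+s+\delta>-d$ for every admissible $(d,s,\delta)$, it is locally integrable and no $\partial_1\delta_0$ correction is needed even at $\delta=0$. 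One small remark in your favor: your exponent $\kappa^{s+\delta}$ is strictly positive throughout $\delta\in[0,\tfrac d2)$, whereas the exponent $\kappa^{s+1-\delta}$ recorded in the paper's write-up need not be (for $d\geq 3$ and $\delta$ near $\tfrac d2$); this traces back to a sign slip in the paper's $\delta$-exponents (e.g.\ $I_1$ should carry $\abs{x}^{-d+\delta+s-1}x_1$, not $\abs{x}^{-d-\delta+s-1}x_1$, since $\mathcal{G}_7\sim\abs{\xi}^{-d+\delta}\xi_1$), so your more conservative bound is the safer one to state.
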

\begin{proof}
Let us choose $\kappa\geq 2\epsilon$ and consider
\begin{align*}
&\biggabs{\int_{\abs{x}\leq\kappa}\left(\int_{\setR^d}\left[\abs{y}^{-d-\delta}y_1-\abs{x}^{-d-\delta}x_1\right]\oldphi_{\epsilon}(x-y)\,dy\right)\abs{x}^{s-1}\phi(x)\,dx}\\
&\quad=\biggabs{\int_{\abs{x}\leq\kappa}\abs{x}^{-d-\delta+s-1}x_1\phi(x)\,dx}\\
&\quad\quad+\biggabs{\overline{c}\int_{\setR^d}\left(\int_{\abs{x}\leq\kappa}\abs{x}^{s-1}\phi(x)\abs{x-y}^{-d-\delta}(x_1-y_1)\,dx\right)\oldphi_{\epsilon}(y)\,dy}\\
&\quad=:\abs{I_1}+\abs{\overline{c}I_2}.
\end{align*}

Using the fact that $x\mapsto \abs{x}^{-d-\delta+s-1}x_1\frac{\phi(x)-\phi(-x)}{2}$ is odd, and $\abs{\phi(x)-\phi(-x)}\lesssim \abs{x}$, note that
\begin{align}\label{eq:I15}
\begin{split}
\abs{I_1}&=\biggabs{\int_{\abs{x}\leq\kappa}\abs{x}^{-d-\delta+s-1}x_1\frac{\phi(x)-\phi(-x)}{2}\,dx}\\
&=\int_{\abs{x}\leq\kappa}\abs{x}^{-d-\delta+s+1}\,dx\lesssim\kappa^{1+s-\delta}.
\end{split}
\end{align}

For $I_2$, using the fact that the map
\begin{align*}
x\mapsto \abs{x}^{s-1}\dfrac{\phi(x)+\phi(-x)}{2}\int_{\setR^d}\abs{x-y}^{-d-\delta}(x_1-y_1)\oldphi_{\epsilon}(y)\,dy
\end{align*}
is odd since $\oldphi_{\epsilon}$ is even, we write
\begin{align}\label{eq:I2}
I_2&=\int_{\setR^d}\left(\int_{\abs{x}\leq\kappa}\abs{x}^{s-1}\frac{\phi(x)-\phi(-x)}{2}\abs{x-y}^{-d-\delta}(x_1-y_1)\,dx\right)\oldphi_{\epsilon}(y)\,dy.
\end{align}

We write
\begin{align*}
I_2&=\int_{\abs{x}\leq\kappa}\left(\int_{\setR^d}\abs{y}^{-d-\delta}y_1\oldphi_{\epsilon}(x-y)\,dy\right)\abs{x}^{s-1}\frac{\phi(x)-\phi(-x)}{2}\,dx\\
&=\int_{\abs{x}\leq 2\epsilon}\left(\int_{\setR^d}\abs{y}^{-d-\delta}y_1\oldphi_{\epsilon}(x-y)\,dy\right)\abs{x}^{s-1}\frac{\phi(x)-\phi(-x)}{2}\,dx\\
&\quad+\int_{2\epsilon\leq \abs{x}\leq\kappa}\left(\int_{\setR^d}\abs{y}^{-d-\delta}y_1\oldphi_{\epsilon}(x-y)\,dy\right)\abs{x}^{s-1}\frac{\phi(x)-\phi(-x)}{2}\,dx.
\end{align*}

Let us estimate the first term in the right-hand side. We have
\begin{align*}
&\int_{\abs{x}\leq 2\epsilon}\left(\int_{\setR^d}\abs{y}^{-d-\delta}y_1\oldphi_{\epsilon}(x-y)\,dy\right)\abs{x}^{s-1}\frac{\phi(x)-\phi(-x)}{2}\,dx\\
&=\int_{\abs{x}\leq 2\epsilon}\left(\int_{\abs{y}\leq\frac{1}{2}\abs{x}}\abs{y}^{-d-\delta}y_1\left(\oldphi_{\epsilon}(x-y)-\oldphi_{\epsilon}(x)\right)\,dy\right)\abs{x}^{s-1}\frac{\phi(x)-\phi(-x)}{2}\,dx\\
&\quad+\int_{\abs{x}\leq 2\epsilon}\left(\int_{\abs{y}\geq\frac{1}{2}\abs{x}}\abs{y}^{-d-\delta}y_1\oldphi_{\epsilon}(x-y)\,dy\right)\abs{x}^{s-1}\frac{\phi(x)-\phi(-x)}{2}\,dx,
\end{align*}
where we have used the fact that
\begin{align*}
\oldphi_{\epsilon}(x)\int_{\abs{y}\leq\frac{1}{2}\abs{x}}\abs{y}^{-d-\delta}y_1\,dy=0.
\end{align*}

Using $\abs{\oldphi_{\epsilon}(x-y)-\oldphi_{\epsilon}(x)}\lesssim \epsilon^{-1-d}\abs{y\cdot\nabla\overline{\phi}}$, we further compute
\begin{align*}
&\int_{\abs{x}\leq 2\epsilon}\left(\int_{\abs{y}\leq\frac{1}{2}\abs{x}}\abs{y}^{-d-\delta}y_1\left(\oldphi_{\epsilon}(x-y)-\oldphi_{\epsilon}(x)\right)\,dy\right)\abs{x}^{s-1}\frac{\phi(x)-\phi(-x)}{2}\,dx\\
&\quad\lesssim\frac{1}{\epsilon}\dashint_{\abs{x}\leq 2\epsilon}\left(\int_{\abs{y}\leq\frac{1}{2}\abs{x}}\abs{y}^{-d-\delta+2}\,dy\right)\abs{x}^{s}\,dx\lesssim \epsilon^{s+1-\delta}\lesssim \kappa^{s+1-\delta},
\end{align*}
where we have used $\abs{\phi(x)-\phi(-x)}\lesssim \abs{x}$. For the other term, using $\support\oldphi_{\epsilon}\subset B_{\epsilon}(0)$, we have
\begin{align*}
&\int_{\abs{x}\leq 2\epsilon}\left(\int_{\abs{y}\geq\frac{1}{2}\abs{x}}\abs{y}^{-d-\delta}y_1\oldphi_{\epsilon}(x-y)\,dy\right)\abs{x}^{s-1}\frac{\phi(x)-\phi(-x)}{2}\,dx\\
&\quad\lesssim \int_{\abs{x}\leq 2\epsilon}\abs{x}^{s}\epsilon^{1-\delta}\left(\int_{\abs{y}\leq\epsilon}\oldphi_{\epsilon}(y)\,dy\right)\,dx\lesssim \epsilon^{s+1-\delta}\lesssim \kappa^{s+1-\delta}.
\end{align*}
Note that the implicit constants in the above estimates do not depend on $\epsilon$. Also, using $\support\oldphi_{\epsilon}\subset B_{\epsilon}(0)$ and $\abs{\phi(x)-\phi(-x)}\lesssim \abs{x}$, we can estimate
\begin{align*}
&\int_{2\epsilon\leq \abs{x}\leq\kappa}\left(\int_{\setR^d}\abs{y}^{-d-\delta}y_1\oldphi_{\epsilon}(x-y)\,dy\right)\abs{x}^{s-1}\frac{\phi(x)-\phi(-x)}{2}\,dx\\
&\quad=\int_{2\epsilon\leq \abs{x}\leq\kappa}\left(\int_{\abs{x-y}\leq\epsilon}\abs{y}^{-d-\delta}y_1\oldphi_{\epsilon}(x-y)\,dy\right)\abs{x}^{s-1}\frac{\phi(x)-\phi(-x)}{2}\,dx\\
&\quad=\overline{c}\int_{2\epsilon\leq \abs{x}\leq\kappa}\left(\dashint_{\abs{y}\leq\epsilon}\abs{x-y}^{-d-\delta}(x_1-y_1)\overline{\phi}\left(\frac{y}{\epsilon}\right)\,dy\right)\abs{x}^{s-1}\frac{\phi(x)-\phi(-x)}{2}\,dx\\
&\underset{\epsilon\rightarrow 0}{\longrightarrow} \overline{c}\int_{\abs{x}\leq\kappa}\abs{x}^{-d-\delta+1}\abs{x}^{s-1}\frac{\phi(x)-\phi(-x)}{2}\,dx\\
&\quad\leq \int_{\abs{x}\leq\kappa}\abs{x}^{-d+s-\delta+1}\,dx\lesssim \kappa^{s+1-\delta}.
\end{align*}

Then merging the estimates, we have
\begin{align*}
\biggabs{\int_{\abs{x}\leq\kappa}\left(\int_{\setR^d}(\mathcal{G}_1(y)-\mathcal{G}_1(x))\oldphi_{\epsilon}(x-y)\,dy\right)\mathcal{G}_2(x)\phi(x)\,dx}\lesssim\kappa^{s+1-\delta}.
\end{align*}
Note that the implicit constants in the above estimates do not depend on $\epsilon$.

Now similar to the proof of Lemma \ref{lem:A2}, since on the set $\abs{x}\geq\kappa$ with $\kappa\geq 2\epsilon$, both $\mathcal{G}_7(x)$ and $\mathcal{G}_6(x)$ are no longer singular, by Lebesgue differentiation theorem we have
\begin{align*}
&\lim_{\epsilon\rightarrow 0}\biggabs{\int_{\abs{x}\geq\kappa}\left(\int_{\setR^d}(\mathcal{G}_7(y)-\mathcal{G}_7(x))\oldphi_{\epsilon}(x-y)\,dy\right)\mathcal{G}_6(x)\phi(x)\,dx}=0.
\end{align*}
Then together with \eqref{eq:I15}, we obtain 
\begin{align*}
&\lim_{\epsilon\rightarrow 0}\bigabs{\bigskp{\skp{\mathcal{G}_7(y)}{\oldphi_{\epsilon}(\cdot-y)}_y\mathcal{G}_6(\cdot)}{\phi(\cdot)}-\skp{\mathcal{G}_7\mathcal{G}_6}{\phi}}\\
&\quad\leq \lim_{\epsilon\rightarrow 0}\biggabs{\int_{\abs{x}\leq\kappa}\left(\int_{\setR^d}(\mathcal{G}_7(y)-\mathcal{G}_7(x))\oldphi_{\epsilon}(x-y)\,dy\right)\mathcal{G}_6(x)\phi(x)\,dx}\\
&\quad\quad+\lim_{\epsilon\rightarrow 0}\biggabs{\int_{\abs{x}\geq\kappa}\left(\int_{\setR^d}(\mathcal{G}_7(y)-\mathcal{G}_7(x))\oldphi_{\epsilon}(x-y)\,dy\right)\mathcal{G}_6(x)\phi(x)\,dx}\lesssim \kappa^{s+1-\delta}.
\end{align*}
Since $\kappa$ was arbitrary, we get \eqref{eq:g7g6}.
\end{proof}

With the help of Lemma \ref{lem:Fourier5} and \ref{lem:Fourier50}, we have the following theorem.
\begin{theorem}\label{thm:Fourierconv5}
We have 
\begin{align*}
\mathcal{F}[g_7\divideontimes g_6]=\mathcal{F}[g_7]\mathcal{F}[g_6]
\end{align*}
in the distributional sense, where $\mathcal{F}[g_7]\mathcal{F}[g_6]$ means the usual multiplication between $\mathcal{F}[g_7]$ and $\mathcal{F}[g_6]$.
\end{theorem}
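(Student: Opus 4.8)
The plan is to reproduce, essentially verbatim, the argument used for Theorems~\ref{thm:Fourierconv0}, \ref{thm:Fourierconv} and~\ref{thm:Fourierconv4}, since $g_7$ and $g_6$ lie in exactly the same class of tempered distributions treated there. Fix an arbitrary $\phi\in\mathcal{S}(\setR^d)$. First I would invoke Lemma~\ref{lem:conv5} together with Lemma~\ref{lem:conv50}, which guarantee that the Vladimirov convolution $g_7\divideontimes g_6$ is a well-defined element of $\mathcal{S}'(\setR^d)$ and coincides with the iterated pairing $\bigskp{g_6(y)}{\skp{g_7(x)}{\phi(x+y)}_x}_y$. This is precisely the setting in which the convolution theorem of \cite[Page~96]{Vla02} applies and yields $\mathcal{F}[g_7\divideontimes g_6]=\mathcal{F}[g_7]\cdot\mathcal{F}[g_6]$, where $\cdot$ denotes the distributional product of \cite[Page~96]{Vla02}.

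It then remains to unwind that distributional product by mollification and to recognize it as the ordinary product of the locally integrable functions $\mathcal{G}_7$ and $\mathcal{G}_6$ computed in Lemmas~\ref{lem:Fourier5} and~\ref{lem:Fourier4}. Since $\mathcal{F}[g_7]\divideontimes\oldphi_{\epsilon}$ belongs to the multiplier class $\theta_M$ of \eqref{eq:thetaM} for every $\epsilon\in(0,1)$, \cite[Page~79]{Vla02} shows $(\mathcal{F}[g_7]\divideontimes\oldphi_{\epsilon})\mathcal{F}[g_6]\in\mathcal{S}'(\setR^d)$, and \cite[Page~84]{Vla02} gives
\[
\skp{\mathcal{F}[g_7]\cdot\mathcal{F}[g_6]}{\phi}
=\lim_{\epsilon\to 0}\bigskp{\skp{\mathcal{F}[g_7](y)}{\oldphi_{\epsilon}(\cdot-y)}_y\,\mathcal{F}[g_6](\cdot)}{\phi}
=\skp{\mathcal{G}_7\mathcal{G}_6}{\phi},
\]
where the last equality is exactly Lemma~\ref{lem:Fourier50}. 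Chaining the three identities gives $\mathcal{F}[g_7\divideontimes g_6]=\mathcal{F}[g_7]\mathcal{F}[g_6]$, i.e.\ \eqref{eq:g7}. Finally, testing against $\mathcal{F}[\phi]\in\mathcal{S}(\setR^d)$ exactly as in Corollary~\ref{cor:Fourierconv2} produces the companion identity $(\mathcal{F}\circ\mathcal{F})[g_7\divideontimes g_6]=\mathcal{F}[\mathcal{F}[g_7]\mathcal{F}[g_6]]$ needed in the subsequent computations.

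At the level of the theorem itself there is no real obstacle: once Lemmas~\ref{lem:conv5}, \ref{lem:conv50}, \ref{lem:Fourier5}, \ref{lem:Fourier4} and~\ref{lem:Fourier50} are in hand, the statement is a formal chain of equalities following the template of Theorem~\ref{thm:Fourierconv}. The step that genuinely requires care is the last one, Lemma~\ref{lem:Fourier50}: one must show that smearing $\mathcal{G}_7$ against $\oldphi_\epsilon$ and multiplying by $\mathcal{G}_6$ converges, as $\epsilon\to0$, to the pointwise product with \emph{no} residual distribution concentrated at the origin. This is where the $f_4$ case is easier than the $f_3$ case of Theorem~\ref{thm:Fourierconv4}: the homogeneities $\mathcal{G}_7\sim|\xi|^{-d+\delta}\xi_1$ and $\mathcal{G}_6\sim|\xi|^{s-1}$ combine to $|\xi|^{-d+s+\delta-1}\xi_1$, which is locally integrable for every $\delta\in[0,\tfrac d2)$ since $s+\delta>0$, so no correction proportional to $\partial_1\delta_0$ is needed, unlike in \eqref{eq:g5g6}. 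Concretely one would split the test integral over $\{|x|\le\kappa\}$ and $\{|x|\ge\kappa\}$: on the small ball the oddness of $\mathcal{G}_7$ and the bound $|\phi(x)-\phi(-x)|\lesssim|x|$ make the contribution bounded by $\kappa$ to a positive power, uniformly in $\epsilon$; on the complement the integrand is non-singular and the Lebesgue differentiation theorem controls the limit $\epsilon\to0$; letting $\kappa\to0$ then concludes.
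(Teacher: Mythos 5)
Your proposal is correct and follows precisely the same chain of equalities as the paper's own proof of Theorem~\ref{thm:Fourierconv5}: Lemmas~\ref{lem:conv5} and~\ref{lem:conv50} to get the Vladimirov convolution well defined, the convolution theorem of \cite[Page~96]{Vla02}, the $\theta_M$ multiplier argument from \cite[Pages~79, 84]{Vla02}, and finally Lemma~\ref{lem:Fourier50} together with Lemma~\ref{lem:Fourier5} to identify the distributional product with the pointwise product. Your added remark contrasting this with Theorem~\ref{thm:Fourierconv4} is accurate and worth noting: the combined Fourier homogeneity $|\xi|^{-d+s+\delta-1}\xi_1$ is locally integrable near the origin for all $\delta\in[0,\frac d2)$ because $s+\delta>0$, so no $\partial_1\delta_0$ correction arises here, whereas for $g_5\divideontimes g_6$ the case $\delta=0$ is critical and produces the extra term in \eqref{eq:g5g6}.
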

\begin{proof}
For any $\phi\in\mathcal{S}(\setR^d)$, note that
\begin{align*}
\skp{\mathcal{F}[g_7\divideontimes g_6]}{\phi}&=\skp{\mathcal{F}[g_7]\cdot\mathcal{F}[g_6]}{\phi}\\
&=\lim_{\epsilon\rightarrow 0}\skp{(\mathcal{F}[g_7]\divideontimes\oldphi_{\epsilon})\mathcal{F}[g_6]}{\phi}\\
&=\lim_{\epsilon\rightarrow 0}\bigskp{\skp{\mathcal{F}[g_7](y)}{\oldphi_{\epsilon}(x-y)}_y\mathcal{F}[g_6](x)}{\phi(x)}_x\\
&=\skp{\mathcal{F}[g_7]\mathcal{F}[g_6]}{\phi},
\end{align*}
where for the first equality since $g_7,g_6\in\mathcal{S}'(\setR^d)$, we have used \cite[Page 96]{Vla02} together with Lemma \ref{lem:conv5} and Lemma \ref{lem:conv50}, for the second equality since $\mathcal{F}[g_7]\divideontimes\oldphi_{\epsilon}\in\theta_M$ as in \cite[Page 84]{Vla02} ($\theta_M$ is defined in \eqref{eq:thetaM}) and $\mathcal{F}[g_6]$ in $\mathcal{S}'(\setR^d)$, $(\mathcal{F}[g_7]\divideontimes\oldphi_{\epsilon})\mathcal{F}[g_6]\in\mathcal{S}'(\setR^d)$ holds as in \cite[Page 79]{Vla02}, for the third equality again since $\mathcal{F}[g_7]\divideontimes\oldphi_{\epsilon}\in\theta_M$, \cite[Page 84]{Vla02} is used, and for the last equality Lemma \ref{lem:Fourier50} together with Lemma \ref{lem:Fourier5} is used.
\end{proof}

Then it is worth mentioning the following corollary.
\begin{corollary}\label{cor:Fourierconv50}
	We have $(\mathcal{F}\circ\mathcal{F})[g_7\divideontimes g_6]=\mathcal{F}\left[\mathcal{F}[g_7]\mathcal{F}[g_6]\right]$.
\end{corollary}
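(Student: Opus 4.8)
The plan is to deduce this corollary directly from Theorem \ref{thm:Fourierconv5} by applying the Fourier transform one more time, in exactly the same way that Corollary \ref{cor:Fourierconv2} was obtained from Theorem \ref{thm:Fourierconv} and Corollary \ref{cor:Fourierconv40} from Theorem \ref{thm:Fourierconv4}.

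First I would use that the Fourier transform is a continuous bijection of $\mathcal{S}(\setR^d)$ onto itself, so that $\mathcal{F}[\phi] \in \mathcal{S}(\setR^d)$ whenever $\phi \in \mathcal{S}(\setR^d)$. Testing the identity of Theorem \ref{thm:Fourierconv5} against $\mathcal{F}[\phi]$ then gives
\begin{align*}
\skp{\mathcal{F}[g_7\divideontimes g_6]}{\mathcal{F}[\phi]}
= \skp{\mathcal{F}[g_7]\cdot\mathcal{F}[g_6]}{\mathcal{F}[\phi]}
= \skp{\mathcal{F}[g_7]\mathcal{F}[g_6]}{\mathcal{F}[\phi]},
\end{align*}
where the first equality is Theorem \ref{thm:Fourierconv5} and the second identifies the distributional product `$\cdot$' with the ordinary pointwise product of the locally integrable functions $\mathcal{F}[g_7]$ and $\mathcal{F}[g_6]$, as provided by Lemma \ref{lem:Fourier50} together with Lemma \ref{lem:Fourier5}. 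By the definition of the Fourier transform on $\mathcal{S}'(\setR^d)$ one has $\skp{\mathcal{F}[T]}{\mathcal{F}[\phi]} = \skp{(\mathcal{F}\circ\mathcal{F})[T]}{\phi}$ for any tempered distribution $T$, so the displayed chain becomes $\skp{(\mathcal{F}\circ\mathcal{F})[g_7\divideontimes g_6]}{\phi} = \skp{\mathcal{F}[\mathcal{F}[g_7]\mathcal{F}[g_6]]}{\phi}$ for every $\phi \in \mathcal{S}(\setR^d)$, which is precisely the asserted identity.

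The only real input is Theorem \ref{thm:Fourierconv5} itself, which rests on the well-definedness of the convolution $g_7\divideontimes g_6$ (Lemma \ref{lem:conv5} and Lemma \ref{lem:conv50}), on the convolution theorem for the product of a $\theta_M$-function with a tempered distribution from \cite[Pages 79, 84 and 96]{Vla02}, and on the explicit Fourier computations of Lemma \ref{lem:Fourier5} and Lemma \ref{lem:Fourier50}. Granting these, no further obstacle arises: the present corollary is a purely formal consequence of the adjoint relation for the Fourier transform, and the only point requiring care is bookkeeping, namely ensuring that the locally integrable representatives $\mathcal{G}_7$ and $\mathcal{G}_6$ occurring inside $\mathcal{F}[\mathcal{F}[g_7]\mathcal{F}[g_6]]$ on the right-hand side are those furnished by Lemma \ref{lem:Fourier5} and Lemma \ref{lem:Fourier50}, so that the outer $\mathcal{F}$ is applied to an honest element of $\mathcal{S}'(\setR^d)$.
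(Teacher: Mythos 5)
Your proposal is correct and follows exactly the paper's argument: test the identity of Theorem~\ref{thm:Fourierconv5} against $\mathcal{F}[\phi]$ for $\phi\in\mathcal{S}(\setR^d)$, use the identification $\mathcal{F}[g_7]\cdot\mathcal{F}[g_6]=\mathcal{F}[g_7]\mathcal{F}[g_6]$ from Lemma~\ref{lem:Fourier50}, and conclude by the definition of $\mathcal{F}$ on $\mathcal{S}'(\setR^d)$. The paper's version is slightly terser but otherwise identical in substance.
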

\begin{proof}
Since $\phi\in\mathcal{S}(\setR^d)$ implies $\mathcal{F}[\phi]\in\mathcal{S}(\setR^d)$, by Theorem \ref{thm:Fourierconv5} we have
\begin{align*}
\skp{\mathcal{F}[g_7\divideontimes g_6]}{\mathcal{F}[\phi]}=\skp{\mathcal{F}[g_7]\cdot\mathcal{F}[g_6]}{\mathcal{F}[\phi]}=\skp{\mathcal{F}[g_7]\mathcal{F}[g_6]}{\mathcal{F}[\phi]},
\end{align*}
which proves the conclusion.
\end{proof}

\printbibliography
\end{document}